\DeclareMathAlphabet{\mathscr} {U}{BOONDOX-cal}{r}{n}
\definecolor{dgreen}{rgb}{0.0, 0.5, 0.0}
\let\oldmarginpar\marginpar
\renewcommand\marginpar[1]
\newcommand{\rmap}{\longrightarrow}
\newcommand{\fproduct}[1]{\tensor{\times}{_{#1}}}
\renewcommand{\d}{\mathrm d}               
\newcommand{\Lie}{\boldsymbol{\pounds}}    
\newcommand{\X}{\ensuremath{\mathfrak{X}}} 
\newcommand{\red}{{\mathrm{red}}} 
\renewcommand{\top}{{\mathrm{top}}} 
\newcommand{\can}{{\mathrm{can}}} 
\renewcommand{\ss}{{\mathrm{ss}}}
\newcommand{\lin}{{\mathrm{lin}}} 
\renewcommand{\inf}{{\mathrm{inf}}}
\newcommand{\interior}{{\mathrm{int}}}
\newcommand{\Gr}{{\mathrm{Gr}}}
\newcommand{\hol}{{\mathrm{hol}}}
\newcommand{\reg}{{\mathrm{reg}}}
\newcommand{\princ}{{\mathrm{princ}}}
\newcommand{\subreg}{{\mathrm{subreg}}}
\newcommand{\aff}{{\mathrm{aff}}}
\newcommand{\Aff}{{\mathrm{Aff}}}
\newcommand{\tr}{{\mathrm{tr}}}
\newcommand{\sing}{{\mathrm{sing}}}
\newcommand{\im}{{\mathrm{Im}}}
\newcommand{\id}{{\mathrm{id}}}
\newcommand{\pr}{{\mathrm{pr}}}
\newcommand{\inv}{{\mathrm{inv}}}
\newcommand{\res}{{\mathscr{R}}}
\newcommand{\g}{\gamma}
\newcommand{\w}{\omega}
\newcommand{\eps}{\varepsilon}
\newcommand{\hM}{\widehat{M}}
\newcommand{\hU}{\widehat{U}}
\newcommand{\hF}{\widehat{\cF}_{\pi}}
\newcommand{\hphi}{\widehat{\phi}}
\newcommand{\hLD}{\widehat{L}}
\newcommand{\hQ}{\widehat{Q}}
\newcommand{\hq}{\widehat{q}}
\newcommand{\hG}{\widehat{\cG}}
\newcommand{\hS}{\widehat{S}}
\newcommand{\hx}{{\widehat{x}}}
\newcommand{\hxx}{{\widehat{y}}}
\newcommand{\hqq}{\widehat{r}}
\newcommand{\hy}{{\widehat{y}}}
\newcommand{\hi}{{\widehat{i}}}
\newcommand{\hT}{{\widehat{\cT}}}
\newcommand{\hL}{\widehat{L}_\pi}
\newcommand{\hSi}{{\widehat{\Si}}}
\newcommand{\hSc}{{\widehat{\Sc}}}
\newcommand{\hOmega}{\widehat{\Omega}}
\newcommand{\hgg}{\widehat{\gg^*}}
\newcommand{\htt}{\widehat{\tt}}
\newcommand{\hH}{\mathscr{H}}
\newcommand{\hLL}{\mathscr{L}}
\newcommand{\hnu}{\widehat{\nu}}
\newcommand{\oM}{{\hM}^{\orb}}
\newcommand{\oU}{\widehat{U}^{\orb}}
\newcommand{\oB}{\widetilde{B}^{\orb}}
\newcommand{\orb}{{\mathrm{orb}}}
\newcommand{\VV}{{\mathscr{V}}}
\newcommand{\VVO}{{\mathscr{V}}_0}
\newcommand{\R}{\mathbb{R}}
\newcommand{\C}{\mathbb{C}}
\renewcommand{\S}{\mathbb{S}}
\newcommand{\T}{\mathbb{T}}
\newcommand{\Z}{\mathbb{Z}}
\newcommand{\cB}{\mathcal{B}}
\newcommand{\cC}{\mathcal{C}}
\newcommand{\cD}{\mathcal{D}}
\newcommand{\cE}{\mathcal{E}}
\newcommand{\cF}{\mathcal{F}}
\newcommand{\cG}{\mathcal{G}}
\newcommand{\cK}{\mathcal{K}}
\newcommand{\cH}{\mathcal{H}}
\newcommand{\cI}{\mathcal{I}}
\newcommand{\cM}{\mathcal{M}}
\newcommand{\cP}{\mathcal{P}}
\newcommand{\cS}{\mathcal{S}}
\newcommand{\cT}{\mathcal{T}}
\newcommand{\cV}{\mathcal{V}}
\newcommand{\hW}{\widehat{\mathcal{W}}}
\newcommand{\cW}{\mathcal{W}}
\newcommand{\Bgood}{B'}
\newcommand{\Pol}{{\mathrm{Pol}}}
\newcommand{\laction}{\curvearrowright}
\newcommand{\corank}{\operatorname{corank}}
\newcommand{\G}{\cG}            
\renewcommand{\O}{\mathcal{O}}             
\DeclareMathOperator{\Mon}{Mon}         
\newcommand{\s}{\mathbf{s}}             
\renewcommand{\t}{\mathbf{t}}           
\renewcommand{\aa}{\mathfrak{a}}        
\renewcommand{\gg}{\mathfrak{g}}        
\newcommand{\su}{\mathfrak{su}}     %
\newcommand{\kk}{\mathfrak{k}}          
\renewcommand{\tt}{\mathfrak{t}}        
\newcommand{\pp}{\mathfrak{p}}          
\newcommand{\zz}{\mathfrak{z}}        
\newcommand{\Refl}{\mathfrak{R}}
\newcommand{\Sc}{\mathfrak{S}}
\newcommand{\Si}{\Sigma}
\newcommand{\cSi}{\cS^\inf}
\newcommand{\act}{\mathscr{a}}
\newcommand{\tto}{\rightrightarrows}    
\DeclareMathOperator{\Ker}{Ker}           
\renewcommand{\ker}{\Ker}
\DeclareMathOperator{\ad}{ad}           
\DeclareMathOperator{\Ad}{Ad}           
\DeclareMathOperator{\Aut}{Aut}         
\DeclareMathOperator{\Rep}{Rep}         
\DeclareMathOperator{\GL}{GL}           
\DeclareMathOperator{\Hol}{Hol}         
\DeclareMathOperator{\codim}{codim}           
\DeclareMathOperator{\vol}{vol}           
\DeclareMathOperator{\dev}{dev}      
\newcommand{\SU}{\mathrm{SU}}
\newcommand{\SO}{\mathrm{SO}}
\newcommand{\helpI}{\mathrm{var}}
\newcommand{\Iaff}{\helpI_{\varpi}^{\aff}} 
\newcommand{\Ilin}{\helpI_{\varpi}^{\lin}} 
\newcommand{\helpVar}{\mathrm{var}} 
\newcommand{\Var}{\helpVar_{\varpi}} 
\newcommand{\Varb}{\helpVar_{0}} 
\newcommand{\diffto}{\xrightarrow{\raisebox{-0.2 em}[0pt][0pt]{\smash{\ensuremath{\sim}}}}} 
\numberwithin{equation}{section}
\newtheorem{theorem}{Theorem}[section]
\newtheorem{theorem*}{Theorem}
\newtheorem{problem}{Open Problem}
\newtheorem{lemma}[theorem]{Lemma}
\newtheorem{proposition}[theorem]{Proposition}
\newtheorem{corollary}[theorem]{Corollary}
\theoremstyle{definition}
\newtheorem{definition}[theorem]{Definition}
\newtheorem{example}[theorem]{Example}
\newtheorem{remark}[theorem]{Remark}
\begin{document}
\title[PMCTs]{Poisson Manifolds of Compact Types\\ 
\MakeLowercase{with an appendix by} Joshua Mundinger}



\author[M. Crainic]{Marius Crainic}
\address{Department of Mathematics, Utrecht University, P.O. Box 80010 
3508 TA Utrecht, The Netherlands}
\email{m.crainic@uu.nl}

\author[R.L. Fernandes]{Rui Loja Fernandes}
\address{Department of Mathematics, University of Illinois at Urbana-Champaign,  214 Harker Hall, 1305 W Green St, Urbana, IL 61801 USA}
\email{ruiloja@illinois.edu}

\author[D. Martinez-Torres]{David Mart\'inez Torres}
\address{Department of Applied Mathematics, ETSAM Section, Universidad Polit\'ecnica de Madrid, 
Avda. Juan de Herrera 4, 28040 Madrid, Spain}
\email{df.mtorres@upm.es}

\makeatletter
\let\@wraptoccontribs\wraptoccontribs
\makeatother
\contrib[appendix by]{Joshua Mundinger}
\address{Department of Mathematics, University of California, Berkeley\\ 970 Evans Hall\\ Berkeley, CA 94720 USA}
\email{mundinger@berkeley.edu}

\thanks{The authors were partially supported by NWO Vici grant no. 639.033.312, NSF grant DMS-2303586,  MCIN-AEI grant PID2022-139069NB-I00 and CNPq grant 304049/2018-2.}

\begin{abstract}
We develop the theory of Poisson and Dirac manifolds of compact types, a broad generalization in Poisson and Dirac geometry of compact Lie algebras and Lie groups. We establish key structural results, including local normal forms, canonical stratifications, and a Weyl type resolution, which provides a way to resolve the singularities of the original structure. These tools allow us to show that the leaf space of such manifolds is an integral affine orbifold and to define their Weyl group. This group is a Coxeter group acting on the orbifold universal cover of the leaf space by integral affine transformations, and one can associate to it Weyl chambers, reflection hyperplanes, etc. We further develop a Duistermaat-Heckman theory for Poisson manifolds of s-proper type, proving the linear variation of cohomology of leafwise symplectic form and establishing a Weyl integration formula. As an application, we show that every Poisson manifold of compact type is necessarily regular. We conclude the paper with a list of open problems.
\end{abstract}

\maketitle
\setcounter{tocdepth}{1}
\tableofcontents
\section{Introduction}\label{sec:intro}

This paper concerns an important class of Poisson and (possibly twisted) Dirac manifolds, generically called of proper type. These objects are analogous to compact Lie algebras and Lie groups in Lie theory, or foliations whose leaf spaces are orbifolds. We show that, in many respects, the theory of Poisson and Dirac manifolds of proper type closely parallels the classical theory of compact Lie algebras and Lie groups. In fact, one of the key takeaways of this paper is that many fundamental aspects  from the theory of compact Lie groups are purely Poisson geometric.

For the purposes of this introduction, we will focus primarily on Poisson manifolds of proper type, though many results also hold for twisted Dirac structures of proper type. A Poisson manifold $(M,\pi)$ is said to be of \emph{proper type} if there is a proper symplectic groupoid $(\cG,\Omega)\tto M$ with connected source fibers integrating $(M,\pi)$. This class was first introduced in \cite{CFM-I} under the broader designation of \emph{Poisson manifolds of compact types} (PMCT). That work established fundamental properties of PMCTs and provided various examples and constructions. Further examples and constructions of PMCTs were obtained by \'Alvarez \cite{AlvarezTese}, Mart\'inez Torres \cite{Mar} and Zwaan \cite{Zwaan23b}.

In \cite{CFM-II}, we focused on regular PMCTs, i.e., those whose symplectic leaves have the same dimension. We showed that regular PMCTs possess a rich transverse geometry, where various structures -- both classical and novel -- interact in intricate ways. These include orbifold structures, integral affine structures, symplectic gerbes, and more. In this paper we consider arbitrary PMCTs, regular or not.

For a Poisson manifold $(M,\pi)$ of proper type, we will begin by showing that:
\begin{enumerate}[$\bullet$]
    \item It admits two \textbf{canonical stratifications}: one of an infinitesimal nature, and a second one, associated with a choice of proper integration, which refines the former. Both are smooth stratifications by Poisson submanifolds, and these strata exhibit remarkable properties (Section \ref{sec:canonical:stratifications}).
    \item It has a canonical {\bf Weyl resolution}, defined by
	\begin{align*}
	\hM&:= \{ (x,\tt)\,|\, x\in M,\, \tt\subset \gg_x\,  \mathrm{maximal\ torus}\},\\	
	\res&: \hM\to M,\quad \ \res(x, \tt):= x,
	\end{align*}
	where $\gg_x$ is the isotropy Lie algebra of $(M, \pi)$ at $x$. The resolution $(\hM,\hL)$ is a \emph{regular} Dirac manifold and $\res$ is a forward Dirac map that is a diffeomorphism over the regular locus $M^\reg$ (Section \ref{sec:Weyl:resolution}). 
\end{enumerate}
For instance, when $M=\gg^*$ the dual of a compact Lie algebra, its Weyl resolution is diffeomorphic to $G/T\times_W \tt^*$, where $T\subset G$ is a maximal torus with Lie algebra $\tt$, and $W$ is the classical Weyl group. The resolution map is then $\res([gT,\xi])=\Ad_g\xi$.

The Weyl resolution allows to reduce questions about the \emph{singular} Poisson geometry of $(M,\pi)$ to questions about the \emph{regular} Dirac geometry of $(\hM,\hL)$. Moreover, the map $\res$ induces an identification of the leaf spaces of $(\hM,\hL)$ and $(M,\pi)$, along with their algebras of smooth functions. This generalizes the well-known identification $\tt^*/W\cong \gg^*/G$ and a related classical theorem due to Chevalley, and leads to the following (see Section \ref{sec:leaf:space}):

\begin{theorem*}
\label{thm:intro:int:affine:leaf:space}
Each proper symplectic integration  $(\cG,\Omega)$ of a  Poisson manifold $(M,\pi)$ induces an integral affine orbifold structure on its leaf space $B$. Moreover: 
\begin{enumerate}[(i)]
    \item The underlying classical orbifold structure is independent of the choice of integration and is a good orbifold;
    \item The smooth stratifications of $M$ induce orbifold stratifications of $B$.
\end{enumerate}
\end{theorem*}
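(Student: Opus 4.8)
The plan is to use the Weyl resolution to reduce the statement to the regular case and then invoke (the Poisson and Dirac versions of) the results of \cite{CFM-II}. By Section~\ref{sec:Weyl:resolution}, the resolution $\res\colon(\hM,\hL)\to(M,\pi)$ is a \emph{regular} Dirac manifold together with a forward Dirac map that is a diffeomorphism over $M^\reg$ and that identifies the leaf space $\hB$ of $(\hM,\hL)$ with the leaf space $B$ of $(M,\pi)$, compatibly with their algebras of smooth functions; moreover a proper symplectic integration $(\cG,\Omega)$ of $(M,\pi)$ induces a proper presymplectic integration of $(\hM,\hL)$, so that $(\hM,\hL)$ is a regular Dirac manifold of proper type. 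It therefore suffices to prove the theorem for regular Dirac manifolds of proper type and then transport the structure obtained along $\hB\cong B$, taking care that this transport is compatible with the full orbifold and integral affine data and not merely with the underlying topological space and algebra of functions.

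For a regular Dirac manifold of proper type, the results of \cite{CFM-II} endow the leaf space with an integral affine orbifold structure: the orbifold is presented by the holonomy groupoid of the characteristic foliation $\widehat{\cF}$ of $\hL$ restricted to a complete transversal --- a proper \'etale groupoid, because the ambient integration is proper with connected source fibers --- and the integral affine structure is the transverse one determined by the leafwise presymplectic form, equivalently by the variation of its cohomology class that underlies the Duistermaat--Heckman part of the paper. Transporting along $\res$ gives $B$ an integral affine orbifold structure induced by $(\cG,\Omega)$; since $\hM$ and $\hL$ are attached canonically to $(M,\pi)$, the only choice involved is that of the integration.

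Part~(i) is then a matter of tracking dependencies. The characteristic foliation $\widehat{\cF}$ depends only on $\hL$, hence only on $(M,\pi)$; passing to a different proper integration only changes the groupoid that witnesses properness, and by source connectedness the resulting transverse groupoid is Morita equivalent to the holonomy groupoid of $\widehat{\cF}$ and so presents the same orbifold --- thus the classical orbifold structure on $B$ is independent of $(\cG,\Omega)$. That this orbifold is \emph{good} will follow from the construction in Section~\ref{sec:leaf:space} of the orbifold universal cover $\tilde B$ as the leaf space of the characteristic foliation on a suitable Weyl cover of $\hM$: that leaf space is a smooth manifold, the Weyl group $W$ acts on it freely and properly discontinuously by integral affine transformations, and $B=[\tilde B/W]$, so $B$ is developable. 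For part~(ii), the canonical stratifications of Section~\ref{sec:canonical:stratifications} are by Poisson submanifolds, hence by leaf-saturated subsets, so each stratum $M_\alpha$ descends to a subset $B_\alpha\subseteq B$ and these cover $B$; that they form an orbifold stratification is local on $B$, and I would verify it in the transverse model supplied by the local normal form near a leaf through a point $x$, where the transverse structure is the linear Poisson manifold $\gg_x^*$ with the coadjoint action of the compact isotropy $G_x$ and orbit space $\tt_x^*/W_x$ by Chevalley's restriction theorem: under this identification the canonical strata of $M$ restrict to the $G_x$-isotropy-type strata of $\gg_x^*$, which match the $W_x$-isotropy-type strata of the finite quotient $\tt_x^*/W_x$, and the latter constitute an orbifold stratification of the local model. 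Gluing these local pictures over $B$ yields~(ii).

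The crux lies in two places. The first, more technical, is checking that the Weyl resolution of a proper integration is genuinely of proper type and that $\res$ transports the entire integral affine orbifold structure (not just the differentiable space). The second, and the genuinely new point, is the good-orbifold assertion: it is inseparable from the construction in Section~\ref{sec:leaf:space} of the Weyl group and of the orbifold universal cover $\tilde B$, and this is where I would expect the real work to be.
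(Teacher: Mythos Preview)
Your overall strategy---pass to the Weyl resolution, apply the regular Dirac version of \cite{CFM-II}, and transport back along $\res$---is exactly the paper's, and your account of how the integral affine orbifold structure arises and why the classical orbifold depends only on $\Hol(\hM,\hF)$ is correct.

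There is, however, a genuine error in your argument for the good-orbifold assertion. You write that the Weyl group acts \emph{freely} and properly discontinuously on the orbifold universal cover $\oB$ and that $B=[\oB/W]$. Neither claim holds. The group acting on $\oB$ with quotient $B$ is the full orbifold fundamental group $\pi_1^\orb(B)$, of which the Weyl group $\cW$ is only a normal subgroup (the kernel of $\pi_1^\orb(B)\to\pi_1^\orb(B^\reg)$); and the action of $\cW$ on $\oB$ is never free in the non-regular case---it is a reflection group, with each abstract reflection fixing an integral affine hyperplane (this is one of the central theorems of Section~\ref{sec:Weyl:group:geometric}). What actually shows that $B$ is good is the main diagram of Section~\ref{sec:main:diagram}: one constructs the linear holonomy cover $B^\lin$ as an honest integral affine \emph{manifold} carrying a proper effective action of the discrete group $\Gamma^\lin$ with $B\cong B^\lin/\Gamma^\lin$; the universal cover $\oB$ of $B^\lin$ is then the orbifold universal cover of $B$, and $\pi_1^\orb(B)$ acts on it properly and effectively (but not freely). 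So the good-orbifold fact is earlier and easier than the Weyl group story, not a consequence of it.

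For (ii), your local-model argument via Chevalley restriction and matching $G_x$-isotropy types with $W_x$-isotropy types is a reasonable heuristic, but the paper proceeds differently and more globally: it shows that each stratum $\hSi=\res^{-1}(\Si)$ and $\hSc=\res^{-1}(\Sc)$ is \emph{suited for smooth reduction} of the presymplectic torus bundle $(\hT,\hOmega_{\hT})$, by computing the relevant isotropy Lie algebras (for instance $(T_{\hx}\hSi)^0=\gg_x^\ss\cap\tt_x$) and checking that the corresponding connected subgroups are closed. This yields directly that $B_\Si$ and $B_\Sc$ are embedded integral affine suborbifolds of $B$, without needing to glue local pictures.
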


We recall that a classical good orbifold is one that can be realized as the quotient of a manifold by a discrete group acting properly and effectively. In particular, for any such orbifold, its orbifold universal covering space is a manifold. In our case, much more holds, as demonstrated by the following result (Section \ref{sec:Weyl:group}):

 \begin{theorem*}
\label{thm:intro:orbifold:fundamental:group}
The orbifold universal covering space $\oB$ of the leaf space $B$ of a Poisson manifold of proper type $(M,\pi)$ is an integral affine manifold. The orbifold fundamental group $\pi_1^\orb(B)$ acts on $\oB$ by integral affine transformations and fits into a split short exact sequence:
\begin{equation}
\label{eq:intr:short:sequence} 
\xymatrix{1\ar[r] & \cW\ar[r] & \pi_1^\orb(B)\ar[r] & \pi_1^\orb(B^\reg)\ar[r] & 1},
\end{equation}
where $B^\reg$ is the leaf space of the regular locus $M^\reg$. 
\end{theorem*}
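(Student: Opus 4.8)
The plan is to combine the previous theorem, which turns $B$ into a good integral affine orbifold, with the Coxeter-theoretic picture of $\cW$ on $\oB$ developed in Section~\ref{sec:Weyl:group}, and then run a covering-space argument. First, by the theorem of Section~\ref{sec:leaf:space}, $B$ is a good integral affine orbifold; being \emph{good} means precisely that the orbifold universal cover $\oB$ is a manifold, carrying a proper action of $\Gamma:=\pi_1^\orb(B)$ with $\oB/\Gamma=B$. Pulling back an integral affine orbifold atlas of $B$ along the covering $p\colon\oB\to B$ produces an atlas of honest integral affine charts on the manifold $\oB$, so $\oB$ is an integral affine manifold; and since the deck transformations preserve this atlas, they act on $\oB$ by integral affine transformations. (Alternatively one can pull back the regular Dirac structure of the Weyl resolution $\hM$, using that $\res$ identifies the leaf space of $\hM$ with $B$ as in Section~\ref{sec:Weyl:resolution}, and invoke the regular theory of \cite{CFM-II}.) This settles the first two assertions.

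For the exact sequence I would work entirely inside $\oB$. Recall from Section~\ref{sec:Weyl:group} that $\cW$ is realized inside $\Gamma$ as the subgroup generated by the reflections — the elements whose fixed-point set is a hyperplane of $\oB$ — and that $\cW$ together with the reflections forms a Coxeter system acting simply transitively on the chambers, i.e.\ on the connected components of the complement $\oB'$ of the union $\bigcup H$ of reflection hyperplanes, each chamber being simply connected (indeed a contractible fundamental domain). Since conjugation sends reflections to reflections, $\Gamma$ permutes the reflection hyperplanes, hence $\cW$ is normal in $\Gamma$ and $\Gamma$ permutes the chambers. The geometric point to isolate here is that $p^{-1}(B\setminus B^\reg)=\bigcup H$ — equivalently, a leaf of the Weyl resolution is regular exactly when it lies over the complement of the reflection hyperplanes. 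This is a statement about the canonical stratification (Section~\ref{sec:canonical:stratifications}): its codimension-one strata are the walls out of which the reflections are built through the transverse $\su(2)$-model, and the deeper strata lie in their closures. Granting it, $\oB^\reg:=p^{-1}(B^\reg)$ equals $\oB'$, whose connected components are the open chambers.

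Now fix a component $Z$ of $\oB^\reg$ with stabilizer $\Gamma_Z\leq\Gamma$. Because $\cW$ already acts transitively on the chambers, $B^\reg=\oB^\reg/\Gamma=Z/\Gamma_Z$ and $p$ restricts to an orbifold covering $Z\to B^\reg$. The manifold $Z$ is simply connected, and no $\gamma\in\Gamma_Z\setminus\{e\}$ can fix a hypersurface of $Z$: by rigidity of integral affine transformations such a $\gamma$ would fix a hyperplane of $\oB$, hence be a reflection lying in $\cW\cap\Gamma_Z$, which is trivial by simple transitivity. Therefore $Z$ is the orbifold universal cover of $B^\reg$ and $\pi_1^\orb(B^\reg)\cong\Gamma_Z$ canonically. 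Finally, simple transitivity gives for each $\gamma\in\Gamma$ a unique $w\in\cW$ with $wZ=\gamma Z$, so $w^{-1}\gamma\in\Gamma_Z$; thus $\Gamma=\cW\,\Gamma_Z$, $\cW\cap\Gamma_Z=\{e\}$ and $\cW\triangleleft\Gamma$. Hence
\[ 1\longrightarrow\cW\longrightarrow\pi_1^\orb(B)\longrightarrow\pi_1^\orb(B^\reg)\longrightarrow 1 \]
is a split short exact sequence, the splitting being the inclusion $\Gamma_Z\cong\pi_1^\orb(B^\reg)\hookrightarrow\Gamma$, which is the map induced by $B^\reg\hookrightarrow B$ once a basepoint is chosen in $Z$.

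The covering-space and $\cW\rtimes\Gamma_Z$ bookkeeping above is essentially formal once the Coxeter structure of $\cW$ on $\oB$ is available. The main obstacle, as I see it, is the geometric identification in the second step: that the preimage of the non-regular leaf space is precisely the reflection hyperplane arrangement of $\cW$, the higher canonical strata being concealed in its singular set. Making this rigorous requires the fine structure of the canonical stratification and its transverse models — in particular the fact that every jump of the isotropy Lie algebra ``factors through'' a codimension-one wall — which is also the geometric reason why $\cW$ is generated by reflections at all; controlling the codimension of any discrepancy between $p^{-1}(B\setminus B^\reg)$ and $\bigcup H$ is the delicate point one must not skip.
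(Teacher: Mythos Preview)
Your approach reverses the paper's logical order and is, as written, circular. The paper establishes the split exact sequence \emph{first} (Proposition~\ref{prop:split:orbi:fundamental:group}), by a direct homotopy-theoretic argument at the level of the resolution: one starts from the already-split top row $1\to\hW\to\pi_1(\hM)\to\pi_1(M^\reg)\to 1$ of diagram~\eqref{eq:splitting-weil-pi1M}, and the work is to produce a compatible map $\pi_1^\orb(B)\to\pi_1^\orb(B^\reg)$. This reduces to showing $\res_*(K)\subset\ker\big(\pi_1(M^\reg)\to\pi_1^\orb(B^\reg)\big)$, where $K=\ker\big(\pi_1(\hM)\to\pi_1^\orb(B)\big)$, and the paper does this using the factorization of loops into conjugates of subregular reflections (Lemma~\ref{lemma:prop:generation-by-reflections}) together with the explicit description~\eqref{eq:F-homotopy} of $K$. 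The argument never touches the action on~$\oB$.

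The three facts you invoke --- that $(\oB)^\reg$ equals the complement of the reflection hyperplanes, that the open chambers are simply connected, and that every geometric reflection of $\Gamma$ already lies in $\cW$ --- are proved in the paper only \emph{after} Proposition~\ref{prop:split:orbi:fundamental:group}, and each of those proofs uses it (see Propositions~\ref{pro: equality regulars} and~\ref{pro:chamber-covering}). In particular, in Davis's general framework the chambers of a reflection group on a manifold need not be simply connected, so this is not a gift from Coxeter theory. You correctly flag $p^{-1}(B\setminus B^\reg)=\bigcup H$ as the obstacle, but in the paper the inclusion $(\oB)^\reg\subset(\oB)^{\cW-\reg}$ is deduced precisely from the stabilizer clause of Proposition~\ref{prop:split:orbi:fundamental:group}. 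Making your route self-contained would therefore require independent proofs of all three facts, and that does not appear easier than the paper's direct argument.
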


We call the kernel $\cW(M,\pi):=\cW$ of the sequence \eqref{eq:intr:short:sequence} the \textbf{Weyl group} of $(M,\pi)$. We also call the connected components of the regular locus of the action of $\cW(M,\pi)$ on $\oB$ the \textbf{Weyl chambers} of $(M,\pi)$. In the next statement, to be proved in Section \ref{sec:Weyl:group:geometric}, by a \emph{geometric reflection} we mean an involution whose fixed point set is an integral affine submanifold that separates $\oB$:

\begin{theorem*}
\label{thm:intro:Weyl:group}
The Weyl group $\cW(M,\pi)$ of a Poisson manifold of proper type is generated by geometric reflections and acts transitively and freely on the set of Weyl chambers. In particular, $\cW(M,\pi)$ is a Coxeter group.
\end{theorem*}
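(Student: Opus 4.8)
The plan is to exhibit $\cW=\cW(M,\pi)$ as a discrete reflection group acting on the connected, simply-connected integral affine manifold $\oB$ and then to invoke the classical structure theory of reflection groups acting on simply-connected manifolds (Bourbaki in the Euclidean case; cf.\ Davis' book on Coxeter groups, and, equivalently, a Seifert--van Kampen computation combined with the Tits/Davis-complex description of Coxeter groups): a group acting properly on a connected, simply-connected manifold and generated by geometric reflections whose mirrors form a locally finite family is a Coxeter group, with Coxeter generators the reflections in the walls of any fixed chamber, and it acts simply transitively on the set of chambers, i.e.\ on the connected components of the complement of the union of the mirrors. Write $q\colon\oB\to B$ for the orbifold universal covering, so that $\pi_1^\orb(B)$ acts on $\oB$ properly as its deck group. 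The proof then has three parts: (i) producing geometric reflections in $\pi_1^\orb(B)$ out of the codimension-one strata; (ii) checking that the subgroup $\cW'\le\pi_1^\orb(B)$ they generate satisfies the hypotheses above, with chambers exactly the connected components of $q^{-1}(B^\reg)$, which are the Weyl chambers; and (iii) identifying $\cW'$ with $\cW$.

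For (i) the geometric input is the local normal form theorem of Section~\ref{sec:canonical:stratifications} together with the Chevalley-type identification of leaf spaces generalizing $\tt^*/W\cong\gg^*/G$. At a point $x$ of a stratum of codimension $k$ the semisimple part of the isotropy $\gg_x$ has rank $k$, the transverse Poisson structure is equivalent to its linearization, and a neighbourhood of the corresponding point of $B$ is isomorphic, as an integral affine orbifold, to $U\times(\tt_x^*/W_x)$, where $\tt_x^*\cong\R^k$, $W_x$ is the (finite, Coxeter) Weyl group of the semisimple part of $\gg_x$ acting linearly, and $U$ is an open subset of a lower-dimensional integral affine orbifold. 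For $k=1$ the semisimple part is $\su(2)$ and $W_x=\Z/2$ acts by $t\mapsto-t$; pulling back along $q$, a neighbourhood of a component $\widehat{H}$ of the preimage of the codimension-one stratum is identified with $\widetilde{U}\times(-\eps,\eps)$ carrying a distinguished order-two deck transformation $(\mathbf{y},t)\mapsto(\mathbf{y},-t)$, which determines an involution $r\in\pi_1^\orb(B)$. Its fixed hyperplane is a codimension-one integral affine submanifold (integral affine since $\pi_1^\orb(B)$ acts by integral affine transformations), and it separates $\oB$: locally by the model and globally because in the simply-connected $\oB$ a non-separating mirror is excluded by the standard folding argument. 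Hence each such $r$ is a geometric reflection; let $\cW'$ be the subgroup they generate.

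For (ii): the local models at strata of codimension $k\ge2$ show first that the translates $\{g\cdot\widehat{H}\}$ form a locally finite family (only finitely many meet a small ball), and second that the dihedral pieces $W_x$ attached to codimension-two strata supply the braid relations $(r_ir_j)^{m_{ij}}=1$ among pairs of non-parallel generators. Moreover the union of all these mirrors is precisely $q^{-1}(B\setminus B^\reg)$: every point over a singular stratum lies on a mirror by the local models, while the local normal form at a Poisson-regular point shows that no mirror of $\cW'$ enters $q^{-1}(B^\reg)$, i.e.\ the codimension-one folding occurs only over the singular locus. Consequently the chambers of $\cW'$ are exactly the connected components of $q^{-1}(B^\reg)$; as these are also the components of the regular locus of the $\cW'$-action, they are the Weyl chambers, and the cited structure theorem applies to give that $\cW'$ is a Coxeter group acting transitively and freely on them.

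For (iii): fix a chamber $C_0$. Since $\cW'$ is transitive on chambers, $C_0$ meets every $\pi_1^\orb(B)$-orbit, so $q|_{C_0}\colon C_0\to B^\reg$ is a surjective orbifold covering with deck group $\operatorname{Stab}_{\pi_1^\orb(B)}(C_0)$; since $C_0$ is a manifold and, being a chamber in a simply-connected manifold, is simply connected, $q|_{C_0}$ is the orbifold universal cover of $B^\reg$, whence $\operatorname{Stab}_{\pi_1^\orb(B)}(C_0)\cong\pi_1^\orb(B^\reg)$. Simple transitivity then yields a splitting $\pi_1^\orb(B)=\cW'\rtimes\operatorname{Stab}_{\pi_1^\orb(B)}(C_0)$, i.e.\ a split short exact sequence $1\to\cW'\to\pi_1^\orb(B)\to\pi_1^\orb(B^\reg)\to1$, which by the construction of \eqref{eq:intr:short:sequence} is the same one; hence $\cW'=\cW$ and the theorem follows. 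The step I expect to be the main obstacle is the organization of (ii)--(iii): carrying out the orbifold Seifert--van Kampen computation of $\pi_1^\orb(B)$ along the canonical stratification so that the reflection part $\cW'$ is seen to be generated by the $r$'s and complementary to the copy of $\pi_1^\orb(B^\reg)$ carried by a chamber, and, relatedly, verifying that the mirrors live only over the singular locus; the fine structure of the normal forms, in which the transverse leaf-space model at a stratum of corank $k$ is a product of a Euclidean factor with a rank-$k$ Coxeter cone, is exactly what makes this bookkeeping go through.
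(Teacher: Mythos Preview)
Your strategy---construct reflections from the codimension-one strata of $B$, invoke Davis' structure theory, and then read off the split exact sequence \eqref{eq:intr:short:sequence}---is a natural reorganization of the paper's argument, and much of it lines up well with what the paper does in Sections~\ref{sec:abstract:reflections}--\ref{sec:Coxeter}. The paper, however, runs the logic in the opposite order: it first \emph{defines} $\cW$ via the resolution (as the image in $\pi_1^\orb(B)$ of $\ker(\res_*\colon\pi_1(\hM)\to\pi_1(M))$), proves directly (Proposition~\ref{prop:generation-by-reflections}, via the factorization Lemma~\ref{lemma:prop:generation-by-reflections} on $\hM$) that $\cW$ is generated by abstract reflections attached to subregular fibers of $\res$, establishes the split exact sequence \eqref{eq:intr:short:sequence} by an independent $\cF$-homotopy argument on $\hM$ (Proposition~\ref{prop:split:orbi:fundamental:group}), and only \emph{then} shows these abstract reflections act as geometric reflections on $\oB$ (Proposition~\ref{pro: abstract is geometric reflection}), applies Davis (Theorem~\ref{thm:Davis}), and identifies chambers with components of $(\oB)^\reg$ (Proposition~\ref{pro: equality regulars}).

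Your reversal has a genuine gap at two linked points. In step~(ii) you assert that ``the local normal form at a Poisson-regular point shows that no mirror of $\cW'$ enters $q^{-1}(B^\reg)$''. This is not what the local model gives: a Poisson-regular leaf can have nontrivial linear holonomy (cf.\ Example~\ref{example:codimenision-one-do-exist}), so $q^{-1}(B^\reg)$ contains orbifold-singular points where nontrivial deck transformations \emph{do} have fixed points; you must rule out that your specific $r$'s are among them. The paper proves exactly this inclusion $(\oB)^\reg\subset(\oB)^{\cW\text{-}\reg}$ in Proposition~\ref{pro: equality regulars}, and the proof uses the split exact sequence of Proposition~\ref{prop:split:orbi:fundamental:group} (if $w\in\cW$ fixes a regular point, a conjugate stabilizes the base chamber, hence lies in $\cW\cap\operatorname{im}(i_*)=\{1\}$). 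Since in step~(iii) you intend to \emph{derive} that exact sequence from the reflection-group structure, you cannot presuppose it here.

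The second gap is in step~(iii): ``being a chamber in a simply-connected manifold, [$C_0$] is simply connected''. This is true, but it is not part of the Davis theorem you cite (Theorem~\ref{thm:Davis}); it requires the additional input that $\oB$ is the universal object $\mathcal{U}(\cW',\overline{C_0})$ for the mirror structure on $\overline{C_0}$, which again needs the identification of mirrors with $q^{-1}(B\setminus B^\reg)$ from step~(ii). The paper instead deduces simple connectivity of chambers \emph{after} establishing the split sequence (Proposition~\ref{pro:chamber-covering}). In short, the paper's detailed $\cF$-homotopy argument on the resolution (Lemma~\ref{lemma:prop:generation-by-reflections} and Proposition~\ref{prop:split:orbi:fundamental:group}) is doing real work that your outline does not replace; without an independent proof that your generating reflections have no fixed points over $B^\reg$, steps~(ii) and~(iii) are circular.
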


When $M$ is the dual of a compact Lie algebra $\mathfrak{g}$, equipped with the canonical linear Poisson structure, the group $\mathcal{W}(\mathfrak{g}^*,\pi_{\gg^*})$ coincides with the classical Weyl group (Section \ref{ex:regular-coadjoint-orbits}). When $M$ is a compact connected Lie group $G$, equipped with the Cartan-Dirac structure, the group $\mathcal{W}(G,L_G)$ coincides with the affine Weyl group of $G$ (Section \ref{ex:conjugacy-classes}). These facts, along with the many similarities between $\mathcal{W}(M,\pi)$ and the classical Weyl groups, justify our use of the term ``Weyl group". It should be noted, however, that the definition of the Weyl group of a Poisson manifold of proper type does not rely on classical Lie theory, and there exist many examples of Poisson manifolds whose Weyl group is not a classical Weyl group (Section \ref{sec:examples:parts:removed}).  

All the results stated so far have analogs for twisted Dirac structures. In fact, the main tool used in their proofs is the local normal form around leaves for symplectic, and more generally, twisted presymplectic groupoids, as discussed in Section \ref{sec:nform}. Next, we turn to results that are specific to Poisson manifolds.

We recall from \cite{CFM-I} that a Poisson manifold $(M,\pi)$ is of \textbf{s-proper type} if it admits a symplectic integration $(\cG,\Omega)\tto M$ whose source map is proper and has connected fibers. It is of \textbf{compact type} if it admits a symplectic integration $(\cG,\Omega)\tto M$ that is compact. One has obvious implications
\[ \text{compact type} \quad\Rightarrow\quad \text{s-proper type} \quad\Rightarrow\quad \text{proper type}. \]
A Poisson manifold of s-proper type has compact symplectic leaves. The classical \textbf{Duistermaat-Heckman theory} generalizes to these type of Poisson manifolds as follows.

First, using the Weyl resolution and the integral affine structure on the leaf space, we will show that for an s-proper Poisson manifold $(M,\pi)$, the cohomology class of the symplectic forms $\omega_b$ of the leaves $S_b$ varies linearly. More precisely, one has a flat integral orbivector bundle over the leaf space $B$ with fibers the second cohomology groups of the symplectic leaves:
\[ \cH\to B,\quad \cH_b:=H^2(S_b), \]
and we have (Section \ref{sec:variation:volume}):

 \begin{theorem*}
\label{thm:intro:linear:variation}
For any s-proper Poisson manifold $(M,\pi)$ with leaf space $B$, the map
\[B\ni b\mapsto [\omega_b]\in H^2(S_b) \]
defines an integral affine section of $\cH\to B$. Moreover, the (signed) leafwise symplectic volumes define a polynomial function $\VV_0:\oB\to\R$ on the integral affine manifold $\oB$, whose square descends to a smooth function $\VV_0^2:B\to\R$.
\end{theorem*}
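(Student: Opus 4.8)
The plan is to deduce everything from the \emph{regular} Duistermaat--Heckman theory, applied not to $(M,\pi)$ directly but to its Weyl resolution $(\hM,\hL)$. Recall that $(\hM,\hL)$ is a regular Dirac manifold, that $\res\colon\hM\to M$ is a forward Dirac map, and that it identifies the leaf spaces of $\hM$ and $M$; in particular the leaf $\hS_b$ of $\hM$ over $b\in B$ is mapped by $\res$ onto the leaf $S_b$ of $M$, and $\res$ restricts there to a fibre bundle whose fibre is the space of maximal tori of the isotropy Lie algebra $\gg_x$, a generalized flag manifold $F_x\cong(\cG_x)_0/N_{(\cG_x)_0}(T_x)$. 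Since $(M,\pi)$ is of s-proper type its leaves are compact, hence so are the $\hS_b$, and the transverse integral affine orbifold carried by $(\hM,\hL)$ is $B$ itself. Thus the regular Duistermaat--Heckman theory of \cite{CFM-II}, which applies verbatim to regular Dirac manifolds of s-proper type, applies to $(\hM,\hL)$ and gives: the Gauss--Manin bundle $b\mapsto H^2(\hS_b)$ is a flat integral orbivector bundle $\widehat\cH\to B$, and $b\mapsto[\widehat\omega_b]$ is an integral affine section of it.

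Then I would transport this statement along $\res$. Writing $K=(\cG_x)_0$ and $W_x=N_K(T_x)/T_x$, one has $H^\bullet(F_x;\mathbb{Q})\cong H^\bullet(K/T_x;\mathbb{Q})^{W_x}$, which is concentrated in even degrees (Bruhat decomposition of $K/T_x$) and whose degree-two part vanishes (it is the reflection representation of $W_x$, which has no invariants). Hence $H^1(F_x;\R)=H^2(F_x;\R)=0$, and the Serre spectral sequence of $F_x\hookrightarrow\hS_b\xrightarrow{\res}S_b$ shows that $\res^*\colon H^\bullet(S_b;\R)\to H^\bullet(\hS_b;\R)$ is an isomorphism in degrees $\le 2$, compatibly with the flat connections and the integral structures. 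Since $\res$ is a forward Dirac map, $\widehat\omega_b=\res^*\omega_b$ on leaves, so this isomorphism carries the section $b\mapsto[\widehat\omega_b]$ to $b\mapsto[\omega_b]$. This identifies $\widehat\cH\to B$ with $\cH\to B$ (with $\cH_b=H^2(S_b)$ over $B^\reg$) and proves the first assertion.

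For the volume function I would pull the whole picture back along the orbifold universal covering $\oB\to B$. Over $b\in\oB$ this produces a compact manifold, still denoted $\hS_b$, with a closed two-form $\widehat\omega_b$ varying smoothly in $b$; these form a locally trivial family over the connected, simply connected manifold $\oB$, so all the $\hS_b$ are diffeomorphic and admit a common fibrewise orientation. Fixing one, set $\VV_0(b):=\tfrac1{n!}\int_{\hS_b}\widehat\omega_b^{\,n}$, where $2n$ is the dimension of a generic leaf. Since $[\widehat\omega_b]$ is affine in local integral affine coordinates and the fibrewise fundamental class is flat, the classical Duistermaat--Heckman computation makes $\VV_0$ a polynomial of degree $\le n$ in every integral affine chart; being locally polynomial of bounded degree on the connected affine manifold $\oB$, it is a polynomial. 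Over $\oB^\reg$, where $\res$ is a diffeomorphism onto $S_b$, one has $\VV_0(b)=\pm\vol(S_b)$, with a locally constant sign that may flip across the walls $\oB\smallsetminus\oB^\reg$ (this is the meaning of ``signed''), while over the singular locus $\widehat\omega_b^{\,n}=\res^*(\omega_b^{\,n})=0$ for dimensional reasons, so $\VV_0$ vanishes there. Finally $\pi_1^\orb(B)$ acts on the family over $\oB$ preserving $\widehat\omega$, and on fibres by diffeomorphisms whose effect on the chosen orientation is a single sign $\epsilon(w)\in\{\pm1\}$ (by connectedness of $\oB$), so $\VV_0\circ w=\epsilon(w)\VV_0$; hence $\VV_0^2$ is invariant and descends to a smooth function $\VV_0^2\colon B\to\R$, whose square root recovers the leafwise symplectic volume over $B^\reg$.

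I expect the reduction in the first two paragraphs to be the main obstacle: one must verify that $(\hM,\hL)$ genuinely is a regular Dirac manifold of s-proper type with transverse orbifold $B$, so that the regular Duistermaat--Heckman machinery is available, and establish the comparison $\res^*\colon H^{\le 2}(S_b;\R)\cong H^{\le 2}(\hS_b;\R)$ uniformly over \emph{all} strata, which requires understanding the isotropy groups $\cG_x$ and their spaces of maximal tori. After that, what remains is the standard Duistermaat--Heckman argument together with the elementary observations that the resulting polynomial vanishes on the walls and that squaring removes the orientation ambiguity.
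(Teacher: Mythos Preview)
Your approach is essentially the paper's: pass to the Weyl resolution $(\hM,\hL)$, a regular s-proper Dirac manifold with the same leaf space $B$, and invoke the regular Duistermaat--Heckman theory of \cite{CFM-II}. Your Serre spectral sequence comparison $\res^*\colon H^{\le 2}(S_b;\R)\cong H^{\le 2}(\hS_b;\R)$ is correct and is a nice addition---the paper does not spell it out, working instead directly with the cohomology bundle $\hH_{\hx}=H^2(\Hol(\hx,-))$ on the resolution side and only relating to $H^2(S_b)$ via the map $\res^*$. A small correction: $H^2(G/T;\R)\cong \tt^*/(\tt^*)^W\cong\tt_\ss^*$, not all of $\tt^*$; this is what has no $W$-invariants, so your conclusion $H^2(F_x;\R)=0$ stands.

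One point to tighten for the volume part: when you ``pull the whole picture back along $\oB\to B$'', the fibers you obtain are not the leaves $\hS_b$ themselves but their holonomy covers $\Hol(\hx,-)$ (these are the fibers of $\hM^\orb\to\oB$ in the main diagram, since the lifted foliation has trivial holonomy). This is exactly what the paper integrates over, and is why its $\VV_0$ carries the extra factor $\iota_0(b)$ (the order of the holonomy group) on the regular locus. With that understood, your argument goes through: Ehresmann over the simply connected $\oB$ gives a global fibrewise orientation, linear variation of $[\widehat\omega_b]$ makes the fibre integral of $\widehat\omega_b^{\,n}/n!$ locally---hence globally---polynomial, and the $\{\pm1\}$-valued orientation character of $\pi_1^\orb(B)$ forces $\VV_0^2$ to descend. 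The paper goes further, identifying this character on $\cW$ with the parity character via an explicit computation at subregular points (where the fibre is $\RP^2$), and showing that the zero locus of $\VV_0$ is exactly the singular part of $\oB$; these refinements are used later but are not needed for the statement in the introduction.
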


Next, an s-proper integration $(\cG,\Omega)$ induces a \emph{Duistermaat-Heckman measure} $\boldsymbol{\mu^{DH}_{B}}$ on $B$ by first pushing forward the Liouville measure $\frac{\Omega^\top}{\top!}$ along the source map, and then along the quotient map $p:M\to B$. On the other hand, the integral affine structure on $B$ mentioned previously gives rise to a \emph{Lebesgue measure} $\boldsymbol{\mu^{\aff}_{B}}$. 
In Section \ref{sec:DH:formula} we will show that the classical Duistermaat-Heckman and the Weyl integration formulas extend to our setting (we refer to that section for explanations of notation).

\begin{theorem*} 
\label{thm:introd:Weyl:theory}
For any s-proper integration $(\G,\Omega)$ of $(M, \pi)$ one has
\[
\boldsymbol{\mu^{DH}_{B}}= \VV^2\cdot \boldsymbol{\mu^{\aff}_{B}}.
\]
Moreover, for all $f\in \cC_c^{\infty}(M)$ one has
\[ \int_M f(x) \, \boldsymbol{\d \mu^{DH}_{M}} (x) =  \int_B \left(\int_{\cB(\hx, -)}f(\res(t(g)))\,\d\mu_{\cB(\hx, -)}^{\aff}(g) \right) \VV^2(b) \,  \boldsymbol{\d \mu^{\aff}_{B} } (b).\]
\end{theorem*}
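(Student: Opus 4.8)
The plan is to deduce both statements from a single local computation of the Liouville density of $(\cG,\Omega)$ near a leaf, carried out on the Weyl resolution, where the relevant isotropy is abelian and the transverse geometry is integral affine.

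I would first establish the identity $\mu^{DH}_B=\VV^2\cdot\mu^{\aff}_B$. Since $(\cG,\Omega)$ is s-proper, the source map is a proper submersion, so $\mu^{DH}_M$ is the pushforward of the smooth density $\big|\tfrac{\Omega^\top}{\top!}\big|$ along a proper submersion, hence itself a smooth density on $M$; consequently $\mu^{DH}_M$, and therefore $\mu^{DH}_B=p_*\mu^{DH}_M$, is absolutely continuous and --- exactly like $\mu^{\aff}_B$ --- gives zero mass to the complement of the regular locus, which by the canonical stratification (Section~\ref{sec:canonical:stratifications}) is a locally finite union of lower-dimensional strata. It therefore suffices to compare densities over $M^\reg$, a diffeomorphic copy of which sits inside $\hM$ and over which the integral affine structure of $B^\reg$ is, by construction, the one whose period lattices are the kernels $\Lambda_x\subset\tt_x$ of the exponential maps of the isotropy tori $\cG_x=\tt_x/\Lambda_x$. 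There I would feed in the local normal form for proper (pre)symplectic groupoids around a leaf of Section~\ref{sec:nform}: in the model one reads off that $\cG|_{S_b}$ fibres via $(s,t)$ over $S_b\times S_b$ with isotropy fibres $\cG_x$ of unit mass relative to $\Lambda_x$ --- precisely the normalisation built into $\mu^{\aff}_B$ --- and that $\big|\tfrac{\Omega^\top}{\top!}\big|$, evaluated transversally to the fibres of $p\circ s$, is the tensor product of the Haar measure of $\cG_x$, a leafwise symplectic factor, and the pullback along $p\circ s$ of the period-lattice Lebesgue measure of $B$. Integrating out the fibres of $p\circ s$ then gives $\mu^{DH}_B=\big(\int_{S_b}\tfrac{\omega_b^{n}}{n!}\big)^{2}\cdot\mu^{\aff}_B$, $n=\tfrac12\dim S_b$ (with the normalisations of Sections~\ref{sec:nform} and~\ref{sec:DH:formula} arranged so that no extra universal constant intervenes); the square reflects the two projections $s,t\colon\cG|_{S_b}\to S_b$, and by the linear-variation theorem (Theorem~\ref{thm:intro:linear:variation}) this squared leafwise symplectic volume is the smooth $B$-descended function $\VV^2$.

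For the Weyl integration formula I would start from the definition of pushforward, $\int_M f\,\d\mu^{DH}_M=\int_\cG (f\circ s)\,\big|\tfrac{\Omega^\top}{\top!}\big|$, and apply the inversion $\inv\colon\cG\to\cG$, which is anti-symplectic --- hence preserves the Liouville density --- and satisfies $s\circ\inv=t$; thus the right-hand side equals $\int_\cG (f\circ t)\,\big|\tfrac{\Omega^\top}{\top!}\big|$. Disintegrating $\big|\tfrac{\Omega^\top}{\top!}\big|$ along the proper map $p\circ t\colon\cG\to B$, whose total pushforward is $\mu^{DH}_B=\VV^2\mu^{\aff}_B$ by the first step, reduces the formula to the claim that, for every $b$, the induced conditional measure on $(p\circ t)^{-1}(b)=\cG|_{S_b}$ pushes forward under $\res\circ t$ to the probability measure on $S_b$ obtained by normalising $\d\mu^{\aff}_{\cB(\hx,-)}$, for any point $\hx$ over $b$. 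Over $M^\reg$ this is again the content of the normal form --- on a source fibre the conditional Liouville measure is the product of the Haar measure of the isotropy torus with the normalised leafwise symplectic measure, which is exactly how $\mu^{\aff}_{\cB(\hx,-)}$ is defined in Section~\ref{sec:DH:formula} --- provided one verifies that this conditional measure does not depend on the chosen $\hx$ over $b$. That independence is the right-invariance of the Haar-type system formed by the conditional Liouville measures, which follows from multiplicativity of $\Omega$: from $m^*\Omega=\pr_1^*\Omega+\pr_2^*\Omega$ on composable pairs, freezing the second factor shows that right translation carries $\Omega|_{s^{-1}(x)}$ onto $\Omega|_{s^{-1}(x')}$ and hence one associated density onto the other. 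Combining this with the measure identity of the first step yields the formula.

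The main obstacle is the normalisation bookkeeping inside these two normal-form computations: one must track precisely how $\tfrac{\Omega^\top}{\top!}$ factors into a leafwise symplectic part and a transverse part, and check that the transverse part is \emph{exactly} the period-lattice Lebesgue measure of $B$ and that the density produced is \emph{exactly} $\VV^2$, not merely a universal multiple of it. It is this precision that forces the computation onto the Weyl resolution --- whose regular Dirac structure makes the isotropy abelian and the transverse structure integral affine --- even though, the singular locus being null, all of the measure-theoretic content lives over $M^\reg\cong\hM^\reg$; extending the two identities across $M\setminus M^\reg$ is then automatic. The remaining ingredients (existence of the disintegrations and the Fubini exchange of integration order) are routine, as $s$, $t$ and $p$ are proper and $f$ is compactly supported.
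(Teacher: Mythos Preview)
Your reduction to the regular locus is exactly what the paper does: both measures are induced by smooth densities, so nothing is lost by deleting the measure-zero singular strata and working on $M^\reg\cong\hM^\reg$. From there, however, your execution has two genuine gaps.

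First, over $M^\reg$ the isotropy group $\cG_x$ need not be connected: its identity component is the torus $\tt_x/\Lambda_x$, but $\pi_0(\cG_x)$ can be nontrivial. Your claim that the isotropy fibres have ``unit mass relative to $\Lambda_x$'' is only correct for $\cG_x^0$, so the factor you obtain is $\vol(S_b,\omega_{S_b})^2$, not $\VV^2(b)=\big(\iota(b)\,\vol(S_b,\omega_{S_b})\big)^2$, where $\iota(b)=|\pi_0(\cG_x)|$. The $\iota(b)$ factor is not optional bookkeeping --- without it the identity $\mu^{DH}_B=\VV^2\cdot\mu^{\aff}_B$ is simply false. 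The paper handles this by constructing a specific density $\rho^{\aff}_{\hF}$ on the leaves of $\hF$ out of $\rho^{DH}_M$ and $\rho^{\aff}_\nu$, and then proving the key lemma that the associated fibre-volume function $\vol^{\rho^{\aff}_{\hF}}:B\to\R$ is identically~$1$; the $\iota$-factors are absorbed automatically by working with source fibres of $\cB(\hG)$ rather than orbits, and the regular computation is delegated to \cite[Lemma~6.3.3]{CFM-II} rather than redone from the normal form.

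Second, your argument for the integration formula conflates two different groupoids. The inner integral is over $\cB(\hx,-)$, a source fibre of the foliation groupoid $\cB=\cB(\hG)\tto\hM$ presenting the orbifold $B$, equipped with the measure $\mu^{\aff}_{\cB(\hx,-)}$ coming from $\rho^{\aff}_{\hF}$; it is not a source or target fibre of $\cG\tto M$. Your inversion trick on $\cG$ and the subsequent disintegration along $p\circ t:\cG\to B$ therefore do not land on the object that actually appears in the formula, and your expression ``$\res\circ t$'' does not typecheck (the target of $\cG$ is $M$, not $\hM$). In the paper the integration formula is instead an immediate consequence of Proposition~\ref{prop:measure:affine}: once the density decomposition $\rho^{\aff}_\nu=(\rho^{\aff}_{\hF})^\vee\otimes\rho^{\aff}_{\hM}$ on $\hM$ is in place and $\vol^{\rho^{\aff}_{\hF}}\equiv 1$ is established, that proposition together with Lemma~\ref{lemma:the-DH-lemma} ($\mu^{DH}_M=\VV^2\cdot\res_*\mu^{\aff}_{\hM}$) gives the formula with no further disintegration or inversion argument.
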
  

In Section \ref{sec:compact:completeness}, we deduce structural results about Poisson manifolds of s-proper type and compact type. For example, just as the cohomology of a symplectic form on a compact manifold is constrained, the existence of a multiplicative symplectic form on a compact groupoid forces the groupoid to be regular. In fact, we will show the following.

\begin{theorem*}
Any Poisson manifold of compact type is regular. Equivalently, every compact symplectic groupoid is regular.
\end{theorem*}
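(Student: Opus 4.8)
The plan is to leverage the Duistermaat–Heckman theory developed earlier in the paper (Theorem~\ref{thm:intro:linear:variation} and Theorem~\ref{thm:introd:Weyl:theory}) together with the Weyl resolution, reducing the claim to a statement about the leafwise symplectic volume polynomial $\VV_0$ on the integral affine manifold $\oB$. Note first that it suffices to prove the statement for Poisson manifolds of s-proper type whose leaves are moreover compact \emph{and} whose integration is compact — i.e.\ of compact type — since the two formulations of the theorem are equivalent (a compact symplectic groupoid has compact source fibers, hence integrates an s-proper, in fact compact-type, Poisson manifold; conversely a Poisson manifold of compact type is integrated by a compact symplectic groupoid). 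So assume $(M,\pi)$ is of compact type, integrated by a compact symplectic groupoid $(\cG,\Omega)\tto M$, and suppose for contradiction that $M$ is not regular, i.e.\ the singular locus $M^{\sing}$ is nonempty.

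First I would examine the leafwise symplectic volume function. By Theorem~\ref{thm:intro:linear:variation}, the signed leafwise symplectic volumes assemble into a \emph{polynomial} function $\VV_0\colon\oB\to\R$ on the integral affine manifold $\oB$ (the orbifold universal cover of the leaf space), whose square $\VV_0^2$ descends to a smooth function on $B$. The key point is the geometric meaning of the vanishing locus of $\VV_0$: a leaf $S_b$ has zero symplectic volume exactly when its dimension is strictly smaller than the maximal (regular) leaf dimension $2k$, and this happens precisely over the singular locus. Since $M^{\sing}\neq\emptyset$ is a proper Poisson submanifold whose image in $B$ is a proper closed subset (by Theorem~\ref{thm:intro:int:affine:leaf:space}(ii), it is a union of lower-dimensional strata), the polynomial $\VV_0$ vanishes on a nonempty proper subset of $\oB$, namely the preimage of this singular stratification. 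A nonzero polynomial on a connected integral affine manifold cannot vanish on a set with nonempty interior, but more to the point: its zero set, being the zero set of a polynomial, has measure zero and its complement (the regular leaf space $B^{\reg}$) is open dense and connected-by-component.

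The main argument, and the step I expect to be the crux, is a compactness/degree obstruction: $\VV_0$ is a polynomial on $\oB$, but the \emph{total} leafwise symplectic volume — that is, $\int_B \VV_0^2 \, \boldsymbol{\mu^{\aff}_B}$, which by Theorem~\ref{thm:introd:Weyl:theory} equals the total Duistermaat–Heckman mass $\boldsymbol{\mu^{DH}_B}(B)$ — is \emph{finite} because $\cG$ is compact (the Liouville measure $\frac{\Omega^\top}{\top!}$ has finite total mass). If $B$ (equivalently $\oB$, up to the proper free action of the discrete Weyl-type group) were noncompact, finiteness of $\int \VV_0^2$ would already force $\VV_0$ to be bounded, hence constant (a bounded polynomial on an affine space is constant), forcing $[\omega_b]$ to be a constant affine section and $M$ to be regular. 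If $B$ is compact, one argues instead directly at a point $b_0\in B^{\sing}$ on the boundary stratum: take an affine chart on $\oB$ through a lift $\tilde b_0$, where $\VV_0$ is a genuine polynomial in finitely many variables; the normal form around the singular leaf (Section~\ref{sec:nform}) together with the linear variation shows that near $\tilde b_0$ the leafwise volume is governed by the symplectic reduction of the local model — a product of a regular piece with a linear Poisson structure on $\gg_x^*$ for nonabelian compact $\gg_x$ — and there $\VV_0$ behaves, up to units, like the classical Weyl denominator $\prod_{\alpha>0}\langle\alpha,\cdot\rangle$, which is a nonzero polynomial vanishing to order equal to the number of positive roots. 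The contradiction is then extracted from the fact that such a polynomial is \emph{odd} under the relevant Weyl reflections, so its square $\VV_0^2$, while smooth on $B$, cannot be smooth in the stronger sense demanded by descending through the orbifold charts unless there are no reflections at all — i.e.\ unless $\cW(M,\pi)$ is trivial and $M$ is regular.

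To make the last paragraph rigorous I would isolate the local statement as a lemma: near a point of the singular stratum the Weyl resolution $\res\colon\hM\to M$ looks like the classical resolution $G/T\times_W\tt^*\to\gg^*$ for the isotropy data, and $\VV_0$ pulled back to the smooth manifold $\oB$ factors, transverse to the stratum, through the Weyl denominator polynomial of the isotropy Lie algebra $\gg_{x_0}$. Since $\gg_{x_0}$ is a nonabelian compact Lie algebra (nonabelian precisely because $x_0$ is singular — the isotropy is nonabelian exactly off the regular locus, as the regular locus is where the isotropy Lie algebras are abelian of maximal rank), its Weyl denominator is a nonconstant homogeneous polynomial of positive degree, and this forces $\VV_0$ to be genuinely non-polynomially-bounded unless $\oB$ (hence $B$) has dimension allowing unbounded growth, contradicting finiteness of the total Liouville mass on the compact groupoid $\cG$. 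Thus $M^{\sing}=\emptyset$, i.e.\ $(M,\pi)$ is regular. The equivalence with ``every compact symplectic groupoid is regular'' then follows immediately: a compact symplectic groupoid $(\cG,\Omega)$ has connected source fibers after passing to the identity component, integrates a Poisson manifold of compact type on its base, which is therefore regular, and regularity of the base Poisson structure is equivalent to regularity of the groupoid (constant orbit dimension).
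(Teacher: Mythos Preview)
Your setup is correct: the volume polynomial $\VV_0$ on $\oB$ vanishes precisely on the preimage of the singular locus, and if $(M,\pi)$ is non-regular this zero set is nonempty. But the mechanism you propose for extracting a contradiction does not work. Since $M$ is compact, the leaf space $B$ is automatically compact, so your ``noncompact $B$'' case is vacuous. In the compact case, the finiteness of the total Duistermaat--Heckman mass $\int_B \VV^2\,\boldsymbol{\mu^{\aff}_B}$ carries no information: $\VV^2$ is continuous on the compact space $B$, so the integral is finite regardless of whether $(M,\pi)$ is regular. Your alternative suggestion---that $\VV_0^2$ fails to descend smoothly through the orbifold charts unless $\cW$ is trivial---is also wrong: $\VV_0^2$ \emph{does} descend to a smooth function on $B$ (this is part of Theorem~\ref{thm:volume-polynomial}), precisely because $\VV_0$ is $\delta$-equivariant for the parity character. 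Finally, the ``unbounded growth'' clause in your last paragraph has no traction on a compact $B$.

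The paper's argument uses a completely different obstruction, developed in Section~\ref{sec:polynomial:functions}. The image in $B$ of a subregular stratum is a codimension-one \emph{integral affine} suborbifold (Corollary~\ref{cor:subregular:leaf:space}), and $\VV_0^2$ vanishes on it. The key lemma (Corollary~\ref{cor:no-polynomial}) is that a nontrivial polynomial on a \emph{compact} integral affine orbifold cannot vanish on an integral hyperplane. Its proof goes through a factorization result (Proposition~\ref{pro:monic-orbifold}): on a finite orbifold cover $\widetilde B$, such a polynomial acquires a primitive degree-one factor $L$. Since $\widetilde B$ is compact, $L$ must have a critical point; but via the developing map $L$ is the pullback of a degree-one polynomial on $\R^q$, which has no critical points---contradiction. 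This critical-point argument against compactness is the missing idea in your proposal; the Duistermaat--Heckman formula plays no role in the paper's proof of this theorem.
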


We emphasize that the proof of this result is non-trivial, involving virtually all the previous theorems, along with some new facts about polynomial functions on integral affine orbifolds.

Finally, in Section \ref{sec:open}, we conclude the paper by enumerating several open problems that arise naturally from the theory developed here.
\medskip

\textbf{Acknowledgments.} The authors enthusiastically acknowledge the support of the Oberwolfach Research in Pairs program. We are also encouraged by the fact that several of our former graduate students, Ioan M\u{a}rcu\cb{t}, Jo\~ao Nuno Mestre, Maarten Mol, Joel Villatoro and Luka Zwaan, have been pursuing various research projects that originated from this work.


\
\medskip

\section{Local Normal Forms}
\label{sec:nform}
In this section we describe two normal forms around symplectic leaves for Poisson manifolds of proper type and their associated proper symplectic groupoids. These normal forms yield important tools for our study of non-regular PMCTs.

The first normal form is the \emph{linear normal form} which we have already encountered in the first paper of this series \cite[Section 8]{CFM-I}. It is a normal form which holds on any small enough tubular neighborhood of a symplectic leaf of a Poisson manifold of proper type. It linearizes, in an appropriate sense, the Poisson structure around the leaf.

The second normal form is the so-called \emph{Hamiltonian normal form} which holds on an entire saturated neighborhood of a symplectic leaf of any Poisson manifold of proper type. This saturated neighborhood may fail to retract on the leaf and the Poisson structure in this model is not linear. Still, it exhibits the Poisson structure as a symplectic quotient of a Hamiltonian $G$-space and this will turn out to be useful for us.

We also discuss suitable generalizations of these normal forms to the Dirac setting.
These will be needed later when we study the desingularization of PMCTs. 

\subsection{The Hamiltonian normal form}
\label{sec:ex:simple-local-model}
Consider a (right) Hamiltonian $G$-space 
\[ \mu: (Q, \omega)\to \gg^*. \] 
If the action is free and proper, then $Q/G$ inherits a {\it reduced Poisson structure $\pi_\red$} uniquely characterized by requiring $p^{*}: C^{\infty}(Q/G)\hookrightarrow C^{\infty}(Q)$ to be a morphism of Poisson algebras, where $p:Q\to Q/G$ is the quotient map. Geometrically, $\pi_\red$ is the Poisson structure with symplectic leaves the connected components of the symplectic quotients $\mu^{-1}(\xi)/G_\xi$ (see, e.g., \cite[Section 1.5]{CFM21}). In our set up, $0$ is in the image of $\mu$ and we focus on and around the symplectic quotient $\mu^{-1}(0)/G$, which we denote by $(Q//G, \omega_\red)$.

\begin{theorem}\label{thm-lf-Poisson-v2} 
Let $(S, \omega_S)$ be a symplectic leaf in a Poisson manifold $(M,\pi)$ of proper type. Then there exists a saturated open neighborhood $U$ of $S$ in $M$ that is Poisson diffeomorphic to the reduced space $Q/G$ associated to some connected, free, Hamiltonian $G$-space 
\begin{equation}\label{eq:Ham:space} 
\mu: (Q, \omega)\to \gg^*,
\end{equation}
where $G$ is a compact Lie group, $\mu$ has connected fibers and $0\in \mu(Q)$. Under this diffeomorphism $(S, \omega_S)$ corresponds to $(Q//G, \omega_\red)$. 
\end{theorem}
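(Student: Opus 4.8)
The plan is to produce the Hamiltonian $G$-space $(Q,\omega)$ directly from a proper symplectic integration of $(M,\pi)$, by restricting that groupoid over a Poisson transversal to $S$ and recognising the restriction as a Hamiltonian $G$-space. First I would fix a proper symplectic groupoid $(\cG,\Omega)\tto M$ with connected source fibres integrating $(M,\pi)$ and a point $x_0\in S$. Properness makes the isotropy group $G:=\cG_{x_0}$ a compact Lie group, and the source fibre $P:=\s^{-1}(x_0)$ is a connected principal $G$-bundle over $S$ via $\t$; moreover $T_{x_0}M/T_{x_0}S$ is canonically the coadjoint $G$-module $\gg^{*}$, since $\gg=\ker\pi^{\sharp}_{x_0}=N^{*}_{x_0}S$. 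Then choose a Poisson transversal $\Sigma\ni x_0$, so that $\{x_0\}$ is a $0$-dimensional leaf of $(\Sigma,\pi_\Sigma)$; its saturation $U$ is a saturated open neighbourhood of $S$, and the transversal restriction $\cG_\Sigma:=(\s,\t)^{-1}(\Sigma\times\Sigma)\tto\Sigma$ is a proper symplectic groupoid for which $x_0$ is a fixed point with isotropy $G$. Applying the linear normal form of \cite{CFM-I} to this fixed point — equivalently, linearising the proper groupoid $\cG_\Sigma$ together with its multiplicative symplectic form near $x_0$ — we may, after shrinking $\Sigma$, identify $(\Sigma,\pi_\Sigma,\cG_\Sigma,\Omega_\Sigma)$ with $(\gg^{*},\pi_{\lin},T^{*}G,\Omega_{\can})$ over a $G$-invariant neighbourhood of $0$.

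Now set $Q:=\s^{-1}(\Sigma)\subset\cG$ with $\omega$ the restriction of $\Omega$. Since $\Sigma$ is a Poisson transversal, $Q$ is a symplectic submanifold of $(\cG,\Omega)$, and $\mu:=\s|_Q\colon Q\to\Sigma\cong\gg^{*}$ has connected fibres, each being a source fibre $\s^{-1}(\sigma)$, with $0\in\mu(Q)$; $Q$ itself is connected, being fibred over the connected $\Sigma$ with connected fibres. The right multiplication action of $\cG_\Sigma$ on $Q$ along $\s|_Q$ is free and proper, preserves each fibre of $\t|_Q\colon Q\to U$ and acts transitively on it (if $q_1,q_2$ lie in a common fibre then $q_1^{-1}q_2\in\cG_\Sigma$ and $q_1\cdot(q_1^{-1}q_2)=q_2$), so $\t|_Q$ is precisely the $\cG_\Sigma$-orbit map. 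Under $\cG_\Sigma\cong T^{*}G$ a Hamiltonian action of the cotangent symplectic groupoid is the same datum as a (right) Hamiltonian $G$-action, so $(Q,\omega,\mu)$ becomes a connected, free Hamiltonian $G$-space with compact structure group, whose quotient $Q/G$ is $U$; comparing the reduced symplectic leaves $\mu^{-1}(\xi)/G_\xi$ of $Q/G$ with the symplectic leaves of $U$ via $\t|_Q$ shows that the reduced Poisson structure is $\pi|_U$. Finally $\mu^{-1}(0)=\s^{-1}(x_0)=P$, on which the residual $G$-action is the principal action, and the restriction of $\Omega$ to a source fibre is the $\t$-pullback of the leaf symplectic form; hence $(Q//G,\omega_\red)=(S,\omega_S)$. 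Replacing $(\omega,\mu)$ by $(-\omega,-\mu)$ if a sign needs correcting, this exhibits $U$ as the required reduced space.

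The step I expect to be the main obstacle is the passage from the tautological $\cG_\Sigma$-action on $Q=\s^{-1}(\Sigma)$ to a genuine Hamiltonian $G$-space with moment map $\s|_Q$. It rests on two ingredients that must be put in place with care: the symplectic refinement of the linearisation of $\cG_\Sigma$ at the fixed point $x_0$ — that is, the identification of $(\cG_\Sigma,\Omega_\Sigma)$ with the canonical cotangent groupoid $(T^{*}G,\Omega_{\can})$ near $0\in\gg^{*}$, which is exactly where the linear normal form of \cite{CFM-I} enters — and the precise dictionary between Hamiltonian actions of $T^{*}G$ and of $G$, together with the bookkeeping that the reduced Poisson structure on $Q/G$ coincides on the nose with $\pi|_U$. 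The auxiliary fact used above, that $\s^{-1}(\Sigma)$ is a symplectic submanifold of $(\cG,\Omega)$ on which $\t$ remains a Poisson realisation when $\Sigma$ is a Poisson transversal, is routine and would be isolated beforehand as a lemma.
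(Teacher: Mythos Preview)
Your proposal is correct and follows essentially the same approach as the paper: both take $Q=\s^{-1}(\Sigma)$ for a transversal $\Sigma$ through $x_0$, linearise the restricted proper symplectic groupoid $\cG|_\Sigma$ at its fixed point $x_0$ to identify it with $T^*G|_V\cong G\ltimes V$, and read off the Hamiltonian $G$-space structure from the right $\cG|_\Sigma$-action on $Q$ along $\s$ (the paper phrases this last step as a symplectic Morita equivalence between $\cG|_U$ and $T^*G|_V$, and sets $\mu:=-\s$, matching your sign remark).
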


Letting $V= \mu(Q)\subset \gg^*$,  the Hamiltonian normal form will also be pictured by the diagram 
\begin{equation}\label{eq:diagram:dual-pair}
\vcenter{\xymatrix{
 &  & (Q,\omega) \ar[dll]^{p}\ar[drr]_{\mu}  & &  \\
(U,\pi)&  & & & (V,\pi_\gg^*)}}
\end{equation}
This indicates that we think of $(Q, \omega)$ as defining, locally around $S$, a \emph{dual pair} (see, e.g., \cite[Section 6.4]{CFM21}) between $(M, \pi)$ and the linear Poisson structure $(\gg^*,\pi_\gg^*)$.

\begin{proof}[Proof of Theorem \ref{thm-lf-Poisson-v2}]
The Hamiltonian space (\ref{eq:Ham:space}) arises from a proper symplectic groupoid $(\G,\Omega)$ integrating $(M,\pi)$. Namely, $G$ is the isotropy group $\G_x$, $Q$ is the preimage $\s^{-1}(T)$ of a transversal $T$ to $S$ through a base point $x\in S$, $\omega$ is the restriction $\Omega|_{\s^{-1}(T)}$ and $U$ is the saturation of $T$. 
In order to describe the $G$-action on $Q$, by general facts about symplectic groupoids (see Remark \ref{remark:Morita-general} below), we have a symplectic Morita equivalence:
\[
\xymatrix{
 (\G|_U,\Omega|_U) \ar@<0.25pc>[d] \ar@<-0.25pc>[d]  & \ar@(dl, ul) & (Q,\omega) \ar[dll]^{\t}\ar[drr]_{\s}  & \ar@(dr, ur) & (\G|_T,\Omega|_T) \ar@<0.25pc>[d] \ar@<-0.25pc>[d]  \\
(U,\pi)&  & & & (T,\pi_T)}
\]
where $\pi_T$ is the Poisson structure induced by $\pi$ on the transversal. Since $(\G|_T,\Omega|_T)$ is proper, by the Weinstein-Zung Linearization Theorem \cite{We3,Zu} if $T$ is taken sufficiently small it follows that $(\G|_T,\Omega|_T)$ is isomorphic to $(T^*G|_{V},\Omega_\can)$, where $V\subset \gg^*$ is a $G$-invariant neighborhood of $0$. Since $T^*G|_{V}$ can be identified with the coadjoint action groupoid $G\ltimes V$, we obtain an action of $G$ on $Q$.

Under this isomorphism, $(T,\pi_T)$ is identified with $(V,\pi_{\gg^*})$, where $\pi_{\gg^*}$ denotes the linear Poisson structure on the dual of the Lie algebra. One then obtains the groupoid version of the diagram (\ref{eq:diagram:dual-pair})  
\[
\xymatrix{
 (\G|_U,\Omega|_U) \ar@<0.25pc>[d] \ar@<-0.25pc>[d]  & \ar@(dl, ul) & (Q,\omega) \ar[dll]^{p=\t}\ar[drr]_{\s}  & \ar@(dr, ur) & (T^*G|_{V},\Omega_\can) \ar@<0.25pc>[d] \ar@<-0.25pc>[d]  \\
(U,\pi)&  & & & (V,\pi_{\gg^*})}
\]
showing that $(Q, \omega)$ is a symplectic Morita equivalence between $\G|_{U}$ and $T^*G|_{V}$. Using the identification $T^*G|_{V}\simeq G\ltimes V$, the statement follows setting $\mu:=-\s$.
\end{proof}

Notice that the proof also gives a local normal form for the symplectic groupoid $(\cG,\Omega)$. The symplectic Morita equivalence yields an identification between $(\G|_U,\Omega|_U)$ and the ``gauge groupoid''
\begin{equation}\label{eq:Gauge-groupoid} 
((Q\times_{V} Q)/G, \underline{\Omega})\tto Q/G
\end{equation}
where $\underline{\Omega}$ is the symplectic form corresponding to the basic form
\begin{equation}\label{eq:Gauge-form} 
\textrm{pr}_1^*\omega- \textrm{pr}_2^*\omega\in \Omega^2(Q\times_{V} Q).
\end{equation}
See \cite[Proposition 5.6]{CFM-I} for more details.

\begin{remark}[Morita equivalences]\label{remark:Morita-general} Throughout the paper we will make use of Morita equivalences and their basic properties in various contexts. For a brief introduction and further references please see \cite[Section 3.1 and Appendix A3]{CFM-II}. For later use we recall here that any Morita equivalence between groupoids, 
\[
\xymatrix{
 \G_1 \ar@<0.25pc>[d] \ar@<-0.25pc>[d]  & \ar@(dl, ul) & Q\ar[dll]^-{p_1}\ar[drr]_-{p_2} & \ar@(dr, ur)   & \cG_2 \ar@<0.25pc>[d] \ar@<-0.25pc>[d]\\
M_1 & & & & M_2}
\]
gives rise to a homeomorphism between the the spaces $M_1/\cG_1$ and $M_2/\cG_2$ and isomorphisms between the isotropy groups of the two groupoids and their isotropy representations. We need here a more precise description of these isomorphisms, as well as of the general philosophy of using Morita equivalences to pass from one side to the other. 

First of all, we say that that two saturated subspaces $N_1\subset M_1$ and $N_2\subset M_2$ are \textbf{$Q$-related} if $p_1^{-1}(N_1)= p_{2}^{-1}(N_2)$. The homeomorphism  $M_1/\cG_1\to M_2/\cG_2$ is the correspondence that assigns to an orbit  $\mathcal{O}_1$ of $\cG_1$ the $Q$-related orbit of $\cG_2$. 

Secondly, we will say that a point \textbf{$x_1\in M_1$ is $Q$-related to $x_2\in M_2$} if the orbits through them are $Q$-related. That is equivalent to the existence of $q\in Q$ with $p_1(q)= x_1$, $p_2(q)= x_2$. The choice of such a point $q$ gives rise to:
\begin{enumerate}
\item[(i)] an isomorphism between the isotropy groups, $\psi_q: \G_{x_1}\to G_{x_2}$, uniquely determined by 
\[ g\cdot q= q\cdot \psi_q(g), \quad \forall\ g\in \G_{x_1}.\]
\item[(ii)] an $\psi_q$-equivariant isomorphism $\nu_{x_1}(\O_{x_1})\to \nu_{x_2}(\O_{x_2})$ between the representations of the isotropy groups on the normal spaces to the orbits. This isomorphism is uniquely determined by the condition that, for any $v\in T_qQ$, the class of $\d p_1(v)$ is sent to the one of $\d p_2(v)$.
\end{enumerate}
In the case of symplectic groupoids and symplectic Morita equivalences, we have $\nu_{x_i}=\gg_{x_i}^*$, and the normal isotropy representations of $G_{x_i}$ are identified with the coadjoint representations of $G_{x_i}$. In this case, the isomorphisms from item (ii) are dual to the ones from item (i). 
\end{remark}

\subsection{The linear normal form}
\label{sec:ex:local-model}
Given \emph{any} Poisson manifold $(M,\pi)$ and fixing a symplectic leaf $(S,\omega_S)$, Vorobjev \cite{Vorobjev01,Vorobjev05} has constructed a \emph{linear local model} which is a certain Poisson structure $\pi_\lin$ on an open neighborhood of the zero section in the normal bundle $\nu(S)$ with the following two properties: 
\begin{itemize}
\item $(S,\omega_S)$, identified with the zero section of $\nu(S)$, is a symplectic leaf of $\pi_\lin$;
\item on each fiber $\nu(S)_x\simeq \gg^*_x$, $\pi_\lin$ restricts to the linear Poisson structure $\pi_{\gg^*_x}$. 
\end{itemize}
In general, the germ of $\pi$ around $S$ may fail to be isomorphic to the germ of $\pi_\lin$ around the zero section. When they are isomorphic one says that $(M,\pi)$ is \emph{linearizable} around $S$. If, additionally, one can find arbitrary small saturated neighborhoods of $S$ where $\pi$ and $\pi_\lin$ are isomorphic then one says that $(M,\pi)$ is \emph{invariantly linearizable} around $S$. 

For PMCTs we have (see \cite[Section 8]{CFM-I}):

\begin{theorem}
\label{thm:linearization:PMCT}
If $(M,\pi)$ is a Poisson manifold of proper (respectively, s-proper) type then it is linearizable (respectively, invariantly linearizable) around any symplectic leaf.
\end{theorem}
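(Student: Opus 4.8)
The plan is to reduce the statement to the Hamiltonian normal form of Theorem~\ref{thm-lf-Poisson-v2}, by realizing \emph{both} $\pi$ near a leaf and its linear model $\pi_\lin$ near the zero section as symplectic reductions of Hamiltonian $G$-spaces over $\gg^*$ carrying the same ``reduction data'' along the zero fiber, and then comparing these Hamiltonian $G$-spaces by an equivariant coisotropic embedding argument. This is essentially the route of \cite[Section~8]{CFM-I}. Fix a symplectic leaf $(S,\omega_S)$ of $(M,\pi)$. By Theorem~\ref{thm-lf-Poisson-v2} there is a saturated neighborhood $U$ and a Poisson diffeomorphism $(U,\pi)\cong(Q/G,\pi_\red)$ attached to a connected, free Hamiltonian $G$-space $\mu\colon(Q,\omega)\to\gg^*$, with $G$ compact, $0\in\mu(Q)$, $\mu$ with connected fibers, under which $(S,\omega_S)$ corresponds to $(Q/\!/G,\omega_\red)$. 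Put $P:=\mu^{-1}(0)$. Since $G$ acts freely, $0$ is a regular value, $P$ is a connected coisotropic submanifold of $(Q,\omega)$ whose characteristic foliation is the orbit foliation, and $P\to S=P/G$ is a principal $G$-bundle; using $\gg=\gg_x$ and $G=\cG_x$ we identify $\nu(S)\cong P\times_G\gg^*$, the ambient space of Vorobjev's model.

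On the model side I would use that $\pi_\lin$ is itself a reduced structure: a choice of principal connection on $P\to S$, fed into Sternberg's minimal coupling construction, produces on a neighborhood $Q_0$ of the zero section of $P\times\gg^*$ a $G$-invariant symplectic form $\omega_0$ for which the diagonal $G$-action is Hamiltonian with moment map $\mu_0=\mathrm{pr}_{\gg^*}$, with $\mu_0^{-1}(0)=P\times\{0\}$ reducing to $(S,\omega_S)$, and whose reduced Poisson structure on $Q_0/G\subset\nu(S)$ is precisely $\pi_\lin$ (independently of the connection, up to Poisson isomorphism; see \cite{Vorobjev01,Vorobjev05} and \cite[Section~8]{CFM-I}). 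Thus $\mu$ and $\mu_0$ are Hamiltonian $G$-spaces over $\gg^*$ whose zero fibers $P$ and $P\times\{0\}$ carry the same data: the same principal $G$-bundle over $S$ and the same reduced symplectic form $\omega_S$.

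The comparison is then provided by the equivariant coisotropic (Marle--Guillemin--Sternberg) normal form, which in this free case follows from Gotay's coisotropic embedding theorem together with an equivariant Moser trick ($G$ being compact, one averages): one obtains a $G$-equivariant symplectomorphism, intertwining $\mu$ and $\mu_0$, from a $G$-invariant neighborhood of $P$ in $(Q,\omega)$ onto a $G$-invariant neighborhood of $P\times\{0\}$ in $(Q_0,\omega_0)$. Passing to $G$-quotients, a neighborhood of $S$ in $U$ becomes Poisson diffeomorphic to a neighborhood of the zero section in $(\nu(S),\pi_\lin)$, i.e.\ $(M,\pi)$ is linearizable around $S$. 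Here only properness of $\cG$ is used, and $S$, $P$ may well be noncompact, which causes no trouble since the coisotropic embedding theorem is a statement about germs of neighborhoods.

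For the s-proper refinement the source map $\s$ of $\cG$ is proper; then the leaves of $(M,\pi)$ are compact, so $S$ and $P$ are compact, and $\mu=-\s|_Q$ is a proper moment map. The additional ingredient needed is that, with $\mu$ proper and $P$ compact, the equivariant normal form above can be realized on a $\mu$-saturated neighborhood $\mu^{-1}(B_\eps)$ of $P$, for $\eps>0$ arbitrarily small. Granting this, $\mu^{-1}(B_\eps)$ is $G$-invariant and a union of $\mu$-fibers, so $\mu^{-1}(B_\eps)/G$ is Poisson-saturated in $U$; as $\eps\to0$ these exhaust arbitrarily small saturated neighborhoods of $S$, and the Poisson diffeomorphism identifies each with a saturated neighborhood of the zero section of $(\nu(S),\pi_\lin)$, yielding invariant linearizability. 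I expect this last saturation step to be the main obstacle: one must upgrade the equivariant coisotropic embedding, which a priori controls only the germ of a neighborhood of $P$, to a statement valid on $\mu^{-1}(B_\eps)$. This is exactly where properness of $\mu$ (compactness of the $\s$-fibers) enters, making the equivariant Moser homotopy behind the normal form uniform in the directions normal to $S$; concretely I would either invoke the saturated Marle--Guillemin--Sternberg normal form for proper Hamiltonian actions (equivalently, linearization of proper moment maps, in the spirit of \cite{Zu}), or re-run the coisotropic embedding argument while tracking the neighborhoods, using the compactness of $P$ and of the reduced spaces $\mu^{-1}(\xi)/G_\xi$ for $\xi$ near $0$.
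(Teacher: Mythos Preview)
Your proposal is correct but follows a different route from the paper's main proof. The paper argues at the level of the integrating symplectic groupoid: it identifies the groupoid $\cG_\lin$ integrating the linear model with the linearization $\cG_S\ltimes\nu(S)$ of $\cG$ along $S$, applies the Weinstein--Zung linearization theorem directly to $\cG$ at the orbit $S$ (saturated neighborhoods in the s-proper case), and then uses a \emph{multiplicative} Moser argument to match the two multiplicative symplectic forms on the linearized groupoid. You instead stay at the level of the Hamiltonian $G$-space furnished by Theorem~\ref{thm-lf-Poisson-v2}, realize $\pi_\lin$ via the minimal coupling form $\omega^\theta_\lin$, and compare the two Hamiltonian spaces via the equivariant coisotropic embedding / Marle--Guillemin--Sternberg normal form; for the s-proper refinement you invoke a saturated MGS statement using properness of $\mu$. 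This is precisely the alternative the paper acknowledges in the closing sentence of its proof (``A different proof \ldots using the Hamiltonian local form, follows from the proof of Theorem~\ref{thm-lf-Dirac}''): in the Poisson case the basic $2$-form $\beta$ produced there can be absorbed by an equivariant Moser isotopy, which is exactly what the free-action MGS argument does. The paper's groupoid route has the advantage of extending verbatim to the Dirac/presymplectic setting (Theorem~\ref{cor:nform}, where the gauge term $\beta$ generally survives and is the point of the statement), whereas your route is the more classical symplectic argument and avoids multiplicative Moser altogether.
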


For a general Poisson manifold the local linear model $(\nu(S),\pi_\lin)$ is not so simple to describe and depends on an auxiliary choice of an IM-connection (see \cite{FM22} for a modern approach). However, for a PMCT (in fact for any integrable Poisson manifold) one can describe the linear local model very explicitly. This was already observed in \cite[Section 5.8]{CFM-I} but we review it here in greater detail, since it will be needed in the sequel.

The starting data consists of:
\begin{enumerate}[(i)]
\item A symplectic manifold $(S, \omega_S)$; 
\item A principal $G$-bundle $p: P\to S$, with $P$ connected and $G$ compact;
\item An auxiliary principal connection 1-form:
\[
\theta\in \Omega^1(P, \gg).
\]
\end{enumerate}
The group $G$ acts diagonally on $P\times \gg^*$, where on the second factor one considers the coadjoint action. One has a 2-form
\begin{equation}\label{loc-mod-2-form} 
\omega^{\theta}_\lin:= p^*\omega_S-\d\langle \theta,\cdot\rangle \in \Omega^2(P\times \gg^*),
\end{equation}
where $\langle \theta,\cdot\rangle\in \Omega^1(P\times \gg^*)$ is the form given by 
\[ \langle \theta,\cdot\rangle_{(p,\zeta)}(v,w)= \langle \theta_p(v),\zeta\rangle. \]
The 2-form $\omega^{\theta}_\lin$ is closed, $G$-invariant, and it is non-degenerate at points of $P\times\{0\}$. 

Hence, there is a $G$-invariant open set $P\times\{0\}\subset \cV\subset P\times \gg^*$ such that $\omega^{\theta}_\lin$ restricts to a symplectic form on $\cV$.
It follows that $\omega^{\theta}_\lin$ induces a Poisson bivector $\pi^{\theta}_\lin$ on the quotient:
\[ \mathcal{M}:=\cV/G. \]
The pair $(\cM,\pi^{\theta}_\lin)$ is the desired {\bf linear local model}. 
Notice that when $S$ is compact the space $P$ is compact, and one can take $V$ of the form 
\[ \cV=P\times V, \]
where $V\subset \gg^*$ is a $G$-invariant neighborhood of $0$. In this case  the projection in the second factor defines a Hamiltonian $G$-space with connected fibers
\begin{equation}\label{eq:model-H-presympl}
\textrm{pr}_2: (P\times V, \omega^{\theta}_\lin)\to \mathfrak{g}^*,
\end{equation}
hence, the corresponding symplectic quotients 
\[ S_{\xi}:= P\times_{G} \mathcal{O}_{\xi} \simeq P/G_{\xi},\]
form the symplectic foliation of $(\cM,\pi^{\theta}_\lin)$.

For a PMCT $(M,\pi)$ the data needed to construct the linear local model around a symplectic leaf $(S,\omega)$ is obtained from a symplectic integration $(\G,\Omega)$. Fixing a base point $x\in S$, the relevant principal $G$-bundle $p:P\to S$ is
\begin{equation}\label{s-fiber:principal:bundle}  
\G_x\laction \s^{-1}(x)\stackrel{\t}{\rmap} S.
\end{equation}

\begin{proof}[Proof of  Theorem \ref{thm:linearization:PMCT}]
The proof consists of 4 steps:
\smallskip

\emph{Step 1:} Consider the  groupoid
\begin{equation}\label{l-mod-gpd-eq}  
\G_\lin:= (P\times P)\times_{G} \gg^*\tto P \times_{G} \gg^*,
\end{equation}
the quotient of the product of the pair groupoid $P\times P\tto P$ with the identity groupoid $\gg^*\tto \gg^*$. It carries a closed,
multiplicative form $\Omega^{\theta}_\lin$ induced by the basic 2-form  
\begin{equation}\label{l-mod-gpd-form-eq}  
\widetilde{\Omega}^{\theta}_\lin:=p_1^*\omega_S-p_2^*\omega_S-\d\langle \theta_1,\cdot\rangle+\d\langle \theta_2,\cdot\rangle\in \Omega^2(P\times P\times \gg^*)
\end{equation}
where  $p_i: P\times P\times \gg^*\to S$ denotes the composition of the projection on the i-th factor with the bundle projection $p: P\to S$, while $\langle \theta_i,\cdot\rangle$ denotes the 1-form
\[ \langle \theta_i,\cdot\rangle_{(p_1,p_2,\zeta)}(v_1,v_2,w)= \langle \theta_{p_i}(v_i),\zeta\rangle. \] 
The form $\Omega^{\theta}_\lin$ restricts to a symplectic form on $\cG_\lin|_{\cM}$, and this yields a symplectic groupoid integrating the linear local model $(\cM,\pi^\theta_\lin)$

\smallskip

\emph{Step 2:} If $(\G,\Omega)$ integrates $(M,\pi)$ and we let $p:P\to S$ be the principal bundle \eqref{s-fiber:principal:bundle}, the groupoid $\G_\lin$ is isomorphic to the action groupoid formed from the action of the restricted groupoid $\G_S:=\G|_S$ on the normal bundle $\nu(S)=P\times_G\gg^*$:
\[ \G_\lin \simeq \G_S\ltimes \nu(S). \]
The right-hand side is the linearization of $\G$ at $S$, see \cite{CS,HoFe,We3}.

\smallskip

\emph{Step 3:}  
If $\G\tto M$ is a proper groupoid, then by step 2 and the Weinstein-Zung linearization theorem \cite{CS,HoFe,We3,Zu} there are open sets $S\subset U\subset M$ and $S\subset \cM'\subset \nu(S)$ such that we have a groupoid isomorphism
\[ \G|_U  \simeq \G_\lin|_{\cM'}. \] 

If $\G\tto M$ is a s-proper groupoid, then we can take $U$ saturated and $\cM'=P\times_GV'$, with $V'\subset\gg^*$ a $G$-invariant open neighborhood of $0$.
\smallskip

\emph{Step 4:} The isomorphism in the previous step provides a second multiplicative symplectic form on $\G_\lin|_{\cM'}$, besides $\Omega^{\theta}_\lin$. Since these two forms agree on $\G_S$, by a multiplicative version of Moser's method, possibly after possibly shrinking $U$ and $\cM'$, one obtains a groupoid automorphism which pulls back the second symplectic form to $\Omega^{\theta}_\lin$, so that we have an isomorphism of symplectic groupoids 
\[ (\G|_U,\Omega|_U) \simeq (\G_\lin|_{\cM'},\Omega^{\theta}_\lin|_{\cM'}). \]
See also \cite[Section 8]{CFM-I}. 

A different proof of this result, using the Hamiltonian local form, follows from the proof of Theorem \ref{thm-lf-Dirac} (the case $\beta=0$).
\end{proof}

\begin{remark}
The construction of the linear local model depends on the choice of connection $\theta$. Indeed, both $\omega^{\theta}_\lin$ and the invariant open set $\cV\subset P\times \gg^*$ will depend on this choice, and hence also $(\cM,\pi^{\theta}_\lin)$. However, two different choices of connection lead to isomorphic local models on a possibly smaller invariant neighborhood of $S$ in $P\times_{G}\gg^*$. In fact, if $\theta$ and $\theta'$ are two connections with open invariant sets $\cV,\cV'\subset P\times\gg^*$, then on the overlap $\cV\cap \cV'$ equation \eqref{l-mod-gpd-form-eq}  shows that the symplectic forms  $\Omega^{\theta}_\lin$ and $\Omega^{\theta'}_\lin$ differ by $\d\beta$, where $\beta$ is a multiplicative 1-form vanishing on $\cG_S$. Hence, a multiplicative version of Moser's method implies that the corresponding symplectic groupoids are isomorphic around $S$. It follows that the corresponding linear local models $(\cM,\pi^{\theta}_\lin)$ and $(\cM',\pi^{\theta'}_\lin)$ are also isomorphic around $S$.
\end{remark}

\subsection{Local models in the Dirac setting} 
\label{sec:Dirac:local-model}
The previous constructions of the Hamiltonian local model and the linear local model extend to Dirac manifolds and presymplectic groupoids. We will be interested in this more general setting because we will introduce later a resolution of non-regular PMCTs and proper symplectic groupoids that produce regular DMCTs and proper presymplectic groupoids. 

The extension of the Hamiltonian local model to the Dirac setting is as follows:

\begin{theorem}\label{thm-lf-Dirac} 
Let $(S, \omega_S)$ be a presymplectic leaf in a proper Dirac manifold $(M,L)$. Then there exists a saturated open neighborhood $U$ of $S$ in $M$ that is Dirac diffeomorphic to the reduced space $Q/G$ associated to some connected, free, Hamiltonian presymplectic $G$-space 
\begin{equation}\label{eq:Ham:space:Dirac} 
\mu: (Q, \omega)\to \gg^*,
\end{equation}
where $G$ is a compact Lie group, $\mu$ is a submersion with connected fibers and $0\in \mu(Q)$. Under this diffeomorphism $(S, \omega_S)$ corresponds to $(Q//G, \omega_\red)$. 
\end{theorem}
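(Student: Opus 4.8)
The plan is to imitate, step by step, the proof of Theorem~\ref{thm-lf-Poisson-v2}, the only genuinely new feature being that the leaf $S$ --- and hence the local model --- is presymplectic rather than symplectic. By hypothesis $(M,L)$ is integrated by a proper presymplectic groupoid $(\G,\Omega)\tto M$ with connected source fibres. Fix a base point $x\in S$ and a transversal $T$ to $S$ through $x$; put $G:=\G_x$ (compact, by properness of $\G$), $Q:=\s^{-1}(T)$, $\omega:=\Omega|_Q$ (closed, since $\Omega$ is), and let $U$ be the saturation of $T$, which is a saturated open neighbourhood of $S$ once $T$ is small enough. As in the proof of Theorem~\ref{thm-lf-Poisson-v2}, and with the conventions of Remark~\ref{remark:Morita-general}, the maps $\t$ and $\s$ make $(Q,\omega)$ into a presymplectic Morita equivalence between $(\G|_U,\Omega|_U)$ and $(\G|_T,\Omega|_T)$, with the Dirac structure $L_T$ induced on $T$ on the right.

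The next step is to linearize $(\G|_T,\Omega|_T)$ near $x$. The crucial observation is that the leaf of $L_T$ through $x$ is the single point $\{x\}$, so $L_x=T_x^*T$ and $L_T$ is the graph of a Poisson bivector $\pi_T$ near $x$; after shrinking $T$ one may assume this holds on all of $T$, so that $(\G|_T,\Omega|_T)$ is an ordinary proper \emph{symplectic} groupoid over $(T,\pi_T)$. The Weinstein--Zung theorem \cite{We3,Zu} then gives, after a further shrinking, an isomorphism of symplectic groupoids $(\G|_T,\Omega|_T)\simeq(T^*G|_V,\Omega_\can)\simeq G\ltimes V$, with $V\subset\gg^*$ a $G$-invariant open neighbourhood of $0$, identifying $(T,\pi_T)$ with $(V,\pi_{\gg^*})$ and the isotropy representation on $T_xT$ with the coadjoint representation; the comparison of the transported form $\Omega|_T$ with the standard one on $G\ltimes V$ is carried out by a multiplicative Moser argument, the two forms agreeing on the isotropy group $G$ and hence differing by $\d\beta$ for a multiplicative $1$-form $\beta$ vanishing on $G$. (The specialization $\beta=0$ yields the alternative proof of Theorem~\ref{thm:linearization:PMCT} alluded to above; the twisted case is identical, using the normal form for twisted presymplectic groupoids and keeping the background $3$-form fixed along the homotopy.)

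Finally one reads off the Hamiltonian model from the Morita equivalence, exactly as in the proof of Theorem~\ref{thm-lf-Poisson-v2}. Transporting the $G\ltimes V$ structure through $Q$, the map $\s:Q\to T\cong V\subset\gg^*$ becomes $G$-equivariant, $\omega$ is $G$-invariant, and multiplicativity of $\Omega$ forces the moment map condition, so $\mu:=-\s$ presents $(Q,\omega)$ as a Hamiltonian presymplectic $G$-space; $\mu$ is a submersion (the source map of $\G$ restricted to $\s^{-1}(T)$) with connected fibres (connected source fibres of $\G$), the $G$-action on $Q=\s^{-1}(T)$ is free and proper (right translation by the compact group $G$), $Q$ is connected, and $0\in\mu(Q)$. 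The target map $\t:Q\to U$ then descends to a Dirac diffeomorphism from $Q/G$ onto $U$, identifying the reduced Dirac structure (whose presymplectic leaves are the connected components of the reduced spaces $\mu^{-1}(\xi)/G_\xi$) with $L|_U$, and restricting over $\xi=0$ to a diffeomorphism from $Q//G=\s^{-1}(x)/\G_x$ onto $S$ carrying $\omega_\red$ to $\omega_S$.

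The step I expect to be the main obstacle is the identification of the reduced object $Q/G$ with $(U,L|_U)$ at the level of Dirac --- not merely Poisson --- structures: because $\omega$ now has a kernel, presymplectic reduction must be propagated carefully through the Morita equivalence, and one must in particular check that the reduced presymplectic form on $Q//G$ is genuinely $\omega_S$ rather than just cohomologous to it. A related, more technical point is the multiplicative Moser argument in the linearization step: the primitive $\beta$ must be multiplicative and vanish on $\G_x$, which ultimately rests on the vanishing of the relevant multiplicative de Rham cohomology of the proper groupoid $\G|_T$ relative to its isotropy.
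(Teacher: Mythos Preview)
Your proposal is correct and follows exactly the paper's approach, which is itself only a brief sketch pointing back to the Poisson case (Theorem~\ref{thm-lf-Poisson-v2}): the key observation, which you make, is that the induced Dirac structure on the transversal $T$ is actually Poisson near $x$, so $(\G|_T,\Omega|_T)$ is an honest symplectic groupoid and Weinstein--Zung applies verbatim. Your two self-flagged concerns are not genuine obstacles --- the Dirac-level identification $Q/G\simeq(U,L|_U)$ is part of the general theory of presymplectic Morita equivalences (the paper cites \cite{Xu04}, \cite[Section 3.2]{CFM-I} and \cite[Appendix A]{CFM-II}), and the separate multiplicative Moser step you describe is already subsumed in Zung's symplectic linearization theorem.
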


Here by a Hamiltonian presymplectic $G$-space we mean a $G$-manifold $Q$ together with a $G$-invariant, closed 2-form $\omega$, and a $G$-equivariant map $\mu:Q\to\gg^*$ satisfying the usual moment map condition. We will always assume that the action if free and proper, and the moment map is a submersion -- in the presymplectic setting the latter is not implied by the former. These conditions guarantee that $Q/G$ inherits a reduced Dirac structure $L_\red$ uniquely determined by imposing that the quotient map is forward Dirac (see, e.g., \cite{BF14}).

The Hamiltonian presymplectic space in the theorem is constructed similarly to the Poisson setting. Using a proper presymplectic groupoid $(\G,\Omega)$ integrating $(M,L)$, one sets $Q:=\s^{-1}(T)$ and $\omega:=\Omega|_T$, where $T$ is a small enough transversal $T$ to the presymplectic leaf $(S, \omega_S)$. 

The proof is also similar to the Poisson setting where $(Q,\omega)$ gives a presymplectic Morita equivalence with  $(\G|_T,\Omega|_T)$. Here we need to apply the theory of presymplectic groupoids and presymplectic Morita equivalences (see \cite[Section 3.2]{CFM-I}, \cite[Appendix A]{CFM-II} and \cite{Xu04}). Note that the Dirac structure induced by $L$ on $T$ is actually the graph of a Poisson structure. Hence, the restriction $(\G|_T,\Omega|_T)$ is a symplectic groupoid and the Weinstein-Zung linearization theorem can still be applied.

\begin{remark}
\label{rem:presymplectic:groupoid:hamiltonian:form}
Similarly to what we observed after the proof of Theorem \ref{thm-lf-Poisson-v2}, one also deduces a normal form for the proper presymplectic groupoid $(\G,\Omega)$. One obtains a presymplectic groupoid isomorphism with the gauge groupoid \eqref{eq:Gauge-groupoid}, equipped with the presymplectic form induced by \eqref{eq:Gauge-form}, formed from $(Q,\omega)$ above.
\end{remark}

\medskip

Let us now turn to the linear local model. The Dirac setting actually makes the construction of the model simpler, since one does not need to restrict to a $G$-invariant neighborhood of the origin in $\gg^*$. Furthermore, the linear model also works for non-compact Lie groups, so we momentarily drop that assumption. 

For the starting data one now assumes that $(S, \omega_S)$ is just a presymplectic manifold. Then, given a principal $G$-bundle $p: P\to S$ with $P$ connected, and an auxiliary principal connection 1-form $\theta\in \Omega^1(P, \gg)$,
the coadjoint bundle
\[ \cM_\lin:= P\times_{G} \mathfrak{g}^*, \]
carries a natural Dirac structure $L^{\theta}_\lin$. Namely, the quotient Dirac structure induced by the graph of the closed form $\omega^{\theta}_\lin\in\Omega^2(P\times\gg^*)$ given by \eqref {loc-mod-2-form}. Explicitly, this Dirac structure is given by
\[ L^{\theta}_\lin:= \big\{ ( \d q(v), \eta): q^*\eta= i_{v}\omega^{\theta}_\lin \big\},\] 
where $q:P\times\gg^*\to \cM_\lin$ denotes the quotient map. Here, just as in the Poisson case, the projection 
\[ \textrm{pr}_2: (P\times \gg^*, \omega^{\theta}_\lin)\to \mathfrak{g}^*, \]
defines a Hamiltonian presymplectic $G$-space, and the presymplectic foliation of $L^{\theta}_\lin$ consists of the presymplectic quotients 
\[ S_{\xi}:= P\times_{G} \mathcal{O}_{\xi} \simeq P/G_{\xi}\subset  \cM_\lin,\]
where now $\xi\in\gg^*$ is arbitrary.

The construction of a canonical presymplectic groupoid integrating $(\cM_\lin, L^{\theta}_\lin)$ is as in Step 1 in the proof of Theorem \ref{thm:linearization:PMCT}.  
The Dirac manifold $(\cM_\lin, L^{\theta}_\lin)$ will be called the \textbf{(Dirac) linear local model} associated 
to $p: P\to (S, \omega)$ and $\theta$, while $(\G_\lin,  \Omega^{\theta}_\lin)$ will be called the \textbf{linear groupoid local model}. We will often drop the connection from the notation, for the reason explained in the next remark, and write simply $(\cM_\lin, L_\lin)$ and $(\G_\lin,\Omega_\lin)$.

\begin{remark} $\quad$
\begin{enumerate}[1)]

\item The choice of connection $\theta$ affects the linear local model as follows. Given two principal connections $\theta$ and $\theta'$, the closed 2-form
\[ \d\langle \theta,\cdot\rangle-\d\langle \theta',\cdot\rangle\in \Omega^2(P\times \gg^*) \]
is a basic form, so induces a closed 2-form $\beta\in\Omega^2(\cM_\lin)$. The Dirac structures are related by a gauge transformation in the sense of \cite{BR03}:
\[ L^{\theta'}_\lin=e^\beta L^{\theta}_\lin. \]
Similarly, using \ref{l-mod-gpd-form-eq}, at the groupoid level the presymplectic forms change by a gauge transformation
\[ \Omega^{\theta'}_\lin=\Omega^{\theta}_\lin+\t^*\beta-\s^*\beta. \] 

\item Assume that in the preceeding discussion the principal bundle is over a symplectic base $(S, \omega_S)$. Then $S$ admits a \emph{Poisson neighborhood} $\mathcal{M}$, i.e., there is an open neighborhood 
\[  S\subset \cM\subset \cM_\lin,\]
where $L^{\theta}_\lin$ is defined by a bivector $\pi^{\theta}_\lin$. Indeed, the set of points where a Dirac structure is the graph of a bivector is open and, since $\omega_S$ is non-degenerate, the ``Poisson support'' of $L^{\theta}_\lin$ contains $S$. The restriction of $\G_\lin$ to $\cM$ is then a symplectic groupoid and one recovers in this way the Poisson linear local model and its integration.

\item We will be interested in the case where $G$ is compact. In that case, $\G_\lin$ is a proper groupoid, so the Dirac linear local model is always a Dirac manifold of proper type. It is of strong proper type iff $P$ is 1-connected, 
and it is of s-proper type iff $P$ is compact (i.e., $S$ is compact).
\end{enumerate}
\end{remark}

Finally, let us turn to the Dirac analogue of Theorem \ref{thm:linearization:PMCT}. Linearization, in general, can only be achieved up to gauge transformation. This is in agreement with the fact that in the Dirac category an isomorphism consists of a diffeomorphism composed with a gauge transformation.



\begin{theorem}
\label{cor:nform} 
Let $(\G, \Omega)$ be a proper presymplectic groupoid which integrates the Dirac manifold $(M,L)$ and let $(S, \omega_S)$ be a presymplectic leaf. Let $G$ be the isotropy group of $\cG$ at some base point $x\in S$, let $p=\t: P=\s^{-1}(x)\to S$ be the isotropy-principal bundle at $x$ and let $(\G_\lin,\Omega_\lin)$ be the resulting linear Dirac local model. Then there are arbitrarily small neighborhoods $S\subset U \subset M$ and $S\subset U_\lin\subset \cM_\lin$ and an isomorphism of presymplectic groupoids:
\[ 
\xymatrix@=0.3pc{ 
(\G|_U,\Omega|_{U}) \ar@<0.25pc>[dd] \ar@<-0.25pc>[dd] & & (\G_\lin|_{U_\lin},\Omega_\lin+\t^*\beta-\s^*\beta) \ar@<0.25pc>[dd] \ar@<-0.25pc>[dd]  \\
& \simeq &     \\ 
U & & U_\lin
}
\]
for some exact 2-form 
\[ \beta=\d\gamma \in \Omega^2(U_\lin). \] 
Under this isomorphism, $S$ is mapped diffeomorphically to  $\mu^{-1}(0)/G$, and the following can be arranged:
\begin{enumerate}[(i)]
\item If $\G$ is source proper, then $U$ and $U_\lin$ can be chosen saturated;
\item If $\G$ is source 1-connected, then one can choose $\gamma$ whose pullback to $S$ is zero.
\end{enumerate}
\end{theorem}

\begin{proof}[Proof of Theorem \ref{cor:nform}]

By Theorem \ref{thm-lf-Dirac} we may assume that:
\begin{enumerate}[1)]
    \item we have a Hamiltonian presymplectic $G$-space $\mu: (Q, \omega)\to \gg^*$, with $\mu$ a submersion;
    \item the Dirac manifold and presymplectic leaf are the quotients
    \[ M= Q/G, \quad S=Q//G= \mu^{-1}(0)/G;\] 
    \item The integrating groupoid $\cG$ is the associated gauge groupoid 
    \[ \cG=Q\times_G Q\tto M. \]
\end{enumerate} 
After possibly replacing $Q$ by a smaller $G$-invariant neighborhood of $P:= \mu^{-1}(0)$ in $Q$, one may assume that we have a $G$-equivariant submersive retraction $r: Q\to P$, and that $(r, \mu): Q\to P\times \gg^*$ is a diffeomorphism onto an open subspace of $P\times \gg^*$. In other words, we may assume that $Q\subset P\times \gg^*$ is an $G$-invariant open subset and $\mu$ is the projection $\textrm{pr}_2$.

We then have two multipicative presymplectic forms $\Omega$ and $\Omega_{\lin}^\theta$ on the same gauge groupoid, and both can be recovered from their pullbacks to $Q$,  $\omega $ and $\omega_{\lin}^\theta$ (see Remark \ref{rem:presymplectic:groupoid:hamiltonian:form}).
These forms are $G$-invariant and their difference $\omega-\omega^{\theta}_\lin$ is $G$-basic since (i) each makes $\mu:Q\to\gg^*$ into a presymplectic $G$-Hamiltonian space and (ii) the moment map conditions give, for any $v\in \gg$, 
\[i_{v_{Q}}(\omega-\omega^{\theta}_\lin)=\d \langle\mu, v\rangle-\d \langle\mu, v\rangle=0,\]
where $v_Q$ is the fundamental vector field of the action of $v$ on $Q$. Moreover, 
 the difference $\omega-\omega^{\theta}_\lin$ pulls back to zero along $P$. Therefore, there  exist $\beta\in\Omega^2(U_\lin)$ satisfying
\[\omega-\omega^{\theta}_\lin=\t^*\beta, \]
where $\t:\cG_\lin\to U_\lin$ is the target map, 
and such that its pullback to $S=S\times \{0\}\subset U_\lin $ vanishes.
Since we can assume that $S$ is a deformation retract of $U_\lin$, we conclude that  $\beta=\d\gamma$, $\gamma\in \Omega^1(U_\lin)$. This proves the first half of the theorem.

If $\cG$ is source proper, then in the Weinstein-Zung linearization theorem we can take arbitrary small saturated neighborhoods, and item (i) follows.

If $\cG$ is source 1-connected, then the long exact homotopy sequence for the projection $p:P\to S$ implies that $S$ has finite fundamental group, and therefore trivial first de Rham cohomology group. Hence, the pullback of $\gamma$ to $S$ is exact with primitive $f\in C^\infty(S)$. If we let $F\in C^\infty(U_\lin)$ be an extension of $f$, then 
\[\gamma-\d F\in \Omega^1(U_\lin)\]
is a primitive for $\beta$ whose pullback to $S$ vanishes. This proves item (ii).

\end{proof}

\begin{remark}\label{rmk-nf-tDirac}
Theorems \ref{thm-lf-Dirac} and \ref{cor:nform} have $\phi$-twisted versions. If on a $\phi$-twisted Dirac manifold $(M,L)$ one fixes a leaf $(S,\omega_S)$, then one obtains:
\begin{enumerate}[(i)]
    \item A Hamiltonian local model, as in Theorem \ref{thm-lf-Dirac}, where now we have a Hamiltonian twisted presympletic $G$-space $(Q,\omega)$, with $\d\omega=p^*\phi$;
    \item A linear local model, where in the starting data one has a twisted presymplectic manifold $(S,\omega_S)$. The resulting linear local model has a twisted Dirac structure $L_\lin$ with twist $\d(\pr^*\omega_S)$, where $\pr:P\times_G\gg^*\to S$ is the (tubular neighborhood) projection. Theorem \ref{cor:nform} still holds, where the integrating groupoid is $(\cG_\lin,\Omega^\theta_\lin)$, with $\d(\pr^*\omega_S)$-twisted presymplectic form 2-form still given by formula \eqref{l-mod-gpd-form-eq}. 
\end{enumerate}

There are also linear local models and linearization results for Poisson (respectively, Dirac) manifolds along Poisson (respectively, Dirac) submanifolds, generalizing the ones along leaves (see \cite{FM22}). 
\end{remark}

\section{The Canonical Stratifications}
\label{sec:canonical:stratifications}


This section is the starting point of our investigation of the geometry of the leaf spaces of possibly non-regular PMCTs. We shall define two canonical stratifications, we will analyze the Poisson geometric properties of their strata, and we will describe the structure induced on the leaf spaces of the strata by a choice of a proper symplectic integration.

The stratifications can be introduced right away. Recall that any Poisson manifold $(M, \pi)$ induces an algebroid structure on $T^*M$, with anchor $\pi$ interpreted as a map $\pi^{\sharp}: T^*M\to TM$. 
The corresponding symplectic foliation is 
\begin{equation}\label{eq:Poisson:fix} 
\cF_{\pi}:= \im(\pi^{\sharp}),
\end{equation}
and the isotropy Lie algebras coincide with the conormal spaces to the foliation
\begin{equation}\label{eq:Poisson:fix2} 
\gg_x(M, \pi):= \ker(\pi^{\sharp}_{x})=\nu^{*}_{x}(\cF_{\pi}):=(\cF_{\pi})^{o}\subset T^*_xM.
\end{equation}
When no confusion arises, we will use the simpler notation $\gg_x$ for the isotropy Lie algebras.

We define a first equivalence relation on $(M,\pi)$ by setting
\begin{equation}\label{eq:inf-equiv}
x \underset{\inf}{\sim} y \quad\textrm{iff}\quad \gg_x \simeq \gg_y.
\end{equation}
We define a second equivalence relation, using the linear holonomy representation of the symplectic leaf $S$ through $x\in M$ on the center of the isotropy $\zz(\gg_x)$ -- see \eqref{eq:rho:lin:hol} -- by setting
\begin{equation}\label{eq:equiv-inv} 
x \underset{\hol}{\sim} y \quad\textrm{iff}\quad 
\begin{cases}
\gg_x \simeq \gg_y \\  
\zz(\gg_x)^{\Hol}\simeq \zz(\gg_y)^{\Hol},
\end{cases}
\end{equation} 
where the superscript denotes the fixed-point set. 

\begin{definition} 
\label{def:canonical:stratifications:PMCT}
Let $(M,\pi)$ be a Poisson manifold of proper type. 
\begin{enumerate}[(i)]
 \item Its {\bf canonical infinitesimal stratification} $\cSi(M, \pi)$ is the partition of $M$ 
 by the connected components of the equivalence classes w.r.t $\underset{\inf}{\sim}$.
 \item Its {\bf canonical stratification} $\cS(M, \pi)$  is the partition of $M$ 
 by the connected components of the equivalence classes w.r.t  $\underset{\hol}{\sim}$.
 \end{enumerate}
\end{definition}

For any proper Lie groupoid there exist two canonical stratifications (see \cite{CraMe,PPT14}), which generalize the well-known {\it orbit type} and {\it infinitesimal orbit type} stratifications of a proper Lie group action (see, e.g., \cite{DK,Pflaum}). We will see that the canonical stratifications of a Poisson manifold of proper type coincide with the stratifications obtained from \emph{any} proper symplectic integration. 

We emphasize that the partitions in Definition \ref{def:canonical:stratifications:PMCT} make sense for any Poisson manifold, integrable or not. However, these partitions may be ill-behaved and fail to satisfy the requirements to fit into a stratification. For a Poisson manifold of proper type the situation is much nicer as they do form stratifications which enjoy remarkable Poisson-theoretic properties. We will show that:

\begin{theorem} 
Let $(M,\pi)$ be a Poisson manifold of $\cC$-type. Then the strata of both the canonical and the infinitesimal stratifications are regular Poisson submanifolds of $(M,\pi)$ of $\cC$-type. Furthermore, the infinitesimal strata are core Poisson submanifolds of $(M,\pi)$.
\end{theorem}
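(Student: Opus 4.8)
The plan is to use the Hamiltonian normal form (Theorem \ref{thm-lf-Poisson-v2}) to reduce everything to a local statement about the reduced space of a Hamiltonian $G$-space, and then invoke the known theory of the orbit-type and infinitesimal-orbit-type stratifications of proper Lie group actions. Concretely, fix a symplectic leaf $S$ through a point $x$ and let $U$ be the saturated neighborhood with $U\cong Q/G$ for $\mu\colon(Q,\omega)\to\gg^*$ a connected, free, Hamiltonian $G$-space as in the theorem, with $G$ compact. The first step is to show that the traces on $U$ of the partitions $\cSi(M,\pi)$ and $\cS(M,\pi)$ coincide with the images under $Q\to Q/G$ of the infinitesimal-orbit-type and orbit-type partitions of the $G$-action on the submanifold $\mu^{-1}(V)\subset Q$ (equivalently, of the coadjoint-action partitions of $V\subset\gg^*$ pulled back along $\mu$). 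Here the key identification is that, for a point of $U$ lying over $\xi\in V\subset\gg^*$, the isotropy Lie algebra $\gg_y(M,\pi)$ is isomorphic to $\gg_\xi$, the isotropy algebra for the coadjoint action (this is exactly the content of the Morita-equivalence dictionary in Remark \ref{remark:Morita-general}(i)), and similarly the linear holonomy representation on $\zz(\gg_y)$ matches the relevant slice representation; one then uses that the orbit-type stratification of the coadjoint $G$-action on $\gg^*$, restricted to the $G$-invariant open set $V$, is the standard one. Since these local models glue (the stratifications are intrinsically defined by $\pi$), this both proves that $\cSi$ and $\cS$ are genuine stratifications — smoothness and the frontier condition being inherited from the proper-action case treated in \cite{CraMe,PPT14,DK,Pflaum} — and shows that they agree with the stratifications coming from any proper symplectic integration.

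The second step is to verify the three asserted properties of each stratum $\Sigma$. That $\Sigma$ is a Poisson submanifold: locally $\Sigma\cap U$ is the image of a $G$-invariant Poisson submanifold of the reduced space — namely $\mu^{-1}(V_\Sigma)/G$ where $V_\Sigma\subset\gg^*$ is the corresponding coadjoint stratum — and coadjoint strata are Poisson submanifolds of $(\gg^*,\pi_{\gg^*})$ because they are unions of coadjoint orbits (for the infinitesimal strata this uses that $\gg_\xi$ is locally constant along the stratum, forcing it to be saturated by symplectic leaves; for the canonical strata one adds the holonomy condition). That $\Sigma$ is regular: along $\Sigma$ the isotropy algebras $\gg_y$ are all isomorphic, and on a Poisson submanifold where the isotropy algebra has constant dimension the symplectic foliation has constant rank, so $\pi|_\Sigma$ is a regular Poisson structure. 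That $\Sigma$ is of $\cC$-type: restrict the proper symplectic integration $(\cG,\Omega)$ to $\Sigma$; because $\Sigma$ is a saturated-in-leaves Poisson submanifold, $\cG|_\Sigma\tto\Sigma$ is a subgroupoid which is again source-connected and inherits sourceproperness / properness / compactness from $\cG$ (using that $\Sigma$ is a closed-in-its-stratification, or at least locally closed, submanifold — one checks this via the local model, where $\cG|_{\mu^{-1}(V_\Sigma)}$ is manifestly of the same type), and $\Omega|_{\cG|_\Sigma}$ remains multiplicative and symplectic, so it integrates $(\Sigma,\pi|_\Sigma)$ and exhibits it as being of $\cC$-type. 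Finally, for the ``core'' assertion about the infinitesimal strata, I would recall the definition of a core Poisson submanifold (to be given earlier in the paper) and check it in the local model: the infinitesimal stratum through $x$ is the one whose closure contains $S$ with the smallest isotropy jump, and the condition of being core — roughly, that $\Sigma$ is the leaf-saturation of its intersection with any transversal and that $\pi$ restricted near $\Sigma$ is determined by $\pi|_\Sigma$ together with the normal data — translates directly into the statement that $\mu^{-1}(V_\Sigma)$ is the preimage under $\mu$ of the corresponding infinitesimal-orbit-type stratum of $V$, which is immediate.

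The main obstacle I anticipate is not any single computation but the bookkeeping needed to show the local models are compatible — i.e., that the partitions defined intrinsically by $\pi$ really do restrict, on each saturated neighborhood $U$, to the pushed-forward proper-action stratifications, and that the resulting local stratifications patch into a global stratification satisfying the frontier/local-finiteness axioms. The subtle point is the holonomy condition entering $\underset{\hol}{\sim}$: one must check that the linear holonomy of a leaf $S_\xi$ on $\zz(\gg_\xi)$, computed in $(M,\pi)$, matches the holonomy computed in the local model $Q/G$, which requires tracing through the identifications in Remark \ref{remark:Morita-general}(ii) and the explicit description of the linear local model (equation \eqref{loc-mod-2-form}); this is where the connection $\theta$ and the curvature of $P\to S$ enter, and a careless treatment could miss a correction term. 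A secondary technical point is confirming that $\cG|_\Sigma$ is well-behaved — in particular source-connected and of the correct proper/compact type — which again is cleanest to verify inside the gauge-groupoid local model \eqref{eq:Gauge-groupoid} rather than abstractly.
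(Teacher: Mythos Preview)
Your overall strategy of reducing to the local Hamiltonian model is close to the paper's, but there is a genuine gap in the $\cC$-type step that is the actual heart of the argument.

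You write that ``$\Omega|_{\cG|_\Sigma}$ remains multiplicative and symplectic, so it integrates $(\Sigma,\pi|_\Sigma)$''. This is false: the restriction $\cG|_\Sigma$ is a \emph{coisotropic} subgroupoid of $(\cG,\Omega)$, and the restricted form has nontrivial kernel $\mathfrak{K}=\Ker(\Omega|_{\cG|_\Sigma})$. Indeed, a symplectic groupoid over $\Sigma$ must have dimension $2\dim\Sigma$, whereas $\cG|_\Sigma$ has the larger dimension $\dim\cG-2\,\codim\Sigma$. What the paper does instead is perform symplectic reduction: one must quotient $\cG|_\Sigma$ by the null foliation $\mathfrak{K}$ to obtain a candidate $(\cG_{\mathrm{red}},\Omega_{\mathrm{red}})$. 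The substantive issue is then whether this quotient is a smooth Hausdorff manifold. The paper shows (Lemma~\ref{lemma:suited-for-reduction}) that this happens precisely when the connected subgroups $\cK_x\subset\cG_x$ integrating $(T_x\Sigma)^0\subset\gg_x$ are closed. For the infinitesimal strata this follows because $(T_x\Sigma)^0=\gg_x^{\ss}$ (Corollary~\ref{corollary: infinitesimal-stratum-conormal}) and the semisimple part always integrates to a closed subgroup. For the canonical strata it is more delicate: one first reduces to the regular Poisson manifold $(\Sigma^{\inf},\pi_{\Sigma^{\inf}})$ containing $\Sc$, and then must show that $(T_x\Sc)^0$ is an integral affine subspace of the abelian isotropy $(\gg_x,\Lambda_x)$, which requires an argument about invariant splittings under finite groups of integral affine transformations (Lemma~\ref{lem:invariant:decomposition}). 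None of this is captured by simply restricting $\cG$.

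Secondly, your treatment of the core property is too loose. The definition (Definition~\ref{def:core}) requires that the restriction map $\zz(\gg_x)\to\nu_x^*(\cF_\Sigma)$ be an isomorphism, which the paper deduces directly from the identity $(T_x\Sigma)^0=\gg_x^{\ss}$; your description in terms of ``leaf-saturation of its intersection with any transversal'' is not the right condition and would not suffice.
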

 
The notion of (regular) core Poisson submanifold, mentioned in the theorem, arises in connection with the transverse geometry of the ambient Poisson manifold. More precisely,  core Poisson submanifolds retain all the information about monodromy of the ambient Poisson manifold (see Definition \ref{def:core}).

A more detailed version of the previous theorem states that, from a proper integration of $(M, \pi)$, one obtains proper integrations of each member of either canonical stratification. Since the strata are regular Poisson manifolds, the results in the second paper in this series \cite[Chapter 3]{CFM-II} show that the leaf space of each stratum $\Sc\in\cS(M,\pi)$ and $\Si\in\cSi(M,\pi)$ is an integral affine orbifold. Moreover, the strata are saturated submanifolds of the ambient Poisson manifold so if $\Sc\subset\Si$, one obtains inclusions of leaf spaces:
\[ \Sc/\cF_\pi \subset \Si/\cF_\pi\subset M/\cF_\pi. \]
It is natural to wonder if the integral affine orbifold structures on $ \Sc/\cF_\pi$ and $\Si/\cF_\pi$ are induced by an integral affine orbifold structure on $M/\cF_\pi$. However, the ambient Poisson manifold is not regular, so \cite{CFM-II} does not apply to it. We will be able to address this issue once we have introduced in Section \ref{sec:leaf:space} the desingularization of $(M,\pi)$. Hence, we postpone to that section the discussion of leaf spaces of PMCTs.




\subsection{Stratifications and orbit types} 
Before we discuss in detail the canonical stratifications of  PMCTs, we will start by providing a short overview of the basics on stratifications and we will recall the orbit type stratification of a proper Lie groupoid.

\subsubsection{General stratifications} 

We consider stratifications of a space $M$ in the following sense.

\begin{definition}\label{def:str:man}
A {\bf stratification} $\cS$ of a topological space $M$ is a locally finite partition into locally closed, connected, subspaces -- called \emph{strata} -- which are manifolds and satisfy the following frontier condition: the closure of each strata $S\in \cS$ satisfies
\[ \overline{S}=S\cup \bigcup_i S_i , \]
where $S_i$ are strata satisfying $\dim S_i<\dim S$. When $M$ is a manifold and each stratum is an embedded submanifold we call $\cS$ a  {\bf smooth stratification}.
\end{definition}



\begin{remark}[connected strata]\label{rem:starufications:connectedness}
The condition that the strata be connected is more important than it may seem at first sight. This condition makes the notion of stratification into a local one, which agrees with Mather's germ-viewpoint on stratifications but without reference to germs (see, e.g., \cite{Pflaum}). In practice, one usually starts with a locally finite partition $\cP$ of $M$ by manifolds and one passes to the partition $\cP^{c}$ consisting of the connected components of the members of $\cP$. In many examples (e.g., for the partition by orbit types), the frontier condition holds only after this passage. That implies, in particular, that a stratification may be induced by several different (but interesting) partitions. To handle such situations, one has the following simple lemma (see, e.g., \cite{CraMe}).

\begin{lemma}
\label{stratif-lemma-1}
Let $\cP_{i}$, $i\in \{1, 2\}$ be two partitions of a space $M$ by manifolds with the subspace topology (whose connected components may have different dimensions). Let $\cP_{i}^{c}$ be the partition obtained by taking the connected components of the members of $\cP_i$. Then $\cP_{1}^{c}= \cP_{2}^{c}$ if and only if, for each $x\in M$, there exists an open neighborhood $U$ of $x$ in $M$ such that
\[ P_1\cap U= P_2\cap U,\]
where $P_i\in \cP_i$ are the members containing $x$.
\end{lemma}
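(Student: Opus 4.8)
The plan is to prove Lemma~\ref{stratif-lemma-1} by a direct, elementary argument about connected components, with the real content being the equivalence between "the connected components of two partitions coincide" and "the partitions agree locally near every point."

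\textbf{The easy direction.} Suppose $\cP_1^c = \cP_2^c$. Fix $x \in M$ and let $P_i \in \cP_i$ be the member containing $x$, and let $P_i^c \in \cP_i^c$ be the connected component of $P_i$ containing $x$. By hypothesis $P_1^c = P_2^c =: C$. Since $P_i$ is a manifold with the subspace topology, $P_i^c$ is open in $P_i$; hence there is an open set $W_i \subset M$ with $W_i \cap P_i = P_i^c = C$. Taking $U := W_1 \cap W_2$, I would like to conclude $U \cap P_1 = U \cap C = U \cap P_2$. The first and last equalities are immediate from $W_i \cap P_i = C$ together with $U \subset W_i$; the middle is a tautology. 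So $P_1 \cap U = P_2 \cap U$, as required. The only subtlety is making sure the members of the \emph{coarse} partitions $\cP_i$ (not just their components) agree on $U$ — this is exactly why we intersect with the two witnessing neighborhoods and use that each $P_i^c$ is open in $P_i$.

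\textbf{The harder direction.} Conversely, assume that for every $x$ there is an open $U_x$ with $P_1 \cap U_x = P_2 \cap U_x$ (where $P_i$ is the member of $\cP_i$ through $x$). I must show each connected component of a member of $\cP_1$ is a connected component of a member of $\cP_2$, and vice versa. Fix $C \in \cP_1^c$, say $C$ is the component through $x_0$ of $P_1 \in \cP_1$, and let $P_2$ be the member of $\cP_2$ through $x_0$, with $P_2^c$ its component through $x_0$. The strategy is to show $C = P_2^c$ by showing that the set $A := \{x \in C : \text{the component of } P_1 \text{ through } x \text{ equals that of } P_2 \text{ through } x,\ \text{and these agree as sets near } x\}$ is nonempty, open, and closed in $C$. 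Nonemptiness ($x_0 \in A$) and openness follow directly from the local hypothesis: near any $x \in A$, the two partitions coincide on a neighborhood, so the local pieces of $P_1$ and $P_2$ — hence their components — agree there. Closedness is the point requiring care: if $x$ is a limit point of $A$ in $C$, apply the local hypothesis at $x$ to get $U_x$ with $P_1 \cap U_x = P_2 \cap U_x$; since $x \in C \subset P_1$, the member of $\cP_2$ through $x$ is forced (being a partition member meeting $P_1 \cap U_x$) to be the $P_2$ we started with — here I use that $A \cap U_x \neq \emptyset$ so the $\cP_2$-member through points of $A$ near $x$ is $P_2$, and partition members are either equal or disjoint. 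This pins down that the component of $P_2$ through $x$ is $P_2^c$, giving $x \in A$. By connectedness of $C$, $A = C$, so $C \subseteq P_2^c$; the reverse inclusion $P_2^c \subseteq C$ is symmetric (swap the roles of $1$ and $2$), and hence $C = P_2^c \in \cP_2^c$. By symmetry every member of $\cP_2^c$ is also a member of $\cP_1^c$, so $\cP_1^c = \cP_2^c$.

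\textbf{Main obstacle.} I expect the genuine difficulty to be the closedness step in the harder direction: one must carefully track that a \emph{partition} member of $\cP_2$, once it meets a small piece of a $\cP_1$-member, is globally determined (since partition members are pairwise disjoint or equal), and then transfer this from the coarse partition down to the connected components. The argument is not deep but is easy to botch by conflating "the partitions agree on $U$" with "the components agree on $U$" — the local hypothesis gives the former, and one must deduce the latter. No topological input beyond connectedness, the subspace topology (so that manifold components are open in their members), and the defining disjointness of partitions is needed.
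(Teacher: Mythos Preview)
The paper does not actually prove this lemma; it is stated with a reference to \cite{CraMe} and used as a black box throughout. So there is no ``paper's own proof'' to compare against.

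Your argument is correct. Both directions are sound: the easy direction uses exactly the right fact (components of a manifold are open in it, so one can cut down to a neighborhood where each $P_i$ coincides with its component $C$), and the harder direction is a legitimate connectedness argument. One minor simplification: your set $A$ is defined somewhat redundantly. It is cleaner to take $A := C \cap P_2$ and show directly that $A$ is nonempty, open, and closed in $C$. Openness is your argument; for closedness, if $x \in C \setminus A$ then the $\cP_2$-member $Q_2$ through $x$ is disjoint from $P_2$, and the local hypothesis gives $C \cap U_x \subset P_1 \cap U_x = Q_2 \cap U_x \subset Q_2$, so $C \cap U_x$ misses $P_2$ --- i.e.\ $C \setminus A$ is open. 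This avoids the limit-point reasoning and the slightly awkward ``these agree as sets near $x$'' clause in your definition of $A$. But this is cosmetic; your proof as written is complete.
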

\end{remark}

The strata of a stratification $\cS$ of $M$ are partially ordered by
\[ S\preccurlyeq T \quad \Longleftrightarrow \quad S\subset \overline{T}.\]
The strata that are maximal w.r.t. $\preccurlyeq$ are precisely the open strata. The union of 
all the maximal strata form a dense open subset of $M$, called the \textbf{$\cS$-regular part} of $M$ and denoted
\[ M^{\cS-\reg}\subset M. \] 
A stratum $S$ is called {\bf subregular} if it is maximal among the non-regular strata.

The $\cS$-regular part is the first member of a filtration
\[ C_0 \subset C_1 \subset \ldots \subset C_n= M,\]
where $C_{k}$ consists of the disjoint union of all strata of codimension at most $k$. If $\cS$ is a smooth stratification, this codimension is relative to the ambient manifold $M$. For a general stratification the codimension of a stratum is relative to the dimension of the maximal stratum whose closure contains it. The stratification $\cS$ is recovered from the filtration by considering the connected components of the codimension $k$-locus:
\begin{equation}\label{sigma-k} 
\Sigma_{k}:= C_k\setminus C_{k-1} .
\end{equation}
The axioms on $\cS$ translate into properties of the filtration -- e.g., that $C_0$ is dense or, as a consequence of the frontier axiom, that each $\Sigma_k$ is a disjoint union of manifolds closed in $C_k$, so all the members of the filtration are open in $M$. One can also consider a similar filtration by dimension, but for us codimension arises more naturally. The previous discussion also implies the following result.

\begin{lemma}\label{stratif-lemma-aux} Let $\cS_1$ and $\cS_2$ be two stratifications of $M$ and assume that for every $x\in M$ the strata of $\cS_1$ and $\cS_2$ through $x$ have the same codimension. Then $\cS_1=\cS_2$. 
\end{lemma}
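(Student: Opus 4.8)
The plan is to reduce the statement to the codimension filtration $C_0 \subseteq C_1 \subseteq \cdots \subseteq C_n = M$ attached to a stratification, described in the discussion preceding the lemma. First I would note that the hypothesis is precisely a statement about this filtration: if $C_k(\cS)$ denotes the union of all strata of $\cS$ of codimension at most $k$, then $x \in C_k(\cS)$ exactly when the stratum of $\cS$ through $x$ has codimension $\le k$. Since the strata of $\cS_1$ and $\cS_2$ through any $x$ have equal codimension, this gives $C_k(\cS_1) = C_k(\cS_2)$ for all $k$, and hence the codimension-$k$ loci $\Sigma_k := C_k \setminus C_{k-1}$ agree for $\cS_1$ and $\cS_2$ — as subsets of $M$, and therefore as topological subspaces.

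Next I would recover the strata from the $\Sigma_k$. The claim to establish is that, in any stratification $\cS$, the strata of codimension $k$ are exactly the connected components of $\Sigma_k(\cS)$; this is essentially the assertion, recalled just before the lemma, that $\cS$ is recovered from its filtration. Granting it, both stratifications then have the same codimension-$k$ strata for every $k$ — the connected components of the common space $\Sigma_k$ — and since every stratum has some codimension, $\cS_1 = \cS_2$. To prove the claim I would show that each codimension-$k$ stratum $S$ is clopen in $\Sigma_k$, so (being connected) it is a connected component, and conversely that a connected component, being a union of such clopen strata, must be a single one. Closedness in $\Sigma_k$ comes from the frontier condition $\overline{S} = S \cup \bigcup_i S_i$ with $\dim S_i < \dim S$: a short dimension count shows each $S_i$ has codimension $> k$, hence misses $\Sigma_k$, so $\overline{S} \cap \Sigma_k = S$. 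Openness follows because $\Sigma_k$ is the disjoint union of the codimension-$k$ strata and this family is locally finite, so the union of all of them except $S$ is again closed in $\Sigma_k$.

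The only point that needs care — the main obstacle — is the bookkeeping with codimension for a general, not necessarily smooth, stratification, where $\codim S = \dim U - \dim S$ for $U$ the maximal stratum whose closure contains $S$. For a smooth stratification this collapses to $\dim M - \dim S$ and the dimension count is trivial; in general one uses that if $S \subseteq \overline{U}$ then $S_i \subseteq \overline{S} \subseteq \overline{U}$ as well, so the maximal stratum whose closure contains $S_i$ has dimension at least $\dim U$, giving $\codim S_i \ge \dim U - \dim S_i > \dim U - \dim S = \codim S = k$. Everything else is formal point-set topology, and one could alternatively route the argument through Lemma \ref{stratif-lemma-1} by checking that, near each $x$, the $\cS_1$-stratum and the $\cS_2$-stratum through $x$ are the same connected component of the common locally closed set $\Sigma_k$.
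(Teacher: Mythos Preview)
Your proof is correct and follows exactly the approach the paper intends: the paper does not give a separate proof of this lemma but simply states that ``the previous discussion also implies the following result,'' referring to the fact that a stratification is recovered from its codimension filtration by taking connected components of the $\Sigma_k$. You have supplied precisely the details behind that sentence, including the clopen argument and the careful handling of codimension in the non-smooth case.
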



The following result should be standard, but since we could not find a proof in the literature we include one.

\begin{lemma}
\label{lem:stratif-lemma-4}  Let $\cS$ be a smooth stratification of a manifold $M$. 
If $\cS$ has no strata of codimension $k\in \{1, \ldots , p\}$ 
then the maps induced in homotopy by the inclusion, 
\[ i_*: \pi_{j} (M^{\cS-\reg})\to \pi_j(M),\]
are isomorphisms in all degrees $j\in \{0, 1, \ldots, p-1\}$. 
\end{lemma}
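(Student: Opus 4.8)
The plan is to build, from the smooth stratification $\cS$ with no strata of codimension $\le p$, a suitable open handle-free exhaustion of $M$ by $M^{\cS-\reg}$ and transversality-type arguments. The cleanest route is to remove the singular strata one at a time, in increasing order of dimension (equivalently, decreasing order of codimension), and observe that removing a submanifold of codimension $\ge p+1$ does not affect homotopy groups in degrees $\le p-1$. Concretely, let $M = C_0 \supset \bigsqcup_{k\ge p+1}\Sigma_k$ (using the filtration notation \eqref{sigma-k}, noting $C_p = \dots = C_0 = M^{\cS-\reg}$ by hypothesis), and write $\Sigma := M \setminus M^{\cS-\reg} = \bigcup_{k \ge p+1} \Sigma_k$. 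First I would observe that $\Sigma$ is closed in $M$ and that, locally, $M^{\cS-\reg}$ is obtained from $M$ by deleting a locally closed submanifold stratification all of whose pieces have codimension $\ge p+1$.

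The key step is the following standard fact, which I would state as a sublemma: if $N \subset M$ is a closed subset that is a countable locally finite union of embedded submanifolds each of codimension $\ge c$ in the $n$-manifold $M$, then $\pi_j(M \setminus N) \to \pi_j(M)$ is an isomorphism for $j \le c-2$ and surjective for $j = c-1$. With $c = p+1$ this gives isomorphisms in degrees $j \le p-1$, exactly as claimed. I would prove the sublemma by general position: a continuous map $(D^j, S^{j-1}) \to (M, M\setminus N)$ or $S^j \to M$ can be smoothed and perturbed rel a neighborhood of $S^{j-1}$ (where it already misses $N$) so that it becomes transverse to each stratum of $N$; since $j \le c-1 \le \dim(\text{any stratum of }N) - 1 < \operatorname{codim} = c$, wait—more precisely $j < c$ forces the image to be disjoint from each stratum, hence from $N$. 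This handles surjectivity; injectivity in degrees $j \le c-2$ comes from applying the same perturbation argument one dimension up, to a nullhomotopy $D^{j+1} \to M$, which is transverse-able to $N$ away from $N$ precisely when $j+1 \le c-1$, i.e. $j \le c-2$. Since in our setting $c = p+1$, we get injectivity for $j \le p-1$ and surjectivity for $j \le p$, so the map is an isomorphism for $j \le p-1$. The local finiteness and the frontier condition from Definition \ref{def:str:man} ensure the perturbations can be carried out stratum by stratum without interference, and the smoothness hypothesis on $\cS$ guarantees the $\Sigma_k$ are genuine embedded submanifolds so that transversality makes sense.

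I expect the main obstacle to be the careful bookkeeping in the transversality argument when $N$ is an infinite (though locally finite) union of submanifolds of possibly different dimensions, rather than a single closed submanifold: one must perturb the map to be simultaneously transverse to all strata, which is done inductively over a locally finite cover using relative transversality (Thom's jet transversality / the parametric transversality theorem), keeping the map fixed where it already avoids $N$. A secondary technical point is justifying the reduction to smooth maps and smooth homotopies, i.e. that the comparison map on homotopy groups is unaffected by replacing continuous representatives with smooth ones; this is routine smoothing theory. Once the sublemma is in place, the lemma follows immediately by applying it to $N = \Sigma$ with $c = p+1$.
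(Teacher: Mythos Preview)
Your proposal is correct and uses the same core idea as the paper---smoothing plus transversality to show that removing submanifolds of codimension $\ge p+1$ does not affect $\pi_j$ for $j\le p-1$. The organizational difference is that the paper exploits the filtration $M^{\cS-\reg}=C_p\subset C_{p+1}\subset\cdots\subset C_n=M$: each $C_k$ is open in $M$ (hence a manifold), and $\Sigma_k=C_k\setminus C_{k-1}$ is a \emph{single} closed embedded submanifold of codimension $k$ in $C_k$, so one only needs the standard folklore fact that removing one codimension-$k$ submanifold preserves $\pi_j$ for $j<k-1$, applied $n-p$ times. This sidesteps exactly the obstacle you flag---simultaneous transversality to an infinite stratified union---by peeling off one codimension at a time, so no inductive perturbation over a locally finite cover is needed. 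Your all-at-once sublemma is also fine, but the filtration route is shorter and avoids that bookkeeping entirely.
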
 


\begin{proof}
If $S\subset M$ is a submanifold of codimension $k$, then the inclusion $M\setminus S\hookrightarrow M$ induces an isomorphism 
\[ \pi_j(M\setminus S)\simeq \pi_j(M), \quad (0\le j< k-1).\] 
This is a folklore result which can be proved representing elements of homotopy groups by smooth maps and using  transversality theory.

Now apply this result to the codimension $k$ submanifold $\Sigma_k$ of $C_k$. Since $C_{k-1}=C_k\setminus\Sigma_k$, we conclude that if $\cS$ has no strata of codimension $k\in \{1, \ldots , p\}$, then
\[ \pi_j(M)=\pi_j(C_n)=\cdots=\pi_j(C_p)=\pi_j(M^{\cS-\reg})\quad (0\le j < p). \]
\end{proof}


\subsubsection{The stratifications induced by a proper action}
\label{sec:stratifications proper action}
An important class of examples of stratifications comes from proper actions.

At the global level, an action of a Lie group $G$ on a manifold $M$ determines the following equivalence relations on $M$:
\begin{itemize}
\item {\bf Isotropy types}:  $x\sim y$ if the isotropy groups $G_x$ and $G_y$ are isomorphic.
\item  {\bf Orbit types}: $x\underset{\textrm{orb}}{\sim} y$ if the isotropy groups $G_x$ and $G_y$ are conjugate.
\item {\bf Local orbit types:}  $x\underset{\textrm{loc}}{\sim} y$ if $G_y= \textrm{Ad}_{g} G_y$ and there is an isomorphism of the normal isotropy representations $G_x\laction\nu_x$ and $G_y\laction\nu_y$ compatible with $\textrm{Ad}_{g}$.
\item {\bf Morita types}:  $x\underset{\textrm{Mor}}{\sim} y$ if the normal isotropy representations $G_x\laction\nu_x$ and $G_y\laction\nu_y$ are isomorphic (this is equivalent to the existence of a Morita equivalence between neighborhoods of the orbits through $x$ and $y$, preserving these orbits).
\end{itemize}
There are some obvious implications
 \begin{equation}\label{eq:diagram:4:stratif} 
 \xymatrix{ 
 \underset{\textrm{orb}}{\sim}   \ar@{=>}[r]  & \sim \\
 \underset{\textrm{loc}}{\sim}  \ar@{=>}[r]  \ar@{=>}[u] & \underset{\textrm{Mor}}{\sim}  \ar@{=>}[u] 
 }
 \end{equation}
For a proper action, all these equivalence relations determine partitions of $M$ by submanifolds. Upon passing to connected components, these partitions induce the same smooth stratification of $M$. We call it the \textbf{canonical stratification} induced by the proper action, denoted $\cS_{G}(M)$. We also have infinitesimal versions of the equivalence relations above, such as:
\begin{itemize}
\item {\bf Infinitesimal isotropy types}:  $x\underset{\inf}{\sim} y$ if $\gg_x$ and $\gg_y$  are isomorphic. 
\item {\bf Infinitesimal Morita types}: $x\overset{\textrm{Mor}}{\underset{\inf}{\sim}} y$  if $(\gg_x, \nu_x)$ and $(\gg_y, \nu_y)$ are isomorphic as pairs consisting of a Lie algebra and a representation.  
\end{itemize}
Again one finds that, upon passing to connected components, the partitions defined by them induce the same smooth stratification. We call it the \textbf{canonical infinitesimal stratification} induced by  the proper action, denoted $\cSi_{G}(M)$. We refer the reader to \cite[Section 4]{CraMe} for details. 

The strata of both $\cS_G(M)$ and $\cSi_G(M)$ are saturated by orbits of the action, and therefore give rise to two partitions of the orbit space $M/G$, denoted $\cS_G(M/G)$ and $\cSi_G(M/G)$. The first partition is a stratification of $M/G$ in the sense of Definition \ref{def:str:man}. The second one would also be a stratification if in the definition  we would  allow the members of the partition to be orbifolds. Furthermore, the two stratifications can be interpreted as ``smooth stratifications of $M/G$''. Indeed, although $M/G$ may fail to be a manifold, there are various known frameworks to make sense of ``smooth structures'' on $M/G$, of smooth subspaces and, ultimately, of smooth stratifications of $M/G$ (see Remark \ref{rmk:smooth:functions}).

\subsubsection{The stratifications induced by a proper Lie {groupoid}}
\label{sec:The stratifications induced by a proper Lie groupoid}
Most (but not all) of the previous equivalence relations for proper $G$-actions extend to general proper Lie groupoids $\G\tto M$. They give rise to the groupoid analogues of the canonical stratifications above. 

First of all, one can define right away the isotropy type equivalence relations $\sim$ and $\underset{\mathrm{inf}}{\sim}$ using the isotropy Lie group $\G_x$ and the associated Lie algebra $\gg_x$, at any point $x\in M$.  Then, since there are also isotropy representations $\nu_x\in \textrm{Rep}(\G_x)$ one can also introduce the Morita type equivalence relation $\underset{\textrm{Mor}}{\sim}$ and its infinitesimal version. Passing to connected components, and using that the slice theorem holds in this generality, one ends up with two smooth stratifications of $M$, induced by $\sim$ and $\underset{\mathrm{inf}}{\sim}$, and denoted
\[ \cS_{\G}(M) \quad\text{ and } \quad \cSi_{\G}(M).\]
We call them the \textbf{canonical stratification} and 
the \textbf{canonical infinitesimal stratification} induced by $\cG$, respectively. These are discussed in greater detail in \cite{CraMe}. 

The regular part of the stratifications $\cS_\cG(M)$ and $\cSi_\cG(M)$ are called the {\bf principal locus} and the {\bf regular locus} of $M$, respectively:
\[
M^{\princ}:= M^{\cS_{\cG}(M)-\reg},\quad
M^{\reg}:= M^{\cSi_{\cG}(M)-\reg}.
\]
Recalling that a point is regular w.r.t.~to a stratification if and only if the corresponding stratum is open, and using the slice theorem, one finds:
\begin{align*}
M^{\princ}&= \{x\in M: \textrm{the action of $\cG_x$ on $\nu_x$ is trivial}\},\\
M^{\reg}&= \{x\in M: \textrm{the action of $\gg_x$ on $\nu_x$ is trivial}\}.
\end{align*}
We will also call the \textbf{subregular locus} of $M$ the subregular part of the infinitesimal stratification:
\[ M^\subreg:=M^{\cSi_{\cG}(M)-\subreg}.\]

Similarly, at the level of the orbit space $B=M/\cG$, one obtains partitions
\[ 
\cS_{\G}(B) \quad\text{ and } \quad \cSi_{\G}(B).
\]
Again, these are ``smooth stratifications" in the more general sense mentioned before for orbit spaces of proper actions.

\begin{remark}\label{rem:correspondence-stratifications}
The 1-1 correspondence between subspaces of $B= M/\G$ and invariant subspaces of $M$ gives rise to a 1-1 correspondence
between stratifications of $B$ and $\G$-invariant stratifications of $M$ (provided the orbit spaces of the $\cG$-invariant strata are manifolds). The fact that the canonical projection $p: M\to B$ is an open map implies that the frontier condition is preserved.

\end{remark}

\begin{remark}[Morita equivalences II]\label{remark:Morita-general-stratifications} It follows from  Remark \ref{remark:Morita-general} that for Morita equivalent proper groupoids $Q$-related points have isomorphic isotropic groups. Hence, both canonical stratifications for proper groupoids are Morita invariant. 
\end{remark}

\subsubsection{Stratification of foliations of proper type}\label{sec:proper-foliations}
One class of groupoids that plays an important role for us is that of {\it foliation groupoids}, i.e., Lie groupoids $\cB\tto M$ that integrate a foliation $(M,\cF)$ (not necessarily s-connected). Equivalently, these are Lie groupoids
with all isotropy groups discrete. In particular this implies that the infinitesimal stratification on the base of such a Lie groupoid is trivial.

We recall from \cite[Chapter 2]{CFM-II} that a foliation $\cF$ is said to be $\cC$-type if it admits a s-connected integration that is of $\cC$-type.


%
\begin{proposition}
\label{prop:stratification:foliation}
Two proper s-connected foliation groupoids integrating a foliation $(M,\cF)$ induce the same canonical stratification.
\end{proposition}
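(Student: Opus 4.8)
The plan is to reduce the statement to a known fact about proper Lie groupoids, namely that Morita equivalent proper groupoids induce the same canonical stratification (Remark \ref{remark:Morita-general-stratifications}), together with the observation that any two s-connected integrations of the same foliation are Morita equivalent. Since foliation groupoids have discrete isotropy, the infinitesimal stratification is trivial, so the only stratification to compare is the canonical one $\cS_\cB(M)$, whose strata are governed by the (finite) isotropy groups $\cB_x$ and their normal representations.

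First I would recall that if $\cB_1\tto M$ and $\cB_2\tto M$ are two s-connected Lie groupoids integrating the same foliation $(M,\cF)$, then they have the same orbits (the leaves of $\cF$) and, at each point $x\in M$, the same Lie algebroid, hence their source-connected components agree near the identities. In fact, both are quotients of the monodromy groupoid $\Mon(\cF)\tto M$ by discrete (wide, normal) subgroupoids contained in the isotropy, so one can form a third s-connected foliation groupoid dominating both — or, more directly, one appeals to the standard fact that any two s-connected integrations of a Lie algebroid are related by a smooth groupoid morphism covering the identity which is a local diffeomorphism, an ``equivalence'' in a suitable sense. The cleanest route is: the identity map on $M$ together with the bibundle $\cB_1 \times_M^{\s,\cdot} \cB_2$ (or, more carefully, the appropriate homotopy pullback) exhibits a Morita equivalence between $\cB_1$ and $\cB_2$, with $Q$-related points being literally equal points of $M$.

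Granting this Morita equivalence, Remark \ref{remark:Morita-general-stratifications} (which itself follows from Remark \ref{remark:Morita-general}: $Q$-related points have isomorphic isotropy groups and isomorphic normal isotropy representations) immediately gives that the canonical stratifications $\cS_{\cB_1}(M)$ and $\cS_{\cB_2}(M)$ coincide. One should note that properness is preserved under Morita equivalence, so both groupoids being proper is consistent; and that one does not even need the full Morita machinery for the infinitesimal stratification since it is trivial on both sides (discrete isotropy). Alternatively, one can argue directly via Lemma \ref{stratif-lemma-1}: using the slice/linearization theorem for proper foliation groupoids, near each $x\in M$ the groupoid $\cB_i$ is isomorphic to an action groupoid $\cB_{i,x}\ltimes (T_x\cF \times V_i)$ for a slice, and since $\cB_{1,x}\cong \cB_{2,x}$ act with the same (trivial-along-the-leaf) linearized data, the orbit-type partitions agree on a common neighborhood of $x$; then Lemma \ref{stratif-lemma-1} yields equality of the stratifications obtained by passing to connected components.

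The main obstacle is making precise the claim that two s-connected integrations of the same foliation are Morita equivalent ``over the identity of $M$'': one must verify that the natural candidate bibundle is smooth, that the two moment maps are surjective submersions, that the actions are free and proper (properness using that the groupoids are proper, or at least proper enough on the relevant isotropy), and that the principality conditions hold. I expect this to be essentially a repackaging of the classical comparison of integrations (e.g. via monodromy/holonomy groupoids) rather than anything genuinely hard, but it is the step where care is needed. Once that is in place, everything else is a direct citation of the Morita-invariance of the canonical stratification for proper groupoids established in \cite{CraMe} and recalled in Remark \ref{remark:Morita-general-stratifications}.
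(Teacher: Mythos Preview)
Your main approach has a genuine gap: two s-connected integrations of the same foliation are \emph{not} in general Morita equivalent. Morita equivalent groupoids have isomorphic isotropy groups at $Q$-related points, but the isotropy groups of different integrations of $(M,\cF)$ at a given $x$ can differ: they range from the full fundamental group $\pi_1(S_x,x)$ (for the monodromy groupoid) down to the holonomy group $\Hol_x$ (for the holonomy groupoid), with all intermediate quotients in between. For a leaf with nontrivial $\pi_1$ and trivial holonomy these are genuinely different, so no Morita bibundle over the identity of $M$ can exist. The same error reappears in your alternative argument when you write ``since $\cB_{1,x}\cong\cB_{2,x}$'': this is simply false.

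What makes the proposition true is a different observation. For a foliation groupoid the normal isotropy representation $\cB_x\to\GL(\nu_x)$ always has image the linear holonomy group $\Hol_x$, regardless of which integration one chooses; consequently the fixed-point sets $\nu_x^{\cB_x}$ and $\nu_x^{\Hol_x}$ coincide. The paper exploits this by reducing to a comparison of any $\cB$ with $\Hol(M,\cF)$: using the linearization theorem it passes to the local models $\Gamma\ltimes V$ (with $\Gamma=\cB_x$) and $\Hol_x\ltimes V$, and then invokes Lemma \ref{stratif-lemma-1} together with the equality $V^{\Gamma}=V^{\Hol_x}$. So Morita invariance is used only to transport the problem to the linear slice, not to compare the two integrations directly; the actual content is that the canonical stratification depends only on the effective part of the normal action.
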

\begin{proof} 
Recall that any foliation groupoid $\cB\tto M$ integrating $\cF$ covers the holonomy groupoid $\Hol(M,\cF)$, and the properness of $\cB$ forces the properness of $\Hol(M,\cF)$. Therefore it suffices to compare the stratifications induced by $\cB$ and $\Hol(M,\cF)$. 

In short, the argument is based on a combination of Lemma \ref{stratif-lemma-1} and the Morita equivalence underlying the normal form for $\cB$ from \cite[Theorem 2.5.5]{CFM-II}. More precisely, to check the condition from Lemma \ref{stratif-lemma-1} at a point $x\in M$ for the partitions associated to $\cB$ and $\Hol(M,\cF)$, we make use of \cite[Theorem 2.5.5]{CFM-II} applied at $x$ and we consider the resulting Morita equivalence
\[
\xymatrix{
 \cB|_{U} \ar@<0.25pc>[d] \ar@<-0.25pc>[d]  & \ar@(dl, ul) & Q\ar[dll]^-{p_1}\ar[drr]_-{p_2} & \ar@(dr, ur)   & \Gamma\ltimes V \ar@<0.25pc>[d] \ar@<-0.25pc>[d]\\
U & & & & V  
}
\]
where $\Gamma= \cB_x$, $V= \nu_x(S)$ and $x$ is $Q$-related to $0$. The holonomy group $\Hol_x$ is the image of the representation map $\Gamma \to \GL(V)$ and the Morita equivalence above induces one between  the holonomy groupoid and the action groupoid $\Hol_x\ltimes V$. By Remark \ref{remark:Morita-general-stratifications}, to check the equality from Lemma \ref{stratif-lemma-1} at $x$ for the partitions associated to $\cB$ and $\Hol(M, \cF)$, we have to check the similar equality on the right hand side at $0$ for the partitions associated to $\Gamma\ltimes V$ and $\Hol_x\ltimes V$. This boils down to the fact that a vector $v\in V$ is $\cB_x$-invariant if and only if it is $\Hol_x$-invariant. 
\end{proof}

\begin{definition} \label{def:foliations:hol:stratif} 
Given a foliation of proper type $\cF$ of $M$, the {\bf holonomy stratification  induced by $\cF$}, denoted $\cS_{\Hol}(M,\cF)$, is the canonical stratification induced by any s-connected proper foliation groupoid integrating $\cF$. 
\end{definition}

Since each strata is saturated, $\cS_{\Hol}(M,\cF)$ induces a partition of the leaf space $M/\cF$, denoted 
$\cS_{\Hol}(M/\cF)$. This turns out to be a stratification in the sense of definition \ref{def:str:man}. The smoothness of the strata is obtained as a corollary of the previous proof.
 We call a leaf {\bf principal} if it is contained in the principal part of the holonomy stratification.

\begin{corollary}\label{corolarry-normal} For any stratum $\Sigma\in \cS_{\Hol}(M,\cF)$ one has
\[ \nu_{\Sigma}(\cF)= \nu_{M}(\cF)^{\Hol}\]
and $(\Sigma, \cF|_{\Sigma})$ is a simple foliation.  In particular a leaf is principal if and only if it has trivial holonomy.
\end{corollary}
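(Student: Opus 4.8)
The plan is to extract everything from the proof of Proposition~\ref{prop:stratification:foliation} together with the local normal form from \cite[Theorem 2.5.5]{CFM-II}. Recall that the normal form gives, around each $x\in M$, a Morita equivalence of $\cB|_U$ with an action groupoid $\Hol_x\ltimes V$, where $V=\nu_x(S)$ and $\Hol_x$ is the (finite) holonomy group acting linearly. Under this Morita equivalence, the point $x$ is $Q$-related to $0\in V$, and by Remark~\ref{remark:Morita-general}(ii) the normal representation $\nu_x(\cF)$ of $\cB_x$ (equivalently of $\Hol_x$) is identified with the linear $\Hol_x$-representation on $V$. The key observation, already used at the end of that proof, is that the canonical stratification near $0$ for a linear action of a finite group $\Hol_x$ on $V$ is, in a neighborhood of $0$, the stratification by the linear subspaces $V^H$ as $H$ ranges over subgroups of $\Hol_x$; in particular the stratum through $0$ is exactly the fixed-point subspace $V^{\Hol_x}$.

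First I would compute $\nu_\Sigma(\cF)$ pointwise. Fix $x\in\Sigma$ and let $S$ be the leaf through $x$. Since the strata of $\cS_{\Hol}(M,\cF)$ are saturated, $\Sigma\supset S$, and the local normal form identifies a neighborhood of $x$ in $M$ with a neighborhood of $(0,S\text{-direction})$ in the model, in which $\Sigma$ corresponds to $S\times V^{\Hol_x}$ and $\cF$ to the foliation whose leaves are (locally) $S\times\{v\}$. Hence $\nu_x(\Sigma)\cap$ (the conormal data to $\cF$) is exactly $V^{\Hol_x}$, while $\nu_x(M)/T_xS\cong V$ carries the holonomy representation; so $\nu_x(\Sigma)(\cF)=\nu_x(M)(\cF)^{\Hol}$. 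As $x$ varies over $\Sigma$ these identifications are compatible (the holonomy representation is locally constant along a stratum, which is the defining property of the canonical stratification), giving the bundle-level equality $\nu_\Sigma(\cF)=\nu_M(\cF)^{\Hol}$.

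Next I would show $(\Sigma,\cF|_\Sigma)$ is a simple foliation. In the local model, $\cF|_\Sigma$ corresponds to the foliation of $S\times V^{\Hol_x}$ by the slices $S\times\{v\}$, which is manifestly simple (its leaf space is the manifold $V^{\Hol_x}$, and the holonomy of each leaf of $\cF|_\Sigma$ is trivial because $\Hol_x$ acts trivially on $V^{\Hol_x}$). Since simplicity (equivalently, triviality of holonomy of every leaf) is a local-to-global property for foliations once every leaf has trivial holonomy and the leaf space is Hausdorff — and the leaf space of $\cF|_\Sigma$ is the corresponding stratum of the leaf-space stratification $\cS_{\Hol}(M/\cF)$, which is a manifold by the smoothness already established — one concludes $(\Sigma,\cF|_\Sigma)$ is simple. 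Finally, the ``in particular'' statement is immediate: a leaf $L$ is principal iff its stratum is open iff $\Hol_L$ acts trivially on $\nu_L(\cF)$, i.e. iff $\Hol_L$ is trivial (the holonomy representation of a foliation is faithful).

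The main obstacle is bookkeeping rather than a conceptual difficulty: one must verify that the pointwise identifications $\nu_x(\Sigma)(\cF)\cong\nu_x(M)(\cF)^{\Hol}$ assemble into a smooth subbundle, and that the local model genuinely identifies $\Sigma$ with $S\times V^{\Hol_x}$ near $x$ (not merely set-theoretically but as embedded submanifolds), which is where Remark~\ref{remark:Morita-general-stratifications} and the Morita-invariance of the canonical stratification do the work. A secondary point is to make precise the sense in which simplicity descends from the local model — here one invokes that the leaf-space stratum is a manifold (the smoothness of the strata of $\cS_{\Hol}(M/\cF)$, which, as the statement notes, is itself a corollary of the preceding proof) and that the quotient map $\Sigma\to\Sigma/(\cF|_\Sigma)$ is, locally, the projection $S\times V^{\Hol_x}\to V^{\Hol_x}$, hence a submersion onto a manifold.
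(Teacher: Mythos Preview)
Your proposal is correct and follows essentially the same route as the paper for the normal-bundle equality: both extract it from the local model in the proof of Proposition~\ref{prop:stratification:foliation}, where the stratum through $x$ is identified with $V^{\Hol_x}$.

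For simplicity of $(\Sigma,\cF|_\Sigma)$, the paper's argument is shorter and cleaner than yours. Once the equality $\nu_\Sigma(\cF)=\nu_M(\cF)^{\Hol}$ is established, the holonomy of $\cF|_\Sigma$ (which acts on $\nu_\Sigma(\cF)$) is trivial by construction; the paper then simply invokes the general fact that a proper foliation with trivial holonomy is simple. Your argument instead appeals to the smoothness of the corresponding leaf-space stratum and checks the quotient map is locally a projection. This works, but it is more roundabout, and the parenthetical ``simplicity (equivalently, triviality of holonomy of every leaf)'' is imprecise without the properness hypothesis---trivial holonomy alone does not imply simplicity (think of an irrational line on a torus). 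The paper's one-line invocation avoids this slip and makes the logical dependence transparent.
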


\begin{proof} 
The equality follows from the previous proof. The second part follows from the fact that proper foliations with trivial holonomy must be simple. 
\end{proof}


\subsection{The canonical infinitesimal stratification of PMCTs}
\label{sec:The canonical infinitesimal stratification of PMCTs}

Finally, we move to the setting of PMCTs. We will show that the canonical and the canonical infinitesimal stratifications 
induced by the corresponding proper symplectic groupoids do not depend on the integrations themselves but only
on the actual Poisson bivector, and that their strata will inherit Poisson-geometric structures. In this subsection we discuss the canonical infinitesimal stratification and in the next one the canonical stratification.

Besides the equivalence relation $x \underset{\inf}{\sim} y$, defined by \eqref{eq:inf-equiv}, we also consider
the equivalence relation
\[ x\underset{\textrm{cod}}{\sim} y  \quad\textrm{iff}\quad{\codim}_{\gg_{x}}\,\zz(\gg_{x})={\codim}_{\gg_{y}}\,\zz(\gg_{y}),\]
that partitions $M$ into the subsets
\begin{equation}\label{eq:cod-k-locus}
\Sigma_k^{\inf}(M, \pi)=\{x\in M\,|\,\mathrm{codim}_{\gg_{x}}\,\zz(\gg_{x})=k\}, \quad k=0,1,\dots.
\end{equation}
In the case of PMCTs the isotropy Lie algebras are of compact type, and for these we have
\[ \codim_{\gg}\,\zz(\gg)= \mathrm{dim}\, \gg^{\ss},\]
where $\gg^\ss$ is the semi-simple part of $\gg$. Indeed, this follows from the usual decomposition
\begin{equation}\label{eq:cpctLie-decomp}
\gg=\zz(\gg)\oplus \gg^{\ss},\quad \gg^{\ss}:=[\gg,\gg].
\end{equation}

\begin{theorem}
\label{thm:inf-strat} 
Let $(M, \pi)$ be a Poisson manifold of proper type.  Then the partition $\cSi(M, \pi)$ is a stratification of $M$ and its codimension $k$ strata are precisely the connected components of $\Sigma_k^{\inf}(M, \pi)$. This stratification satisfies:
\begin{align}
\Sigma_0^{\inf}(M, \pi)&=M^{\reg}; \label{eq:compute:Sigma0}\\
\Sigma_1^{\inf}(M, \pi)&=\Sigma_2^{\inf}(M, \pi)=\emptyset, \label{eq:compute:Sigma12}
\end{align}
where $M^{\reg}$ consists of points where the rank of $\pi$ is locally constant. 
In particular, 
the set $M^{\reg}$ is connected, so all regular leaves have the same dimension, and
the inclusion $M^{\reg}\subset M$ induces an isomorphism of fundamental groups.
\end{theorem}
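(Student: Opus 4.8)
The plan is to prove Theorem~\ref{thm:inf-strat} in stages, first establishing that $\cSi(M,\pi)$ is a smooth stratification and identifying its codimension-$k$ strata, then computing $\Sigma_0^{\inf}$, $\Sigma_1^{\inf}$, $\Sigma_2^{\inf}$, and finally deducing the statements about $M^\reg$ from the absence of low-codimension strata.

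\textbf{Step 1: $\cSi(M,\pi)$ is a smooth stratification with the asserted strata.} Fix a proper symplectic integration $(\cG,\Omega)\tto M$, which exists by hypothesis. By \eqref{eq:Poisson:fix2} the isotropy Lie algebra $\gg_x$ of $(M,\pi)$ at $x$ coincides with the Lie algebra of the isotropy group $\cG_x$, so the equivalence relation $\underset{\inf}{\sim}$ on $(M,\pi)$ is exactly the infinitesimal isotropy type relation of the proper groupoid $\cG$. Hence $\cSi(M,\pi)=\cSi_{\cG}(M)$, which is a smooth stratification by the results recalled in Section~\ref{sec:The stratifications induced by a proper Lie groupoid} (see \cite{CraMe}). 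Next I claim its codimension-$k$ stratum is the connected-component decomposition of $\Sigma_k^\inf(M,\pi)$. By the local normal form (Theorem~\ref{cor:nform}, or equivalently the linear/Hamiltonian local models of Section~\ref{sec:nform}), near a point $x$ the Poisson structure is modeled on $P\times_G\gg^*$, so a transversal to the symplectic leaf through $x$ is identified with a $G$-invariant neighborhood of $0$ in $\gg^*=\gg_x^*$, and the isotropy Lie algebra at a nearby point corresponding to $\xi\in\gg_x^*$ is $(\gg_x)_\xi$, the coadjoint isotropy. The codimension of the infinitesimal stratum through $x$ inside $M$ equals the codimension inside $\gg_x^*$ of the set of $\xi$ with $(\gg_x)_\xi\cong\gg_x$; since $\gg_x$ is of compact type, $(\gg_x)_\xi\cong\gg_x$ iff $\xi$ annihilates $[\gg_x,\gg_x]$, i.e. $\xi$ lies in the subspace $\zz(\gg_x)^*\subset\gg_x^*$ (using \eqref{eq:cpctLie-decomp}). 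Therefore the codimension is $\dim\gg_x^*-\dim\zz(\gg_x)=\dim\gg_x^\ss=\codim_{\gg_x}\zz(\gg_x)$. Lemma~\ref{stratif-lemma-aux} then forces the codimension-$k$ strata of $\cSi(M,\pi)$ to be precisely the connected components of $\Sigma_k^\inf(M,\pi)$.

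\textbf{Step 2: Computing $\Sigma_0^\inf$, $\Sigma_1^\inf$, $\Sigma_2^\inf$.} By Step 1 and the definition of the regular locus in Section~\ref{sec:The stratifications induced by a proper Lie groupoid}, $\Sigma_0^\inf(M,\pi)=M^{\cSi_{\cG}(M)-\reg}=M^\reg$, the locus where $\gg_x$ acts trivially on the normal space $\nu_x$; via the local model this is exactly where the rank of $\pi$ is locally constant, giving \eqref{eq:compute:Sigma0}. For \eqref{eq:compute:Sigma12}, I use the classification of compact Lie algebras: if $\codim_{\gg_x}\zz(\gg_x)=\dim\gg_x^\ss=k$ then $\gg_x^\ss$ is a semisimple compact Lie algebra of dimension $k$, but there is no nonzero semisimple Lie algebra of dimension $1$ or $2$ (the smallest is $\su(2)$, of dimension $3$). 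Hence $\Sigma_1^\inf(M,\pi)=\Sigma_2^\inf(M,\pi)=\emptyset$. I expect this dimension-counting argument to be the cleanest point of the proof; the main obstacle is rather in Step 1, namely being careful that the local normal form genuinely computes the codimension of the \emph{infinitesimal} stratum (not the orbit-type stratum) and that the identification of transversals with neighborhoods in $\gg_x^*$ respects isotropy Lie algebras — this is where the Morita-equivalence bookkeeping of Remark~\ref{remark:Morita-general} must be invoked.

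\textbf{Step 3: Consequences for $M^\reg$.} Since $\cSi(M,\pi)$ has no strata of codimension $1$ or $2$ by \eqref{eq:compute:Sigma12}, Lemma~\ref{lem:stratif-lemma-4} (applied with $p=2$) shows that the inclusion $M^{\cSi\text{-}\reg}=M^\reg\hookrightarrow M$ induces isomorphisms $\pi_j(M^\reg)\to\pi_j(M)$ for $j=0,1$. The case $j=0$ gives that $M^\reg$ is connected (using that $M$ is connected, as implicit throughout), hence all regular leaves lie in a single connected saturated set and, the rank of $\pi$ being constant there, have the same dimension. The case $j=1$ gives the asserted isomorphism of fundamental groups $\pi_1(M^\reg)\cong\pi_1(M)$. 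This completes the proof.
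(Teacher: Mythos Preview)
Your proof is correct and follows essentially the same route as the paper's: both use the Hamiltonian local normal form to identify the infinitesimal stratum through $x$ with (the saturation of) $\zz(\gg_x)^*\subset\gg_x^*$ in the transversal, read off the codimension $\dim\gg_x^{\ss}$, and then invoke Lemma~\ref{lem:stratif-lemma-4} for the topological consequences. One minor point: your appeal to Lemma~\ref{stratif-lemma-aux} is slightly off, since that lemma compares two \emph{stratifications} and you have not independently shown that $\underset{\textrm{cod}}{\sim}$ defines one; but your argument does not actually need it, as the codimension computation already identifies the codimension-$k$ locus of $\cSi(M,\pi)$ with $\Sigma_k^{\inf}(M,\pi)$ directly (the paper instead uses Lemma~\ref{stratif-lemma-1} together with Morita invariance of the partitions to make the comparison).
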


\begin{remark}
    Equation \eqref{eq:compute:Sigma0} says that the regular locus of the Poisson bivector coincides with the regular locus of the infinitesimal stratification, so we can unambiguous call this locus the \textbf{regular locus} of $(M,\pi)$. We define the \textbf{subregular locus} of $(M,\pi)$ as the subregular locus of the infinitesimal stratification
    \[  M^{\subreg}:= M^{\cSi(M,\pi)-\subreg}.\]
    According to the theorem, this is the subset of $M^\sing:=M\backslash M^\reg$ where the rank $\pi|_{M^\sing}$ is maximal.
\end{remark}

\begin{proof} 
Let $(\G, \Omega)\tto M$ be a proper symplectic groupoid integrating $(M, \pi)$. The fact that
$\cSi(M, \pi)$ is a stratification follows from the general discussion on proper Lie groupoids; the discussion below provides further insight.

To prove that $\underset{\textrm{cod}}{\sim}$ and $\underset{\inf}{\sim}$ induce the same stratifications, we proceed like in the proof of Proposition \ref{prop:stratification:foliation}. We use Lemma \ref{stratif-lemma-1} and the normal form  of Theorem \ref{thm-lf-Poisson-v2}, which yields a Morita equivalence (see Remark \ref{remark:Morita-general})
\[
\xymatrix{
 \G|_U \ar@<0.25pc>[d] \ar@<-0.25pc>[d]  & \ar@(dl, ul) & Q \ar[dll]^{\alpha}\ar[drr]_{\mu}  & \ar@(dr, ur) & G\ltimes V \ar@<0.25pc>[d] \ar@<-0.25pc>[d]  \\
U&  & & & V}
\]
with $V\subset \gg^*$ invariant under the coadjoint action, and with $x_0$ $Q$-related to $0\in V$. By Remark \ref{remark:Morita-general-stratifications} and the fact that $\underset{\textrm{cod}}{\sim}$ only depends on isotropy Lie algebras, to prove the condition from Lemma \ref{stratif-lemma-1} at $x_0$ it suffices to check that
\begin{equation}\label{eq:analogue:for:proof}
V\cap {\Sigma_0}= V\cap \Sigma_k^{\inf}(\gg^*,\pi_{\gg^*}), 
\end{equation}
where $\Sigma_0$ is the stratum of the canonical stratification of $\gg^*$ through the origin and $k$ equals the codimension of $\zz(\gg)$ in $\gg$. 

Notice that $V\cap {\Sigma_0}=V^{\gg}$, the fixed-point set of the restriction of the coadjoint action to $V$. On the other hand, we have that $\Sigma_k^{\inf}(\gg^*,\pi_{\gg^*})\cap V$ consists of the points $\xi\in V$ such that $\zz(\gg_\xi)$ has the same dimension as $\zz(\gg)$. Since $\gg$ is compact, this happens if and only if $\xi\in\zz(\gg)^*$, which is precisely $V^{\gg}$. 

For later reference, we note that the previous argument gives the following description of the stratum $\Sigma\in\cSi(M,\pi)$ through $x_0$ in the Hamiltonian local normal form
\begin{equation}
\label{eq:stratum:local:model}
U\cap \Sigma= 
\mu^{-1}(\zz(\gg)^*)/G. 
\end{equation}

The equality (\ref{eq:compute:Sigma0}) is a restatement of the fact that a point $x\in M$ is regular for $\pi $ if and only if
its isotropy Lie algebra is abelian. The equalities (\ref{eq:compute:Sigma12}) are consequences of the fact that 
there are no semisimple Lie algebras of compact type of dimension $1$ and $2$. 
Lemma \ref{lem:stratif-lemma-4} implies that the inclusion $M^{\mathrm{reg}}=\Sigma_0\subset M$ induces
an isomorphism on $\pi_0$ and $\pi_1$, proving also the remaining claims about $M^{\reg}$. 
\end{proof}

\begin{corollary}\label{corollary: infinitesimal-stratum-conormal} For any stratum $\Sigma$ of the infinitesimal stratification of a Poisson manifold of proper type $(M, \pi)$ and any $x\in \Sigma$ one has
\begin{equation}
\label{eq:strata:conormal}
(T_x\Sigma)^0= 
((\gg^*_x)^{\gg_x-\inv})^{0}= \gg^\ss_x.
\end{equation}
In particular, 
\begin{equation}
\label{eq:strata:normal}
T_x\Si/T_xS_x=\zz_x(\gg)^*.
\end{equation}
Moreover, the following are equivalent:
\begin{enumerate}[(a)]
\item $\Sigma$ is a subregular stratum;
\item the codimension of $\Sigma$ is $3$;
\item $\gg_x^\ss\cong \mathfrak{su}(2)$ for a/any $x\in \Si$. 
\end{enumerate}
\end{corollary}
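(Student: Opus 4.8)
The plan is to carry out everything inside the Hamiltonian normal form of Theorem~\ref{thm-lf-Poisson-v2}. First I would fix $x\in\Sigma$, set $\gg:=\gg_x$, and take the model around the leaf $S_x$: a free Hamiltonian $G$-space $\mu\colon(Q,\omega)\to\gg^*$ with $G$ compact of Lie algebra $\gg$, $\mu$ a submersion with connected fibers, $0\in\mu(Q)$, presenting a saturated neighbourhood $U$ of $S_x$ as $Q/G$ with $S_x\cong\mu^{-1}(0)/G$; by \eqref{eq:stratum:local:model} one also has $U\cap\Sigma=\mu^{-1}(\zz(\gg)^*)/G$, and by Remark~\ref{remark:Morita-general} the underlying symplectic Morita equivalence identifies the conormal $(T_xS_x)^0$ (which is $\gg_x$ by \eqref{eq:Poisson:fix2}) with $\gg$, and $\nu_x(S_x)$ with the coadjoint $\gg^*$, so that $\zz(\gg)^*$ corresponds to $(\gg_x^*)^{\gg_x-\inv}$. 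For \eqref{eq:strata:conormal} I would then pick $q\in\mu^{-1}(0)$ with $[q]=x$, note that $Q\to Q/G$ is a principal bundle and that $T_q(G\cdot q)\subseteq\ker\d\mu_q\subseteq(\d\mu_q)^{-1}(\zz(\gg)^*)$ inside $T_qQ$, quotient these three nested subspaces by $T_q(G\cdot q)$ to realise $T_xS_x\subseteq T_x\Sigma\subseteq T_xM$, and take annihilators in $T_x^*M$ (the annihilator of $T_q(G\cdot q)$ in $T_q^*Q$): since $\d\mu_q$ is surjective this yields $(T_x\Sigma)^0=(\d\mu_q)^*\big(\text{annihilator of }\zz(\gg)^*\text{ in }\gg\big)$, and by the compact decomposition \eqref{eq:cpctLie-decomp} that annihilator is $\gg^\ss$, giving $(T_x\Sigma)^0=\gg_x^\ss$ under $(T_xS_x)^0\cong\gg_x$. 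The middle equality in \eqref{eq:strata:conormal} is the same decomposition read on the dual: a covector on a compact Lie algebra is coadjoint-fixed iff it kills $[\gg,\gg]=\gg^\ss$, so $(\gg_x^*)^{\gg_x-\inv}=\zz(\gg_x)^*$, whose annihilator is $\gg_x^\ss$. Then \eqref{eq:strata:normal} follows either directly (the map $\d\mu_q$ identifies $T_x\Sigma/T_xS_x=(\d\mu_q)^{-1}(\zz(\gg)^*)/\ker\d\mu_q$ with $\zz(\gg)^*$) or by dualising \eqref{eq:strata:conormal} via $T_xM/T_xS_x\cong\gg_x^*$ and $\gg_x=\zz(\gg_x)\oplus\gg_x^\ss$. (Alternatively one can get \eqref{eq:strata:conormal} without the tangent bookkeeping: $S_x\subseteq\Sigma$ gives $(T_x\Sigma)^0\subseteq\gg_x$; since $\Sigma$ is a regular Poisson submanifold its isotropy $\gg_x/(T_x\Sigma)^0$ at $x$ is abelian, so $\gg_x^\ss\subseteq(T_x\Sigma)^0$; and $\dim(T_x\Sigma)^0=\codim\Sigma=\dim\gg_x^\ss$ by Theorem~\ref{thm:inf-strat}.)

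For the equivalences the easy half is $(b)\Leftrightarrow(c)$ and $(b)\Rightarrow(a)$. By Theorem~\ref{thm:inf-strat} the codimension of $\Sigma$ is the common value of $\dim\gg_x^\ss$ on $\Sigma$, and $\su(2)$ is the only compact semisimple Lie algebra of dimension $3$, which gives $(b)\Leftrightarrow(c)$. If $\codim\Sigma=3$ then $\Sigma$ is not open, hence non-regular; and any stratum $T$ with $\Sigma\subsetneq\overline T$ has $\dim T>\dim\Sigma$ by the frontier condition, i.e. $\codim T<3$, so $\codim T=0$ by \eqref{eq:compute:Sigma12} and $T$ is regular. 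Thus $\Sigma$ is maximal among the non-regular strata, which is $(b)\Rightarrow(a)$.

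The substantive point, and the step I expect to be the main obstacle, is $(a)\Rightarrow(b)$. Suppose $\Sigma$ is subregular; being non-regular it has $\codim\Sigma=k\ge 3$ and $\gg_x^\ss\neq 0$ (no codimension $1$ or $2$ strata, by \eqref{eq:compute:Sigma12}), and I must exclude $k>3$, equivalently $\gg_x^\ss\not\cong\su(2)$. Fixing $x\in\Sigma$ and the local model above, $V:=\mu(Q)$ is an open coadjoint-invariant neighbourhood of $0$ in $\gg^*$ and $\dim\gg^\ss=k$. The classical structure theory of compact semisimple Lie algebras produces, for any nonzero such algebra, elements $\eta$ arbitrarily close to $0$ whose centralizer has semisimple part $\cong\su(2)$ — e.g. $\eta$ lying on exactly one reflection hyperplane of a Cartan subalgebra of $\gg^\ss$, so that $\gg_\eta=\zz(\gg_\eta)\oplus\su(2)$. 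Since $\mu$ is a submersion these values are attained at points $q'$ arbitrarily close to $q$, and the leaf of $(M,\pi)$ through $[q']$ is $\mu^{-1}(\cO_\eta)/G$ with isotropy Lie algebra $\gg_\eta$, hence $\dim\gg_{[q']}^\ss=3$ and $[q']\in\Sigma_3^{\inf}(M,\pi)$, cf. \eqref{eq:cod-k-locus}. Thus $x$ is a limit of points of $\Sigma_3^{\inf}$ not lying in it; since the codimension-$3$ strata are the connected components of $\Sigma_3^{\inf}$ (Theorem~\ref{thm:inf-strat}), the frontier condition places $x$ in the closure of some codimension-$3$ stratum $T$, and then — $\Sigma$ being a connected stratum through $x$ and disjoint from $T$ — all of $\Sigma$ lies in $\overline T$. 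So $\Sigma\prec T$ with $T$ non-regular, contradicting the maximality of $\Sigma$; hence $k=3$. The only delicate step is precisely this last one: one must be confident that the local coadjoint model genuinely forces codimension-$3$ strata to accumulate onto every non-regular stratum, and that is exactly the role of the classical fact about one-wall (subregular) elements of a compact semisimple Lie algebra.
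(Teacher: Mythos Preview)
Your proposal is correct and follows essentially the same route as the paper: both deduce \eqref{eq:strata:conormal} and \eqref{eq:strata:normal} from the local description \eqref{eq:stratum:local:model}, and both reduce the equivalences (a)--(c) to classical facts about the coadjoint stratification of $\gg^*$ via the Hamiltonian normal form. The paper's proof is a two-line pointer to the linear case; your write-up simply unpacks that pointer, most notably in the direction $(a)\Rightarrow(b)$, where you make explicit the Lie-theoretic fact (existence of one-wall elements arbitrarily close to $0$) that guarantees every non-regular stratum lies below a codimension-$3$ stratum.
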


\begin{proof}
The first part of the result follows from \eqref{eq:stratum:local:model}. The equivalences (a)-(c) 
hold when $(M,\pi)$ is the dual of a Lie algebra of compact type. Hence, as in the proof of the theorem, they hold for any Poisson manifold of proper type.
\end{proof}


\begin{remark}
Theorem \ref{thm:inf-strat}  and Corollary \ref{corollary: infinitesimal-stratum-conormal} admit generalizations to the infinitesimal stratification of any proper groupoid $\cG$. Namely, one can show that $\cSi(\cG)$ coincides with the stratification induced by the codimension of the fixed point set of the actions of the isotropy Lie algebras on the normal spaces. The identity \eqref{eq:compute:Sigma0} remains valid, but not the identities \eqref{eq:compute:Sigma12}. Also, the identity \eqref{eq:strata:conormal} now becomes
\[ (T_x\Sigma)^0=(\nu_x(\O_x)^{\gg_x})^0. 
\]
\end{remark}

We now move to the Poisson geometric properties of the canonical infinitesimal stratification. Notice that, by the definition of $ \underset{\inf}{\sim}$, the strata are saturated with respect to the symplectic foliation, hence are Poisson  submanifolds. As we shall see shortly, they belong to the following very special class of Poisson submanifolds. 

\begin{definition}
\label{def:core}
A \textbf{core Poisson submanifold} of $(M, \pi)$ is  a Poisson submanifold $\Sigma$ with  symplectic foliation $\cF_{\Sigma}$ such that:
\begin{enumerate}[(i)]
\item $\Sigma$ is saturated and $\cF_{\Sigma}$ is regular;
\item for each $x\in \Sigma$, the canonical map  
\begin{equation}
    \label{map:core:condition}
    \gg_x= \nu_{x}^{*}(\cF_{\pi}) \to \nu_{x}^{*}(\cF_{\Sigma}), \quad \xi\mapsto \xi|_{T\Sigma}, 
\end{equation} 
restricts to an isomorphism 
\begin{equation}\label{restr-to-sigma-new}
r: \zz(\gg_x)\overset{\sim}{\rmap} \nu_{x}^{*}(\cF_{\Sigma}). 
\end{equation}
\end{enumerate}
\end{definition}

Core Poisson submanifolds allow one to reduce the study of certain invariants of Poisson manifolds to the regular case, where its geometry is more transparent and they become easier to compute.

\begin{theorem}\label{thm:inf-strat-type} 
If $(M,\pi)$ is a Poisson manifold of (strong) $\mathcal{C}$-type, then so are all the strata of $\cSi(M, \pi)$. Moreover, every strata is a core Poisson submanifold of $(M, \pi)$.
\end{theorem}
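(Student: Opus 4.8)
The plan is to handle the two assertions separately: the core property is essentially a reformulation of Corollary~\ref{corollary: infinitesimal-stratum-conormal}, while for the $\cC$-type property I would restrict a proper symplectic integration of $(M,\pi)$ to $\Sigma$ and then divide out the bundle of compact groups integrating the semisimple part of the isotropy. For the first assertion, fix a stratum $\Sigma\in\cSi(M,\pi)$. As noted just before the theorem, $\Sigma$ is a saturated Poisson submanifold, so in Definition~\ref{def:core}(i) only the regularity of $\cF_\Sigma$ requires an argument. Since $\Sigma$ is saturated, the leaf of $\cF_\Sigma$ through $x$ is the symplectic leaf $S_x$ of $(M,\pi)$, and \eqref{eq:strata:normal} gives $\dim\Sigma-\dim S_x=\dim\zz(\gg_x)$; both $\dim\Sigma$ and $\dim\zz(\gg_x)$ are constant along the connected manifold $\Sigma$, so $\dim S_x$ is as well and $\cF_\Sigma$ is regular. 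For condition (ii), any $\xi\in\gg_x=(T_xS_x)^0\subset T^*_xM$ restricts to an element of $(T_xS_x)^0\subset T^*_x\Sigma$, so the map $\xi\mapsto\xi|_{T_x\Sigma}$ does land in $\nu^*_x(\cF_\Sigma)$; its kernel is $\gg_x\cap(T_x\Sigma)^0=\gg_x^\ss$ by \eqref{eq:strata:conormal}. Using the splitting \eqref{eq:cpctLie-decomp}, the restriction $r$ of this map to $\zz(\gg_x)$ is injective, and the count $\dim\zz(\gg_x)=\dim\Sigma-\dim S_x=\dim\nu^*_x(\cF_\Sigma)$ shows $r$ is an isomorphism. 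This is exactly Definition~\ref{def:core}(ii).

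Next I construct an integration of $(\Sigma,\pi|_\Sigma)$. Let $(\G,\Omega)\tto M$ be a proper symplectic groupoid integrating $(M,\pi)$ (chosen source $1$-connected, resp.\ source proper, to track the strong, resp.\ s-proper case). Since $\Sigma$ is saturated, $\G|_\Sigma:=\s^{-1}(\Sigma)=\t^{-1}(\Sigma)$ is a Lie subgroupoid over $\Sigma$, still proper, with the same source fibers as $\G$; its Lie algebroid is $A|_\Sigma=T^*M|_\Sigma$, with anchor $\pi^\sharp$ and isotropy bundle $\bigsqcup_{x\in\Sigma}\gg_x$. The subbundle $\gg^\ss:=\bigsqcup_{x\in\Sigma}\gg_x^\ss$ is a bundle of ideals, and by \eqref{eq:strata:conormal} it equals $(T\Sigma)^0$, hence is smooth. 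Properness of $\G$ makes the isotropy groups compact, so $\gg^\ss$ integrates inside $\G|_\Sigma$ to a closed embedded bundle of compact Lie groups $\cH\tto\Sigma$, normal because $\gg^\ss$ is a bundle of ideals. Set $\G_\Sigma:=(\G|_\Sigma)/\cH$: having compact fibers, $\cH$ acts freely and properly, so $\G_\Sigma$ is a Lie groupoid over $\Sigma$ with Lie algebroid $A|_\Sigma/\gg^\ss$. Since $\Sigma$ is a Poisson submanifold we have $(T\Sigma)^0\subset\ker\pi^\sharp$, so the restriction map $T^*M|_\Sigma\to T^*\Sigma$ identifies $A|_\Sigma/\gg^\ss$ with the cotangent algebroid of $(\Sigma,\pi|_\Sigma)$; thus $\G_\Sigma$ integrates $(\Sigma,\pi|_\Sigma)$.

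It remains to put the symplectic structure on $\G_\Sigma$ and identify its type. The form $\Omega|_{\G|_\Sigma}$ is closed and multiplicative. For $\alpha\in\Omega^1(M)$ with $\alpha|_\Sigma$ a section of $\gg^\ss=(T\Sigma)^0$, the associated invariant vector field is tangent to $\G|_\Sigma$ (it is source-fiber tangent, as $\pi^\sharp(\alpha)|_\Sigma=0$) and contracts $\Omega$ into a pullback of $\alpha|_\Sigma$ along $\s$ (or $\t$), which vanishes on $\G|_\Sigma$ since $\s,\t$ map $\G|_\Sigma$ into $\Sigma$ and $\alpha|_\Sigma\in(T\Sigma)^0$. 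Hence the $\cH$-orbit directions lie in $\ker(\Omega|_{\G|_\Sigma})$, and together with multiplicativity this shows $\Omega|_{\G|_\Sigma}$ is $\cH$-basic and descends to a closed multiplicative $2$-form $\Omega_\Sigma$ on $\G_\Sigma$. To see $\Omega_\Sigma$ is nondegenerate and that $\G_\Sigma$ inherits the type, I would localize via Theorem~\ref{thm-lf-Poisson-v2}: around a leaf $S\subset\Sigma$ one has $U\cong Q/G$ and $\G|_U\cong(Q\times_V Q)/G$, and \eqref{eq:stratum:local:model} identifies $U\cap\Sigma$ with $\mu^{-1}(\zz(\gg)^*)/G=(\mu^\ss)^{-1}(0)/G$, where $\mu^\ss\colon Q\to(\gg^\ss)^*$ is the $G^\ss$-moment map obtained from $\mu$. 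Reducing by the compact semisimple subgroup $G^\ss$ at the regular value $0$ exhibits $U\cap\Sigma$ as the reduced space of a Hamiltonian space for the torus $T:=G/G^\ss$, and correspondingly $\G_\Sigma|_{U\cap\Sigma}$ as its gauge groupoid, which is a proper symplectic groupoid by the computation in the proof of Theorem~\ref{thm-lf-Poisson-v2} (see also \cite[Prop.~5.6]{CFM-I}). So $\Omega_\Sigma$ is symplectic and $\G_\Sigma$ is proper; it is source $1$-connected (resp.\ source proper) whenever $\G$ is, because $\G_\Sigma$ is obtained from $\G|_\Sigma$ — which has the same source fibers as $\G$ — by quotienting those fibers by the connected compact groups $\cH_x$. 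Hence $(\Sigma,\pi|_\Sigma)$ is of the same type as $(M,\pi)$.

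The delicate points all live in the last two paragraphs: verifying that $\gg^\ss$ integrates to a genuinely closed, embedded bundle of compact subgroups so that $\G|_\Sigma/\cH$ is smooth — which is where properness of $\G$ is used in earnest, via the slice/linearization theorem — and confirming that the restricted multiplicative form is $\cH$-basic with kernel exactly the $\cH$-directions, so that the quotient form is nondegenerate. The local identification of $\G_\Sigma|_{U\cap\Sigma}$ with the gauge groupoid of a Hamiltonian torus space makes both transparent, and the bookkeeping of that identification — matching the quotient by $\cH$ with the $G^\ss$-reduction — is the most computational step of the argument.
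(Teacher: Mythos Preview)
Your argument is correct and follows the same strategy as the paper: identify $(T\Sigma)^0$ with the bundle of semisimple parts $\gg^\ss$ via Corollary~\ref{corollary: infinitesimal-stratum-conormal}, integrate it to a closed bundle of compact normal subgroups, and form the quotient groupoid. The core property is handled identically in both.

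Where the paper is more efficient is in the second half. It introduces, just before the theorem, the general notion of a saturated submanifold \emph{suited for smooth reduction} and Lemma~\ref{lemma:suited-for-reduction}: for any saturated $N\subset M$, the restriction $\G|_N$ is coisotropic in $(\G,\Omega)$, the null foliation of $\Omega|_{\G|_N}$ is exactly the orbit foliation of the bundle $\cK$ integrating $(TN)^0$, and if each $\cK_x$ is closed in $\G_x$ then the symplectic quotient $\G_\red=\G|_N/\cK$ is automatically a symplectic groupoid of the same (strong) $\cC$-type. So the paper's proof of the theorem reduces to the single Lie-theoretic fact that the connected subgroup integrating $\gg_x^\ss$ is closed in the compact group $\G_x$ (citing \cite[p.~165]{DK}). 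Your hands-on verification that $\Omega|_{\G|_\Sigma}$ is $\cH$-basic is precisely the coisotropic-reduction computation hidden in that framework, and your appeal to the Hamiltonian local model to check nondegeneracy of $\Omega_\Sigma$ is then unnecessary: coisotropic reduction already gives a symplectic form on the quotient, with kernel of $\Omega|_{\G|_\Sigma}$ equal to the $\cH$-orbit directions by construction. Your proof and the paper's are the same argument, yours unpacked and theirs packaged.
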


Before proceeding with the proof, we first recall the following construction of Hamiltonian reduction of symplectic groupoids along Poisson submanifolds. 
Let $(\cG, \Omega)\tto M$ be a symplectic groupoid. Its restriction $\cG_N\tto N$ to any saturated submanifold $N\subset M$ is a coisotropic subgroupoid of $(\cG, \Omega)$ and one applies the classical symplectic reduction. Namely, the kernel 
\[ \mathfrak{K}:= \Ker(\Omega|_{\cG_N})\subset T\cG_N \]
can be interpreted as a foliation on $\cG_N$ and, if the leaf space 
\begin{equation}\label{eq:red:gpd}
\cG_{\textrm{red}}:= \cG_{N}/\mathfrak{K}
\end{equation}
is a smooth manifold, then $\Omega|_{\cG_N}$ descends to symplectic structure $\Omega_{\textrm{red}}$ on $\cG_{\textrm{red}}$. In our context, it follows that 
$\left(\cG_{\textrm{red}}, \Omega_{\textrm{red}}\right)\tto N$
is a symplectic groupoid (cf. \cite{BCWZ}). The various groupoids appearing in this reduction are represented by the following diagram:
\[ \vcenter{\xymatrix{
 & (\cG_N, \Omega_N)\ar@{->>}[d]  \ar@{^{(}->}[r] & (\G, \Omega) \\
 & (\cG_\red, \Omega_\red) & 
}}\]
We say
that the saturated submanifold $N$ is \textbf{suited for smooth reduction of $\G$} if the leaf space (\ref{eq:red:gpd}) is, indeed, a smooth Hausdorff manifold. The resulting symplectic groupoid 
$\left(\cG_{\textrm{red}}, \Omega_{\textrm{red}}\right)$ will be called the \textbf{reduction of $\cG$ along $N$}. 
It is clear that, in this case, if $\cG$ is of $\cC$-type, then so is its reduction along $N$. 

%
%

\medskip 
The foliation $\mathfrak{K}$ can be further described by taking advantage of the groupoid context. First of all, $\mathfrak{K}$ can be reconstructed from its restriction $\mathfrak{K}|_{N}$ to $N$ via right (or left) translations.  Secondly, $\mathfrak{K}|_{N}$ is a smooth sub-bundle of the bundle of isotropy Lie algebras of $\cG_{N}$ which, via the isomorphism $\textrm{Lie}(\G)\simeq T^*M$ given by $\Omega$, is identified with the Lie algebra bundle 
\[  \mathfrak{K}|_{N}\simeq (TN)^0\subset T^*_N M.\]
Thirdly, each fiber $\mathfrak{K}_{x}$ integrates to a connected subgroup $\cK_{x}\subset \cG_x$ and one obtains a bundle of groups $\cK\to N$ that acts on $\cG_{N}$. 
Finally, the leaf space (\ref{eq:red:gpd}) can 
now be described as the quotient of this action. This leads to the following result.

\begin{lemma}\label{lemma:suited-for-reduction}
Given a symplectic groupoid  $(\cG, \Omega)\tto M$, a saturated submanifold $N\subset M$ is suited for smooth reduction of $\G$ if and only if  the bundle of connected groups $\cK\subset \cG$ integrating $\mathfrak{K}|_{N}= (TN)^0$ is closed in $\cG_{N}$.

If $\cG$ is of (strong) $\cC$-type, then $N$ is suited for smooth reduction if and only if each connected subgroup $\cK_x$ is closed  in $\cG_x$, in which case $\cG_{\textrm{red}}$ is also of (strong) $\cC$-type.
\end{lemma}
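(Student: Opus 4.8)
The plan is to reduce the statement to the existence of a smooth quotient for a free action of a bundle of Lie groups, and then to apply Godement's quotient criterion. Indeed, the discussion preceding the statement already exhibits $\cG_\red$ --- whenever it is a smooth Hausdorff manifold --- as the orbit space of the free action of the bundle of groups $\cK\to N$ on $\cG_N$, where $k\in\cK_{\s(g)}$ acts by $g\cdot k=gk$, and records that $\Omega_N$ then descends to a symplectic groupoid structure on $\cG_\red\tto N$ by \cite{BCWZ}. So ``$N$ is suited for smooth reduction of $\cG$'' means precisely ``this orbit space is a smooth Hausdorff manifold'', and the first part of the lemma is entirely a statement about when this holds.

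First I would encode the orbit relation as the image of the injective immersion $\iota\colon\cG_N\fproduct{\s}\cK\to\cG_N\times\cG_N$, $(g,k)\mapsto(g,gk)$ (injective because the action is free), whose first projection $\cG_N\fproduct{\s}\cK\to\cG_N$ is a surjective submersion. By Godement's criterion the orbit space is then a smooth Hausdorff manifold as soon as $\iota$ is proper (equivalently, a closed embedding), so for the implication ``$\cK$ closed $\Rightarrow$ $N$ suited'' it suffices to check properness of $\iota$: for a compact $C\subset\cG_N\times\cG_N$, any $(g,k)\in\iota^{-1}(C)$ has $g$ in a compact set and $k=g^{-1}(gk)$ in a compact set, so $k$ lies in $\cK$ intersected with a compact set, which is itself compact exactly because $\cK$ is closed; hence $\iota^{-1}(C)$ is compact. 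For the converse ``$N$ suited $\Rightarrow$ $\cK$ closed'', I would use that the quotient map $q\colon\cG_N\to\cG_\red$ is a Lie groupoid morphism over $\id_N$ and hence carries the unit section onto the closed submanifold of units of $\cG_\red$; since the $\cK$-orbit of a unit $1_y$ is $\cK_y$, this identifies $\cK$ with the $q$-preimage of the units of $\cG_\red$, so $\cK$ is closed. The remaining claim of the first part --- that $\cG_\red$ is of (strong) $\cC$-type when $\cG$ is --- is already recorded in the excerpt.

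For the second equivalence I would use that $\cC$-type implies $\cG$ proper, hence every $\cG_x$ compact. The implication ``$\cK$ closed in $\cG_N$ $\Rightarrow$ each $\cK_x$ closed in $\cG_x$'' is immediate, as $\cG_x$ is closed in $\cG_N$ and $\cK_x=\cK\cap\cG_x$. The reverse implication is where the work lies. Closedness of $\cK$ in $\cG_N$ being a local condition, I would fix $x_0\in N$ and invoke the normal form around its orbit $\O$ (the Weinstein--Zung linearization, or Theorem \ref{cor:nform} in the symplectic case): after shrinking, $\cG|_U\cong\cG|_\O\ltimes\nu(\O)$, and since $N$ is saturated $N\cap U$ corresponds to $\cG|_\O\ltimes W$ for a $\cG|_\O$-invariant submanifold $W\subset\nu(\O)$ through the zero section. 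Over the slice this trivializes the bundle of isotropy Lie algebras of $\cG_N$ over $N\cap U$ as $W\times\gg_0$ with $\gg_0=\gg_{x_0}$, identifying $\mathfrak{K}|_{N\cap U}=(TN)^0$ with a smooth family $w\mapsto\mathfrak{K}_w\subset\gg_0$ of subalgebras, each generating a closed subgroup $\cK_w$ of the compact group $G_0=\cG_{x_0}$; I would then show that for such a family $\bigcup_w\cK_w$ is closed.

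This last step is the main obstacle. For a general smooth family $\mathfrak{K}_w$ the union $\bigcup_w\cK_w$ need not be closed --- the model example being rationally-sloped circles in a $2$-torus with unbounded winding numbers --- so the hypothesis that every $\cK_w$ be closed has to be used decisively. The mechanism I expect to exploit is the decomposition $\mathfrak{K}_w=[\mathfrak{K}_w,\mathfrak{K}_w]\oplus\zz(\mathfrak{K}_w)$: the semisimple factor of $\cK_w$ is automatically closed, and closedness of $\cK_w$ amounts to a rationality condition on $\zz(\mathfrak{K}_w)$ relative to the integral lattice of an ambient maximal torus of $G_0$. Such a condition takes values in a countable set, so once it holds along the whole continuous family the ``toral part'' is forced to be locally constant, and the residual $w$-dependence of $\cK_w$ is then a continuous family of conjugations of a single fixed closed subgroup of $G_0$. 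Hence $w\mapsto\cK_w$ is continuous for the Hausdorff topology on compact subgroups of $G_0$, $\bigcup_{w\in K}\cK_w$ is compact for every compact $K\subset W$, and $\cK$ is closed near $x_0$, hence everywhere. Making the ``rational data is locally constant, the rest is conjugation'' argument fully rigorous --- in particular handling the semisimple/abelian splitting continuously and the possibly disconnected $G_0$ --- is the delicate point; by contrast the Godement machinery and the point-set estimates of the first half are routine.
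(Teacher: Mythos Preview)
Your Godement argument for the first equivalence is correct and makes explicit what the paper condenses into ``follows from the previous discussion''.

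For the second equivalence the contrast is sharper. The paper's entire argument for the implication ``each $\cK_x$ closed $\Rightarrow$ $\cK$ closed in $\cG_N$'' is: ``If all $\cK_x$ are closed, they are also compact since $\cG$ is proper. It follows that the bundle $\cK$ is closed in $\cG_N$.'' No further justification is offered. You are right that this step is not immediate --- your rationally-sloped-circles example is exactly the phenomenon one must rule out --- and your strategy of linearizing around an orbit and then arguing rigidity of the family $w\mapsto\cK_w$ is the natural way to fill in what the paper leaves unsaid.

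One imprecision in your setup: over the slice it is the Lie \emph{algebroid} $T^*M$ that trivializes (in the Hamiltonian local model the algebroid becomes the trivial bundle $\gg_0\times V$ over the slice $V\subset\gg_0^*$), not the bundle of isotropy Lie algebras of $\cG_N$, whose fibre at $w$ is only the coadjoint stabilizer $(\gg_0)_w\subsetneq\gg_0$. Since $(T_wN)^0\subset(\gg_0)_w\subset\gg_0$ this does not affect your family $w\mapsto\mathfrak{K}_w\subset\gg_0$, but the phrasing should be adjusted. Your semisimple/toral splitting and ``rational data is locally constant'' mechanism is the right idea; it is worth noting, though, that in the two places the lemma is actually invoked (Theorems~\ref{thm:inf-strat-type} and~\ref{thm:can-strat-type}) one is always in an extreme case --- either $\mathfrak{K}_x=\gg_x^{\ss}$, where the integral is automatically closed and rigidity of semisimple subalgebras suffices, or $\cG$ is regular and $\mathfrak{K}_x$ is an integral-affine subspace of an abelian $\gg_x$, where the pure rationality argument applies directly --- so the mixed case you are concerned about never arises in the paper's applications.
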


\begin{proof}
The first part of the lemma follows from the previous discussion. Assume then that $\cG$ is of $\cC$-type. If all $\cK_x$ are closed, they are also compact since $\cG$ is proper. It follows that the bundle $\cK$ is closed in $\cG_N$.
 The reduced groupoid $\G_{\textrm{red}}$ is proper since it is the restriction of $\cG$ to the saturated submanifold $N$, followed by the quotient by a bundle of compact Lie groups $\cK$. Similarly, the s-fiber of $\G_{\textrm{red}}$ at $x$ is the quotient of the s-fiber $\G$ at $x$ by the compact connected subgroup $\cK_{x}$. It follows that if $\cG$ is of (strong) $\mathcal{C}$-type so is $\G_{\textrm{red}}$.
\end{proof}



\begin{proof}[Proof of Theorem \ref{thm:inf-strat-type}] 
Fix a stratum $\Sigma\in\cSi(M, \pi)$. This is a regular Poisson submanifold since, by the definition of $\cSi(M, \pi)$, points in $\Sigma$ have isomorphic isotropy.
Notice also that, by Corollary \ref{corollary: infinitesimal-stratum-conormal}, the kernel of the canonical map  
\[ \gg_x= \nu_{x}^{*}(\cF_{\pi}) \to \nu_{x}^{*}(\cF_{\Sigma}), \quad \xi\mapsto \xi|_{T\Sigma}, \]
is precisely $\gg^\ss_x$. Therefore, the restriction of this map to the center $\zz(\gg_x)$ gives an isomorphism onto $\nu_{x}^{*}(\cF_{\Sigma})$, i.e.,  $\Sigma$ is a core Poisson submanifold.

To prove that $\Sigma$ is of (strong) $\cC$-type we show that $\Sigma$ is suited for smooth reduction of $\G$ by appealing to Lemma \ref{lemma:suited-for-reduction}. We are left with proving that for each $x\in \Sigma$ the connected Lie group $\cK_x$ integrating $(T\Sigma)^0\subset T^*_\Sigma M$ is closed in $\cG_x$. By \eqref{eq:strata:conormal}  $(T\Sigma)^0=\gg_x^{\textrm{ss}}$ and thus its integration 
is a closed subgroup of $\cG_x^0$ (see, e.g., pp. 165 in \cite{DK}).

\end{proof}

 The following result shows that core Poisson submanifolds retain all the information about two important invariants, that we recall after its statement.

\begin{theorem}\label{thm:inf-strat-monodromy} 
For any core Poisson submanifold $\Sigma$ of a Poisson manifold $(M, \pi)$:
\begin{enumerate}[(i)]
\item the holonomy representations of $(M, \pi)$ and $(\Sigma, \pi_{\Sigma})$ at any $x\in \Sigma$ are isomorphic, i.e. one has a commutative diagram: 
\[
\xymatrix{
&   \GL(\zz(\gg_x))\ar[dd]_-{\sim}^-{r_*}\\
\pi_1(S, x) \ar[ru]^-{\rho^{\Hol}_M} \ar[rd]_-{\rho^{\Hol}_{\Sigma}}   \\
&    \GL(\nu_{x}^{*}(\cF_{\Sigma}))
} \]
where $r$ is the isomorphism \eqref{restr-to-sigma-new};


\item the monodromy map of  $(\Sigma, \pi_{\Sigma})$ at any $x\in \Sigma$ factors through the monodromy map of $(M,\pi)$:
\[ \xymatrix@1{
&   Z(G(\gg_x))\ar[dd]^{\Phi}\\
\pi_2(S, x) \ar[ur]^-{\partial^{M}}\ar[dr]_-{\partial^{\Sigma}}   & \\
&  \zz(\gg_x)  
} \]
where we use the isomorphism $ \nu_{x}^{*}(\cF_{\Sigma})\simeq\zz(\gg_x)$ given by \eqref{restr-to-sigma-new}, and $\Phi$ is the restriction to the center of the group morphism $G(\gg_x)\to \zz(\gg_x)$ integrating the map (\ref{map:core:condition}) in the core condition.


\end{enumerate}
\end{theorem}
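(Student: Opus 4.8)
I would prove both items simultaneously by exploiting the fact that the holonomy representation and the monodromy map are, by their very construction, natural with respect to Poisson maps that are transverse to the relevant leaf, and the inclusion $\iota\colon\Sigma\hookrightarrow M$ is such a map since $\Sigma$ is a saturated (hence leaf-preserving) Poisson submanifold sharing the leaf $S$ through $x$. The point is that all the data entering the definitions of $\rho^{\Hol}$ and $\partial$ — the flat Bott-type connection on $\nu^*(\cF)$ controlling linear holonomy, and the connecting map $\partial\colon\pi_2(S,x)\to Z(G(\gg_x))$ coming from the homotopy long exact sequence of the $\s$-fiber bundle over $S$, followed by the monodromy/variation map into $\gg_x$ — depend only on the restriction of the ambient algebroid structure to a neighborhood of $S$, and restrict compatibly along $\iota$. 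So the commutativity of both triangles is, in essence, a functoriality statement, and the only genuine content is identifying the vertical maps as the isomorphisms $r$ and $\Phi$ from the core condition.

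\textbf{Item (i).} First I would recall the precise definition of $\rho^{\Hol}_M$: it is obtained from parallel transport along loops in $S$ using the (canonical, coordinate-free) partial Bott connection on the conormal bundle $\nu^*(\cF_\pi)|_S$, restricted to the sub-bundle $\zz(\gg)$ of centers, which is preserved by holonomy because holonomy acts by Lie algebra automorphisms and automorphisms preserve the center. The analogous statement for $\Sigma$ uses the partial Bott connection on $\nu^*(\cF_\Sigma)|_S$. The map $r\colon\zz(\gg_x)\to\nu_x^*(\cF_\Sigma)$ is the fiberwise restriction $\xi\mapsto\xi|_{T\Sigma}$, which by the core condition is an isomorphism; the key observation is that this restriction map is a bundle isomorphism $\zz(\gg)|_S\to\nu^*(\cF_\Sigma)|_S$ intertwining the two Bott connections, because the Bott connection is defined by Lie-bracketing with vectors tangent to the leaf and $T\Sigma$ is involutive and contains $TS$. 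Hence parallel transport commutes with $r$, which is exactly the commutativity of the first triangle. I would phrase this as: $r$ is flat for the relevant flat connections, so $r_*\circ\rho^{\Hol}_M = \rho^{\Hol}_\Sigma$.

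\textbf{Item (ii).} Here I would use the description of the monodromy map via a symplectic (or just algebroid) integration. Both $(M,\pi)$ and $(\Sigma,\pi_\Sigma)$ may be taken integrable near $S$ (by properness, using the local models of Section~\ref{sec:nform}); let $G(\gg_x)$, $G(\gg_x^\Sigma)$ be the source-simply-connected isotropy groups and note $\gg_x^\Sigma=\zz(\gg_x)$ is already abelian, so $G(\gg_x^\Sigma)$ is $\zz(\gg_x)$ itself (as a vector group) up to the monodromy lattice. The monodromy map $\partial^M\colon\pi_2(S,x)\to Z(G(\gg_x))$ is the boundary map in the homotopy sequence of the fibration $G(\gg_x)\to (\s$-fiber over $x) \to S$ for a chosen integration; under the Poisson submanifold inclusion the $\Sigma$-leaf $S$ is the same and the isotropy algebras restrict via the core map (\ref{map:core:condition}), which integrates to a group morphism $G(\gg_x)\to G(\zz(\gg_x))=\zz(\gg_x)$ whose restriction to the center is $\Phi$. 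Functoriality of the long exact homotopy sequence with respect to the morphism of fibrations then gives $\Phi\circ\partial^M=\partial^\Sigma$, once one checks that the map on total spaces ($\s$-fiber of the $M$-groupoid $\to$ $\s$-fiber of the $\Sigma$-groupoid, induced by the core morphism) covers the identity on $S$ — this is where the core condition (that (\ref{map:core:condition}) restricts to an iso on centers, hence in particular the induced map on leaves is the identity and on normal directions is controlled) does the work.

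\textbf{Main obstacle.} The routine part is the functoriality/naturality bookkeeping; the subtle part I expect is item (ii): making precise the morphism of fibrations at the level of source-simply-connected integrations restricted to a neighborhood of $S$. Specifically, $\Sigma$ need not be integrable globally, only near $S$ (via the Hamiltonian/linear normal form of Section~\ref{sec:nform}), and one must check that the group morphism integrating (\ref{map:core:condition}) — a priori only a morphism of the restricted algebroids $T^*_SM\to T^*_S\Sigma$ along $S$, not of $T^*M\to T^*\Sigma$ — actually induces a well-defined bundle map on $\s$-fibers over $S$ compatible with the homotopy sequence. I would handle this by working inside the local model, where the integration is the explicit gauge groupoid of Theorem~\ref{thm-lf-Poisson-v2}/\eqref{eq:Gauge-groupoid} and the core morphism becomes the visible quotient map $\gg^*\twoheadrightarrow\zz(\gg)^*$ composed with the Morita picture, so the morphism of fibrations is manifest and the identification of the vertical maps with $\Phi$ (resp.\ the dual statement for $r$) is a direct computation rather than an abstract argument.
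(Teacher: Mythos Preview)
Your overall strategy --- naturality of the holonomy representation and of the monodromy map with respect to the restriction morphism --- is exactly the paper's strategy, and for item (i) your argument via the Bott-type connection on the centers is essentially identical to the paper's (the paper phrases it as naturality of the adjoint $A_S$-connection \eqref{eq:adj:rep:algebroid} under a surjective morphism of transitive Lie algebroids, which is the same computation).

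For item (ii), however, you make your life harder than necessary, and in doing so introduce an unjustified hypothesis. You propose to realise the naturality of $\partial$ by constructing a morphism of $s$-fiber fibrations over $S$, using the Hamiltonian/linear local model of Section~\ref{sec:nform} to guarantee integrability near $S$; but those local models require properness, which is not assumed in the statement of Theorem~\ref{thm:inf-strat-monodromy} (core Poisson submanifolds are defined for arbitrary $(M,\pi)$). The paper bypasses your ``main obstacle'' entirely: it works not with integrations of $(M,\pi)$ and $(\Sigma,\pi_\Sigma)$, but with the restricted \emph{transitive} Lie algebroids $T^*_SM$ and $T^*_S\Sigma$ over the fixed leaf $S$ and the surjective morphism $T^*_SM\to T^*_S\Sigma$ between them. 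Transitive Lie algebroids are always integrable, and the monodromy map $\partial$ is natural with respect to such morphisms (this is the standard functoriality from \cite{CF2}), yielding the commutative triangle with vertical map $\Phi$ immediately --- no local model, no properness, no explicit gauge groupoid needed. So your plan would work in the proper setting, but the paper's route is both shorter and strictly more general.
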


For a Poisson manifold $(M, \pi)$ its {\bf linear holonomy representation} at $x\in M$ is a homomorphism
\begin{equation}\label{eq:rho:lin:hol} 
\rho^{\Hol}: \pi_1(S, x)\to \GL(\zz(\gg_x)),
\end{equation}
where $S$ is the symplectic leaf containing $x$. It is the parallel transport map for the canonical flat connection on the bundle of centers of the isotropy $\zz(\gg_S)\to S$ defined by
\begin{equation}\label{eq:canonical-flat-on-zz-gg} 
\nabla: \X(S)\times \Gamma(\zz(\gg_S))\to  \Gamma(\zz(\gg_S)),\quad \nabla_{\pi^{\sharp}(\alpha)}\xi=[\alpha,\xi].
\end{equation}
In the regular case this is just the Bott connection and \eqref{eq:rho:lin:hol} is the dual of the usual linear holomomy representation of a foliation. When $(M,\pi)$ is integrable, the linear holonomy also has the following interpretation.

\begin{proposition}
\label{prop:holonomy:groupoid:version} 
Let $(\G,\Omega)$ be any symplectic integration of $(M,\pi)$, fix $x\in M$ and let $S$ be the symplectic leaf containing $x$. Then the linear holonomy representation factors through the adjoint representation of $\cG_x/\cG_x^0$ on $\zz(\gg_x)$
\[
\xymatrix{
\pi_1(S, x) \ar[rd]^-{\rho^{\Hol}_M} \ar[dd] \\
& \GL(\zz(\gg_x))\\
\cG_x/\cG_x^0\ar[ru]_{\Ad}
} \]
where the vertical arrow arises from the homotopy exact sequence of $\t:\s^{-1}(x)\to S$.
\end{proposition}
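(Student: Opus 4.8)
The plan is to compute the parallel transport of the flat connection $\nabla$ on $\zz(\gg_S)\to S$ directly in terms of the groupoid, via the adjoint action of $\cG$ on the bundle of isotropy Lie algebras, and then to recognise the resulting monodromy as the connecting map of $\t\colon\s^{-1}(x)\to S$.

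Write $A=\mathrm{Lie}(\cG)$, with anchor $\rho$ (in the Poisson case $A=T^{*}M$, $\rho=\pi^{\sharp}$). The bundle of isotropy Lie algebras $\gg:=\ker\rho\subset A$ (over all of $M$) carries the \emph{adjoint $A$-connection} $\nabla^{A}_{a}\eta:=[a,\eta]$: it is valued in $\Gamma(\gg)$ since $\rho[a,\eta]=[\rho(a),\rho(\eta)]=0$, it is a flat $A$-connection by the Jacobi identity, and along $A$-paths its parallel transport is given by the adjoint action $\Ad_{g}\colon\gg_{\s(g)}\to\gg_{\t(g)}$ of the integrated $\cG$-path (the derivative of conjugation of isotropy groups). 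Restricting to the subbundle of centres $\zz(\gg)\subset\gg$: $\Ad$ preserves $\zz(\gg)$, and since $\eta\mapsto[a,\eta]$ is $C^{\infty}(M)$-linear in $a$ (because $\rho(\xi)=0$ for $\xi\in\Gamma(\zz(\gg))$), over the leaf $S$ the operator $\nabla^{A}_{a}\xi$ depends only on $\rho(a)|_{S}\in TS$ and vanishes when $a(y)\in\gg_{y}$; hence $\nabla^{A}$ descends on $S$ to an ordinary flat connection on $\zz(\gg_{S})\to S$, which is exactly the connection $\nabla$ of \eqref{eq:canonical-flat-on-zz-gg}, $\nabla_{\pi^{\sharp}(\alpha)}\xi=[\alpha,\xi]$. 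I would then recall the standard facts about $A$-paths, their integration to $\cG$-paths on $[0,1]$, and the identity ``parallel transport along an $A$-path $=\Ad$ of the integrated $\cG$-path''; the one point to spell out is that, for an $A$-path $a(t)$ over a path $\gamma(t)$ in $S$ with integration $g_{t}\in\cG$ normalised by $g_{0}=1_{x}$ (so $\s(g_{t})\equiv x$ and $\t(g_{t})=\gamma(t)$), differentiating $t\mapsto\Ad_{g_{t}}\eta_{0}$ via the cocycle $g_{t+u}=h_{u}g_{t}$ reduces to the infinitesimal adjoint action of a local bisection through $a(t)$, which is precisely $\nabla^{A}_{a(t)}$; thus $t\mapsto\Ad_{g_{t}}\eta_{0}$ is $\nabla^{A}$-parallel, and restricting to centres shows $\Ad_{g_{1}}|_{\zz(\gg_{x})}$ computes $\nabla$-parallel transport along $\gamma$.

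Next I would record independence from the $A$-path lift. Two lifts $a,a'$ of the same $\gamma$ are joined by the affine family $a_{s}=(1-s)a+sa'$ of $A$-paths over $\gamma$; the integrations $g^{s}_{t}$ vary continuously in $s$ with $g^{s}_{0}=1_{x}$, so the endpoints $g^{s}_{1}$ all lie in the same path-component of $\s^{-1}(x)\cap\t^{-1}(\gamma(1))$. Hence $g_{1}^{-1}g'_{1}\in\cG_{\gamma(1)}^{0}$, and since the connected group $\cG_{\gamma(1)}^{0}$ acts trivially on $\zz(\gg_{\gamma(1)})$ (as $\ad_{\xi}$ vanishes on the centre for every $\xi\in\gg_{\gamma(1)}$), we get $\Ad_{g_{1}}|_{\zz}=\Ad_{g'_{1}}|_{\zz}$. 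In particular the holonomy of $\nabla$ is well defined, and the factor map $\cG_{x}/\cG_{x}^{0}\to\GL(\zz(\gg_{x}))$ is the well-defined map $\Ad$ appearing in the triangle.

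Finally, for a loop $\gamma$ based at $x$ the integrated $\cG$-path $g_{\bullet}$ of any $A$-path lift satisfies $\s(g_{t})\equiv x$, hence lies in $\s^{-1}(x)$, with $\t(g_{t})=\gamma(t)$ and $g_{0}=1_{x}$; so $g_{\bullet}$ is a lift of $\gamma$ through the principal $\cG_{x}$-bundle $\t\colon\s^{-1}(x)\to S$ starting at $1_{x}$, and its endpoint $g_{1}$ lies in $\cG_{x}$ with class $[g_{1}]\in\cG_{x}/\cG_{x}^{0}=\pi_{0}(\cG_{x})$ equal, by definition, to the image of $[\gamma]$ under the connecting map of the homotopy exact sequence of $\t$. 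Combining the three steps, $\rho^{\Hol}_{M}([\gamma])=\Ad_{g_{1}}|_{\zz(\gg_{x})}=\Ad_{[g_{1}]}$, which is exactly the asserted commutativity of the triangle. The main obstacle is the bridge in the second paragraph: establishing cleanly, and with the correct sign conventions, that the groupoid adjoint action on $\gg$ integrates the algebroid bracket connection $\nabla^{A}_{a}\eta=[a,\eta]$, so that $\Ad_{g_{t}}$ computes $\nabla$-parallel transport; once this is in place the rest is formal. Conceptually this is the familiar statement that $\zz(\gg_{S})=\bigl(\s^{-1}(x)/\cG_{x}^{0}\bigr)\times_{\cG_{x}/\cG_{x}^{0}}\zz(\gg_{x})$ is associated to the Galois covering $\s^{-1}(x)/\cG_{x}^{0}\to S$ with deck group $\cG_{x}/\cG_{x}^{0}$, equipped with its tautological flat connection, which the computation identifies with $\nabla$.
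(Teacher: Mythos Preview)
Your proof is correct and follows essentially the same approach as the paper: both identify the flat connection $\nabla$ on $\zz(\gg_S)$ as the descent of the adjoint $A$-connection $\nabla^A_a\eta=[a,\eta]$, observe that its integration is the groupoid adjoint action, and then identify the resulting map $\pi_1(S,x)\to\cG_x/\cG_x^0$ with the connecting homomorphism of the fibration $\t:\s^{-1}(x)\to S$. The only cosmetic difference is that the paper packages the last step via the groupoid morphism $\Psi:\Pi(S)\to\cG_S/\cI_S$ integrating $TS\simeq A_S/\gg_S$ and argues that $\Ad\circ\Psi$ and $\rho^{\Hol}_M$ share the same linearization, whereas you work directly at the level of $A$-paths and their integrated $\cG$-paths; the content is the same.
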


The proof is deferred to the end of the section.

Next we briefly recall the other invariant present in the statement of Theorem \ref{thm:inf-strat-monodromy}. For any Poisson manifold $(M, \pi)$, its {\bf monodromy map} at $x$ is a group homomorphism
\begin{equation}\label{eq:rho:lin:mon} 
\partial_x: \pi_2(S, x)\to Z(G(\gg_x)).
\end{equation}
In the regular case, this map has a nice geometric interpretation as the variation of symplectic areas of spheres in directions transverse to $S$ (see \cite{CF2} or  \cite[Chapter 14]{CFM21}). 
In the non-regular case, assuming that $(M, \pi)$ is integrable with the source 1-connected integration denoted $\Sigma= \Sigma(M, \pi)$, then \eqref{eq:rho:lin:mon} can be defined as the boundary map in the long homotopy sequence associated to $\t:\s^{-1}(x)\to S$, combined with the canonical inclusion 
of $\pi_1(\Sigma_x)$ into $Z(G(\gg_x))$ (see \cite{CF2} or  \cite[Chapter 14]{CFM21}). We also recall that one defines the {\bf monodromy group} of $(M, \pi)$ at $x$ as the image of the monodromy map
\[ \mathcal{N}_x:= \textrm{Im}(\partial_x)\subset Z(G(\gg_x)).\]
The integrability of a Poisson manifold is controlled by the groups $\mathcal{N}_x\cap Z(G(\gg_x))^0$.

In general, passing to a Poisson submanifold, one loses information about the holonomy and/or the monodromy of the ambient Poisson manifold. Theorem \ref{thm:inf-strat-monodromy} asserts that core Poisson submanifolds have the remarkable property of keeping the relevant part of this information and allows to reduce the computation of these groups to the regular case. More precisely, they encode the entire linear holonomy and, since $\Phi$  restricts to an isomorphism on $Z(G(\gg_x))^0$, they also detect the obstructions to integrability.

\begin{proof}[Proof of Theorem \ref{thm:inf-strat-monodromy}] 
The representation \eqref{eq:adj:rep:algebroid} is natural with respect to a surjective morphism $\phi: A\to B$ of transitive Lie algebroids over $S$. Similarly, at the groupoid level with the adjoint representation. Upon restriction to the centers, the naturality translates to the commutativity of the diagram
 \[\xymatrix{
 \pi_1(S, x) \times \zz(\gg^A_x) \ar[r]^-{\rho^{\Hol}_A} \ar[d]^-{(\id, \phi)} & \zz(\gg^A_x) \ar[d]^-{\phi}\\
 \pi_1(S, x) \times \zz(\gg^B_x) \ar[r]^-{\rho^{\Hol}_B}& \zz(\gg^B_x)
} 
\]

A similar naturality argument applies to the monodromy maps of transitive algebroids, and one obtains another commutative diagram
 \[\xymatrix{
 & Z(G(\gg^A_x)) \ar[dd]^-{\Phi} \\
 \pi_2(S, x) \ar[ru]^-{\partial_A} \ar[rd]_-{\partial_B} \\
 & Z(G(\gg^B_x))
} 
\]
where $\Phi:G(\gg^A_x)\to G(\gg^B_x)$ is the morphism integrating $\phi$.

The result follows from the previous commutative diagrams applied to the algebroid morphism 
\[
T^*_S M \to T^*_S\Sigma,
\]
where $S$ is a symplectic leaf inside $\Sigma$.
\end{proof}

 \begin{proof}[Proof of Proposition \ref{prop:holonomy:groupoid:version}] 
We start by remarking that the linear holonomy representation (\ref{eq:rho:lin:hol}) of $(M, \pi)$ at $x$ depends only on the restricted algebroid $T^*_SM$, and in fact makes sense for a general transitive Lie algebroid $(A_S, [\cdot, \cdot]_{A_S}, \rho_S)\to S$. For such an algebroid, one has a flat $A_S$-connection on the isotropy Lie algebra bundle $\gg_S:= \mathrm{Ker}(\rho_S)$ given by
\begin{equation}
\label{eq:adj:rep:algebroid}
\nabla: \Gamma(A_S)\times \Gamma(\gg_S) \to \Gamma(\gg_S), \quad \nabla_{\alpha}(\beta)= [\alpha, \beta]_{A_S}. 
\end{equation}
For any source connected Lie groupoid $\G_S$ with algebroid $A_S$, this integrates to the adjoint representation of $\G_S$ on $\gg_S$. Upon restriction to the centers $\zz(\gg_S)$, the $A_S$-connection factors via the anchor map $\rho_S:A_S\to TS$ to a flat connection 
\[ \nabla: \mathfrak{X}(S)\times  \Gamma(\zz(\gg_S))\to \Gamma(\zz(\gg_S)). \]

Using this remark, let $\G_S$ be the restriction  of $\G$ to $S$ and let $\cI_S$ be the bundle of groups consisting of the unit connected components of the isotropy groups of $\G_S$. Then $\cG_S$ and $\cI_S$ have Lie algebroids $A_S= T^*_SM$ and $\gg_S= \Ker (\rho_S=\pi^\sharp|_S: A_S\to TS$). The proof is a combination of several ingredients:
\begin{enumerate}[1)]
\item The representation $\Ad$ from the statement is simply the one induced by the adjoint representation of the Lie group $\G_x$ on its Lie algebra. The key remark is that the adjoint action makes sense as an action of the entire $\G_S$ on $\gg_S$ and, when acting just on $\zz(\gg_S)$, it descends to an action 
\[ \Ad: \G_S/\cI_S\to GL(\zz(\gg_S)).\]
\item The infinitesimal counterpart of $\Ad$ is the the action of $A_S$ on $\gg_S$ induced by the Lie bracket, i.e., the flat $A_S$-connection \eqref{eq:adj:rep:algebroid}.
As remarked above, upon restriction to the center, it descends to an action of $A_S/\gg_S$. Under the isomorphism $A_S/\gg_S\simeq TS$ this becomes the flat connection on 
$\zz(\gg_S)$ whose parallel transport defines the representation $\rho^{\Hol}_M$. At the global level, one can consider the entire parallel transport as a representation
\[ \rho^{\Hol}_M: \Pi(S)\to GL(\zz(\gg_S)).\]
\item Notice that 
$\G_S/\cI_S$ is a $s$-connected Lie groupoid integrating $A_S/\gg_S$ while $\Pi(S)$ is a source 1-connected integration of $TS$. Therefore, we have a groupoid morphism
\begin{equation}\label{eq:Psi:Pi-to-quotient} 
\Psi: \Pi(S)\to \G_S/\cI_S,
\end{equation}
integrating the isomorphism $TS\simeq A_S/\gg_S$. Restricting to the isotropy group at a point $x\in M$ provides the desired vertical group homomorphism in the statement. Explicitly, a class $[\gamma]\in\pi_1(S,x)$ is sent to the class of $u(1)\in \G_x/\G^0_x$, where $u: I\to \s^{-1}(x)$ is any lift of $\gamma$ along the target map starting at the unit. This is precisely the boundary map from the statement. 
\end{enumerate}
We now see that the representations $\Ad\circ \Psi$ and $\rho^{\Hol}_M$ have the same linearization and, therefore, they must coincide. 
The argument can summarized by the following diagrams
\[
\xymatrix{
TS \ar[rd]^-{\nabla} \ar@{<->}[dd]^{\sim}_{\rho_S} \\
& \mathrm{gl}(\zz(\gg_S))\\
A_S/\gg_S\ar[ru]_{\ad}
} 
\quad \quad \quad 
\xymatrix{
\Pi(S) \ar[rd]^-{\rho^{\Hol}_M} \ar[dd]_{\Psi} \\
& \GL(\zz(\gg_S))\\
\cG_S/\cI_S\ar[ru]_{\Ad}
} \]
where the left diagram is the infinitesimal counterpart of the right diagram. At $x$, one obtains the diagram from the statement. 
\end{proof}
\subsection{The canonical stratification of PMCTs}
\label{sec:The canonical stratification of PMCTs}

We now discuss the canonical stratification $\cS(M, \pi)$, originally introduced in Definition \ref{def:canonical:stratifications:PMCT}.
 We will give two analogues of the result we proved for proper foliations, Proposition \ref{prop:stratification:foliation}.
The first one is a simple consequence for regular PMCTs of the definition of $\underset{\textrm{hol}}{\sim}$.

\begin{proposition}\label{prop:regular-canonical-is-foliation} For any regular 
 Poisson manifold of proper type $(M,\pi)$ its canonical stratification $\cS(M,\pi)$ coincides with the canonical stratification of its foliation $\cF_\pi$. 
\end{proposition}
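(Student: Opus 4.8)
The plan is to unwind the definitions of both stratifications and check that, for a regular Poisson manifold of proper type, the equivalence relation $\underset{\hol}{\sim}$ used to define $\cS(M,\pi)$ and the equivalence relation used to define the holonomy stratification $\cS_{\Hol}(M,\cF_\pi)$ agree locally, so that Lemma \ref{stratif-lemma-1} (or directly Lemma \ref{stratif-lemma-aux}) applies. The starting observation is that when $(M,\pi)$ is regular the isotropy Lie algebras $\gg_x=\nu_x^*(\cF_\pi)$ all have the same dimension, and in fact (since the isotropy Lie algebras of a PMCT are of compact type and here the symplectic leaves all have the same dimension) the infinitesimal stratification $\cSi(M,\pi)$ is trivial: by \eqref{eq:strata:conormal} the $\gg_x^\ss$ would cut out positive-codimension strata, and regularity forces $M^\reg=M$, so $\gg_x$ is abelian for every $x$. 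Hence the condition $\gg_x\simeq\gg_y$ in \eqref{eq:equiv-inv} is automatic, and $x\underset{\hol}{\sim}y$ reduces to $\zz(\gg_x)^{\Hol}\simeq\zz(\gg_y)^{\Hol}$, i.e. to $\gg_x^{\Hol}\simeq\gg_y^{\Hol}$ since $\gg_x=\zz(\gg_x)$.

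Next I would identify the holonomy representation appearing in \eqref{eq:equiv-inv} with the linear holonomy representation of the foliation $\cF_\pi$. By the regular case remarks following \eqref{eq:canonical-flat-on-zz-gg}, the flat connection $\nabla$ on $\zz(\gg_S)=\nu^*(\cF_\pi)|_S$ is the (dual of the) Bott connection, so $\rho^{\Hol}:\pi_1(S,x)\to\GL(\gg_x)$ is exactly the dual of the usual linear holonomy representation of the leaf $S$ of $\cF_\pi$; in particular its fixed-point space $\gg_x^{\Hol}=\nu_x^*(\cF_\pi)^{\Hol}$ is the conormal annihilator of $\nu_x(\cF_\pi)^{\Hol}$. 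On the other side, Corollary \ref{corolarry-normal} describes the strata of $\cS_{\Hol}(M,\cF_\pi)$ precisely through the fixed-point space $\nu_M(\cF)^{\Hol}$ of the linear holonomy. So the two relations are defined by the same linear-holonomy fixed-point data, just viewed once on $\nu$ and once on $\nu^*$; since passing to annihilators is a dimension-reversing bijection, $x\underset{\hol}{\sim}y$ holds iff the corresponding holonomy stratification relation holds.

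I would then verify the hypothesis of Lemma \ref{stratif-lemma-1}: given $x\in M$, I want an open neighborhood $U$ on which the $\underset{\hol}{\sim}$-class and the $\cS_{\Hol}$-class of $x$ coincide. For this I invoke the Hamiltonian (or linear) normal form around the leaf $S$ through $x$ from Theorem \ref{thm-lf-Poisson-v2}; since $(M,\pi)$ is regular this is the regular local model built from a principal bundle $p:P\to S$ with structure group the (abelian, by the above) isotropy $G=\cG_x$, so that $\nu(S)=P\times_G\gg^*$ and the symplectic foliation near $S$ consists of the $S_\xi=P/G_\xi$. In this model the transverse direction $\gg^*$ carries the trivial $G$-action (as $G$ abelian), and the linear-holonomy fixed-point computation becomes the statement that the $\cG_x$-invariant part of $\nu_x$ equals the $\Hol_x$-invariant part — the exact analogue, in the abelian setting, of the final line in the proof of Proposition \ref{prop:stratification:foliation}. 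This gives the required local equality and hence $\cS(M,\pi)=\cS_{\Hol}(M,\cF_\pi)$, with the smoothness of the strata inherited from the latter.

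The main obstacle I anticipate is purely bookkeeping rather than conceptual: one must be careful that the holonomy appearing in \eqref{eq:equiv-inv} (defined via parallel transport of the flat connection $\nabla$ on centers of isotropy, which in the regular case is the full isotropy) is genuinely the foliation holonomy used in Definition \ref{def:foliations:hol:stratif}, including matching the fixed-point spaces $\zz(\gg_x)^{\Hol}$ against $\nu_x(\cF_\pi)^{\Hol}$ under duality, and that ``isomorphic fixed-point spaces'' on both sides really means the same thing locally (which is where the normal form and the abelianness of $G$ are used). Once that dictionary is set up, the proof is a one-line application of Lemma \ref{stratif-lemma-1}.
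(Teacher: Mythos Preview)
Your proposal is correct and its core is exactly the paper's argument: in the regular case the isotropy Lie algebras are all abelian of the same dimension, so the first clause of $\underset{\hol}{\sim}$ is vacuous, and the remaining clause involves precisely the (dual of the) linear holonomy of $\cF_\pi$; hence the two stratifications are built from the same holonomy data. The paper's proof is literally this observation in one sentence, even asserting equality at the level of partitions.

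Your Step~3 (passing to the Hamiltonian normal form and invoking Lemma~\ref{stratif-lemma-1}) is unnecessary: once you have identified $\rho^{\Hol}$ on $\gg_x=\nu_x^*(\cF_\pi)$ with the dual Bott-connection holonomy, the relation $x\underset{\hol}{\sim}y$ reads ``$\dim\nu_x^*(\cF_\pi)^{\Hol}=\dim\nu_y^*(\cF_\pi)^{\Hol}$'', which for finite holonomy groups (as here) is equivalent to ``$\dim\nu_x(\cF_\pi)^{\Hol}=\dim\nu_y(\cF_\pi)^{\Hol}$''. This matches the holonomy stratification directly via Corollary~\ref{corolarry-normal}, without any local-model gymnastics. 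The ``bookkeeping obstacle'' you flag is exactly this duality step, and it is disposed of by the remark immediately following~\eqref{eq:canonical-flat-on-zz-gg} (the connection is the Bott connection) together with the fact that a finite group representation and its dual have fixed-point spaces of equal dimension.
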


\begin{proof}
We note that we even have an equality at the level of partitions. The linear holonomy of the algebroid $T^*M$ that is used in the definition of $\underset{\textrm{hol}}{\sim}$ is nothing but the (dual of) the linear holonomy of the foliation $\cF_\pi$.
\end{proof}

The second Poisson geometric analogue of Proposition \ref{prop:stratification:foliation} holds for arbitrary PMCTs, also justifying  our terminology for $\cS(M, \pi)$.

\begin{proposition}\label{prop:can-str-equiv-hol}
Let $(M,\pi)$ be a  Poisson manifold of proper type. Then for any proper integration $(\cG,\Omega)$ the canonical stratification $\cS_{\cG}(M)$ coincides with the partition $\cS(M, \pi)$ induced by the equivalence relation $\underset{\hol}{\sim}$ defined by \eqref{eq:equiv-inv}.
\end{proposition}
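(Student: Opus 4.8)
The plan is to follow the same strategy used in the proofs of Proposition \ref{prop:stratification:foliation} and Theorem \ref{thm:inf-strat}, reducing the comparison of the two partitions to a purely local computation via the Hamiltonian normal form and Morita invariance. Since $\cS_\cG(M)$ is defined by passing to connected components of the Morita-type equivalence relation, and $\cS(M,\pi)$ by passing to connected components of $\underset{\hol}{\sim}$, it suffices by Lemma \ref{stratif-lemma-1} to check that, for every $x_0\in M$, there is a neighborhood $U$ on which the Morita-type stratum through $x_0$ and the $\underset{\hol}{\sim}$-class through $x_0$ agree. First I would invoke Theorem \ref{thm-lf-Poisson-v2} to obtain, around $x_0$, a saturated $U$ Poisson diffeomorphic to a reduced space $Q/G$ with a Morita equivalence
\[
\xymatrix{
 \G|_U \ar@<0.25pc>[d] \ar@<-0.25pc>[d]  & \ar@(dl, ul) & Q \ar[dll]^{\alpha}\ar[drr]_{\mu}  & \ar@(dr, ur) & G\ltimes V \ar@<0.25pc>[d] \ar@<-0.25pc>[d]  \\
U&  & & & V}
\]
with $V\subset\gg^*$ a $G$-invariant neighborhood of $0$, $G=\cG_{x_0}$ compact, and $x_0$ $Q$-related to $0\in V$. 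By Remark \ref{remark:Morita-general-stratifications} the canonical stratification is Morita invariant, so $\cS_\cG(M)$ near $x_0$ corresponds to the orbit-type stratification of the coadjoint action groupoid $G\ltimes V$ near $0$; and since $G$ is compact this is the classical orbit-type stratification for the compact group action $G\laction V\subset\gg^*$.

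The core of the argument is then to show that the equivalence relation $\underset{\hol}{\sim}$, transported through the Morita equivalence, matches the orbit-type relation on $V$. Here the two ingredients of $\underset{\hol}{\sim}$ must be matched separately. The isotropy Lie algebra part, $\gg_x\simeq\gg_y$, has already been handled in the proof of Theorem \ref{thm:inf-strat}: for the coadjoint picture, $\xi$ and $\eta$ in $V$ have isomorphic $\gg_\xi,\gg_\eta$ precisely when they lie in the same infinitesimal orbit type stratum. For the finer datum $\zz(\gg_x)^{\Hol}\simeq\zz(\gg_y)^{\Hol}$, I would use Proposition \ref{prop:holonomy:groupoid:version}: the linear holonomy representation factors through the adjoint action of $\cG_x/\cG_x^0$ on $\zz(\gg_x)$, so $\zz(\gg_x)^{\Hol}$ is the fixed-point set of $\cG_x/\cG_x^0$ (equivalently of the component group of the isotropy) acting on $\zz(\gg_x)$. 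Under the Morita equivalence, the isotropy group $\cG_x$ and its normal isotropy representation go over to the isotropy group $G_\xi$ of the $G$-action at the $Q$-related point $\xi\in V$ together with its (co)adjoint normal representation; so the condition $\zz(\gg_x)^{\Hol}\simeq\zz(\gg_y)^{\Hol}$ becomes an isomorphism of the fixed-point sets $\zz(\gg_\xi)^{G_\xi/G_\xi^0}$, and combined with $\gg_\xi\simeq\gg_\eta$ this is exactly the condition defining the local orbit type (equivalently Morita type) stratum of $G\laction V$ through $\xi$. Checking this equivalence at $\xi=0$, where everything is about the linear coadjoint action $G\laction V\subset\gg^*$, reduces to an explicit Lie-theoretic statement which, as in the proofs of Theorem \ref{thm:inf-strat} and Corollary \ref{corollary: infinitesimal-stratum-conormal}, can be verified directly for the dual of a compact Lie algebra.

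I expect the main obstacle to be the careful bookkeeping of the two Morita equivalences in Remark \ref{remark:Morita-general}: one must make sure that the identification of isotropy groups $\psi_q:\cG_{x}\to G_\xi$ carries the whole flat-connection/adjoint data of Proposition \ref{prop:holonomy:groupoid:version} to the corresponding coadjoint data on the $G\ltimes V$ side, and in particular that component groups $\cG_x/\cG_x^0$ are matched with $G_\xi/G_\xi^0$ and their actions on the centers of the isotropy Lie algebras are intertwined. Once that compatibility is in place — which is really just unwinding items (i) and (ii) of Remark \ref{remark:Morita-general} in the symplectic case, where the normal isotropy representations are the coadjoint ones — the rest is the codimension/connected-component formalism of Lemma \ref{stratif-lemma-1}, plus the observation that $\cS(M,\pi)$ refines $\cSi(M,\pi)$ so that smoothness and the frontier condition are inherited from the known stratification on the $G\ltimes V$ side. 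I would conclude, exactly as in Proposition \ref{prop:stratification:foliation}, that the two partitions agree in a neighborhood of each point, hence induce the same stratification.
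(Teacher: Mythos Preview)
Your approach is correct and matches the paper's proof: use Proposition \ref{prop:holonomy:groupoid:version} to rewrite $\zz(\gg_x)^{\Hol}$ as $\zz(\gg_x)^{\cG_x}$, then apply Lemma \ref{stratif-lemma-1} together with the Hamiltonian normal form and Morita invariance (Remarks \ref{remark:Morita-general}, \ref{remark:Morita-general-stratifications}) to reduce to the coadjoint action $G\ltimes V$ at $\xi=0$. The only step the paper makes explicit that you leave as ``can be verified directly'' is the concrete Lie-theoretic equivalence $\xi\in V^G \Leftrightarrow \big(\xi\in V^{\gg}\text{ and }\zz(\gg_\xi)^{G_\xi}\simeq\zz(\gg)^G\big)$, proved by observing that under a $G$-invariant inner product the dual vector $v_\xi\in\gg$ always lies in $\zz(\gg_\xi)^{G_\xi}$, hence must be $G$-invariant.
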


\begin{proof}
Proposition \ref{prop:holonomy:groupoid:version} implies that the equivalence relation $\underset{\textrm{hol}}{\sim}$ agrees with the following one:
\[ x\sim'_{\cG} y \quad \text{iff}\quad
\begin{cases}
\gg_x \simeq \gg_y \\  
\zz(\gg_x)^{\G_x}\simeq \zz(\gg_y)^{\G_y}.
\end{cases}
\]
So we have to prove that $\sim_{\cG}$ and $\sim'_{\cG}$ produce the same partition of $M$. We do that by an argument similar to the one in the proof of Theorem \eqref{thm:inf-strat}, making use of 
Lemma \ref{stratif-lemma-1} and the normal form  of Theorem \ref{thm-lf-Poisson-v2}. We are left with proving the analogue of (\ref{eq:analogue:for:proof}):
\[ 
V\cap \Sc_0= V\cap \Sc'_{0}, 
\]
Writing out the two sides, we are left with proving that, for $\xi\in \gg^*$:
\[ \xi\in V^G \quad \Longleftrightarrow \quad \xi\in V^{\gg}\ \textrm{and}\ \zz(\gg_{\xi})^{G_{\xi}}\simeq
\zz(\gg)^{G}.  \]
To show this, fix first $\xi$ satisfying the conditions in the right-hand side. The first condition guarantees that  $\gg_{\xi}= \gg$ which, in turn, implies that the second isomorphism is actually an equality. On the other hand, if we choose a $G$-invariant inner product on $\gg$, then the dual $v_{\xi}\in \gg$ of $\xi$ is always in 
$\zz(\gg_\xi)^{G_\xi}$ and hence it has to be $G$-invariant. Therefore $\xi$ is $G$-invariant as well. The opposite implication is clear.
\end{proof}

\begin{remark} The previous theorem implies that $\cS_{\cG}(M)$ does not depend on the choice of proper integration of $(M, \pi)$ and can be defined in terms of infinitesimal data. This, together with an appropriate modification of the theorem, holds true for proper integrations of arbitrary algebroids. 
The more general version of  $\underset{\textrm{hol}}{\sim}$ is: $x\underset{\textrm{hol}}{\sim} y$ if $\gg_x\simeq \gg_y$ and the invariant parts of $\nu^*_x(\mathcal{O}_x)$ and $\nu^*_y(\mathcal{O}_y)$ are isomorphic. 
\end{remark}


The {\bf principal part} of $(M,\pi)$ is defined as the regular part of this stratification
\[  M^{\princ}:= M^{\cS(M,\pi)-\reg}.\]
Also, we will denote by $\Sc_k(M,\pi)$ the union of the codimension $k$ strata of $\cS(M, \pi)$. Similar to Theorem \ref{thm:inf-strat} for the infinitesimals stratification, we have the following.

\begin{theorem}\label{thm:can-strat} 
Let $(M, \pi)$ be a  Poisson manifold of proper type.  Then the codimension of the strata of $\cS(M,\pi)$ through $x\in M$ is precisely the codimension of 
$\zz(\gg_x)^{\Hol}$ in $\gg_x$, i.e., 
\[ \Sc_k(M,\pi)=  \left\{x\in M: \codim_{\gg_{x}}(\zz(\gg_x)^{\Hol})=k\right\}. \]
In particular,  $M^{\princ}$ consists of those points $x\in M^{\reg}$ with the property that the linear holonomy representation $\rho^{\Hol}_M:\pi_1(S_x)\to \GL(\nu_x)$ is trivial. 
\end{theorem}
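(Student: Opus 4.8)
The plan is to mirror the proof of Theorem \ref{thm:inf-strat}, using the Hamiltonian local normal form and Lemma \ref{stratif-lemma-1} to reduce the statement to a computation in the local model around a leaf, i.e.\ to a question about coadjoint orbits in $\gg^*$. First I would fix a proper symplectic integration $(\cG,\Omega)$; by Proposition \ref{prop:can-str-equiv-hol}, the partition $\cS(M,\pi)$ is the canonical stratification $\cS_\cG(M)$ induced by the proper groupoid $\cG$, so it is genuinely a stratification by the general theory of proper Lie groupoids (cf.\ Section \ref{sec:The stratifications induced by a proper Lie groupoid}). Thus the only content left is the identification of the codimension of each stratum with $\codim_{\gg_x}(\zz(\gg_x)^{\Hol})$, and the resulting description of $M^{\princ}$. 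By Lemma \ref{stratif-lemma-aux}, it suffices to check that through every point $x_0$ the canonical stratum has the asserted codimension.

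Next I would introduce, in parallel with the relation $\underset{\mathrm{cod}}{\sim}$ from the proof of Theorem \ref{thm:inf-strat}, the relation
\[ x\underset{\mathrm{cod}'}{\sim} y \quad\textrm{iff}\quad \codim_{\gg_x}(\zz(\gg_x)^{\Hol})=\codim_{\gg_y}(\zz(\gg_y)^{\Hol}), \]
whose classes are the sets $\Sc_k(M,\pi)$ in the statement. By Proposition \ref{prop:holonomy:groupoid:version}, for an integrable Poisson manifold $\zz(\gg_x)^{\Hol}=\zz(\gg_x)^{\cG_x}=\zz(\gg_x)^{\cG_x/\cG_x^0}$, so this invariant depends only on the pair $(\gg_x,\nu_x)$ together with the isotropy representation, exactly the data governing $\cS_\cG(M)$. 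Applying the Hamiltonian normal form of Theorem \ref{thm-lf-Poisson-v2} at $x_0$, one gets a symplectic Morita equivalence with $G\ltimes V$, $V\subset\gg^*$ a $G$-invariant neighbourhood of $0$, and $x_0$ $Q$-related to $0$; by Remark \ref{remark:Morita-general-stratifications} the problem reduces to checking the analogue of \eqref{eq:analogue:for:proof}, namely that near $0\in\gg^*$ the canonical stratum of $(\gg^*,\pi_{\gg^*})$ through $0$ coincides with $\{\xi\in V : \codim_{\gg_\xi}(\zz(\gg_\xi)^{G_\xi})=\dim\gg^\ss\}$. For the coadjoint picture, the canonical stratum through $0$ is $V^G$: indeed $\xi\in V^G$ forces $\gg_\xi=\gg$, hence equality of the invariants; conversely, choosing a $G$-invariant inner product on $\gg$, the dual vector $v_\xi\in\zz(\gg_\xi)^{G_\xi}$ of any $\xi$ in the right-hand set must be $G$-invariant (this is exactly the argument already used at the end of the proof of Proposition \ref{prop:can-str-equiv-hol}), so $\xi\in V^G$. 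Feeding this back through the Morita equivalence gives the desired local equality, and Lemma \ref{stratif-lemma-1} then yields $\Sc_k(M,\pi)$ as described. The identification of the codimension itself comes from the explicit description $U\cap\Sc=\mu^{-1}((\zz(\gg)^*)^{\Hol\text{-part}})/G$ of the stratum in the local model, parallel to \eqref{eq:stratum:local:model}, and from the fact that $\codim_{\gg}\zz(\gg)=\dim\gg^\ss$ under \eqref{eq:cpctLie-decomp}.

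Finally, for the ``in particular'' statement: $M^{\princ}$ is the open (regular) stratum of $\cS(M,\pi)$, i.e.\ the set of $x$ with $\codim_{\gg_x}(\zz(\gg_x)^{\Hol})=0$, equivalently $\zz(\gg_x)^{\Hol}=\gg_x$, which forces $\gg_x$ abelian (so $x\in M^{\reg}$ by \eqref{eq:compute:Sigma0}) and then $\gg_x^{\Hol}=\gg_x$, i.e.\ the linear holonomy representation $\rho^{\Hol}_M:\pi_1(S_x)\to\GL(\nu_x)$ (which on $M^{\reg}$ is the dual of the usual linear holonomy of $\cF_\pi$ on $\nu_x\cong\gg_x^*$) is trivial. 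The main obstacle I anticipate is not any single hard computation but bookkeeping: one must be careful that the ``invariant part'' appearing via $\rho^{\Hol}_M$ (a $\pi_1$-representation) really matches the $\cG_x/\cG_x^0$-fixed points used in the groupoid-side stratification $\cS_\cG(M)$ — this is precisely what Proposition \ref{prop:holonomy:groupoid:version} supplies — and that the reduction to $\gg^*$ via the normal form is compatible with passing to connected components in the sense required by Lemma \ref{stratif-lemma-1}.
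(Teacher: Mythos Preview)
Your overall strategy is exactly the paper's: fix a proper integration, invoke Proposition~\ref{prop:can-str-equiv-hol} so that $\cS(M,\pi)=\cS_\cG(M)$, use Proposition~\ref{prop:holonomy:groupoid:version} to rewrite $\zz(\gg_x)^{\Hol}=\zz(\gg_x)^{\cG_x}$ (so the codimension partition is Morita invariant), reduce via the Hamiltonian normal form and Lemma~\ref{stratif-lemma-1} to the coadjoint model, and verify there that the canonical stratum through $0$ is $V^G=\{\xi:\codim_{\gg_\xi}(\zz(\gg_\xi)^{G_\xi})=\codim_\gg(\zz(\gg)^G)\}$. The ``in particular'' paragraph is fine.

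There is, however, a genuine gap in your converse direction in the local model. You invoke the $v_\xi$ argument from the end of the proof of Proposition~\ref{prop:can-str-equiv-hol}, but that argument worked only because one \emph{first} had $\gg_\xi=\gg$ (from the separate hypothesis $\xi\in V^{\gg}$) and hence $\zz(\gg_\xi)^{G_\xi}=\zz(\gg)^G$; only then does $v_\xi\in\zz(\gg_\xi)^{G_\xi}$ force $v_\xi\in\gg^G$. In the present theorem you only have the codimension equality, and the membership $v_\xi\in\zz(\gg_\xi)^{G_\xi}$ holds for \emph{every} $\xi$, so by itself it proves nothing. What is missing is the monotonicity step that the paper supplies: since $\zz(\gg)^G\subset\zz(\gg_\xi)^{G_\xi}$ (elements of $\zz(\gg)$ lie in $\zz(\gg_\xi)$ and $G_\xi\subset G$) and $\gg_\xi\subset\gg$, one always has
\[
\codim_{\gg_\xi}\bigl(\zz(\gg_\xi)^{G_\xi}\bigr)\;\le\;\codim_{\gg}\bigl(\zz(\gg)^{G}\bigr),
\]
and equality forces \emph{both} $\gg_\xi=\gg$ and $\zz(\gg_\xi)^{G_\xi}=\zz(\gg)^G$. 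Only after this does the $v_\xi$ trick yield $\xi\in(\gg^*)^G$. (The paper's sentence ``can only increase when moving away from $0$'' is a slip for ``decrease'', but the intended content is exactly this inequality.)

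A smaller slip: in your displayed local condition you wrote $\dim\gg^\ss$ on the right-hand side. The correct value is $\codim_\gg(\zz(\gg)^G)=\dim\gg-\dim\zz(\gg)^G$, which exceeds $\dim\gg^\ss=\dim\gg-\dim\zz(\gg)$ whenever $G/G^0$ acts nontrivially on $\zz(\gg)$ (cf.\ Example~\ref{example:codimenision-one-do-exist}); since in the normal form $G=\cG_{x_0}$ may well be disconnected, you cannot assume $\zz(\gg)^G=\zz(\gg)$.
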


Notice however that, unlike the case of the infinitesimal stratification, the canonical stratification of a Poisson manifold of proper type may have codimension one strata. See Example \ref{example:codimenision-one-do-exist}.

\begin{proof}
Notice that, by Proposition \ref{prop:holonomy:groupoid:version}, $\zz(\gg_x)^{\Hol}= \zz(\gg_x)^{\G_x}$ so the codimension partition is invariant under Morita equivalences. Therefore we can apply again the same type of argument used in the proofs of Theorem \eqref{thm:inf-strat} and Proposition \ref{prop:can-str-equiv-hol}.
We will be left with proving that:
\[  \xi\in (\gg^*)^{G} \quad\Longleftrightarrow \quad
{\codim}_{\gg_{\xi}}\left(\zz(\gg_{\xi})^{G_{\xi}}\right)= 
{\codim}_{\gg}\left(\zz(\gg)^{G}\right).\]
Notice that the codimension of  $\zz(\gg_{\xi})^{G_{\xi}}$ in $\gg_{\xi}$ can can only increase when moving away from $0\in \gg^*$. Hence, if $\xi$ satisfies the conditions on the right hand side, then $\gg_{\xi}= \gg$ and $\zz(\gg_{\xi})^{G_{\xi}}= \zz(\gg)^{G}$, from which it follows that $\xi\in (\gg^*)^{G}$. The opposite implication is clear.
\end{proof}

\noindent Similar to Corollary \ref{corollary: infinitesimal-stratum-conormal} we have:

\begin{corollary}\label{corollary: stratum-conormal}
For any stratum $\Sc$ of the canonical stratification of a Poisson manifold of proper type $(M,\pi)$ 
and any $x\in \Sigma$ one has:
\begin{equation}
\label{eq:strata:conormal2}
(T_x\Sc)^0= 
((\gg_{x}^*)^{\G_x})^{0} = \gg^\ss_x\oplus \mathrm{Coinv}_{\pi_1(S_x)}(\zz(\gg_{x})).
\end{equation}
In particular, 
\begin{equation}
\label{eq:strata:normal2}
T_x\Sc/T_xS_x=(\zz(\gg)^*)^{\G_x/\G_x^0}.
\end{equation}
\end{corollary}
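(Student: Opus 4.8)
The plan is to reduce the statement to a computation in the linear Poisson model $(\gg^*,\pi_{\gg^*})$ for a compact Lie algebra $\gg$, exactly as in the proofs of Theorem \ref{thm:inf-strat} and its Corollary \ref{corollary: infinitesimal-stratum-conormal}. First I would recall from Proposition \ref{prop:holonomy:groupoid:version} that $\zz(\gg_x)^{\Hol}=\zz(\gg_x)^{\G_x}$, so that the canonical stratum through $x$ is, in the Hamiltonian local model of Theorem \ref{thm-lf-Poisson-v2} (with $G=\G_x$, $\mu\colon Q\to\gg^*$, $U=Q/G$, and $x$ corresponding to $0\in\gg^*$), described by
\[
U\cap\Sc=\mu^{-1}\big((\gg^*)^{G}\big)/G,
\]
by the same argument that gave \eqref{eq:stratum:local:model}. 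Hence both \eqref{eq:strata:conormal2} and \eqref{eq:strata:normal2} are statements about the stratum through the origin in $\gg^*$, and by $G$-equivariance of $\mu$ and the fact that $\mu$ is a surjective submersion onto its image with connected fibers, the identifications $T_x\Sc/T_xS_x\cong (\zz(\gg)^*)^{G/G^0}$ and $(T_x\Sc)^0\cong((\gg^*)^G)^0$ will follow once they are established in $\gg^*$.

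Next I would do the linear algebra over $\gg^*$. Fix a $G$-invariant inner product, giving $G$-equivariant identifications $\gg\cong\gg^*$ and the orthogonal splitting $\gg=\zz(\gg)\oplus\gg^{\ss}$ of \eqref{eq:cpctLie-decomp}, which is also the isotypic decomposition of $\gg^*$ under $G^0$. The tangent space at $0$ to the stratum $\Sc_0=(\gg^*)^G$ is just $(\gg^*)^G$ itself (it is a linear subspace), and the symplectic leaf through $0$ is the point $\{0\}$, so $T_0\Sc_0/T_0 S_0=(\gg^*)^G$. Now $(\gg^*)^G=((\gg^*)^{G^0})^{G/G^0}=(\zz(\gg)^*)^{G/G^0}$, since $G^0$ acts trivially exactly on the $\zz(\gg)^*$-summand; this gives \eqref{eq:strata:normal2}. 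For the conormal formula, $(T_0\Sc_0)^0=((\gg^*)^G)^0\subset\gg$, and under the inner product this annihilator is the orthogonal complement of $(\gg^*)^G$ inside $\gg$. Decomposing $(\gg^*)^G=(\zz(\gg)^*)^{G/G^0}$, its complement in $\zz(\gg)$ is the sum of the nontrivial $G/G^0$-isotypic pieces, i.e.\ $\mathrm{Coinv}_{G/G^0}(\zz(\gg))\cong\mathrm{Coinv}_{\pi_1(S_x)}(\zz(\gg))$ (identifying coinvariants with the complement of invariants via the invariant inner product and using that $\pi_1(S_x)$ surjects onto $\G_x/\G_x^0$), while the complement of $(\gg^*)^G$ inside $\gg^{\ss}$ is all of $\gg^{\ss}$ since $G$ acts on $\gg^{\ss}$ with no nonzero invariants (a compact semisimple Lie algebra has trivial invariants under its adjoint group). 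This yields $(T_0\Sc_0)^0=\gg^{\ss}\oplus\mathrm{Coinv}_{\pi_1(S_x)}(\zz(\gg))$, which is \eqref{eq:strata:conormal2}.

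Finally I would transport these identities back along the Morita equivalence / Hamiltonian normal form: by Remark \ref{remark:Morita-general}(ii), at the $Q$-related points $x\leftrightarrow 0$ the normal isotropy representations $\nu_x(S_x)$ and $\gg_x^*$ are identified $\psi_q$-equivariantly (dually to the isotropy group isomorphism $\psi_q\colon\G_x\to G$), and this identification carries $T_x\Sc/T_xS_x$ to $T_0\Sc_0$ (using \eqref{eq:stratum:local:model} applied to $\cS$ rather than $\cSi$), giving \eqref{eq:strata:normal2}; and applying $(-)^0$ inside $T^*_xM$ versus inside $\gg$ gives \eqref{eq:strata:conormal2} after observing that $(T_x\Sc)^0\subset T^*_xM$ contains $(T_xS_x)^0=\gg_x$ and the quotient $(T_x\Sc)^0$-in-$\gg_x$ is exactly the conormal of the stratum-in-$\gg^*$ computed above. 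The main obstacle is bookkeeping: keeping straight which annihilators and coinvariants are taken inside which space ($T^*_xM$, $\gg_x$, $\gg$, $\gg^*$), and justifying the identification $\mathrm{Coinv}_{\pi_1(S_x)}(\zz(\gg))\cong$ (orthogonal complement of invariants) compatibly with the $\pi_1(S_x)\twoheadrightarrow\G_x/\G_x^0$ factorization from Proposition \ref{prop:holonomy:groupoid:version}; once the dictionary is fixed, each individual step is routine, paralleling Corollary \ref{corollary: infinitesimal-stratum-conormal}.
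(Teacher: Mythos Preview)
Your proposal is correct and follows exactly the route the paper indicates: reduce via the Hamiltonian local model and the description of $U\cap\Sc$ obtained in the proof of Theorem~\ref{thm:can-strat} (namely $U\cap\Sc=\mu^{-1}((\gg^*)^G)/G$) to a linear-algebra computation at $0\in\gg^*$, paralleling Corollary~\ref{corollary: infinitesimal-stratum-conormal}. One small slip in your last paragraph: you write that $(T_x\Sc)^0$ \emph{contains} $\gg_x$, but since $S_x\subset\Sc$ one has $(T_x\Sc)^0\subset(T_xS_x)^0=\gg_x$; the remainder of your argument makes clear you have the correct inclusion in mind.
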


\begin{example} \label{example:codimenision-one-do-exist}
Similarly to what happens for proper Lie group actions, the canonical stratification of PMCTs may have codimension one strata. For example, let $(S,\w)$ be a symplectic manifold with a connected double cover $q:\widetilde{S}\to S$. Also, let $\Z_2$ act on $\S^1$ by inversion, so we have the principal $\S^1\rtimes \Z_2$-bundle 
 \[P:=\widetilde{S}\times_{\Z_2} (\S^1\rtimes \Z_2)\to S.\]
The gauge construction for $P$ and $(S,\w)$ produces a  Poisson manifold of proper type (see \cite[Section 5.8]{CFM-I})
 \[M_\lin=:\widetilde{S}\times_{\Z_2}\R, \]
 with Poisson structure $\pi_\lin$ induced by the leafwise symplectic form on $\widetilde{S}\times \R$.
 The coadjoint action of $\Z_2$ on $\R$ is by reflection and it follows that $\cS(M_\lin,\pi_\lin)$ has the codimension one strata
 \[\Si_1=\widetilde{S}\times \{0\}/\Z_2\equiv S.\]
\end{example}

\noindent Next, we discuss the interaction between the canonical and the infinitesimal strata.

 
\begin{theorem}
\label{thm:can-strat:inf-strat} 
For any  Poisson manifold of proper type $(M, \pi)$  the canonical stratification $\cS(M, \pi)$ is a refinement of $\cSi(M, \pi)$. Its restriction to a stratum $\Sigma\in\cSi(M,\pi)$ coincides with the canonical stratification of $(\Sigma,\pi_\Sigma)$, i.e.,
\[ \cS(M,\pi)|_{\Sigma}=\cS(\Sigma,\pi_\Sigma). \]
Furthermore, the principal part of $(\Sigma, \pi_{\Sigma})$ consists of those points $x\in \Sigma$ for which the representation $\rho^{\Hol}_M:\pi_1(S_x)\to \GL(\zz(\gg_x))$ is trivial. 

\end{theorem}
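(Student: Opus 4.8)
The plan is to reduce everything, as in the proofs of Theorems~\ref{thm:inf-strat} and~\ref{thm:can-strat} and Proposition~\ref{prop:can-str-equiv-hol}, to the model case $(\gg^*,\pi_{\gg^*})$ via the Hamiltonian normal form of Theorem~\ref{thm-lf-Poisson-v2} and the Morita-invariance of both canonical stratifications (Remark~\ref{remark:Morita-general-stratifications}). First, the fact that $\cS(M,\pi)$ refines $\cSi(M,\pi)$ is immediate from the definitions: $x\underset{\hol}{\sim}y$ implies $\gg_x\simeq\gg_y$, hence $x\underset{\inf}{\sim}y$, so each $\underset{\hol}{\sim}$-class is contained in a $\underset{\inf}{\sim}$-class, and the same passes to connected components. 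Combined with Theorem~\ref{thm:can-strat}, which computes codimensions of $\cS$-strata via $\codim_{\gg_x}(\zz(\gg_x)^{\Hol})$, one sees that the $\cS$-strata inside a fixed $\Sigma\in\cSi(M,\pi)$ form a partition of $\Sigma$; one must then identify this with $\cS(\Sigma,\pi_\Sigma)$.

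For the key identification $\cS(M,\pi)|_\Sigma=\cS(\Sigma,\pi_\Sigma)$, I would use Lemma~\ref{stratif-lemma-aux}: it suffices to show that, for each $x\in\Sigma$, the $\cS(M,\pi)$-stratum through $x$ and the $\cS(\Sigma,\pi_\Sigma)$-stratum through $x$ have the same codimension \emph{inside $\Sigma$}. By Theorem~\ref{thm:can-strat} applied to $(\Sigma,\pi_\Sigma)$, the right-hand codimension is $\codim_{\gg_x(\Sigma)}\bigl(\zz(\gg_x(\Sigma))^{\Hol_\Sigma}\bigr)$. By Theorem~\ref{thm:inf-strat-type}, $\Sigma$ is a core Poisson submanifold, so by Theorem~\ref{thm:inf-strat-monodromy}(i) the linear holonomy representations of $(M,\pi)$ and $(\Sigma,\pi_\Sigma)$ at $x$ are intertwined by the isomorphism $r\colon\zz(\gg_x)\overset{\sim}{\to}\nu_x^*(\cF_\Sigma)=\zz(\gg_x(\Sigma))$ of~\eqref{restr-to-sigma-new}; hence $r$ carries $\zz(\gg_x)^{\Hol}$ onto $\zz(\gg_x(\Sigma))^{\Hol_\Sigma}$, giving
\[
\dim\zz(\gg_x(\Sigma))-\dim\zz(\gg_x(\Sigma))^{\Hol_\Sigma}=\dim\zz(\gg_x)-\dim\zz(\gg_x)^{\Hol}.
\]
On the other hand, within $M$ the $\cS(M,\pi)$-stratum through $x$ has codimension $\codim_{\gg_x}(\zz(\gg_x)^{\Hol})=\dim\gg_x-\dim\zz(\gg_x)^{\Hol}=\dim\gg_x^\ss+(\dim\zz(\gg_x)-\dim\zz(\gg_x)^{\Hol})$, while $\Sigma$ itself has codimension $\dim\gg_x^\ss$ in $M$ by~\eqref{eq:strata:conormal} of Corollary~\ref{corollary: infinitesimal-stratum-conormal}. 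Subtracting, the codimension of the $\cS(M,\pi)$-stratum \emph{inside $\Sigma$} equals $\dim\zz(\gg_x)-\dim\zz(\gg_x)^{\Hol}$, which by the displayed equality matches the $\cS(\Sigma,\pi_\Sigma)$-codimension. Lemma~\ref{stratif-lemma-aux} then yields $\cS(M,\pi)|_\Sigma=\cS(\Sigma,\pi_\Sigma)$. (One should check that $\cS(M,\pi)$ restricted to the saturated submanifold $\Sigma$ genuinely \emph{is} a stratification of $\Sigma$ in the sense of Definition~\ref{def:str:man}; this follows because $\Sigma$ is a union of $\cSi$-strata which are Poisson submanifolds, the $\cS$-strata inside $\Sigma$ are locally closed and connected, and the frontier condition descends.)

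The final assertion about the principal part of $(\Sigma,\pi_\Sigma)$ is then Theorem~\ref{thm:can-strat} applied to $(\Sigma,\pi_\Sigma)$: $x\in\Sigma$ is principal in $\Sigma$ iff $x$ is regular in $\Sigma$ and the linear holonomy $\rho^{\Hol}_\Sigma\colon\pi_1(S_x)\to\GL(\nu_x(\cF_\Sigma))$ is trivial; every point of $\Sigma$ is already regular for $\pi_\Sigma$ since $\Sigma$ is a regular Poisson submanifold, and by the core property and Theorem~\ref{thm:inf-strat-monodromy}(i) triviality of $\rho^{\Hol}_\Sigma$ is equivalent to triviality of $\rho^{\Hol}_M\colon\pi_1(S_x)\to\GL(\zz(\gg_x))$, which is the claimed description. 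The main obstacle I anticipate is the bookkeeping in the codimension count — in particular correctly identifying $\nu_x(\cF_\Sigma)$ with $\zz(\gg_x)^*$ via~\eqref{eq:strata:normal} and keeping straight which codimensions are taken relative to $M$ versus relative to $\Sigma$ — rather than any conceptual difficulty, since all the substantive input (the normal form, the core property, and the holonomy-compatibility of Theorem~\ref{thm:inf-strat-monodromy}) is already available.
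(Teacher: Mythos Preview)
Your proposal is correct and rests on the same key ingredient as the paper's proof: the core property of $\Sigma$ (Theorem~\ref{thm:inf-strat-type}) together with Theorem~\ref{thm:inf-strat-monodromy}(i), which identifies the linear holonomy of $(M,\pi)$ with that of $(\Sigma,\pi_\Sigma)$ via the isomorphism $r$. The paper's argument is somewhat more direct than yours: rather than invoking Lemma~\ref{stratif-lemma-aux} and matching codimensions, it observes that since the isotropy Lie algebras $\gg_x$ are constant (up to isomorphism) along $\Sigma$ and the holonomy-invariants match via $r$, the equivalence relations $\underset{\hol}{\sim}$ for $(M,\pi)$ restricted to $\Sigma$ and for $(\Sigma,\pi_\Sigma)$ coincide as partitions, so their connected components agree; this sidesteps the need to separately verify that $\cS(M,\pi)|_\Sigma$ is a stratification of $\Sigma$ before applying the codimension criterion.
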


\begin{proof}
If $\Sigma\subset M$ is a stratum of $\cSi(M,\pi)$, since $(\Sigma,\pi_\Sigma)$ is a core Poisson submanifold, by Theorem \ref{thm:inf-strat-monodromy} its linear holonomy coincides with the linear holonomy of $(M,\pi)$. This implies the statement in the theorem. 
The last part also follows from the fact that $\Si$ is a core Poisson submanifold and from the description in \eqref{eq:strata:normal} 
of $\nu_\Sigma(S_x)$. 
\end{proof}

Next, we have the analogue of Theorem \ref{thm:inf-strat-type}:

\begin{theorem}
\label{thm:can-strat-type}
If $(M,\pi)$ is a  Poisson manifold of proper type each strata of $\cS(M,\pi)$ is a saturated, regular, Poisson submanifold of $(M,\pi)$, of the same (strong) $\mathcal{C}$-type as $(M,\pi)$. 
\end{theorem}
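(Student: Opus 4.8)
The plan is to mimic the strategy already used for the infinitesimal stratification in Theorem \ref{thm:inf-strat-type}, reducing the assertion to the criterion of Lemma \ref{lemma:suited-for-reduction}. Fix a proper symplectic integration $(\cG,\Omega)\tto M$ of $(M,\pi)$, which by hypothesis may be taken of (strong) $\cC$-type, and fix a stratum $\Sc\in\cS(M,\pi)$. By Proposition \ref{prop:can-str-equiv-hol}, $\Sc$ is a connected component of an equivalence class of $\underset{\hol}{\sim}$, hence saturated, so $\pi$ restricts to a Poisson structure $\pi_\Sc$ on $\Sc$. That $(\Sc,\pi_\Sc)$ is \emph{regular} follows from Theorem \ref{thm:can-strat} combined with Corollary \ref{corollary: stratum-conormal}: along $\Sc$ the isotropy Lie algebras are all isomorphic and, by \eqref{eq:strata:conormal2}, the conormal bundle $(T\Sc)^0$ equals the sub-bundle $\gg^\ss\oplus\mathrm{Coinv}_{\pi_1(S)}(\zz(\gg))$, which has locally constant rank; equivalently, by \eqref{eq:strata:normal2} the normal bundle to the symplectic leaves inside $\Sc$ has constant rank, so $\cF_{\pi_\Sc}$ is regular.

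Next I would show that $\Sc$ is suited for smooth reduction of $\cG$, which by Lemma \ref{lemma:suited-for-reduction} amounts to checking that for each $x\in\Sc$ the connected Lie subgroup $\cK_x\subset\cG_x$ integrating $(T_x\Sc)^0\subset\gg_x$ is closed. By \eqref{eq:strata:conormal2}, $(T_x\Sc)^0=\gg_x^\ss\oplus\mathrm{Coinv}_{\pi_1(S_x)}(\zz(\gg_x))$ as a Lie subalgebra of the compact Lie algebra $\gg_x$. The semisimple summand integrates to a closed (indeed compact) subgroup of $\cG_x^0$, exactly as in the proof of Theorem \ref{thm:inf-strat-type}. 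For the central summand, one observes that $\mathrm{Coinv}_{\pi_1(S_x)}(\zz(\gg_x))$ is precisely the span of the directions moved by the (finite, since $\cG$ is proper) holonomy action of $\cG_x/\cG_x^0$ on $\zz(\gg_x)$ via Proposition \ref{prop:holonomy:groupoid:version}; since $\Sc$ is, by Proposition \ref{prop:can-str-equiv-hol}, a stratum of the canonical \emph{orbit-type} stratification $\cS_\cG(M)$ of the proper groupoid $\cG$, the subgroup $\cK_x$ is an integral subgroup with abelian quotient direction which is closed by the general theory of orbit-type strata for proper groupoids — alternatively, one invokes the Hamiltonian normal form of Theorem \ref{thm-lf-Poisson-v2} to reduce to $(\gg^*,\pi_{\gg^*})$, where $\cK_x$ is visibly the integration of a subspace invariant and split off by the compact group $G$, hence closed. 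Once closedness of all $\cK_x$ is established, Lemma \ref{lemma:suited-for-reduction} gives that $\cG_\red\tto\Sc$ is a symplectic groupoid of the same (strong) $\cC$-type as $\cG$, integrating $(\Sc,\pi_\Sc)$, which completes the proof.

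The main obstacle I expect is the closedness of the connected subgroup $\cK_x$ corresponding to the central-coinvariant part: unlike the purely semisimple case (where one quotes the standard fact that $[\gg,\gg]$ integrates to a closed subgroup of a compact group), the summand $\mathrm{Coinv}_{\pi_1(S_x)}(\zz(\gg_x))$ sits inside the central torus $\zz(\gg_x)$, and an integral subgroup of a torus need not be closed. The resolution is that this subgroup is not arbitrary: it is distinguished by the finite-order holonomy representation, so it is the Lie algebra of a genuine (closed) subtorus — precisely the content that makes the orbit-type stratum $\Sc$ smooth and saturated in the first place. Making this rigorous is cleanest by transporting the question, via the Morita equivalence of Theorem \ref{thm-lf-Poisson-v2} and Remark \ref{remark:Morita-general}, to the linear model $(\gg^*,\pi_{\gg^*})$ with its compact group $G$, where $(T_x\Sc)^0$ becomes a $G$-invariant rational subspace of $\gg$ and its integration is manifestly a closed subgroup; the rest of the argument is then routine bookkeeping exactly parallel to Theorem \ref{thm:inf-strat-type}.
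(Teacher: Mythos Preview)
Your architecture matches the paper's: reduce to Lemma \ref{lemma:suited-for-reduction} and show that $\cK_x$, the connected integration of $(T_x\Sc)^0\subset\gg_x$, is closed. You correctly isolate the only nontrivial point --- the central summand $W:=\mathrm{Coinv}_{\pi_1(S_x)}(\zz(\gg_x))$ lies in the Lie algebra of a torus, and a connected subgroup of a torus is closed iff the corresponding subspace is \emph{rational} with respect to the exponential lattice --- but neither of your two proposed justifications actually establishes rationality. Appealing to ``general theory of orbit-type strata for proper groupoids'' is circular: suitability for smooth reduction \emph{is} what is being proved here. And after passing to the local model you simply assert that $(T_x\Sc)^0$ is a ``$G$-invariant rational subspace'' and hence manifestly closed; but $G$-invariance (which on $\zz(\gg)$ amounts to $G/G^0$-invariance) does not by itself force rationality.

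The missing ingredient, which the paper isolates as a separate lemma, is purely algebraic: if a finite group $\Gamma$ acts on an integral affine vector space $(V,\Lambda)$ by lattice-preserving linear maps, then both summands of the canonical decomposition $V=V^\Gamma\oplus((V^*)^\Gamma)^0$ are integral affine subspaces, because the averaging operator $T(v)=\sum_{\gamma\in\Gamma}\gamma\cdot v$ is an \emph{integral} linear map with image $V^\Gamma$ and kernel $((V^*)^\Gamma)^0$. Here $\Gamma=\cG_x/\cG_x^0$ acts on $\zz(\gg_x)$ via $\Ad$, and this preserves $\ker(\exp)$, so the lemma applies and $W$ is rational. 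The paper streamlines the bookkeeping by first reducing $\cG$ along the ambient infinitesimal stratum $\Sigma\supset\Sc$ (using Theorem \ref{thm:inf-strat-type}), so that $\gg_x$ becomes abelian and the semisimple factor never appears; your direct route on $M$ also works once the averaging lemma is supplied.
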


\begin{proof}
A stratum $\Sc\in\cS(M,\pi)$ is saturated by symplectic leaves and therefore it is a Poisson submanifold of $(M,\pi)$. It is regular because it is a Poisson submanifold of a regular Poisson manifold $(\Si,\pi_\Si)$, for some $\Si\in\cSi(M,\pi)$.

Let $(\cG,\Omega)$ be an integration of $(M,\pi)$ of $\cC$-type and fix $\Sc\in\cS(M,\pi)$. We have $\Sc\subset \Sigma$ for some $\Sigma\in \cSi(M,\pi)$, and we already know that $\Si$ is suited for smooth reduction of $(\cG,\Omega)$. So we may assume without loss of generality that $(\cG,\Omega)$ is an integration of $\cC$-type of the regular Poisson manifold $(\Si,\pi_\Si)$. We make use of Lemma \ref{lemma:suited-for-reduction} and of the notations therein. We need to show that $\Sc$ is suited for smooth reduction of $(\cG,\Omega)$, i.e., that the integration $\cK\subset\cG$ of the bundle of abelian subalgebras
\[ (T\Sc)^0\subset T^*\Sigma\]
is a closed subgroupoid.

The bundle of groups $\cK$ is contained in the connected component of the isotropy $\cT$ of $\cG\tto \Sigma$ which, since $(\Si,\pi_\Si)$ is regular, is a bundle of tori with fibers 
\[ \cT_x=\nu^*(\cF_\Si)_x/\Lambda_x. \] 
Here $\Lambda$ denotes the transverse integral affine structure associated to the regular  Poisson manifold of proper type $(\Sigma, \pi_{\Sigma})$ \cite[Section 3.3]{CFM-II}, i.e., the kernel of the exponential map
\[ \exp:\gg_x=\nu_x^*(\cF_\Si)\to \cG_x. \] 
To show that $\cK$ is closed it is enough to show that its fibers are subtori of the fibers of $\cT$, i.e., that each $(T_x\Sc)^0$ is an integral affine subspace  of $(\gg_x,\Lambda_x)$. It follows from \eqref{eq:strata:conormal2} that we have a canonical $\cG_x$-invariant splitting
\[ \gg_x=\gg_x^{\cG_x}\oplus  (T_x\Sc)^0. \]
Since $\gg_x$ is abelian, the action of the connected component $\cG^0_x$ is trivial and we obtain a $\cG_x/\cG^0_x$-invariant splitting
\[ \gg_x=\gg_x^{\cG_x/\cG^0_x}\oplus  (T_x\Sc)^0. \]
Observing that the action of $\cG_x/\cG^0_x$ on $\gg_x$ is by integral affine transformations, 
to conclude our proof it suffices to invoke the following lemma.

\begin{lemma}
\label{lem:invariant:decomposition}
Let $V$ be an integral affine vector space and let $\Gamma$ be a finite group acting by integral affine transformations on $V$. Then there is a canonical $\Gamma$-invariant splitting by integral affine subspaces
\[ V=V^\Gamma\oplus ((V^*)^\Gamma)^0. \]
\end{lemma}

\begin{proof}[Proof of the Lemma]
Let $T:V\to V$ be the linear integral affine map
\[ T(v):= \sum_{\gamma\in\Gamma}\gamma\cdot v. \]
This map preserves the fixed point $V^\Gamma$, has image $V^\Gamma$ and kernel $((V^*)^\Gamma)^0$, so the lemma follows.
\end{proof}
\end{proof}

 \subsection{Canonical stratifications of DMCTs}
 \label{sec:stratifications:DMCT}

Twisted Dirac manifolds of $\cC$-type also have canonical stratifications, with the same features as in the Poisson case. Namely, Theorems \ref{thm:inf-strat} and \ref{thm:can-strat}, as well as Corollaries \ref{corollary: infinitesimal-stratum-conormal} and \ref{corollary: stratum-conormal}, hold as stated for (possibly twisted) DMCTs. This follows because the Hamiltonian normal form (cf. Theorem \ref{thm-lf-Dirac} and Remark \ref{rmk-nf-tDirac}) still gives, around each leaf, a Morita equivalence with a $G$-invariant neighborhood of 0 in the dual of $\gg^*$.

The definition of core submanifold $\Sigma$ of a twisted Dirac manifold $(M,L)$ (cf. Definition \ref{def:core}) is the same, since the conormal bundle still coincides with the isotropy of the Dirac structure. Also, Theorems \ref{thm:inf-strat-type} and \ref{thm:can-strat-type} still hold, because:
\begin{enumerate}[(i)]
    \item The notion of saturated submanifold $i:N\hookrightarrow M$ suited for smooth reduction in the Dirac setting, as well as Lemma \ref{lemma:suited-for-reduction}, carry through. To that end, one sets $\mathfrak{K}:=\ker(\Omega|_{\cG_N})\cap\ker(\d\s)\cap\ker(\d\t)$. Under the isomorphism $\mathrm{Lie}(\cG)\simeq L$ we still have $\mathfrak{K}|_N=(TN)^0$;
    \item Theorem \ref{thm:inf-strat-monodromy} still holds. In its proof, one replaces the algebroid morphism between cotangent Lie algebroids by the analogous one for twisted Dirac structures, namely $L|_S\to (i^*L)|_S$, where $i^*L$ denotes the pullback Dirac structure to $N$.
\end{enumerate}
From these it follows also that the relationship between the canonical and (i) the canonical infinitesimal stratification (Theorem \ref{thm:can-strat:inf-strat}) and (ii) the holonomy stratification of its characteristic foliation in the regular case (Proposition \ref{prop:can-str-equiv-hol}), also hold in the twisted Dirac setting.

Finally, Proposition \ref{prop:holonomy:groupoid:version} also holds for twisted presymplectic groupoids, as can easily be seen from its proof.

\begin{remark}
\label{rem:suited:IA}
The concept of submanifold suited for smooth reduction has an interesting interpretation in the case of presymplectic torus bundles $(\cT,\Omega_\cT)\tto M$. By \cite[Proposition 3.2.8]{CFM-II}, such bundles encode transverse integral affine structures $(\cF,\Lambda)$ on $M$. Using the same arguments as in \cite{CFM-II}, it follows that a submanifold $N\subset M$ is suited for smooth reduction of $(\cT,\Omega_\cT)$ if and only if it is a transverse integral affine submanifold of $(\cF,\Lambda)$.

Suppose $(\cG,\Omega)$ is a proper, regular, symplectic groupoid  with associated short exact sequence
\[
\xymatrix{1\ar[r]& (\cT,\Omega_{\cT})\ar[r]& (\cG,\Omega)\ar[r]& \cB(\cG)\ar[r] & 1.}
\]
Then one checks that given a saturated  submanifold $N\subset M$, the symplectic groupoid $(\cG,\Omega)$ is suited for reduction along $N$ if and only if the presymplectic torus bundle $(\cT,\Omega_{\cT})$ is suited for reduction along $N$.
\end{remark}

\section{The Weyl Resolution}
\label{sec:Weyl:resolution}

In this section we show that any Poisson manifold of proper type can be ``desingularized'', in the sense that the non-regular locus can be eliminated. Such a resolution of singularities will force us to consider the more general setting of Dirac geometry. However, these Dirac resolutions will still be Poisson on a dense open subspace and of proper type. For simplicity of the presentation, we will assume that the Poisson manifold $(M,\pi)$ is connected. For a general Poisson manifold one may apply these results to each component.

The definition of the resolution can be given straight away as follows.

\begin{definition}\label{def:resolution}
The Weyl resolution of a Poisson manifold $(M, \pi)$ is defined as 
\[ \hM:= \{ (x,\tt)\,|\, x\in M,\, \tt\subset \gg_x\,  \textrm{maximal\ torus}\},\]
together with the map
\[ \res: \hM\to M,\quad \ \res(x, \tt)= x.\]
\end{definition}

If $(\cG,\Omega)\tto M$ is a symplectic groupoid integrating $(M,\pi)$ then it acts naturally on the the Weyl resolution along the resolution map
\begin{equation}
\label{eq:action:resolution} 
g\cdot (x,\tt)=(y,\Ad_g(\tt)),
\end{equation}
where $g$ is an arrow from $x$ to $y$ and $\Ad_g$ is the differential of conjugation by $g$.

For a general Poisson manifold, $\hM$ may even fail to be smooth and the Weyl resolution can be of little use.  However, for PMCTs the situation is quite different and in this section we will discuss many nice properties of the resolution for these Poisson manifolds. For example, the pullback by $\res$ of $\pi$, as a Dirac structure, is a Dirac structure 
\begin{equation}
    \label{eq:Dirac:structure}
    \hL:=\res^*L_\pi.
\end{equation}
We will denote the associated presymplectic foliation by 
\begin{equation}
    \label{eqdefn:foliation:resolution}
    \hF:=\pr_{T\hM}(\hL).
\end{equation}  
Its leaves are the preimages by $\res$ of the leaves of the symplectic foliation $\cF_\pi$. We have the following result which summarizes the contents of this section:

%
%
%
%

\begin{theorem}\label{thm:Weyl-intro} 
For any  Poisson manifold of proper type $(M, \pi)$:
\begin{enumerate}[(i)]
\item $(\hM,\hL)$ is a regular Dirac manifold of proper type, and its Poisson locus is precisely the preimage $\res^{-1}(M^{\reg})$ of the regular part of $(M, \pi)$; 
\item $\res$ is a forward Dirac map which restricts to a Poisson diffeomorphism from $\res^{-1}(M^{\reg})$ to $M^{\reg}$;
\item $\res$ descends to a homemorphism between the leaf spaces of $(\hM,\hL)$ and $(M, \pi)$.  
\item For any proper integration $(\cG,\Omega)\tto (M, \pi)$, the action groupoid and the pullback of $\Omega$
\[
    \hG:=\G\ltimes\hM\tto\hM,\quad \hOmega:=\pr_\cG^*\Omega,
\]
is a proper presymplectic integration of $(\hM,\hL)$.
\end{enumerate}
\end{theorem}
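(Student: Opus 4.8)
The plan is to prove the four assertions of Theorem~\ref{thm:Weyl-intro} by reducing everything to the local Hamiltonian normal form of Theorem~\ref{thm-lf-Poisson-v2} and the explicit description of the Weyl resolution of the dual of a compact Lie algebra. The key observation is that, since all the objects involved ($\hM$, $\res$, $\hL$, the $\cG$-action, $\hOmega$) are natural in $(M,\pi)$ and its integration, it suffices to understand the model case $M=\gg^*$ with its proper integration $T^*G\simeq G\ltimes\gg^*$, and then transport the conclusions through the Morita equivalence furnished by the Hamiltonian normal form. Concretely, recall that for $\gg$ of compact type the Weyl resolution is $\wh{\gg^*}=\{(\xi,\tt): \tt\subset\gg_\xi \text{ maximal torus}\}$, and the map $G/T\times\tt^*\to\wh{\gg^*}$, $[gT,\eta]\mapsto(\Ad_g^*\eta,\Ad_g\tt)$ is a $G$-equivariant diffeomorphism, with $\res$ corresponding to $[gT,\eta]\mapsto\Ad_g^*\eta$; this exhibits $\wh{\gg^*}$ as a smooth manifold, shows $\res$ is a diffeomorphism over the regular locus $(\gg^*)^{\reg}$, and makes the pullback Dirac structure $\res^*L_{\pi_{\gg^*}}$ manifestly regular (its leaves are the $\res$-preimages of coadjoint orbits, which over the model $G/T\times\tt^*$ are the $G$-homogeneous slices $G/T\times\{\eta\}$-bundles over orbits).

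First I would establish smoothness of $\hM$ and statement~(i)–(ii) locally. Fix a leaf $S$ and invoke Theorem~\ref{thm-lf-Poisson-v2}: a saturated neighborhood $U$ of $S$ is Poisson diffeomorphic to $Q/G$ with $Q\to\gg^*$ a free Hamiltonian $G$-space and $\res^{-1}(U)$ is then identified with $(Q\times_{\gg^*}\wh{\gg^*})/G$, a manifold because the $G$-action is free and the fibered product is a manifold (since $\mu$ is a submersion onto an open $G$-invariant $V\subset\gg^*$ and $\wh{\gg^*}\to\gg^*$ is $G$-equivariant). The pullback Dirac structure $\hL$ is read off from this model, and regularity follows from the model computation on $\wh{\gg^*}$ together with the fact that pullback of Dirac structures commutes with the reduction and the fibered product here; the Poisson locus is exactly where $\hL$ is the graph of a bivector, which by the $\gg^*$-computation is precisely $\res^{-1}(V^{\reg})=\res^{-1}(U\cap M^{\reg})$. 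That $\res$ is forward Dirac is a general property of pullback Dirac structures, and over $M^{\reg}$ it is a diffeomorphism because there every isotropy is abelian, so the maximal torus $\tt$ is uniquely $\gg_x$ itself and the fiber of $\res$ is a point; smoothness of the inverse over the open set $M^{\reg}$ is then immediate. Statement~(iii) follows from (ii): $\res$ restricted to the dense open $\res^{-1}(M^{\reg})\to M^{\reg}$ is a leaf-preserving diffeomorphism, and since every leaf of $\hF$ maps onto a leaf of $\cF_\pi$ and $\res$ is surjective, the induced map on leaf spaces is a continuous bijection; that it is a homeomorphism uses properness — one checks that $\res$ is a proper map (its fibers are flag-type manifolds of the compact groups $\cG_x$, which vary semicontinuously and are compact), hence closed, giving continuity of the inverse on leaf spaces.

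For statement~(iv), the action~\eqref{eq:action:resolution} is well defined because $\Ad_g$ carries a maximal torus of $\gg_x$ to one of $\gg_y$; it is smooth and proper because $\cG$ is proper and $\res$ is proper over $M$, so $\hG=\cG\ltimes\hM\tto\hM$ is a proper Lie groupoid. The form $\hOmega=\pr_\cG^*\Omega$ is multiplicative since $\pr_\cG:\hG\to\cG$ is a groupoid morphism and $\Omega$ is multiplicative, and $\d\hOmega=\pr_\cG^*\d\Omega=0$. The content is that $(\hG,\hOmega)$ is a \emph{presymplectic} groupoid integrating exactly $(\hM,\hL)$, i.e.\ that the image of $T_{\wh{x}}\hM$ under the anchor-type map $(\t,\s)$ together with $\hOmega$ reproduces $\hL_{\wh x}$, and that $\ker\hOmega$ has the correct rank (the IM-form condition / non-degeneracy transverse to the kernel of $\d\s\cap\d\t$). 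This I would check in the local model, where $\hG|_{\res^{-1}(U)}$ is the action groupoid of $T^*G\simeq G\ltimes V$ pulled back to $V\times_{\gg^*}\wh{\gg^*}$, and the statement reduces to the corresponding elementary fact for $G\ltimes\wh{\gg^*}$ with the pullback of $\Omega_{\can}$ — which is where one uses that the isotropy of $\hG$ at $(\xi,\tt)$ is the stabilizer of $\tt$ in $\cG_\xi$, whose Lie algebra pairs correctly with the regular Dirac structure $\hL$. I expect the main obstacle to be precisely this last verification: showing that $(\hG,\hOmega)$ has the right kernel and satisfies the presymplectic-groupoid axioms \emph{globally} rather than just being a closed multiplicative 2-form — in other words, ruling out that the pullback degenerates too much or too little. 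The cleanest route is to observe that $\hOmega$ restricted to a source-fiber of $\hG$ equals the pullback along $\res$ (on source-fibers) of $\Omega$ on a source-fiber of $\cG$, and since $\res$ is a submersion with compact fibers onto leaves, the IM-condition for $(\cG,\Omega)$ integrating $L_\pi$ transports to the IM-condition for $(\hG,\hOmega)$ integrating $\res^*L_\pi=\hL$; properness and regularity have already been secured, so this completes~(iv).
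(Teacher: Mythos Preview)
Your overall architecture matches the paper's exactly: reduce to the Hamiltonian local model of Theorem~\ref{thm-lf-Poisson-v2}, use the explicit description of $\wh{\gg^*}$ from Example~\ref{ex:linear:case}, build $\hM$ locally as the quotient $(Q\times_{\gg^*}\wh{\gg^*})/G$, and deduce properness of $\res$ to get the homeomorphism of leaf spaces. One slip: the map $G/T\times\tt^*\to\wh{\gg^*}$ you write is not a diffeomorphism but a $W$-fold cover; the correct model is $(G\times\tt^*)/N(T)\simeq G/T\times_W\tt^*$.

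Where the paper is sharper than your sketch is in handling the smoothness of $\hL$ and the presymplectic-groupoid axioms for $(\hG,\hOmega)$ simultaneously and globally, rather than by local verification plus Morita transport. The paper observes that the action map and the pullback of covectors assemble into a single bundle map
\[
(\widehat{\act},\,\d\res^*)\colon \res^*T^*M \longrightarrow T\hM\oplus T^*\hM,
\]
and proves it is \emph{injective} using the description of the range of $\d\res$ (Proposition~\ref{prop:range:resolution} and Corollary~\ref{cor:resolution:action}). Since its image lands in the pointwise pullback $\hL$ and has the correct rank $\dim M$, this immediately gives that $\hL$ is a smooth subbundle (hence a Dirac structure) and identifies it with $\res^*T^*M$ as a Lie algebroid. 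The same injective map is then exactly the IM-form datum needed to invoke \cite[Corollary~4.8]{BCWZ} and conclude that $(\hG,\hOmega)$ is a presymplectic groupoid integrating $\hL$, bypassing the local kernel-rank check you anticipate as the main obstacle. Your route would work, but the paper's single injectivity statement does the job for both (i) and (iv) at once.
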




If $(M,\pi)$ is a Poisson manifold of proper type, then Theorem \ref{thm-lf-Poisson-v2} implies that the rank of the isotropy Lie algebras is locally the rank of the isotropy Lie algebras of the coadjoint action, which is constant. Hence, since we assume $M$ to be connected, the common rank of the isotropy Lie algebras equals codimension of a regular leaf, which is usually called the {\bf corank} of $(M,\pi)$. Denoting this corank by $r$, the Weyl resolution sits canonically inside the Grassmann bundle:
\[ \hM \subset \Gr_{r}(T^*M).\]
In particular $\hM$ comes equipped with a natural topology. In fact, $\hM$ will turn out to be a closed submanifold of the Grassmannian.

\begin{example}[The case $M= \gg^*$]
\label{ex:linear:case}
Let us consider the linear Poisson manifold $M= \gg^*$, where $\gg$ is a Lie algebra of compact type. Denote
\begin{equation}\label{eg:all-tori-in-gg} \cT(\gg):= \{ \tt\subset \gg: \mathfrak{t}\  \textrm{is\ a\ maximal\ torus} \} \subset \Gr_r(\gg),
\end{equation}
where $r$ is the rank of $\gg$. The standard decomposition $\gg= \tt\oplus [\gg, \tt]$ allows one to write 
\begin{equation*}
\tt^*\subset \gg^*
\end{equation*}
Using this inclusion we find $\cT(\gg_{\xi})= \{ \tt\in \cT(\gg): \xi\in \tt^* \}$.
Hence, the Weyl resolution is
\begin{equation}\label{eq:g-star-hat}
\widehat{\gg^*}= \{(\xi, \tt): \tt\in \cT(\gg), \xi\in \tt^* \}.
\end{equation}

Let $G$ be a compact Lie group integrating $\gg$ and fix a maximal torus $T\subset G$ with Lie algebra $\tt\subset \gg$. Then all maximal tori in $\gg$ are conjugate to $\tt$ so we have an isomorphism
\begin{equation}
\label{eq:t:g} 
G/N(T)
\overset{\sim}{\rmap} \cT(\gg),\quad g\mapsto \Ad_{g}(\tt),
\end{equation}
where $N(T)$ is the normalizer of $T$ in $G$. Using \eqref{eq:g-star-hat} we obtain a bijection
\begin{equation}\label{eq:Weyl-smooth}
\phi:(G\times \tt^*)/N(T)\overset{\sim}{\rmap} \widehat{\gg^*}, \quad (gT,\xi)\mapsto (\Ad_g^*(\xi),\Ad_g(\tt)).
\end{equation}
Note that we also have a natural identification
\[ 
(G\times \tt^*)/N(T)\simeq G/T\times_W \tt^*, 
\]
where $W= N(T)/T$ is the Weyl group of $G$ relative to $T$. Viewing $\phi$ as a map into $\Gr_r(T^*\gg^*)=\gg^*\times \Gr_r(\gg)$, one checks that the first component is proper and $\phi$ is an immersion. It follows that $\phi$ is a proper immersion or, equivalently, a closed embedding. We conclude that $\widehat{\gg^*}$ is a submanifold of $\Gr_r(T^*\gg^*)$.

Notice that in the model given by \eqref{eq:Weyl-smooth} the resolution map is simply:
\begin{equation}
\label{res:linear:case}
\res:G/T\times_W \tt^*\to \gg^*, \quad (gT,\xi)\mapsto \Ad_g^*(\xi),
\end{equation} 
and this maps the $W$-classes with representatives in $G/T\times\{\xi\}$ to the coadjoint orbit through $\xi$. In other words, the symplectic foliation of $\gg^*$ is resolved into a regular foliation. If we pullback the symplectic form on the coadjoint orbit through $\xi$, we obtain a $G$-invariant closed form $\omega_\xi$ on $G/T\times\{\xi\}$ which at the point $[(T,\xi)]$ is given by the familiar formula
\[ \omega_\xi((u,0),(v,0))=\xi([u,v]). \]
These leafwise presymplectic forms  descend to $W$-classes and together define the Dirac structure $\hL$ on $\hgg$ given by \eqref{eq:Dirac:structure}.

For later use, notice that we have not assumed $G$ to be connected. The construction of $\widehat{\gg^*}$ does not depend on the choice of Lie group integrating $\gg$. We have a short exact sequence
\begin{equation}
\label{eq:short:seq:Weyl:grp}
\xymatrix{1\ar[r] & W^0\ar[r] & W\ar[r] & G/G^0\ar[r] & 1}
\end{equation}
where $W^0=(N(T)\cap G^0)/T$, so the we can replace $G$ by $G^0$ in the construction above. This agrees with the fact that Definition \ref{def:resolution} only uses infinitesimal data.
\end{example}

\subsection{Smoothness of the resolution}

\begin{theorem}
\label{thm:resolution:map}
Let $(M, \pi)$ be a  Poisson manifold of proper type of corank $r$. Then $\hM$ is a closed embedded submanifold of $\Gr_r(T^*M)$ and $\res:\hM\to M$ is a proper, smooth surjection with connected fibers. Moreover, for any integration $(\cG,\Omega)$ of $(M, \pi)$ the action \eqref{eq:action:resolution} is smooth and its orbit foliation is regular of codimension $\corank\pi$.
\end{theorem}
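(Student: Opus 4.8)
The plan is to establish smoothness locally, using the Hamiltonian normal form of Theorem \ref{thm-lf-Poisson-v2} to reduce everything to the linear model $M = \gg^*$ already analyzed in Example \ref{ex:linear:case}. First I would observe that the Weyl resolution is a fiberwise construction: for each $x$, the fiber $\res^{-1}(x)$ is the space $\cT(\gg_x)$ of maximal tori in the isotropy Lie algebra, which sits inside $\Gr_r(\gg_x) = \Gr_r((\cF_\pi)^o_x) \subset \Gr_r(T^*_xM)$. Since $(M,\pi)$ is of proper type, Theorem \ref{thm-lf-Poisson-v2} gives, around any leaf $S$ through a point $x_0$, a saturated neighborhood $U$ Poisson diffeomorphic to a reduced space $Q/G$ with $G$ a compact Lie group; under the accompanying symplectic Morita equivalence the isotropy Lie algebras over $U$ are identified (via the data described in Remark \ref{remark:Morita-general}) with the isotropy Lie algebras of the coadjoint action of $G$ on a $G$-invariant neighborhood $V$ of $0$ in $\gg^*$. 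Concretely, for $q\in Q$ with $p(q)=x$, $\mu(q)=\xi$, the isomorphism $\psi_q^*\colon \gg^*_\xi \to \gg_x$ carries maximal tori to maximal tori, so it identifies $\res^{-1}(U)$ with the Weyl resolution $\widehat{V}$ of $(V,\pi_{\gg^*}|_V)$ together with its resolution map. Since $\widehat{V}$ is an open subset of $\widehat{\gg^*}$ (by the description $\widehat{\gg^*} = \{(\xi,\tt): \tt\in\cT(\gg),\ \xi\in\tt^*\}$, intersected with $V$), and $\widehat{\gg^*}$ was shown in Example \ref{ex:linear:case} to be a closed embedded submanifold of $\Gr_r(T^*\gg^*)$ via the model $G/T\times_W\tt^*$, the smoothness of $\hM$ and of $\res$ follows locally, hence globally.

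Next I would verify the stated properties of $\res$ by transporting them through these local identifications. Properness is local over $M$ and, in the linear model, $\res\colon G/T\times_W\tt^*\to\gg^*$ has fiber over $\xi$ the compact space $\{gT: \Ad_g^*\xi = \xi \text{ modulo } W\text{-action}\}$, which is compact because $G$ is compact; alternatively, $\hM$ is closed in $\Gr_r(T^*M)$ and the bundle projection $\Gr_r(T^*M)\to M$ is proper on the image. Surjectivity is immediate since every compact-type Lie algebra contains a maximal torus. For connectedness of the fibers: $\res^{-1}(x) = \cT(\gg_x)$, and all maximal tori in a compact-type Lie algebra are conjugate by the connected group $G^0(\gg_x)$, so $\cT(\gg_x)$ is a homogeneous space $G^0/N_{G^0}(T^0)$ of a connected group, hence connected. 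Finally, that $\hM$ is \emph{embedded} (not merely immersed) follows from the local embedding statement in Example \ref{ex:linear:case} together with properness: a proper injective immersion is a closed embedding.

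For the last assertion, that for any integration $(\cG,\Omega)$ the action \eqref{eq:action:resolution} is smooth with regular orbit foliation of codimension $\corank\pi$: smoothness of $g\cdot(x,\tt) = (y,\Ad_g\tt)$ follows because $\Ad_g\colon \gg_x\to\gg_y$ depends smoothly on the arrow $g$ (it is the differential of conjugation in the Lie groupoid $\cG$, a smooth operation), and the map it induces on Grassmannians of cotangent spaces is smooth. To see the orbit foliation is regular, I would note that the $\cG$-orbit through $(x,\tt)$ maps under $\res$ onto the symplectic leaf $S_x$, and the map from the orbit to $S_x$ has discrete fibers: if $g$ is an arrow from $x$ to $x$ (i.e. $g\in\cG_x$) fixing $\tt$, then $\Ad_g$ normalizes $\tt$, and since $\tt$ is a maximal torus the normalizer $N_{\cG_x}(\tt)$ meets $\cG_x^0$ in a subgroup with identity component the maximal torus integrating $\tt$ — so the stabilizer of $(x,\tt)$ in $\cG_x$ has the same dimension as $\tt$, namely $r = \corank\pi$, giving $\cG_x$-orbit-dimension $\dim\cG_x - r$ and hence orbit dimension (as a leaf of $\hF$) equal to $\dim S_x$. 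Thus the orbits of the $\cG$-action on $\hM$ have constant dimension $\dim S_x + (\dim\cG_x - r) $... more cleanly: the orbit foliation on $\hM$ is the pullback foliation $\hF$ of \eqref{eqdefn:foliation:resolution}, whose leaves are the $\res$-preimages of symplectic leaves; each such preimage is a (finite-cover-of-a-)bundle over the leaf with fiber $\cT(\gg_x)$ of constant dimension, so $\hF$ is regular, and its codimension equals $\dim\hM - \dim S_x - \dim\cT(\gg_x) = \corank\pi$ by a dimension count in the linear model, where $\hM = G/T\times_W\tt^*$ has $\dim = \dim G/T + r$ near a point and the leaves have codimension exactly $r$.

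The main obstacle I anticipate is the careful bookkeeping needed to patch the local linear-model identifications into a globally coherent smooth structure on $\hM\subset\Gr_r(T^*M)$ — specifically, checking that the Morita-equivalence isomorphisms $\psi_q^*$ on isotropy Lie algebras (which depend on the auxiliary choice of $q$ over each point) induce the \emph{same} subspace topology and smooth structure on $\cT(\gg_x)$ as the ambient Grassmannian, independent of all choices. Once one knows $\hM$ is cut out inside the (proper, smooth) bundle $\Gr_r(T^*M)\to M$ and is locally diffeomorphic, compatibly with $\res$, to the explicit closed submanifold of Example \ref{ex:linear:case}, everything else is routine.
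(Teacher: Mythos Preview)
Your overall strategy---reduce to the linear model via the Hamiltonian normal form---is exactly the paper's approach, and your arguments for properness, surjectivity, connectedness of fibers, and regularity of the orbit foliation are all essentially correct. However, there is a genuine error in your central identification step.

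You claim that the isomorphisms $\psi_q^*\colon \gg_\xi \to \gg_x$ (for $q\in Q$ with $p(q)=x$, $\mu(q)=\xi$) identify $\res^{-1}(U)$ with the Weyl resolution $\widehat{V}$. This cannot be right: $U=Q/G$ has dimension $\dim Q - \dim G$, while $V\subset\gg^*$ has dimension $\dim\gg$, and these differ whenever the leaf $S$ is positive-dimensional. The Morita equivalence $Q$ identifies the \emph{leaf spaces} of $U$ and $V$, not the spaces themselves; the map $x\mapsto\xi$ is not even well-defined without choosing $q$, and different $q$'s over the same $x$ give $\xi$'s ranging over an entire coadjoint orbit. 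What you correctly obtain from $\psi_q$ is a bijection
\[
Q\times_M \hM \;\longleftrightarrow\; Q\times_{\gg^*}\widehat{\gg^*},\qquad (q,x,\tt)\mapsto (q,\mu(q),\phi_q(\tt)),
\]
and the paper's proof proceeds by showing that both sides embed properly into $\Gr_r(T^*Q)$ via $\pp^*$ and $\mu^*$ respectively, with the same image. This yields a smooth structure on $Q\times_M\hM$; quotienting by the free $G$-action then gives the smooth structure on $\hM|_U\cong (Q\times_{\gg^*}\widehat{\gg^*})/G$, together with its embedding in $\Gr_r(T^*M)$. The $q$-dependence you worried about is absorbed precisely by this fibered-product-then-quotient construction. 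Your anticipated obstacle was on target, but the resolution is not a patching argument---it is to work equivariantly upstairs on $Q$ from the start.
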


\begin{proof}
One can cover $M$ by saturated open sets where the Hamiltonian local model holds and it is enough to prove the various statements on each such domain. So we assume that  $(M,\pi)=(Q/G,\pi_\red)$, where $\mu: (Q, \omega)\to \gg^*$ is as in Theorem \ref{thm-lf-Poisson-v2}. We denote by
\[ \pp: Q\to M\]
the canonical projection, we pull back along $\pp$ the map $\hM\to\Gr_r(T^*M)$ covering the identity of $M$, and  we consider the following diagram:
\[
\xymatrix{
Q\fproduct{M} \hM\ar@{-->}[dr]\ar@{-->}[dd] \ar@{-}[rr]^{\sim} & & Q\fproduct{\gg^*} \hgg \ar@{-->}[dl]\ar[dd]\\
 & \Gr_r(T^*Q) \\
Q\fproduct{M}  \Gr(T^*M) \ar[ur]_{\pp^*} & & Q\fproduct{\gg^*}  \Gr(T^*\gg^*)\ar[ul]^{\mu^*}
}
\]
where for $(q,x,\tt)\in Q\fproduct{M} \hM$ the remaining maps behave as follows:
\[
\xymatrix{
(q,x,\tt)\ar@{-->}[dr]\ar@{-->}[dd] \ar@{-}[rr]^{\sim} & & (q,\mu(q),\phi_q(\tt)) \ar@{-->}[dl]\ar[dd]\\
 & (\d_q\pp)^*(\tt)=(\d_q\mu)^*(\phi_q(\tt)) \\
(q,\tt) \ar[ur]_{\pp^*} & & (q,\phi_q(\tt))\ar[ul]^{\mu^*}
}
\]
Here 
\begin{equation}\label{eq:isotropy-maps:phi-q} 
\phi_q:  \gg_{\mu(q)}\to \gg_{\pp(q)}^{M}
\end{equation}
is the differential at unit of the map $\psi_q$ from Remark \ref{remark:Morita-general}.
In the diagram above, the top horizontal arrow is a bijection, inducing a smooth structure on $Q\fproduct{M} \hM$. The solid arrows represent proper embeddings, and therefore it follows in sequence that the dash arrows are proper embeddings.

Now consider the diagram
\[
\xymatrix{
Q\fproduct{M} \hM\ar[r]\ar[d] & Q\fproduct{M}  \Gr(T^*M)\ar[d]\\
 \hM\ar[r] & \Gr(T^*M)
}
\]
The vertical arrows are the quotient maps for the free and proper action of the compact Lie group $G$ on $Q$, while the top horizontal arrow is a $G$-equivariant proper embedding. Hence, $\hM$ inherits a quotient smooth structure such that the bottom horizontal arrow is a proper embedding.

Since the projection $\Gr_r(T^*M)\to M$ is proper, the map $\res:\hM\to M$ is smooth and proper. The connecteness of the fibers follows from the definition of the topology on $\hM$.

For the smoothness of the action we use again the Hamiltonian local model. In this model the action becomes:
\[
\xymatrix@C=15pt{
(Q\fproduct{\gg^*}Q)/G \ar@<0.25pc>[dr] \ar@<-0.25pc>[dr]& {}\save[]+<-40pt,0cm>*\txt{\Large $\circlearrowright$}\restore (Q\fproduct{\gg^*}\hgg)/G \ar[d] \\
  & Q/G  }
\] 
where the action is the obvious one. 

Finally, from the definition of the action, its isotropy groups have constant dimension equal to $\corank\pi$, so the result follows.
\end{proof}

\begin{remark}
\label{rem:life:easy}
From the proof above we deduce the following property of the smooth structure of $\hM$. If $M\supset U\simeq Q/G$ is a domain where the Hamiltonian local model holds, then $\widehat{U}:=\res^{-1}(U)$ is diffeomorphic to $\hQ/G$ where $\hQ:=Q\fproduct{\gg^*}\hgg$. Moreover, we have the following morphism of Morita equivalences,

\[
\xymatrix{
  & & \hQ{}\save[]+<32pt,1 pt>*\txt{$=Q\times_{\gg^*}\widehat{\gg^*}$}\restore\ar[dll]\ar[drr]^-{\pr_{\widehat{\gg}^*}} \ar[d]_{\pr_Q}& & \\
\hM {}\save[]+<-22pt,+2 pt>*\txt{$\hQ/G=$}\restore \ar[d]_{\res} & & Q\ar[dll]^-{p}\ar[drr]_-{\mu} & & \widehat{\gg^*}{}\save[]+<35pt,0 pt>*\txt{$=G/T\times_W \tt^*$}\restore \ar[d]^{\res}\\
M {}\save[]+<-22pt,1 pt>*\txt{$Q/G=$}\restore
& & & & \gg^*
}
\]
where the bottom two-leg diagram is an equivalence between $\cG$ and $G\ltimes\gg^*$, and the top two-leg diagram is an equivalence between the action groupoids $\cG\ltimes \hM$ and $G\ltimes \widehat{\gg^*}$. We will be using this local model for $\hM$ in the sequel. 
\end{remark}

Recall that $y\in Y$ is a clean value of a smooth map $\phi:X\to Y$  if the preimage $\phi^{-1}(y)$ is a submanifold of $X$ and $T_x\phi^{-1}(y)=\ker\d_x\phi$ for all $x\in \phi^{-1}(y)$. 

\begin{proposition} 
\label{prop:range:resolution}
Let $(M, \pi)$ be a  Poisson manifold of proper type. The differential at $\hx:=(x,\tt_x)\in\hM$ of the resolution map $\res$ has range the subspace $\mathcal{R}_{\hx}\subset T_xM$ whose annihilator is
\[ (\mathcal{R}_{\hx})^0=[\tt_x,\gg_x]\subset T_x^*M. \]
Furthermore,
\[ \dim(\ker\d_{\hx}\res)=\dim\gg_x-\dim\tt_x, \]
and all values of $\res$ are clean.
\end{proposition}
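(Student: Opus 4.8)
The plan is to use the Hamiltonian local model, exactly as in the proof of Theorem~\ref{thm:resolution:map}, to reduce everything to the linear case $M=\gg^*$, and then to compute the differential of $\res$ at a point of $\widehat{\gg^*}$ by hand. By Remark~\ref{rem:life:easy}, near $\hx=(x,\tt_x)$ the resolution map $\res\colon\hM\to M$ is, up to diffeomorphism, the map $\res\colon \widehat{\gg^*}\to\gg^*$ of the local model pulled back along a submersion (the quotient $Q\to Q/G$); since that submersion has the same behaviour on all tangent directions coming from $\gg^*$, it is enough to establish the three assertions for the model map $\res\colon G/T\times_W\tt^*\to\gg^*$, $(gT,\xi)\mapsto\Ad_g^*\xi$, at the base point corresponding to $(0,\tt)$ — or, equivalently since everything is $G$-equivariant, at an arbitrary point $(gT,\xi)$.

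First I would compute the differential. Writing $\res([g T,\xi])=\Ad_g^*\xi$ and differentiating, a tangent vector to $G/T\times_W\tt^*$ at $[(e,\xi)]$ is a pair $(\bar u,\eta)$ with $u\in\gg$ (modulo $\tt$) and $\eta\in\tt^*$, and
\[
\d_{\hx}\res(\bar u,\eta)=\ad_u^*\xi+\eta\in T_{\xi}\gg^*\cong\gg^*.
\]
Hence the range $\mathcal{R}_{\hx}$ is $\ad_{\gg}^*\xi+\tt^*=[\gg,\tt]^{\perp\!\!\!\perp}$-complement description: we have $\ad_u^*\xi\in\gg^*$, and its annihilator computation gives $(\ad_{\gg}^*\xi)^0=\gg_\xi$ (the isotropy at $\xi$), while $(\tt^*)^0=[\gg,\tt]$ under the splitting $\gg=\tt\oplus[\gg,\tt]$. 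For $\xi\in\tt^*$ one has $\tt\subset\gg_\xi$, so $\mathcal{R}_{\hx}^0=\gg_\xi\cap[\gg,\tt]$; and since $\gg_\xi$ is reductive with maximal torus $\tt$ and $[\gg,\tt]=[\gg_\xi,\tt]\oplus(\text{stuff in }\gg_\xi^{\perp})$ appropriately, one identifies $\gg_\xi\cap[\gg,\tt]=[\gg_\xi,\tt]=[\tt,\gg_\xi]$. Transporting back to $T_x^*M$ via the isomorphism $\gg_x\cong\gg_{\mu(q)}$ of Remark~\ref{remark:Morita-general}, this is exactly $[\tt_x,\gg_x]$, proving the first claim. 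The kernel computation then follows: $\ker\d_{\hx}\res$ consists of pairs $(\bar u,\eta)$ with $\ad_u^*\xi=-\eta\in\tt^*$; decomposing $\ad_u^*\xi$ according to $\gg^*=\tt^*\oplus[\gg,\tt]^*$ and using that $u\mapsto\ad_u^*\xi$ has kernel $\gg_\xi$ and image $\gg_\xi^0$, one finds that the kernel has dimension $\dim(\gg_\xi/\tt)=\dim\gg_x-\dim\tt_x$, as claimed.

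For cleanness of the values, fix $x_0\in M$ and let $\hx\in\res^{-1}(x_0)$. By the first part, $\dim(\ker\d_{\hx}\res)=\dim\gg_{x_0}-r$ is the same for every $\hx$ in the fibre, so it suffices to show that the fibre $\res^{-1}(x_0)$ is a submanifold of $\hM$ of exactly that dimension and that its tangent space at each point is contained in (hence equals, by the dimension count) $\ker\d_{\hx}\res$. But $\res^{-1}(x_0)$ is by definition $\{\tt\subset\gg_{x_0}\ \text{maximal torus}\}\cong\cT(\gg_{x_0})$, which — again by the local model and the identification $\cT(\gg)\cong G/N(T)$ from \eqref{eq:t:g} applied to $\gg_{x_0}$ — is a closed submanifold of $\Gr_r(\gg_{x_0})$ of dimension $\dim\gg_{x_0}-\dim N(T)=\dim\gg_{x_0}-r$. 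The inclusion $T_{\hx}\res^{-1}(x_0)\subset\ker\d_{\hx}\res$ is immediate since $\res$ is constant on the fibre.

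The main obstacle is the Lie-theoretic bookkeeping in the first step: one must carefully identify the annihilator of the range with $[\tt_x,\gg_x]$, which amounts to checking, inside the reductive Lie algebra $\gg_\xi$ with maximal torus $\tt$, that $\gg_\xi\cap[\gg,\tt]=[\tt,\gg_\xi]$ and transporting this through the Morita-equivalence isomorphisms $\phi_q$ of \eqref{eq:isotropy-maps:phi-q} so that it holds intrinsically in $\gg_x$ rather than just in the model. Everything else — the differential computation, the dimension count for the kernel, and the submanifold structure on the fibre — is routine once the local model of Remark~\ref{rem:life:easy} is invoked, since that remark already does the hard work of giving $\hM$ its smooth structure compatibly with the coadjoint picture.
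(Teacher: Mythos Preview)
Your proposal is correct and follows essentially the same route as the paper: reduce via the Hamiltonian local model (Remark~\ref{rem:life:easy}) to the linear case $\res\colon G/T\times_W\tt^*\to\gg^*$, compute $\d_{(1,\xi)}\res(\bar u,\eta)=\ad_u^*\xi+\eta$, identify the annihilator of the range as $[\tt,\gg_\xi]$, and use $\res^{-1}(x)\cong\cT(\gg_x)$ together with a dimension count for cleanness. The only minor differences are that the paper states the annihilator directly rather than via the intersection $\gg_\xi\cap[\gg,\tt]$, and obtains the kernel dimension by rank--nullity from $\gg_x=\tt_x\oplus[\tt_x,\gg_x]$ rather than by your more explicit analysis; your identity $\gg_\xi\cap[\gg,\tt]=[\tt,\gg_\xi]$ (which you flag as the main obstacle) is in fact immediate from the decomposition $\gg_\xi=\tt\oplus[\tt,\gg_\xi]$.
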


\begin{proof}
Since this result is local, we can assume that $M=Q/G$ for a Hamiltonian local model $\mu:(Q,\omega)\to\gg^*$. We also observe that we can reduce the result to the linear case due to the following diagram:
\[
\xymatrix{
T^*_{(x,\tt_x)}\hM \ar@{^{(}->}[r] & T_{(q,x,\tt_x)}^*(Q\fproduct{M} \hM) \ar[r]^{\sim} & T_{(q,\xi,\tt_\xi)}^*(Q\fproduct{\gg^*} \hgg) & \ar@{_{(}->}[l]  T^*_{(\xi,\tt_\xi)}\hgg\\
T^*_x M \ar@{^{(}->}[r]_{\pp^*} \ar[u]^{\res^*}& T_q^*Q\ar@{=}[r]\ar[u]_{\pr_Q^*} & T^*_qQ\ar[u]^{\pr_Q^*} & \ar@{_{(}->}[l]^{\mu^*}  T^*_\xi\gg^*\ar[u]_{\res^*}
}
\]
where we fixed a point $(q,x,\tt_x)\in Q\fproduct{M} \hM$, $\xi=\mu(q)$, and $\phi_q(\tt_\xi)=\tt_x$ where $\phi_q: \gg_\xi\to \gg^M_x$ is the isomorphism \eqref{eq:isotropy-maps:phi-q}. The kernels of the vertical arrows are contained in various isotropy Lie algebras that sit inside the bottom row and correspond to each other
\[
\xymatrix{
\gg^M_x\ar@{<->}[r] & p^*(\gg^M_x)\ar@{=}[r] & \mu^*(\gg^M_x) & \ar@{<->}[l]  \gg_\xi
}
\]
In other words, we have:
\[ (\mathcal{R}_{(x,\hx)})^0 =\phi_q((\mathcal{R}_{\xi, \tt_\xi})^0), \]
where on the right side we have the range of the differential of $\res:\hgg\to\gg^*$.

We are left with proving the statement for the linear case. We use the model \eqref{res:linear:case} for the resolution map
\[
\xymatrix{
G\times \tt^*\ar[d]\ar[dr]^{\widetilde{\res}} \\
G/T\fproduct{W}\tt^*\ar[r]_-{\res} & \gg^*
}
\]
By $G$ equivariance, it suffices to describe the image of $\d\widetilde{\res}$ at elements $(1,\xi)$. Since
\[ \d_{(1,\xi)}\widetilde{\res}(u,\eta)=\ad_u\xi+\eta, \]
it follows that its image is annihilated precisely by $[\tt,\gg_\xi]$ (note that $\tt\subset \gg_\xi$), proving the statement about the range.

The last formula in the statement now follows from $\gg_x=\tt_x\oplus [\tt_x,\gg_x]$. Also, since the preimage $\res^{-1}(x)=\cT(\gg_x)\subset \Gr_r(T_x M)$ is a submanifold and $T\res^{-1}(x)\subset \ker\d\res$, by a dimension count it follows that $x$ is a clean value of $\res$.
\end{proof}

\begin{corollary}
\label{cor:resolution:action}
Let $(M, \pi)$ be a  Poisson manifold of proper type and $(\cG,\Omega)\tto M$ a  proper integration. Then
\begin{equation}
\label{eq:resolution:action}
(\mathcal{R}_{\hx})^0\cap \ker\widehat{\act}=\{0\},
\end{equation}
where $\widehat{\act}:\res^*T^*M\to T\hM$ is the infinitesimal action 
induced by the action (\ref{eq:action:resolution}).
\end{corollary}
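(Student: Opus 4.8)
The plan is to unwind both sides of \eqref{eq:resolution:action} at a point $\hx = (x,\tt_x)$ using the Hamiltonian local model, and reduce to the linear case $M = \gg^*$ exactly as in the proof of Proposition \ref{prop:range:resolution}. Recall that under the isomorphism $\mathrm{Lie}(\cG)\simeq T^*M$ furnished by $\Omega$, the infinitesimal action $\widehat{\act}\colon \res^*T^*M\to T\hM$ is the differential of \eqref{eq:action:resolution}. Since $g\cdot(x,\tt)=(y,\Ad_g\tt)$, the kernel of $\widehat{\act}_{\hx}$ consists of those $\alpha\in \gg_x\subset T^*_xM$ (note the anchor of the $\cG$-action covers the anchor $\pi^\sharp$, so only covectors in the isotropy $\gg_x$ can act trivially on the base-point component, and we must check when they also fix $\tt_x$) whose adjoint action $\ad_\alpha$ on $\gg_x$ preserves $\tt_x$ pointwise; that is, $\ker\widehat{\act}_{\hx} = \{\alpha\in\gg_x : [\alpha,\tt_x]=0\}$, which is the centralizer of $\tt_x$ in $\gg_x$. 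Because $\gg_x$ is of compact type and $\tt_x$ is a maximal torus, its centralizer is exactly $\tt_x$ itself. So the claim \eqref{eq:resolution:action} becomes the purely linear-algebra statement
\[
[\tt_x,\gg_x]\cap \tt_x = \{0\},
\]
which is immediate from the orthogonal (with respect to an invariant inner product) decomposition $\gg_x = \tt_x\oplus[\tt_x,\gg_x]$ already used in Proposition \ref{prop:range:resolution}.

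Concretely, first I would invoke the diagram from the proof of Proposition \ref{prop:range:resolution} to transport the identity $(\mathcal{R}_{\hx})^0 = [\tt_x,\gg_x]$ and the computation of $\ker\widehat{\act}$ to the linear model $\res\colon G/T\times_W\tt^*\to\gg^*$, using the isomorphisms $\phi_q\colon\gg_\xi\to\gg^M_x$; the point is that the $\cG$-action on $\hM$ corresponds, under the Morita equivalence of Remark \ref{rem:life:easy}, to the $G$-action on $\hgg = G/T\times_W\tt^*$, whose infinitesimal action at $[(1,\xi)]$ sends $u\in\gg$ to the class of $\ad_u$ acting on $(\xi,\tt)$. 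Second, at the point $[(1,\xi)]$ I would identify $\ker\widehat{\act}$ with $\{u\in\gg_\xi : [u,\tt]=0\} = \tt$ (maximality of the torus), and recall from Proposition \ref{prop:range:resolution} that $(\mathcal{R}_{(\xi,\tt)})^0 = [\tt,\gg_\xi]$. Third, fix a $G$-invariant inner product on $\gg$, restrict it to $\gg_\xi$, and observe $[\tt,\gg_\xi]\perp\tt$ since $\langle[t,v],t'\rangle = -\langle v,[t,t']\rangle = 0$ for $t,t'\in\tt$; hence the intersection is zero. Transporting back via $\phi_q$ gives \eqref{eq:resolution:action}.

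I do not expect a serious obstacle here — this is essentially a corollary, and every needed ingredient (the range computation, the reduction to the linear case, the structure theory of compact Lie algebras) is already in place. The only point requiring a little care is pinning down precisely what $\ker\widehat{\act}$ is: one must check that a covector $\alpha\in T^*_xM$ not lying in the isotropy $\gg_x$ cannot be in the kernel, because then $\pi^\sharp(\alpha)\neq 0$ moves the base point; and among $\alpha\in\gg_x$, the condition to fix $\tt_x$ as a subspace of $\gg_x$ under the adjoint (conjugation) action is exactly $[\alpha,\tt_x]\subseteq\tt_x$, which for an invariant inner product forces $[\alpha,\tt_x]=0$, i.e. $\alpha\in\tt_x$ by maximality. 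Once this identification of the kernel is made, the disjointness with $[\tt_x,\gg_x]$ is automatic from \eqref{eq:cpctLie-decomp}-type orthogonality applied inside $\gg_x$.
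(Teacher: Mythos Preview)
Your proposal is correct and is precisely the argument the paper has in mind: the corollary is stated without proof, immediately following Proposition~\ref{prop:range:resolution}, because once one knows $(\mathcal{R}_{\hx})^0=[\tt_x,\gg_x]$ and identifies $\ker\widehat{\act}_{\hx}$ with the Lie-algebra normalizer of $\tt_x$ in $\gg_x$ (which equals $\tt_x$ by maximality, since $\gg_x$ is of compact type), the conclusion is the decomposition $\gg_x=\tt_x\oplus[\tt_x,\gg_x]$ already used in the proof of Proposition~\ref{prop:range:resolution}. Your care in distinguishing the condition $[\alpha,\tt_x]\subset\tt_x$ from $[\alpha,\tt_x]=0$, and then collapsing them via the invariant inner product (or equivalently via the finiteness of $N(T)/T$), is exactly the right point to make explicit.
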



\subsection{Dirac versus Poisson geometry}
The local model for the resolution $\hM$, as in Remark \ref{rem:life:easy}, can be enhanced with a presymplectic form. If $M=Q/G$ for a Hamiltonian $G$-space $\mu:(Q,\omega)\to\gg^*$, then  $\widehat{\omega}=\pr^*_Q\omega$ gives the following presymplectic Hamiltonian $G$- space:
\begin{equation}
\label{eq:resolution:not:local:model}
\xymatrix{
(\hQ,\widehat{\omega})\ar[r]^{\pr_Q}
\ar@/^2.0pc/[rr]^{
\widehat{\mu}}
\ar[d] & (Q,\omega)\ar[r]^{\mu}\ar[d]& \gg^*\\
\hM {}\save[]+<-22pt,+2 pt>*\txt{$\hQ/G=$}\restore \ar[r] & M{}\save[]+<22pt,0cm>*\txt{$=Q/G$}\restore
}
\end{equation}
Since $\widehat{\omega}$ has kernel, this brings us to the Dirac geometry of the resolution.


\begin{theorem}
\label{thm:resolution:Dirac}
Let $(M, \pi)$ be a  Poisson manifold of proper type. Then $\hM$ carries a unique Dirac structure $\hL$ making $\res:\hM\to M$ a forward Dirac map. Furthermore:
\begin{enumerate}[(i)]
\item $\hL$ is a regular Dirac structure;
\item the presymplectic leaves $( \widehat{S},\omega_{\widehat{S}})$ of $\hL$ take the form
\[ \widehat{S}= \res^{-1}(S),\quad \omega_{\widehat{S}}= \res^*(\omega_S), \]
where $(S,\omega_S)$ is a symplectic leaf of $(M,\pi)$;
\item the Poisson locus of $\hL$ is $\res^{-1}(M^{\reg})$ and $\res:\res^{-1}(M^{\reg})\to M^\reg$ is a diffeomorphism.
\end{enumerate}
Moreover,  $\res$ induces a homeomorphism between the leaf spaces associated to $\hL$ and $\pi$.
\end{theorem}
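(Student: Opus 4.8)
\textbf{Proof strategy for Theorem \ref{thm:resolution:Dirac}.}

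The plan is to exploit the Hamiltonian local model of Theorem \ref{thm-lf-Poisson-v2} together with the enhanced presymplectic local picture \eqref{eq:resolution:not:local:model}, reducing every assertion to a statement that is either local or purely about the linear model $\widehat{\gg^*}$ treated in Example \ref{ex:linear:case}. First I would establish existence and uniqueness of $\hL$. Uniqueness is automatic: by Proposition \ref{prop:range:resolution} the map $\res$ is a surjective submersion onto its (locally closed, clean) image, and forward Dirac images along submersions are uniquely determined fiberwise, so $\hL=\res^*L_\pi$ is forced. For existence one must check that the naive fiberwise pushforward $\res^*L_\pi$ is a \emph{smooth} Dirac structure on $\hM$; here I would invoke the local model, where $\hM|_U\cong \hQ/G$ with $\hQ=Q\times_{\gg^*}\widehat{\gg^*}$, and $\widehat{\omega}=\pr_Q^*\omega$ is a $G$-invariant closed $2$-form making $\widehat{\mu}$ a presymplectic Hamiltonian $G$-space. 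The reduced Dirac structure on $\hQ/G$ induced by $\widehat{\omega}$ exists by the general Dirac reduction recalled in Section \ref{sec:Dirac:local-model} (the $G$-action is free and proper, and $\widehat{\mu}$ is a submersion since $\mu$ is), and one checks directly that it agrees with $\res^*L_\pi$ on the Poisson locus, hence everywhere by density and smoothness. Patching the local pieces (they agree on overlaps because they are all equal to the fiberwise pushforward) gives the global $\hL$.

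Next, the three enumerated properties. For (i), regularity: the presymplectic foliation $\hF$ has, by \eqref{eqdefn:foliation:resolution} and Proposition \ref{prop:range:resolution}, the property that $\ker\d_{\hx}\res$ has constant dimension $\dim\gg_x-\dim\tt_x = \corank\pi - r' $\ldots more precisely $\dim\gg_x-r$ where $r=\rank\gg_x$ is constant; combined with the fact that on $M$ the symplectic leaf $S_x$ has codimension equal to $\dim\gg_x$ and $\res^{-1}(S_x)$ therefore has a tangent distribution of constant rank $\dim S_x + (\dim\gg_x - r)$, one sees $\hF$ is regular of codimension $r=\corank\pi$. I would phrase this cleanly using the local model: on $\hQ/G$ the leaves are $P_\xi:=\{(\cdot,\phi_q(\tt))\}$-type reductions and are manifestly equidimensional. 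For (ii), the description of leaves: since $\res$ is a forward Dirac map, leaves of $\hL$ map onto leaves of $\pi$; as $\res$ has connected fibers (Theorem \ref{thm:resolution:map}) each leaf of $\hL$ is exactly a connected component of $\res^{-1}(S)$, and $\res^{-1}(S)$ is connected because $\res^{-1}(x)=\cT(\gg_x)$ is connected — here one uses that the space of maximal tori in a compact Lie algebra is connected. The formula $\omega_{\widehat S}=\res^*\omega_S$ is then the defining compatibility of a forward Dirac map restricted to leaves. For (iii): the Poisson locus of a regular Dirac structure is the open set where the leafwise presymplectic form is nondegenerate; by (ii) this is where $\res^*\omega_S$ is nondegenerate on $\res^{-1}(S)$, equivalently where $\d\res$ is injective on the leaf directions, equivalently by Proposition \ref{prop:range:resolution} where $\ker\d_{\hx}\res=0$, i.e. $\dim\gg_x=r$, i.e. $\gg_x$ is abelian — which by \eqref{eq:compute:Sigma0} is exactly $\res^{-1}(M^\reg)$. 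On that set $\res^{-1}(x)=\cT(\gg_x)$ is a point, so $\res$ is a continuous bijection, and being a clean submersion with zero-dimensional fibers there it is a local diffeomorphism, hence a diffeomorphism onto $M^\reg$.

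Finally, the leaf space statement. The map $\res$ sends leaves onto leaves surjectively (forward Dirac) and, as above, $\res^{-1}$ of a leaf is a single leaf (connected fibers plus connectedness of $\cT(\gg_x)$); hence $\res$ induces a bijection $\hM/\hF \to M/\cF_\pi$. For the topology I would argue that both leaf spaces carry the quotient topology, that $\res$ is continuous, proper and surjective (Theorem \ref{thm:resolution:map}), so the induced map on quotients is continuous, closed and bijective, therefore a homeomorphism. The main obstacle I anticipate is the \emph{smoothness} of $\hL$ as one crosses the singular locus: the fiberwise pushforward $\res^*L_\pi$ is defined pointwise, and showing it varies smoothly requires genuinely using the local model \eqref{eq:resolution:not:local:model} and Dirac reduction, rather than any soft argument — everything else (regularity, leaf description, Poisson locus, leaf-space homeomorphism) then follows formally once smoothness and the forward-Dirac property are in hand.
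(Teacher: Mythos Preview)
Your uniqueness argument has a genuine error: $\res$ is \emph{not} a submersion --- Proposition \ref{prop:range:resolution} shows that $\im(\d_{\hx}\res)$ has annihilator $[\tt_x,\gg_x]$, which is nonzero precisely away from the regular locus. More fundamentally, even along a submersion the forward-Dirac condition $\res_!\hL=L_\pi$ does not determine $\hL$ fiberwise: it constrains only the image under $\d\res$ and leaves freedom in the kernel directions. The paper's uniqueness argument is different and simpler: since $\res$ restricts to a diffeomorphism $\res^{-1}(M^\reg)\to M^\reg$ and this preimage is open and dense in $\hM$, any smooth Dirac structure making $\res$ forward Dirac is determined on a dense set, hence globally.

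Your route to smoothness via Dirac reduction of the presymplectic local model $(\hQ,\widehat\omega)$ is viable in principle, but the step ``it agrees with $\res^*L_\pi$ on the Poisson locus, hence everywhere by density and smoothness'' is circular: you are invoking smoothness of the fiberwise pullback to conclude it agrees with the reduced structure, when smoothness is exactly what is at stake. What you would actually need to verify is that the reduced Dirac structure on $\hQ/G$ coincides with the pointwise backward pullback $\res^*L_\pi$ at \emph{every} point --- a direct computation in the model, but not the argument you wrote. The paper takes a cleaner global route that avoids the local model altogether for this step: it observes that the infinitesimal groupoid action $\widehat{\act}:\res^*T^*M\to T\hM$ combines with $\d\res^*$ into a bundle map $(\widehat{\act},\d\res^*):\res^*T^*M\to T\hM\oplus T^*\hM$ which is injective by Corollary \ref{cor:resolution:action}, and whose image is therefore a smooth rank-$(\dim M)$ subbundle equal pointwise to the backward pullback $\res^*L_\pi$. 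This yields smoothness, the backward-Dirac property (hence the leaf description (ii)), and regularity (i) via Theorem \ref{thm:resolution:map} in one stroke; the forward-Dirac property and uniqueness then both follow from density of $\res^{-1}(M^\reg)$.

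Your arguments for (iii) and for the leaf-space homeomorphism are essentially correct and match the paper's.
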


\begin{proof}
We define $\hL$ as in \eqref{eq:Dirac:structure} by pulling back the Poisson structure $\pi$ viewed as a Dirac structure. Hence, $\hL$ is given pointwise by
\[ \hL|_{\hx}=\{(V,\res^*\beta):\pi^\sharp(\beta)=\d_{\hx}\res(V)\}. \]
In order to obtain a Dirac structure, we only need to check that this is a smooth vector subbundle of $T\hM\oplus T^*\hM$ (see, e.g., \cite[Section 7.3]{CFM21}). To that end, we observe that $\hL$ has rank equal to $\dim M$ and contains the image of the injective bundle map
\begin{equation}
\label{eq:injective:IM:form} 
(\widehat{\act},\d\res^*): \res^*T^*M\to T\hM\oplus T^*\hM. 
\end{equation}
Here $\widehat{\act}:\res^*T^*M\to T\hM$ is the infinitesimal action corresponding to the action of $\cG$ on $\hM$, and injectivity follows from Corollary \ref{cor:resolution:action}. In conclusion, $\hL$ is the image of the previous injective bundle map.

The description of the presymplectic leaves in item (ii) follows from the fact that $\res$ is a backward Dirac map. 

To prove item (iii), observe that $\res:\res^{-1}(M^{\reg})\to M^\reg$ is a bijection and by the construction of the smooth structure it follows that it is a diffeomorphism. Since $M^\reg$ is open and dense in $M$ and $\res$ is proper, we have that $\res^{-1}(M^{\reg})$ is open and dense in $\hM$. From this it follows that $\res$ is also a forward Dirac map, and that the Dirac structure is unique. 

Since $\res$ is a proper map, it follows that the continuous bijection between leaf spaces is also a closed map, hence a homeomorphism.
%
%
%
\end{proof}

\begin{corollary}
Let $(M, \pi)$ be a  Poisson manifold of proper type. For $y\in \hM$ the following are equivalent:
\begin{enumerate}[(a)]
   \item $\res(y)$ is a regular point of $(M, \pi)$;
   \item $\res(y)$ is a regular value of $\res$;
   \item $y$ belongs to the Poisson support of $\hL$.
\end{enumerate}
\end{corollary}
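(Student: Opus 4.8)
The plan is to prove the three-fold equivalence by chaining together facts already established, in particular Theorem \ref{thm:resolution:Dirac}(iii) and Proposition \ref{prop:range:resolution}. The statement concerns a point $y=\hx=(x,\tt_x)\in\hM$, and the three conditions are: (a) $x=\res(y)$ is a regular point of $(M,\pi)$; (b) $x$ is a regular value of $\res$ (i.e.\ $\d_{\hx}\res$ is surjective); (c) $y$ lies in the Poisson support of $\hL$.

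First I would observe that Theorem \ref{thm:resolution:Dirac}(iii) already gives the equivalence (a) $\Leftrightarrow$ (c): the Poisson locus of $\hL$ is exactly $\res^{-1}(M^{\reg})$, so $y$ is in the Poisson support iff $\res(y)\in M^{\reg}$, which is precisely condition (a). It remains to fold in condition (b). For (a) $\Rightarrow$ (b): if $x$ is regular, then since $\res$ restricts to a diffeomorphism $\res^{-1}(M^{\reg})\to M^{\reg}$, in particular $\d_{\hx}\res$ is an isomorphism, hence surjective. For (b) $\Rightarrow$ (a): I would use Proposition \ref{prop:range:resolution}, which identifies the range of $\d_{\hx}\res$ as the subspace $\mathcal{R}_{\hx}\subset T_xM$ with annihilator $[\tt_x,\gg_x]$. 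Thus $\d_{\hx}\res$ is surjective iff $[\tt_x,\gg_x]=0$, i.e.\ iff $\tt_x$ is central in $\gg_x$; since $\tt_x$ is a maximal torus of the compact-type Lie algebra $\gg_x$, this forces $\tt_x=\gg_x$, so $\gg_x$ is abelian, which by \eqref{eq:compute:Sigma0} in Theorem \ref{thm:inf-strat} (characterizing $M^{\reg}=\Sigma_0^{\inf}$ as the set of points with abelian isotropy) means $x\in M^{\reg}$, i.e.\ (a) holds.

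Putting these implications together yields (a) $\Leftrightarrow$ (b) $\Leftrightarrow$ (c). There is essentially no hard step here — the result is a bookkeeping corollary — but the one place requiring a moment's care is the passage from ``$\tt_x$ central in $\gg_x$'' to ``$\gg_x$ abelian'': this uses that a maximal torus of a compact-type Lie algebra, being a maximal abelian subalgebra on which $\gg_x$ acts trivially only in the center, coincides with $\gg_x$ precisely when $\gg_x$ is abelian. Equivalently, one can argue directly from the local model $M=Q/G$ of Theorem \ref{thm-lf-Poisson-v2}: by the reduction-to-the-linear-case diagram in the proof of Proposition \ref{prop:range:resolution}, $[\tt_x,\gg_x]=\phi_q([\tt_\xi,\gg_\xi])$, and $[\tt_\xi,\gg_\xi]=0$ happens exactly on $\zz(\gg)^*=V^G$, which is contained in the regular locus of $\gg^*$. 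I expect no genuine obstacle; the proof is short and can be written in a few lines invoking the cited results.

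\begin{proof}
The equivalence (a) $\Leftrightarrow$ (c) is immediate from Theorem \ref{thm:resolution:Dirac}(iii), which identifies the Poisson support of $\hL$ with $\res^{-1}(M^{\reg})$.

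(a) $\Rightarrow$ (b): If $\res(y)\in M^{\reg}$, then by Theorem \ref{thm:resolution:Dirac}(iii) the map $\res$ restricts to a diffeomorphism $\res^{-1}(M^{\reg})\to M^{\reg}$; in particular $\d_y\res$ is an isomorphism, so $\res(y)$ is a regular value of $\res$.

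(b) $\Rightarrow$ (a): Write $y=\hx=(x,\tt_x)$. By Proposition \ref{prop:range:resolution}, the annihilator of the range of $\d_{\hx}\res$ is $[\tt_x,\gg_x]$, so $\d_{\hx}\res$ is surjective if and only if $[\tt_x,\gg_x]=0$. Since $(M,\pi)$ is of proper type, $\gg_x$ is of compact type, and $\tt_x\subseteq\gg_x$ is a maximal torus; the condition $[\tt_x,\gg_x]=0$ says that $\tt_x$ is central in $\gg_x$, and using the decomposition \eqref{eq:cpctLie-decomp} together with the fact that a maximal torus of a compact-type Lie algebra is a maximal abelian subalgebra, this forces $\tt_x=\gg_x$, i.e.\ $\gg_x$ is abelian. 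By \eqref{eq:compute:Sigma0} in Theorem \ref{thm:inf-strat}, a point of $M$ has abelian isotropy Lie algebra if and only if it lies in $M^{\reg}$. Hence $x=\res(y)\in M^{\reg}$, which is condition (a).
\end{proof}
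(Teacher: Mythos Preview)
Your proof is correct and follows precisely the route the paper intends: the corollary is stated without proof immediately after Theorem \ref{thm:resolution:Dirac}, and your argument simply unpacks that theorem together with Proposition \ref{prop:range:resolution}. One small point of phrasing: in (a) $\Rightarrow$ (b) you conclude ``$\d_y\res$ is an isomorphism, so $\res(y)$ is a regular value'', which tacitly uses that the fiber $\res^{-1}(x)$ over a regular $x$ is the singleton $\{y\}$ (so surjectivity at $y$ is surjectivity at every fiber point); this is of course immediate from the diffeomorphism statement, but worth making explicit.
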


\begin{corollary}
The restriction $\res:\widehat{S}\to S$ is a fiber bundle.
\end{corollary}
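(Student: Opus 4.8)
The plan is to prove the claim by a single application of the Hamiltonian normal form, which realizes $\widehat{S}$ as an \emph{associated bundle} over $S$. First I would invoke Theorem \ref{thm-lf-Poisson-v2} at the leaf $(S,\omega_S)$: it produces a saturated open neighbourhood $U\supseteq S$ together with a Poisson identification $(U,\pi)\cong(Q/G,\pi_\red)$, where $\mu:(Q,\omega)\to\gg^*$ is a connected free Hamiltonian $G$-space, $G$ compact, $\mu$ has connected fibres and $0\in\mu(Q)$, and under which $S$ corresponds to $\mu^{-1}(0)/G$. Writing $\pp:Q\to Q/G=U$ for the quotient map and $P:=\mu^{-1}(0)$, the restriction $p:=\pp|_P:P\to S$ is the isotropy-principal $G$-bundle of Theorem \ref{thm-lf-Poisson-v2}.

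Next I would feed this into Remark \ref{rem:life:easy}, which identifies $\widehat{U}:=\res^{-1}(U)$ with $\hQ/G$, where $\hQ:=Q\times_{\gg^*}\hgg$, and under which the resolution map $\res$ becomes the map $\hQ/G\to Q/G$ induced by $\pr_Q:\hQ\to Q$. Restricting over $P$, and using that by \eqref{eq:g-star-hat} the fibre of $\res:\hgg\to\gg^*$ over the origin is exactly $\cT(\gg)=\{\tt\subset\gg\ \textrm{maximal torus}\}$ (since $0\in\tt^*$ for every maximal torus $\tt$), I get $\hQ|_P=P\times\cT(\gg)$ with $G$ acting diagonally: freely on $P$ and by the adjoint (conjugation) action on $\cT(\gg)$. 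Hence
\[ \widehat{S}=\res^{-1}(S)\ \cong\ (\hQ|_P)/G\ =\ P\times_{G}\cT(\gg), \]
and under this identification $\res:\widehat{S}\to S$ becomes the projection $[q,\tt]\mapsto p(q)$ of the associated bundle. Since $\gg$ is of compact type, $\cT(\gg)$ is a smooth (indeed compact) manifold — it is a homogeneous space under the adjoint action, $\cT(\gg)\cong G_0/N(T_0)$ for any connected Lie group $G_0$ with Lie algebra $\gg$ and maximal torus $T_0$ — so the associated bundle $P\times_G\cT(\gg)\to S$ is a locally trivial fibre bundle with fibre $\cT(\gg)$ and structure group $G$: over a trivialising open $V\subset S$ with $P|_V\cong V\times G$ one gets $\widehat{S}|_V\cong V\times\cT(\gg)$.

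All steps here are routine once the normal form and Remark \ref{rem:life:easy} are available; the only point that needs a moment's care is checking that the diffeomorphism $\widehat{U}\cong\hQ/G$ of Remark \ref{rem:life:easy} restricts, over $S=\mu^{-1}(0)/G$, to precisely the identification above — that is, that the fibre of $\hgg$ over $0$ is all of $\cT(\gg)$ and that $G$ acts there by the adjoint action — which is immediate from \eqref{eq:g-star-hat}. (Alternatively, one can avoid the normal form: along the leaf $S$ the isotropy Lie algebras assemble into a bundle of Lie algebras $\gg_S\to S$, locally trivial as such and of compact type, and $\widehat{S}\to S$ is then its bundle of maximal tori, manifestly a locally trivial fibre bundle with fibre $\cT(\gg)$.)
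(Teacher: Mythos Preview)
Your argument is correct but takes a different route from the paper's. The paper proceeds more directly: Proposition~\ref{prop:range:resolution} gives $T\cF_\pi\subset\im(\d\res)$, so $\res$ restricted to any presymplectic leaf $\widehat{S}=\res^{-1}(S)$ is a submersion onto $S$; since $\res$ is proper (Theorem~\ref{thm:resolution:map}), Ehresmann's theorem finishes the proof in one line. Your approach via the Hamiltonian normal form is more constructive: it identifies $\widehat{S}\cong P\times_G\cT(\gg)$ explicitly as an associated bundle, which not only proves local triviality but also names the typical fibre $\cT(\gg)$ and the structure group $G$. The trade-off is that you invoke the full normal-form machinery and its compatibility with the resolution (Remark~\ref{rem:life:easy}) for a statement that the paper deduces from softer global properties of $\res$ already established. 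Your parenthetical alternative --- viewing $\widehat{S}\to S$ as the bundle of maximal tori of the locally trivial Lie-algebra bundle $\gg_S\to S$ --- is closer in spirit to the paper's argument and arguably the most economical route.
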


\begin{proof}
    Proposition \ref{prop:range:resolution} shows that $T\cF_\pi\subset\im(\d\res)$. Since the resolution map $\res$ is a backward Dirac map, it follows that its restriction to a presymplectic leaf is automatically a submersion. The result then follows from the fact that $\res$ is proper.
\end{proof}

\begin{example}
One can obtain a Hamiltonian local model for $(\hM, \hL)$ from one for $(M,\cF_\pi)$. Assume that 
 $M= Q/G$ is a  Hamiltonian $G$-space $\mu:(Q,\omega)\to\gg^*$. Fixing a maximal torus $T\subset G$, one has
\begin{equation} 
\label{eq:resolution:local:model}
\hM\simeq\mu^{-1}(\tt^*)/N(T),
\end{equation} 
for the presymplectic Hamiltonian $N(T)$-space 
\[ \mu:(\mu^{-1}(\tt^*),i^*\omega)\to \tt^*, \]
where $i:\mu^{-1}(\tt^*)\hookrightarrow Q$ is the inclusion. The identification \eqref{eq:resolution:local:model} is given by
\[ qN(T)\mapsto (\pp(q),\phi_q(\tt)),\]
and arises from applying $Q\fproduct{\gg^*} -$ to the transversal $\tt^*\hookrightarrow \hgg$, $\xi\mapsto (\xi,\tt)$.
\end{example}


\begin{example}\label{ex:linear local model resolution}
Consider the linear local model $(M,\pi^\theta_\lin)$ associated with the data of a principal $G$-bundle $p:P\to (S,\w_S)$ and a connection 1-form $\theta\in \Omega^1(P,\gg)$, described in Section \ref{sec:ex:local-model}. One has $M=(P\times\gg^*)/G$ and the Dirac structure $L_{\pi^\theta_\lin}$ is the reduction of the form (cf.~\eqref{loc-mod-2-form}):
\[ p^*\omega_S-\d\langle \theta,\cdot\rangle \in \Omega^2(P\times \gg^*).\]

Fixing a maximal torus $T\subset G$ one obtains data for a linear local model for $(\hM,\hL)$. Namely:
\begin{itemize}
 \item the principal $N(T)$-bundle $q:P\to \hS=P/N(T)$;
 \item the presymplectic form $\res^*\w_S$ on $\hS$;
\item the connection 1-form $\theta^T:=\pr_\tt\circ\theta\in \Omega^1(P, \tt)$ which is the $\tt$-component of $\theta$ with respect to the decomposition $\gg=\tt\oplus [\tt,\gg]$.
\end{itemize}
The construction in Section \ref{sec:ex:local-model} gives the local model for $\hM$ together with a map induced by the inclusion $\tt^*\hookrightarrow\gg^*$,
\[ \res:\hM=(P\times\tt^*)/N(T)\to M=(P\times\gg^*)/G,\quad \quad [(p,\xi)]_{N(T)}\mapsto [(p,\xi)]_{G}. \]
The Dirac structure $\widehat{L}_{\pi^\theta_\lin}$ for $\hM$ is obtained as a quotient of the $N(T)$-invariant form (see 
\[
     p^*\omega_S-\d\langle \theta^T,\cdot\rangle \in \Omega^2(P\times \tt^*),
\]
where we used that $q^*\res^*\omega_s=p^*\omega_S$.
The map 
\[\res:(\hM,\widehat{L}_{\pi^\theta_\lin})\to (M,{L}_{\pi^\theta_\lin})\]
is forward and backward Dirac, and it is the  resolution for $(M,\pi^\theta_\lin)$ in our sense over the Poisson support of ${L}_{\pi^\theta_\lin}$.
\end{example}

\begin{proposition}
Let $(\G, \Omega)$ be an integration of $(M, \pi)$ of (strong) $\cC$-type and consider the $\G$-action on $\hM$. The associated action groupoid 

\[\hG:=\G\ltimes \hM,\quad \widehat{\Omega}:=\pr_{\cG}^{*}\Omega, \] 
is a presymplectic integration of $(\hM, \hL)$ of (strong) $\cC$-type.
\end{proposition}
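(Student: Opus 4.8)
The plan is to verify the three claims in turn: that $\widehat{\cG}=\cG\ltimes\hM$ is a Lie groupoid acting on $\hM$ (already established in Theorem \ref{thm:resolution:map}), that $\widehat{\Omega}=\pr_\cG^*\Omega$ is a multiplicative presymplectic form integrating $\hL$, and that $\widehat{\cG}$ inherits the relevant $\cC$-type property. First I would recall from Theorem \ref{thm:resolution:map} that the $\cG$-action \eqref{eq:action:resolution} is smooth with regular orbit foliation, so $\widehat{\cG}\tto\hM$ is a genuine Lie groupoid; its source and target maps are $\hat\s(g,\hx)=\hx$ and $\hat\t(g,\hx)=g\cdot\hx$, and the projection $\pr_\cG:\widehat{\cG}\to\cG$ is a groupoid morphism covering $\res$. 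Multiplicativity of $\widehat{\Omega}$ is then immediate: since $\pr_\cG$ is a morphism and $\Omega$ is multiplicative, the pullback $\pr_\cG^*\Omega$ is multiplicative; closedness is likewise inherited by pullback. The key point needing argument is that $\widehat{\Omega}$ is \emph{presymplectic} in the sense appropriate to a presymplectic groupoid, i.e.\ that it has the correct rank/nondegeneracy along units and that the IM-form it defines on $\hL$ is exactly the Dirac structure \eqref{eq:Dirac:structure}.

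For this I would use the local model of Remark \ref{rem:life:easy} together with the presymplectic Hamiltonian $G$-space \eqref{eq:resolution:not:local:model}. Over a saturated chart $U\simeq Q/G$ one has $\hU\simeq\hQ/G$ with $\hQ=Q\times_{\gg^*}\hgg$ and presymplectic form $\widehat\omega=\pr_Q^*\omega$, and the action groupoid $\widehat\cG|_{\hU}$ is identified with the gauge groupoid $(\hQ\times_{\gg^*}\hQ)/G\tto\hQ/G$ equipped with the form induced from $\pr_1^*\widehat\omega-\pr_2^*\widehat\omega$ (this is the Dirac analogue of the discussion after Theorem \ref{thm-lf-Poisson-v2}, cf.\ Remark \ref{rem:presymplectic:groupoid:hamiltonian:form}, applied to the Hamiltonian presymplectic $G$-space $\widehat\mu:(\hQ,\widehat\omega)\to\gg^*$). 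One checks this form is exactly $\pr_\cG^*\Omega$ under the identification $\widehat\cG|_{\hU}\simeq\cG|_U\ltimes\hU$. By Theorem \ref{thm-lf-Dirac} and Remark \ref{rem:presymplectic:groupoid:hamiltonian:form}, the reduced Dirac structure of this gauge presymplectic groupoid on $\hQ/G$ is precisely the reduced Dirac structure $\hL=\res^*L_\pi$ — indeed $\res$ corresponds under the local identification to the quotient of $\pr_Q:\hQ\to Q$, which is forward Dirac by construction. Thus $(\widehat\cG,\widehat\Omega)$ is a presymplectic integration of $(\hM,\hL)$, and since this holds over every chart of a saturated cover it holds globally by uniqueness of the integrating presymplectic form given the source-connected groupoid.

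Finally, for the $\cC$-type statement: if $\cG$ is proper then so is $\widehat\cG=\cG\ltimes\hM$, because $\hM\to M$ is proper (Theorem \ref{thm:resolution:map}) and the action groupoid of a proper action of a proper groupoid on a space with proper anchor remains proper — concretely, $(\hat\s,\hat\t):\widehat\cG\to\hM\times\hM$ factors through $(\s,\t)\times\id$ composed with proper maps. For source-properness/source-compactness one notes that the source fiber of $\widehat\cG$ at $\hx=(x,\tt)$ is $\hat\s^{-1}(\hx)=\s^{-1}(x)\times\{\hx\}\cong\s^{-1}(x)$, so s-properness (resp.\ compactness) of $\cG$ transfers directly; source-connectedness likewise transfers since the source fibers are literally source fibers of $\cG$. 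The only slightly delicate point — and the one I expect to require the most care — is pinning down that the pulled-back form $\pr_\cG^*\Omega$ satisfies the \emph{nondegeneracy transverse to the kernel foliation} that a presymplectic groupoid requires (its kernel must be spanned by the flows of $\hat\s$-vertical and $\hat\t$-vertical directions in a controlled way); this is exactly what the local gauge-groupoid computation delivers, since there $\widehat\Omega$ restricted to $\widehat\cG|_{\widehat\cM}$ is the multiplicative presymplectic form whose kernel is governed by the fibers of $\widehat\mu$, matching the regular Dirac structure $\hL$ from Theorem \ref{thm:resolution:Dirac}.
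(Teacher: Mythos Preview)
Your proof is correct, but takes a more roundabout route than the paper's. You verify the presymplectic condition and identify the induced Dirac structure by passing to the Hamiltonian local model, identifying $\widehat{\cG}|_{\hU}$ with a gauge groupoid, and checking things chart-by-chart. The paper instead argues globally in one line: for an action groupoid $\cG\ltimes\hM$ with Lie algebroid $\res^*T^*M$, the anchor is $\widehat{\act}$ and the IM-form of $\widehat{\Omega}=\pr_\cG^*\Omega$ is $\d\res^*$ (pulled back from the IM-form of $\Omega$, which is the identity since $\cG$ is symplectic). Hence the map $(\rho,\sigma_{\widehat{\Omega}})$ is precisely the bundle map \eqref{eq:injective:IM:form}, already shown in the proof of Theorem~\ref{thm:resolution:Dirac} (via Corollary~\ref{cor:resolution:action}) to be injective with image $\hL$. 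Then \cite[Corollary~4.8]{BCWZ} gives both the nondegeneracy $\ker\d\s\cap\ker\d\t\cap\ker\widehat{\Omega}=\{0\}$ and that the base Dirac structure is $\hL$, with no local computation needed. Your approach has the virtue of being self-contained and making the geometry explicit, but it duplicates work already packaged in \eqref{eq:injective:IM:form}; the paper's route is shorter precisely because that injectivity was the key content of the preceding results. Your $\cC$-type argument (properness of $\res$ and identification of source fibers) matches the paper's.
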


\begin{proof}
Since $\pr_{\cG}:\G\ltimes \hM\to\cG$ is a groupoid morphism, the form $\widehat{\Omega}$ is a closed, multiplicative 2-form. The only things to be checked are that (i) the non-degeneracy condition
\[ \ker\d\s\cap\ker\d\t\cap \ker\widehat{\Omega}=\{0\} \]
holds and (ii) $\hL$ is the induced Dirac structure on the base. Both of these follow from \cite[Corollary 4.8]{BCWZ} and the fact that \eqref{eq:injective:IM:form} is bundle isomorphism onto $\hL$. 

Since $\res:\hM\to M$ is a proper map, $\G\ltimes \hM$ has the same $\cC$-type as $\cG$.
\end{proof}

\subsection{A characterization of the Weyl resolution}

We provide a way to detect the Weyl resolution 
among backward Dirac maps.

\begin{theorem}
\label{thm:characterization:resolution}
Let $\phi:(N,L)\to (M,\pi)$ be a backward Dirac map from a Dirac manifold to a Poisson manifold of proper type with the following properties:
\begin{enumerate}[(i)]
\item $L$ is regular;
\item $\phi$ is proper and restricts to a diffeomorphism
\[\phi:\phi^{-1}(M^\reg)\to M^\reg.\]
\item There is an action $\widehat{\act}$ of $T^*M$ on $\phi$ (necessarily unique)
yielding a Lie algebroid isomorphism $(\widehat{\act},\phi^*):\phi^*T^*M\to L$;
\end{enumerate}
 Then there exist a diffeomorphism
 \[\psi:(N,L)\to (\hM,\hL)\]
 such that $\phi=\res\circ \psi$.
\end{theorem}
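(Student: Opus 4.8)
The idea is to build $\psi$ fiberwise from the Lie algebroid isomorphism in (iii), using the fact that a point of $\hM$ over $x$ is the choice of a maximal torus in $\gg_x$, and that such a choice is the same thing as a regular (maximal rank) point of the isotropy Lie algebra acting on the normal direction. Concretely, I would first use (iii) to identify, for each $y\in N$ with $\phi(y)=x$, the isotropy Lie algebra $L_y$ of $L$ with $\gg_x=\gg_x(M,\pi)$. Since $L$ is regular, $L_y$ is abelian; being the image under $(\widehat{\act},\phi^*)$ of an abelian subalgebra of the compact-type Lie algebra $\gg_x$, it must be contained in some maximal torus. The heart of the argument is to pin down \emph{which} maximal torus: I claim that $L_y$ is itself a maximal torus $\tt_y\subset\gg_x$, so that the assignment $\psi(y):=(\phi(y),\tt_y)$ is well-defined as a map $N\to\hM$.

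\textbf{Why $L_y$ is a maximal torus.} This is where properness and condition (ii) enter. Working in a saturated neighborhood $U\simeq Q/G$ where the Hamiltonian local model of Theorem \ref{thm-lf-Poisson-v2} holds, one reduces (as in the proof of Theorem \ref{thm:resolution:map}) to the linear case $M=\gg^*$, where everything can be computed explicitly: over $\xi\in\gg^*$, the corank of $L$ equals $\corank\pi=\rank\gg=\dim\tt$, so $\dim L_y=\dim\tt$; an abelian subalgebra of $\gg_\xi$ of dimension equal to the rank is a maximal torus. The regularity of $L$ forces $\dim L_y$ to be locally constant and equal to $\corank\pi$ everywhere on $N$ (not just over $M^\reg$), because $\phi^{-1}(M^\reg)$ is open dense by properness and (ii), and the rank of a regular Dirac structure is continuous. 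Hence $L_y$ has dimension $\rank\gg_x$ for every $y$, and being abelian it is a maximal torus $\tt_y$. Thus $\psi(y)=(x,\tt_y)\in\hM$ is defined.

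\textbf{Smoothness and the diffeomorphism property.} To see $\psi$ is smooth, observe that $\psi$ is the composition $N\to\Gr_r(T^*M)$ where $y\mapsto (\phi(y),L_y)$; the subbundle $y\mapsto L_y\subset\phi^*T^*M$ is smooth because it is the isotropy of the smooth vector bundle $L$ transported via the algebroid isomorphism $(\widehat{\act},\phi^*)$, so $\psi$ lands in the smooth submanifold $\hM\subset\Gr_r(T^*M)$ (Theorem \ref{thm:resolution:map}) and is smooth. By construction $\res\circ\psi=\phi$. For bijectivity: over $M^\reg$, both $\phi$ and $\res$ restrict to diffeomorphisms onto $M^\reg$ (condition (ii) and Theorem \ref{thm:resolution:Dirac}(iii)), so $\psi$ is a diffeomorphism on the dense open set $\phi^{-1}(M^\reg)\to\res^{-1}(M^\reg)$. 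Since $\phi$ and $\res$ are both proper with connected fibers, $\psi$ is a proper bijection on fibers — indeed over each $x$ it is the identity-covering self-map of the (connected) fiber $\cT(\gg_x)$ sending a point to itself once the two fibers are identified via the local model — hence $\psi$ is a proper continuous bijection, therefore a homeomorphism, and finally a diffeomorphism since on the dense set $\phi^{-1}(M^\reg)$ it is one and near singular points one checks $\d\psi$ is an isomorphism using the clean-value description of $\d\res$ in Proposition \ref{prop:range:resolution} together with the algebroid isomorphism (iii) (which forces $\ker\d_y\phi$ and $\ker\d_{\psi(y)}\res$ to match in dimension and position). The main obstacle is the smoothness/diffeomorphism step at singular points: one must carefully transport the local model $\widehat{U}\simeq\hQ/G$ of Remark \ref{rem:life:easy} through the data $(N,L,\phi)$, using (iii) to match the isotropy subbundles, and verify the identification is smooth across strata — which is exactly where one invokes the explicit linear picture $\hgg\simeq G/T\times_W\tt^*$ and the fact that condition (iii) rigidifies $L$ to be $\res^*L_\pi$ on the nose over each local chart.
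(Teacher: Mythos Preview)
Your construction of $\psi$ via $y\mapsto(\phi(y),L_y)\in\Gr_r(T^*M)$ coincides with the paper's, and your dimension-count argument that the isotropy $L_y$ is a maximal torus of $\gg_x$ is a valid (slightly more direct) alternative to the paper's density/closedness argument for landing in $\hM$.

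The genuine gap is in the diffeomorphism step. Your claim that $\psi$ is a bijection on fibers is circular: you say it is ``the identity-covering self-map of $\cT(\gg_x)$ once the two fibers are identified via the local model'', but identifying $\phi^{-1}(x)$ with $\cT(\gg_x)$ is exactly what must be proved. Similarly, your check that $\d\psi$ is an isomorphism near singular points does not go through: the algebroid isomorphism in (iii) tells you the isotropy \emph{subbundles} match, not that $\d_y\psi$ is injective along $\ker\d_y\phi$, and ``rigidifies $L$ to be $\res^*L_\pi$ on the nose'' presupposes the identification you are constructing. The paper closes this by proving $\ker\d_y\phi\cap\ker\d_y\psi=\{0\}$ directly: since $\phi$ is proper, the algebroid action of $T^*M$ along $\phi$ \emph{integrates} (Moerdijk--Mr\v{c}un) to an action of the source-1-connected groupoid $\Sigma(M)$, giving a local action of the isotropy group $G_x$ on the fiber $\phi^{-1}(x)$ whose orbits are tangent to $\ker\d\phi$. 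This identifies a neighbourhood of $y$ in the fiber with a neighbourhood of the identity coset in $G_x/T_x$, under which $\psi$ becomes the canonical covering $G_x/T_x\to G_x/N(T_x)=\cT(\gg_x)$ --- visibly a local diffeomorphism. Once $\psi$ is a local diffeomorphism, properness of $\phi=\res\circ\psi$ makes $\psi$ a proper local diffeomorphism, hence a covering; being a diffeomorphism over $M^\reg$ forces it to be one-sheeted. This integration step is the technical core you are missing.
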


\begin{remark}
The previous theorem implies that following global statement. Assume that $(\cG, \Omega)\tto (M, \pi)$ is a proper groupoid integrating a Poisson manifold $(M, \pi)$, acting along a proper map $\phi: N\to M$, such that:
\begin{enumerate}[(i)]
\item the orbits of the action are regular;
\item $\phi$ restricts to a diffeomorphism
\[\phi:\phi^{-1}(M^\reg)\to M^\reg;\]
\item the resulting action groupoid, endowed with the pull-back of $\Omega$ by the projection, is a presymplectic groupoid.
\end{enumerate}
Then the same conclusion of the theorem holds.
\end{remark}

\begin{proof}
The map $\phi$ is backward Dirac, hence $L=\phi^!L_\pi$. In order to define the map $\psi:N\to \hM$ we denote by $\cI\to N$ the bundle of isotropy Lie algebras of $\phi^!L_\pi$. This is a bundle of abelian Lie algebras, because $\phi^!L_\pi$ is regular, and we  consider the composition of Lie algebroid morphisms
\begin{equation}\label{eq:family of maximal tori}
\xymatrix{
\psi_0:\cI\, \ar@{^{(}->}[r] & \phi^!L_\pi\ar[r]^{{(\widehat{\act},\phi)}^{-1}} & \phi^*T^*M\ar[r] ^{\pr}& T^*M}.
\end{equation}
The last projection is injective on isotropy Lie algebras. Hence, $\psi_0$ is a fiberwise injective vector bundle map covering $\phi:N\to M$. Therefore we have an induced  smooth map between the Grassmannians of rank $r=\corank(\pi)$:
\[ \psi: N=\Gr_r(\cI)\to \Gr_r(T^*M),\quad y\mapsto \psi_0(\cI_y).\]
Over the open dense set $\phi^{-1}(M^\reg)$ the map $\psi_0$ is an isomorphism which takes values in $\hM\subset \Gr_r(T^*M)$. Hence, applying also Theorem \ref{thm:resolution:map}, one has that:
\begin{enumerate}[(a)]
    \item $\hM$ is closed in $\Gr_r(T^*M)$, so $\psi$ has image contained in $\hM$;
    \item $\hM$ is embedded in $\Gr_r(T^*M)$, so $\psi$ can be viewed has a smooth map into $\hM$.
\end{enumerate}
Therefore 
\[\psi:N\to \hM,\quad y\mapsto \psi_0(\cI_y)\in \Gr_r(T^*_xM)\]
is a smooth map that satisfies 
\begin{equation}\label{eq:factor resolution}
\res\circ \psi=\phi.
\end{equation}

%


Next we show that $\psi$ is a diffeomorphism. For that it is enough to prove that it is a local diffeomorphism. Indeed, note that since $\phi$ is a proper map, the factorization \eqref{eq:factor resolution} implies that also $\psi$ is  proper. A proper local diffeomorphism is necessarily a covering map and $\psi$ being both a closed and an open map, must be surjective. Since $\psi$ is a diffeomorphism over $M^\reg$, we conclude that $\psi:N\to \hM$ is a diffeomorphism.

Finally, to show that $\psi$ is a local diffeomorphism we note that, by \eqref{eq:factor resolution}, we have
\[ \ker \psi_*\subset \ker\phi_*.\]
Hence, injectivity of $\psi_*$ follows from the identity
\begin{equation}\label{eq:trivial kernel intersection}
\ker\phi_*\cap\ker\psi_*=\{0\}.
\end{equation}
To prove this identity, let $y\in N$ and let $S'\subset N$ and $S\subset M$ denote the presymplectic leaves through $y$ and $x=\phi(y)$, respectively. Since $\phi$ is a backward Dirac map, we have that $(X,0)\in\phi^!L_\pi$ for any $X\in\ker \phi_*$, so 
\[{(\ker\phi_*)}|_{S'}=\ker{(\phi|_{S'})}_*.\]

We now use the algebroid isomorphism $(\widehat{\act},\phi):\phi^*T^*M\to \phi^!L_\pi$. Since the algebroid $T^*M$ acts along the proper map $\phi$, the action integrates to an action of the source 1-connected Lie groupoid $\Sigma(M)$ \cite[Thm 5.3]{MoerdijkMrcun02}. It follows that for any $x\in S$
 we obtain an action of the isotropy group $\Sigma(M)_x$ on the fiber 
 \[ \phi^{-1}(x)\cap S'=(\phi|_{S'})^{-1}(x)\] 
 whose orbits are tangent to $ \ker{(\phi|_{S'})}_*$. This also shows that we have a \emph{local} action of the isotropy group $G_x$ of a (possible smaller) proper integration of $(M,\pi)$ on this fiber. This provides a diffeomorphism
\[ G_x/T_x\supset U\diffto V\subset {(\phi|_{S'})}^{-1}(x)\]
from  a neighborhood $U$ of the coset of the identity to a neighborhood $V$ of $y$ in $(\phi|_{S'})^{-1}(x)$. Under this isomorphism, the map $\psi|_{V}$ becomes the restriction of the canonical covering map 
\[G_x/T_x\supset U\to G_x/N(T_x)=\mathcal{T}(\gg_x).\]
Therefore the differential of $\psi$ along $\ker\phi_*$ is injective, and this proves \eqref{eq:trivial kernel intersection}.
\end{proof}

\subsection{Resolution and the infinitesimal stratification}

In the sequel, we will say that the intersection of a map $\phi:X\to Y$ with a (embedded) submanifold $S\subset Y$ is {\bf very clean} if it satisfies the following properties:
\begin{enumerate}[(a)]
   \item $\phi^{-1}(S)$ is a (embedded) submanifold of $X$;
   \item  $T\phi^{-1}(S)=(\d \phi)^{-1}(TS)$;
   \item $\phi|_{\phi^{-1}(S)}:\phi^{-1}(S)\to S$ is a submersion.
\end{enumerate}
Notice that the usual notion of cleanness only requires the first two items which, unlike the notion of very clean, do not imply that points in $S$ are clean values. Also, (c) guarantees that the rank of $\phi$ is constant along $\phi^{-1}(S)$.

Cleanness is relevant also when trying to pull-back  stratifications $\cS_Y$ of $Y$ along $\phi: X\to Y$: if $\phi$ intersects all the strata of $\cS_Y$ cleanly and $\phi$ has connected fibers, then 
\[ \phi^*\cS_Y:=\left\{\phi^{-1}(S):S\in \cS_Y\right\}\]
is a stratification of $X$. Requirement (b) is relevant when addressing the Whitney conditions. Very clean ensures that the conclusion holds not only for $\cS_Y$ but for all other refinements of $\cS_Y$. 


\begin{theorem}\label{thm:compatibility with inf-stratification} 
Let $(M, \pi)$ be a  Poisson manifold of proper type. Then for each stratum $\Sigma$ of $\cSi(M,\pi)$, one has:
\begin{enumerate}[(i)]
   \item The intersection of $\res$ with $\Sigma$ is very clean;
   \item $\res^{-1}(\Sigma)$ is an embedded submanifold of $\hM$ along which $\res$ has constant rank equal to $\dim M+\corank(\pi)-\dim\gg_x$, for any $x\in\Sigma$.
\end{enumerate}
The partition of $\hM$ given by the rank of $\d\res$ induces the stratification $\res^{-1}(\cSi(M,\pi))$, after passing to connected components.
\end{theorem}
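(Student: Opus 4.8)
The plan is to derive (i) and (ii) by reducing everything to the linear model $M=\gg^*$ via the Hamiltonian normal form, exactly in the spirit of the proofs of Theorems~\ref{thm:inf-strat} and~\ref{thm:can-strat}, and then to obtain the last assertion from the rank formula of Proposition~\ref{prop:range:resolution}.

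All three conditions defining a \emph{very clean} intersection are local on $\hM$ and on $M$, and $\cSi$ is Morita invariant (Remark~\ref{remark:Morita-general-stratifications}), so it suffices to check (i) and (ii) over a saturated open $U\simeq Q/G$ carrying a Hamiltonian local model $\mu\colon (Q,\omega)\to\gg^*$ (Theorem~\ref{thm-lf-Poisson-v2}). The morphism of Morita equivalences of Remark~\ref{rem:life:easy} identifies $\res\colon\hU\to U$ with $\res\colon\hgg\to\gg^*$ compatibly with both equivalences, so the problem becomes that of the intersection of $\res\colon\hgg\to\gg^*$ with the infinitesimal strata of $(\gg^*,\pi_{\gg^*})$. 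In the explicit model $\hgg\cong G/T\times_W\tt^*$, $\res(gT,\xi)=\Ad_g^*\xi$, of Example~\ref{ex:linear:case} one has, for an isotropy type $\hh$, that $\res^{-1}(\Sigma^{\inf}_{[\hh]})=G/T\times_W\{\xi\in\tt^*:\gg_\xi\cong\hh\}$; the set $\{\xi\in\tt^*:\gg_\xi\cong\hh\}$ is, near each of its points, an open subset of the linear subspace where a fixed collection of roots vanishes, hence an embedded submanifold of $\tt^*$ of constant dimension, and since $W$ acts freely on $G/T$ this exhibits $\res^{-1}(\Sigma^{\inf}_{[\hh]})$ as an embedded submanifold of $\hgg$; restricting to a connected component gives (i)(a). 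For (i)(b), a direct computation of $\d\res$ at $(gT,\xi)$, using $\d_{(1,\xi)}\widetilde{\res}(u,\eta)=\ad_u\xi+\eta$ from the proof of Proposition~\ref{prop:range:resolution} together with $(T_\xi\Sigma^{\inf})^0=\gg_\xi^{\ss}$ from Corollary~\ref{corollary: infinitesimal-stratum-conormal}, shows $T\res^{-1}(\Sigma)=(\d\res)^{-1}(T\Sigma)$.

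Granting (i)(a)--(b), the rest is formal. By Proposition~\ref{prop:range:resolution}, $\dim\ker\d_{\hx}\res=\dim\gg_x-\dim\tt_x$, so $\rank\d_{\hx}\res=\dim M+\corank(\pi)-\dim\gg_x$, which is constant along $\res^{-1}(\Sigma)$ because $\gg_x$ is constant up to isomorphism along a stratum of $\cSi(M,\pi)$; this proves (ii). Moreover $\gg_x=\tt_x\oplus[\tt_x,\gg_x]$ and $[\tt_x,\gg_x]\subseteq[\gg_x,\gg_x]=\gg_x^{\ss}$, so $(\mathcal{R}_{\hx})^0=[\tt_x,\gg_x]\subseteq\gg_x^{\ss}=(T_x\Sigma)^0$ (Proposition~\ref{prop:range:resolution}, Corollary~\ref{corollary: infinitesimal-stratum-conormal}), i.e. $T_x\Sigma\subseteq\mathcal{R}_{\hx}=\im\d_{\hx}\res$; combined with (i)(b) this gives $\d_{\hx}\res\bigl(T_{\hx}\res^{-1}(\Sigma)\bigr)=\mathcal{R}_{\hx}\cap T_x\Sigma=T_x\Sigma$, so $\res|_{\res^{-1}(\Sigma)}$ is a submersion onto $\Sigma$ — that is (i)(c). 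Consequently $\res^{-1}(\cSi(M,\pi))$ is a stratification, by the general pull-back principle recalled before the theorem and the fact that $\res$ has connected fibers (Theorem~\ref{thm:resolution:map}); note that each $\res^{-1}(\Sigma)$ is connected, being the total space of an open surjection with connected fibers over the connected base $\Sigma$. For the last assertion, the rank formula shows the rank-$\rho$ locus of $\d\res$ equals $\res^{-1}\bigl(\{x\in M:\dim\gg_x=\dim M+\corank(\pi)-\rho\}\bigr)$; and near any $x_0$, upper semicontinuity of $x\mapsto\dim\gg_x$ together with \eqref{eq:stratum:local:model} (in the Hamiltonian model $\gg_y\cong(\gg_{x_0})_\eta$ for $\eta$ near $0$, and $\dim(\gg_{x_0})_\eta=\dim\gg_{x_0}$ only for central $\eta$) gives $\{y:\dim\gg_y=\dim\gg_{x_0}\}=\Sigma_{x_0}$ locally. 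Hence $\{x:\dim\gg_x=d\}$ is a union of strata of $\cSi(M,\pi)$ whose connected components are exactly those strata, so the rank-$\rho$ locus is a disjoint union of the connected submanifolds $\res^{-1}(\Sigma)$ with $\dim\gg|_\Sigma=d$, each open and closed in it; passing to connected components recovers $\res^{-1}(\cSi(M,\pi))$.

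The step I expect to be the main obstacle is the second paragraph: pinning down the embedded-submanifold structure of $\{\xi\in\tt^*:\gg_\xi\cong\hh\}$ and of its quotient inside $\hgg\cong G/T\times_W\tt^*$, and — above all — transporting these facts along the morphism of Morita equivalences of Remark~\ref{rem:life:easy} carefully enough that the (local) very-clean conditions for $\res\colon\hU\to U$ are genuinely deduced from the linear computation rather than merely analogized.
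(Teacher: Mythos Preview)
Your approach is correct in outline and close to the paper's, but the paper makes two simplifications that dissolve exactly the obstacle you flag at the end.

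First, instead of working with $\hgg\cong G/T\times_W\tt^*$ and then transporting along the Morita equivalence of Remark~\ref{rem:life:easy}, the paper uses the \emph{other} local description of the resolution, namely $\hU\simeq\mu^{-1}(\tt^*)/N(T)$ from \eqref{eq:resolution:local:model}. In this model the resolution map is literally $\mu^{-1}(\tt^*)/N(T)\to Q/G$, so by \eqref{eq:stratum:local:model} one has $\Sigma=\mu^{-1}(\zz(\gg)^*)/G$ and hence $\res^{-1}(\Sigma)=\mu^{-1}(\zz(\gg)^*)/N(T)$; embeddedness and the submersion property are then immediate, with no transport needed. Second, rather than verifying (i)(b) by a direct computation, the paper first establishes only (i)(a) and (i)(c) (their condition (C)) and then deduces (i)(b) formally: since all values of $\res$ are clean (Proposition~\ref{prop:range:resolution}), one has $T_{\hx}\res^{-1}(x)=\ker\d_{\hx}\res$, and comparing the two short exact sequences
\[
0\to T_{\hx}\res^{-1}(x)\to T_{\hx}\res^{-1}(\Sigma)\to T_x\Sigma\to 0,\qquad
0\to\ker\d_{\hx}\res\to(\d_{\hx}\res)^{-1}(T_x\Sigma)\to T_x\Sigma\to 0
\]
forces the middle inclusion to be an equality. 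Your derivations of the rank formula, of (i)(c) via $(\mathcal{R}_{\hx})^0=[\tt_x,\gg_x]\subset\gg_x^{\ss}=(T_x\Sigma)^0$, and of the last assertion are fine and match the paper's reasoning (the paper phrases the final step via Lemma~\ref{stratif-lemma-1}).
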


\begin{proof}
Let us assume, for the moment, that 
\begin{enumerate}
\item[(C)] $\res^{-1}(\Sigma)$ is embedded in $\hM$ and the restriction $\d\res:T\res^{-1}(\Sigma)\to T\Sigma$ is surjective.
\end{enumerate}
Then we have an inclusion of short exact sequences
\[ 
\xymatrix{
T_{\hx}\res^{-1}(x)\ar[r] \ar@{_{(}->}[d]& T_{\hx}\res^{-1}(\Sigma)\ar[r]\ar@{_{(}->}[d] & T_x\Sigma\ar@{=}[d]\\
\ker(\d_{\hx}\res)\ar[r] & (\d_{\hx}\res)^{-1}(T_x\Sigma)\ar[r] & T_x\Sigma
}
\]
Since all values of $\res$ are clean, the left vertical arrow is an equality, hence, so is the middle one. To complete the proof of the very clean statement (no pun intended) it remains to check that (C) holds.

In order to prove (C) we can use the Hamiltonian local model, so we assume $M=Q/G$ for a Hamiltonian $G$-space $\mu:(Q,\omega)\to\gg^*$. According to \eqref{eq:resolution:local:model}  the resolution map reads
\[ \res:\mu^{-1}(\tt^*)/N(T)\to Q/G, \]
where $T\subset G$ is a a fixed maximal torus. Then $\Sigma$ is identified with (see \eqref{eq:stratum:local:model})
\[ \Sigma= \mu^{-1}(\zz(\gg)^*)/G, \]
while its pre-image becomes
\[ \res^{-1}(\Sigma)=\mu^{-1}(\zz(\gg)^*)/N(T). \]
From this (C) is immediate.

To check that the preimage of $\cSi(M,\pi)$ is a stratification of $\hM$, one uses the local model, as in Remark \ref{rem:life:easy}. This reduces  the problem to the linear case where $\res:G/T\fproduct{W}\tt^*\to\gg^*$ and the infinitesimal stratification of $\gg^*$ has strata the $G$-saturations of the open faces $\interior(\Delta_I)$ of a Weyl chamber $\Delta$ of $\tt^*$ (see Example \ref{ex:regular-coadjoint-orbits} for more details). One observes that their pullbacks under the resolution map, $\res^{-1}(\interior(\Delta_I))=G/T\fproduct{W}(W\cdot \interior(\Delta_I))$, satisfy the frontier condition.

The formula for the dimension in item (ii) now follows. From this formula and the characterization \eqref{eq:inf-equiv} of the infinitesimal stratification, applying Lemma \ref{stratif-lemma-1}, the last statement also follows.
\end{proof}

\begin{corollary}\label{cor:codimension-rank-stratification}
For $\Si\in \cSi(M,\pi)$ the following holds 
\begin{align*}
    \codim(\Si)- \codim(\res^{-1}(\Si))&= \dim (\gg_x)- \dim (\tt_x),\\
    \codim (\res^{-1}(\Si))&= \dim (\tt_x)- \dim (\zz(\gg_x)).
\end{align*}
In particular, for any subregular stratum $\Si$,
$\res^{-1}(\Si)$ is of codimension one.
\end{corollary}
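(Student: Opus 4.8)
The plan is to derive both displayed identities directly from Theorem \ref{thm:compatibility with inf-stratification}(ii), and then read off the special case of the subregular stratum. First I would recall that for $x\in\Sigma$ we have $\dim\gg_x = \dim\tt_x + (\dim\gg_x - \dim\tt_x)$, with $\tt_x$ a maximal torus, and that by Corollary \ref{corollary: infinitesimal-stratum-conormal} the codimension of $\Sigma$ in $M$ equals $\dim\gg_x^{\ss} = \dim\gg_x - \dim\zz(\gg_x)$. From Theorem \ref{thm:compatibility with inf-stratification}(ii) the rank of $\d\res$ along $\res^{-1}(\Sigma)$ is $\dim M + \corank(\pi) - \dim\gg_x$, and since the intersection is very clean, $\res|_{\res^{-1}(\Sigma)}\colon \res^{-1}(\Sigma)\to\Sigma$ is a submersion whose fibers are the manifolds $\cT(\gg_x)$ of dimension $\dim\gg_x - \dim\tt_x$ (using $\gg_x = \tt_x\oplus[\tt_x,\gg_x]$, cf. Proposition \ref{prop:range:resolution}). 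Hence $\dim\res^{-1}(\Sigma) = \dim\Sigma + \dim\gg_x - \dim\tt_x$, and subtracting from $\dim\hM = \dim M$ gives the first identity:
\[ \codim(\res^{-1}(\Si)) = \codim(\Si) - (\dim\gg_x - \dim\tt_x). \]

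For the second identity I would substitute $\codim(\Si) = \dim\gg_x - \dim\zz(\gg_x)$ into the first, obtaining
\[ \codim(\res^{-1}(\Si)) = (\dim\gg_x - \dim\zz(\gg_x)) - (\dim\gg_x - \dim\tt_x) = \dim\tt_x - \dim\zz(\gg_x), \]
which is exactly the claim. Alternatively, one can see this directly in the linear local model $\res\colon G/T\times_W\tt^*\to\gg^*$ of Remark \ref{rem:life:easy}: the stratum through a point corresponding to $\xi\in\tt^*$ with $\gg_\xi = \gg$ is $\res^{-1}(\Sigma) \simeq \mu^{-1}(\zz(\gg)^*)/N(T)$ sitting inside $\hM \simeq \mu^{-1}(\tt^*)/N(T)$, whose codimension is $\dim\tt^* - \dim\zz(\gg)^* = \dim\tt - \dim\zz(\gg)$.

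Finally, for the subregular case, Corollary \ref{corollary: infinitesimal-stratum-conormal} gives $\gg_x^{\ss}\cong\su(2)$ when $\Si$ is subregular, so $\dim\gg_x - \dim\zz(\gg_x) = 3$ and the rank of $\su(2)$ is $1$, whence $\dim\tt_x - \dim\zz(\gg_x) = \rank(\gg_x) - \dim\zz(\gg_x) = \rank(\gg_x^{\ss}) = \rank(\su(2)) = 1$. Plugging into the second identity yields $\codim(\res^{-1}(\Si)) = 1$, as asserted. I do not anticipate a genuine obstacle here: the result is essentially bookkeeping built on Theorem \ref{thm:compatibility with inf-stratification} and the structure theory of compact Lie algebras; the only point requiring a little care is making sure the fiber dimension of $\res|_{\res^{-1}(\Si)}$ is correctly identified as $\dim\gg_x - \dim\tt_x$ rather than something involving $\zz(\gg_x)$, which is precisely where $\gg_x = \tt_x\oplus[\tt_x,\gg_x]$ is used.
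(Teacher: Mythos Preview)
Your proposal is correct and is precisely the intended argument: the paper states this corollary without proof, as an immediate consequence of Theorem~\ref{thm:compatibility with inf-stratification} together with Corollary~\ref{corollary: infinitesimal-stratum-conormal}, and your computation spells out exactly that implication. The only small simplification worth noting is that you do not need to invoke the rank formula from Theorem~\ref{thm:compatibility with inf-stratification}(ii) separately from the fiber dimension---once you know $\res|_{\res^{-1}(\Si)}$ is a submersion with fibers $\cT(\gg_x)$, the identity $\dim\res^{-1}(\Si)=\dim\Si+\dim\gg_x-\dim\tt_x$ and $\dim\hM=\dim M$ give everything.
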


\subsection{Resolution and the canonical stratification}

A similar result holds for the pullback of the canonical stratification. 
This time however, $(\hM, \hL)$ comes with its own 
canonical stratification, denoted by $\cS(\hM,\hL)$,  as follows from the results in Section \ref{sec:stratifications:DMCT}. 
However, since $\hL$ is regular, Proposition \ref{prop:regular-canonical-is-foliation} allows us to be more direct and adopt the following: 
\begin{equation}\label{eq:cS-on-hM}
\cS(\hM,\hL):= \cS(\hM, \hF),
\end{equation}
the canonical stratification of the underlying regular foliation, as discussed in section \ref{sec:proper-foliations}.
More transparently, the analogue of Proposition \ref{prop:can-str-equiv-hol} ensures that  
$\cS(\hM,\hL)$ coincides with the canonical stratification induced by its integrating groupoid $\hG\tto \hM$, 
as discussed in section \ref{sec:The stratifications induced by a proper Lie groupoid}.

\begin{theorem}\label{thm:compatibility with stratification} Let $(M, \pi)$ be a Poisson manifold of proper type. The resolution map 
$\res:(\hM,\hL)\to (M,\pi)$
is compatible with the canonical stratifications. More precisely, $\res$ intersects every canonical stratum very cleanly and
\[ \cS(\hM,\hL)= \left\{\res^{-1}(\Sc): \Sc\in \cS(M,\pi)\right\}\]
\end{theorem}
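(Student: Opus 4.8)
The plan is to follow the now-familiar strategy used for Theorems \ref{thm:inf-strat}, \ref{thm:can-strat} and \ref{thm:compatibility with inf-stratification}: reduce everything to the Hamiltonian local model via Theorem \ref{thm-lf-Poisson-v2}, and then to the linear case $M=\gg^*$ via Remark \ref{rem:life:easy}. Concretely, first I would observe that the statement is local over $M$, so we may assume $M=Q/G$ for a Hamiltonian $G$-space $\mu:(Q,\omega)\to\gg^*$, with $G=\cG_x$ compact; then Remark \ref{rem:life:easy} identifies $\hM$ with $\hQ/G$, where $\hQ=Q\fproduct{\gg^*}\hgg$, and gives a morphism of Morita equivalences between $(\cG\ltimes\hM \Rightarrow \hM,\ G\ltimes\hgg\Rightarrow\hgg)$ and $(\cG\Rightarrow M,\ G\ltimes\gg^*\Rightarrow\gg^*)$. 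Since $\cS(\hM,\hL)$ is defined via the integrating groupoid $\hG=\cG\ltimes\hM$ (by \eqref{eq:cS-on-hM} and the remark following it) and $\cS(M,\pi)$ via $\cG$, and since canonical stratifications are Morita invariant (Remark \ref{remark:Morita-general-stratifications}), it suffices to prove the analogous statement for $\res:\hgg=G/T\times_W\tt^*\to\gg^*$ and the groupoids $G\ltimes\hgg$ and $G\ltimes\gg^*$.

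In the linear case the two sides can be computed explicitly. The canonical strata of $\gg^*$ (w.r.t.\ $G$) are the $G$-saturations of the open faces $\interior(\Delta_I)$ of a closed Weyl chamber $\Delta\subset\tt^*$, as recalled in the proof of Theorem \ref{thm:compatibility with inf-stratification}; I would spell this out using that $\cS(\gg^*)=\cS_{G\ltimes\gg^*}(\gg^*)$ refines $\cSi(\gg^*)$ and that on each open face of $\Delta$ the isotropy group $G_\xi$ (not just its Lie algebra) is constant up to conjugacy. On the other hand $\hgg$ is regular, so by \eqref{eq:cS-on-hM} its canonical stratification is that of the regular foliation $\hF$, equivalently (analogue of Proposition \ref{prop:can-str-equiv-hol}) the one induced by $G\ltimes\hgg$; concretely the strata are the $G$-saturations in $G/T\times_W\tt^*$ of the sets $W\cdot\interior(\Delta_I)$, these being exactly the loci where the normal isotropy representation of $G\ltimes\hgg$ — i.e.\ the linear holonomy of $\hF$ — has constant type. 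Matching these, one sees directly that $\res^{-1}(G\cdot\interior(\Delta_I))=G/T\times_W(W\cdot\interior(\Delta_I))$ is precisely a stratum (after taking connected components) of $\cS(\hgg,\hF)$, so $\{\res^{-1}(\Sc):\Sc\in\cS(\gg^*)\}=\cS(\hgg,\hat L)$, and the frontier condition for the pullback family follows from that for $\Delta$ exactly as in the proof of Theorem \ref{thm:compatibility with inf-stratification}.

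For the very-clean assertion I would argue pointwise, again in the local model. Fix $\hx=(x,\tt_x)$ with $\res(\hx)=x\in\Sc$, and let $\Si\in\cSi(M,\pi)$ be the infinitesimal stratum containing $\Sc$. Since $\cS(M,\pi)$ refines $\cSi(M,\pi)$ (Theorem \ref{thm:can-strat:inf-strat}) and by Theorem \ref{thm:compatibility with inf-stratification} the intersection of $\res$ with $\Si$ is already very clean, it is enough to analyse $\res$ restricted to $\res^{-1}(\Si)\to\Si$, which by Theorem \ref{thm:can-strat:inf-strat} is again (the resolution of) a regular PMCT, where $\Sc$ is a canonical stratum. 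So without loss of generality $(M,\pi)$ is regular; then $\res$ is forward \emph{and} backward Dirac (Theorem \ref{thm:resolution:Dirac}(iii), together with the fact that in the regular case $\res$ restricted to leaves is a covering), and the canonical stratifications on both sides are holonomy stratifications of foliations (Proposition \ref{prop:regular-canonical-is-foliation}). I would then verify the three clauses (a)--(c) of very cleanness directly from the description $\res:(P\times\tt^*)/N(T)\to(P\times\gg^*)/G$ of Example \ref{ex:linear local model resolution}: on a canonical stratum $\Sc$ of $(M,\pi)$, the holonomy is locally constant, the preimage $\res^{-1}(\Sc)$ is cut out by the corresponding $N(T)$-invariant condition and is therefore an embedded submanifold with $T\res^{-1}(\Sc)=(\d\res)^{-1}(T\Sc)$, and $\res|_{\res^{-1}(\Sc)}$ is a submersion because $\res$ is already a submersion onto each leaf and the leaf space stratum is smooth (Corollary \ref{corolarry-normal}).

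The main obstacle I anticipate is the bookkeeping in the regular/canonical step: unlike the infinitesimal stratification, $\cS(M,\pi)$ can have codimension-one strata (Example \ref{example:codimenision-one-do-exist}), and one must be careful that passing to connected components is done correctly so that the pullback family genuinely satisfies the frontier condition and matches $\cS(\hM,\hL)$ rather than a coarsening of it. Concretely, the delicate point is verifying that the linear-holonomy type of $\hF$ along $\res^{-1}(\Sc)$ is constant and changes exactly where the holonomy type of $\cF_\pi$ along $\Sc$ does — i.e.\ that $\res$ does not "split" a canonical stratum — which comes down to the fact (used already in Proposition \ref{prop:holonomy:groupoid:version} and Theorem \ref{thm:inf-strat-monodromy}) that the linear holonomy of $\cF_\pi$ and that of $\hF$ are intertwined by $\res$ via the inclusion $\tt_x\hookrightarrow\gg_x$ on centers, together with the Morita invariance of isotropy types. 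Once this is in place, the frontier condition and the identification of strata are, as in the earlier proofs, a purely combinatorial statement about faces of a Weyl chamber and their $W$-orbits, which is routine.
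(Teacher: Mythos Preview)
Your overall strategy --- reduce to the Hamiltonian local model and then to the linear case --- is reasonable, and parts of it parallel the paper. But there is a genuine gap in your very-clean argument, and your approach to the stratification equality diverges from the paper's in a way that leaves the key step unaddressed.

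\textbf{The gap in the very-clean step.} You write that ``$\res$ restricted to $\res^{-1}(\Si)\to\Si$ \ldots\ is again (the resolution of) a regular PMCT''. This is false: the Weyl resolution of a \emph{regular} PMCT is the identity map, whereas $\res|_{\res^{-1}(\Si)}:\res^{-1}(\Si)\to\Si$ has nontrivial fibers $\cT(\gg_x)\cong G_x^0/N(T_x)$. So your reduction to Example \ref{ex:linear local model resolution} does not apply, and the rest of that paragraph collapses. What actually works --- and what the paper does implicitly --- is much simpler: by Theorem \ref{thm:compatibility with inf-stratification} the restriction $\res|_{\res^{-1}(\Si)}:\res^{-1}(\Si)\to\Si$ is a \emph{submersion}, and a submersion intersects any embedded submanifold of its target very cleanly. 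Since $\Sc\subset\Si$, this immediately gives very-cleanness for every $\Sc\in\cS(M,\pi)$. This is the content of the remark preceding Theorem \ref{thm:compatibility with inf-stratification} that ``very clean ensures that the conclusion holds \ldots\ for all other refinements''.

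\textbf{The stratification equality: different route, missing key identity.} The paper does \emph{not} compute both stratifications explicitly in the linear model and match them. Instead it proves, at every point $\hx=(x,\tt_x)$, the tangent space equality $T_{\hx}\widehat{\Sc}=T_{\hx}\res^{-1}(\Sc)$ and then invokes Lemma \ref{stratif-lemma-aux}. The computation is carried out at the level of normal spaces to the leaf $\hS$: using the identification $\nu_{\hx}(\hS)\cong\tt_x^*\subset\gg_x^*$ from \eqref{eq:res:normal:spaces}, the paper shows that both $T_{\hx}\widehat{\Sc}/T_{\hx}\hS$ and $T_{\hx}\res^{-1}(\Sc)/T_{\hx}\hS$ equal $(\zz(\gg_x)^*)^{G/G^0}$, via the Lie-theoretic identity
\[
(\tt^*)^{N(T)/T}=(\zz(\gg)^*)^{G/G^0},
\]
which follows from the short exact sequence \eqref{eq:short:seq:Weyl:grp} and the fact that $W^0$ acts without fixed points on $\tt^{\ss}$. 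Your approach could in principle reach the same conclusion, but the ``delicate point'' you flag at the end is precisely this identity, and your proposed resolution (``intertwined via the inclusion $\tt_x\hookrightarrow\gg_x$ on centers'') does not establish it. Note also that your description of the canonical strata of $\gg^*$ as $G$-saturations of open Weyl-chamber faces is correct only when $G$ is connected; for disconnected $G=\cG_x$ --- which is exactly when $\cS$ differs from $\cSi$ --- one must track the $G/G^0$-action, and this is where the identity above enters.
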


\begin{proof}
%
We claim that $\res^{-1}(\cS(M,\pi))$ is a stratification of $\hM$ with the property that
\begin{equation}\label{eq:equal:tgts} 
T_y \widehat{\Sc}= T_y \res^{-1}(\Sc), 
\end{equation}
for every $ \widehat{\Sc}\in \cS(\hM,\hL)$, $\Sc\in \cS(M,\pi)$ and $y\in \widehat{\Sc}\cap \res^{-1}(\Sc)$. Applying Lemma \ref{stratif-lemma-aux}, the result follows.


To prove the claim we first observe that that $\res^{-1}(\cS(M,\pi))$ is a stratification because $\cS(M,\pi)$ refines $\cSi(M,\pi)$, 
and $\res$ is a map with connected fibers which intersects very cleanly the strata of $\cSi(M,\pi)$ -- see the remarks before Theorem \ref{thm:compatibility with inf-stratification}. 

To prove \eqref{eq:equal:tgts} fix $\hx= (x,\tt_x)$ and consider the stratum $\Sc\in \cS(M,\pi)$ through $x$ and the stratum $\widehat{\Sc}\in \cS(\hM,\hL)$ through $\hx$. We also fix the symplectic leaf $S\subset M$ through $x$ and the presymplectic leaf $\hS\subset \hM$ through $\hx$. Since $\hS=\res^{-1}(S)$,  \eqref{eq:equal:tgts}  will follow by showing that:
\begin{equation}
\label{eq:claim:normal:spaces} 
T_{\hx} \widehat{\Sc}/T_{\hx} \hS= T_{\hx} \res^{-1}(\Sc)/T_{\hx} \hS.
\end{equation}
For this, observe that $\res$ intersects $S$ cleanly, so we have an injection of normal spaces:
\[ \d_{\hx}\res: T_{\hx} \hM/T_{\hx} \hS\to T_x M/T_x S=\gg^*_x. \]
By Proposition \ref{prop:range:resolution}, the image is $\tt^*_x$, and we will identify the normal space to $\hS$ with this image:
\begin{equation}
\label{eq:res:normal:spaces}
\d_{\hx}\res: T_{\hx} \hM/T_{\hx}\hS\diffto \tt^*_x  \subset  \gg^*_x=T_x M/T_x S. 
\end{equation} 
Now we fix a integration $(\cG,\Omega)$ of $(M,\pi)$, we let $G=G_x$ and $T\subset G$ be the maximal torus of $G$ with Lie algebra $\tt=\tt_x$. From now on, we also denote $\gg_x$ by $\gg$. For the associated action groupoid $\hG=\cG\ltimes\hM$ the isotropy group at $(x,\tt)$ is $N(T)$. Its isotropy Lie algebra $\tt$ is canonically identified with the conormal to $\hS$ at $(x,\tt)$. This is precisely the dual identification to \eqref{eq:res:normal:spaces}. We need the following lemma which identifies each side of \eqref{eq:claim:normal:spaces}.

\begin{lemma}
Under the identification \ref{eq:res:normal:spaces}, one has that:
\begin{enumerate}[(i)]
\item the right-hand side of \eqref{eq:claim:normal:spaces} becomes
\[
 T_{(x, \tt)} \res^{-1}(\Sc)/T_{(x, \tt)} \hS=(\zz(\gg)^*)^{G/G^0}\subset\tt^*;
\]
\item  the left-hand side of \eqref{eq:claim:normal:spaces} becomes
\[
T_{(x, \tt)} \widehat{\Sc}/T_{(x, \tt)} \hS=(\tt^*)^{N(T)/T}\subset\tt^*.
\]
\end{enumerate}
\end{lemma}

\begin{proof}
We note the following the inclusions:
\[ T_x\Sc/T_xS\subset T_x\Si/T_xS\subset T_x M/T_x S, \]
which have the Lie theoretical interpretation (see \eqref{eq:strata:normal} and \eqref{eq:strata:normal2}):
\[(\zz(\gg)^*)^{G/G^0}\subset \zz(\gg)^*\subset \gg^*. \]
So item (i) follows.

As for item (ii), we consider the holonomy action of $\pi_1(\hS)$ on the normal space to the presymplectic leaf. By Proposition \ref{prop:holonomy:groupoid:version}, this action coincides with the coadjoint action of the group of connected components of the isotropy of the groupoid $\hG$, i.e., $N(T)/T$. With the identifications described before the proposition, (ii) follows.
\end{proof}

To conclude the proof of \eqref{eq:claim:normal:spaces} and the proof of the theorem, we only need to show that:
\[ (\tt^*)^{N(T)/T}=(\zz(\gg)^*)^{G/G^0}. \]
This follows from the short exact sequence \eqref{eq:short:seq:Weyl:grp}, the decomposition \eqref{eq:cpctLie-decomp} for $\tt^*$, and the fact that $W^0$ acts without fixed points on $\tt^{\ss}:=[\gg,\gg]\cap\tt$.
\end{proof}

\subsection{Weyl resolution of DMCTs}
 
The definition of the Weyl resolution of a $\phi$-twisted DMCT $(M,L)$ is exactly the same as for PMCTs, using maximal tori in the isotropy Lie algebras of $L$ (see Definition \ref{def:resolution}). We shall denote the resolution by
\[ \res:(\hM,\hLD)\to (M,L).\]
where $\hLD=\res^*L$ is a twisted Dirac structure with twist $\hphi=\res^*\phi$.

\begin{example}
\label{ex:Cartan:Dirac:baby}
Let $G$ be a compact connected Lie group, let $\langle\cdot, \cdot\rangle$ be an $\Ad$-invariant inner product on $\gg^*$, and let $L_G$ be the corresponding
Cartan-Dirac structure on $G$ with twisting $\phi$ the Cartan 3-form \cite{AAM98,BCWZ,SW01}. Recall that its leaves are the conjugacy classes of $G$. An s-proper integration is provided by the conjugacy action groupoid $G\ltimes G$ endowed the with the multiplicative 2-form
 \[\Omega_G(g, h) =\frac{1}{2}\left(\langle\Ad_h \pr_1\theta^L , \pr_1\theta^L\rangle+\langle \pr_1\theta^L, \pr_2(\theta^L +\theta^R\rangle\right),\]
where $\theta^L$ and $\theta^R$ are the left and right-invariant Maurer-Cartan forms.

Similarly to the case of coadjoint orbits, recalling that the maximal torus of $G_g$ are precisely the maximal torus of $G$ that contain $g$, one finds that the resolution
\[\widehat{G}=\{(h,T)\,|\, h\in T,\,T\subset G\,\,\mathrm{maximal}\,\,\mathrm{torus}\},\quad \res=\mathrm{pr}_1:\widehat{G}\to G. \]
Fixing a maximal torus $T$, one has an identification 
\[G\times_{N(T)}T=G/T\times_W T \cong  \hM,\quad [(g,h)]\mapsto (ghg^{-1},gTg^{-1}), \]
so under this identification the resolution map is given by:
\[ \res([g,h])=ghg^{-1}. \]
\end{example}

The resolution enjoys many of the properties we saw for the case of PMCTs. For example, we have:

\begin{theorem}\label{thm:Weyl-intro:Dirac} 
For any  twisted Dirac manifold of (strong) $\cC$-type $(M,L)$:
\begin{enumerate}[(i)]
\item $(\hM,\hLD)$ is a regular Dirac manifold of (strong) $\cC$-type; 
\item $\res$ is a forward Dirac map which restricts to a diffeomorphism from $\res^{-1}(M^{\reg})$ to $M^{\reg}$;
\item $\res$ descends to a homemorphism between the leaf spaces of $(\hM,\hLD)$ and $(M,L)$.  
\item For any twisted presymplectic integration $(\cG,\Omega)\tto (M,L)$ of (strong) $\cC$-type, the action groupoid and the pullback of $\Omega$
\begin{equation}
    \label{eq:grp:resolution}
    \hG:=\G\ltimes\hM\tto\hM,\quad \hOmega:=\pr_\cG^*\Omega,
\end{equation}
is a twisted presymplectic integration of $(\hM,\hLD)$ of (strong) $\cC$-type.
\end{enumerate}
\end{theorem}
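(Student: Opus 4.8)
The plan is to reduce every assertion to the Poisson case treated above, the key observation being that the Hamiltonian local normal form of Theorem \ref{thm-lf-Dirac} --- in its $\phi$-twisted form, recorded in Remark \ref{rmk-nf-tDirac} --- is available in this setting. First I would cover $M$ by saturated open sets $U\simeq Q/G$, where $\mu:(Q,\omega)\to\gg^*$ is a connected, free, Hamiltonian $\phi$-twisted presymplectic $G$-space with $G$ compact and $0\in\mu(Q)$. The twist enters only through the condition $\d\omega=p^*\phi$; it does not affect the isotropy Lie algebras of $L$ --- which, as recalled in Section \ref{sec:stratifications:DMCT}, still coincide with the conormal spaces to the characteristic foliation and are of compact type --- nor does it affect the underlying presymplectic Morita equivalence of $\cG|_U$ with $G\ltimes V$ for a $G$-invariant neighbourhood $V$ of $0$ in $\gg^*$. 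Since the Weyl resolution $\hM$, the map $\res$, and the $\cG$-action on $\hM$ depend only on this infinitesimal and Morita data, the proofs of Theorems \ref{thm:resolution:map} and \ref{thm:resolution:Dirac}, of Proposition \ref{prop:range:resolution} and Corollary \ref{cor:resolution:action}, and of the Poisson analogue of item (iv), all transcribe with no essential change.

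For (i), (ii) and (iii) I would repeat the arguments of Theorems \ref{thm:resolution:map} and \ref{thm:resolution:Dirac}. Working in the local model $\hM\simeq\mu^{-1}(\tt^*)/N(T)$, one checks that $\hM$ is a closed embedded submanifold of $\Gr_r(T^*M)$, with $r$ the corank, and that $\res$ is a proper smooth surjection with connected fibres. The Dirac structure $\hLD=\res^*L$, defined pointwise, is then handled as in the proof of Theorem \ref{thm:resolution:Dirac}: it has pointwise rank $\dim M$ and contains the image of the $\phi$-twisted analogue of the injective bundle map \eqref{eq:injective:IM:form} (with $T^*M$ replaced by $L$), injectivity being the content of the Dirac version of Corollary \ref{cor:resolution:action}; hence $\hLD$ coincides with that image, is a smooth subbundle, and is a genuine $\hphi$-twisted Dirac structure for which $\res$ is backward Dirac. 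Its regularity, the description of its presymplectic leaves as $\widehat S=\res^{-1}(S)$ with $\omega_{\widehat S}=\res^*\omega_S$, and the identification of its Poisson locus with $\res^{-1}(M^\reg)$ follow verbatim. Since $\res$ restricts to a bijection $\res^{-1}(M^\reg)\to M^\reg$ which is a diffeomorphism by construction of the smooth structure, and $M^\reg$ is open and dense while $\res$ is proper, one concludes that $\res$ is also forward Dirac (whence (ii) and the uniqueness of $\hLD$) and that the induced continuous bijection of leaf spaces is closed, hence a homeomorphism, giving (iii). The strong-$\cC$-type clause of (i) is immediate from properness of $\res$, exactly as in Theorem \ref{thm:Weyl-intro}.

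For (iv), since $\pr_\cG:\hG=\cG\ltimes\hM\to\cG$ is a groupoid morphism with $\res\circ\t_{\hG}=\t_\cG\circ\pr_\cG$ and $\res\circ\s_{\hG}=\s_\cG\circ\pr_\cG$, the form $\hOmega=\pr_\cG^*\Omega$ is closed, multiplicative and satisfies $\d\hOmega=\t_{\hG}^*\hphi-\s_{\hG}^*\hphi$, so it is $\hphi$-twisted presymplectic. It then remains to verify the non-degeneracy condition $\ker\d\s_{\hG}\cap\ker\d\t_{\hG}\cap\ker\hOmega=\{0\}$ and that the Dirac structure induced on the base is $\hLD$; both follow, exactly as in the Poisson case, from the twisted version of \cite[Corollary 4.8]{BCWZ}, using that the map \eqref{eq:injective:IM:form} is a bundle isomorphism onto $\hLD$. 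Properness of $\res$ again gives that $\hG$ has the same (strong) $\cC$-type as $\cG$.

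I expect the only point requiring genuine care --- beyond faithfully transcribing the Poisson arguments --- to be the very first reduction step: one needs the Dirac structure induced by $L$ on a small transversal to a leaf to be the graph of a Poisson structure, so that the restricted groupoid is an honest symplectic groupoid and the Weinstein--Zung linearization theorem can be applied. This is precisely the observation recorded after the statement of Theorem \ref{thm-lf-Dirac} (and, in the twisted case, in Remark \ref{rmk-nf-tDirac}). Once the Hamiltonian local model is in hand, the twisting is inert throughout and no new obstacle arises.
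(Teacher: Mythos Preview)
Your proposal is correct and follows exactly the paper's approach: the paper itself does not give a standalone proof but simply states that ``the proof of these facts is the same as in the Poisson case using the Hamiltonian local form for DMCTs,'' and then records the one genuinely new formula, namely that $\hLD$ is the image of the injective bundle map $(\widehat{\act},\d\res^*\circ\pr_{T^*M}):\res^*L\to T\hM\oplus T^*\hM$ --- precisely your ``$\phi$-twisted analogue of \eqref{eq:injective:IM:form} with $T^*M$ replaced by $L$.'' Your write-up is in fact more detailed than what the paper provides, and your identification of the only delicate point (that the induced Dirac structure on a transversal is Poisson, so Weinstein--Zung applies) matches the paper's own emphasis after Theorem~\ref{thm-lf-Dirac}.
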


The proof of these facts is the same as in the Poisson case using the Hamiltonian local form for DMCTs (cf.\,Theorem \ref{thm-lf-Dirac}). More specifically, one shows that the Weyl resolution $\hM$ again is canonically a closed embedded submanifold of $\mathrm{Gr}_r(T^*M)$, where $r$ is the codimension of a regular leaf on $(M,L)$. The (regular) Dirac structure $\hLD$ is also obtained as the image of the injective bundle map
\begin{equation*}
(\widehat{\act},\d\res^*\mathrm{pr}_{T^*M}): \res^*L\to T\hM\oplus T^*\hM, 
\end{equation*}
where $\widehat{\act}$ is the action map.

The resolution map enjoys properties analogous to the ones of the resolution map in the Poisson case. Namely, one finds that:
\begin{enumerate}[(i)]
    \item $\res:\hM\to M$ is a proper smooth surjection with connected fibers;
    \item All values of $\res$ are clean and the regular values are exactly the points in $M^\reg$;
    \item The range and kernel of $\d\res$ satisfy:
    \begin{align*}
        &\im(\d_{x,\tt_x}\res)^0=[\tt_x,\gg_x],\\
        &\dim(\ker\d_{x,\tt_x}\res)=\dim\gg_x-\dim\tt_x;
    \end{align*}
    \item Upon restriction to the twisted presymplectic leaves,
    \[ \res:\hS\to S\]
    is a fiber bundle.
\end{enumerate}
Moreover, the characterization of the Weyl resolution in Theorem \ref{thm:characterization:resolution} extends to DMCTs replacing the algebroid $T^*M$ by $L$. 

Finally, the resolution interacts with the canonical stratifications of $(M,L)$ and $(\hM,\hLD)$ in an similar fashion as in the Poisson case. 

\begin{theorem}\label{thm:compatibility with inf-stratification:Dirac} 
Let $(M,L)$ be a twisted Dirac manifold of proper type. Then for each stratum $\Sigma$ of $\cSi(M,L)$, one has:
\begin{enumerate}[(i)]
   \item The intersection of $\res$ with $\Sigma$ is very clean;
   \item $\res^{-1}(\Sigma)$ is an embedded submanifold of $\hM$ along which $\res$ has constant rank equal to $\dim M+\corank(L)-\dim\gg_x$, for any $x\in\Sigma$;
   \item The stratification $\res^{-1}(\cSi(M,L))$ is induced by the partition of $\hM$ given by the rank of $\d\res$, after passing to connected components;
   \item One has
   \begin{align*}
    \codim(\Si)- \codim(\res^{-1}(\Si))&= \dim (\gg_x)- \dim (\tt_x),\\
    \codim (\res^{-1}(\Si))&= \dim (\tt_x)- \dim (\zz(\gg_x)),
    \end{align*}
    and in particular each subregular infinitesimal strata is of codimension 1.
\end{enumerate}
Moreover, $\res$ intersects every canonical stratum very cleanly and
\[ \cS(\hM,\hLD)= \left\{\res^{-1}(\Sc): \Sc\in \cS(M,L)\right\}\]

\end{theorem}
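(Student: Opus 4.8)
The plan is to deduce Theorem~\ref{thm:compatibility with inf-stratification:Dirac} for twisted DMCTs from the corresponding Poisson statements (Theorems~\ref{thm:compatibility with inf-stratification} and~\ref{thm:compatibility with stratification}) by the same device used throughout Section~\ref{sec:canonical:stratifications} and Section~\ref{sec:Weyl:resolution}: the Hamiltonian normal form around a leaf. By Theorem~\ref{thm-lf-Dirac} and Remark~\ref{rmk-nf-tDirac}, around any leaf $S$ of $(M,L)$ there is a saturated neighbourhood $U$ Dirac-diffeomorphic to a reduced space $Q/G$ of a (twisted) Hamiltonian presymplectic $G$-space $\mu\colon(Q,\omega)\to\gg^*$ with $G$ compact, and the transversal Dirac structure is the graph of the linear Poisson structure on a $G$-invariant neighbourhood $V$ of $0$ in $\gg^*$. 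All the objects appearing in the theorem -- the resolution $\hM$, the map $\res$, the strata of $\cSi$ and $\cS$, their preimages, and the ranks of $\d\res$ -- are defined using only the isotropy Lie algebras and the normal isotropy (coadjoint) representations, and these are exactly the data that match under the Morita equivalence of Remark~\ref{remark:Morita-general}. So the statement reduces, just as in the Poisson case, to the purely linear model $\res\colon G/T\times_W\tt^*\to\gg^*$, which is precisely where Theorems~\ref{thm:compatibility with inf-stratification} and~\ref{thm:compatibility with stratification} were proved, and those arguments go through verbatim.

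Concretely, first I would invoke Remark~\ref{rem:life:easy}, whose Dirac analogue holds by the discussion following Theorem~\ref{thm:Weyl-intro:Dirac}, to obtain over $U\simeq Q/G$ the local description $\widehat{U}=\hQ/G$ with $\hQ=Q\times_{\gg^*}\hgg$, together with the morphism of Morita equivalences relating $(\cG\ltimes\hM,\res)$ over $U$ with $(G\ltimes\hgg,\res)$ over $V$. Second, I would recall from Section~\ref{sec:stratifications:DMCT} that the canonical and infinitesimal stratifications of $(M,L)$ coincide, in the Hamiltonian model, with those of $V\subset\gg^*$: indeed \eqref{eq:stratum:local:model} gives $U\cap\Sigma=\mu^{-1}(\zz(\gg)^*)/G$ for the infinitesimal stratum, and the analogue of Proposition~\ref{prop:can-str-equiv-hol} gives the corresponding description of the canonical strata. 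Third, using the local model $\res\colon\mu^{-1}(\tt^*)/N(T)\to Q/G$ for the resolution (the Dirac version of \eqref{eq:resolution:local:model}), I would identify $\res^{-1}(\Sigma)=\mu^{-1}(\zz(\gg)^*)/N(T)$ and $\res^{-1}(\Sc)=\mu^{-1}((\zz(\gg)^*)^{G/G^0})/N(T)$, from which the very-clean property (embeddedness, the tangent-space identity, and submersivity of $\res|_{\res^{-1}(\Sigma)}$) is immediate exactly as in the proofs of Theorems~\ref{thm:compatibility with inf-stratification} and~\ref{thm:compatibility with stratification}.

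For the remaining quantitative parts I would argue as follows. The rank formula in (ii) and the codimension identities in (iv) follow from the general range/kernel computation for $\d\res$ recorded after Theorem~\ref{thm:Weyl-intro:Dirac}, namely $\im(\d_{(x,\tt_x)}\res)^0=[\tt_x,\gg_x]$ and $\dim\ker\d_{(x,\tt_x)}\res=\dim\gg_x-\dim\tt_x$, combined with the decomposition $\gg_x=\tt_x\oplus[\tt_x,\gg_x]$ and, for the second codimension identity, the fact that along $\res^{-1}(\Sigma)$ the torus $\tt_x$ varies over the maximal tori of the fixed semisimple part, so $\corank$ of $\res|_{\res^{-1}(\Sigma)}$ picks up $\dim\tt_x-\dim\zz(\gg_x)$; since for a subregular stratum $\gg^{\ss}_x\cong\su(2)$ by Corollary~\ref{corollary: infinitesimal-stratum-conormal}, one gets $\dim\tt_x-\dim\zz(\gg_x)=1$. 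For (iii) and the final displayed equality about $\cS(\hM,\hLD)$, one reduces to the linear model $\res\colon G/T\times_W\tt^*\to\gg^*$, observes (as in the proof of Theorem~\ref{thm:compatibility with inf-stratification}) that the strata of $\cSi(\gg^*,\pi_{\gg^*})$ are the $G$-saturations of the open faces of a Weyl chamber in $\tt^*$ and their $\res$-preimages $G/T\times_W(W\cdot\interior(\Delta_I))$ satisfy the frontier condition, then applies Lemmas~\ref{stratif-lemma-1} and~\ref{stratif-lemma-aux} to match the rank-partition stratification with $\res^{-1}(\cSi(M,L))$, and finally Theorem~\ref{thm:compatibility with stratification}'s argument (using $\cS(\hM,\hLD)=\cS(\hM,\hF)$ from \eqref{eq:cS-on-hM}, the tangent-space identity \eqref{eq:claim:normal:spaces}, and the computation $(\tt^*)^{N(T)/T}=(\zz(\gg)^*)^{G/G^0}$) to get $\cS(\hM,\hLD)=\{\res^{-1}(\Sc):\Sc\in\cS(M,L)\}$.

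The main obstacle I anticipate is not in any single step but in making the reduction to the Poisson case genuinely self-contained: one must check that the Hamiltonian local model in the twisted Dirac setting, and the Morita equivalence it produces, really does identify the isotropy Lie algebras \emph{and} their (coadjoint) normal representations with the linear model on $V\subset\gg^*$, so that every object in the statement is determined by that matching. This is exactly the content that Theorem~\ref{thm-lf-Dirac}, Remark~\ref{rmk-nf-tDirac}, and the discussion in Section~\ref{sec:stratifications:DMCT} were set up to provide -- the transversal Dirac structure is the graph of a genuine Poisson structure and the restricted groupoid is symplectic, so Weinstein--Zung linearization still applies -- and once that is granted, the proof is literally a citation of the Poisson proofs, with $T^*M$ replaced by $L$ throughout.
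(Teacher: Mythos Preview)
Your proposal is correct and matches the paper's approach exactly: the paper does not give an explicit proof of this theorem but states it immediately after remarking that ``the proof of these facts is the same as in the Poisson case using the Hamiltonian local form for DMCTs (cf.\ Theorem~\ref{thm-lf-Dirac})'', and your write-up fleshes out precisely this reduction. If anything, you have supplied more detail than the paper itself, which simply packages the Dirac versions of Theorems~\ref{thm:compatibility with inf-stratification}, \ref{thm:compatibility with stratification} and Corollary~\ref{cor:codimension-rank-stratification} into a single statement and leaves the verification to the reader.
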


\section{The Geometry of the Leaf Space} 
\label{sec:leaf:space}

In this section we fix a Poisson manifold of proper type $(M, \pi)$ and we look at the structure present on its space of symplectic leaves $M/\cF_\pi$. One of the main ingredients in this discussion is the resolution map $\res$. By Theorem \ref{thm:resolution:Dirac}, it induces a homeomorphism 
\[\res:\hM/\hF\to M/\cF_\pi. \] 
We will soon identify the two leaf spaces, 
but we would like to point out that this identification is not entirely obvious and it becomes relevant when we take into account the various structures present on them. For example, when $M= \gg^*$, the dual of the Lie algebra of a compact Lie group $G$, this amounts to the identification of $\tt^*/W$ with $\gg^*/G$.

\begin{remark}[Leaf spaces of DMCTs]
All results in this section are valid for any twisted Dirac manifold of proper type. We leave it to the reader to check that the proofs hold in this more general setting.
\end{remark}

\subsection{The smooth structure and a Chevalley-type theorem}
\label{sec:The smooth structure}
The quotient map $M\to M/\cF_{\pi}$ can be used to endow $M/\cF_{\pi}$ not only with a topology, but also to make sense of ``smooth functions'': we say that $f: M/\cF_{\pi}\to \R$ is smooth if its composition with the quotient map is smooth. We denote by $\cC^{\infty}(M/\cF_{\pi})$ the collection of all such smooth functions, and we define $\cC^{\infty}(\hM/\hF)$ similarly. The identification  between the two leaf spaces is compatible with these smooth structures.

\begin{theorem} The resolution map induces an isomorphism
\[ \res^*: \cC^{\infty}(M/\cF_{\pi})\cong \cC^{\infty}(\hM/\hF).\]
\end{theorem}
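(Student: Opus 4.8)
The plan is to check that $\res^*$ is well defined and injective by a direct diagram chase, and then to reduce surjectivity, via the Hamiltonian normal form and Morita invariance, to a smooth version of the classical Chevalley restriction theorem. Write $p:M\to M/\cF_\pi$ and $\hp:\hM\to\hM/\hF$ for the quotient maps, so that $\cC^\infty(M/\cF_\pi)$ consists of the functions $f$ with $f\circ p$ smooth, and similarly for $\hM/\hF$. By Theorem \ref{thm:resolution:Dirac} the square with horizontal arrows $\res:\hM\to M$ and the induced homeomorphism $\res:\hM/\hF\to M/\cF_\pi$, and vertical arrows $p,\hp$, commutes. Hence for $f\in\cC^\infty(M/\cF_\pi)$ one has $(f\circ\res)\circ\hp=(f\circ p)\circ\res\in\cC^\infty(\hM)$ since $\res$ is smooth, so $\res^*$ is well defined; and $\res^*$ is injective because the induced map on leaf spaces is surjective.

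For surjectivity, given $g\in\cC^\infty(\hM/\hF)$ I would transport it through the leaf-space homeomorphism to a continuous function $\bar g$ on $M/\cF_\pi$ and show $\bar g\circ p$ is smooth; since this is local on $M$, I would cover $M$ by saturated open sets $U$ on which the Hamiltonian normal form of Theorem \ref{thm-lf-Poisson-v2} holds, $U\cong Q/G$ with $\mu:(Q,\omega)\to\gg^*$ a Hamiltonian $G$-space, $G$ compact and connected (one may pass to the identity component, since the resolution uses only infinitesimal data, cf.\ Example \ref{ex:linear:case}), and $V=\mu(Q)$ a $G$-invariant neighbourhood of $0$. Using that $\cF_\pi$-basic functions are exactly $\cG$-invariant functions (the source fibers are connected) and that a Morita equivalence identifies the algebras of invariant smooth functions, the symplectic Morita equivalence $\cG|_U\sim G\ltimes V$ gives $\cC^\infty(U/\cF_\pi)\cong\cC^\infty(V)^G$; and, via Remark \ref{rem:life:easy} together with $\widehat{\gg^*}=G\times_{N(T)}\tt^*$ and the triviality of the $T$-action on $\tt^*$, the Morita equivalence $\hG|_{\hU}\sim G\ltimes\widehat{\gg^*}$ gives $\cC^\infty(\hU/\hF)\cong\cC^\infty(V\cap\tt^*)^W$. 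Under these identifications $\res^*$ becomes the restriction map $\cC^\infty(V)^G\to\cC^\infty(V\cap\tt^*)^W$, $f\mapsto f|_{\tt^*}$.

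This reduces the theorem to the statement that restriction induces an isomorphism $\cC^\infty(\gg^*)^G\xrightarrow{\ \sim\ }\cC^\infty(\tt^*)^W$, and likewise on $G$-invariant neighbourhoods of $0$. I would deduce this by combining Chevalley's polynomial restriction theorem — so that a set of homogeneous generators $p_1,\dots,p_k$ of $\R[\gg^*]^G$ restricts to a generating set of $\R[\tt^*]^W$ — with G.~Schwarz's theorem on smooth invariants of compact group actions, applied both to $G\laction\gg^*$ and to the finite action $W\laction\tt^*$: any $h\in\cC^\infty(\tt^*)^W$ has the form $F(p_1|_{\tt^*},\dots,p_k|_{\tt^*})$ for some $F\in\cC^\infty(\R^k)$, whence $F(p_1,\dots,p_k)\in\cC^\infty(\gg^*)^G$ restricts to $h$; injectivity is immediate because every coadjoint orbit meets $\tt^*$, and passing to an invariant neighbourhood of $0$ costs nothing after pre-composing $F$ with a cut-off. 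I expect this smooth Chevalley step — equivalently, in groupoid language, the assertion that the leaf-space smooth structures match under the local Morita equivalences — to be the main obstacle; the remaining arguments are formal once Theorems \ref{thm:resolution:map} and \ref{thm:resolution:Dirac} and the Hamiltonian normal form are available.
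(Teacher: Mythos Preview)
Your proposal is correct and follows essentially the same route as the paper: localize via the Hamiltonian normal form, use Morita invariance of smooth invariant functions to reduce to the linear model, and then invoke the smooth Chevalley restriction theorem (Chevalley plus Schwarz). The paper phrases the localization sheaf-theoretically rather than as a direct surjectivity argument, but the content is identical; the one imprecision is your parenthetical about passing to $G^0$ --- the Morita equivalence in the Hamiltonian normal form is with the full isotropy $G=\cG_x$, so you actually need Chevalley--Schwarz for $W=N_G(T)/T$, which follows from the connected case by averaging over $G/G^0$ using the short exact sequence \eqref{eq:short:seq:Weyl:grp}.
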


\begin{proof}
Denote $B=M/\cF_{\pi}$. The concept of smoothness defined above gives rise to a subsheaf $\cC^{\infty}_{B}$ of the sheaf $\cC_B$ of continuous functions on $B$. Similarly for $\widehat{B}= \hM/\hF$. The homeomorphism $\res: \widehat{B}\to B$
gives rise to an isomorphism of sheaves
\[ \res^*: \res^*\cC_{B}\to \cC_{\widehat{B}}.\]
The statement of the theorem follows from the stronger claim that this map restricts to an isomorphism 
\[ \res^*:\res^*\cC^{\infty}_{B}\to \cC^{\infty}_{\widehat{B}}.\] 
This more stronger statement is sheaf theoretic and, therefore, it suffices to prove it locally. For that we can use the Hamiltonian local form to reduce it to the case where $M=\gg^*$, where $\gg$ is a Lie algebra of compact type. 

We are left to show that we have an isomorphism
\begin{equation}
    \label{eq:res:iso:functions}
    \res^*:C^\infty(\gg^*)^G\to C^\infty(\hgg)^G,
\end{equation}
where $G$ is a compact Lie group with Lie algebra $\gg$, the resolution of $\gg^*$ is (see Example \ref{ex:linear:case}):
\[ \hgg=G/T\times_W \tt^*,\]
and the resolution map is (cf.~\eqref{res:linear:case})
\[ \res:G/T\times_W \tt^*\to \gg^*, \quad (gT,\xi)\mapsto \Ad_g^*(\xi). \]
Here the $G$-action on $\hgg=G/T\times_W \tt^*$ is by translations on the first factor
\[ g\cdot (g'T,\xi)=(gg'T,\xi). \]
By classical results of Chevalley (\cite[Section 3.5]{Humphreys90} and Schwarz (\cite{Schwarz75}), the inclusion $i:\tt^*\hookrightarrow\gg^*$ gives an isomorphism
\[ i^*:C^\infty(\gg^*)^G\to C^\infty(\tt^*)^W.\] 
Now we just have to observe that the composition
\[ \res^*\circ (i^*)^{-1}:C^\infty(\tt^*)^W\to C^\infty(G/T\times_W\tt^*)^G, \]
is clearly an isomorphism since it
maps a $W$-invariant function $f\in C^\infty(\tt^*)$ to the $G$-invariant function $\widehat{f}\in C^\infty(G/T\times_W\tt^*)$ given by
\[ 
\widehat{f}([gT,\xi]):=f(\xi).
\]
\end{proof}

\begin{remark} 
\label{rmk:smooth:functions}
The notion of smoothness on the leaf space introduced above makes sense whenever we deal with quotients of manifolds. In general, the resulting concept is a poor one. However, we are in a very special situation: both $M/\cF_{\pi}$ and $\hM/\hF$ arise as orbit spaces of proper groupoids. That implies that that $C^{\infty}(M/\cF_{\pi})$ behaves very similarly to the algebra of smooth functions on a manifold, including the existence of partitions of unity. 
Actually, for such orbit spaces even more is true: there are several existing frameworks that allow us to treat them as ``smooth spaces'', such as that of ringed spaces, differentiable spaces, locally $C^{\infty}$-schemes, Sikorski spaces or  subcartesian spaces (for an overview, and more references, see \cite{CraMe}). All these settings produce the same notion of smoothness as the one described above. However, as we shall see in the next section, the case of Poisson manifolds of proper type and their proper symplectic groupoids is even more special: the resulting leaf spaces are actually orbifolds. 
\end{remark}

\begin{definition} 
Given a a Poisson manifold of proper type, we denote its leaf space by 
\begin{equation}\label{eq:the-leaf-space} 
B= B(M, \pi):= M/\cF_{\pi}= \hM/\hF.
\end{equation}
where the last equality is the identification induced by $\res$.
\end{definition} 
\medskip


\subsection{Orbifolds and stratifications}
\label{sec:The case of orbifolds}
Before  continuing our discussion on the leaf space $B$ we recall some basic notions and facts concerning orbifolds and their stratifications. 


As in \cite[Chp. 2.6]{CFM-II}, by an {\bf orbifold} we mean a pair $(B, \cB)$ where $B$ is a topological space and $\cB$ is an orbifold atlas for $B$, i.e., $\cB\tto M$ is a proper foliation groupoid and $p:M\to B$ is a map inducing a homeomorphism $M/\cB\to B$. By a \textbf{classical orbifold} we mean an orbifold presented by an effective orbifold atlas. Two atlases induce the same orbifold structure if they are Morita equivalent. By the discussion in Section \ref{sec:The stratifications induced by a proper Lie groupoid}, an orbifold $B$ carries an induced stratification, independent of the atlas, known as the \textbf{canonical stratification of the orbifold $B$}.

A connected orbifold will be said to be \textbf{of smooth type} if its stratification has a single strata. This condition can be rephrased by requiring $B$ to carry a smooth structure, necessarily unique, making $p: M\to B$ into a smooth submersion. The same condition is equivalent also to the requirement that $\cB$ is \textbf{totally ineffective}, meaning that the normal actions of the isotropy groups $\cB_x$ are all trivial. Notice however that, even if $B$ is of smooth type, it may still carry interesting non-trivial orbifold structures. 

More generally, assume that a space $B$ carries two different orbifold structures defined by atlases $\cB_i\tto N_i\stackrel{p_i}{\to} B$. We will say that $\cB_1$ covers $\cB_2$ if there exist a surjective, submersive, groupoid morphism fibered over $B$, as in the following diagram:
\[
\xymatrix{
\cB_1\ar[rr] \ar@<0.25pc>[d] \ar@<-0.25pc>[d]  & & \cB_2 \ar@<0.25pc>[d] \ar@<-0.25pc>[d]  \\
N_1\ar[rr]\ar[dr]_{p_1} & & N_2\ar[dl]^{p_2}\\
 & B 
}
\]
We will also say that the second orbifold structure is an {\bf isotropy reduction} of the first one. In this situation, the canonical stratifications induced by $\cB_1$ and $\cB_2$ coincide.

Given an orbifold $B$,  with orbifold atlas $\cB\tto M$, by a {\bf suborbifold} we mean any subspace $C\subset B$ 
with the property that $p^{-1}(C)\subset M$ is a submanifold of $M$. We say that $C$ is {\bf embedded} if $p^{-1}(C)\subset M$ is an embedded submanifold. In this case we endow $C$ with the orbifold structure presented by the restriction of $\cB\tto M$ to $p^{-1}(C)$. Similarly, given a classical orbifold $B$ with effective atlas $\cB\tto M$ we say that $C\subset B$ is a  \textbf{classical embedded suborbifold} if $p^{-1}(C)\subset M$ is an embedded submanifold and the restriction of $\cB$ to $p^{-1}(C)$ is effective.

\begin{remark}\label{remark:for:reference:to:embedded:suborbifolds}
We warn the reader that the notion of suborbifold considered here is one suited for our purposes. There are various notions of suborbifold in the literature with subtle relationships -- see, e.g., \cite{MestreMartin} and references therein.
\end{remark}

The previous discussion allows us to talk about \textbf{orbifold stratifications}: in Definition \ref{def:str:man} the strata are allowed to be orbifolds, but they are required to be embedded suborbifolds.

\begin{example}
For any orbifold $B$ the members of its canonical stratification are embedded suborbifolds of smooth type. So this stratification can be consider either as:
\begin{itemize}
    \item a stratification in the weaker sense of Definition \ref{def:str:man} (so we regard its members as manifolds), or
    \item an orbifold stratification (so we now regard its members as embedded suborbifolds).
\end{itemize}
From now on, given an orbifold we always consider its canonical stratification as an orbifold stratification.
\end{example}

Finally, here is one more concept on orbifolds that we need: 
an {\bf integral affine orbifold} is described by an orbifold atlas $\cB\tto M\stackrel{p}{\to} B$ 
together with a transverse integral affine structure on the orbit foliation of $\cB$. The latter can also be described by a presymplectic torus bundle $(\cT,\Omega_\cT)\to M$ -- see \cite[Prop. 3.2.8]{CFM-II} and Remark \ref{rem:suited:IA}.
An  {\bf integral affine suborbifold} is any suborbifold $C$ with the property that $p^{-1}(C)\subset M$ is 
a transverse integral affine submanifold. This allows us to talk about \textbf{integral affine orbifold stratifications} of an integral affine orbifold: the strata are required to be integral affine suborbifolds.

\subsection{The integral affine orbifold structure}
\label{sec:The integral affine structure on the leaf space}
Using the identification \eqref{eq:the-leaf-space}, we now make actual use of the geometry of the resolution $\hM$, a \emph{regular} Dirac manifold. Recall that a choice of proper integration $(\cG,\Omega)$ of $(M, \pi)$ gives rise to a \emph{regular}, proper, presymplectic groupoid $(\hG,\hOmega)$ over $\hM$ (see \eqref{eq:grp:resolution}). This allows us to apply the Dirac version of Theorem 3.0.1 of \cite{CFM-II}, obtaining the following result.

\begin{theorem}
\label{thm:int:affine:leaf:space}
Each proper symplectic integration  $(\cG,\Omega)$ of a  Poisson manifold of proper type $(M,\pi)$ induces an integral affine orbifold structure on the leaf space $B= B(M, \pi)$.
\end{theorem}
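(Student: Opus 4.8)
The plan is to reduce the statement for the (singular) Poisson manifold $(M,\pi)$ to the regular Dirac case, where the analogue of Theorem~3.0.1 of \cite{CFM-II} applies, by passing through the Weyl resolution. First I would invoke the identification of leaf spaces $B=B(M,\pi)=M/\cF_\pi=\hM/\hF$ from \eqref{eq:the-leaf-space}, together with Theorem~\ref{thm:resolution:Dirac}, which tells us that $(\hM,\hL)$ is a \emph{regular} Dirac manifold of proper type and that $\res$ induces a homeomorphism between the two leaf spaces. Next, given a proper symplectic integration $(\cG,\Omega)\tto M$ of $(M,\pi)$, I would form the action groupoid $\hG=\cG\ltimes\hM$ with the pulled-back form $\hOmega=\pr_\cG^*\Omega$; by Theorem~\ref{thm:Weyl-intro}(iv) (equivalently the Proposition preceding Section~\ref{sec:leaf:space}) this is a proper presymplectic integration of the regular Dirac manifold $(\hM,\hL)$, and moreover it is a \emph{regular} presymplectic groupoid since $\hL$ is regular. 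This is exactly the input required by the Dirac version of Theorem~3.0.1 of \cite{CFM-II}.

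The second step is to apply that theorem: a proper, regular presymplectic groupoid $(\hG,\hOmega)\tto\hM$ produces on the leaf space $\hM/\hF$ an integral affine orbifold structure. Concretely, \cite[Chapter~3]{CFM-II} shows that such a groupoid fits into a short exact sequence $1\to(\cT,\Omega_\cT)\to(\hG,\hOmega)\to\cB\to 1$, where $\cB\tto\hM$ is a proper foliation groupoid integrating $\hF$ (hence an orbifold atlas for $\hM/\hF$) and $(\cT,\Omega_\cT)\to\hM$ is a presymplectic torus bundle; by \cite[Prop.~3.2.8]{CFM-II} the latter encodes a transverse integral affine structure on $\hF$, and together these data are precisely an integral affine orbifold structure on $\hM/\hF$ in the sense recalled in Section~\ref{sec:The case of orbifolds}. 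Transporting this structure along the homeomorphism $\res:\hM/\hF\diffto M/\cF_\pi=B$ yields the desired integral affine orbifold structure on $B$, which by construction depends on the chosen integration $(\cG,\Omega)$ through $(\hG,\hOmega)$.

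The main obstacle — really the only non-formal point — is checking that the Dirac analogue of the (symplectic) Theorem~3.0.1 of \cite{CFM-II} genuinely applies here, i.e.\ that $(\hG,\hOmega)$ meets all its hypotheses. The properness of $\hG$ follows from properness of $\cG$ since $\res$ is proper (the Proposition before Section~\ref{sec:leaf:space}); regularity of $\hL$, hence of $\hG$, is Theorem~\ref{thm:resolution:Dirac}(i); and the fact that $\hOmega$ is a genuine (possibly degenerate) multiplicative presymplectic form integrating $\hL$ — in particular that $\hL$ is exactly the induced Dirac structure on the base — is precisely what the Proposition before Section~\ref{sec:leaf:space} records, via \cite[Corollary~4.8]{BCWZ} and the fact that \eqref{eq:injective:IM:form} is a bundle isomorphism onto $\hL$. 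The remaining verification — that the outputs of the Dirac version of \cite[Thm.~3.0.1]{CFM-II} assemble into an integral affine orbifold structure in exactly the sense of Section~\ref{sec:The case of orbifolds}, and that transport along a homeomorphism of leaf spaces is legitimate — is routine bookkeeping once the sequence $1\to\cT\to\hG\to\cB\to1$ is in hand. (It is worth remarking that no separate argument is needed for the independence of the underlying classical orbifold structure or for the induced stratifications on $B$; those are deferred to later parts of Section~\ref{sec:leaf:space}, as the statement here only asserts existence of the integral affine orbifold structure for a given integration.)
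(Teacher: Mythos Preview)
Your proposal is correct and follows essentially the same route as the paper: reduce to the regular Dirac setting via the Weyl resolution, use that $(\hG,\hOmega)=(\cG\ltimes\hM,\pr_\cG^*\Omega)$ is a proper regular presymplectic groupoid integrating $(\hM,\hL)$, and then invoke the Dirac version of \cite[Theorem~3.0.1]{CFM-II} to obtain the short exact sequence $1\to(\hT,\hOmega_{\hT})\to(\hG,\hOmega)\to\cB(\hG)\to 1$ furnishing the orbifold atlas and the transverse integral affine structure. The paper's argument is exactly this, with the same citations and the same short exact sequence \eqref{eq:Weyl-short-exact-sequence}; your additional hypothesis-checking paragraph is more explicit than the paper but adds nothing new.
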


Let us explain briefly how the integral affine orbifold structure on the leaf space arises, using the concepts recalled in Section \ref{sec:The case of orbifolds}.
The orbifold atlas for $B$ is the proper foliation groupoid $\cB(\hG)$ defined by the short exact sequence 
\begin{equation}\label{eq:Weyl-short-exact-sequence}
\xymatrix{1\ar[r]& (\hT,\widehat{\Omega}_{\hT})\ar[r]& (\hG,\widehat{\Omega})\ar[r]& \cB(\hG)\ar[r] & 1.}
 \end{equation}
Here $\hT$ is the bundle of groups consisting of the unit connected components of the isotropy groups of $\hG$. Explicitly, it is the tautological torus bundle whose fiber at $(x,\tt_x)\in\hM$ is the torus $T_x\subset \cG_x$ integrating  $\tt_x$. The restriction of $\hOmega$ to $\hT$ makes it into a presymplectic torus bundle which induces an integral affine structure on the orbifold.

From now on we always consider $B$ with the integral affine orbifold structure on $B$ induced by an integration $(\cG,\Omega)$.

\subsection{
Stratifications of the leaf space of a Poisson manifold of proper type
}
\label{sec:stratifications:leaf:space}

Throughout this section $(M, \pi)$ is a Poisson manifold of proper type, with a fixed choice of proper integration $(\G, \Omega)\tto (M, \pi)$. Our aim is to explain how the stratifications  discussed in Section \ref{sec:canonical:stratifications} interact with the integral affine orbifold structure that we just described.

First of all, the canonical and the infinitesimal stratifications of $(M, \pi)$ descend to similar partitions on the leaf space $B$, denoted
\[ \cS(B), \quad \cSi(B) .\]
Via the identification (\ref{eq:the-leaf-space}) these can also be seen as induced  by the canonical and the rank stratifications of the resolution $(\hM, \hL)$,
 respectively -- see Theorems \ref{thm:compatibility with stratification} and  \ref{thm:compatibility with inf-stratification}. Recalling the notions of ``(integral affine) orbifold stratification'' and ``canonical stratification of an orbifold'' (see Section \ref{sec:The case of orbifolds}) one has:

\begin{theorem}
\label{thm:canonical-orbifold-stratifications} 
    The members of $\cS(B)$ and $\cSi(B)$ carry natural orbifold structures so that $\cS(B)$ and $\cSi(B)$ become integral affine orbifold stratifications on the leaf space $B$. 
\end{theorem}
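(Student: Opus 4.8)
The plan is to establish Theorem \ref{thm:canonical-orbifold-stratifications} by pulling everything back to the resolution $(\hM,\hL)$, where the Dirac structure is regular, and then transferring the resulting orbifold stratifications along the identification $B=\hM/\hF$ of \eqref{eq:the-leaf-space}. First I would recall that, by Theorem \ref{thm:int:affine:leaf:space}, the leaf space $B$ carries an integral affine orbifold structure whose atlas is the proper foliation groupoid $\cB(\hG)\tto\hM$ from the short exact sequence \eqref{eq:Weyl-short-exact-sequence}, with transverse integral affine structure encoded by the presymplectic torus bundle $(\hT,\widehat\Omega_{\hT})$. Thus, by the dictionary of Remark \ref{rem:correspondence-stratifications}, a stratification of $B$ is the same thing as a $\cB(\hG)$-invariant (equivalently $\hG$-invariant) stratification of $\hM$, and an \emph{orbifold} stratification of $B$ corresponds to one whose strata are embedded suborbifolds, which by the discussion in Section \ref{sec:The case of orbifolds} means that their preimages in $\hM$ are embedded submanifolds. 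So the two things to check are: (a) the strata of $\cS(\hM,\hL)$ and of $\cSi(\hM,\hL)$ (the rank stratification) are embedded $\hG$-invariant submanifolds whose orbit spaces recover $\cS(B)$ and $\cSi(B)$, and (b) these strata are in fact integral affine suborbifolds, i.e. their preimages are transverse integral affine submanifolds of $(\hF,\Lambda)$.

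For step (a): By Theorem \ref{thm:compatibility with stratification} we have $\cS(\hM,\hL)=\{\res^{-1}(\Sc):\Sc\in\cS(M,\pi)\}$, and by Theorem \ref{thm:compatibility with inf-stratification} the rank stratification of $\hM$ equals $\res^{-1}(\cSi(M,\pi))$ (after passing to connected components); both consist of embedded submanifolds of $\hM$. These are manifestly saturated by the leaves of $\hF$ (since they are preimages under $\res$ of saturated subsets of $M$, and $\res$ is leafwise a fibration by the final corollary of Section \ref{sec:Weyl:resolution}), hence $\hG$-invariant. Applying $\res:\hM/\hF\to M/\cF_\pi$ and Remark \ref{rem:correspondence-stratifications}, their images in $B$ are precisely $\cS(B)$ and $\cSi(B)$, and the frontier condition descends because the quotient map $\hM\to B$ is open. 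For the orbifold structures on the strata: each stratum $\widehat\Sc$ of $\cS(\hM,\hL)$ is itself regular (it is a leaf-saturated submanifold of the regular Dirac manifold $\hM$), integrated by the restriction $\hG|_{\widehat\Sc}$, which is again proper; restricting the foliation groupoid $\cB(\hG)$ to $\widehat\Sc$ gives the orbifold atlas for the stratum of $\cS(B)$, so it is an embedded suborbifold of $B$. The same argument applies to the infinitesimal strata, provided one notes that their orbit spaces are orbifolds rather than manifolds, which is exactly the weakening allowed in the definition of orbifold stratification.

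For step (b), the integral affine refinement: I would use Remark \ref{rem:suited:IA} together with Theorem \ref{thm:can-strat-type} (and Theorem \ref{thm:inf-strat-type}), transported to the resolution. The content of those theorems is that each canonical (resp. infinitesimal) stratum $\Sc$ of $M$ is \emph{suited for smooth reduction} of $(\cG,\Omega)$, the key point being Lemma \ref{lem:invariant:decomposition}: the fibers of the relevant isotropy subalgebra bundle $(T\Sc)^0$ are integral affine subspaces of $(\gg_x,\Lambda_x)$. Pulling this back through $\res$ — which by Theorem \ref{thm:compatibility with stratification} intersects every canonical stratum very cleanly, and whose normal-bundle behaviour over leaves is computed in Proposition \ref{prop:range:resolution} — the preimage $\res^{-1}(\Sc)\subset\hM$ is a leaf-saturated submanifold whose conormal directions inside $\hT$ are again integral affine subspaces of the torus-bundle fibers; equivalently, $\res^{-1}(\Sc)$ is suited for smooth reduction of $(\hT,\widehat\Omega_{\hT})$, hence by Remark \ref{rem:suited:IA} it is a transverse integral affine submanifold of $(\hF,\Lambda)$. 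By the definition of integral affine suborbifold (Section \ref{sec:The case of orbifolds}), the corresponding stratum of $B$ is then an integral affine suborbifold, which is exactly what is needed.

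The main obstacle I anticipate is the bookkeeping at the level of the infinitesimal stratification, where the strata of $B$ are only \emph{orbifolds of smooth type} fibered over the leaf space, not embedded suborbifolds in the naive manifold sense; one has to be careful that the restriction of the torus bundle $\hT$ to $\res^{-1}(\Si)$ still yields a well-defined transverse integral affine structure, i.e. that the codimension-jump computed in Corollary \ref{cor:codimension-rank-stratification} is compatible with the sublattice structure. Concretely, the delicate point is verifying that the splitting produced by Lemma \ref{lem:invariant:decomposition} is compatible with the lattice $\Lambda_x$ \emph{after} passing through the isomorphism $\d_{\hx}\res:T_{\hx}\hM/T_{\hx}\hS\xrightarrow{\sim}\tt_x^*\subset\gg_x^*$ of \eqref{eq:res:normal:spaces}, so that $\res^{-1}(\Sc)$ inherits genuinely integral (and not merely rational) affine data; everything else is a routine transfer through the Morita equivalence of Remark \ref{rem:life:easy}.
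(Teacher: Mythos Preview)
Your proposal is essentially correct and follows the same architecture as the paper's proof: work on the resolution, check that the preimages $\hSc=\res^{-1}(\Sc)$ and $\hSi=\res^{-1}(\Si)$ are embedded $\hG$-invariant submanifolds (step (a)), and then verify they are suited for smooth reduction of the presymplectic torus bundle $(\hT,\widehat\Omega_{\hT})$, invoking Remark \ref{rem:suited:IA} to conclude they are transverse integral affine (step (b)).

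Two points where the paper's execution differs from your sketch are worth noting. For the \emph{canonical} strata, the paper does not ``transport'' Theorem \ref{thm:can-strat-type} from $(M,\pi)$ through $\res$; instead it observes that, by Theorem \ref{thm:compatibility with stratification}, $\hSc$ is itself a canonical stratum of the DMCT $(\hM,\hL)$, and then applies the Dirac version of Theorem \ref{thm:can-strat-type} \emph{directly} to $(\hG,\hOmega)$. This sidesteps the transfer entirely. For the \emph{infinitesimal} strata the situation is genuinely different: since $(\hM,\hL)$ is regular, $\cSi(\hM,\hL)$ is trivial, so $\hSi$ is \emph{not} an infinitesimal stratum of the resolution and no version of Theorem \ref{thm:inf-strat-type} applies there. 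The paper instead carries out the explicit computation you anticipate: using very clean intersection and the identification \eqref{eq:res:normal:spaces}, one finds that the conormal $(T_{\hx}\hSi)^0$ corresponds to $\gg_x^{\ss}\cap\tt_x\subset\tt_x$, and the connected subgroup integrating this is closed in $\hG_{\hx}=N(T_x)$ because it is the intersection of the closed semisimple subgroup with the torus. This is pure Lie theory rather than an application of Lemma \ref{lem:invariant:decomposition}; your citation of that lemma is relevant only to the canonical strata on $M$, not to the infinitesimal strata on $\hM$.
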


Applying Corollary \ref{cor:codimension-rank-stratification} we deduce:

\begin{corollary}
\label{cor:subregular:leaf:space}
For any subregular strata $\Sigma\in \cSi(M, \pi)$, $B_\Sigma:=\Si/\cF_\pi\in\cSi(B)$ is a connected codimension one integral affine embedded suborbifold of $B$.
\end{corollary}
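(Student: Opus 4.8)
The plan is to derive Corollary \ref{cor:subregular:leaf:space} directly from Corollary \ref{cor:codimension-rank-stratification} together with Theorem \ref{thm:canonical-orbifold-stratifications}. Recall that a subregular stratum $\Sigma \in \cSi(M,\pi)$ is, by Corollary \ref{corollary: infinitesimal-stratum-conormal}, characterized by $\gg_x^\ss \cong \su(2)$, equivalently $\codim(\Sigma) = 3$; and by Theorem \ref{thm:inf-strat-type} the stratum $\Sigma$ is connected (being a stratum), a regular Poisson submanifold, and of the same $\cC$-type as $(M,\pi)$. Hence $B_\Sigma := \Sigma/\cF_\pi$ is the leaf space of a regular PMCT, so it is an integral affine orbifold, and as a member of $\cSi(B)$ it is an integral affine embedded suborbifold of $B$ by Theorem \ref{thm:canonical-orbifold-stratifications}. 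The only substantive point left is the codimension count.

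First I would compute $\codim(B_\Sigma)$ in $B$. The cleanest route is through the resolution: by Theorem \ref{thm:compatibility with inf-stratification}, $\res^{-1}(\Sigma)$ is an embedded submanifold of $\hM$, and by Theorem \ref{thm:compatibility with stratification} and the identifications of Section \ref{sec:stratifications:leaf:space}, the partition $\cSi(B)$ is the one induced on the leaf space $B = \hM/\hF$ by $\res^{-1}(\cSi(M,\pi))$. Since $\hL$ is regular, codimensions in the leaf space $B$ of $(\hM,\hL)$ match transverse codimensions in $\hM$: more precisely, $\res^{-1}(\Sigma)$ is $\hF$-saturated (its preimage is a union of leaves $\res^{-1}(S)$, by Theorem \ref{thm:resolution:Dirac}(ii)), so $\codim_B(B_\Sigma) = \codim_{\hM}\big(\res^{-1}(\Sigma)\big)$. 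Now invoke the second displayed formula of Corollary \ref{cor:codimension-rank-stratification}:
\[
\codim\big(\res^{-1}(\Sigma)\big) = \dim(\tt_x) - \dim(\zz(\gg_x)).
\]
For $x \in \Sigma$ subregular we have $\gg_x = \zz(\gg_x) \oplus \gg_x^\ss$ with $\gg_x^\ss \cong \su(2)$, whose maximal torus is one-dimensional; hence $\dim \tt_x = \dim \zz(\gg_x) + 1$, and the right-hand side equals $1$. Therefore $\codim_B(B_\Sigma) = 1$.

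Connectedness of $B_\Sigma$ is automatic since $\Sigma$ is connected and $p: M \to B$ is continuous and surjective onto $B_\Sigma$. Putting these together: $B_\Sigma$ is a connected, codimension-one integral affine embedded suborbifold of $B$, which is exactly the assertion. I do not anticipate a genuine obstacle here — the corollary is essentially a bookkeeping consequence of the structural theorems already proved. The only place requiring a line of care is the identification $\codim_B(B_\Sigma) = \codim_{\hM}(\res^{-1}(\Sigma))$, i.e.\ that passing to the (regular) leaf space of $\hM$ preserves transverse codimension of a saturated embedded submanifold; this follows because $\res^{-1}(\Sigma)$ is a union of presymplectic leaves of $\hL$ of the same dimension as the ambient leaves (all regular leaves of a regular Dirac structure having the same dimension), so the normal directions to $\res^{-1}(\Sigma)$ in $\hM$ descend bijectively to normal directions to $B_\Sigma$ in $B$. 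Alternatively, one may phrase the whole codimension computation on the orbifold side using Corollary \ref{cor:codimension-rank-stratification} verbatim, since that corollary already records $\codim(\res^{-1}(\Sigma)) = 1$ for subregular $\Sigma$.
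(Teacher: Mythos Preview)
Your proposal is correct and follows exactly the paper's approach: the paper derives this corollary in one line from Corollary \ref{cor:codimension-rank-stratification} (which already records $\codim(\res^{-1}(\Sigma))=1$ for subregular $\Sigma$) together with Theorem \ref{thm:canonical-orbifold-stratifications} (which establishes that the members of $\cSi(B)$ are integral affine embedded suborbifolds). Your added justification that $\codim_B(B_\Sigma)=\codim_{\hM}(\res^{-1}(\Sigma))$ is a reasonable elaboration of what the paper leaves implicit, since the orbifold atlas for $B$ is precisely $\cB(\hG)\tto\hM$ and the notion of suborbifold codimension is defined via preimages.
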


From now on, unless otherwise specified, we consider $\cS(B)$ and $\cSi(B)$ as integral affine orbifold stratifications, as in Theorem \ref{thm:canonical-orbifold-stratifications}. These have the following properties:

\begin{proposition}
Fix $\Si\in\cSi(B)$. Then:    \label{prop:canonical-orbifold-stratifications}
\begin{enumerate}[(i)]
\item $\cS(B)$ coincides with the canonical stratification of the orbifold $B$; 
\item The restriction $\cS(B)|_{B_{\Si}}$ is the canonical stratification of the orbifold $B_{\Si}$.
\end{enumerate}
\end{proposition}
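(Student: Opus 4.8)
The plan is to reduce both statements to the groupoid level via the Weyl resolution and the integral affine orbifold atlas $\cB(\hG)$ described in Section \ref{sec:The integral affine orbifold structure}, and then invoke the already-established compatibility between the resolution and the canonical stratifications (Theorems \ref{thm:compatibility with stratification} and \ref{thm:compatibility with inf-stratification}) together with the correspondence between stratifications of an orbit space and invariant stratifications of the total space (Remark \ref{rem:correspondence-stratifications}).

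For item (i): by Theorem \ref{thm:compatibility with stratification}, under the identification $B = \hM/\hF = \hM/\hG$, the partition $\cS(B)$ is the image, under the quotient map $\hM \to B$, of the canonical stratification $\cS(\hM,\hL) = \cS(\hM,\hF)$. By \eqref{eq:cS-on-hM} and the discussion there, $\cS(\hM,\hF)$ is the canonical stratification induced by any s-connected proper foliation groupoid integrating $\hF$, and in particular by the foliation groupoid $\cB(\hG)$ appearing in the defining short exact sequence \eqref{eq:Weyl-short-exact-sequence} — this is legitimate since $\cB(\hG)$ is the isotropy reduction of $\hG$, so by the last sentence of Section \ref{sec:The case of orbifolds} it induces the same canonical stratification. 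Now $\cB(\hG) \tto \hM$, together with the quotient map $\hM \to B$, is precisely the orbifold atlas defining the orbifold structure on $B$ (Theorem \ref{thm:int:affine:leaf:space} and the explanation following it). By definition, the canonical stratification of the orbifold $B$ is the one induced on $B$ by the canonical stratification of this atlas $\cB(\hG)$ (Section \ref{sec:The stratifications induced by a proper Lie groupoid}, Remark \ref{rem:correspondence-stratifications}). Matching up these two descriptions gives (i). One must also check that the orbifold structures on the strata agree; but by Corollary \ref{corolarry-normal} the strata of $\cS(\hM,\hF)$ are simple foliations, so the restricted atlas is totally ineffective on each, which means the orbifold structure the strata inherit as embedded suborbifolds of $B$ is exactly the smooth-type one carried by the members of the canonical stratification of an orbifold.

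For item (ii): fix $\Sigma \in \cSi(B)$, coming from a stratum $\Si \in \cSi(M,\pi)$, equivalently from the $\hG$-invariant submanifold $\res^{-1}(\Si) \subset \hM$ (using Theorem \ref{thm:compatibility with inf-stratification}, which also guarantees $\res^{-1}(\Si)$ is embedded and $\hG$-invariant, hence $\hF$-saturated). The orbifold structure on $B_\Si = \Si/\cF_\pi$ from Theorem \ref{thm:canonical-orbifold-stratifications} is, by construction, presented by the restricted atlas $\cB(\hG)|_{\res^{-1}(\Si)} \tto \res^{-1}(\Si)$. Its canonical stratification is therefore the one induced on $B_\Si$ by the canonical stratification of the proper foliation groupoid $\cB(\hG)|_{\res^{-1}(\Si)}$. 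On the other hand, $\cS(B)|_{B_\Si}$ is, by item (i) and the fact that restricting a stratification to a saturated suborbifold commutes with passing to the orbit space (Remark \ref{rem:correspondence-stratifications}), induced by $\cS(\hM,\hF)|_{\res^{-1}(\Si)}$. So it suffices to show $\cS(\hM,\hF)|_{\res^{-1}(\Si)}$ equals the canonical stratification of the restricted foliation groupoid $\cB(\hG)|_{\res^{-1}(\Si)}$, i.e.\ that restricting the canonical stratification of a proper foliation groupoid to a saturated embedded submanifold yields the canonical stratification of the restricted groupoid. This is a local statement that follows from the slice theorem / normal form for proper foliation groupoids (\cite[Theorem 2.5.5]{CFM-II}) combined with Lemma \ref{stratif-lemma-1}, in the same spirit as the proof of Proposition \ref{prop:stratification:foliation}: in the linear model $\Hol_x \ltimes V$ with invariant subspace $V' = T_x\res^{-1}(\Si) \cap (\text{slice})$, one checks that a vector is $\Hol_x$-invariant and lies in $V'$ iff it is invariant for the restricted holonomy action on $V'$, which is automatic since the latter is just the restriction of the former.

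The main obstacle I expect is the bookkeeping in item (ii) around orbifold structures on the strata: one must verify that the orbifold structure a stratum of $\cS(B)|_{B_\Si}$ inherits as an embedded suborbifold of $B_\Si$ coincides with the one it carries as a stratum of $\cS(B)$ (viewed inside $B$). This amounts to checking that for $y \in \res^{-1}(\Si)$ lying in a canonical stratum $\widehat\Sc$, one has $T_y(\widehat\Sc \cap \res^{-1}(\Si)) = T_y(\res^{-1}(\Si \cap \Sc'))$ where $\Sc'$ is the canonical stratum of $M$ through $\res(y)$ — which is precisely the content of the very clean intersection statements in Theorems \ref{thm:compatibility with stratification} and \ref{thm:compatibility with inf-stratification} (and Lemma \ref{stratif-lemma-aux}). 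So the pieces are all in place; the work is assembling them carefully.
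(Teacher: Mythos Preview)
Your proposal is correct and follows essentially the same route as the paper. For item (i) both you and the paper reduce to $\hM$ via Theorem \ref{thm:compatibility with stratification} and use that $\cS(\hM,\hL)$ is the holonomy stratification of $\hF$, which is by definition what presents the canonical orbifold stratification of $B$. For item (ii) the paper's argument is slightly leaner than yours: rather than invoking the slice theorem and checking the invariance criterion in a linear model, the paper simply observes that since $\hSc$ and $\hSi$ are $\hF$-saturated, the holonomy of a leaf $\hS\subset\hSc$ for $\hF|_{\hSc}$ is the restriction of its holonomy for $\hF|_{\hSi}$, and since $\cS(\hM)$ is the holonomy stratification of $\hF$ this immediately gives the equality. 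Your local-model verification proves the same fact, so there is no gap, just some extra scaffolding.
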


Each member of $\cS(B)$ and $\cSi(B)$ carries \emph{another} natural orbifold structure, which we now describe. First, the members can be described as:
\begin{align*}
    B_{\Sc}:= B(\Sc, \pi|_{\Sc})= \Sc/\cF_{\pi}= \hSc/\hF, \quad &\textrm{with}\ \Sc\in \cS(M, \pi),\ \hSc= \res^{-1}(\Sc),\\
   B_{\Si}:= B(\Sc, \pi|_{\Si})= \Si/\cF_{\pi}= \hSi/\hF,\,  \quad &\textrm{with}\ \Si\in \cSi(M, \pi),\ \hSi= \res^{-1}(\Si). 
\end{align*}
Each member $\Sc\in \cS(M, \pi)$ sits inside a member $\Si\in \cSi(M, \pi)$, so that 
\[ B_{\Sc}\subset B_{\Si}\subset B.\]
Theorem \ref{thm:inf-strat-type} and Theorem \ref{thm:can-strat-type} ensure that $B_{\Sc}$ and $B_{\Si}$ are themselves leaf spaces of regular PMCTs. Furthermore, as we shall also recall below, the proofs of those theorems show that the original groupoid $(\cG, \Omega)$ induces proper symplectic integrations $\G_{\Si}$ and $\G_{\Sc}$ of the strata $\Si$ and $\Sc$. Therefore both $B_{\Si}$ and $B_{\Sc}$ inherit integral affine orbifold structures from $\G$. These are \emph{different} from the integral affine orbifold structures appearing in Theorem \ref{thm:canonical-orbifold-stratifications}, which were inherited from $B$. To explain the relationship between the two integral orbifold structures on $B_{\Si}$ and $B_{\Sc}$ we make use of the notion of ``isotropy reduction'' described in Section \ref{sec:The case of orbifolds}. 


\begin{proposition}
\label{prop:canonical-orbifold-stratifications:G} 
Let $\Sc\in \cS(M)$ and $\Si\in \cSi(M)$. Then:
\begin{enumerate}[(i)] 
\item The orbifold structures of $B_{\Si}$ and $B_{\Sc}$ induced from $\cG$ are  isotropy reductions of the orbifold structures inherited from $B$;
\item If $B_{\Sc}$ and $B_{\Si}$ are endowed with the orbifold structures induced from $\cG$ and $\Sc\subset \Si$, then $B_{\Sc}$ is an integral affine suborbifold of $B_{\Si}$.
\item If we endow the members of both $\cS(B)$ and $\cSi(B)$ with the orbifold structures induced from $\cG$, then $\cS(B)|_{B_\Si}$ is the canonical orbifold stratification of $B_\Si$.
\end{enumerate}
\end{proposition}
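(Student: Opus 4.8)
The plan is to prove the three items in order, with (i) carrying essentially all the geometric content and (ii)–(iii) following formally from (i) together with results already in place.

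For (i), recall from the proofs of Theorems \ref{thm:inf-strat-type} and \ref{thm:can-strat-type} that the integrations of $\Si$ and $\Sc$ induced by $\cG$ are the Hamiltonian reductions
\[
\cG_\Si=\cG|_{\Si}/\cK,\qquad \cG_\Sc=\cG|_{\Sc}/\cK',
\]
where $\cK\to\Si$ and $\cK'\to\Sc$ are the bundles of compact connected groups integrating the conormals $(T\Si)^0=\gg_x^{\ss}$ and $(T\Sc)^0=\gg_x^{\ss}\oplus\mathrm{Coinv}_{\pi_1(S_x)}(\zz(\gg_{x}))$ (Corollaries \ref{corollary: infinitesimal-stratum-conormal} and \ref{corollary: stratum-conormal}); in either case this is the integration of an \emph{ideal} of $\gg_x$, so $\cK_x$ (resp.\ $\cK'_x$) is a closed normal subgroup of $\cG_x^{0}$. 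On the resolution side, Theorems \ref{thm:compatibility with inf-stratification} and \ref{thm:compatibility with stratification} show that $\hSi=\res^{-1}(\Si)$ and $\hSc=\res^{-1}(\Sc)$ are embedded submanifolds of $\hM$ and that $\res$ restricts there to surjective submersions with connected fibres onto $\Si$, resp.\ $\Sc$. These are exactly the maps $p^{-1}(B_\Si)\to B_\Si$ and $p^{-1}(B_\Sc)\to B_\Sc$ of the $B$-inherited atlases, and they intertwine the leaf-space projections, so the two presentations of $B_\Si$ (resp.\ $B_\Sc$) are genuinely identified.

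The heart of (i) is the following comparison. For a stratum $\Si$ I would consider the composite groupoid morphism
\[
\hG|_{\hSi}=\cG|_{\Si}\ltimes\hSi\;\longrightarrow\;\cG|_{\Si}\;\longrightarrow\;\cG_{\Si},\qquad (g,\hx)\mapsto[g],
\]
which lies over $\res:\hSi\to\Si$ and is surjective and submersive on arrows (a projection of an action groupoid, followed by the quotient map of a smooth reduction). The crucial point is that it sends the connected isotropy bundle of $\hG|_{\hSi}$ \emph{onto} that of $\cG_\Si$: the connected isotropy of $\hG|_{\hSi}$ at $\hx=(x,\tt_x)$ is $N_{\cG_x}(T_x)^{0}=T_x$, where $T_x\subset\cG_x^{0}$ is the maximal torus integrating $\tt_x$ (this being the fibre of $\hT|_{\hSi}$), and since $\cK_x$ is a normal subgroup of $\cG_x^{0}$ with $\gg_x=\tt_x+\mathrm{Lie}(\cK_x)$, the compact subgroup $T_x\cK_x$ is open in $\cG_x^{0}$, hence equal to $\cG_x^{0}$, so the image of $T_x$ in $\cG_x/\cK_x$ is the full connected isotropy torus $\cT_{\Si,x}$ of $\cG_\Si$. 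Therefore the morphism descends to a surjective submersive groupoid morphism over $B_\Si$,
\[
\cB(\hG)|_{\hSi}=\hG|_{\hSi}/\hT|_{\hSi}\;\longrightarrow\;\cG_\Si/\cT_\Si=\cB(\cG_\Si),
\]
exhibiting the $\cG$-induced atlas of $B_\Si$ as an isotropy reduction of its $B$-inherited atlas. The same argument, with $\cK'$ in place of $\cK$ (still a normal subgroup of $\cG_x^0$, still satisfying $\gg_x=\tt_x+\mathrm{Lie}(\cK'_x)$), treats $B_\Sc$, proving (i). For (ii), assume $\Sc\subset\Si$; then $\cG_\Sc$ is the reduction of $\cG_\Si$ along $\Sc$ by the bundle $\cK'/\cK|_\Sc\subset\cT_\Si$ of subtori integrating $(T\Sc)^0\subset T^*\Si$ (which, by the proof of Theorem \ref{thm:can-strat-type}, is an integral affine subbundle of $(\zz(\gg_x),\Lambda_x)$), and quotienting $(\cG_\Si)|_\Sc$ first by this subtorus bundle and then by its connected isotropy is the same as quotienting directly by connected isotropy, whence
\[
\cB(\cG_\Sc)=(\cG_\Si)|_\Sc/\cT_\Si=\cB(\cG_\Si)|_\Sc;
\]
so the $\cG$-atlas on $B_\Sc$ equals the suborbifold structure it inherits inside $B_\Si$ with the $\cG$-atlas. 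That $\Sc=p_\Si^{-1}(B_\Sc)$ is a transverse integral affine submanifold of $(\Si,\cF_{\pi_\Si},\Lambda_\Si)$ is, by Remark \ref{rem:suited:IA}, equivalent to $\Sc$ being suited for smooth reduction of $\cG_\Si$, which is exactly how $\cG_\Sc$ was produced in the proof of Theorem \ref{thm:can-strat-type}.

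Finally, (iii) is immediate: by (i) the $\cG$-atlas of $B_\Si$ is an isotropy reduction of its $B$-atlas, so the two have the same canonical orbifold stratification (Section \ref{sec:The case of orbifolds}); by Proposition \ref{prop:canonical-orbifold-stratifications}(ii) the canonical stratification of $B_\Si$ with its $B$-atlas is $\cS(B)|_{B_\Si}$; hence $\cS(B)|_{B_\Si}$ is also the canonical orbifold stratification of $B_\Si$ with the $\cG$-atlas, and by (ii) each member $B_\Sc$ of it carries precisely the $\cG$-induced suborbifold structure. The main obstacle is (i), and within it the identity $T_x\cK_x=\cG_x^{0}$, which is what upgrades the comparison map from a bare groupoid morphism to a genuine isotropy reduction; one must also be disciplined about which presentation of $B_\Si$ (namely $\hSi\subset\hM$ with $\cB(\hG)|_{\hSi}$, versus $\Si$ with $\cB(\cG_\Si)$) is in play at each stage. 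Once (i) is in hand, (ii) and (iii) are bookkeeping plus appeals to Remark \ref{rem:suited:IA}, Proposition \ref{prop:canonical-orbifold-stratifications}, and the discussion of isotropy reductions in Section \ref{sec:The case of orbifolds}.
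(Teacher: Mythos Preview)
Your proof is correct and follows essentially the same strategy as the paper's. For item (i) the paper packages the argument into two general principles about proper regular groupoids---one for submersive morphisms covering the identity, one for action groupoids along transitive fibered actions---and applies them to $\cG|_{\Si}\to\cG_\Si$ and to $\cG|_{\Si}\ltimes\hSi$; your explicit construction of the composite $\hG|_{\hSi}\to\cG_\Si$ and the check $T_x\cK_x=\cG_x^{0}$ amounts to verifying these principles by hand (the surjectivity of connected isotropy is in fact slightly more than required for the induced map on $\cB$'s to be a surjective submersion, but it is correct and clarifying). Items (ii) and (iii) match the paper's reasoning, with the minor difference that for (iii) you route through (i) and Proposition~\ref{prop:canonical-orbifold-stratifications}(ii) together with the invariance of the canonical stratification under isotropy reduction, whereas the paper appeals to Proposition~\ref{prop:regular-canonical-is-foliation}; both paths are valid.
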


\begin{remark}
    Notice that Theorem \ref{thm:canonical-orbifold-stratifications} implies that for each $\Sc\in \cS(B)$ the orbifold structure on $B_{\Sc}$ induced from $B$ is of smooth type (see Section \ref{sec:The case of orbifolds}). The previous proposition then implies that the orbifold structure on $B_{\Sc}$ induced from $\cG$ is also of smooth type. 
\end{remark}

\begin{proof}[Proof of Theorem \ref{thm:canonical-orbifold-stratifications} and Propositions \ref{prop:canonical-orbifold-stratifications} and \ref{prop:canonical-orbifold-stratifications:G}]
We start by showing that both $\cS^\inf(B)$ and $\cS(B)$ are integral affine orbifold stratifications of $B$.
The members of both partitions satisfy the frontier condition as in Remark \ref{rem:correspondence-stratifications}. So it suffices to prove that each
\[B_{\Si}=\hSi/\hF,\quad  B_{\Sc}=\hSc/\hF\] is an  embedded integral affine suborbifold of $B$. 

Checking that they are embedded suborbifolds is immediate: the preimage of $B_{\Si}$ and $B_{\Sc}$ by 
$p:\hM\to B$ are the embedded submanifolds $\hSi$ and $\hSc$, respectively.

Checking the integral affine suborbifold condition is equivalent, by Remark \ref{rem:suited:IA}, to the presymplectic torus bundle $(\hT,\hOmega)$ being suited for smooth reduction along both $\hSi$ and $\hSc$. The latter conditions are the same as $(\hG,\hOmega)$ being suited for smooth reduction along these strata. For this observe that:
\begin{enumerate}[(a)]
    \item[-] By Theorem \ref{thm:compatibility with stratification} the strata $\hSc$ belongs to the canonical stratification of $(\hM,\hL)$. The Dirac version of Theorem \ref{thm:can-strat-type} (see Section \ref{sec:stratifications:DMCT}) then shows that $(\hG,\hOmega)$ is suited for smooth reduction along $\hSc$.
    \item[-] We claim that $(\hG,\hOmega)$ is suited for smooth reduction along $\hSi$. For that, we fix $\hx=(x,\tt_x)\in\hSi$ and we need to show that the connected Lie subgroup $\cK_{\hx}\subset\hG_{\hx}$ integrating $(T_{\hx}\hSi)^0$ is closed (see Lemma \ref{lemma:suited-for-reduction}). Notice that 
    \[ \hG_{\hx}=N(T)\subset \cG_x, \]
    where $T\subset \cG_x$ is the maximal torus corresponding to $\tt_x$, and we claim that $\cK_{\hx}$ has Lie algebra 
    \begin{equation}
        \label{eq:Lie:algebra:aux}
        \kk_{\hx}=\gg_x^\ss\cap \tt_x\subset \gg_x.
    \end{equation}
    It follows that $\cK_{\hx}$ is the intersection of a closed subgroup of $\cG_x$ with $N(T)$, hence it is closed.
    To check \eqref{eq:Lie:algebra:aux} we recall that the Lie algebroid of $\hG=\cG\ltimes\hM$ is identified with $\hL$ via the map
    \[ (\act,\res^*):\res^*(T^*M)\to \hL. \]
    This restricts to an isomorphism
    \[ \res^*:\tt_x\to (T_{\hx}\hS)^0,\]
    where $\tt_x\subset\gg_x=(T_x S)^0$. Now we note that the linear map 
    \[ \res^*: (T_xS)^0\to (T_{\hx}\hS)^0, \]
    is surjective with kernel $[\tt_x,\gg_x]$ (c.f. Proposition \ref{prop:range:resolution}). Since $\res$ has very clean intersection with $\Sigma$, one finds
    \[
    \res^*((T_x\Sigma)^0)=(T_{\hx}\hSi)^0.
    \]
    On the other hand, by \eqref{eq:strata:conormal}, we have
    \[ (T_x\Sigma)^0=[\gg_x,\gg_x]=\gg_x^\ss. \]
    so \eqref{eq:Lie:algebra:aux} follows.
\end{enumerate}
This completes the proof of Theorem \ref{thm:canonical-orbifold-stratifications}.
\medskip

Next we prove item (i) of Proposition \ref{prop:canonical-orbifold-stratifications}. Using again that $\res$ pulls back $\Sc(M,\cF_\pi)$
to $\Sc(\hM,\hL)$, by the Dirac version of Proposition \ref{prop:regular-canonical-is-foliation} (see Section \ref{sec:stratifications:DMCT}), the canonical stratification $\Sc(\hM,\hL)$ equals the holonomy stratification of the regular foliation $\hF$, and hence item (i) of Proposition \ref{prop:canonical-orbifold-stratifications} follows.
\medskip

To prove item (ii) of Proposition \ref{prop:canonical-orbifold-stratifications}, we note that $\hSc$ and $\hSi$ are both saturated for the regular foliation $\hF$, so whenever $\hSc\subset\hSi$ the holonomy of a leaf $\hS\subset\hSc$ for $\hF|_{\hSc}$ is the restriction of the holonomy of $\hF|_{\hSi}$. Since the the canonical stratification $\cS(\hM)$ equals the holonomy stratification of $\hF$, the result follows.
\medskip

We now turn to the proof of Proposition \ref{prop:canonical-orbifold-stratifications:G}. To prove item (ii), we observe that by the proof of Theorem \ref{thm:can-strat-type}, one has:
\begin{enumerate}[(a)]
\item for any  $\Si\in\cSi(M)$, the reduction of $(\cG,\Omega)$
is a proper symplectic integration $(\cG(\Sigma),\Omega(\Sigma))$ of $(\Si,\pi_{\Si})$: 
\[
\xymatrix{
1\ar[r] & (\cT({\Si}),\Omega(\Si)|_{\cT(\Si)})\ar[r] & (\cG(\Si),\Omega(\Si))\ar[r] & \cB({\Si}) \ar[r] & 1},
\]
\item and, if $\Sc\in\cS(M)$ with $\Sc\subset \Si$, then $(\cG(\Si),\Omega(\Si))$ is suited for smooth reduction along $\Sc$, resulting in
\[
\xymatrix{
1\ar[r] & \left(\cT(\Sc),\Omega(\Sc)|_{\cT(\Sc)}\right)\ar[r] &\left(\cG(\Sc),\Omega(\Sc)\right)\ar[r] & \cB({\Sc}) \ar[r] & 1},
\]
where $\cB({\Sc})=\cB({\Si})|_{\Sc}$. Also, this coincides with the reduction of $\cG$ along $\Sc$.
\end{enumerate}
Therefore, by Remark \ref{rem:suited:IA}, the transverse integral affine foliation $(\Si,\cF_{\pi}|_{\Si})$ contains $\Sc$ as a transverse integral affine submanifold.
\medskip

To prove item (i) of Proposition \ref{prop:canonical-orbifold-stratifications:G} we have to compare $\cB(\hSc)$ and $\cB(\Sc)$, and, similarly, $\cB(\hSi)$ and $\cB(\Si)$. This follows from two general remarks concerning the orbifold atlas $\cB(\cG)$ on the orbit space $M/\cG$ of a proper regular Lie groupoid with associated exact sequence
\[ 
\xymatrix{1\ar[r] & \cT(\cG)\ar[r] & \cG \ar[r] & \cB(\cG)\ar[r] & 1}. 
\]
Namely:
\begin{enumerate}[1)]
\item For any two such proper regular groupoid $\cG_i\tto M$, if there is a submersive groupoid morphism $\cG_1\to \cG_2$ covering the identity, then the orbifold structure $\cB(\cG_2)$ is an isotropy reduction of $\cB(\cG_1)$; 
\item Secondly, if a proper regular groupoid $\cG\tto M$ acts transitively along a submersion $\hM\to M$ with connected fibers, then the orbit space of $\cG\ltimes \hM\tto \hM$ can be identified with $M/\cG$ and $\cB(\cG)$ is an isotropy reduction of $\cB(\cG\ltimes \hM)$.
\end{enumerate}
\medskip

Finally,to complete the proof, item  (iii) of Proposition \ref{prop:canonical-orbifold-stratifications:G}  follows from Proposition \ref{prop:regular-canonical-is-foliation} and item (ii).
\end{proof}

\begin{remark}
    Item (iii) in Proposition \ref{prop:canonical-orbifold-stratifications:G} is a special case of a general fact for proper Lie groupoids. Namely,
    let $\cG\tto M$ be a proper Lie groupoid with leaf space $B$. Then each member of $\cS_\cG(B)$ and $\cSi_\cG(B)$ inherits a natural orbifold structure from $\cG$: for each such member $N\subset M$, the restriction $\cG|_N$ is a regular proper groupoid and its effectivization defines an orbifold atlas for the leaf space of $N$ (see, e.g., \cite{CraMe,Moerdijk03}). Now, given $\Si\in\cSi(M)$, one finds that the restriction $\cS(B)|_{B_\Si}$ is the orbifold stratification of $B_\Si$. Moreover, the members of $\cS(B)|_{B_\Si}$ -- namely $B_\Sc$ for $\Sc\in \cS(M)$ with $\Sc\subset\Si$ -- are embedded suborbifolds of smooth type.
\end{remark}

\subsection{Coverings and the main diagram}
\label{sec:main:diagram}
In this section we collect from \cite{CFM-II} various facts about coverings of the resolution and of the leaf space of a PMCT, which arise from the integral affine geometry. This will be heavily used in the remainder of this paper and can be summarized in the following result.

\begin{proposition}[The main diagram]
Let $(M,\pi)$ be a Poisson manifold with a proper integration $(\cG, \Omega)\tto (M, \pi)$ and fix a base point $\hx_0= (x_0, \tt_0)\in \hM$. The resolution map extends to a commutative diagram
\begin{equation}\label{eq:Weyl:main-diagram} 
\vcenter{
\xymatrix{
\oM \ar[d]\ar[r] &  \hM^{\aff} \ar[d]\ar[r]&  \hM^{\lin} \ar[d]\ar[r]&  \hM \ar[d]\ar[r]^{\res} &  M \ar[d] \\
\oB \ar[r] &  B^{\aff} \ar[r]&  B^{\lin} \ar[r]&  B= \hM/\hF  \ar@{=}[r] &  M/\cF_{\pi} 
}}
\end{equation}
with the following properties:
\begin{enumerate}[(i)]
    \item In the top row the spaces are covering spaces of $\hM$. In particular, they inherit pullback Dirac structures from $\hL$.
    \item In the bottom row $B^\lin$ is an 
    integral affine smooth manifold, while $B^\aff$ and $\oB$ are covering spaces of $B^\lin$, with $\oB$ simply connected. So they are integral affine smooth manifolds as well.
    \item The fibers of the vertical rows are the presymplectic leaves of the pullback Dirac structures.
\end{enumerate}
\end{proposition}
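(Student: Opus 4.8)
The strategy is to build the main diagram one step at a time, starting from the identification $B = \hM/\hF = M/\cF_\pi$ already established, and each time using the regularity of the Dirac manifold $(\hM,\hL)$ together with the corresponding results from \cite{CFM-II} for regular Poisson/Dirac manifolds of proper type. The key point is that all of the covering constructions take place at the level of the \emph{regular} object $(\hM,\hL)$, where \cite[Chapter 3]{CFM-II} applies directly (in its Dirac version), rather than at the level of the singular $(M,\pi)$.

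First I would construct $\hM^{\lin} \to \hM$ and $B^{\lin}\to B$. Since $(\hM,\hL)$ is a regular Dirac manifold of proper type with proper presymplectic integration $(\hG,\hOmega)$, the associated foliation groupoid $\cB(\hG)$ (from the short exact sequence \eqref{eq:Weyl-short-exact-sequence}) is a proper foliation groupoid presenting the orbifold $B$. The \emph{linear holonomy cover} of a regular proper presymplectic groupoid, constructed in \cite{CFM-II}, kills the linear holonomy of the leaves: one takes $\hM^{\lin}$ to be the cover of $\hM$ associated to the kernels of the linear holonomy representations $\pi_1(\hS)\to \GL(\nu_{\hx})$, leaf by leaf; this is a manifold because the holonomy is linearizable (using again the normal form results of Section \ref{sec:nform}). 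On $\hM^{\lin}$ the pullback Dirac structure $\hL^{\lin}$ has leaves with trivial linear holonomy, hence its leaf space $B^{\lin}$ is a \emph{smooth} integral affine manifold — this is precisely the content of \cite[Theorem 3.0.1]{CFM-II} in its Dirac incarnation applied to $(\hM^{\lin},\hL^{\lin})$. The integral affine structure descends from the presymplectic torus bundle $\hT$ pulled back to $\hM^{\lin}$. The fibers of $\hM^{\lin}\to B^{\lin}$ are, by construction, the leaves of the pulled-back foliation $\hF^{\lin}$.

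Next I would obtain $B^{\aff} \to B^{\lin}$ and $\oB \to B^{\lin}$ by purely topological covering space theory applied to the smooth manifold $B^{\lin}$: let $\oB$ be the universal cover of $B^{\lin}$ (which is simply connected by definition), and let $B^{\aff}$ be an intermediate cover as dictated by \cite{CFM-II} (the cover corresponding to the developing map / period homomorphism of the integral affine structure, so that $B^{\aff}$ is the maximal cover on which the integral affine structure develops compatibly). Being covers of the integral affine manifold $B^{\lin}$, both $B^{\aff}$ and $\oB$ inherit integral affine manifold structures, proving (ii). Then $\hM^{\aff}$ and $\oM$ are defined as the corresponding fiber products: $\hM^{\aff} := \hM^{\lin}\times_{B^{\lin}} B^{\aff}$ and $\oM := \hM^{\lin}\times_{B^{\lin}} \oB$. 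These are covering spaces of $\hM^{\lin}$, hence of $\hM$, because the map $\hM^{\lin}\to B^{\lin}$ is a submersion with connected fibers (the leaves of a regular foliation) so pulling back a covering of the base yields a covering of the total space; this gives the top row of \eqref{eq:Weyl:main-diagram} and part of (i). The pullback Dirac structures are then automatic since all the horizontal maps in the top row are local diffeomorphisms (étale), so one pulls back $\hL$ along them. Property (iii) — that the fibers of the vertical maps are the presymplectic leaves — holds by construction: each total space fibers over the corresponding leaf space with fibers the (connected components, or covers thereof, of) leaves, and one checks the pullback Dirac structure restricts on each such fiber to the leafwise presymplectic form.

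\textbf{Main obstacle.} The genuinely delicate point is verifying that $\hM^{\lin}$ and $B^{\lin}$ are \emph{manifolds} (equivalently, that the linear holonomy of the leaves of $\hL$ is trivializable by a cover, i.e.\ that the linear holonomy groups are finite and the relevant cover is Hausdorff and second countable), and that the integral affine structure genuinely descends to $B^{\lin}$ — this is where one must invoke the full force of the regular theory from \cite[Chapter 3]{CFM-II}, checking carefully that its hypotheses are met by $(\hM,\hL)$ and, crucially, that the Dirac-geometric generalization of those results (sketched in Section \ref{sec:Dirac:local-model} and the Dirac remarks throughout) is valid here. The other non-trivial verification is that the fiber products defining $\hM^{\aff}$ and $\oM$ carry the pullback Dirac structures compatibly with the already-constructed maps and that these agree with what one gets by pulling $\hL$ back directly — but this is a routine diagram chase once the covers are in place. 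Everything else is formal: composition of covering maps, functoriality of pullback of Dirac structures along étale maps, and the elementary fact that a covering of the base of a submersion-with-connected-fibers pulls back to a covering of the total space.
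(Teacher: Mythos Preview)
Your overall strategy is correct and matches the paper's: the diagram is assembled by applying the regular theory of \cite[Appendix B]{CFM-II} to $(\hM,\hL)$, and the orbifold squares come from pulling back along covers of $B^{\lin}$. However, your description of $\hM^{\lin}$ contains a genuine conceptual slip.

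You write that $\hM^{\lin}$ is ``the cover of $\hM$ associated to the kernels of the linear holonomy representations $\pi_1(\hS)\to \GL(\nu_{\hx})$, leaf by leaf.'' This does not define a covering space of $\hM$: a collection of subgroups of the various $\pi_1(\hS)$ does not single out a subgroup of $\pi_1(\hM)$. The correct construction uses a single \emph{global} representation. The integral affine lattice $\Lambda\subset \htt^*\cong\nu(\hF)$ determines a flat connection on $\nu(\hF)$ over all of $\hM$ (extending the Bott connection), and its holonomy is a homomorphism $h^{\lin}_{\hx_0}:\pi_1(\hM,\hx_0)\to \GL_{\Lambda_0}(\tt_0^*)$. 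One sets $K^{\lin}:=\ker h^{\lin}_{\hx_0}$ and $\hM^{\lin}:=\widetilde{\hM}/K^{\lin}$. The leafwise holonomies factor through this global one via $\pi_1(\hS)\to\pi_1(\hM)$, but it is the global kernel that defines the cover. The paper then constructs $\hM^{\aff}$ in the same way from the affine holonomy $h^{\aff}:\pi_1(\hM)\to\Aff_{\Lambda_0}(\tt_0^*)$, and only afterwards observes that the resulting square over $B^{\lin}$ is a pullback; so your fiber-product description of $\hM^{\aff}$ is correct a posteriori, but the primary definition is via $K^{\aff}$.

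A second inaccuracy: you suggest checking that ``the linear holonomy groups are finite.'' They need not be. What matters is that $\Gamma^{\lin}$ lands in the discrete group $\GL_{\Lambda_0}(\tt_0^*)\cong\GL(r,\Z)$, and that the pullback foliation $\hF^{\lin}$ on $\hM^{\lin}$ has trivial (linear) holonomy. Since $\hF^{\lin}$ is still proper, trivial linear holonomy forces it to be simple, and \emph{that} is why $B^{\lin}=\hM^{\lin}/\hF^{\lin}$ is a smooth manifold; your appeal to ``holonomy is linearizable'' and finiteness is not the actual mechanism.
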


\begin{remark}$\quad$
    \begin{enumerate}[(a)]
        \item The spaces in the diagram depend on the choice of proper integration.
        \item We will see in Section \ref{sec:The orbifold fundamental group} that the bottom row of the diagram consists of orbifold covering spaces of $B$, viewed as a classical orbifold. In particular, $\oB$ is the orbifold universal covering space of $B$, so $B$ is a good orbifold. 
    \end{enumerate}
\end{remark}

In the sequel we will explain the spaces in the diagram and how this proposition follows from the results of \cite[Appendix B]{CFM-II}. We will make use of the tautological bundle 
\[ \htt\to \hM, \quad \htt_{(x, \tt)}:=\tt. \]
Notice that this provides a concrete description of the normal bundle to the foliation $\hF$ since we have a canonical identification
\[ \nu(\hF)\cong \htt^*.\]
We recall that this bundle carries an integral affine structure $\Lambda\subset \htt^*$ induced from $(\hG,\hOmega)$, the dual of the lattice $\ker(\exp:\htt\to \hG)$. We will denote by 
\[ \GL_\Lambda(\,\htt^*)\tto\hM, \]
the transitive groupoid whose arrows are the invertible integral affine transformations between the fibers of $\htt^*$.  
\subsubsection{The space $\hM^\lin$}
\label{sec:hMlin}
The bundle $\htt^*$ carries a canonical flat connection, uniquely determined by the condition that the local sections of $\Lambda$ are flat. The corresponding parallel transport gives rise to the \textit{linear holonomy action} of $\Pi_1(\hM)$ on $\htt^*$ denoted
\begin{equation}\label{lin-action-transversal} 
h^{\lin}: \Pi_1(\hM) \to \GL_\Lambda(\,\htt^*\,).
\end{equation}
It is proved in \cite[Appendix B]{CFM-II} that its image is an embedded subgroupoid
\[ \Pi_1^{\lin}(\hM)\subset \GL_\Lambda(\,\htt^*\,).\]
Since $\Pi_1(\hM)$ and $\Pi^{\lin}_1(\hM)$ are transitive, these groupoids are encoded in their isotropy groups at the base point $\hx_0$, acting on
the s-fiber $\s^{-1}(\hx_0)$. For $\Pi_1(\hM)$ we obtain the fundamental group at $\hx_0$ and the universal cover of $\hM$
\[ \pi_1(\hM):=\pi_1(\hM, \hx_0), \quad \widetilde{\hM}:= \Pi_1(\hM)(\hx_0, -).\]
For $\Pi_1^{\lin}(\hM)$, notice that (\ref{lin-action-transversal}) induces at $\hx_0$ a group homomorphism
\begin{equation}\label{lin-action-transversal2} 
h^{\lin}_{\hx_0}: \pi_1(\hM) \to \GL_{\Lambda_0}(\tt^*_{0}).
\end{equation}
The kernel and image of this homomorphism will be denoted 
\[ K^{\lin}=\ker(h^{\lin}_{\hx_0})\subset \pi_1(\hM), \quad \Gamma^{\lin}=\im(h^{\lin}_{\hx_0})\subset \GL_{\Lambda_0}(\tt^*_{0}).\]
The group $\Gamma^\lin$ is precisely the isotropy group of $\Pi_1^{\lin}(\hM)$ at $\hx_0$, while its $s$-fiber at $\hx_0$ can be canonically identified with 
\begin{equation}
\label{eq:hM-linear-covering} 
\hM^{\lin}=\widetilde{\hM}/K^{\lin},
\end{equation}
i.e., the covering space of $\hM$ with $\Gamma^{\lin}$ as group of deck transformations. We call $\hM^{\lin}$ the \emph{linear holonomy cover} of $\hM$.

\subsubsection{The space $B^\lin$} 
The presymplectic foliation $\hF$ of the Dirac structure $\hL$ is a proper foliation $\hM$. Hence, its holonomy and linear holonomy groupoids coincide. Moreover, the Bott connection of $\hF$ coincides with the restriction of the flat connection on $\htt^*$, and one has a groupoid embedding (see \cite[Appendix B]{CFM-II})
\[ \Hol(\hF)\hookrightarrow \GL_\Lambda(\,\htt^*\,), \]
whose image is contained in $\Pi^{\lin}_1(\hM)$. 
The holonomy and the monodromy groupoids of the foliation $\hF$ then fit into a diagram
\[ 
\xymatrix@C=40pt{
\Mon(\hF)\ar[d]_-{i_*}  \ar@{->>}[r]^-{\hol^{\lin}} & \Hol(\hF) \quad\ar@{_{(}->}[d]  \ar@{^{(}->}[rd] & \\
\Pi_1(\hM) \ar@{->>}[r]_-{ h^{\lin}} &\Pi^{\lin}_1(\hM)  \ar@{^{(}->}[r] & \GL_\Lambda(\,\htt^*\,)
}\]
where $i_*:\Mon(\hF)\to\Pi_1(\hM)$ is the map induced by inclusion. Fixing the base point $\hx_0$, one obtains a similar diagram for the isotropy groups
\[ \xymatrix@C=40pt{
\pi_1(S_{\hx_0})  \ar[d]_-{i_*}  \ar@{->>}[r]^-{\hol^{\lin}} & \Hol(\hF)_{\hx_0}\quad\ar@{_{(}->}[d]  \ar@{^{(}->}[rd] & \\
\pi_1(\hM) \ar@{->>}[r]^-{ h^{\lin}} &\Gamma^{\lin} \ar@{^{(}->}[r] &  \GL_{\Lambda_0}(\tt^*_{0})
}\]

Pulling back the Dirac structure $\hL$ along the covering (\ref{eq:hM-linear-covering}) yields a presymplectic foliation $\hF^{\lin}$ so one has a covering of foliated spaces 
\[ (\hM^{\lin}, \hF^{\lin})\to (\hM, \hF).\]
The remarkable property of $\hF^\lin$ proved in \cite[Appendix B]{CFM-II} is that it is simple and its leaf space
\[ B^{\lin}:= \hM^{\lin}/\hF^{\lin}\]
is a smooth integral affine manifold. Moreover, the action of $\Gamma^{\lin}$ on $\hM^{\lin}$ descends to a proper action on 
$B^{\lin}$ by integral affine transformations and $\hM^{\lin}$ yields a Morita equivalence of Lie groupoids
\begin{equation}
    \label{eq:Morita:lin}
    \vcenter{
\xymatrix{
 \Hol(\hF) \ar@<0.25pc>[d] \ar@<-0.25pc>[d]  & \ar@(dl, ul) & \hM^{\lin}\ar[dll]\ar[drr] & \ar@(dr, ur)   & B^{\lin}\rtimes \Gamma^{\lin} \ar@<0.25pc>[d] \ar@<-0.25pc>[d]\\
\hM & & & & B^{\lin}  }}
\end{equation}
One obtains an isomorphism of classical integral affine orbifolds
\[ B\cong \hM/\hF\cong B^{\lin}/\Gamma^{\lin}. \]
At this point we have obtained the 2 right squares in the main diagram \eqref{eq:Weyl:main-diagram}.

\subsubsection{The spaces $\hM^\aff$ and $B^\aff$} 
The affine part of the diagram arises from the developing map associated to the transverse integral affine structure on $\hF$, denoted 
\begin{equation}
    \label{eq:dev:cocycle:groupoid}
    \dev: \Pi_1(\hM)\to \htt^*.
\end{equation}
This is the groupoid cocycle that integrates the projection on the normal bundle to $\hF$, $T\hM\to \htt^*$, interpreted as an algebroid 1-cocycle. Here we are using the representation given by the linear holonomy action \eqref{lin-action-transversal}. 

The developing map together with the linear holonomy action give rise to the {\bf affine holonomy action} where an element $[\gamma]\in \Pi_1(\hM)$ acts by the affine transformation
\[ h^\aff([\gamma]):\htt^*_{\gamma(0)}\to \htt^*_{\gamma(1)}, \quad v\mapsto h^\lin([\gamma])(v)+\dev([\gamma]). \]
As shown in \cite[Appendix B]{CFM-II}, the affine holonomy action gives an embedding of Lie groupoids 
\begin{equation}\label{aff-action-transversal} 
h^{\aff}: \Pi_1(\hM)\hookrightarrow \Aff_\Lambda(\,\htt^*\,).
\end{equation}
Its image $\Pi_1(\hM)^{\aff}\subset \Aff_\Lambda(\,\htt^*\,)$ fits into a commutative diagram 
\[ \xymatrix@C=40pt@R=10pt{
 &  \Pi_1^{\lin}(\hM)\ar@{^{(}->}[r] & \GL_\Lambda(\,\htt^*\,)\\
\Hol(\hF)\overset{i_*}{\longrightarrow}\Pi_1(\hM)\ar@{->>}[rd]_-{h^{\aff}} \ar@{->>}[ru]^-{h^{\lin}}  & & \\
 & \Pi_1^{\aff}(\hM)\ar@{^{(}->}[r]\ar[uu] & \Aff_\Lambda(\,\htt^*\,) \ar[uu]_-{}
}\]
Fixing the base point, ones obtains a corresponding diagram of isotropy groups, as in Section \ref{sec:hMlin}, giving rise to the groups $\Gamma^{\aff}$ and $K^{\aff}$ as in the following commutative diagram
\[ \xymatrix@C=40pt@R=10pt{
K^{\aff}\ar@{^{(}->}[rd]\ar@{_{(}->}[dd] & & \Gamma^{\lin}\ar@{^{(}->}[r] & \GL_{\Lambda_0}(\tt^*_{0}) \\
& \pi_1(\hM)\ar@{->>}[ru]^-{h^{\lin}} \ar@{->>}[rd]_-{h^{\aff}}  & & \\
K^{\lin}\ar@{^{(}->}[ru]& & \Gamma^{\aff}\ar@{^{(}->}[r] \ar[uu]& \Aff_{\Lambda_0}(\tt^*_{0})\ar[uu]_-{}
}\]
Also, we can identify the s-fiber of $\Pi(\hM)^{\aff}$ with the covering space of $\hM$ with $\Gamma^{\aff}$ as group of deck transformations, namely
\begin{equation}
\label{eq:hM-affine-covering} 
\hM^{\aff}=\widetilde{\hM}/K^{\aff},
\end{equation}
We call $\hM^{\aff}$ the \emph{affine holonomy cover} of $\hM$. Pulling back the Dirac structure $\hL$ along the covering map yields a presymplectic foliation $\hF^{\aff}$ so one has sequence of foliated coverings
\[ (\hM^{\aff}, \hF^{\aff}) \to (\hM^{\lin}, \hF^{\lin}) \to (\hM, \hF).\]

The foliation $\hF^\aff$ is simple and its leaf space
\[ B^{\aff}:= \hM^{\aff}/\hF^{\aff}\]
is a smooth integral affine manifold. Moreover, the action of $\Gamma^{\aff}$ on $\hM^{\aff}$ descends to a proper action on $B^{\aff}$ by integral affine transformations and $\hM^{\aff}$ yields another Morita equivalence of Lie groupoids
\begin{equation}
    \label{eq:Morita:aff}
\vcenter{\xymatrix{
 \Hol(\hF) \ar@<0.25pc>[d] \ar@<-0.25pc>[d]  & \ar@(dl, ul) & \hM^{\aff}\ar[dll]\ar[drr] & \ar@(dr, ur)   & B^{\aff}\rtimes \Gamma^{\aff} \ar@<0.25pc>[d] \ar@<-0.25pc>[d]\\
\hM & & & & B^{\aff}  }}
\end{equation}
where the left action is by translations via $h^\aff\circ i_*:\Hol(\hF)\to\Pi_1^\aff(\hM)$. This gives one further isomorphism of classical integral affine orbifolds
\[ B\cong \hM/\hF\cong B^{\lin}/\Gamma^{\lin}\cong B^{\aff}/\Gamma^{\aff}. \]
Altogether, we obtain the 3 right squares in the main diagram \eqref{eq:Weyl:main-diagram}. 

\subsubsection{The spaces $\hM^\orb$ and $\oB$} 
Finally, there is the left-hand square of the main diagram. For that, we let $\oB$ be the universal covering space of $B^\aff$ (which is also the universal covering space of $B^\lin$). Then we can pullback the submersion $\hM^\aff\to B^\aff$ along the covering projection, obtaining $\hM^\orb\to\oB$. Noticing that the square involving $\hM^{\aff}$ and $\hM^{\lin}$ is also a pull-back diagram, we have
\[ \hM^\orb=\oB\times_{B^{\aff}} \hM^{\aff}= \oB\times_{B^{\lin}} \hM^{\lin}.\]
With this definition the left-hand square of the main diagram is commutative, and the induced map $\hM^\orb\to\hM^\aff$ is a cover.

Furthermore, the principal actions of $\Pi_1(\hF)$ in either $ \hM^{\lin}$ or $\hM^{\aff}$, give a principal action of $\Pi_1(\hF)$ on $\hM^\orb$ whose quotient is $\oB$, so we have a Morita equivalence
\begin{equation}\label{eq:diagram:?:in:diagram}
\vcenter{\xymatrix{
 \Hol(\hF) \ar@<0.25pc>[d] \ar@<-0.25pc>[d]  & \ar@(dl, ul) & \hM^{\orb}\ar[dll]\ar[drr] & \ar@(dr, ur)   & ? \ar@<0.25pc>[d] \ar@<-0.25pc>[d]\\
\hM & & & & \oB }}
\end{equation}
The groupoid on the right-hand side, apriori, is the gauge groupoid. We will see later that it can be identified with an action groupoid $\oB\ltimes\pi_1^\orb(B)\tto \oB$, where $\pi_1^\orb(B)$ is the orbifold fundamental group of $B$ (viewed as a classical orbifold).

\subsubsection{The developing map} 
For later use, we observe that developing map \eqref{eq:dev:cocycle:groupoid} factors through the groupoid morphism $h^\aff:\Pi_1(\hM)\to\Pi_1^\aff(\hM)$. We will denote by the same letter the resulting groupoid cocycle
\[ \dev:\Pi_1^\aff(\hM)\to\htt^*. \]
The restriction of this cocycle to the source fiber of the base point $\hx_0$, gives the \emph{developing map based at $\hx_0$} denoted
\[ \dev_0:\hM^\aff\to\tt^*_{0}. \]
This map is constant along the leaves of $\hF^\aff$, and the resulting map
\[ \dev_0:B^\aff\to\tt^*_{0}, \]
is an integral affine local diffeomorphism. 

\begin{definition}
    A Poisson manifold $(M,\pi)$ of proper type is called \textbf{complete} if its developing map $\dev_0:B^\aff\to\tt^*_{0}$ is a global diffeomorphism.
\end{definition}

This definition is equivalent to say that the canonical flat connection on $B^\lin$ (or any of its covering spaces) is complete. In the $\s$-proper case, it is equivalent to say that the canonical flat connection on the normal bundle to the foliation on $\hM$ (or any of its covering spaces) is complete.

\subsection{The main diagram for the Hamiltonian local model} 
\label{ex:model main diagram}
As in the local model for a Poisson manifold of proper type, we let $\mu: (Q, \omega)\to \gg^*$ be a $G$-Hamiltonian space for a free and proper $G$-action, where $\mu$ has connected fibers, we let $V=\mu(Q)$ and we consider the quotient 
\[ M:= Q/G, \]
equipped with the quotient Poisson structure $\pi$. This Poisson manifold has proper integration $\cG=(Q\times_V Q)/G\tto M$ and fixing a maximal torus $T\subset G$ we claim that the main diagram in this case becomes
\begin{equation}
\label{eq:Weyl:main-diagram:local:model} 
\vcenter{
\xymatrix{
\oM=\hM^{\aff}=\hM^{\lin}\ar@{=}[d] \ar[r]&  \hM \ar@{=}[d]\ar[r]^{\res} &  M \ar@{=}[d] \\
\mu^{-1}(\tt^*)/Z_G(T) \ar[d]\ar[r]&    \mu^{-1}(\tt^*)/N(T) \ar[d]\ar[r]^{\res} &   Q/G \ar[d] & \\
\tt^*\cap V \ar@{=}[d] \ar[r]&    (\tt^*\cap V)/W \ar@{=}[d]\ar[r]^{\simeq} &  (\gg^*\cap V)/G  \ar@{=}[d] &  \\
\oB=B^{\aff}=B^{\lin} \ar[r]&  B= \hM/\hF  \ar@{=}[r] &  M/\cF_{\pi} 
}}
\end{equation}
Here $W= N(T)/T$ and $Z_G(T)\subset N(T)$ are the elements in the normalizer with trivial coadjoint action on $\tt^*$ and we also claim that
\[ \Gamma^\aff=\Gamma^{\lin}\cong N(T)/Z_G(T).\]

To show that these claims hold we fix as base point $\hx_0=(x_0,\tt)$ where $x_0\in M$ is any point that is $Q$-related to $0\in \gg^*$. We also use the identification $\hM= \mu^{-1}(\tt^*)/N(T)$ (see (\ref{eq:resolution:local:model})). We will first compute the representation \eqref{lin-action-transversal}
\[ h^\lin:=h^{\lin}_{\hx_0}: \pi_1(\hM) \to \GL(\tt^*). \]
The flat bundle $\htt^*$ whose holonomy was used to define this representation is easier to describe after pulling it along the cover 
\[ \overline{M}:= \mu^{-1}(\tt^*)/T \to \mu^{-1}(\tt^*)/N(T).\]
Indeed, the foliation $\hF$ now becomes the one associated to the submersion $\overline{M}\to \tt^*$ induced by $\mu$, so the normal bundle becomes the trivial bundle with fiber $\tt^*$ and the connection becomes the flat one. 
Conversely, $\hM$ is the quotient of the action of $W$ on $\overline{M}$ and this group also acts on $\tt^*$ via the standard representation
\begin{equation}\label{eq:W(T)-inside-GL}
W\to \GL(\tt^*).
\end{equation}
It follows that $\htt^*=(\overline{M}\times\tt^*)/W$ and that $h^{\lin}$ is the composition of (\ref{eq:W(T)-inside-GL}) with the boundary map 
\begin{equation}\label{ref:partial-hM-W} 
\partial: \pi_1(\hM)\to W
\end{equation}
associated to the homotopy sequence for $\overline{M}\to \hM$. Since the image of (\ref{eq:W(T)-inside-GL}) is precisely $\Gamma^{\lin}$ and its kernel is $Z_G(T)/T$, we deduce that 
\[ \Gamma^{\lin}\cong N(T)/Z_G(T).\]
This also implies that the holonomy cover is 
precisely the quotient of $\overline{M}$ modulo the kernel
$Z_G(T)/T$, i.e., 
\[ \hM^{\lin}= \mu^{-1}(\tt^*)/Z_G(T). \]
One obtains also $B^{\lin}= \tt^*$. Since this space is 1-connected, it follows that $\oB=B^\aff=B^\lin$ and then that $\hM^\orb=\hM^\aff=\hM^\lin$, which proves all the claims.
\medskip

For the sequel, we will need to restrict to an invariant open subset $U\subset M$. This will allow, for example, the reduction of fact-checking for a general Poisson manifold to the Hamiltonian local model. The main diagram is functorial for such restrictions.

\begin{proposition}
    \label{rk:functoriality-main-diagram}
    Let $(M,\pi)$ be a Poisson manifold with a choice of proper integration $(\cG,\Omega)\tto M$. Let $U\subset M$ be an invariant open subset and consider the restriction  $(\cG|_U,\Omega|_U)\tto U$. Fixing a base point $\hx_0=(x_0,\tt)$ with $x_0\in U$, the corresponding main diagrams \eqref{eq:Weyl:main-diagram} are related by 
\begin{equation}\label{eq:Weyl:main-diagram-open}
\vcenter{
\xymatrix@C=10pt@R=10pt{
\hU^\orb\ar[rr] \ar[dd] \ar[dr]&  &\hU^{\aff} \ar[rr] \ar@{-}[d] \ar[dr]&  & \hU^{\lin} \ar@{-}[d] \ar[rr] \ar[dr]&  & \hU \ar[rr]^{\res} \ar@{-}[d] \ar[dr]&  &  U  \ar@{-}[d] \ar[dr]\\
& 
\oM\ar[rr]\ar[dd] & \ar[d] & 
\hM^\aff\ar[rr]\ar[dd] & \ar[d] & 
\hM^\lin\ar[rr]\ar[dd] & \ar[d] &
\hM\ar[rr]^(.3){\res}\ar[dd] & \ar[d] &   M\ar[dd] \\
\oB_U \ar[dr] \ar@{-}[r] &\ar[r] & B^\aff_U \ar[dr] \ar@{-}[r] &\ar[r] & B^\lin_U \ar[dr] \ar@{-}[r] &\ar[r] & B_U \ar[dr] \ar@{=}[r] &\ar@{=}[r]  & B_U\ar[dr]\\
 & \oB \ar[rr] &   & B^\aff\ar[rr] &  & B^\lin\ar[rr] & & B\ar@{=}[rr] &  &B
}}
\end{equation}
where the diagonal maps are open embeddings, except possibly for the ones on the left face of the diagram. The map $\hU^\orb\to\hM^\orb$ is an open embedding if $B^\aff_U\to B^\aff$ is one.
\end{proposition}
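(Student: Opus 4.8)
The plan is to show that every object occurring in the main diagram \eqref{eq:Weyl:main-diagram}, when built from the restricted integration $(\cG|_U,\Omega|_U)\tto U$, is the restriction of the corresponding object for $(\cG,\Omega)\tto M$ to a suitable open, saturated piece. The starting point is the locality of the Weyl resolution itself: by Theorem \ref{thm:resolution:map} the set $\hU=\res^{-1}(U)$ is open in $\hM$, it is the Weyl resolution of $(U,\pi|_U)$, the $\cG|_U$-action on it is the restriction of the $\cG$-action, and the Dirac structure $\hL|_{\hU}$ together with its foliation $\hF|_{\hU}$ are the restrictions of $\hL$ and $\hF$. Since $U$ is $\cG$-invariant, $\hU$ is $\hG$-invariant, hence $\hF$-saturated, so $B_U=\hU/\hF$ is an open subset of $B=\hM/\hF$ whose integral affine orbifold structure (Theorem \ref{thm:int:affine:leaf:space}) is the restriction of the one on $B$. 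This already produces the two right-hand squares of \eqref{eq:Weyl:main-diagram-open}, with $U\hookrightarrow M$, $\hU\hookrightarrow\hM$ and $B_U\hookrightarrow B$ open embeddings, and commutativity is automatic.

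For the linear and affine holonomy covers I would exploit that the data underlying the constructions recalled in Section \ref{sec:main:diagram} are local over $\hM$: the tautological bundle $\htt^*$, its lattice $\Lambda$, its canonical flat connection and the developing cocycle \eqref{eq:dev:cocycle:groupoid} all restrict over $\hU$ to the analogous data built directly from $(\cG|_U,\Omega|_U)$. Consequently the inclusion $\iota\colon\hU\hookrightarrow\hM$ induces a morphism $\iota_*$ of fundamental groupoids that intertwines the linear holonomy actions \eqref{lin-action-transversal} and the affine holonomy actions \eqref{aff-action-transversal}; evaluating at the base point $\hx_0$ (and replacing $\hU$ by the connected component of $\hx_0$ if it is disconnected) one obtains $K^\lin(\hU)=\iota_*^{-1}(K^\lin(\hM))$ and $K^\aff(\hU)=\iota_*^{-1}(K^\aff(\hM))$. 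By elementary covering-space theory these equalities identify $\hU^\lin$ (resp. $\hU^\aff$) with the connected component of a lift of $\hx_0$ in the restriction over $\hU$ of the covering $\hM^\lin\to\hM$ (resp. $\hM^\aff\to\hM$); such a component is open and saturated for the pulled-back foliation, so $\hU^\lin\hookrightarrow\hM^\lin$ and $\hU^\aff\hookrightarrow\hM^\aff$ are open embeddings and, passing to leaf spaces, so are $B^\lin_U\hookrightarrow B^\lin$ and $B^\aff_U\hookrightarrow B^\aff$. Together with the obvious commutativities this yields the three middle squares.

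There remains the left face, where the universal covers $\oB$ of $B^\aff$ and $\oB_U$ of $B^\aff_U$ appear and $\oM=\hM^\orb=\oB\times_{B^\aff}\hM^\aff$, $\hU^\orb=\oB_U\times_{B^\aff_U}\hU^\aff$. The map $\oB_U\to\oB$ is the lift to universal covers of the open embedding $B^\aff_U\hookrightarrow B^\aff$ obtained in the previous step; it is always a covering map onto a connected open subset of $\oB$, but it is injective only when $\pi_1(B^\aff_U)\to\pi_1(B^\aff)$ is — the usual failure exhibited by an annulus inside a disk. Forming the fibered products, $\hU^\orb\to\oM$ has open image (because $\oB_U\to\oB$ is open and $\hU^\aff\hookrightarrow\hM^\aff$ is an open embedding) and is injective precisely when $\oB_U\to\oB$ is; in particular it is an open embedding as soon as $B^\aff_U\to B^\aff$ is one which injects $\pi_1$, which holds in the situations where the proposition is applied, e.g.\ whenever $B^\aff$ is simply connected as in the Hamiltonian local model of Section \ref{ex:model main diagram}. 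This is exactly the last assertion.

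I expect the middle paragraph to be where the real work lies: one has to check, line by line against \cite[Appendix B]{CFM-II}, that the holonomy groupoids, the lattice and the developing cocycle for $\hU$ are literal restrictions of those for $\hM$, and to keep the base-point and connectedness bookkeeping straight so that $\hU^\lin$ and $\hU^\aff$ genuinely appear as connected components of the restricted covers. The only genuinely non-formal phenomenon is the one isolated in the last paragraph: passing to universal covers can destroy injectivity, which is why the left-face maps must be flagged as open embeddings only conditionally.
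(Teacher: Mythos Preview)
Your proof is correct and follows essentially the same strategy as the paper's (which is itself quite terse). The only real difference is in the middle step: for the open embeddings $\hU^{\lin}\hookrightarrow\hM^{\lin}$ and $\hU^{\aff}\hookrightarrow\hM^{\aff}$, the paper invokes directly the realisation of $\hM^{\lin}$ as the $\s$-fiber of $\hx_0$ in the image groupoid $\Pi_1^{\lin}(\hM)\subset\GL_\Lambda(\htt^*)$ (and similarly for $\hM^{\aff}$ inside $\Aff_\Lambda(\htt^*)$). Since these image groupoids sit inside fixed ambient groupoids whose data restrict verbatim to $\hU$, the $\s$-fiber for $\hU$ is simply the part of the $\s$-fiber for $\hM$ lying over $\hU$, and the open embedding is immediate without any connected-component bookkeeping. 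Your route via the kernels $K^{\lin}$, $K^{\aff}$ and elementary covering-space theory is entirely equivalent, just slightly longer.

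For the left face both you and the paper argue the same way: $\hU^{\orb}\to\oM$ is the map of fibre products induced by $\oB_U\to\oB$ and $\hU^{\aff}\hookrightarrow\hM^{\aff}$, hence it is an open embedding exactly when $\oB_U\to\oB$ is. Note that the proposition as stated writes ``$B^{\aff}_U\to B^{\aff}$'' in the last sentence, whereas the proof (and your argument) concerns ``$\oB_U\to\oB$''; your reading via $\pi_1$-injectivity of $B^{\aff}_U\hookrightarrow B^{\aff}$ matches the proof, and the annulus-in-disk example you give is apt.
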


\begin{proof}
    The construction of the resolution shows that we have a commutative diagram
    \[
    \xymatrix{
    \hU \ar[d]\ar[r]^{\res} &  U \ar[d] \\
    \hM  \ar[r]^\res &  M 
    }
    \]
    where the left vertical arrow is an open embedding. The diagonal maps in \eqref{eq:Weyl:main-diagram-open}, with the exception of the ones on the left face of the diagram, arise from the identification of $\hM^{\lin}$ as the s-fiber of $\hx_0$ of the image $\Pi^{\lin}_1(\hM)\subset \GL(\htt^*)$ of the linear holonomy map (\ref{lin-action-transversal}). The bottom diagonal  map on the left face of the diagram is the map induced on universal covering spaces from the embedding $B_U^\aff\to B^\aff$, while the first top diagonal map is the one induced between fiber products 
    \[\oU=\oB_U\times_{B_U^{\aff}}\hU^{\aff}\to 
    \oM=\oB\times_{B^{\aff}}\hM^{\aff}.
    \]
    This map is an embedding if the map $\oB_U\to \oB$ is an embedding.
\end{proof}

\newpage

\section{The Weyl Group}
\label{sec:Weyl:group}

In this section we analyse further the structure of the leaf space of a PMCT $(M,\pi)$. We saw before that its leaf space $M/\cF_\pi$, can be identified with the leaf space $\hM/\hF$ of its resolution, and we denoted it by $B$, so we have
\[
\xymatrix{
\hM \ar[d]\ar[r]^{\res} &  M \ar[d] \\
B \ar@{=}[r] &  B 
}
\]
The space $B$ inherits an integral affine orbifold structure for each choice of proper integration. Any such choice induces the same classical orbifold structure on $B$, which coincides with the orbifold structure induced by the proper groupoid $\Hol(\hM,\hF)$. In this section \emph{we will work exclusively with this classical orbifold structure}. We will still need to fix an integration to specify the integral affine structure on $B$.

\subsection{The orbifold fundamental group}
\label{sec:The orbifold fundamental group}

The orbifold fundamental group(oid) will play a crucial role for us, so let us recall its definition. It is convenient to adopt a more general point of view and define the fundamental groupoid of a Lie groupoid $\cG$. For details we refer to \cite{BH99,MM05}. First, one defines a Haeflieger path on $\cG\tto X$ to be a sequence
\[ 
\eta=(\gamma_0,g_1,\gamma_1,\dots,g_k,\gamma_k)
\]
where $\gamma_i$ are paths in $X$, $\g_i\in\cG$, and
\[ \t(g_i)=\gamma_{i-1}(0),\quad \s(g_i)=\gamma_i(1). \]
One defines the source and target of $\eta$ by
\[ \s(\eta)=\gamma_k(0),\quad \t(\eta)=\gamma_0(1). 
\]
There is an obvious concatenation operation on the set $P(\cG)$ of all Haeflieger paths. Also, given two Haeflieger paths $\eta_1,\eta_2\in P(\cG)$ with the same end-points one defines a homotopy between them to be the equivalence relation $\sim$ generated by the following operations:
\begin{enumerate}[(i)]
    \item \emph{multiplication:} If $\gamma_i$ is a constant path, then
    \[ (\gamma_0,g_1,\gamma_1,\dots,g_i,\gamma_i,g_{i+1},\dots,g_k,\gamma_k)\sim(\gamma_0,g_1,\gamma_1,\dots,g_ig_{i+1},\dots,g_k,\gamma_k);\]
    \item \emph{concatenation:} If $g_i=1_{\gamma_i(1)}$, then
    \[ (\gamma_0,g_1,\gamma_1,\dots,\gamma_{i-1},g_i,\gamma_i,\dots,g_k,\gamma_k)\sim (\gamma_0,g_1,\gamma_1,\dots,\gamma_{i-1}\gamma_i,\dots,g_k,\gamma_k);\]
    \item \emph{deformation:} If there is a continuous family of Haeflieger paths $\eta_\varepsilon$ with fix end-point $\s(\eta_\varepsilon)=x_0$ and $\t(\eta_\varepsilon)=x_1$, then $\eta_0\sim \eta_1$.
\end{enumerate}
The fundamental groupoid of $\cG$ has arrows the equivalence classes of Haeflieger paths and will be denoted
\[
\Pi_1(\cG)\tto X.
\]
Multiplication is induced by concatenation. It is proved in \cite{MM05} that this is a Lie groupoid. \textbf{The fundamental group of $\cG$ based at $x$} is, by definition, the resulting isotropy group at $x$, 
\[ \pi_1(\cG,x):=\Pi_1(\cG)_x.\]
Since $\Pi_1(\cG)$ is transitive, the fundamental groups of $\cG$ based at different points are isomorphic. 

Notice that we have natural Lie groupoid morphisms
\begin{equation}\label{eq:fund-gpd-canonical-maps} 
\cG\to \Pi_1(\cG), \qquad \Pi_1(X)\to \Pi_1(\cG). 
\end{equation}
The second one is an instance of the fact that a groupoid morphism $\phi:\cH\to\cG$ induces a morphism between the fundamental groupoids $\phi_*:\Pi_1(\cH)\to \Pi_1(\cG)$. The later is actually an isomorphism whenever $\phi$ is a weak equivalence, so Morita equivalent groupoids have isomorphic fundamental groups and groupoids.

Let $B$ be an orbifold, with a choice of an orbifold atlas $\cB\tto X$. We will fix a base point $x_0\in X$ and we will call 
\[ \pi_1^\orb(B,b_0):=\pi_1(\cB,x_0)\] 
the \textbf{orbifold fundamental group} of $B$ based at $b_0=p(x_0)$. We have the map induced by the inclusion of the units
\begin{equation}\label{eq:p*-again} 
\pi_1(X,x_0) \to \pi_1^\orb(B,b_0). 
\end{equation} 
which can also be thought as being induced by the orbit space projection $p: X\to B$.

We recall also that any (effective) orbifold $B$ has a universal covering space 
\[ \oB\]
which is itself an (effective) orbifold. If $B$ is presented by an orbifold atlas $\cB\tto X$ and we fix $x_0\in X$, then the model for $\oB$, based at $x_0$, is the orbifold atlas given by the action groupoid
\begin{equation}
\label{eq:universal:abstract}
\cB\ltimes \Pi_1(\cB)(x_0,-) \tto \Pi_1(\cB)(x_0,-), 
\end{equation}
together with the (right) action of $\pi_1^\orb(B,b_0)=\pi_1(\cB,x_0)$.

We will be interested in two more specific situations that we now describe.

\subsubsection{Orbifolds presented by a holonomy groupoid}
Assume $B$ has orbifold atlas the holonomy groupoid of a proper foliation 
\[\cB=\Hol(X,\cF)\tto X. \]
Note that in this case every arrow in $\cB$ is represented by a path, from which it follows that \eqref{eq:p*-again} is a surjective morphism. This allows us to reinterpret $\pi^\orb_1(B,x_0)$ as the quotient of $\pi_1(X,x_0)$ modulo the equivalence relation, denoted $\sim_{\cF}$ and called \textbf{$\cF$-homotopy}, induced by the kernel of (\ref{eq:p*-again}):
\begin{equation}\label{eq:F-homotopy} K_{x_0}=\big\{[\gamma]\in \pi_1(X,x_0)\,|\, \gamma\sim_{\cF} x_0\big\}.
\end{equation}
Explicitly, a loop is $\cF$-homotopic to the constant path $x_0$ if it is homotopic to a concatenation of loops $\gamma_1\cdots \gamma_k$, each of the form
\[\gamma_i=\alpha_i \beta_i \alpha_i^{-1},\]
where $\beta_i$ is a leafwise loop with trivial holonomy, based at some $y_i$, and $\alpha_i$ is an arbitrary path connecting $y_i$ to $x_0$ (note that the $\beta_i$'s will be in general contained in different leaves).

In this case, the construction \eqref{eq:universal:abstract} of the universal covering space yields a connected space
\[ X^\orb:=\Hol(X,\cF)(x_0,-)\longrightarrow X\]
This is precisely the covering space of $X$ with group of deck of transformations $\pi_1^{\orb}(B)$,  i.e., the quotient of the universal covering space of $X$ modulo the kernel of (\ref{eq:p*-again}). 
The construction of the orbifold universal cover yields an orbifold atlas consisting of the holonomy groupoid of the pull-back foliation $\cF^{\orb}$ along $X^{\orb}\to X$. In particular, the underlying topological space is $\oB= X^{\orb}/\cF^{\orb}$ with atlas
\[ \Hol(X^{\orb}, \cF^{\orb})\tto X^{\orb}
\overset{p}{\longrightarrow} X^{\orb}/\cF^{\orb}.\]

For the next lemma recall (Corollary \ref{corolarry-normal}) that principal leaves of a proper foliation are the leaves with trivial holonomy.

\begin{lemma}
\label{lem: orbifold fundamental group isomorhism} 
Let $(X,\cF)$ be a proper foliation and on the leaf space $p:X\to B$ consider the orbifold defined by the atlas $\Hol(X,\cF)\tto X$. If all principal leaves are 1-connected then one has an isomorphism
\[ \pi_1^\orb(B,p(x_0))\simeq \pi_1(X,x_0). \]
\end{lemma}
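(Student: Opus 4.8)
The statement to prove is that if $(X,\cF)$ is a proper foliation with all principal leaves $1$-connected, then the canonical map $\pi_1(X,x_0)\to\pi_1^\orb(B,p(x_0))$ is an isomorphism, where $B$ has the orbifold structure presented by $\Hol(X,\cF)\tto X$. Since the discussion just before the lemma shows that the map $\pi_1(X,x_0)\to\pi_1^\orb(B,p(x_0))$ is always \emph{surjective} (every arrow of $\Hol(X,\cF)$ is represented by a path), the entire content of the lemma is \emph{injectivity}. Equivalently, one must show that the subgroup $K_{x_0}\subset\pi_1(X,x_0)$ of $\cF$-homotopy classes that are trivial, described in \eqref{eq:F-homotopy}, is itself trivial. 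So the plan is: reduce to showing that any loop $\gamma$ which is a product $\gamma_1\cdots\gamma_k$ of loops of the form $\alpha_i\beta_i\alpha_i^{-1}$, with $\beta_i$ a \emph{leafwise} loop having \emph{trivial holonomy} and $\alpha_i$ a path from the basepoint of $\beta_i$ to $x_0$, is already null-homotopic in $X$.

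The key geometric input is that a leafwise loop with trivial holonomy can be ``pushed off'' into nearby leaves, and in particular into a \emph{principal} leaf, without changing its homotopy class in $X$. Concretely: first I would observe that it suffices to handle a single generator $\alpha\beta\alpha^{-1}$, since the general element of $K_{x_0}$ is a product of these and $\pi_1(X,x_0)\to\pi_1^\orb(B)$ is a group homomorphism; injectivity of a surjective homomorphism follows once its kernel-generators die. Let $\beta$ be a leafwise loop based at $y$ in a leaf $L$, with trivial linear holonomy; using a tubular-neighborhood / foliated-chart argument along $\beta$ (a standard ``holonomy box'' covering of the compact loop $\beta$), the vanishing of the holonomy cocycle of $\beta$ lets one construct a homotopy, through loops in $X$, from $\beta$ to a loop $\beta'$ contained in a principal leaf $L'$ passing through a point $y'$ near $y$; the connecting path from $y$ to $y'$ can be absorbed into $\alpha$. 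At this point $\alpha\beta\alpha^{-1}$ is homotopic in $X$ to a loop $\alpha'\beta'(\alpha')^{-1}$ with $\beta'$ a loop in a principal leaf $L'$. Since principal leaves are $1$-connected, $\beta'$ is null-homotopic \emph{inside} $L'$, hence inside $X$, so $\alpha'\beta'(\alpha')^{-1}$ is null-homotopic in $X$. Therefore $K_{x_0}=\{1\}$ and the map is injective.

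For the step ``$\beta$ with trivial holonomy is homotopic in $X$ to a loop in a principal leaf,'' I would proceed as follows. Cover the loop $\beta:[0,1]\to L$ by finitely many foliated charts $U_j\cong D^p\times D^q$ ($j=1,\dots,N$, cyclically indexed), with $\beta([t_{j-1},t_j])\subset U_j$ for a partition $0=t_0<\dots<t_N=1$; the holonomy of $\beta$ computed from this chain of charts is a germ of diffeomorphism of the local transversal at $y$, and by hypothesis this germ is the identity (using that for a proper foliation the linear holonomy group is finite and the holonomy representation is faithful on the holonomy group, so ``trivial linear holonomy'' forces ``trivial holonomy''; this is precisely Corollary \ref{corolarry-normal}, which identifies principal leaves with trivial-holonomy leaves). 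Pick a transversal point $y'$ close to $y$ lying on a principal leaf $L'$ — principal leaves are dense since they form the regular part of the holonomy stratification. The identity germ of the holonomy means that following the plaque chain starting at $y'$ returns to $y'$; lifting $\beta$ plaque-by-plaque through these charts produces a loop $\beta'$ in $L'$ based at $y'$, and the family of lifts of $\beta$ through the intermediate transversal points sweeps out a homotopy in $X$ from $\beta$ (conjugated by the short path $y\rightsquigarrow y'$ inside the first chart) to $\beta'$.

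The main obstacle is making the ``push-off homotopy'' construction rigorous and genuinely leaf-preserving-up-to-homotopy-in-$X$: one needs the vanishing of holonomy (not just of \emph{linear} holonomy at first order) to guarantee that the plaque-chain lifting actually closes up, and one must be careful that the finitely many charts can be chosen compatibly along the compact loop and that the nearby transversal point $y'$ can be taken in a principal leaf. All of these are standard facts in foliation theory (existence of ``good'' foliated atlases, the holonomy pseudogroup, and for \emph{proper} foliations the finiteness and faithfulness of holonomy from Corollary \ref{corolarry-normal} and Proposition \ref{prop:stratification:foliation}); the work is in assembling them, not in any deep new idea. Once the single-generator case is done, the conclusion is immediate.
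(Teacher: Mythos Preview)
Your proposal is correct and follows essentially the same approach as the paper: reduce to showing $K_{x_0}=\{1\}$, handle a single generator $\alpha\beta\alpha^{-1}$ with $\beta$ a null-holonomic leafwise loop, push $\beta$ off into a nearby principal leaf using triviality of holonomy (the paper phrases this via a tubular neighborhood of the leaf rather than a chain of foliated charts, but the content is identical), and then invoke $1$-connectedness of principal leaves to contract $\beta$.
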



\begin{proof}
Let $K_{x_0}$ be the kernel of the morphism \eqref{eq:p*-again}. An element $[\gamma]\in K_{x_0}$ is a concatenation of loops 
of the form 
\[l=\sigma \beta \sigma^{-1},\]
where $\beta$ is a loop with trivial holonomy in the leaf $L$ containing the base point $x_0$ and $\sigma$ connects $x_0$ with the starting point of $\beta$. Let $\nu(L)\subset X$ be a tubular neighborhood of $L$. Since $\beta$ has trivial holonomy, pulling back $(\nu(L),\cF)$ along $\beta$ yields a foliation which is trivial near the zero section, so we can homotope $\beta$ to any close enough leaf. Since the principal locus is dense, we can assume that $\beta$ is in a principal leaf. By assumption, principal leaves are 1-connected so $\beta$ is contractible in its leaf. Therefore we conclude that $l$ is contractible and thus that $K_{x_0}$ is the trivial group. 
\end{proof}

\subsubsection{Good orbifolds}
\label{sec:good:orbifold}
Assume now that $B$ has orbifold atlas the action groupoid
\[ \cB= \Bgood\rtimes \Gamma\tto \Bgood \]
associated with a proper and effective action of a discrete group $\Gamma$ on a manifold $\Bgood$. The orbifold fundamental group has the explicit model $\pi_1(\Bgood\ltimes \Gamma, b'_0)$ defined as the collection of pairs $(g, \sigma)$ with $g\in \Gamma$ and where $\sigma$ is a path homotopy class in $\Bgood$ from $b'_0$ to $b'_0g $, with the composition
\begin{equation}\label{eq:fund-group-discrete-actions} 
(g_1, \sigma_1)(g_2, \sigma_2)= (g_1g_2, g_2^*(\sigma_1)\sigma_2).
\end{equation}
where $g_2^*(\sigma_1)$ is the $g_2$-translate of $\sigma_1$.
It follows right away that it fits into a short exact sequence
\begin{equation}\label{eq:pi1-orb-B-gamma} 
\xymatrix{
1\ar[r] & \pi_1(\Bgood, b'_0)\ar[r] & \pi_{1}^{\orb}(B,b_0)\ar[r] & \Gamma \ar[r]& 1
}
\end{equation}
In this case, the action groupoid \eqref{eq:universal:abstract} presenting the orbifold universal cover has base space a smooth manifold, namely the universal covering space of $\Bgood$ based at $b'_0$:
\[ \widetilde{B}^{\orb}= \widetilde{\Bgood}, \]
which we think of as consisting of path homotopy classes of paths in $\Bgood$ starting at $b'_0$.  
The resulting (right) action of $\pi_1^\orb(B,b_0)$ on this space can be described explicitly as:
\begin{equation}\label{eq:right action orbifold} \tau\cdot (g, \sigma)= g^*(\tau)  \sigma
\end{equation}
This action is proper and effective, and this can also be checked directly using (\ref{eq:pi1-orb-B-gamma}). However, the action is free if and only if the original action of $\Gamma$ on $\Bgood$ is free.

\subsubsection{Orbifold fundamental group of the leaf space of PMCTs}
We can now apply the previous discussion to our setting of PMCTs. Recall we are viewing the leaf space $B=\hM/\hF=M/\cF_\pi$ as a classical orbifold. 
The next result clarifies the missing piece in diagram (\ref{eq:diagram:?:in:diagram}).

\begin{proposition}
\label{prop:Morita:oB}
    Let $(M,\pi)$ be a Poisson manifold with a proper integration $(\cG, \Omega)\tto (M, \pi)$ and fix a base point $\hx_0= (x_0, \tt_0)\in \hM$. In the main diagram \eqref{eq:Weyl:main-diagram} $\oB$ is the orbifold universal covering of the leaf space $B$. In particular, $\pi_1^\orb(B)$ acts properly and effectively on $\oB$ and there is a Morita equivalence
    \begin{equation}
    \label{eq:Morita:orb}
\vcenter{\xymatrix{
 \Hol(\hM, \hF) \ar@<0.25pc>[d] \ar@<-0.25pc>[d]  & \ar@(dl, ul) & \hM^{\orb}\ar[dll]\ar[drr] & \ar@(dr, ur) &  \oB\rtimes\pi_1^{\orb}(B) \ar@<0.25pc>[d] \ar@<-0.25pc>[d]\\
\hM & & & &\, \oB }}
\end{equation}
\end{proposition}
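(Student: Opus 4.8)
The plan is to identify the abstract model \eqref{eq:universal:abstract} for the orbifold universal cover of $B$, built from the chosen atlas $\cB = \Hol(\hM,\hF)$, with the space $\hM^\orb$ and the groupoid $\oB \rtimes \pi_1^\orb(B)$ appearing in the main diagram. The key observation is that $B$ has two presentations as a classical orbifold: on one hand by the holonomy groupoid $\Hol(\hM,\hF) \tto \hM$, and on the other hand, via the Morita equivalence \eqref{eq:Morita:lin} (or \eqref{eq:Morita:aff}), by the action groupoid $B^\lin \rtimes \Gamma^\lin$ of a discrete group acting properly and effectively on a manifold. This is exactly the ``good orbifold'' situation of Section \ref{sec:good:orbifold}. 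Since Morita equivalent groupoids have isomorphic fundamental groups and isomorphic orbifold universal covers, I can compute everything in whichever presentation is most convenient and then transport the answer.

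First I would record that, by the discussion in Section \ref{sec:good:orbifold} applied to $\cB = B^\lin \rtimes \Gamma^\lin$, the orbifold universal cover of $B$ is $\oB$ = the universal covering manifold of $B^\lin$ (equivalently of $B^\aff$), equipped with a proper effective right action of $\pi_1^\orb(B)$; this matches the definition of $\oB$ given in Section \ref{sec:main:diagram}. Next, using the holonomy-groupoid presentation $\cB = \Hol(\hM,\hF)$, the abstract model \eqref{eq:universal:abstract} for the orbifold universal cover is the action groupoid $\Hol(\hM,\hF) \ltimes \Hol(\hM,\hF)(\hx_0,-) \tto \Hol(\hM,\hF)(\hx_0,-)$ together with the right $\pi_1^\orb(B)$-action. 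By the remark following \eqref{eq:F-homotopy}, the base space $\hM^\orb$ of this model is precisely the covering space $\hM^\orb := \Hol(\hM,\hF)(\hx_0,-) \to \hM$ whose deck group is $\pi_1^\orb(B)$, i.e. the quotient of $\widetilde{\hM}$ by the kernel of $\pi_1(\hM,\hx_0) \to \pi_1^\orb(B)$. The content then is to check that this $\hM^\orb$ coincides with the $\hM^\orb$ of the main diagram, defined there as the fiber product $\oB \times_{B^\aff} \hM^\aff = \oB \times_{B^\lin} \hM^\lin$. This follows because: (a) $\hM^\lin = \widetilde{\hM}/K^\lin$ is the covering with deck group $\Gamma^\lin$, and $K^\lin = \ker(h^\lin_{\hx_0})$ is exactly the subgroup of $\pi_1(\hM,\hx_0)$ that acts trivially on the transverse integral affine structure; (b) under the holonomy presentation, the kernel $K_{\hx_0}$ of $\pi_1(\hM,\hx_0) \to \pi_1^\orb(B)$ is the $\hF$-homotopy kernel, which lies inside $K^\lin$ (leafwise loops with trivial holonomy act trivially on $\htt^*$), so $\hM^\orb$ covers $\hM^\lin$; and (c) the fiber product description shows precisely that $\hM^\orb \to \hM^\lin$ is the pullback of $\oB \to B^\lin$, and since $\oB \to B^\lin$ is exactly the cover killing the remaining fundamental group $\pi_1(B^\lin)$ inside $\Gamma^\lin$'s worth of data, a diagram chase with the short exact sequences relating $\pi_1(\hM)$, $\Gamma^\lin$, $\pi_1(B^\lin)$ and $\pi_1^\orb(B)$ identifies the two constructions of $\hM^\orb$.

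With $\hM^\orb$ identified, the Morita equivalence \eqref{eq:Morita:orb} is obtained as follows. The abstract model gives an orbifold atlas for $\oB$ which is the holonomy groupoid $\Hol(\hM^\orb, \hF^\orb) \tto \hM^\orb$ of the pulled-back foliation, and simultaneously $\hM^\orb$ carries the principal $\Hol(\hF)$-action inherited from $\hM^\lin$ (or $\hM^\aff$), as noted around \eqref{eq:diagram:?:in:diagram}, with quotient $\oB$. This is a bibundle realizing a Morita equivalence between $\Hol(\hM,\hF) \tto \hM$ and the gauge groupoid of the principal action. Since $\hM^\orb \to \oB$ is a principal bundle for the discrete group $\pi_1^\orb(B)$ — this is exactly the ``good orbifold'' content: the action groupoid $\hM^\orb \ltimes \pi_1^\orb(B)$ on the left of \eqref{eq:universal:abstract}, pushed through the Morita equivalence, becomes the action groupoid $\oB \rtimes \pi_1^\orb(B)$ on $\oB$ — the gauge groupoid on the right of \eqref{eq:diagram:?:in:diagram} is identified with $\oB \rtimes \pi_1^\orb(B)$. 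Finally, properness and effectiveness of the $\pi_1^\orb(B)$-action on $\oB$ follow from the good-orbifold discussion in Section \ref{sec:good:orbifold}, via the action presentation $B \cong B^\lin/\Gamma^\lin$ and \eqref{eq:right action orbifold}.

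The main obstacle I expect is step (b)–(c): carefully verifying that the $\hF$-homotopy kernel $K_{\hx_0}$ sits inside $K^\lin$ and that the resulting tower of covers $\hM^\orb \to \hM^\lin \to \hM$ matches the fiber-product definition of $\hM^\orb$ in the main diagram, i.e. that $\hM^\orb = \oB \times_{B^\lin} \hM^\lin$ as covering spaces with their foliations. This requires keeping straight several short exact sequences — the one for $\pi_1^\orb(B)$ in \eqref{eq:pi1-orb-B-gamma}, the sequences defining $\Gamma^\lin$ and $K^\lin$ from \cite[Appendix B]{CFM-II}, and the monodromy/holonomy diagram of $\hF$ — and checking they fit together compatibly; once that bookkeeping is done, the Morita equivalence \eqref{eq:Morita:orb} is essentially formal from the good-orbifold machinery of Section \ref{sec:good:orbifold} together with Lemma \ref{lem: orbifold fundamental group isomorhism}.
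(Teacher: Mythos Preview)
Your approach is correct and is essentially the argument the paper has in mind: the paper does not spell out a proof of this proposition, but treats it as a consequence of the preceding constructions --- the good-orbifold presentation $B \cong B^{\lin}/\Gamma^{\lin}$ from \eqref{eq:Morita:lin}, the description of the orbifold universal cover for good orbifolds in Section~\ref{sec:good:orbifold}, and the fiber-product definition of $\hM^{\orb}$ in Section~\ref{sec:main:diagram}. Your plan to transport the abstract model \eqref{eq:universal:abstract} across the Morita equivalence \eqref{eq:Morita:lin} and match it with the fiber-product definition is exactly the right bookkeeping; the ``obstacle'' you flag in (b)--(c) is genuine but routine once one notes that a leafwise loop with trivial holonomy has trivial linear holonomy (so $K_{\hx_0}\subset K^{\lin}$) and then compares the two short exact sequences for $\pi_1^{\orb}(B)$ coming from \eqref{eq:pi1-orb-B-gamma} and from the holonomy presentation.

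One small correction: drop the reference to Lemma~\ref{lem: orbifold fundamental group isomorhism} at the end. That lemma concerns the special situation where all principal leaves are $1$-connected and gives $\pi_1^{\orb}(B)\cong \pi_1(\hM)$; it is neither available in general nor needed here. The Morita equivalence \eqref{eq:Morita:orb} follows purely from the good-orbifold machinery and the identification of $\hM^{\orb}$, without any hypothesis on the leaves.
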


Note also that the kernel $K=K_{x_0}$ of the homomorphism \ref{eq:p*-again} coincides with the normal subgroup of $\pi_1(\hM,\hx_0)$ associated with the covering space $\oM\to\hM$.

\subsection{The Weyl group}
\label{sec:Weyl:group:PMCT}
Given a Poisson manifold of proper type  $(M, \pi)$ and chosing $\hx=(x,\tt_x)\in\hM$, the kernel of the map
\[ \res_*: \pi_1(\hM,\hx)\to\pi_1(M,x)\]
will be denoted $\hW_{\hx}$. We recall that we view the leaf space $B$ of $(M,\pi)$ as a classical orbifold presented by $\Hol(\hF)$.

\begin{definition}
The Weyl group $\cW_{\hx}$ of $(M,\pi)$ at $\hx$  is the image of $\hW_{\hx}$ under the map 
\begin{equation}\label{eq:p*-again2} 
\pi_1(\hM,\hx) \to \pi_1^\orb(B,b).
\end{equation} 
\end{definition}


By passing from $\hM$ to $B$ we are able to make use of the action of $\pi_1^{\orb}(B,b)$ on the orbifold universal covering space $\oB$. In particular, we obtain a proper and effective action of $\cW_\hx$ on $\oB$. Our ultimate aim is to show that $\cW_{\hx}$ is a Coxeter group acting as a reflection group on $\oB$, in the sense of \cite{Davis83}.



Note that the Weyl group $\cW_\hx$ is a normal subgroup of $\pi_1^\orb(B,b)$ because (\ref{eq:p*-again2}) is surjective. The following commutative diagram explains the relationship between the various groups that appear in the definition: 
\begin{equation}
    \label{eq:splitting-weil-pi1M}
    \vcenter{
    \xymatrix{
1\ar[r] & \hW_{\hx}\ar[r]\ar@{-->>}[d] & \pi_1(\hM,\hx)\ar[r]_{\res_*}\ar@{->>}[d]^-{p_*} & \pi_1(M,x)\ar[r]\ar@/_1.2pc/[l] & 1\\
 & \cW_{\hx}\,  \ar@{^{(}->}[r] & \pi_1^{\orb}(B,b) & 
}}
\end{equation}
The top row splits canonically: we have an isomorphism $\res: \hM^{\reg}\overset{\sim}{\longrightarrow} M^{\reg}$ and $M^{\reg}\hookrightarrow M$
induces an isomorphism in $\pi_1$. Hence, we have
\[ \pi_1(\hM,\hx)\cong \pi_1(M,x)\ltimes \hW_{\hx}. \]
We will see later that the orbifold fundamental group of $B$ is also a semi-direct product with kernel $\cW_{\hx}$.

\begin{remark}[Base-points]\label{rem:base-point change}
All the covering spaces appearing in the main diagram are constructed using paths based at some $\hx=(x,\tt_x)$. Note that concatenation by a path $\sigma$ gives isomorphisms between the spaces in the main diagram based at $\sigma(0)=\hx$ and at $\sigma(1)=\hy$. Conjugation by $\sigma$ gives isomorphisms $\phi$ between the (orbifold) fundamental groups at $\hx$ and at $\hy$, that carry the groups $\cW_{\hx}$ and $\hW_{\hx}$ to $\cW_{\hy}$ and $\hW_{\hy}$. The isomorphisms between the spaces are $\phi$-equivariant. In other words, the path $\sigma$ provides an equivariant isomorphism between the main diagram based at $\hx$ and at $\hy$.

With that in mind, from now on we fix a base point $\hx_0= (x_0, \tt_0)$ with $x_0\in M^{\reg}$. We endow all the spaces with the resulting base points, denoted 
\[ b_0= p(\hx_0)\in B,\ x_0\in M,\ \widetilde{b}_0\in \oB, \textrm{etc}.\]
and we denote
\[ \hW=\hW_{\hx_0}, \quad \cW= \cW_{\hx_0}. \]
Furthermore, when we use fundamental groups, we omit base-points from the notation unless necessary. 
\end{remark}

\begin{example}
\label{ex:Weyl-group:gg*}
For $M= \gg^*$ the Lie algebra of a compact connected Lie group $G$, we claim that
\[ \hW\cong \cW\cong\pi_1^\orb(B) \cong W, \]
where $W=N(T)/T$ is the classical Weyl group of $G$ relative to the maximal torus $T\subset G$.

To prove this claim, recall from Example \ref{ex:linear:case} that the resolution map for $\gg^*$ is
\[ \res: G/T\times_{W} \tt^*\to \gg^*, \quad (gT, \xi)\mapsto \Ad_g(\xi). \]
Since the image is contractible, by  (\ref{eq:splitting-weil-pi1M}) we have 
\[ \hW\cong \pi_1(G/T\times_{W} \tt^*),\quad \cW\cong \pi_1^\orb(B), \] 
where $\pi_1(G/T\times_{W} \tt^*)\cong W$. 
Since the principal leaves are diffeomorphic to $G/T$, they are simply connected and Lemma \ref{lem: orbifold fundamental group isomorhism} implies that $\pi_1(G/T\times_{W} \tt^*)\cong \pi_1^\orb(B)$, and so the claim follows.

Observe that the Weyl group $\cW$ in this example appears as the fundamental group of the fiber of the resolution $\res^{-1}(0)=\cT(\gg)$:
\begin{equation}
\label{eq:pi1:tori-Grassmannian} 
\pi_1\left(\mathcal{T}(\gg)\right) \cong  N(T)/T= W.
\end{equation}
This fact will be generalized appropriately in Proposition
\ref{pro:non-trivial Weyl group elements}.

For a more concrete example let 
$M= \mathfrak{su}(2)^*$, which we can 
identify with $\R^3$. The coadjoint action of $\SU(2)$ is given, via the covering morphism $\SU(2)\to \SO(3)$, by  rotations. We let $\tt\subset \mathfrak{su}(2)$ be the span of the infinitesimal generator of the rotations around an axis and set $W$ to be the Weyl group of $\SU(2)$ relative to $T$.  We have the diffeomorphism
\begin{equation}\label{eq:models so(3)}\widehat{\mathfrak{su}(2)^*}\cong \left(\SU(2)/T\times \tt\right)/W\cong (S^2\times \R)/\Z_2,
\end{equation}
where $S^2\subset \R^3$ is the unit sphere and the non-trivial element of $\Z_2$ act as 
\begin{equation}\label{eq:action in so(3)}(\theta,z)\mapsto (-\theta,-z).
\end{equation} 
Under the isomorphism \eqref{eq:models so(3)} the resolution map becomes
\[\res:(S^2\times \R)/\Z^2\to \R^3,\quad [\theta,z]\mapsto z\theta, \]
its fiber over the origin is
\[(S^2\times \{0\})/\Z_2\cong \mathbb{RP}^2. \]
The Weyl group in this case is $\hW\cong \cW\cong\pi_1^\orb(B)\cong
\pi_1(\mathbb{RP}^2)\cong
\Z_2$.
\end{example}

\begin{example}
\label{ex:Weyl-group:G-Dirac}
Consider now the Cartan-Dirac structure $L_G$ on a compact simply connected Lie group $G$ determined by a fixed $\Ad$-invariant inner product on $\gg$. Recall from Example \ref{ex:Cartan:Dirac:baby} that, fixing a maximal torus $T\subset G$, the resolution can be identified with
\[ \widehat{G}=G/T\times_W T. \]
We claim that this twisted Dirac structure has Weyl group
\[\cW\cong\hW\cong \pi_1^\orb(B)\cong \Lambda_G\rtimes W,\]
where $W=N(T)/T$ acts on $\Lambda_G=\ker(\exp:\tt\to T)$ via the standard representation $W\to \GL(\tt)$. 
Since, by assumption, the base of the resolution map
\[
\res:G/T\times_W T\to G
\]
is simply connected, by (\ref{eq:splitting-weil-pi1M}) we have 
\[ \hW\cong \pi_1(G/T\times_W T),\quad \cW\cong \pi_1^\orb(B). \] 
Since the principal leaves are diffeomorphic to $G/T$, they are simply connected and Lemma \ref{lem: orbifold fundamental group isomorhism} implies that $\pi_1(G/T\times_W T)\cong \pi_1^\orb(B)$.

We are left to prove that
\[ \pi_1(G/T\times_W T)\simeq \Lambda_G\rtimes W. \]
This follows by observing that we have an isomorphism
\[ 
(G/T\times \tt)/(\Lambda_G\rtimes W)\cong G/T\times_W T, \quad
[(gT,\xi)]\mapsto [(gT,\exp(\xi))],
\]
where on the left side, $\Lambda_G$ acts trivially on $G/T$ and by translations on $\tt$.

For a concrete example, the case $G=\SU(2)$. To obtain an explicit expression of the resolution map we let $T$ be the diagonal matrices in $\SU(2)$ and we regard $\SU(2)$ as the unit sphere in $\C^2$ with coordinates $z_1,z_2$. Then $T$ is identified with the unit circle $\S^1$ with coordinate $u$. The resolution map becomes 
\[\res:\left(\SU(2)\times \S^1\right)/N(\S^1)\to \SU(2),\quad [z_1,z_2,u]\mapsto (uz_1\bar{z}_1+\bar{u}z_2\bar{z}_2,(\bar{u}-u)z_1z_2).\]
Its fiber at $\pm I$ is 
\[ \left(\SU(2)\times \{\pm 1\}\right)/N(\S^1)\cong \S^2/{\Z_2}\cong \mathbb{RP}^2. \]
Note that the Weyl group in this case is
\[\cW\cong\hW\cong \pi_1^\orb(B)\cong \Z\rtimes\Z_2,\]
where $-1\in\Z_2$ acts on $\Z$ by multiplication.

Observe that we can also compute the fundamental group of
\[ \left(\SU(2)\times \S^1\right)/N(\S^1)\cong \S^2\times_{\Z_2} \S^1,\]
using the Seifert-Van Kampen theorem, and this recovers the well-known isomorphism 
\[\Z\rtimes \Z_2\cong \Z_2*\Z_2.\]

\end{example}

\begin{example}
\label{ex:Weyl-group:G-Dirac-minus}
    Let us consider the twisted Dirac structure $(M,L)$ obtained by removing from the Cartan-Dirac structure on $\SU(2)$ the point $-I$. Then it follows from the previous example that the resolution is 
    \[ \res:\hM=\S^2\times_{\Z_2} (\S^1\backslash \{-1\})\to M=\SU(2)\backslash \{-I\}.\]
    Since the base of the resolution and the principal fiber -- still $\S^2$ -- are simply connected, we have as in the previous examples, that
    \[ \cW\cong\hW\cong \pi_1^\orb(B)\cong \pi_1(\hM)\cong\Z_2. \]

    Note that this example does not fit into classical Lie theory, so one does not have a classical notion of Weyl group.
\end{example}

\subsection{Abstract reflections}
\label{sec:abstract:reflections}
Our next aim is to show that $\cW$ is generated by certain ``reflections'', which will be introduced below abstractly as certain elements of order two. In the next section we will show that they can be realized as actual geometric reflections on $\oB$. 

The most obvious way to exhibit elements of the Weyl group is by focusing on the fundamental groups of the fibers of $\res$. 

\begin{proposition}\label{pro:non-trivial Weyl group elements}
Fix a point  $\hx= (x,\tt_x)\in \hM$ and let $i:\res^{-1}(x)\hookrightarrow \hM$ be the inclusion. Then there is a canonical diffeomorphism 
\[ \res^{-1}(x)\cong G/N(T),\] 
where $G=\cG_x^0$ and $T\subset G$ is the torus corresponding to $\tt_x$. The composition
\[
\xymatrix{\pi_1(\res^{-1}(x),\hx)\ar[r]^-{i_*}& \pi_1(\hM,\hx) \ar[r]^{p_*}& \pi_1^{\orb}(B,b)}\]
gives a canonical injective homomorphism 
\[ W\hookrightarrow \cW_{\hx}\, ,\]
where $W=N(T)/T$ is the Weyl group.
\end{proposition}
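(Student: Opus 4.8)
The first step is to establish the diffeomorphism $\res^{-1}(x)\cong G/N(T)$. Recall that by definition $\res^{-1}(x) = \{(x,\tt)\mid \tt\subset \gg_x \text{ maximal torus}\} = \cT(\gg_x)$, the space of maximal tori in the isotropy Lie algebra $\gg_x$. Since $(M,\pi)$ is of proper type, $\gg_x$ is a Lie algebra of compact type, so we may fix a compact integration; in fact, by the Hamiltonian normal form (Theorem \ref{thm-lf-Poisson-v2}) around the leaf $S$ through $x$, the relevant integrating group at $x$ is $G = \cG_x^0$, a compact connected Lie group with Lie algebra $\gg_x$. All maximal tori of $\gg_x$ are conjugate under $G$ (since $\gg_x^{\ss}$ and $\zz(\gg_x)$ decompose $\gg_x$ and the conjugacy is classical for the semisimple part), and the stabilizer of a fixed $\tt_x$ under the adjoint action is exactly $N(T)$, the normalizer of the corresponding torus $T\subset G$. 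This is precisely the map \eqref{eq:t:g} from Example \ref{ex:linear:case}, now applied fiberwise; one checks it is a diffeomorphism onto $\cT(\gg_x) = \res^{-1}(x)$, consistent with the local model identification $\widehat{U}\simeq \hQ/G$ of Remark \ref{rem:life:easy}.

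The second step identifies $\pi_1(\res^{-1}(x)) = \pi_1(G/N(T))$ with $W = N(T)/T$. The fibration $N(T)/T \to G/T \to G/N(T)$ gives a long exact sequence; since $G/T$ is simply connected (the standard fact for a compact connected Lie group modulo a maximal torus) and $N(T)/T = W$ is discrete, the sequence yields $\pi_1(G/N(T)) \cong \pi_0(N(T)/T) = W$. This is exactly \eqref{eq:pi1:tori-Grassmannian} in the example, now in its general fiberwise form.

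The third and main step is to show that the composite $W \cong \pi_1(\res^{-1}(x),\hx) \xrightarrow{i_*} \pi_1(\hM,\hx) \xrightarrow{p_*} \pi_1^{\orb}(B,b)$ is injective and lands in $\cW_{\hx}$. That the image lies in $\cW_{\hx}$ is immediate from the definition: $\res \circ i$ is the constant map to $x$, so $\res_* \circ i_* = 0$, hence $i_*(\pi_1(\res^{-1}(x))) \subset \ker(\res_*) = \hW_{\hx}$, and applying $p_*$ lands in $\cW_{\hx}$ by definition. The injectivity is the crux. For this I would argue locally, reducing via the Hamiltonian normal form (and the functoriality of the main diagram, Proposition \ref{rk:functoriality-main-diagram}) to the linear case $M = \gg_x^*$, where by Example \ref{ex:Weyl-group:gg*} one has $\cW \cong \pi_1^{\orb}(B) \cong W$ and the composite in question is visibly the identity: the fiber $\res^{-1}(0) = \cT(\gg_x)$ carries $\pi_1 \cong W$ via \eqref{eq:pi1:tori-Grassmannian}, and by Lemma \ref{lem: orbifold fundamental group isomorhism} (applicable because the principal leaves $G/T$ of the resolved foliation are simply connected) the map $p_*: \pi_1(\hM) \to \pi_1^{\orb}(B)$ is an isomorphism. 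Since the inclusion $U^{\reg}\hookrightarrow M^{\reg}$ and the local model are compatible with the global structure, an element of $W$ in the kernel globally would already be trivial in the local model, forcing it to be trivial. The main obstacle I anticipate is making this localization argument fully rigorous: one must check that passing from the global $(M,\pi)$ to a saturated neighborhood $U$ of the leaf through $x$ does not kill the relevant $\pi_1$-classes, i.e., that the map $\pi_1(\widehat{U}) \to \pi_1(\hM)$ (or rather the induced maps on orbifold fundamental groups of the leaf spaces) is injective on the image of $\pi_1(\res^{-1}(x))$. This should follow because $\res^{-1}(x)$ already sits inside $\widehat{U}$ and the relevant loops can be realized entirely within $\widehat{U}$, combined with the fact that the covering $\hM^{\orb}\to \hM$ restricts correctly over $\widehat{U}$ by Proposition \ref{rk:functoriality-main-diagram}; alternatively one invokes a direct homotopy argument using that a loop in $\res^{-1}(x)$ that bounds a disk in $\hM$ can be pushed into the principal locus (via the retraction structure of the local model) where such loops are known to bound, contradicting its nontriviality in $W$ unless it was already trivial. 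Finally, one checks the homomorphism property is automatic since all maps involved are group homomorphisms.
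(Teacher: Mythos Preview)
Your first two steps (the diffeomorphism $\res^{-1}(x)\cong G/N(T)$ and the identification $\pi_1(G/N(T))\cong W$) match the paper. The argument that the image lands in $\cW_{\hx}$ is also correct and essentially as in the paper.

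The substantive difference is in the injectivity argument. The paper does \emph{not} localize to the Hamiltonian model. Instead it uses the linear holonomy representation
\[
h^{\lin}: \pi_1(\hM,\hx)\to \GL(\tt_x)
\]
of the flat tautological bundle $\htt\to\hM$. Two observations make this decisive: (a) $h^{\lin}$ factors through $p_*:\pi_1(\hM)\to\pi_1^{\orb}(B)$ (since the flat structure on $\htt$ descends to the orbifold $B$), and (b) the composition $h^{\lin}\circ i_*$ is the holonomy of the restricted bundle $\htt|_{\res^{-1}(x)}$, which is the tautological bundle over $G/N(T)$; pulling back to $G/T$ trivializes it, so its holonomy is the standard representation $W\to\GL(\tt)$, which is faithful. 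Hence $h^{\lin}\circ i_*$ is injective, and since $h^{\lin}$ factors through $p_*$, so is $p_*\circ i_*$.

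Your localization route has the gap you yourself flag: functoriality (Proposition~\ref{rk:functoriality-main-diagram}) only gives a \emph{map} $\pi_1^{\orb}(B_U)\to\pi_1^{\orb}(B)$, with no injectivity in general, and your suggested fixes (restricting the orbifold cover, or a homotopy push into the principal locus) are not fleshed out. The paper's holonomy argument sidesteps this entirely: rather than comparing local and global orbifold fundamental groups, it produces a single globally defined representation that already detects $W$ on the fiber. This is both shorter and avoids the delicate localization.
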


\begin{proof} For simplicity of notations let $\tt= \tt_x$, $\gg= \gg_x$, $G= \G_x^{0}$, and $T= T_x$. By definition, $\res^{-1}(x) =\mathcal{T}(\gg)$ and the canonical isomorphism in the statement is
\[ G/N(T)\diffto \cT(\gg),\quad gN(T)\mapsto \Ad_g(\tt). \]

We claim that the map $i_*$ 
is injective. For that we use the holonomy representation associated to the flat tautological bundle $\mathscr{T}$ over $\hM$,
\[ h^{\lin}: \pi_1(\hM, \hx)\to \GL(\tt).\]
More precisely, we will show that $h^{\lin}\circ i_*$ is injective. This composition is, of course, the holonomy map of the flat bundle $\mathscr{T}|_{\res^{-1}(x)}$. In turn, 
$\mathscr{T}|_{\res^{-1}(x)}$ is identified with  the tautological bundle over $G/N(T)$, call it $E$. This flat bundle can be trivialized by pulling it back to the universal cover  
\[ G/T\to G/N(T).\]
We see that, conversely, $E\to G/N(T)$ can be described, as a flat bundle, as the quotient of the trivial bundle $G/T\times \tt$ modulo the diagonal action of $W$, where the action on the second factor is via the standard representation $W\to \GL(\tt)$. 
This is similar to the computation of the linear holonomy in Section \ref{ex:model main diagram}.
The holonomy of $E\to G/N(T)$ is the the standard representation of $W$.
Hence $i_*$ is injective. 

On the other hand, recall that $h^{\lin}$ factors through $p_*$:
\[ \xymatrix{
\pi_1(\hM,\hx) \ar[r]^-{h^{\lin}} \ar[d]_{p_*} & \Gamma^{\lin}\subset \GL(\tt) \\
\pi_{1}^{\orb}(B,b) \ar[ru] & 
} \]
Therefore, the previous argument implies also that $p_* \circ i_*$ is injective. 
\end{proof}

\begin{example}\label{ex:W-for-Hamiltonian-model} When $M= Q/\gg^*$ is the Hamiltonian local model, if one uses points $Q$-related to $0\in \gg^*$, then the previous proposition gives a copy of the Weyl group $W^0:=N(T)\cap G^0/T$ inside the Weyl group of $M$. Later, in Proposition \ref{prop:Morita:Weyl} (see also Remark \ref{rem:connecteness}), we will show that Weyl groups are Morita invariant, so one finds $\cW\cong W^0$.
\end{example}

\medskip

When $\hx\in\hM$ is a subregular point one has $\gg_x\cong \mathfrak{su}(2)\oplus \zz_x$ with $\zz_x$ abelian,
 the fiber over $x$ is
\[ \res^{-1}(x)=\mathcal{T}(\su(2))\cong \mathbb{RP}^2,\]
so has fundamental group $\Z_2$. The last proposition then shows that at any such subregular point one obtains a non-trivial involution
\[\tau_{\hx}\in \cW_{\hx}.\]
This will be called the \textbf{subregular reflection} at $\hx$. 


\begin{definition}\label{def:abstract-reflections}
An  \textbf{abstract reflection} in $\cW$ is any element obtained from a subregular reflection $\tau_{\hx}\in \cW_{\hx}$ by transporting it along a path from $\hx$ to $\hx_0$.
\end{definition}

Note that each subregular reflection $\tau_{\hx}$ does not give rise to a unique abstract reflection in $\cW$. Rather, it produces a canonical conjugacy class of involutions which only depends on the subregular strata $\Sigma$ through $\hx$. 

Similarly, one shall make use of abstract reflections in $\hW$, induced by the order two elements $\widehat{\tau_{\hx}}\in \hW_{\hx}$ in the image of $i_*:\pi_1(\res^{-1}(x),\hx)\to \pi_1(\hM,\hx)$.

\begin{example}
\label{ex:local:model:Weyl:continued}
In the case $M= \gg^*$, fixing a maximal torus $\tt\subset\gg$, we saw in Example \ref{ex:Weyl-group:gg*} that the Weyl group $\cW$ is isomorphic to the classical Weyl group $W$ of $G$ relative to $\tt$. The abstract reflections are exactly the standard reflections associated to the root hyperplanes in $\tt$. Indeed, if $\xi$ is a subregular point on a root hyperplane $\mathcal{H}$, then $\res^{-1}(\xi)$  embedds canonically in $\res^{-1}(0)\cong G/N(T)$ as the set of maximal tori of $\gg$ contained in $\gg_\xi$. The generator of the fundamental group of this embedded $\mathbb{RP}^2$ is the standard reflection on $\mathcal{H}$. The straight path connecting  $\widehat{\xi}:=(\xi,\tt)$ with $(0,\tt)$ conjugates  $\tau_{\widehat{\xi}}$ to the standard reflection on $\cH$.

The fact that the standard reflections generate the classical Weyl group generalizes as follows.
\end{example}

\begin{proposition}\label{prop:generation-by-reflections}
For any Poisson manifold of proper type $(M,\pi)$ the Weyl group $\cW$ is generated by the abstract reflections.
\end{proposition}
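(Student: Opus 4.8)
The plan is to reduce the statement to a fact about codimensions in the orbifold $B$, by means of the main diagram and Lemma \ref{lem:stratif-lemma-4}. The crucial observation is that the Weyl group $\cW$ lives inside $\pi_1^\orb(B)$, which, by Proposition \ref{prop:Morita:oB}, acts properly and effectively on the orbifold universal cover $\oB$, and $\oB$ is a \emph{manifold}. So one would like to compare $\cW$ with the fundamental group of a suitable open submanifold of $\oB$ obtained by deleting the preimages of the ``large'' strata, and show that deleting the reflection hyperplanes (the images of $\res^{-1}(\text{subregular})$) already kills everything. The abstract reflections are precisely the classes associated with the connected components of (the preimages in $\oB$ of) the subregular strata of $B$, via Proposition \ref{pro:non-trivial Weyl group elements} and the definition of subregular reflection.

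\textbf{Main steps.} First I would localize using the Hamiltonian normal form: by Theorem \ref{thm-lf-Poisson-v2} and the functoriality of the main diagram (Proposition \ref{rk:functoriality-main-diagram}), it suffices to understand the generation statement near each point, and the $Q$-relatedness together with Morita invariance of the Weyl group (Example \ref{ex:W-for-Hamiltonian-model}) reduces the local picture to $M=\gg^*$ with $\gg$ of compact type. For $M=\gg^*$ the claim is classical — Example \ref{ex:local:model:Weyl:continued} already identifies $\cW\cong W$ and the abstract reflections with the standard root reflections, which generate $W$ by the structure theory of compact Lie groups. Second, to globalize, I would pass to $\oB$ and consider the open set
\[
\oB^\circ := \oB \setminus \bigcup_{\text{subregular }\widehat{\Sc}} q^{-1}(\widehat{\Sc}),
\]
where $q:\hM^\orb\to\oB$ is the quotient and $\widehat{\Sc}$ ranges over the codimension-one strata of $\cS(\hM,\hL)$ lying over subregular infinitesimal strata of $M$ — recall from Corollary \ref{cor:codimension-rank-stratification} these preimages are exactly the codimension-one strata. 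The complement $\oB\setminus\oB^\circ$ is a (locally finite) union of codimension-one integral affine submanifolds. The point is that $\cW$ is generated by the abstract reflections if and only if the composite $\pi_1(\oB^\circ)\to\pi_1(\oB)=1$ is surjective \emph{after} we quotient out the normal subgroup generated by the abstract reflections — i.e. the abstract reflections together with the loops coming from $\oB^{\reg}$ (the preimage of $B^{\reg}$) generate $\pi_1^\orb(B^\reg)$, hence generate $\cW$ modulo the image of $\pi_1(M^\reg)\hookrightarrow\pi_1(M)$, which is trivial in $\cW$ by the splitting \eqref{eq:splitting-weil-pi1M}.

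\textbf{Execution via homotopy excision.} Concretely, I would filter by codimension: let $\oB_{\le k}\subset\oB$ be the union of strata of the pulled-back stratification of codimension $\le k$ (using Theorems \ref{thm:compatibility with stratification} and \ref{thm:compatibility with inf-stratification}, which guarantee this is a genuine stratification of $\hM$, hence descends to $\oB$ since the strata are transverse integral affine suborbifolds of smooth type by Theorem \ref{thm:canonical-orbifold-stratifications}). Since $\oB$ is simply connected and there are no codimension-$1$ or codimension-$2$ strata of the infinitesimal stratification (Theorem \ref{thm:inf-strat}, \eqref{eq:compute:Sigma12}), and subregular strata are codimension $3$ in $M$ (Corollary \ref{corollary: infinitesimal-stratum-conormal}) but codimension $1$ in $\hM$ (Corollary \ref{cor:codimension-rank-stratification}), the first nontrivial removal happens exactly at the subregular level. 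Crossing a single subregular hyperplane in $\oB$, a standard meridian loop is, by the local model, conjugate to a subregular reflection $\widehat\tau_{\hx}$ — this is the content of Proposition \ref{pro:non-trivial Weyl group elements} applied near $\hx$, where $\res^{-1}(x)\cong G/N(T)$ and $\pi_1(\mathcal{T}(\su(2)))=\Z_2$. Then a van Kampen / general-position argument shows $\pi_1(\oB^\circ)$ surjects onto $\pi_1(\oB\setminus\oB_{\le 2})$ with kernel normally generated by these meridians, and finally $\pi_1(\oB\setminus\oB_{\le 2})\to\pi_1(\oB)=1$ is an isomorphism by the codimension-$\ge 4$ version of the excision argument in Lemma \ref{lem:stratif-lemma-4} (adapted to orbifolds, or rather applied directly to the manifold $\oB$). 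Chasing this back through \eqref{eq:splitting-weil-pi1M} gives that $\cW$ is generated by the images of the meridians, i.e. by the abstract reflections.

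\textbf{Expected main obstacle.} The delicate point is the precise identification, on $\oB$, of the meridian loop around a subregular stratum with an abstract reflection as defined in Definition \ref{def:abstract-reflections} — one must check that the loop that ``goes around'' the codimension-one wall $q^{-1}(\widehat\Sc)$ in $\oB$ is, after transport to the base point $\hx_0$, exactly in the conjugacy class of $\tau_{\hx}$, and that these meridians (for all components of all subregular strata) together with the trivial contribution from $M^\reg$ exhaust $\pi_1^\orb(B^\reg)$. This is essentially a local computation in the Hamiltonian model $\mu^{-1}(\su(2)^*\oplus\zz^*)/G$, where it reduces to the $\su(2)^*$ case worked out in Examples \ref{ex:Weyl-group:gg*} and \ref{ex:local:model:Weyl:continued}; the bookkeeping of base points and conjugacy classes (Remark \ref{rem:base-point change}) is the part most likely to hide a subtlety.
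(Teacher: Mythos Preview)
Your instinct to use a transversality/codimension filtration is close to the paper's approach, but the execution on $\oB$ and the ``meridian loop'' mechanism are both wrong.

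The fatal problem: the subregular strata pull back to \emph{codimension-one} submanifolds of $\hM$ and of $\oB$ (Corollaries~\ref{cor:codimension-rank-stratification} and~\ref{cor:subregular:leaf:space}), not codimension two. A codimension-one submanifold has no meridian loop---removing it locally disconnects the space rather than creating $\pi_1$. Indeed $(\oB)^{\reg}$ is typically disconnected (its connected components later turn out to be the open Weyl chambers). Moreover $\pi_1(\oB)=1$, so $\cW\subset\pi_1^{\orb}(B)$ is a group of deck transformations of $\oB\to B$, not a subgroup of $\pi_1$ of any subspace of $\oB$; your chain $\pi_1(\oB^\circ)\to\pi_1(\oB)$ cannot detect $\cW$ at all. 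The codimension bookkeeping is also off: strata beyond subregular have codimension $\ge 2$ in $\oB$ (e.g.\ $\gg_x^{\ss}\cong\su(3)$ gives codimension $2$), not $\ge 4$, so the excision step as you wrote it does not hold. Finally, your appeal to Morita invariance (Example~\ref{ex:W-for-Hamiltonian-model}) is forward-referencing Proposition~\ref{prop:Morita:Weyl}, whose proof \emph{uses} Proposition~\ref{prop:generation-by-reflections}.

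The paper works on $\hM$ directly (Lemma~\ref{lemma:prop:generation-by-reflections}). One takes a loop $\gamma$ in $\hM$, makes it transverse to the codimension-one strata $\res^{-1}(\Si)$ for $\Si$ subregular and disjoint from the codimension $\ge 2$ strata of $\res^{-1}(\cSi(M,\pi))$, then eliminates the finitely many crossings one at a time. The key geometric mechanism---entirely absent from your proposal---is that the normal line bundle to $\hSi$ is \emph{non-orientable along the embedded loop $\kappa$ representing $\widehat\tau_{\hx}$} (since $[\kappa]$ is the nontrivial class in $\pi_1(\res^{-1}(x))\cong\pi_1(\mathbb{RP}^2)\cong\Z_2$). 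This yields an embedded M\"obius band transverse to $\hSi$ along $\kappa$; writing $\gamma=\sigma_1\sigma_2$ at the crossing point, one factors $[\gamma]=[\sigma_1\kappa\sigma_1^{-1}]\cdot[\sigma_1\kappa\sigma_2]$ and homotopes $\sigma_1\kappa\sigma_2$ off the zero section inside the M\"obius band, reducing the number of crossings by one. The M\"obius band is precisely the substitute for your nonexistent meridian, and is the reason the factored-off element has order two.
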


The proposition follows from the following lemma.

\begin{lemma}\label{lemma:prop:generation-by-reflections}
Any element in $\pi_1(\hM)$ can be written as a product of abstract reflections in $\hW$ and an element in the image of the splitting in (\ref{eq:splitting-weil-pi1M}). 
\end{lemma}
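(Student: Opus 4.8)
\textbf{Proof proposal for Lemma \ref{lemma:prop:generation-by-reflections}.}

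The plan is to argue by induction on the codimension of the infinitesimal strata that a loop meets, using the stratified structure of $\hM$ together with the transversality/cleanness properties of $\res$ established in Theorems \ref{thm:compatibility with inf-stratification} and \ref{thm:compatibility with stratification}. The key observation is that the regular locus $\res^{-1}(M^\reg)$ is open and dense in $\hM$ (Theorem \ref{thm:resolution:Dirac}), that by Corollary \ref{cor:codimension-rank-stratification} the preimage of a subregular stratum $\res^{-1}(\Si)$ is of codimension one in $\hM$, while by Theorem \ref{thm:compatibility with inf-stratification}(ii) all strata $\res^{-1}(\Si)$ with $\Si$ of higher codimension have codimension at least two in $\hM$. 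Hence, by transversality (representing homotopy classes by smooth maps and perturbing), any loop in $\hM$ based at $\hx_0\in\res^{-1}(M^\reg)$ can be homotoped so that it misses all $\res^{-1}(\Si)$ with $\codim\Si\ge 3$, i.e.\ it lies entirely in the open set $\hM^{\ge 2} := \res^{-1}(M^\reg\cup M^\subreg)$, which is the complement in $\hM$ of a codimension $\ge 2$ subset (so $\pi_1(\hM^{\ge 2})\to\pi_1(\hM)$ is surjective by Lemma \ref{lem:stratif-lemma-4} applied to the rank stratification $\res^{-1}(\cSi(M,\pi))$).

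Next I would analyze $\pi_1$ of $\hM^{\ge 2}$. This space is obtained from the regular locus $\hM^\reg := \res^{-1}(M^\reg)$ by gluing in tubular neighborhoods of the codimension-one pieces $\res^{-1}(\Si)$, $\Si\in\cSi(M,\pi)$ subregular. Using the Hamiltonian local normal form (Theorem \ref{thm-lf-Poisson-v2}) near a subregular stratum, the situation is modeled on $\gg_x^\ss\cong\su(2)$, i.e.\ locally one is looking at the $\widehat{\su(2)^*}\cong (S^2\times\R)/\Z_2$ picture from Example \ref{ex:Weyl-group:gg*} crossed with a product of regular factors. In that model a small loop encircling the codimension-one wall $\res^{-1}(\Si)$ is precisely (conjugate to) the subregular reflection $\widehat\tau_{\hx}\in\hW_{\hx}$, the generator of $\pi_1(\mathbb{RP}^2)=\Z_2$ coming from the fiber $\res^{-1}(x)=\cT(\su(2))$. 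So, applying a Seifert--van Kampen argument to the decomposition of $\hM^{\ge 2}$ into $\hM^\reg$ together with the tubular neighborhoods of the subregular walls, every element of $\pi_1(\hM^{\ge 2})$ can be written as a product of the wall-encircling loops (the abstract reflections in $\hW$, by Definition \ref{def:abstract-reflections}) together with an element coming from $\pi_1(\hM^\reg)\cong\pi_1(M^\reg)\cong\pi_1(M)$, i.e.\ an element in the image of the splitting in \eqref{eq:splitting-weil-pi1M}.

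Finally, one must be careful that the loops in $\pi_1(\hM^\reg)$ that appear really land in the image of the splitting $\pi_1(M)\hookrightarrow\pi_1(\hM)$, and that conjugating a subregular reflection by an arbitrary path from $\hx$ back to $\hx_0$ (which moves it out of $\hW_{\hx}$) stays within $\hW$ modulo the splitting — but this is automatic since $\hW$ is normal in $\pi_1(\hM)$ with complement the splitting image, by the short exact sequence in \eqref{eq:splitting-weil-pi1M}. To deduce Proposition \ref{prop:generation-by-reflections}, one simply applies $p_*:\pi_1(\hM)\to\pi_1^\orb(B)$ to the conclusion of the lemma: the image of the splitting part lands in $p_*(\res^{-1}(M^\reg)\text{-loops})$, but since any regular loop $\gamma$ satisfies $\res_*[\gamma]=[\res\circ\gamma]$ and the point is that $p_*$ kills exactly $K=K_{\hx_0}$, one sees directly from \eqref{eq:splitting-weil-pi1M} that $\cW$ is the image of $\hW$, hence is generated by the images of the abstract reflections in $\hW$, which are the abstract reflections in $\cW$. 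The main obstacle I anticipate is making the local Seifert--van Kampen step around the subregular walls fully rigorous: one needs that the normal disk bundle structure of $\res^{-1}(\Si)$ in $\hM$ is understood well enough (via the $\su(2)$ Hamiltonian local model) that the meridian loop is genuinely the generator $\widehat\tau_{\hx}$ and not some power or a trivial loop, and that the identification is natural enough to be transported coherently to the base point — this is where the very clean intersection property of $\res$ with the strata, and the explicit fiber computation $\pi_1(\cT(\su(2)))=\Z_2$, do the essential work.
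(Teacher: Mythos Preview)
Your overall strategy matches the paper's: first use transversality and Corollary \ref{cor:codimension-rank-stratification} to push the loop into $\res^{-1}(M^{\reg}\cup M^{\subreg})$, then deal with the finitely many transverse crossings of the codimension-one walls using the local $\su(2)$ model. So the reduction step is fine.

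The gap is in your treatment of the crossings. You speak of ``a small loop encircling the codimension-one wall'' and identify it with the subregular reflection, but in codimension one there is no meridian loop: a small transverse disk is an interval, not a circle, and its boundary is two points, not a loop. The subregular reflection $\widehat{\tau}_{\hx}$ is not a loop \emph{around} the wall $\res^{-1}(\Si)$ but a loop \emph{inside} it, namely the generator of $\pi_1(\res^{-1}(x))=\pi_1(\mathbb{RP}^2)$. A Seifert--van Kampen argument with $U=\hM^{\reg}$ and $V$ a tubular neighbourhood of the wall would require you to analyse $\pi_1(U\cap V)$, which is the punctured normal line bundle over $\res^{-1}(\Si)$; this is where the non-orientability of that line bundle enters, and you have not isolated it.

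The paper's argument is more direct and pinpoints exactly this. Given a loop $\gamma=\sigma_1\sigma_2$ crossing the wall at $\hx$, one inserts $\kappa\kappa$ at the crossing (with $\kappa$ representing $\widehat{\tau}_{\hx}$) and factors $[\gamma]=[\sigma_1\kappa\sigma_1^{-1}]\cdot[\sigma_1\kappa\sigma_2]$. The first factor is an abstract reflection. The point is that $\sigma_1\kappa\sigma_2$ can be homotoped off the wall: since $[\kappa]$ is nontrivial, the normal line bundle of $\res^{-1}(\Si)$ restricted to $\kappa$ is the nontrivial bundle over $\mathbb{RP}^2$, so there is an embedded M\"obius band whose core is $\kappa$ and whose normal fibres realise the crossing of $\gamma$. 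Sliding along this M\"obius band pushes $\sigma_1\kappa\sigma_2$ away from the zero section, removing one intersection point. Iterating yields a loop in $\hM^{\reg}\cong M^{\reg}$, i.e.\ an element of the image of the splitting. This M\"obius-band mechanism is the concrete content behind your anticipated ``main obstacle''.
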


\begin{proof}[Proof of the Lemma]

Let $[\gamma]\in \pi_1(\hM)$.
We can assume that $\gamma$ is a smooth loop which, as in the proof of Lemma \ref{lem:stratif-lemma-4}, intersects at most the codimension one strata of $\cS^ \textrm{rk}(\widehat{M},\widehat{\cF})=\res^{-1}(\cSi(M,\pi))$ and any such intersection is transverse. By Corollary \ref{cor:codimension-rank-stratification} these are subregular points.



Next, we claim that we can eliminate successively the finitely many intersection points one by one: 
for the first subregular intersection point we construct an abstract reflection $\widehat{\tau}$ in $\hW$ such that $[\gamma]=\widehat{\tau} \cdot[\gamma']$ where $\gamma'$ has one less subregular intersection point than $\gamma$.


Let $t_1\in [0,1]$ be the fist value such that  $\gamma(t_1)=\hx$ is a subregular point on the stratum $\hSi\in  \cS^ \textrm{rk}(\widehat{M},\widehat{\cF})$. We can assume that the reflection $\widehat{\tau}_{\hx}$ is represented by an embedded loop $\kappa$
and we decompose $\gamma=\sigma_1\sigma_2$ according to the subdivision $[0,1]=[0,t_1]\cup [t_1,1]$. Then we have the factorization
\[[\gamma]=[\sigma_1\kappa\kappa\sigma_2]=[\sigma_1\kappa\sigma_1^{-1}] [\sigma_1 \kappa\sigma_2],\] 
where, by construction, $[\sigma_1\kappa\sigma_1^{-1}]$ is an abstract reflection. We claim  that $\sigma_1 \kappa\sigma_2$ is homotopic to a loop which intersects 
the codimension one strata $\hSi$ in one point less than $\gamma$.

Since $\widehat{\tau}_{\hx}= [\kappa]$ is nontrivial, the restriction of the normal bundle of $\hSi$ in $\hM$ to $\kappa$ is the non-trivial line bundle over $\mathbb{RP}^2$. It further follows that there is an embedded Moebius band   
\[\Phi:\R\times (-1,1)/(\theta,s)\sim (\theta+2\pi,-s)\to \hM,\]
\[ \kappa(t)=\Phi([2\pi t,0]),\,t\in [0,1],\quad  \hx=\Phi([0,0]),\]
so that $\hSi\cap \Phi= \kappa$. 
We can arrange that $\gamma\cap \Phi$
be the normal fiber over $\hx$ and that points in the Moebius band other than the zero section are regular points. 
Then the normal fibers  can be used to perform a homotopy supported in the Moebius band which moves the curve 
$\sigma_1\kappa\sigma_2$ away form the zero section, i.e., away from $\hSi$ (see Figure \ref{fig:homotopy}).

\begin{figure}
\centering
\includegraphics[scale=1]{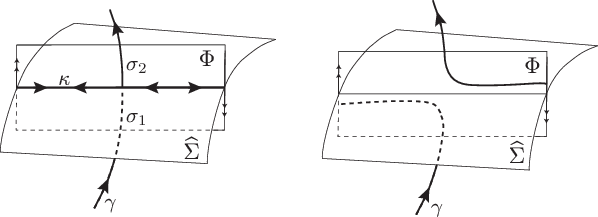}
\caption{Homotopy supported on Moebius band}
\label{fig:homotopy}
\end{figure}
\end{proof}


To close this section we will show that $\pi_1^\orb(B)$ is a $\cW$-semi-direct product. In other words, that 
there is an analogue for $\cW$ of the split exact sequence in the diagram \ref{eq:splitting-weil-pi1M}.

For that, note that for any Poisson manifold of proper type $(M,\pi)$, the regular part $(M^\reg,\pi)$ is also of proper type. The restriction of a proper integration $(\cG,\Omega)$ to   $M^\reg$ endows the leaf space $B^\reg$ with an integral affine orbifold  structure. For this structure, the inclusion of the regular part $M^\reg\subset M$
descends to an embedding of classical integral affine 
orbifolds (see Section \ref{sec:The case of orbifolds}) 
\[B^\reg\hookrightarrow B.\]
We also define the regular part of any of the orbifold covering spaces of $B$, denoted
\[ (B^\lin)^\reg,\,(B^\aff)^\reg, \, (\oB)^\reg,\]
as the pre-image of $B^\reg$ by the corresponding covering maps.

\begin{proposition}
\label{prop:split:orbi:fundamental:group}
The map in homotopy induced by the inclusion $B^{\reg}\subset B$ gives a splitting of the short exact sequence
\[
\xymatrix{1\ar[r] & \cW \ar[r] & \pi_1^\orb(B)
\ar[r] &
\pi_1^\orb(B^\reg)\ar[r] & 1}.\]
Furthermore, the image of this splitting consists of the $\pi_1^\orb(B)$-stabilizer of the connected component of $(\oB)^{\reg}$ containing the base-point. 
\end{proposition}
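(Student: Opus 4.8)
The strategy is to realize everything inside the orbifold fundamental groupoid and use the splitting of the top row of \eqref{eq:splitting-weil-pi1M} together with the identification of $\oB$ as the orbifold universal cover (Proposition \ref{prop:Morita:oB}). First I would recall that by Theorem \ref{thm:inf-strat} the inclusion $M^{\reg}\subset M$ induces an isomorphism on $\pi_0$ and $\pi_1$, and by Theorem \ref{thm:resolution:Dirac}(iii) the restriction of $\res$ identifies $\res^{-1}(M^{\reg})$ with $M^{\reg}$; on the leaf-space level, the Weyl resolution carries $B^{\reg}$ and $B$ into one another compatibly with the whole main diagram \eqref{eq:Weyl:main-diagram}, restricted to the open saturated piece over $M^{\reg}$ (Proposition \ref{rk:functoriality-main-diagram} with $U=M^{\reg}$). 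So the inclusion $j:B^{\reg}\hookrightarrow B$ of classical orbifolds induces a map $j_*:\pi_1^\orb(B^{\reg})\to\pi_1^\orb(B)$, and I need to produce a section of the composite $\pi_1^\orb(B)\twoheadrightarrow\pi_1^\orb(B^{\reg})$.

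The key step is to show that $j_*$ is injective and that its image is a complement to $\cW$. For injectivity I would argue on the cover level: $j_*$ at the level of the covers $\oB$ versus $\oB^{\reg}$ (defined as $p^{-1}(B^{\reg})$ for $p:\oB\to B$) corresponds, via the functoriality diagram \eqref{eq:Weyl:main-diagram-open}, to the inclusion $\hM^{\reg}\hookrightarrow\hM$, which induces a $\pi_1$-isomorphism because $\hM^{\reg}=\res^{-1}(M^{\reg})$ has complement of codimension $\geq 1$ with no codimension-one or -two strata in the \emph{infinitesimal} sense; more precisely, one uses that $\res^{-1}(\cSi(M,\pi))$ is the rank stratification of $\hM$ (Theorem \ref{thm:compatibility with inf-stratification}) and its regular part is $\hM^{\reg}$, together with Lemma \ref{lem:stratif-lemma-4} applied to $\hM$ — but note that the rank stratification on $\hM$ \emph{does} have codimension-one strata (over the subregular locus, Corollary \ref{cor:codimension-rank-stratification}), so the direct homotopy argument for $\hM$ fails. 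Instead the right move is to work with the \emph{canonical} stratification: pass to the covers $B^{\aff}$, $\oB$ where the orbifold becomes a genuine manifold, and there the statement reduces to the claim that removing from $\oB$ the preimage of the subregular and deeper strata does not change $\pi_1$ — which is true because that preimage has codimension $\geq 3$ in $\oB$ by the last equivalence of Corollary \ref{corollary: infinitesimal-stratum-conormal} (subregular means codimension $3$ in $M$, and the affine cover $\oB$ maps to a chart where the stratum through $x_0$ lies in a Weyl chamber, so the deleted set has real codimension $\geq 3$), and Lemma \ref{lem:stratif-lemma-4} applies to the manifold $\oB$.

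With injectivity of $j_*$ in hand, the splitting follows from a diagram chase. I would take the section $s:\pi_1^\orb(B^{\reg})\to\pi_1^\orb(B)$ to be $j_*$ itself and check that $\operatorname{im}(j_*)\cap\cW=\{1\}$ and that $\cW\cdot\operatorname{im}(j_*)=\pi_1^\orb(B)$. Both follow from the commuting square relating \eqref{eq:splitting-weil-pi1M} to the orbifold level: the surjection $p_*:\pi_1(\hM)\twoheadrightarrow\pi_1^\orb(B)$ carries the splitting of the top row (the section $\pi_1(M)\to\pi_1(\hM)$ coming from $\hM^{\reg}\cong M^{\reg}$) onto a splitting of the bottom sequence, because $\pi_1^\orb(B^{\reg})$ is the quotient of $\pi_1(M^{\reg})=\pi_1(M)$ by the appropriate $\cF$-homotopy kernel, and this kernel is exactly $p_*$ of the kernel relevant for $B$ (the $\cF^{\reg}$-homotopies in $\hM^{\reg}$ being a sub-collection of the $\hF$-homotopies in $\hM$, with the leaves of $\hF$ that are collapsed all lying over $M^{\reg}$ since $\res$ is a diffeomorphism there). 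Concretely: $\cW=\ker(\res_*:\pi_1^\orb(B)\to\pi_1^\orb(B^{\reg})?)$ — more carefully, $\cW$ is the image of $\hW=\ker\res_*\subset\pi_1(\hM)$, and $\pi_1^\orb(B)/\cW\cong\pi_1^\orb(B^{\reg})$ via the induced map, which is what the bottom row of \eqref{eq:splitting-weil-pi1M}-extended asserts.

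Finally, for the identification of $\operatorname{im}(s)$ as the $\pi_1^\orb(B)$-stabilizer of the base component of $(\oB)^{\reg}$: since $\pi_1^\orb(B)$ acts on $\oB$ with quotient $B$ and the regular part $B^{\reg}$ pulls back to $(\oB)^{\reg}$, the deck group permutes the connected components of $(\oB)^{\reg}$, and the stabilizer of the base component acts on it as its deck group over $B^{\reg}$, i.e.\ is a quotient of $\pi_1^\orb(B^{\reg})$; but $(\oB)^{\reg}$ restricted to the base component is a covering of $B^{\reg}$ corresponding precisely to the subgroup $\operatorname{im}(s)\subset\pi_1^\orb(B)$ — one checks it is the universal orbifold cover of $B^{\reg}$ by the $\pi_1$-isomorphism $\hM^{\reg}\hookrightarrow\hM$ established above, so the stabilizer is all of $\operatorname{im}(s)\cong\pi_1^\orb(B^{\reg})$. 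Conversely any element stabilizing the base component restricts to a deck transformation of the universal cover, hence lies in $\operatorname{im}(s)$, while $\cW$ acts nontrivially on components (a subregular reflection swaps the two local half-spaces, hence swaps the adjacent regular components), giving $\cW\cap\operatorname{Stab}=\{1\}$ again and closing the argument.

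\textbf{Main obstacle.} The delicate point is the injectivity of $j_*$, i.e.\ that passing from $B$ to $B^{\reg}$ kills nothing in the orbifold fundamental group: naively $\hM\setminus\hM^{\reg}$ has codimension-one pieces (over the subregular locus), so one must be careful to run the transversality/general-position argument on the \emph{smooth} covers $\oB$ (where the bad set has codimension $\geq 3$) rather than on $\hM$ itself, and to match the $\cF$-homotopy relations up and downstairs correctly. All the needed inputs — the codimension computations (Corollaries \ref{corollary: infinitesimal-stratum-conormal}, \ref{cor:codimension-rank-stratification}), the main diagram and its functoriality, and Lemma \ref{lem:stratif-lemma-4} — are already available, so the remaining work is bookkeeping with these pieces.
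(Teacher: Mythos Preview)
Your plan has the right overall shape --- push the splitting of the top row of \eqref{eq:splitting-weil-pi1M} down via $p_*$ --- but the technical heart of the argument contains a genuine error. You claim that the complement of $(\oB)^{\reg}$ in $\oB$ has codimension $\geq 3$, citing Corollary \ref{corollary: infinitesimal-stratum-conormal}. This is false: that corollary gives the codimension of the subregular stratum \emph{in $M$}, not in the leaf space. In $B$ (and hence in its cover $\oB$) the subregular locus has codimension \emph{one} --- see Corollary \ref{cor:subregular:leaf:space} --- and indeed these codimension-one pieces are precisely the reflection hyperplanes that appear later in the paper (Proposition \ref{pro: equality regulars}). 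So Lemma \ref{lem:stratif-lemma-4} simply does not apply to $(\oB)^{\reg}\hookrightarrow\oB$, and no general-position argument on $\oB$ can establish the injectivity you need. In fact, injectivity of $j_*$ is far from formal: removing codimension-one sets can kill $\pi_1$-classes, and the entire content of the Weyl group being nontrivial is that passing from $B^{\reg}$ to $B$ involves nontrivial orbifold topology across these hyperplanes.

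The paper does not try to prove injectivity of $j_*$ directly. Instead it constructs the retraction $\mathscr{r}_*:\pi_1^\orb(B)\to\pi_1^\orb(B^{\reg})$ first, by showing that the $\hF$-homotopy kernel $K\subset\pi_1(\hM)$ satisfies $\hi\circ\res_*(K)\subset K$. The argument uses two ingredients you do not invoke: (i) any null-holonomic leafwise loop in $\hM$ can be homotoped into a nearby leaf in $\hM^{\reg}$ (as in the proof of Lemma \ref{lem: orbifold fundamental group isomorhism}); and, crucially, (ii) the factorization result of Lemma \ref{lemma:prop:generation-by-reflections}, applied to paths with endpoints in $\hM^{\reg}$, which lets one write any such path as a product of abstract reflections in $\hW$ times a path lying entirely in $\hM^{\reg}$. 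It is this reflection-factorization, not a codimension count, that separates the ``Weyl part'' from the ``regular part'' of a loop. Once $\mathscr{r}_*$ exists, the exactness and the splitting follow by diagram chasing from the top row of \eqref{eq:splitting-weil-pi1M}. For the stabilizer statement, the paper reduces to a clean general lemma about classical embedded suborbifolds of good orbifolds, rather than relying on the (as yet unproved) identification of the base component of $(\oB)^{\reg}$ with the universal cover of $B^{\reg}$ --- that identification is Proposition \ref{pro:chamber-covering}, which is proved \emph{after} the present proposition and in fact uses it.
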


\begin{proof}
We complete the diagram \ref{eq:splitting-weil-pi1M}  with the orbifold fundamental group of the regular part $M^\reg$. The orbifold embedding $B^\reg\hookrightarrow B$ induces a map between the  orbifold fundamental groups that makes the outer square of the following diagram commutative 
\[
\xymatrix{
1\ar[r] & \hW\ar[d]^{p_*} \ar[r] &\pi_1(\hM)\ar[d]^{p_*} \ar[r]^{\res_*} & \pi_1(M^\reg) \ar[r]\ar[d]_{p_*} \ar@/_1.2pc/[l]_{\hi} & 1
\\
1\ar[r] & \cW \ar[r] & \pi_1^\orb(B)\ar@{-->}[r]^{\mathscr{r}_*} &\pi_1^\orb(B^\reg) \ar@/^1.2pc/[l]^{i}  
}
\]
We claim that there exists  dotted arrow $\mathscr{r}_*$ making the inner square commutative. By diagram chasing, using that vertical arrows are surjective, this implies the exactness of the sequence
\[ \xymatrix{
1\ar[r] & \cW \ar[r] & \pi_1^\orb(B)\ar[r] &\pi_1^\orb(B^\reg)  \ar[r] & 1
}
\]
Next, again by a diagram chasing, it follows that the existence of $\mathscr{r}_*$ is equivalent to the fact that $\res_*(K)\subset \Ker (p_*)$, where $K$ is the kernel of the middle vertical arrow (described in \eqref{eq:F-homotopy}). 
This will follow by showing that
\begin{equation}\label{eq: kernel splitting}
\hi\circ\res_*(K)\subset K.
\end{equation}
To prove this, we show that any $[\gamma]\in K$ can be factorized as
\begin{equation}
    \label{eq:factorization:K}
    [\gamma]=[\gamma_1]\cdots [\gamma_k],\quad \textrm{ with }\quad [\gamma_i]=w_i[\gamma'_i]w_i^{-1}, 
\end{equation}
where $\gamma_i'\subset \hM^\reg$, $[\gamma_i']\in K$, and $w_i\in\hW$. Using this factorization, we find that if we let $\gamma'=\gamma'_1\cdots \gamma'_k$, then 
\[ \hi\circ\res_*[\gamma]=\hi\circ\res_*[\gamma']=[\gamma']\in K, \]
where the last equality holds because of the definition of $\widehat{i}$ and $\gamma'\subset 
\hM^\reg$. Hence, \eqref{eq: kernel splitting} follows.

It remains to prove the factorization \eqref{eq:factorization:K}. By \eqref{eq:F-homotopy}, we note that any $[\gamma]\in K$ is a product loops
of type 
\begin{equation}\label{eq:fact:we:want0}  \mathscr{l}= \sigma \beta \sigma^{-1},
\end{equation} with $\beta$ a null-holonomic leafwise loop and $\beta$ is a path between the base point of $\beta$ and $x_0$. We will show that for such $\mathscr{l}$ we have
\begin{equation}\label{eq:fact:we:want} \mathscr{l}= w \mathscr{l}' w^{-1}
\end{equation}
with $w\in \hW$ and $\mathscr{l}'\in K$ sitting inside $\hM^{\reg}$, this proving \eqref{eq:factorization:K}.
To prove \eqref{eq:fact:we:want}, starting with \ref{eq:fact:we:want0}, we make the following modifications:
\begin{enumerate}[(i)]
\item Since $\beta$ is null-holonomic, as in the proof of Lemma \ref{lem: orbifold fundamental group isomorhism}, we may write $\mathscr{l}= \alpha\beta'{\alpha^{-1}}$
with $\beta'\subset \hM^\reg$ a leafwise null-holonomic path. 
\item We observe that the factorization result of Lemma \ref{lemma:prop:generation-by-reflections} applies not just to loops, but to paths with end-points in $\hM^\reg$. By applying it to $\alpha$ we obtain that our $\mathscr{l}$ can be written as claimed in (\ref{eq:fact:we:want}). 
\end{enumerate}

The last part of the statement can be seen as a particular case of a property of  classical orbifolds. In order to state it, we use the notion of classical embedded orbifold (see Section \ref{sec:The case of orbifolds}).

\begin{lemma}
Let $B$ be a good orbifold, assume that $B_0\subset B$ is a classical embedded suborbifold with $b_0\in B_0$.
Then the image of the map 
\[ i_*: \pi_1^{\orb}(B_0)\to  \pi_1^{\orb}(B)\]
induced by the inclusion is the $\pi_1^{\orb}(B)$-stabilizer of the connected component $C$ of $p^{-1}(B_0)$ containing the base point $\widetilde{b}_0$,
where $p^{-1}(B_0)$ is the pre-image of $B_0$ by the canonical projection $\oB\to B$
\end{lemma}

\begin{proof} We use the notations and the discussion from Section \ref{sec:good:orbifold}. 
Since $B$ is a good orbifold and $B_0$ is a classical embedded suborbifoldwe can write 
\[ (B, b_0)= (\Bgood, b_0')/\Gamma, \quad  (B_0, b_0)= (\Bgood_0, b_0')/\Gamma,\]
with $\Bgood_0\subset \Bgood$ a $\Gamma$-invariant embedded submanifold and the $\Gamma$-action on $B'_0$ remains effective. 
Given an element 
\[ a= (g, \sigma)\in \pi_1^{\orb}(B)= \pi_1(\Bgood\ltimes \Gamma)\]
(see (\ref{eq:fund-group-discrete-actions}) and the discussion preceding it) the condition $C\cdot a= C$ is equivalent to the existence of a path in $p^{-1}(B_0)$ connecting $\widetilde{b}_0$ with $\widetilde{b}_0 a$. Recalling that $\widetilde{b}_0= 1_{b_{0}'}$ is represented by the constant path at $b_{0}'$, (\ref{eq:right action orbifold}) gives 
\[ \widetilde{b_0}\cdot a= g^{*}(1_{b_{0}'})\cdot \sigma= 1_{b_{0}' g}\cdot  \sigma= \sigma,\]
where $\cdot$ stands for the concatenation. 
Hence there exists a path in  $p^{-1}(B_0)$ between 
$\widetilde{b}_0$ and $\sigma$. That means precisely that $\sigma$ is represented by a path in $\Bgood_0$, i.e., that $a$ is in the image of the splitting. 
\end{proof}

\end{proof}

\subsection{Morita invariance of the Weyl group}
Next we prove Morita invariance of the Weyl groups.


\begin{proposition}\label{prop:Morita:Weyl}
Let $(M_i, \pi_i)$ be two PMCTs with integrating proper symplectic groupoids $(\G_i, \Omega_i) \tto (M_i, \pi_i)$, and let $(Q, \Omega)$ be a Morita equivalence between them. 
Then the homeomorphism induced by $Q$ between the leaf spaces $B_i= M_i/\cF_{\pi_i}$, 
\[ \phi: B_1\overset{\sim}{\longrightarrow} B_2,\]
is an isomorphism between the orbifold integral affine structures induced by $(\G_i, \Omega_i)$. Furthermore, the induced map in the orbifold fundamental groups, 
\[ \varphi: \pi_1^{\orb}(B_1)\to \pi_1^{\orb}(B_2)\]
restricts to an isomorphism between the corresponding Weyl groups and makes the induced isomorphism $\phi_*:\oB_1\to\oB_2$ a $\varphi$-equivariant map. Under this isomorphism abstract reflections correspond to each other.
\end{proposition}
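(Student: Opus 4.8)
The plan is to reduce everything to statements about the main diagram and its functoriality, exploiting that all the relevant spaces and groups are constructed out of the Weyl resolution and the holonomy groupoid $\Hol(\hM_i,\hF_i)$, both of which are Morita invariant. First I would observe that a symplectic Morita equivalence $(Q,\Omega)$ between $(\cG_1,\Omega_1)$ and $(\cG_2,\Omega_2)$ induces a canonical Morita equivalence between the associated action groupoids on the Weyl resolutions. Indeed, by Definition \ref{def:resolution} the resolution $\hM_i$ depends only on the isotropy Lie algebras $\gg_x$, and by Remark \ref{remark:Morita-general}(i) a point $q\in Q$ with $p_1(q)=x_1$, $p_2(q)=x_2$ yields an isomorphism $\psi_q:\cG_{1,x_1}\to\cG_{2,x_2}$, hence (differentiating at the unit) an isomorphism $\gg_{x_1}\to\gg_{x_2}$ carrying maximal tori to maximal tori. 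This produces a diffeomorphism $\hQ := Q\times_{M_1}\hM_1 \cong Q\times_{M_2}\hM_2$ fitting into a morphism of Morita equivalences relating $\hG_1=\cG_1\ltimes\hM_1$ and $\hG_2=\cG_2\ltimes\hM_2$, exactly as in the local statement of Remark \ref{rem:life:easy}. Since the presymplectic form $\hOmega_i=\pr_{\cG_i}^*\Omega_i$ is pulled back from $\Omega_i$, the form $\pr_Q^*\Omega$ makes $\hQ$ into a presymplectic Morita equivalence, so the induced integral affine structures on the orbifolds $B_i$ agree under $\phi$; this gives the first assertion.

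Next I would handle the orbifold fundamental groups and the main diagram. A Morita equivalence between $\hG_1$ and $\hG_2$ restricts to a Morita equivalence between $\Hol(\hM_1,\hF_1)$ and $\Hol(\hM_2,\hF_2)$ (Remark \ref{remark:Morita-general-stratifications} and the discussion in Section \ref{sec:proper-foliations}), hence induces an isomorphism $\varphi:\pi_1^\orb(B_1)\to\pi_1^\orb(B_2)$ of orbifold fundamental groups, compatible (by the functoriality of the construction in Section \ref{sec:good:orbifold} and Proposition \ref{prop:Morita:oB}) with a $\varphi$-equivariant diffeomorphism $\phi_*:\oB_1\to\oB_2$ of the orbifold universal covers. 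Concretely, $\hQ$ provides, after choosing a base point $\widehat{q}_0$ over $\hx_0$, compatible identifications of the whole main diagram \eqref{eq:Weyl:main-diagram} for $(M_1,\pi_1)$ with that for $(M_2,\pi_2)$, just as a path does in Remark \ref{rem:base-point change}; the linear and affine holonomy representations \eqref{lin-action-transversal}, \eqref{aff-action-transversal} are transported to one another because they are defined from the flat tautological bundle $\htt^*$, which is Morita invariant.

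It remains to see that $\varphi$ carries $\cW_1$ onto $\cW_2$ and abstract reflections to abstract reflections. For the Weyl groups, I would argue that $\hW_i=\ker(\res_{i,*}:\pi_1(\hM_i)\to\pi_1(M_i))$ is characterized by the top split exact sequence of \eqref{eq:splitting-weil-pi1M}, i.e. $\hW_i$ is the kernel of the retraction onto $\pi_1(M_i^\reg)\cong\pi_1(\hM_i^\reg)$; since a Morita equivalence restricts to one over the regular loci $M_i^\reg$ (these being saturated) and is compatible with the inclusions $\hM_i^\reg\hookrightarrow\hM_i$, the isomorphism on $\pi_1(\hM_i)$ carries $\hW_1$ onto $\hW_2$, and composing with $p_{i,*}$ shows $\varphi(\cW_1)=\cW_2$. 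For the reflections, recall from Definition \ref{def:abstract-reflections} and Proposition \ref{pro:non-trivial Weyl group elements} that an abstract reflection comes, up to conjugation, from the generator of $\pi_1(\res^{-1}(x))\cong\pi_1(G/N(T))$ at a subregular point $x$; since $\hQ$ identifies the fibers $\res_1^{-1}(x_1)$ and $\res_2^{-1}(x_2)$ for $Q$-related points (via the identification $G/N(T)\cong\cT(\gg_{x})$ which is $\psi_q$-equivariant), and preserves subregularity (subregular points being detected by $\gg_x^\ss\cong\su(2)$, an infinitesimal-isotropy invariant, cf.\ Corollary \ref{corollary: infinitesimal-stratum-conormal}), the subregular reflection $\tau_{\hx_1}$ is sent to $\tau_{\hx_2}$; transporting along $\hQ$-compatible paths to the base points then shows that abstract reflections correspond.

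The main obstacle I anticipate is bookkeeping the base points: a Morita equivalence gives a canonical isomorphism of the diagrams only after a choice of $\widehat{q}_0\in\hQ$, and one must check that the resulting isomorphism on $\pi_1^\orb$ is independent of this choice up to inner automorphism (so the statement about Weyl groups, which are normal, is unambiguous) and that it genuinely commutes with the splittings and covering projections. This is exactly the ``general philosophy of using Morita equivalences'' flagged in Remark \ref{remark:Morita-general}, and the technical heart is verifying that $\psi_q$ varies smoothly and that the induced maps on fundamental groupoids assemble into the stated equivariant isomorphism; the geometric content, by contrast, is entirely contained in the already-established Morita invariance of the resolution, of $\Hol(\hM,\hF)$, and of the infinitesimal stratification.
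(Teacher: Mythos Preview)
Your overall architecture---lift the symplectic Morita equivalence to a presymplectic Morita equivalence $\hQ$ between the resolutions $\hG_1$ and $\hG_2$, and read off all the structures from there---is essentially the paper's route. The paper organizes it slightly differently (it first pulls both Poisson structures back to $Q$, where they become gauge-equivalent Dirac structures with identical Weyl groups, and thereby reduces to the case of a single submersion $\pp:Q\to M$ with connected fibers), but the geometric content is the same.

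There is, however, a genuine gap in your Weyl-group step. You write ``the isomorphism on $\pi_1(\hM_i)$ carries $\hW_1$ onto $\hW_2$'', but a Morita equivalence between $\hG_1$ and $\hG_2$ does \emph{not} induce an isomorphism $\pi_1(\hM_1)\cong\pi_1(\hM_2)$: the space $\hQ$ only gives surjections $\pi_1(\hQ)\to\pi_1(\hM_i)$, and there is no reason these should have the same kernel. What is Morita-invariant is $\pi_1^{\orb}(B)$, and the issue is to show that the two images $\cW_{\hq}$ and $\cW_{\hx}$ inside this common group coincide. In the paper's submersion picture one inclusion ($\cW_{\hq}\subset\cW_{\hx}$) is immediate from functoriality of $\widehat{\pp}_*$, but the reverse inclusion is not formal: the paper obtains it by invoking Proposition~\ref{prop:generation-by-reflections} (generation by abstract reflections), writing an arbitrary element of $\cW_{\hx}$ as conjugate of a subregular reflection $\tau_{\hxx}$, and then lifting that reflection to $\hQ$ using that $\widehat{\pp}$ restricts to a diffeomorphism between the fibers of the two resolution maps (both identified with $\cT(\gg_x)$). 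So the reflections are not just needed for the final ``abstract reflections correspond'' claim---they are the mechanism that makes $\varphi(\cW_1)=\cW_2$ hold. Your proposed shortcut via the split sequence \eqref{eq:splitting-weil-pi1M} does not bypass this: the splitting only identifies $\cW$ as the kernel of a particular retraction $\mathscr{r}_*$, and you would still need to verify that this retraction is Morita-invariant, which is no easier.
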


\begin{remark}
The statement of the proposition can be refined by specifying the base points $\hx_i= (x_i, \tt_i)\in \hM_i$ appropriately. The points $x_i\in M_i$ should be $Q$-related via a point $q\in Q$. Such a $q$ gives rise to an identification between the Lie algebras $\gg_{x_i}$ (cf. Remark \ref{remark:Morita-general}) and we require that the $\tt_i$'s correspond to each other. With these choices in place, we obtain concrete models for the orbifold fundamental groups, for the $\cW_{\hx_i}$ sitting inside them, as well as for $\phi_*$ mapping $\cW_{\hx_1}$ isomorphically to $\cW_{\hx_2}$.
\end{remark}

\begin{proof}
Observe that for a Morita equivalence $\xymatrix{M_1 & \ar[l]_{\pp_1} Q \ar[r]^{\pp_2}& M_2}$ as in the statement the pull-back of the graphs of the Poisson structures, 
$L_1:=\pp_1^*L_{\pi_1}$ and $L_2:=\pp_2^*L_{\pi_2}$, are related to each other via a gauge transformation w.r.t.~$\Omega$ (see \cite[page 154]{CFM21}). Moreover, $(Q,L_1)$ and $(Q,L_2)$ are DMCTs with integrating proper presymplectic groupoids the pullback groupoids $\pp^*\G_i= Q\times_{M_i} \G_i\times_{M_i} Q\tto Q$ endowed with presymplectic forms that differ by $\t^*\Omega-\s^*\Omega$. Therefore, the leaf spaces of $(Q,L_i)$, their integral affine orbifold structures and their Weyl groups are identical .

We are left with proving the following claim. Given a submersion with connected fibers $\pp: Q\to M$ into a Poisson manifold of proper type $(M, \pi)$ and a proper symplectic groupoid $(\G, \Omega)$, consider the Dirac manifold $(Q,L= \pp^*L_{\pi})$ together with the proper presymplectic integration $\pp^*\G= Q\times_M \G\times_M Q\tto Q$. Then there is a canonical isomorphism between the leaf space of $(M,\pi)$ and $(Q,L)$ which preserves their integral affine structures and identifies their Weyl groups.

To prove this claim we pass to the level of the resolutions. Their construction shows that $\pp:Q\to M$ induces a submersion with connected fibers
\[ \widehat{\pp}:\hQ\to\hM. \]
This submersion pulls back the foliation of $\hM$ to the foliation of $\hQ$. Hence their leaf spaces are canonical isomorphic classical orbifolds. Also, observe that the natural groupoid morphisms $\pp^*\cG\to \cG$ yields a groupoid morphism at the level of the action groupoids over the resolutions:
\[ \pp^*\cG\ltimes \hQ\to \cG\ltimes \hM. \]
This restricts to an isomorphism between isotropy groups so the transverse integral affine structures of $\hQ$ and $\hM$ also correspond to each other, and we have an isomorphism of the integral affine structures on the leaf spaces.

For the claim on the Weyl groups, we consider the following commutative diagram:
\[ \xymatrix@C=15 pt{
1\ar[rr] & & \hW_{\hq} \ar[rr] \ar@{-->}[dd]_{\widehat{\pp}_*} \ar[rd]^{i} & & \pi_1(\hQ, \hq)\ar[ld]_{i} \ar[dd]_{\widehat{\pp}_*} \ar[rr] & & \pi_1(Q, q)\ar[dd]_{\pp_*} \ar[r] & 1 \\
& & & \pi^{\orb}_{1}(B, b) & &  \\
1\ar[rr] &  & \hW_{\hx} \ar[rr] \ar[ru]_{j}& & \pi_1(\hM, \hx)\ar[rr] \ar[lu]^{j}& &\pi_1(M, x) \ar[r] & 1 
}
\]
where $\hq\in \hQ$ determines all the other points $q\in Q$, $\hx\in \hM$, $x\in M$, $b\in B$. 
We have to prove that 
\begin{equation}
    \label{eq:proof:Weyl:Morita}
    i(\hW_{\hq}) =j(\hW_{\hx}),
\end{equation}
coincide. The inclusion
$i(\hW_{\hq})\subset j(\hW_{\hx})$ is clear. For the reverse, we start with $u\in j(\hW_{\hx})$. By Proposition \ref{prop:generation-by-reflections}, we may assume that $u$ arises from a reflection 
corresponding to a point $\hxx$ in a subregular strata: $u= j(\sigma \tau_{\hxx} \sigma^{-1})$, with $\tau_{\hxx}\in \hW_{\hxx}$ as above, and where $\sigma$ is a path in $\hM$ between $\hx$ and $\hxx$. We also choose a point $\hqq\in \hQ$ above $\hxx$ and a path $\widetilde{\sigma}$ in $\hQ$ between $\hq$ and $\hqq$. Since the groups that we are comparing are normal in $\pi_1^{\orb}(B)$ we may assume that $\widetilde{\sigma}$ covers $\sigma$. Observe now that: 
\begin{enumerate}[(i)]
\item $\hqq$ is necessarily subregular for $(Q, L)$, so it has an associated element  $\tau_{\hqq}\in \hW_{\hqq}$. 
\item The map $\widehat{\pp}_*$ sends $\tau_{\hqq}$ to $\tau_{\hxx}$. This is due to the fact that $p:\hQ\to \hM$ restricts to a diffeomorphism between the fibers of the resolution maps above $q$ and $x$. 
\end{enumerate}
It follows that $j(\tau_{\hxx})= i(\tau_{\hqq})$
and then 
\begin{align*}
u & = j(\sigma \tau_{\hxx} \sigma^{-1})\\
  &= j(\widehat{\pp}_*(\widetilde{\sigma})\widehat{\pp}_*(\tau_{\hqq})\widehat{\pp}_*(\widetilde{\sigma})^{-1})\\
  &=j\widehat{\pp}_*(\widetilde{\sigma}\tau_{\hqq}\widetilde{\sigma}^{-1})\\
  &= i(\widetilde{\sigma}\tau_{\hqq}\widetilde{\sigma}).    
\end{align*}
This shows that \eqref{eq:proof:Weyl:Morita} 
holds and completes the proof of the proposition.
\end{proof}


\begin{remark}[s-connecteness]
\label{rem:connecteness} 
Our symplectic groupoids are assumed to be source connected. However, in the Hamiltonian local model we will encounter non-source connected groupoids whenever the isotropy $G=\cG_x$ is disconnected. For that, it is important to note that Morita invariance of the Weyl groups remains valid for Morita equivalent symplectic groupoids that are not necessarily s-connected, provided the Morita bi-bundle is connected. 

For this, note that for a proper symplectic groupoid $(\cG,\Omega)\tto (M,\pi)$ which is not necessarily s-connected, the relevant orbit space is $B=M/\cG$, and that may be different from the leaf space $M/\cF_\pi$. To define the Weyl group $\cW\subset \pi_1^\orb(B)$ one still uses the resolution $(\hG,\hOmega)\to (\hM,\hL)$, where $\hG=\hM\rtimes\cG$ may fail to be s-connected. The classical orbifold structure on $B=M/\cG\cong\hM/\hG$ is presented by the effectivization $\cE(\hG)$ of the proper foliation groupoid $\cB(\hG)$ of the associated short exact sequence
\[ 
\xymatrix{1 \ar[r] & \cT \ar[r] & \hG \ar[r] &\cB(\hG)\ar[r] & 1 }
\]
The effective groupoid $\cE(\hG)\tto \hM$ integrates the foliation $\hF$ -- although it is possibly not s-connected --  so its source connected component is $\Hol(\hM,\hF)$. Hence, $\pi_1^\orb(B)$ is defined as the fundamental group of $\cE(\hG)$ and the Weyl group is defined as before by the diagram
\[ 
\xymatrix{
1\ar[r] & \hW_{\hx}\ar[r]\ar@{-->>}[d] & \pi_1(\hM,\hx)\ar[r]_{\res_*}\ar[d]^-{p_*} & \pi_1(M,x)\ar[r]\ar@/_1.2pc/[l] & 1\\
 & \cW_{\hx}\,  \ar@{^{(}->}[r] & \pi_1^{\orb}(B,b) & 
}
\]
We remark that now the vertical arrow $p_*$ is not surjective in general.

The proof of Proposition \ref{prop:Morita:Weyl} can be adapted to show that 
a connected symplectic Morita equivalence between (not necessarily s-connected) symplectic groupoids $(\cG_1,\Omega_1)\tto (M_1,\pi_1)$ and $(\cG_2,\Omega_2)\tto (M_2,\pi_2)$ induces a presymplectic Morita equivalence between the groupoids $(\hG_1,\hOmega_1)\tto (\hM_1,\widehat{L}_{\pi_1})$ and $(\hG_2,\hOmega_2)\tto (\hM_2,\widehat{L}_{\pi_2})$. The latter induces a Morita equivalence between the effectivizations $\cE(\hG_1)$ and $\cE(\hG_2)$ and hence an isomorphism between orbifold fundamental groups. This isomorphism restricts to an isomorphism of Weyl groups.
\end{remark}

\begin{example}
\label{ex:local:model:Morita}
We can now provide a Morita-equivalence interpretation of Example \ref{ex:W-for-Hamiltonian-model},  
and find the orbifold fundamental group and Weyl group of the Hamiltonian local model $M=Q/G$. 
The space $Q$ provides a symplectic Morita equivalence between the symplectic groupoids $(\cG,\Omega)\tto M$ and $(\gg^*\rtimes G,\omega_\can)\tto \gg^*$, where the latter is not s-connected whenever $G$ is disconnected. Fixing a maximal torus $T\subset G$, one finds that
\[ \cB(\widehat{\gg}^*\rtimes G)\cong \tt^*\rtimes N(T) \]
and so we find for the effectivization the action groupoid
\[ \cE(\widehat{\gg}^*\rtimes G)\cong\tt^*\rtimes N(T)/Z_G(T). \]
Then the previous remark gives
\begin{enumerate}[(a)]
\item a computation of the orbifold fundamental group of $B$ and the identification of the Weyl group:
\[\pi^\orb_1(B)\cong N(T)/Z_G(T),\quad \cW(M,\pi)\cong W^0.
\]
where $W^0:=(N(T)\cap G^0)/T$.
Notice that one also obtains $\Gamma^\lin=\pi^\orb_1(B)$.
\item an identification of the orbifold universal cover $\oB$ with $\tt^*$, as $\pi_1^\orb$-equivariant integral affine manifolds.
\end{enumerate}
%
In particular, we obtain an identification of the action of $\cW(M,\pi)$ on $\oB$ with the classical action of $W^0$ on $\tt^*$.
\end{example}

\begin{corollary}\label{cor:didn't-loose-any-reflection}
Let $(M, \pi)$ be a Poisson manifold of proper type and choose any $\hx= (x,\tt_x)\in \hM$.
The image of any standard hyperplane reflection under the morphism $W\to \cW_{\hx}$ (see Proposition \ref{pro:non-trivial Weyl group elements}) is an abstract reflection.
\end{corollary}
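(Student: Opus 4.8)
The plan is to reduce the statement to the Hamiltonian local model and then invoke the Morita-equivalence description of the Weyl group from Example \ref{ex:local:model:Morita}, together with the identification of abstract reflections in the linear case worked out in Example \ref{ex:local:model:Weyl:continued}. The key point is that an abstract reflection is, by definition, a conjugacy class of involutions in $\cW$ associated to a subregular stratum, and this notion is transported correctly along Morita equivalences by Proposition \ref{prop:Morita:Weyl}; so it suffices to verify the statement for $M=\gg^*$, where everything is classical.

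First I would fix $\hx=(x,\tt_x)\in\hM$ and choose a saturated open neighborhood $U$ of the leaf through $x$ on which the Hamiltonian normal form of Theorem \ref{thm-lf-Poisson-v2} holds, so that $(U,\pi)$ is Morita equivalent to $(\gg_x^*,\pi_{\gg_x^*})$ via a connected bi-bundle $Q$, with $x$ being $Q$-related to $0\in\gg_x^*$ via some $q\in Q$. This $q$ induces an identification $\gg_x\cong\gg_{x_0}$ (in the notation of Remark \ref{remark:Morita-general}) under which $\tt_x$ corresponds to a maximal torus $\tt\subset\gg_x$, and we may assume this is the torus used to present things on the linear side. By Proposition \ref{prop:Morita:Weyl} (and Remark \ref{rem:connecteness}, since $\gg_x^*\rtimes G$ need not be s-connected), the induced isomorphism $\varphi:\pi_1^\orb(B_U)\to\pi_1^\orb(\widehat{\gg_x^*}/\hF)$ restricts to an isomorphism of Weyl groups and, crucially, \emph{carries abstract reflections to abstract reflections}: the proof of Proposition \ref{prop:Morita:Weyl} shows that $\widehat{\pp}_*$ matches up subregular points and their reflections $\tau_{\hxx}$, because the resolution map fibers of $Q$ and of $M$ over $Q$-related points are diffeomorphic.

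Next I would identify, on the linear side, the image of a standard hyperplane reflection under $W\to\cW_{(0,\tt)}$. By Example \ref{ex:Weyl-group:gg*} we have $\cW_{(0,\tt)}\cong W$, the classical Weyl group of $G$ relative to $T$, and by Proposition \ref{pro:non-trivial Weyl group elements} this isomorphism is exactly the map $W\hookrightarrow\cW_{(0,\tt)}$ induced by $\pi_1(\res^{-1}(0))=\pi_1(G/N(T))\cong W$. Now Example \ref{ex:local:model:Weyl:continued} records precisely that, if $\xi$ is a subregular point on a root hyperplane $\cH\subset\tt$, then $\res^{-1}(\xi)$ embeds canonically into $\res^{-1}(0)\cong G/N(T)$ as the set of maximal tori contained in $\gg_\xi$, that the generator of its fundamental group $\Z_2$ maps to the standard reflection $s_\cH\in W$, and that the straight path from $(\xi,\tt)$ to $(0,\tt)$ conjugates $\tau_{(\xi,\tt)}$ to $s_\cH$. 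Thus $s_\cH$ \emph{is} an abstract reflection in the linear Weyl group. Transporting this back along $\varphi^{-1}$ and then, via a path in $\hM$, to the global base point $\hx_0$, yields the claim that the image of the corresponding standard reflection under $W\to\cW_{\hx}\to\cW$ is an abstract reflection. Finally, by Proposition \ref{pro:non-trivial Weyl group elements} every standard hyperplane reflection of $W=N(T_x)/T_x$ arises this way from \emph{some} root hyperplane, so all of them are abstract reflections.

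\textbf{Main obstacle.} The only genuinely delicate point is the base-point bookkeeping: abstract reflections in $\cW=\cW_{\hx_0}$ are defined via transport along paths from a subregular $\hy$ to $\hx_0$, while the Morita equivalence and Proposition \ref{pro:non-trivial Weyl group elements} naturally produce elements of $\cW_{(x,\tt_x)}$ for the particular $\hx$ at hand. One must check that the notion of ``abstract reflection'' is well-defined independently of these choices — i.e.\ that the conjugacy class of $\tau_{\hy}$ in $\cW$ depends only on the subregular stratum of $\hy$, which is stated right after Definition \ref{def:abstract-reflections} — and that the path-transport isomorphisms of Remark \ref{rem:base-point change} and the Morita-induced isomorphism $\varphi$ are compatible with these conjugacy classes. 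Once that compatibility is in place, the argument is a direct concatenation of the cited examples and propositions.
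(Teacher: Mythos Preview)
Your proposal is correct and follows essentially the same route as the paper's proof: reduce to the Hamiltonian local model, use Example \ref{ex:local:model:Weyl:continued} to identify standard reflections with abstract reflections on the linear side $\gg^*$, and invoke the Morita invariance of abstract reflections from Proposition \ref{prop:Morita:Weyl} to transport the conclusion back. The paper's version is slightly more streamlined in that it first isolates the elementary observation that the inclusion of a saturated open $(U,\pi|_U)\hookrightarrow(M,\pi)$ sends abstract reflections to abstract reflections (immediate from Definition \ref{def:abstract-reflections}), and only then applies the Morita equivalence between $Q/G$ and $\gg^*$; this sidesteps much of the base-point bookkeeping you flag as the main obstacle, since Remark \ref{rem:base-point change} already handles the path-transport compatibility.
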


\begin{proof} 
The inclusion of any saturated open subset $(U,\pi|_U)\to (M,\pi)$ maps abstract reflections of $\cW_{\hx}(U,\pi|_U)$ to abstract reflections of $\cW_{\hx}(M,\pi)$. Therefore it is enough to prove the result for the Hamiltonian local model $M=Q/G$.

By Example 
\ref{ex:local:model:Weyl:continued}
the standard reflections in $W$ are exactly the abstract reflections. By Proposition \ref{prop:Morita:Weyl}, the symplectic Morita equivalence between $(\gg^*,\pi_\lin)$ and $(M,\pi)$ provided by $Q$, maps isomorphically $W$ into $\cW(M,\pi)$, identifying abstract reflections.
\end{proof}

\section{The Weyl Group as a Geometric Reflection Group}
\label{sec:Weyl:group:geometric}

Here, as in the previous section, the leaf space $B=\hM/\hF=M/\cF_\pi$ is endowed with its classical orbifold structure. 

\subsection{Geometric reflections}
\label{sec:geometric:reflections}
Our next aim is to show that the abstract reflections can be realised as geometric ones. By a  \textbf{geometric reflection} on a manifold $N$ we mean any  smooth involution $r: N\rightarrow N$ with the property that the fixed-point set
\[ N^r:= \{x\in N: r(x)= x\}\]
is a codimension one submanifold that separates $N$, i.e., $N\setminus N^\tau$ has two connected components.

\begin{definition}
\label{def:integral affine reflection group}
A \textbf{reflection group} on a manifold $N$ is a discrete group acting properly and effectively on $N$ and generated by geometric reflections.

When $N$ is an integral affine manifold and the group acts by integral affine transformations, we call it an \textbf{integral affine reflection group} on $N$.
\end{definition}

Given a reflection group $\Gamma$ acting on an orientable manifold $N$, we define the \textbf{parity character} to be the group homomorphism
\[ \delta:\Gamma\to\Z_2, \]
which associates to $\gamma\in\Gamma$ the element $1$ (respectively, $-1$) if $\gamma$ preserves (resp.~reverses) orientation. We note that $\delta(\gamma)$ coincides with the parity of the decomposition of $\gamma$ into a product of reflections.

We will now show that.

\begin{theorem}
\label{thm:Weyl:reflection:group}
The Weyl group $\cW$ of a Poisson manifold of proper type is an integral affine reflection group on $\oB$.
\end{theorem}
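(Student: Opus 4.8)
\textbf{Proof strategy for Theorem \ref{thm:Weyl:reflection:group}.}

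The plan is to assemble the theorem from the three ingredients already built: (1) the abstract description of $\cW$ as generated by abstract reflections (Proposition \ref{prop:generation-by-reflections}), (2) the action of $\cW$ on $\oB$ by integral affine transformations, which is proper and effective because it is the restriction of the proper effective action of $\pi_1^\orb(B)$ from Proposition \ref{prop:Morita:oB}, and (3) the local model computation (Examples \ref{ex:local:model:Morita} and \ref{ex:local:model:Weyl:continued}) identifying the action of $\cW$ near a subregular point with the classical action of a Weyl group of a compact group on $\tt^*$. By definition of an integral affine reflection group, the only thing that remains is to show that each abstract reflection $\tau$ acts on $\oB$ as a \emph{geometric} reflection: a smooth integral affine involution whose fixed-point set is a codimension-one submanifold separating $\oB$.

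First I would fix an abstract reflection $\tau\in\cW$, arising (Definition \ref{def:abstract-reflections}) from a subregular reflection $\tau_{\hx}\in\cW_{\hx}$ at a subregular point $\hx=(x,\tt_x)$, transported along a path to the base point. Since $\tau$ has order two and acts by integral affine transformations on the connected integral affine manifold $\oB$, its fixed-point set $\oB^\tau$ is a (possibly empty, possibly disconnected) totally geodesic integral affine submanifold. To pin down its codimension and the separation property, I would pass to a $\cW$-invariant neighborhood of a suitable point of $\oB$ lying over $x$, and use Proposition \ref{rk:functoriality-main-diagram} together with the Hamiltonian local model around the subregular leaf: by Corollary \ref{corollary: infinitesimal-stratum-conormal}(c) we have $\gg_x^\ss\cong\su(2)$, so locally $(M,\pi)$ is Morita equivalent to $(\gg^*,\pi_{\gg^*})$ for $\gg$ with semisimple part $\su(2)$, and by Example \ref{ex:local:model:Morita} the local model for $\oB$ is $\tt_x^*$ with $\cW$ acting (locally) through the classical Weyl group $W^0$, which for an $\su(2)$-factor contains exactly the single reflection in a root hyperplane. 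Under this identification $\tau$ becomes the standard reflection fixing a hyperplane $\cH\subset\tt_x^*$ — codimension one and separating. Thus $\oB^\tau$ has codimension one near every point over a subregular leaf; away from such points $\oB^\tau$ cannot jump in codimension because $\oB^\tau$ is a closed totally geodesic submanifold and $\res^{-1}(M^\subreg)$ is dense in the relevant locus (Corollary \ref{cor:codimension-rank-stratification} says $\res^{-1}(\Sigma)$ has codimension one for $\Sigma$ subregular). Hence $\oB^\tau$ is a codimension-one integral affine submanifold.

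For the separation property I would argue as follows: $\oB$ is simply connected (it is the orbifold universal cover, Proposition \ref{prop:Morita:oB}), and a connected, properly embedded, codimension-one closed submanifold $H$ of a simply connected manifold separates it into (at least, hence exactly) two pieces — this is a standard consequence of the long exact sequence of the pair together with $H_1(\oB)=0$ and the two-sidedness of $H$ (two-sidedness holds because $\tau$, being an involution fixing $H$ pointwise, acts on the normal line bundle by $-1$, hence trivializes its orientation; alternatively $H=\oB^\tau$ for an orientation-behaviour-consistent isometry). I would then note that $\oB\setminus\oB^\tau$ has exactly the two components and $\tau$ interchanges them, completing the verification that $\tau$ is a geometric reflection. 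Finally, invoking Proposition \ref{prop:generation-by-reflections}, $\cW$ is generated by these geometric reflections, and since the $\cW$-action on $\oB$ is proper, effective, and by integral affine transformations, $\cW$ is by definition an integral affine reflection group on $\oB$.

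\textbf{Main obstacle.} The delicate point is the \emph{global} structure of the fixed-point set $\oB^\tau$: the local model only controls it over subregular leaves, and one must rule out $\oB^\tau$ acquiring higher-codimension components over deeper strata or becoming disconnected/non-separating in a way incompatible with $\tau$ being a single reflection. The cleanest route is probably to identify $\oB^\tau$ with (a connected component of) $p^{-1}(B_{\Sigma})$ for the appropriate subregular stratum $\Sigma$, using Corollary \ref{cor:subregular:leaf:space} (which says $B_\Sigma$ is a connected codimension-one integral affine embedded suborbifold of $B$) pulled back to $\oB$, and then combining the simple-connectivity of $\oB$ with the two-sidedness coming from the involution to get separation — so the real work is matching the abstract reflection $\tau$ to the geometric datum $B_\Sigma$ coherently as the base point and the path vary.
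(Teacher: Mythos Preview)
Your overall architecture matches the paper's: reduce to showing each abstract reflection $\tau\in\cW$ acts on $\oB$ as a geometric reflection, check this via the Hamiltonian local model at the subregular point $\hx$ defining $\tau$ (where $\gg_x^\ss\cong\su(2)$ and the action is the classical $\Z_2$-reflection on $\tt^*$), and conclude separation from the simple connectivity of $\oB$. That part is correct.

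The gap is precisely where you flag it, and your proposed resolution is more complicated than necessary. You only verify that $\oB^\tau$ has codimension one near points lying over the subregular stratum, and then try to propagate this via a density argument (``$\oB^\tau$ cannot jump in codimension because $\res^{-1}(M^\subreg)$ is dense in the relevant locus''). This step is not justified: fixed points of $\tau$ can lie over deeper strata (e.g.\ over the origin in $\gg^*$), and nothing you have said excludes an isolated higher-codimension component of $\oB^\tau$ sitting over such a stratum. Your alternative route, identifying $\oB^\tau$ with a component of $p^{-1}(B_\Sigma)$, would require matching the abstract $\tau$ to a specific subregular stratum $\Sigma$ globally, which is extra work.

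The paper bypasses this entirely with a short global argument that you are missing. After moving the base point to $\hx$, the new base point $\widetilde{b}_0\in\oB$ is fixed by $\tau=\tau_{\hx}$, and the local model shows that the connected component $\cH$ of $\oB^\tau$ through $\widetilde{b}_0$ has codimension one \emph{and} that locally $\tau$ interchanges the two sides of $\cH$. Since $\oB$ is simply connected, $\cH$ separates $\oB$ into two components (Lemma~\ref{lem:separation}); the local interchange forces $\tau$ to swap these two global components. But then any point not on $\cH$ lies strictly on one side and is sent by $\tau$ to the other side, hence is \emph{not} fixed. Therefore $\oB^\tau=\cH$ is automatically connected and of codimension one, with no further analysis of deeper strata needed.
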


First we recall the following folklore result regarding the separation property.

\begin{lemma}\label{lem:separation} If $N$ is a simply connected manifold, then  any  closed connected codimension one submanifold $Y$ is co-orientable and separates $N$.
\end{lemma}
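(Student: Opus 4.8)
The plan is to prove Lemma~\ref{lem:separation} using standard algebraic-topology arguments: co-orientability comes from the vanishing of the relevant first cohomology obstruction, and separation follows from a Mayer--Vietoris / Alexander duality computation.

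First I would address co-orientability. The obstruction to co-orienting a closed submanifold $Y\subset N$ of codimension one is the first Stiefel--Whitney class $w_1(\nu)\in H^1(Y;\Z_2)$ of its normal bundle $\nu=\nu_N(Y)$, which is a line bundle since $Y$ has codimension one. Equivalently, by the tubular neighborhood theorem, $w_1(\nu)$ is the image under $H^1(N, N\setminus Y;\Z_2)\to H^1(N;\Z_2)$ restricted appropriately --- but the cleanest route is: a connected codimension-one submanifold is co-orientable if and only if $[Y]$, viewed as an element of $H_{n-1}(N;\Z_2)$ with $n=\dim N$, lifts to an integral class, or more directly, co-orientability is equivalent to the normal bundle being trivial, and a line bundle over $Y$ is trivial iff its $w_1$ vanishes. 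Now $w_1(\nu)$ pulls back from a class on $N$ along the collapse map only in special situations; instead I would argue via Poincar\'e--Lefschetz duality: $w_1(\nu)$ is the restriction to $Y$ of a class dual to $[Y]\in H_{n-1}(N;\Z_2)$, and since $N$ is simply connected, $H^1(N;\Z_2)=0$, so $H_{n-1}(N;\Z_2)\cong H^1(N;\Z_2)^{\vee}\oplus(\text{torsion})$... Rather than get tangled, I would use the explicit statement: for $N$ simply connected, $H^1(N;\Z_2)\cong\Hom(\pi_1(N),\Z_2)=0$, hence every line bundle on $N$ is trivial, and in particular there is no nontrivial double cover of $N$; but the co-orientation double cover of $Y$ inside $N$ extends over a tubular neighborhood, and one checks it is classified by a map that factors through $\pi_1$ of that neighborhood. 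The robust argument: the complement $N\setminus Y$ and the two-sidedness are governed by the same $\Z_2$-cohomology class, so I would treat co-orientability and separation together.

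The main step, then, is the following. Let $U$ be a tubular neighborhood of $Y$, so $U\setminus Y$ deformation retracts onto the unit normal sphere bundle $S(\nu)\to Y$, which has either one or two components according to whether $\nu$ is non-orientable or orientable (i.e.\ non-co-orientable or co-orientable). Apply Mayer--Vietoris to $N=U\cup (N\setminus Y)$ with $U\cap(N\setminus Y)\simeq S(\nu)$:
\[
H_1(N)\longrightarrow H_0(S(\nu))\longrightarrow H_0(U)\oplus H_0(N\setminus Y)\longrightarrow H_0(N)\longrightarrow 0,
\]
with $\Z_2$ coefficients (or $\Z$ coefficients once co-orientability is known). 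Since $N$ is connected, $H_0(N)=\Z$; since $N$ is simply connected, $H_1(N)=0$. Thus $\operatorname{rk}H_0(S(\nu))+1=1+\operatorname{rk}\bigl(H_0(U)\oplus H_0(N\setminus Y)\bigr)$. Because $Y$ is connected, $H_0(U)=\Z$. Now a short case analysis: if $S(\nu)$ is connected then $\operatorname{rk}H_0(S(\nu))=1$, forcing $\operatorname{rk}H_0(N\setminus Y)=1$, i.e.\ $N\setminus Y$ connected; conversely if $N\setminus Y$ is connected then the normal bundle is non-co-orientable. But I would rule out the connected (non-co-orientable) case directly: a non-co-orientable codimension-one submanifold $Y$ gives a nontrivial class in $H^1(N;\Z_2)$ --- namely the class represented by the cocycle counting intersection numbers mod $2$ of loops with $Y$, well-defined by transversality --- contradicting $H^1(N;\Z_2)=\Hom(\pi_1(N),\Z_2)=0$. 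Hence $\nu$ is co-orientable, $S(\nu)$ has two components, and the Mayer--Vietoris sequence then gives $\operatorname{rk}H_0(N\setminus Y)=2$, i.e.\ $N\setminus Y$ has exactly two connected components.

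The step I expect to be the main (minor) obstacle is making the intersection-number cocycle argument fully rigorous without invoking more machinery than warranted: one must check that for a loop $\gamma$ in $N$ transverse to $Y$, the mod-$2$ count $\gamma\cdot Y$ depends only on the homotopy class of $\gamma$ (standard, via a transverse homotopy and the fact that a compact $1$-manifold-with-boundary has an even number of boundary points), and that this cocycle is a coboundary iff $Y$ is co-orientable. Since the statement is explicitly flagged as folklore, I would keep this short, citing transversality theory and the Thom class / Poincar\'e duality identification $H^1(N;\Z_2)\cong H_{n-1}(N;\Z_2)$, and conclude: $H^1(N;\Z_2)=0$ forces the class $[Y]^\vee$ to vanish, hence $Y$ is co-orientable, and then the Mayer--Vietoris computation above yields separation into two pieces. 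This completes the proof of the lemma, which is then applied in the proof of Theorem~\ref{thm:Weyl:reflection:group} to the fixed-point sets of abstract reflections inside the simply connected manifold $\oB$.
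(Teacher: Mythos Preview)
Your approach is correct and coincides with the paper's: first establish co-orientability from $\pi_1(N)=0$, then deduce separation by a homological computation. The paper outsources the first step to Samelson's note and for the second uses excision $H_*(N,N\setminus Y)\cong H_*(\nu(Y),\nu(Y)\setminus 0)$ together with the long exact sequence of the pair; your Mayer--Vietoris argument is an equivalent repackaging, and your intersection-cocycle argument is a self-contained proof of the Samelson step.

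One small correction: in your final paragraph you assert that the mod-$2$ intersection cocycle is a coboundary \emph{iff} $Y$ is co-orientable. This biconditional is false in general---a meridian circle in $T^2$ is co-orientable yet has nontrivial intersection class. Only the implication you actually use holds, namely that non-co-orientability forces the class to be nonzero: pick a loop $\gamma\subset Y$ along which $\nu$ is a M\"obius band, push $\gamma$ off along a parallel normal frame to get a path in $N\setminus Y$ from $p+\epsilon n$ to $p-\epsilon n$, and close it up by the short normal segment through $p$; the resulting loop meets $Y$ transversely exactly once. Since $H^1(N;\Z_2)=\Hom(\pi_1(N),\Z_2)=0$, this yields co-orientability, and your Mayer--Vietoris count then gives exactly two components. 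Your earlier paragraph already states the implication in the correct direction, so the proof stands; just drop the ``iff''.
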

\begin{proof} The fact that the normal bundle of $Y$ must be trivial is proved in \cite{Samelson69}. By the excision property, the reduced homology group $\widetilde{H}_0(N,N\setminus Y)$ is  isomorphic to $\widetilde{H}_0(\nu(Y),\nu(Y)\setminus 0)\cong \Z$, so the result follows.
\end{proof}

We have already observed that the action of $\cW$ on $\oB$ is proper and effective. Theorem \ref{thm:Weyl:reflection:group} now follows from the following proposition and the fact that the Weyl group is generated by abstract reflections -- see Proposition \ref{prop:generation-by-reflections}.

\begin{proposition}\label{pro: abstract is geometric reflection}
The action of any abstract reflection $\tau\in \cW$ on $\oB$ is a geometric reflection on $\oB$. 
\end{proposition}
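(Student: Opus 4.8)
The plan is to reduce the statement to a local computation near a subregular point, where everything is governed by $\mathfrak{su}(2)$, and then to propagate the conclusion using the structure of the main diagram and the covering maps. Fix an abstract reflection $\tau\in\cW$; by definition it is obtained by transporting a subregular reflection $\tau_{\hx}\in\cW_{\hx}$ along a path from a subregular point $\hx=(x,\tt_x)\in\hM$ to the base point $\hx_0$. Using Remark \ref{rem:base-point change}, it suffices to describe $\tau_{\hx}$ acting on $\oB$ near the connected component of the preimage of the subregular stratum $B_\Sigma\subset B$ containing a lift of $\hx$; a path from $\hx$ to $\hx_0$ conjugates $\tau_{\hx}$ to $\tau$ and carries the relevant submanifolds and components to each other. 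So the core of the argument is: (a) identify the fixed-point set of $\tau_{\hx}$ on $\oB$ and show it is a codimension-one submanifold; (b) show it separates $\oB$.

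\textbf{Step 1: the local model at a subregular point.} First I would use the Hamiltonian local form (Theorem \ref{thm-lf-Poisson-v2}) around the leaf through $x$. Since $\hx$ is subregular, by Corollary \ref{corollary: infinitesimal-stratum-conormal} the semisimple part of $\gg_x$ is $\su(2)$, so the isotropy group is (up to cover) $\mathrm{SU}(2)\times K$ with $K$ abelian, and the Hamiltonian model involves the dual $\gg^*=\su(2)^*\oplus\kk^*$. In this model, by Example \ref{ex:local:model:Morita} and the computation of the main diagram for the Hamiltonian local model (Section \ref{ex:model main diagram}), the orbifold universal cover $\oB$ is locally identified with $\tt^*$ and the action of $\cW$ with the action of the relevant Weyl group $W^0$ by the standard representation on $\tt^*$. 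The subregular reflection $\tau_{\hx}$ corresponds, under this identification, to the standard reflection across the single root hyperplane $\cH\subset\tt^*$ coming from the $\su(2)$-factor (this is the content of Example \ref{ex:local:model:Weyl:continued} and Corollary \ref{cor:didn't-loose-any-reflection}). Thus locally the fixed-point set $(\oB)^{\tau_{\hx}}$ is a hyperplane, hence codimension one, and it locally separates. Crucially, $(\oB)^{\tau_{\hx}}$ maps onto (a component of) the subregular suborbifold $B_\Sigma$, which by Corollary \ref{cor:subregular:leaf:space} is a connected codimension-one integral affine embedded suborbifold of $B$.

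\textbf{Step 2: globalizing the fixed-point set.} Next I would argue that the fixed-point set $Y:=(\oB)^{\tau}$ is globally a closed, connected, codimension-one submanifold of $\oB$. Closedness is automatic since $\tau$ is a homeomorphism. For the submanifold property: $\cW$ acts properly and effectively on $\oB$ by integral affine (in particular smooth) transformations, so the fixed-point set of the finite-order element $\tau$ is a closed submanifold (a union of components of various dimensions); the local model of Step 1 shows it has pure codimension one near any of its points, since near a point of $Y$ the action is conjugate to a neighborhood of the Hamiltonian local model where $\tau$ is a hyperplane reflection. For connectedness: $Y$ covers the connected suborbifold $B_\Sigma$ (more precisely, a connected component of its preimage, which I would identify with $Y$ using the orbifold covering structure of $\oB\to B$ from Proposition \ref{prop:Morita:oB} and the stabilizer description of preimage components from the Lemma in the proof of Proposition \ref{prop:split:orbi:fundamental:group}); since $Y$ is exactly the stabilizer-component associated to $\tau$ and $\tau$ acts trivially precisely there, $Y$ is a single connected component.

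\textbf{Step 3: separation.} Finally, since $\oB$ is simply connected (it is the orbifold universal cover, and as an integral affine \emph{manifold} by the main diagram it is genuinely simply connected), Lemma \ref{lem:separation} applies directly: the closed connected codimension-one submanifold $Y$ is co-orientable and separates $\oB$ into two components. This gives exactly the definition of a geometric reflection, completing the proof; combined with Proposition \ref{prop:generation-by-reflections} it yields Theorem \ref{thm:Weyl:reflection:group}.

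\textbf{Main obstacle.} The delicate point is Step 2: verifying that the fixed-point set is \emph{globally} connected and of \emph{pure} codimension one, rather than a priori a union of components of various dimensions. The local $\su(2)$-model pins down the behavior near $Y$, but one must carefully match the local picture with the global covering $\oB\to B$ and use that $\tau$ is, up to conjugacy, canonically attached to the subregular stratum $\Sigma$ (so that distinct "sheets" of the fixed locus would correspond to distinct conjugacy classes, contradicting the construction). Making this matching precise — in particular ruling out that $\tau$ has fixed points away from the preimage of $B_\Sigma$ — is where the bulk of the genuine work lies; I would handle it by analyzing, via the local model, that a point of $\oB$ fixed by $\tau$ must lie over the subregular stratum, because over the regular locus $\res$ is a diffeomorphism and $\cW$ acts there, by construction of the splitting in \eqref{eq:splitting-weil-pi1M}, in a way that no abstract reflection can fix a regular point without being trivial.
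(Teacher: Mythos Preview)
Your overall strategy --- reduce to the Hamiltonian local model at a subregular point, identify $\tau$ there with the $\su(2)$-reflection, and then invoke Lemma \ref{lem:separation} on the simply connected $\oB$ --- is exactly the paper's. The genuine gap is in your Step 2, and the paper closes it by a much simpler argument than the one you propose.

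You assert that the fixed set $Y=(\oB)^\tau$ has pure codimension one near \emph{every} one of its points because ``near a point of $Y$ the action is conjugate to a neighborhood of the Hamiltonian local model where $\tau$ is a hyperplane reflection''. But the local model is only established at the specific subregular point $\hx$ from which $\tau$ was constructed; at an arbitrary fixed point $\tilde b'$ you have no a priori reason to know that $\tilde b'$ lies over the subregular stratum, nor that $\tau$ looks like a hyperplane reflection there. Your proposed fix in the ``Main obstacle'' paragraph only rules out fixed points over the \emph{regular} locus (via the stabilizer description in Proposition \ref{prop:split:orbi:fundamental:group}); it says nothing about potential fixed points of $\tau$ lying over deeper strata, where the local fixed set could in principle have higher codimension. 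So the argument as written does not establish connectedness or pure codimension one.

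The paper avoids this entirely. It proves the local claim only at the single fixed point $\tilde b$ coming from $\hx$: there the fixed set is codimension one. Let $\cH$ be the connected component of $(\oB)^\tau$ through $\tilde b$. Since $\oB$ is simply connected, Lemma \ref{lem:separation} gives that $\oB\setminus\cH$ has exactly two components. The local model also shows that $\tau$ interchanges the two sides of $\cH$ near $\tilde b$, hence interchanges the two global components of $\oB\setminus\cH$. But then $\tau$ cannot fix any point off $\cH$, so $(\oB)^\tau=\cH$ is automatically connected and of pure codimension one. This ``swapping'' argument replaces your entire Step 2 and makes the identification of $Y$ with preimages of $B_\Sigma$, as well as any appeal to regular vs.\ singular loci, unnecessary.
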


\begin{proof}
Given $\tau\in \cW$ an abstract reflection  and $\widetilde{b}\in \oB$ a fixed point of $\tau$, we claim that:
\begin{enumerate}
\item[(C)] there exists a contractible neighborhood $V$ of $\widetilde{b}\in \oB$ such that 
\[V\cap (\oB)^\tau\]
is a codimension one submanifold.
\end{enumerate}
Assuming this claim, the result follows. To see this we first notice that $(\oB)^\tau\subset \oB$ is a closed embedded submanifold, with connected components possibly of different dimensions. Let $\cH$ be the component that contains $\widetilde{b}$, which is assumed to be of codimension one. Since $\oB$ is simply connected we can apply the previous lemma,  so $\oB\setminus \cH$ has two connected components and $\cH$ is the common boundary. Furthermore, $\cH$ would be orientable. Then since the action of $\tau$ on $V$ interchanges the two connected components of $V\setminus (V\cap \cH)$, we conclude that $\tau$ interchanges the connected components of $\oB\setminus \cH$. Therefore there cannot be any other fixed point away from $\cH$.


We are left with proving (C). For that note that $\tau$ comes from a fiber $\res^{-1}(x)$ inside a subregular stratum, i.e. $\tau= \sigma \tau_{\hx}\sigma^{-1}$ where $\sigma$ joins the base point $\hx_0$ with $\hx$ and $\tau_{\hx}$ is a non-contractible loop in $\res^{-1}(x)$. We divide the proof into several steps, that reduce the problem to the local Hamiltonian model.
\smallskip

\emph{Step 1:} We may assume that $\hx_0=\hx$ and $\tau=\tau_{\hx}$. 

This follows from Remark \ref{rem:base-point change} and the fact that conjugation by $\sigma$ takes $\tau$ to $\tau_{\hx}$.
\smallskip

\emph{Step 2:} It suffices to prove that the claim holds for $(U,\pi|_U)$ where $U$ is a saturated neighborhood of $\hx$ invariant by $\tau$. 


This is a consequence of the functoriality of the main diagram with respect to restriction to saturated opens (see Remark \ref{rk:functoriality-main-diagram}). The inclusion $U\hookrightarrow M$ gives group homomorphisms $\phi$ between the (orbifold) fundamental groups corresponding to $(U, \pi|_U)$ and $(M, \pi)$. 
The maps relating the main diagrams are $\phi$-equivariant. Notice that $\phi$ maps the groups $\cW(U,\pi|_U)$ to $\cW(M, \pi)$ and sends $\tau_{\hx}\in \cW(U,\pi|_U)$ to $\tau_{\hx}\in \cW(M, \pi)$.
\smallskip

\emph{Step 3:} The claim holds when $M= Q/G$ is the Hamiltonian local model with $\hx=(x,\tt_x)$ $Q$-related to $(0,\tt)$. 

Since $\hx$ a subregular point we have $\gg^\ss=\mathfrak{su}(2)$. The discussion in Example \ref{ex:local:model:Morita} shows that the Weyl group of $Q/G$ is isomorphic to $\Z_2$ and acts on 
$\tt^*=\zz(\gg)^*\oplus {(\tt^\ss)}^*$ fixing the first summand. Therefore, the claim holds.  
\end{proof}

\subsection{The Weyl group as a Coxeter group}
\label{sec:Coxeter}

The fact that the Weyl group of $(M,\pi)$ is an integral affine reflection group on $\oB$ has several interesting consequences. To describe them we use several notions from the theory of reflection groups and Coxeter systems (see, e.g., \cite{Davis08}). Namely, we will be using the following: 
\begin{itemize}
\item \textbf{set of geometric reflections} $\Refl\subset \cW$.
\item \textbf{hyperplane} $\cH_{r}$ of $(\oB,\cW)$: the fixed point set of a reflection $r\in \Refl$.
Any $\cH_{r}$ is an integral affine hyperplane that separates $\oB$;
\item \textbf{$\cW$-regular point}: any point $u\in \oB$ which is not fixed by any $w\in \cW$. We denote by $(\oB)^{\cW-\reg}$ the set of regular points. 
\item \textbf{chamber} of $(\oB,\cW)$: the closure of a connected component of the subset of regular points. 
\item \textbf{simple reflection} associated to  the chamber $\Delta$: a reflection $r\in \Refl$ with the property that there is a point $u\in \Delta\cap \cH_{r}$ such that $r$ is the only reflection that fixes $u$. We denote by $\Refl_{\Delta}$ the collection of all such reflections. 
\end{itemize}

\smallskip

We recall also that a Coxeter system is a group $\cW$ with a set of generators $\Refl_0=\{r_i:i\in I\}$ satisfying relations
\[ 
(r_{i}r_{j})^{m_{ij}}=1,\quad (i,j\in I), 
\]
where $m_{ii}=1$ and $m_{ij}=m_{ji}\geq 2$ is either an integer or $\infty$ for $i\neq j$. The condition $m_{ij}=\infty$ means that no relation $(r_{i}r_{j})^{m}=1$ for any integer $m\geq 2$ is imposed.

Using the general results in \cite{Davis83} concerning reflection groups one deduces the following result.

\begin{theorem}\label{thm:Davis}
For a Poisson manifold of proper type, one has:
\begin{enumerate}[(i)]
\item $(\cW, \Refl_{\Delta})$ is a Coxeter system.
\item $\cW$ acts freely and transitively on the set of chambers.
\item $\Delta$ is a fundamental domain 
in the sense that the canonical projection restricts to a homeomorphism $\Delta \cong \oB/\cW$. In particular, 
\[ \Delta\cdot \cW= \oB. \] 
\item the $\cW$-regular part is the complement of the hyperplanes: 
\[ (\oB)^{\cW-\reg}=\oB\setminus \bigcup_{r\in \Refl} \cH_{r}. \]
\item the isotropy group $\cW(u)$ at any $u\in\Delta$ is a finite group generated by the reflections from $\Refl_{\Delta}$ that fix $u$. Moreover, if $U$ is a $\cW(u)$-linear neighborhood of $u$, one has:
\begin{enumerate}[(1)]
\item $U\cap \Delta$ is a chamber for the action of $\cW(v)$ on $U$; 
\item $\Refl_{U\cap \Delta}$ coincides with the reflections in $\Refl_{\Delta}$ which fix $u$.
\end{enumerate}
\item for $r\in \Refl_{\Delta}$, $\Delta$ is contained in the closure of one of the two components of $\oB\setminus \cH_r$. Denoting by $H_r^{+}(\Delta)$ that component, one has
\[ \Delta= \bigcap_{r\in \Refl_{\Delta}} H_r^{+}(\Delta).\]
\end{enumerate}
\end{theorem}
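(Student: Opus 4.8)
The strategy is to reduce Theorem \ref{thm:Davis} to the general theory of reflection groups developed by Davis \cite{Davis83}. By Theorem \ref{thm:Weyl:reflection:group}, the Weyl group $\cW$ acts properly and effectively on the integral affine manifold $\oB$ as an integral affine reflection group, i.e., it is a discrete group acting properly and effectively and generated by the geometric reflections in $\Refl$. Since $\oB$ is simply connected, Lemma \ref{lem:separation} guarantees that each hyperplane $\cH_r$ (the fixed-point set of a reflection $r$, which by Proposition \ref{pro: abstract is geometric reflection} and Lemma \ref{lem:separation} is a closed connected codimension-one separating submanifold) divides $\oB$ into exactly two components. This is precisely the setup of a reflection group on a manifold in the sense of \cite{Davis83}, so all the structural conclusions of that paper apply verbatim. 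Concretely, I would first verify that $\cW$ together with the action on $\oB$ satisfies Davis's hypotheses: discreteness and properness of the action are part of the definition, effectiveness was noted, local finiteness of the hyperplane arrangement follows from properness, and the separation property of each hyperplane follows from simple connectivity of $\oB$ as just explained.

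Once this is in place, items (i)--(vi) are essentially a restatement of the main theorems of \cite{Davis83}. For (i), the key input is that a chamber $\Delta$ (the closure of a connected component of the regular locus) has a well-defined set $\Refl_\Delta$ of simple reflections — those $r$ for which the ``wall'' $\cH_r\cap\Delta$ is a genuine codimension-one face of $\Delta$, equivalently there is a point of $\Delta\cap\cH_r$ fixed by no other reflection — and Davis shows that $(\cW,\Refl_\Delta)$ is then a Coxeter system with exponents $m_{ij}$ read off from the dihedral angles between adjacent walls. For (ii) and (iii), the free transitive action on chambers and the fact that $\Delta$ is a strict fundamental domain (so that the quotient map restricts to a homeomorphism $\Delta\xrightarrow{\sim}\oB/\cW$) are again standard consequences: transitivity comes from connectivity of $\oB$ and the ability to cross walls one at a time, freeness from the fact that a nontrivial element fixing a chamber setwise would have to fix it pointwise and hence be trivial by effectiveness, and the fundamental domain statement follows by a gallery/minimal-word argument. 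Item (iv) — that the $\cW$-regular part is exactly the complement of all hyperplanes — is immediate from the definition of a geometric reflection together with the fact that for a proper action with finite isotropy, every element with a fixed point is conjugate into some $\cW(u)$ which by Davis's theory is generated by reflections.

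Item (v) is the local structure theorem: the isotropy group $\cW(u)$ at a point $u$ of the fundamental domain is finite (properness) and is generated by the simple reflections in $\Refl_\Delta$ fixing $u$. To prove the two refinements (1) and (2), I would pass to a $\cW(u)$-invariant ``linear'' neighborhood $U$ of $u$ — obtained via the slice theorem for the proper $\cW$-action, and which by Theorem \ref{thm:Davis}(iv) we may take so that the only reflections meeting $U$ are those fixing $u$ — and check that the chamber decomposition of $U$ under $\cW(u)$ is the restriction of the global one, so that $U\cap\Delta$ is a $\cW(u)$-chamber and its set of simple reflections is exactly $\{r\in\Refl_\Delta : r\cdot u=u\}$. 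This is a purely local computation that again follows the template of \cite{Davis83}. Finally, item (vi) records that $\Delta$ lies on one side $H_r^+(\Delta)$ of each of its walls $\cH_r$, $r\in\Refl_\Delta$, and that $\Delta$ is the intersection of these half-spaces; this is Davis's description of the fundamental chamber as an intersection of half-spaces, the nontrivial direction being that a point in all the $H_r^+(\Delta)$ which is not in $\Delta$ could be connected to $\Delta$ by a path crossing only walls \emph{not} in $\Refl_\Delta$, contradicting minimality.

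\textbf{Main obstacle.} The genuinely delicate point is not the invocation of \cite{Davis83} but confirming that the hypotheses of that paper are met in our (a priori merely topological/smooth, not geometric) setting: in particular, that the hyperplane arrangement $\{\cH_r\}_{r\in\Refl}$ is locally finite on $\oB$, that each $\cH_r$ is connected (so that Lemma \ref{lem:separation} applies and gives exactly two sides), and that chambers are well-behaved (locally the intersection of finitely many half-spaces). Local finiteness and the local polyhedral structure both follow from properness of the $\cW$-action combined with the slice theorem and the already-established local model at subregular points (Step 3 in the proof of Proposition \ref{pro: abstract is geometric reflection}), but the connectedness of each $\cH_r$ — needed to guarantee the separation statement globally rather than just locally — requires care and is where I would spend most of the effort, deducing it from the fact that each reflection arises (up to conjugacy) from a single subregular stratum whose preimage in $\oB$ is connected.
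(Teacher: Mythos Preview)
Your approach is correct and matches the paper's exactly: the paper states the theorem as a direct consequence of the general results in \cite{Davis83}, giving no further proof. Your elaboration of the hypotheses to verify is accurate, though your ``main obstacle'' --- the connectedness of each $\cH_r$ --- is in fact already established in the proof of Proposition \ref{pro: abstract is geometric reflection}: there it is shown that the fixed-point set of an abstract reflection $\tau$ consists of a single connected component $\cH$ (since $\tau$ interchanges the two sides of $\cH$, it can have no further fixed points).
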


\medskip

In particular, we have: 

\begin{corollary} The Weyl chamber $\Delta$ is both a manifold with corners as well as a (classical) orbifold, with the orbifold atlas given by $\oB\rtimes \cW\tto \oB$.
\end{corollary}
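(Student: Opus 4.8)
The plan is to read off both structures directly from Theorem~\ref{thm:Davis}. The corollary asserts two things about the Weyl chamber $\Delta$: that it is a manifold with corners, and that it carries a classical orbifold structure presented by $\oB\rtimes\cW\tto\oB$.

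First I would establish the orbifold assertion. By Theorem~\ref{thm:Davis}(iii), the canonical projection restricts to a homeomorphism $\Delta\cong\oB/\cW$. Since $\cW$ acts properly and effectively on the (simply connected, hence effective-orbifold-admitting) manifold $\oB$ by Theorem~\ref{thm:Weyl:reflection:group}, the action groupoid $\oB\rtimes\cW\tto\oB$ is a proper effective foliation groupoid (all isotropy is discrete), so it is an orbifold atlas in the sense of Section~\ref{sec:The case of orbifolds}; its orbit space is $\oB/\cW\cong\Delta$. This is precisely the statement that $\Delta$ is the classical (good) orbifold presented by $\oB\rtimes\cW$. Nothing more is needed here beyond quoting the definitions.

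Next, the manifold-with-corners structure. The local picture is given by Theorem~\ref{thm:Davis}(v): for $u\in\Delta$ with a $\cW(u)$-linear neighborhood $U$, the finite group $\cW(u)$ is generated by the reflections in $\Refl_\Delta$ fixing $u$, it acts linearly on $U\cong\R^n$ (after choosing integral affine coordinates), and $U\cap\Delta$ is a chamber for this linear action with $\Refl_{U\cap\Delta}$ equal to those reflections. A finite reflection group acting linearly on $\R^n$ has, as its fundamental chamber, the intersection of the positive half-spaces of its simple reflection hyperplanes (Theorem~\ref{thm:Davis}(vi) applied locally), which is a simplicial cone; such a cone is diffeomorphic to $\R^{n-k}\times[0,\infty)^k$, where $k=|\Refl_{U\cap\Delta}|$ is the number of walls through $u$. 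This gives $\Delta$ a corner chart near each point, and one checks the transition maps are smooth because they are restrictions of the integral affine chart transitions on $\oB$. Assembling these charts over a locally finite cover (properness of the $\cW$-action guarantees local finiteness, cf. the hyperplane arrangement being locally finite) yields the manifold-with-corners atlas.

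The main obstacle is the linearization input of Theorem~\ref{thm:Davis}(v): one must know that near each $u$ the chamber $\Delta$ genuinely looks like a simplicial cone for the \emph{linear} isotropy action, not merely a more general convex region. This is exactly what parts (v)(1)--(2) and (vi) of Theorem~\ref{thm:Davis} supply — $U\cap\Delta=\bigcap_{r\in\Refl_{U\cap\Delta}}H_r^+$ is an intersection of half-spaces through a common point, hence a polyhedral cone, and since the bounding hyperplanes are the mirrors of the \emph{simple} reflections of a finite Coxeter group they are in general position (the cone is simplicial). Once this is in hand the corner structure is immediate. I would therefore spend essentially all the effort citing and correctly invoking Theorem~\ref{thm:Davis}, and only a line each verifying that the charts are compatible and that the atlas is locally finite.
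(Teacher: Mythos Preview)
Your proposal is correct and follows the same route as the paper, which states the corollary as an immediate consequence of Theorem~\ref{thm:Davis} without further proof. You have simply spelled out the details the paper leaves implicit: the orbifold structure comes from (iii) together with the proper effective $\cW$-action, and the corner structure comes from the local description in (v) and (vi), which identifies a neighborhood of each $u\in\Delta$ with a chamber of the finite linear reflection group $\cW(u)$.
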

 
Next, the action of the entire $\pi_1^{\orb}(B)$ descends to an action of the group
\[ \pi:= \pi_1^{\orb}(B)/\cW\]
on the chamber $\Delta$, with orbit space 
\[ \Delta/\pi\cong B.\] 
Since the action of $\pi$ is still proper and effective, one may think of $\Delta \rtimes \pi\tto \Delta$ as an atlas making $B$ into an ``orbifold with corners''. 
\medskip

At this point we have two notion of regular points in $\oB$: the points in $(\oB)^\reg$ coming from from the infinitesimal stratification (see the comments preceding Proposition \ref{prop:split:orbi:fundamental:group}), and 
the $\cW$-regular points described in item (iv) of the last theorem. We now show that the two notions coincide. 


\begin{proposition}\label{pro: equality regulars} The following equality holds:
\begin{equation}\label{eq: equality regulars}
(\oB)^\reg= (\oB)^{\cW-\reg}.
\end{equation}
Moreover,
\begin{enumerate}[(i)]
    \item the chambers of $(\oB,\cW)$ are the connected components of $(\oB)^\reg$;
    \item $\Refl$ is precisely the set of all abstract reflections of $\cW$.
\end{enumerate}
\end{proposition}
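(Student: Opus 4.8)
\textbf{Proof strategy for Proposition \ref{pro: equality regulars}.}

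The plan is to establish the two inclusions in \eqref{eq: equality regulars} separately, and then deduce (i) and (ii) as formal consequences together with material already proved. For the inclusion $(\oB)^{\cW-\reg}\subset(\oB)^\reg$, I would argue by contrapositive: suppose $\widetilde{b}\in\oB$ is not $\cW$-regular in the infinitesimal sense, i.e.\ $\widetilde{b}$ maps into a non-regular stratum of $B$ under the projection $\oB\to B$. Using the description of $(\oB)^\reg$ as the preimage of $B^\reg$, together with Corollary \ref{cor:codimension-rank-stratification} (which says that the preimage under $\res$ of a subregular stratum has codimension one in $\hM$), it suffices to reduce to a neighborhood of a subregular point. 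There I would invoke the Hamiltonian local model (Theorem \ref{thm-lf-Poisson-v2}) and the explicit computation in Example \ref{ex:local:model:Morita}: near such a point the situation is Morita equivalent to $(\gg^*,\pi_\lin)$ with $\gg^{\ss}\cong\su(2)$, the Weyl group acts on $\tt^*=\zz(\gg)^*\oplus(\tt^{\ss})^*$ fixing the first summand, and the fixed locus of the subregular reflection is precisely the non-regular part of $\tt^*$. Hence a point over a subregular stratum is fixed by the subregular reflection, so it is not $\cW$-regular. Running this argument over all non-regular strata (which all lie in the closure of the subregular ones by the frontier condition) gives $\oB\setminus(\oB)^\reg\subset\oB\setminus(\oB)^{\cW-\reg}$.

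For the reverse inclusion $(\oB)^\reg\subset(\oB)^{\cW-\reg}$, I would argue that a point lying over $B^\reg$ cannot be fixed by any nontrivial $w\in\cW$. By Proposition \ref{prop:generation-by-reflections}, $\cW$ is generated by abstract reflections, and by Proposition \ref{pro: abstract is geometric reflection} each abstract reflection $\tau$ acts on $\oB$ as a geometric reflection whose fixed hyperplane $\cH_\tau$ is, by the very construction in Step 3 of that proof (and the local model of Example \ref{ex:local:model:Morita}), contained in the preimage of a subregular stratum — in particular disjoint from $(\oB)^\reg$. A nontrivial element $w$ with a fixed point $\widetilde{b}$ must, by the standard theory of reflection groups (Theorem \ref{thm:Davis}(v) applied to the finite isotropy group $\cW(\widetilde{b})$), be a product of reflections each fixing $\widetilde{b}$; if $\widetilde{b}\in(\oB)^\reg$ this forces $\cW(\widetilde{b})$ trivial since no reflection hyperplane meets $(\oB)^\reg$. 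Thus $\widetilde{b}$ is $\cW$-regular.

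Granting \eqref{eq: equality regulars}, statement (i) is immediate: the chambers of $(\oB,\cW)$ are by definition the closures of connected components of $(\oB)^{\cW-\reg}$, so the open chambers are the connected components of $(\oB)^{\cW-\reg}=(\oB)^\reg$. For (ii), one inclusion is Definition \ref{def:abstract-reflections} combined with Proposition \ref{pro: abstract is geometric reflection}: every abstract reflection is a geometric reflection, hence lies in $\Refl$. Conversely, given $r\in\Refl$, its hyperplane $\cH_r$ separates $\oB$ and, by the equality just proved, lies in the complement of $(\oB)^\reg$; picking a point of $\cH_r$ in a subregular stratum and applying the local model at that point identifies $r$ with the subregular reflection there, transported to the base point, hence $r$ is an abstract reflection. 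The main obstacle I anticipate is the careful bookkeeping in the reverse inclusion of (ii): one must verify that an \emph{arbitrary} geometric reflection of $\cW$ — not just one obtained by construction from a subregular stratum — necessarily has its hyperplane meeting a subregular stratum, which requires knowing that $\Refl_\Delta$ (and hence all conjugates) exhausts exactly the abstract reflections; this is where Corollary \ref{cor:didn't-loose-any-reflection} and the Coxeter-system structure of Theorem \ref{thm:Davis}(i) must be used to rule out ``extra'' reflections.
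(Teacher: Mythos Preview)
Your argument for $(\oB)^\reg \subset (\oB)^{\cW-\reg}$ has a circularity that you correctly flag at the end but do not resolve. You invoke Theorem \ref{thm:Davis}(v), which says the isotropy $\cW(\widetilde{b})$ is generated by reflections in $\Refl$ fixing $\widetilde{b}$, and then want to conclude $\cW(\widetilde{b})=\{1\}$ because ``no reflection hyperplane meets $(\oB)^\reg$''. But you have only argued this for \emph{abstract} reflection hyperplanes, while $\Refl$ is the set of \emph{all} geometric reflections in $\cW$. Until (ii) is proved, $\Refl$ might properly contain the abstract reflections, and an ``extra'' geometric reflection could a priori have its hyperplane passing through $(\oB)^\reg$. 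Moreover, even the claim that each abstract reflection hyperplane $\cH_\tau$ lies entirely in $(\oB)^\sing$ is not supplied by Proposition \ref{pro: abstract is geometric reflection}: Step~3 there only treats a neighborhood of the base point, not the whole hyperplane.

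The paper breaks this circularity by proving $(\oB)^\reg\subset(\oB)^{\cW-\reg}$ without touching hyperplanes at all. If a nontrivial $w\in\cW$ fixed a regular point, then (using normality of $\cW$ in $\pi_1^\orb(B)$ and transitivity of $\pi_1^\orb(B)$ on the connected components of $(\oB)^\reg$) a conjugate $w'\in\cW$ would stabilize the base component $C$; but by the last clause of Proposition \ref{prop:split:orbi:fundamental:group} the stabilizer of $C$ is exactly the image of the splitting, which meets $\cW$ only in the identity. This gives the inclusion cleanly. One then has the chain
\[
(\oB)^\reg \ \subset\ (\oB)^{\cW-\reg} \ =\ \oB\setminus\!\!\bigcup_{r\in\Refl}\cH_r \ \subset\ \oB\setminus\!\!\!\bigcup_{s\ \mathrm{abstract}}\!\!\cH_s,
\]
and closing the loop with the other inclusion yields both \eqref{eq: equality regulars} and $\cH_r\subset\bigcup_s\cH_s$ for every $r\in\Refl$. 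Item (ii) then follows by a local argument: linearize the $\cW$-action near any $\widetilde{b}\in\cH_r$, so that $\cH_r$ and some $\cH_s$ (with $s$ abstract) agree in a neighborhood; the linear involutions $r$ and $s$ then have the same fixed hyperplane, hence $rs^{-1}$ is either the identity or of infinite order, and finiteness of the isotropy forces $r=s$.

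A smaller point: your reduction of the other inclusion to subregular points ``via the frontier condition'' is imprecise. A singular $\widetilde{b}$ lies in the closure of some subregular component, but to conclude that a reflection fixing nearby subregular points also fixes $\widetilde{b}$ you would need a \emph{single} reflection fixing the entire component (so that closedness of the fixed set applies); this requires justification. The paper sidesteps this by applying the Hamiltonian local model directly at an arbitrary singular $\widetilde{b}$ and using Corollary \ref{cor:didn't-loose-any-reflection} to exhibit an abstract reflection in $\cW$ fixing $\widetilde{b}$.
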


\begin{proof}
We first prove the inclusion $\subset$.
Since $\cW$ acts transitively on connected components, if an element $w\in \cW$  fixes a regular point, a conjugate $w'$ of it fixes a regular point in the connected component $C$ of ${(\oB)}^\reg$ containing the base point, hence $C\cdot w'= C$. Then Proposition \ref{prop:split:orbi:fundamental:group} implies that $w'$ must be  the identity, and therefore  the inclusion holds. 

At this point we have:
\[(\oB)^\reg \subset (\oB)^{\cW-\reg}=\oB\setminus \bigcup_{r\in \Refl} \cH_{r}\subset \oB\setminus \bigcup_{s\in \Refl^{\mathrm{abs}}} \cH_{s}
\]
where $\Refl^{\mathrm{abs}}$ is the collection of all abstract reflections. Next we prove that the last term is included in the first one. In other words, we will show that any point  $\widetilde{b}\in (\oB)^\sing$ is fixed by some abstract reflection $w\in \cW$. 

As in the proof of Proposition \ref{pro: abstract is geometric reflection} we may assume that $\widetilde{b}$ is the base-point of $\oB$ and, using the functoriality, that $M$ is the Hamiltonian local model. So let
$M= Q/G$ with $\hx=(x,\tt_x)$ $Q$-related to $(0,\tt)$ and let $W$ denote the Weyl group of $G^0$ relative to $\tt$. Any standard reflection $w\in W$ on a root hyperplane fixes the origin $0\in \tt^*$. By Corollary \ref{cor:didn't-loose-any-reflection}, its image under the inclusion $W\to \cW$ is the desired abstract reflection.
We conclude that (\ref{eq: equality regulars}) holds and that 
\begin{equation}\label{eq:inclusion:hyperplanes} \cH_r\subset \bigcup_{s\in \Refl^{\mathrm{abs}}} \cH_{s}\ \quad \quad \forall\ r\in \Refl.
\end{equation}

Item (i) follows immediately from the (\ref{eq: equality regulars}). As for item (ii), let $r\in \Refl$ be any geometric reflection and consider a neighborhood of a point $\widetilde{b}\in\cH_r$. Since the action of $\cW$ can be linearized around $\widetilde{b}$ and all the hyperplanes through $\widetilde{b}$ become linear hyperplanes, it follows from \eqref{eq:inclusion:hyperplanes} that a neighborhood of $\widetilde{b}$ in $\cH_r$ is inside a 
neighborhood of $\widetilde{b}$ in some $\cH_{s}$ with $s\in \Refl^{\mathrm{abs}}$. In the linearised neighborhood, $r$ and $s$ will be linear reflections on the same hyperplane and, therefore, $rs^{-1}$ will be either the identity or will have infinite order. Using properness and effectiveness of the action, the isotropy group $\cW_{\widetilde{b}}$ is finite and we must have $rs^{-1}= 1$.
\end{proof}

\medskip
The previous results justify the following definition.

\begin{definition}
    Given a Poisson manifold $(M,\pi)$ of proper type, a \textbf{Weyl chamber} of $\oB$ is the closure of a connected component of the regular locus of $\oB$.
\end{definition}

It follows from Propositions \ref{prop:split:orbi:fundamental:group} and \ref{pro: equality regulars} that the orbifold fundamental group of the embedded integral affine orbifold $B^\reg\subset B$ can  identified with the subgroup of  $\pi_1^\orb(B^\reg,b_0)$ which stabilizes the Weyl chamber $\Delta$ containing $\tilde{b}_0$. The next result identifies the orbifold universal covering space of $B^\reg$

\begin{proposition}\label{pro:chamber-covering}
An open Weyl chamber $\interior(\Delta)\subset \oB$ is an orbifold universal covering space of $B^\reg$. In particular $\interior(\Delta)$ is simply connected. 
\end{proposition}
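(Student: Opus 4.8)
The plan is to show that $\interior(\Delta)$ is simply connected and that the natural map $\interior(\Delta)\to B^\reg$ is an orbifold universal covering. First I would observe that by Proposition \ref{pro: equality regulars}, the open Weyl chamber $\interior(\Delta)$ is precisely a connected component of $(\oB)^\reg$. Since the action of $\cW$ on $\oB$ is an integral affine reflection group (Theorem \ref{thm:Weyl:reflection:group}) and $\cW$ acts freely and transitively on the set of chambers (Theorem \ref{thm:Davis}(ii)), the stabilizer of $\interior(\Delta)$ in $\cW$ is trivial, so the composite $(\oB)^\reg\to \oB\to B$ restricts to an open embedding on $\interior(\Delta)$ with image exactly $B^\reg$ -- more precisely, $\interior(\Delta)$ maps bijectively and open-ly onto a component of the preimage of $B^\reg$ in $\oB$. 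Then I would invoke Proposition \ref{prop:split:orbi:fundamental:group} together with the last paragraph before this proposition: the orbifold fundamental group $\pi_1^\orb(B^\reg)$ is identified, via the splitting, with the $\pi_1^\orb(B)$-stabilizer of the connected component of $(\oB)^\reg$ containing the base point, i.e., with the stabilizer of $\interior(\Delta)$ in $\pi=\pi_1^\orb(B)/\cW$ (since $\cW$ acts freely on chambers, this stabilizer injects into $\pi$).

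Next I would argue that $\interior(\Delta)$ is simply connected. The key input is Theorem \ref{thm:Davis}(iii): $\Delta$ is a fundamental domain and the canonical projection restricts to a homeomorphism $\Delta\cong\oB/\cW$. More useful here is the classical fact from the theory of reflection groups (Davis \cite{Davis83}) that $\oB$ is obtained from $\Delta$ by the ``basic construction'' $\oB\cong(\cW\times\Delta)/\!\sim$, gluing copies of $\Delta$ along the walls; since $\oB$ is simply connected and the gluing pattern is encoded by the Coxeter presentation, one recovers that each open chamber is contractible onto a point in its interior, hence simply connected. Alternatively, and more directly, I would use that $\interior(\Delta)$ is an open convex-type subset: locally around any point $u\in\Delta$ the action of the isotropy $\cW(u)$ is linear by Theorem \ref{thm:Davis}(v) and $U\cap\Delta$ is a linear chamber for $\cW(u)$, which is a convex cone and hence contractible; patching these together along the manifold-with-corners structure of $\Delta$ (the previous corollary) and using that $\interior(\Delta)$ deformation retracts onto any interior point shows $\pi_1(\interior(\Delta))=1$.

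Having established that $\interior(\Delta)$ is simply connected and that the map $q:\interior(\Delta)\to B^\reg$ is an orbifold covering whose deck group is the stabilizer of $\interior(\Delta)$ in $\pi$, which by the identification above is $\pi_1^\orb(B^\reg)$, I would conclude that $q$ is the orbifold universal covering: the covering corresponding to the full orbifold fundamental group is precisely the universal one, and a simply connected orbifold covering space is automatically the universal one. To make the ``orbifold covering'' assertion precise, I would restrict the Morita equivalence \eqref{eq:Morita:orb} of Proposition \ref{prop:Morita:oB} to the regular part: pulling back $B^\reg\subset B$ along $p:\oB\to B$ gives $(\oB)^\reg$, and the restriction of $\oB\rtimes\pi_1^\orb(B)$ acts on the component $\interior(\Delta)$ through the quotient by $\cW$ with trivial isotropy, exhibiting $B^\reg$ as the (honest manifold) quotient $\interior(\Delta)/\Stab$; effectiveness and discreteness of this action, combined with simple connectedness of $\interior(\Delta)$, give the universal covering property.

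The main obstacle I expect is the careful bookkeeping needed to match up three a priori different descriptions of the relevant group: the stabilizer in $\cW$ (trivial), the stabilizer in $\pi_1^\orb(B)$ (which, modulo $\cW$, is $\pi_1^\orb(B^\reg)$ by Proposition \ref{prop:split:orbi:fundamental:group} and the lemma at its end), and the deck group of the covering $\interior(\Delta)\to B^\reg$. Keeping the base-point conventions straight (Remark \ref{rem:base-point change}) and verifying that the reflection-group fundamental-domain structure of Theorem \ref{thm:Davis} is compatible with the orbifold-theoretic notion of covering used in Section \ref{sec:The orbifold fundamental group} is where the real work lies; the simple connectedness of $\interior(\Delta)$ itself, while needing a short argument, follows cleanly from the local linearity in Theorem \ref{thm:Davis}(v).
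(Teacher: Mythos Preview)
Your approach differs substantially from the paper's, and your argument for simple connectedness has a genuine gap. Neither of your two proposed arguments for $\pi_1(\interior(\Delta))=1$ works as stated: local convexity of $U\cap\Delta$ from Theorem~\ref{thm:Davis}(v) does not globalize to contractibility---a manifold with corners that is locally a convex cone can still carry nontrivial topology---and the claim that ``$\interior(\Delta)$ deformation retracts onto any interior point'' is precisely the thing to be proved. The basic-construction remark is in the right direction but you never extract the needed implication. A correct direct argument would be: by Theorem~\ref{thm:Davis}(iii) the quotient map restricts to a homeomorphism $\Delta\cong\oB/\cW$, so $\Delta$ is a \emph{retract} of the simply connected space $\oB$, hence $\pi_1(\Delta)=1$; then use that for a manifold with corners the inclusion of the interior is a homotopy equivalence.

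The paper avoids this issue entirely by running the argument in the opposite direction. It sets $Y:=(B^\reg)^\lin$, whose universal cover $\widetilde{Y}$ is by construction the orbifold universal cover of $B^\reg$, and shows that the induced map $\widetilde{i}:\widetilde{Y}\to\widetilde{B^\lin}=\oB$ is an open embedding with image the connected component of $p^{-1}(Y)$ containing the base point---which is $\interior(\Delta)$ by Proposition~\ref{pro: equality regulars}. The only nontrivial point is injectivity of $\widetilde{i}$, equivalently of $i_*:\pi_1(Y)\to\pi_1(B^\lin)$; this is read off from the commutative ladder of short exact sequences~\eqref{eq:pi1-orb-B-gamma} for $B^\reg$ and $B$, using that $\pi_1^\orb(B^\reg)\to\pi_1^\orb(B)$ is injective by Proposition~\ref{prop:split:orbi:fundamental:group}. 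Simple connectedness of $\interior(\Delta)$ then falls out as a consequence rather than being an input, and no separate analysis of the orbifold-covering property of $\interior(\Delta)\to B^\reg$ is needed.
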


\begin{proof}
We can assume that $\Delta$ is the Weyl chamber containing the base point $\tilde{b}_0$. Let us represent $B^\reg$ by the orbifold atlas 
\[{(B^\reg)}^\lin\rtimes \Gamma_0^\lin\tto (B^\reg)^\lin\]
where $\Gamma_0^\lin$ is the linear holonomy group of $(\hM^\reg,\hF)$. Note that it sits as a subgroup of $\Gamma^\lin$.

The inclusion $i:Y:=(B^\reg)^\lin\hookrightarrow B^\lin$ (see Prop. \ref{rk:functoriality-main-diagram}) induces a map $\widetilde{i}:\widetilde{Y}\to \widetilde{B^\lin}=\oB$
between orbifold universal covering spaces which is $\pi_1^\orb$-equivariant. We have the following diagram
\[
\xymatrix{\widetilde{Y}\ar[d]\ar[dr]^{\widetilde{i}}\\
\widetilde{i}(\widetilde{Y})\ar@{^{(}->}[r]\ar[d]^{p} & \widetilde{B^\lin}\ar[d]^{p} \\
Y \ar@{^{(}->}[r]_{i} & B^\lin}
\]
where the left column is a sequence of covering maps among (connected) manifolds. This has two consequences:
\begin{enumerate}[(i)]
    \item the open subset $\widetilde{i}(\widetilde{Y})\subset \widetilde{B^\lin}$ is the connected component of $p^{-1}(Y)$ which contains the base-point. By Proposition \ref{pro: equality regulars} this  connected component is exactly the interior of the chamber $\Delta$;
    \item $\tilde{i}$ is a diffeomorphism onto its image if and only if the induced map on fundamental groups $i_*:\pi_1(Y)\to \pi_1(B^\lin)$ is injective. If this holds, the proof of the proposition would follow.
\end{enumerate}
To prove the injectivity of $i_*$ we use the short exact sequences 
relating fundamental groups and orbifold fundamental groups (see \ref{eq:pi1-orb-B-gamma}): 
\[
\xymatrix{
1\ar[r] & \pi_1(B^\lin)\ar[r] & \pi_1^\orb(B)\ar[r] & \Gamma^\lin \ar[r]& 1\\
1\ar[r] & \pi_1((B^\reg)^\lin)\ar[r]\ar[u]_{i_*} & \pi_1^\orb(B^\reg)\ar[r]\ar[u] & \Gamma^\lin_0 \ar[r]\ar[u] & 1
}\]
By Proposition \ref{prop:split:orbi:fundamental:group} the middle vertical arrow is injective, so the injectivity of $i_*$ follows.
\end{proof}

In general, the Weyl group $\cW$ of a PMCT (or a DMCT) is not a classical (affine) Weyl group. However, as another application of Theorem \ref{thm:Davis}, we show that its isotropy subgroups are classical Weyl groups. Namely, the isotropy of $\cW$ at a point $\hx=(x,\tt_x)$ coincides with the classical Weyl group of the isotropy Lie algebra $\gg_x$ relative to $\tt_x$. This is made more precise in the following result which improves Proposition \ref{pro:non-trivial Weyl group elements}.

\begin{proposition} 
\label{prop:Weyl:group:isotropy at point}
Let $\tilde{b}\in \oB$, fix any $\hx=(x,\tt_x)\in \hM$ mapping to the image of $\tilde{b}$ in $B$ and let $W$ be the classical Weyl group relative to $\tt_x$. 

Then there exists open neighborhoods $V_b$ of $b\in \oB$ and $V$ of $0\in \tt_x^*$, a diffeomorphism
$\varphi:V\to V_{\widetilde{b}}$
and a monomorphism $\phi:W\to \cW_{\hx}$,
with the following properties:
\begin{enumerate}[(i)]
    \item $\varphi$ is $\phi$-equivariant;
    \item $\phi$ yields an isomorphism between the regular parts  ${(\tt_x^*)}^\reg$ and $V_b\cap {(\oB)}^\reg$. 
\end{enumerate}
In particular, $\phi$ is an isomorphism from  $W$ to the stabilizer $\cW_{\hx}(\widetilde{b})$.
\end{proposition}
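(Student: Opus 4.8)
The plan is to reduce the statement, via the functoriality of the main diagram under restriction to saturated opens (Proposition \ref{rk:functoriality-main-diagram}) together with Morita invariance of the Weyl group (Proposition \ref{prop:Morita:Weyl} and Remark \ref{rem:connecteness}), to the Hamiltonian local model $M=Q/G$ with $\hx=(x,\tt_x)$ being $Q$-related to $(0,\tt)\in\hgg$. Once in that model we already have, from Example \ref{ex:local:model:Morita}, a $\pi_1^\orb$-equivariant identification of $\oB$ with $\tt^*$ as integral affine manifolds and an identification of $\cW(M,\pi)$ with $W^0=(N(T)\cap G^0)/T$ acting on $\tt^*$ by the classical action. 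So the content of the statement becomes: after choosing the base point appropriately, the stabilizer of $0\in\tt^*$ inside $W^0$ — which is all of $W^0$ — is the classical Weyl group relative to $\tt$, and this is visibly the case since $W^0$ is exactly that group. The neighborhoods $V,V_b$ and $\varphi$ are then taken to be any $W^0$-linear neighborhood of $0$ supplied by the slice/linearization of the proper action (Theorem \ref{thm:Davis}(v)), and $\phi$ is the isomorphism $W\cong W^0\cong\cW_{\hx}$.

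\textbf{Key steps, in order.} First I would reduce to the local model: given $\hx=(x,\tt_x)$ and $\widetilde b$, apply Theorem \ref{thm-lf-Poisson-v2} to obtain a saturated neighborhood $U\simeq Q/G$ of the leaf through $x$, with $Q$ providing a symplectic Morita equivalence between $(\cG|_U,\Omega|_U)$ and $(\gg^*\rtimes G,\omega_\can)\tto\gg^*$; choose $q\in Q$ realizing the $Q$-relation between $x$ and $0$ and compatibly identifying $\tt_x$ with a maximal torus $\tt$ of $\gg$ (this is exactly the refined base-point choice of the remark following Proposition \ref{prop:Morita:Weyl}). Second, invoke Proposition \ref{rk:functoriality-main-diagram} to get a $\phi$-equivariant open embedding of the main diagram of $U$ into that of $M$, sending the local Weyl chamber / local regular locus into the global one and sending the local abstract reflections at $\hx$ to the global ones; this lets me work entirely inside $U$. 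Third, in $U\simeq Q/G$, use Example \ref{ex:local:model:Morita}: $\cW(U,\pi|_U)\cong W^0$, the orbifold universal cover $\oB_U$ is identified $\pi_1^\orb$-equivariantly with $\tt^*$, and the $\cW$-action is the classical linear $W^0$-action. Fourth, identify $\widetilde b$ with $0\in\tt^*$ under this identification — possible after a further base-point translation (Remark \ref{rem:base-point change}), since $\hx$ maps to the image of $\widetilde b$ — so that $\cW_{\hx}(\widetilde b)$ corresponds to the isotropy of $W^0$ at $0$, namely all of $W^0$. Fifth, set $V$ to be a $W^0$-invariant star-shaped neighborhood of $0$ in $\tt^*$ (any such works by linearity of the $W^0$-action), let $V_b=\varphi(V)$ under the equivariant identification, let $\phi:W\xrightarrow{\sim}W^0\cong\cW_{\hx}$ be the composite isomorphism, and check (i) equivariance (immediate from the equivariant identification) and (ii) that $\phi$ matches regular loci — which follows from Proposition \ref{pro: equality regulars}, since the $\cW$-regular locus equals $(\oB)^\reg$ and on the linear model the regular locus of $\tt^*$ for the $W^0$-action is precisely the complement of the root hyperplanes, i.e.\ $(\tt_x^*)^\reg$, and the abstract reflections of $\cW(U)$ restrict to the standard hyperplane reflections in $W^0$ by Example \ref{ex:local:model:Weyl:continued} and Corollary \ref{cor:didn't-loose-any-reflection}.

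\textbf{Main obstacle.} The genuinely delicate point is matching the two notions of ``regular'' in item (ii): one must be sure that under the identification $\oB_U\simeq\tt^*$ the set $V_b\cap(\oB)^\reg$ goes exactly to $V\cap(\tt_x^*)^\reg$, not to something smaller or larger. This is where Proposition \ref{pro: equality regulars} (and its consequence that $\Refl$ consists precisely of the abstract reflections, whose hyperplanes in the linear model are the root hyperplanes) does the essential work, together with Corollary \ref{cor:didn't-loose-any-reflection} guaranteeing that the standard reflections of $W^0$ are not ``lost'' when included into $\cW_{\hx}$. A secondary bookkeeping nuisance is the base-point handling: $\widetilde b$ need not a priori be the chosen base-point $\widetilde b_0$, so one must transport the whole main diagram along a path (Remark \ref{rem:base-point change}) to arrange $\widetilde b=\widetilde b_0$ and simultaneously arrange that $\hx$ lies over it; this is routine but must be stated carefully so that the monomorphism $\phi$ and the identifications are all based consistently. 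Everything else — properness and effectiveness of the $\cW$-action, the linearization producing $\varphi$ — is already available from Theorem \ref{thm:Weyl:reflection:group}, Theorem \ref{thm:Davis}, and the slice theorem for proper actions.
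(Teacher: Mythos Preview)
Your approach is essentially the paper's: base-point change (Remark \ref{rem:base-point change}), reduction to a saturated neighborhood via functoriality of the main diagram (Proposition \ref{rk:functoriality-main-diagram}), and the explicit computation for the Hamiltonian local model (Example \ref{ex:local:model:Morita}), with Proposition \ref{pro: equality regulars} supplying the identification of regular loci.

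One slip to fix: you write ``$\phi:W\xrightarrow{\sim}W^0\cong\cW_{\hx}$'', but $\cW_{\hx}=\cW_{\hx}(M,\pi)$ is in general strictly larger than the image of $\cW(U,\pi|_U)\cong W^0$; you only get a monomorphism $W\hookrightarrow\cW_{\hx}$. Correspondingly, your Strategy paragraph computes the stabilizer of $0$ \emph{inside $W^0$}, whereas what is needed is the stabilizer of $\widetilde b$ \emph{inside $\cW_{\hx}(M,\pi)$}. The missing half-step is exactly what the paper isolates first: once you have (i) and (ii), Theorem \ref{thm:Davis}(v) says $\cW_{\hx}(\widetilde b)$ is generated by the reflections in $\cW_{\hx}$ fixing $\widetilde b$; each such reflection is determined by its hyperplane; by Proposition \ref{pro: equality regulars} every hyperplane through $\widetilde b$ lies in the complement of $V_b\cap(\oB)^\reg$, hence is the $\varphi$-image of a root hyperplane in $\tt_x^*$; so the reflection is $\phi$ of a classical reflection. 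You cite the right ingredients in your ``Main obstacle'' paragraph, but you should make this deduction explicit rather than leaving the stabilizer claim to the local-model computation.
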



\begin{proof}
We first prove that the existence of $\varphi$ and $\phi$ as in the statement implies the equality
\begin{equation}
\label{eq:stabilizers}\cW_{\hx}(\widetilde{b})=\phi(W).
\end{equation}
To see this, by item (v) in Theorem \ref{thm:Davis}, the stabilizer $\cW_{\hx}(\tilde{b})$ is generated by all reflections which fix $\widetilde{b}$. In turn, each reflection is characterized by its fixed point set. By Proposition \ref{pro: equality regulars} the regular and $\cW_{\hx}$-regular parts agree. Therefore, every reflection hyperplane containing $\widetilde{b}$ is the $\varphi$-image of a classical reflection hyperplane in $\tt^*_x$. Hence, $\varphi$ identifies the corresponding reflection of $\cW_{\hx}$ with the image by $\phi$ of a classical reflection of $W$. The equality  \eqref{eq:stabilizers} follows.

Moreover, since the action of $W$ on $\tt_x^*$ is effective, it follows that $\phi$ is a monomorphism. 

To complete the proof we show the existence of a homomorphism $\phi:W\to\cW_{\hx}$ and a diffeomorphism $\varphi:V\to V_{\widetilde{b}}$ which is $\phi$-equivariant. Since the action of $W$ on $\tt_x^*$ is effective, it follows that $\phi$ is a monomorphism. This is divided into 3 steps, that reduce the problem to the local model:
\begin{enumerate}[1)]
\item We may assume that $\hx_0=\hx$ (and therefore $\widetilde{b}=\widetilde{b}_0$). This is a consequence of   the fact that conjugation by a path in $\hM$ joining the base point $\hx_0$ with $\hx$ produces a $\cW_{\hx}$-equivariant diffeomorphism between based orbifold universal covering spaces.  


\item It suffices to contruct the equivariant diffeomorphism for $(U,\pi|_U)$, where $U$
is an invariant neighborhood of $\hx$ for which $B_U^\lin$ is simply connected. This is a consequence of the functoriality of the main diagram with respect to restriction to invariant opens (see Remark \ref{rk:functoriality-main-diagram}). We obtain a group homomorphisms $\phi$ between the (orbifold) fundamental groups corresponding to $(U, \pi|_U)$ and $(M, \pi)$ taking $\cW_{\hx}(U,\pi|_U)$ to $\cW_{\hx}(M, \pi)$. Because  $B_U^\lin$ is simply connected, it follows that the map $\psi:\oB_U\to \oB$ in diagram \eqref{eq:Weyl:main-diagram-open} is an equivariant embedding. Observe that regular points are obtained by pulling back $B^\reg\subset B$, thus $\psi({(\oB_U)}^\reg)=\psi(\oB_U)\cap {(\oB)}^\reg$.

\item The claim holds when $M= Q/G$ is the Hamiltonian local model with $\hx=(x,\tt_x)$ $Q$-related to $(0,\tt_x)$. This is shown in Example \ref{ex:local:model:Morita}.
\end{enumerate}
\end{proof}







\section{Some Examples Derived from Lie Theory}

In this section we first analyze the structures on the leaf spaces of two fundamental examples from Lie theory: the linear Poisson structure on a compact Lie algebra
and the Cartan-Dirac structure on a compact Lie group. We will see, for example, that our notions of Weyl group and Weyl chambers coincide with the classical ones. We will also relate other Poisson theoretic aspects to classical results in Lie theory, leaving verifications to the reader. 
A few of these results have already appeared in the text, but we find convenient to list them together here in a more detailed manner.
Also, the regular loci of these two examples were discussed in \cite[Section 4.5 and 5.3.2]{CFM-II}. 

In the remaining of the section we will consider PMCTs and DMCTs obtained from $\gg^*$ and $G$ by removing saturated closed subsets, and we will discuss how that affects the Weyl group and other related notions.

\subsection{Coadjoint orbits}\label{ex:regular-coadjoint-orbits}
Let $\gg$ be a compact Lie algebra.
If $G$ is a compact connected Lie group with Lie algebra $\gg$, then  $(\cG,\Omega)=(T^*G,\w_\mathrm{can})$ is a s-proper integration of
$(\gg^*,\pi_{\gg^*})$. We will use root data to describe the integral affine orbifold structure of the leaf space $B=\gg^*/\cF_{\gg^*}$ and of the members of $\cSi(B)$, and the Weyl group of $(\gg^*,\pi_{\gg^*})$. For the relevant background on Lie theory we refer the reader to \cite[Chapter 3]{DK}. We first enumerate the main results. 
\begin{enumerate}[(i)]
\item There is a canonical diffeomorphism  
\[G/T\times_{W} \tt^*\cong \hM,\]
where $T$ is the isotropy Lie group of the fixed base point, a point in the regular part, and $\tt$ its Lie algebra. Also, $W=N(T)/T$ is the Weyl group.
Therefore 
\[B=\tt^*/W,\quad \oB=B^{\aff}=B^\lin \cong \tt^*.\]
\item A (classical) integral affine orbifold atlas for $B$ is given by 
\[(\tt^*,\Lambda_G^\vee)\rtimes W\tto \tt^*,\]
where $\Lambda_G^\vee\subset \tt^*$ is the lattice dual to 
$\Ker(\exp:\tt\to T)$.
\item The Weyl group of $\gg^*$ and the orbifold fundamental group of $B$
agree, and coincide with the (classical) Weyl group:
\[\cW=\pi_1^\orb(B)\cong W.\]
The orbifold fundamental group of $B^\reg$ is trivial, this being reflected in the triviality of the short exact sequence 
\begin{equation}
\label{eq: orbifold-fundamental-group-coadjoint} 
\vcenter{
\xymatrix{ 
1\ar[r]&\cW\ar[r]&  \pi_1^\orb(B) \ar[r]^---{\res_*}  
& 1 \ar[r] & 1 
}
}
\end{equation}
\item The canonical and the infinitesimal stratifications of $B$ agree. Their strata are in bijection with the open faces of a fixed Weyl chamber $\Delta$.  If $\Sigma_{I}$ denotes the stratum determined by the face $\Delta_I$ with underlying vector subspace $\tt^*_I\subset \tt^*$, then an orbifold atlas for $B_{\Sigma_I}$ is 
\[(\mathrm{int}(\Delta_I),\Lambda_{G,I}^\vee)\rtimes W_I.\]
Here $W_I\subset W$ denotes the subgroup generated by the reflections fixing $\Delta_I$ pointwise and $\Lambda^\vee_{G,I}=\Lambda_G^\vee\cap \tt_I^*$. In particular, the non-regular strata are purely ineffective orbifolds.
\end{enumerate}
\medskip

In order to prove the previous assertions, we start by recalling the Weyl resolution of the PMCT $(\gg^*,\pi_{\gg^*})$ from  Example \ref{ex:linear:case}:
\[\hM=\{(\xi,\tt')\,|\, \xi\in \gg^*,\,\tt'\subset \gg_\xi\,\,\mathrm{maximal}\,\,\mathrm{torus}\},\quad \res=\mathrm{pr}_1:\hM\to \gg^*.\]
We have identifications 
\[G/T\times_W \tt^*\cong G\times_{N(T)}\tt^* \cong  \hM,\quad [(gT,\xi)]\mapsto (\Ad^*_g\xi,\Ad_g\tt).\]
The groupoid integrating the resolution is the action groupoid
\begin{equation}
\label{eq:res:grpd:coadjoint}
    \hG=G\ltimes (G/T\times_W \tt^*)\tto G/T\times_W \tt^*
\end{equation} 
associated to left $G$-translations on $G/T$. The associated foliation groupoid is   
\begin{equation}\label{eq:fol-oid-coadjoint}
\cB(\hG)=(G/T\times G/T\times \tt^*)/W\tto G/T\times_W \tt^*,\quad \s=\mathrm{pr}_{12},\,\t=\mathrm{pr}_{13}.
\end{equation}
Observe that the obvious map $\tt^*\hookrightarrow G/T\times_W \tt^*$ gives a complete transversal to the orbit foliation of $\hG$. Upon restriction to this transversal, one obtains a presymplectic groupoid $(\hG|_{\tt^*},i^*\hOmega)$ Morita equivalent to $(\hG,\hOmega)$.  The isotropy bundle of $\hG|_{\tt^*}$ is the trivial bundle with fiber $N(T)$, while the associated foliation groupoid is the action groupoid
\[ \cB(\cG|_{\tt^*})=\tt^*\rtimes W\tto \tt^*.\]
This action groupoid is effective, since the action of $W$ on $\tt^*$ is effective (recall that $G$ is connected). It follows that the foliation groupoid \eqref{eq:fol-oid-coadjoint} is effective, so it is the holonomy groupoid of $\hF$.
The identity component  of the isotropy bundle is the trivial bundle, $\cT(\hG|_{\tt^*})=\tt^*\times T\to \tt^*$, and the pullback of the presymplectic form to this bundle is the canonical duality pairing between $\tt$
and $\tt^*$.
We conclude that the induced integral affine structure on $\tt^*$ has (constant) lattice $\Lambda_G^\vee$ dual to $\Ker(\exp:\tt\to T)$.

In this way we obtain a (classical) integral affine orbifold atlas for the leaf space $B$ (c.f. \cite[Section 4.6.1]{CFM-II}): 
\[(\tt^*,\Lambda_G^\vee)\rtimes W.\]
Being simply connected, this is also the universal orbifold covering space for $B$, therefore
\[\oB=B^{\aff}=B^\lin \cong \tt^*,\]
and \[\cW=\pi_1^\orb(B)=W,\quad \pi_1^\orb(B^\reg)=1.\]
This recovers the results we found in Example \ref{ex:Weyl-group:gg*} using the simple connectivity of principal leaves of the resolution.

The equality of the canonical stratifications can be drawn from classical Lie theory: the infinitesimal orbit types and the orbit types agree because isotropy groups are always connected, since $G$ is connected. 

Next we describe the infinitesimal stratification $\cSi(B)$.
An explicit computation in terms of root spaces (see \cite[Page 153]{DK}) implies that all points of each open face $\interior(\Delta_I)$ share the same isotropy Lie algebra $\pp_I$. It also implies that  
the saturation of $\interior(\Delta_I)\subset \gg^*$ is a stratum $\Si_I$ admitting the  canonical parametrization
\begin{equation}
\label{eq:coadj-triv0}
 G/P_I \times \interior(\Delta_I)  \to \Si_I,\quad (gP_I,\xi)\mapsto \Ad_g^*\xi.
 \end{equation}
Similarly, we have a parametrization of the corresponding strata of the resolution, namely
\[
(G/T)/W_I\times \interior(\Delta_I)\to
\hSi_{I},\quad ([gT],\xi)\mapsto (\Ad_g^*\xi,\Ad_g\tt),\]
where $W_I\subset W$ is the subgroup that fixes pointwise the supporting vector space $\tt_I^*$ of $\Delta_I$. We conclude that $B_{\Si_I}=\interior(\Delta_I)$, as topological spaces.

By Theorem \ref{thm:canonical-orbifold-stratifications} the reduction of  $(\hG,\hOmega)$ along $\hSi_I$ 
makes $B_{\Si_I}$ into an integral affine embedded suborbifold of $B$.
Namely, restricting the groupoid \eqref{eq:fol-oid-coadjoint} to $\hSi_I$, we obtain the foliation groupoid
\[(G/T\times G/T \times \interior(\Delta_I))/W_I\to  (G/T)/W_I\times \interior(\Delta_I). \]
Using the complete transversal $\interior(\Delta_I)\hookrightarrow \hSi_I$, we obtain that this groupoid is Morita equivalent to the (purely ineffective) action groupoid 
\[\interior(\Delta_I)\rtimes W_I\tto \interior(\Delta_I). \]
This gives the desired orbifold structure for $B_{\Sigma_I}$. For its integral affine structure, we observe that the isotropy bundle of the reduced symplectic groupoid trivializes upon restriction to the complete transversal,
with fiber $T/(T\cap [P_I,P_I])$. From
the explicit description of $\pp_I$ we obtain the decomposition 
\[\tt=\zz(\pp_I)\oplus \tt\cap [\pp_I,\pp_I],\quad \tt\cap [\pp_i,\pp_I]=(\tt_I^*)^0. \]
The first identity gives the identification $Z(P_I)\cong T/(T\cap [P_I,P_I])$, and the second one shows that the lattice 
$\Ker(\exp:\zz(\pp_I)\to Z(P_I))$ is identified, via the presymplectic form at units, with
$\Lambda_{G,I}^\vee= \Lambda_G^\vee\cap \tt^*_I$. We now conclude that we have the integral orbifold atlas for $B_{\Si_I}$
\[(\interior(\Delta_I),\Lambda_{G,I}^\vee)\rtimes W_I. \]
Comparing with the integral orbifold atlas for $B$ obtained above, we see that $B_{\Si_I}$ is an embedded suborbifold of $B$.

A different integral affine orbifold structure on $B_{\Si_I}$ is obtained by using the reduction of $(\cG,\Omega)$ along $\Si_I$. Since the leaves of $\cG$ are simply connected, the associated foliation groupoid must be the holonomy groupoid of the simple foliation $G/P_I\times \interior(\Delta_I)$. Therefore, the orbifold structure on $B_{\Si_I}=\interior(\Delta_I)$ arising from this reduction is the usual manifold structure. Note that the integral affine structure is still given by the lattice $\Delta_{G,I}^\vee$.

In conclusion, reduction of $(\hG,\hOmega)$ along $\Si_I$ produces  a purely ineffective integral affine orbifold atlas for $B_{\Si_I}$
whose isotropy reduction is the integral affine orbifold atlas obtained from reduction of $(\cG,\Omega)$ along $\Si_I$. This concurs with item (i) of Proposition 
\ref{prop:canonical-orbifold-stratifications:G}.

\subsection{Conjugacy classes}
\label{ex:conjugacy-classes}
Consider a compact, connected, Lie group $G$, let $\langle\cdot, \cdot\rangle$ be an $\Ad$-invariant inner product on $\gg$, and let $L_G$ be the corresponding
Cartan-Dirac structure on $G$ with twisting the Cartan 3-form. We will describe the Weyl resolution, the integral affine orbifold structure of the leaf space, and the Weyl group and orbifold fundamental group in terms of classical Lie theoretic data, leaving the verifications for the reader; we shall also sketch the description of the members of the canonical stratifications. Our viewpoint shall provide a geometric interpretation of some of these classical results. For the material in Lie theory that we will use we refer the reader to \cite[Chapter 3]{DK} and \cite[Chapter 5]{BtD95}.
We first enumerate the main results:
\begin{enumerate}[(i)]
\item There is a canonical diffeomorphism  
\[G\times_{N(T)} T\cong \hM,\]
where $T\subset G$ is the centralizer of the fixed base point, a point in the principal part -- so $T$ is a maximal torus of $G$.
Therefore 
\[B=T/W,\quad B^\lin \cong T,\quad \oB=B^{\aff}\cong\tt,\]
where $W=N(T)/T$ is the Weyl group.
\item One has a (classical) integral affine orbifold atlas for $B$ given by 
\[(T,\Lambda_G^*)\rtimes W\tto T,\]
where 
\[\Lambda_G^*=\{\lambda\in \tt\,|\, \langle u,\lambda \rangle \in \Z,\,\forall u\in \Lambda_G\},\]
with $\Lambda_G=\ker(\exp:\tt\to T)$.
\item The Weyl group of $(G,L_G)$ coincides with the classical affine Weyl group of $G$, while the orbifold fundamental group of $B$ coincides with the (classical) extended affine Weyl group of $G$: 
\[\cW=W^\aff=W\cdot \Lambda_{\mathrm{R}^\vee},\quad \pi_1^\orb(B)=W^\aff_{G}=W\cdot\Lambda_G,\]
where $\Lambda_{\mathrm{R}^\vee}$ denotes the coroot lattice of the root system of $(\gg,\tt)$. When $G$ is simply connected, the two groups coincide and we recover the results in Example \ref{ex:Weyl-group:G-Dirac}. On the other hand, this also yields another integral affine orbifold atlas for $B$, namely 
\[(\tt,\Lambda_G^*)\rtimes W^\aff_{G}\tto \tt. \]
Moreover, the orbifold fundamental group of $B^\reg$ coincides with $\pi_1(G)$ and all these groups fit into the following commutative diagram 
\begin{equation}
\label{eq: orbifold-fundamental-group-conjugacy} 
\vcenter{
\xymatrix{ 
&1\ar[d] &  1\ar[d] & \\
1\ar[r]&\Lambda_{\mathrm{R}^\vee}\ar[d]\ar[r]&   \Lambda_G\ar[d] \ar[r]   
& \pi_1(G) \ar@{=}[d]\ar[r] & 1 
\\
1\ar[r]&\cW \ar[d] \ar[r]&   \pi_1^\orb(B) \ar[d]\ar[r]_{\res_*}&  \pi_1(G) \ar[r] 
& 1 \\
&W\ar[d]\ar@{=}[r] &  W\ar[d] & 
& 
\\
&1  & 1 &  
}}
\end{equation}
In the two vertical short exact sequences, the kernel and the cokernel correspond to the translational and linear parts of $\cW$ and $\pi_1^\orb(B)$, viewed as groups of affine transformations of $\tt$.

\item According to Proposition \ref{prop:split:orbi:fundamental:group}, the inclusion of the regular part $G^\reg$ in $G$ splits the short exact sequence of the orbifold fundamental group, giving a short exact sequence for $\pi_1(G)$:
\begin{equation}
\label{eq: orbifold-fundamental-group-conjugacy2} 
\vcenter{
\xymatrix{ 
  1\ar[d] &  1\ar[d]\\
   \Lambda_G \ar[d] &  \ar@{_{(}->}[l]\, \Lambda_G\cap\zz(\gg)\ar[d] & \\
  \pi_1^\orb(B) \ar[d]\ar@/^1.2pc/[r]^{\res_*}&  \ \pi_1(G) \ar@{_{(}->}[l]^{i}\ar[d]\ar[r] & 1 \\
  W\ar[d] &  \quad Z\ar[d]\quad \ar@{_{(}->}[l]^{i^\lin}\\
 1 &  1
}}
\end{equation}
The translational part of $\pi_1(G)$ defines a (compact) covering group $\overline{G}$ which factors into a central and a 1-connected semisimple factor
\begin{equation}\label{eq: product covering group} \overline{G}:=G(\gg)/(\Lambda_G\cap\zz(\gg))=H\times G(\gg^\ss). 
\end{equation}
The linear part of $\pi_1(G)$ defines a finite abelian group $Z\subset W$ which acts on the faces of the Weyl alcove $\aa=\zz(\gg)\oplus \aa^\ss$ determined by the choice of base point. Note also that 
\[\overline{G}/Z\to G\]
is an isomorphism.
\item The strata of  $\cS^\inf(B)$ are in bijection with the orbits of action of $Z$ on the faces of the Weyl alcove.  If $\Sigma_{[I]}$ denotes the stratum determined by the face $\aa_I$ of the alcove with underlying vector subspace $\tt_I\subset \tt$, then an orbifold atlas for $B_{\Sigma_{[I]}}$ is 
\[(H\times \interior(\aa_I^\ss),\Lambda_{G,I}^*)\rtimes W_IZ_I\]
where $Z_I$ is the $Z$-stabilizer of $\aa_I$, $W_I\subset W$ is the subgroup generated by the reflections fixing $\tt_I$ pointwise and  $\Lambda^*_{G,I}=\Lambda_G^*\cap\tt_I$. 
\end{enumerate}

In order to justify these assertions, we start by discussing the Weyl resolution of $(G,L_G)$. From Example \ref{ex:Cartan:Dirac:baby} we have
\[\hM=\{(h,T')\,|\, h\in T',\,T'\subset G\,\,\mathrm{maximal}\,\,\mathrm{torus}\},\quad \res=\mathrm{pr}_1:\hM\to G,\]
and, upon fixing a principal point with centralizer the maximal torus $T$,  an identification 
\[G\times_{N(T)}T=G/T\times_W T \cong  \hM,\quad [(g,h)]\mapsto (ghg^{-1},gTg^{-1}).\]
The groupoid integrating the resolution is the action groupoid
\begin{equation}
\label{eq:res:grpd:conjugacy}
    \hG=G\ltimes (G/T\times_W T)\tto G/T\times_W T
\end{equation} 
associated to left $G$-translations on $G/T$. The associated foliation groupoid is   
\begin{equation}\label{eq:fol-oid-conjugacy}
\cB(\hG)=(G/T\times G/T\times T)/W\tto G/T\times_W T,\quad \s=\mathrm{pr}_{12},\,\t=\mathrm{pr}_{13}.
\end{equation}
Observe that the obvious map $T\hookrightarrow G/T\times_W T$ gives a complete transversal to the orbit foliation of $\hG$. Upon restriction to this transversal, one obtains a presymplectic groupoid $(\hG|_T,i^*\hOmega)$ Morita equivalent to $(\hG,\hOmega)$. The background 3-form pulls back to zero under the inclusion $T\hookrightarrow G/T\times_W T$, so $(\hG|_T,i^*\hOmega)$ is untwisted. The isotropy bundle of $\hG|_T$ is the trivial bundle with fiber $N(T)$, while the associated foliation groupoid is the action groupoid
\[ \cB(\cG|_T)=T\rtimes W\tto T.\]
This action groupoid is effective, since the action of $W$ on $T$ is effective. It follows that the foliation groupoid \eqref{eq:fol-oid-conjugacy} is effective, so it is the holonomy groupoid of $\hF$.
The identity component $\cT(\hG|_T)$ is the trivial bundle $T\times T\to T$, and the pull back of the presymplectic form to this bundle is the symplectic form
\[i^*\hOmega_{(h_1,h_2)}((u_1,v_1),(u_2,v_2))=\langle u_1,v_2\rangle-\langle v_1,u_2\rangle.\]
Therefore the induced integral affine structure on $T$ has lattice at $h\in T$ the translation of 
\[\Lambda_G^*=\{\lambda\in \tt\,|\, \langle u,\lambda \rangle \in \Z,\,\forall u\in \Lambda_G\}\subset \tt\]
(c.f. \cite[Section 4.6.2]{CFM-II}).

In this way we obtain the (classical) integral affine orbifold atlas for $B$
\[(T,\Lambda_G^*)\rtimes W,\]
with universal orbifold covering space
\[(\tt,\Lambda_G^*)\rtimes \pi^\orb_1(B).\]
In particular, we have
\[B^\lin\cong T,\quad \oB=B^\aff\cong \tt.\] 
Moreover, we also obtain the decomposition of $\pi_1^\orb(B)$ into translational and linear parts (c.f.~\eqref{eq:pi1-orb-B-gamma})
\[\xymatrix{
1\ar[r] & \Lambda_G\ar[r] & \pi_1^\orb(B) \ar[r]& W \ar[r]&1.
}
\]
This extension of the Weyl group by the lattice of $G$ coincides with $W^\aff_G$, the extended affine Weyl group of $G$. 
 

Next, we also relate $\cW$ to well-known groups in Lie theory.  
Recall that $\exp^{-1}(T\setminus (T\cap G^\reg))$ is the union of the affine root hyperplanes of the root system $(\gg,\tt)$ (hyperplanes where the roots attain integer multiples of $2\pi i$). The Weyl group  $\cW$ is generated by (orthogonal) reflections on the affine root hyperplanes, and therefore it coincides with the classical affine Weyl group $W^\aff$. Its decomposition according to the translational and linear parts of the affine action gives the first column in \eqref{eq: orbifold-fundamental-group-conjugacy} 
\[\xymatrix{
1\ar[r] & \Lambda_{\mathrm{R}^\vee}\ar[r] & \cW\ar[r] & W\ar[r] &1.
}
\]

Proposition 
\ref{prop:split:orbi:fundamental:group} implies that the image of the splitting
\[i:\pi_1^\orb(B^\reg)\to \pi_1^\orb(B)=W^\aff_G\]
are the elements of $W^\aff_G$ leaving the Weyl alcove $\aa$ stable. Since $\aa=\zz(\gg)\oplus \aa^\ss$ and the second factor is a compact polytope, we conclude that the translation part of $\pi_1^\orb(B^\reg)$
is $\zz(\gg)\cap \Lambda_G$. Since the principal leaves of $G^\reg$ are 1-connected, we can apply Lemma \ref{lem: orbifold fundamental group isomorhism} to conclude that we have isomorphisms
\[ \pi_1^\orb(B^\reg)\cong\pi_1(G^\reg)\cong \pi_1(G). \] 
This yields the right column of diagram \eqref{eq: orbifold-fundamental-group-conjugacy2}, as well as (by diagram chasing) the monomorphism
\[i^\lin:Z\to W.\]
This also shows that the group $\overline{G}$ defined by \eqref{eq: product covering group} is a covering group of $G$.



There is a second Lie theoretical description of $i^\lin$. First, we assume that $\gg$ is semisimple and $G$ is the adjoint group of $\gg$.  We denote by $T'\subset G(\gg)$ the maximal torus above $T$. On the one hand, we have a free and transitive action of the Weyl group on the connected components of $T'\cap G(\gg)^\reg$. On the other hand, we have another action by left/right translation of $Z(G(\gg))$ on the connected components of $T'\cap G(\gg)^\reg$. Upon selecting the connected component containing the fixed base point and comparing the two actions, we get 
\[i_\gg^\lin:Z(G(\gg))\to W.\]
For an arbitrary compact group $G$ we have  
\[i^\lin=i^\lin_{\gg^\ss}\circ \mathrm{pr}_2|_Z,\]
where $\mathrm{pr}_2: \overline{G}\to G(\gg^\ss)$ is the projection in the second factor in 
\eqref{eq: product covering group}. The fact that this coincides with the previous definition of $i^\lin$ follows along the same lines as the discussion in \cite[Corollary 3.9.5]{DK}.


Next we describe the infinitesimal stratification $\cSi(B)$, i.e., the one given by infinitesimal orbit types.
The explicit computation of isotropy Lie algebras in terms of root spaces \cite[Page 161]{DK} implies that they are constant on the points which are in the image of the interior of each face $\exp(\interior(\aa_I))$, where the interior of $\aa_I$ is relative to its supporting affine subspace. It follows that if $G=\overline{G}$ then the faces of $\aa$ parametrize the members of $\cSi(B)$. In general one has to account for the action of $Z$ on the faces of $\aa$ (equivalently, of $\aa^\ss$).  To describe this action  observe first that the covering map 
$\zz(\gg)\times \tt^\ss\to H\times \tt^\ss$ sends the Weyl alcove onto $H\times \aa^\ss$, inducing a bijection on faces (the faces of $H\times \aa^\ss$
being the preimage of the faces of $\aa^\ss$ by the second projection). Diagram \eqref{eq: orbifold-fundamental-group-conjugacy} allows to construct an action of 
$Z$ on $H\times \tt^\ss$ by integral affine transformations that preserves $H\times \aa^\ss$: for any element of $Z$ one chooses a representative of it on $\pi_1(G)$, applies $i$ to the representative, and then projects the corresponding (integral affine map) to an integral affine map on  $H\times \aa^\ss$ (which acts trivially on the first component). 
Then the strata of $\cSi(B)$ are parametrized by the orbits of $Z$ acting on the faces of $\aa$.  

Let $[I]$ denote one such orbit and let $\Si_{[I]}\in \cSi(G)$ be its associated stratum. To describe it we denote by $Z_I$ the isotropy group of $\aa_I$ for the action of $Z$ on the faces of $\aa$, by $\pp_I\subset \gg$ the isotropy Lie algebra on points in $\exp(\interior(\aa_I))$ and by $P_I\subset G$ its connected integration. To parameterize $\Sigma_{[I]}$ we take first its preimage by $\kappa:\overline{G}\to G/Z$, obtaining a disconnected union of manifolds
\[ G/P_J\times (H\times \interior(\aa_J^\ss)), \quad J\in [I]. \]
It then follows that we can parameterize the strata by the diffeomorphism
\begin{equation}\label{eq:conjugacy-triv0}
G/P_I\times_{Z_I} (H\times \interior(\aa_I^\ss))\to \Sigma_{[I]},\quad [(gP_I,h,u)]\mapsto g\kappa\big(h\exp_{\overline{G}}(u)\big)g^{-1},
\end{equation}
where the action of $Z_I$ on $G/P_I$ is via $i^\lin$.


Lifting \eqref{eq:conjugacy-triv0} along the resolution map gives a parameterization of the corresponding strata of the resolution, namely
\[\hSi_{[I]}\cong ((G/T)/W_I)\times_{Z_I} (H\times \interior(\aa_I^\ss)),\]
where $W_I\subset W$ is the linear part of the subgroup of $W^\aff$ generated by reflections on affine root hyperplanes containing $\aa_I$, and we use that $Z_I$ normalizes $W_I$.

The orbifold integral affine structure on $B_{\Sigma_{[I]}}\in S^\inf(B)$
comes from the reduction of \eqref{eq:res:grpd:conjugacy} along $\hSi_{[I]}$. Namely, from \eqref{eq:fol-oid-conjugacy} we obtain first the foliation groupoid
\[(G/T\times G/T\times(H\times \interior(\aa_I^\ss))/W_IZ_I \tto ((G/T)/W_I)\times_{Z_I} (H\times \interior(\aa_I^\ss)).\]
This is Morita equivalent to the ineffective orbifold
\[(H\times \interior(\aa_I^\ss))\rtimes W_IZ_I,\]
which gives the desired orbifold atlas for $B_{\Sigma_{[I]}}$. Next, for its integral affine structure, we consider the embedding 
\[H\times \interior(\aa^\ss_I)\to G,\quad (h,u)\mapsto [h\exp_{\overline{G}(u)}],\]
which is a full transversal to the presymplectic foliation. The presymplectic torus bundle over $\hSi_{[I]}$ pull backs to this transversal to a trivial bundle with fiber  isomorphic to $Z(P_I)$. Its Lie algebra $\zz(\pp_I)$ is the vector subspace underlying $\zz(\gg)\times \aa^\ss_I$,
and one checks that contraction by the presymplectic form followed by duality takes $\Lambda_{G,I}=\ker(\exp:\zz(\pp_I)\to Z(P_I))$ to 
$\Lambda_{G}^*\cap \zz(\pp_I)$, which yields the desired integral affine structure on $B_{\Sigma_{[I]}}$.



We finish the discussion on conjugacy classes with three comments about strata:
\begin{itemize}
\item The action of  $Z$ by permutations of the vertices of the alcove is explicitly determined for every simple Lie group (see \cite{IM65}), which
describes  equivalently the action on codimension one faces). Therefore one has an explicit description of the members of $\cSi(B)$ and their integral affine orbifold structures for every given compact Lie group. 
\item The partition of $B_{\Si_{[I]}}$ by canonical strata is given by the orbit type stratification of the action of $Z_I$ on $H\times \interior(\aa_I)$. 
\item The integral affine orbifold atlas on  $B_{\Si_{[I]}}$ obtained from the reduction of $(\cG,\Omega)$ along $\Si_{[I]}$ is
\[(H\times \interior(\aa_I^\ss),\Lambda_G^*\cap \zz(\pp_I))\rtimes Z_I.\]
Notice that this is an isotropy reduction (by $W_I$) of the action groupoid in item (v). 
It is effective if and only if $Z_I$ acts effectively on $\aa^\ss_I$. This is equivalent to $\Si_{[I]}$ have 1-connected principal leaves, necessarily diffeomorphic to $G/P_I$. The holonomy and monodromy groupoids of such a (proper) foliation must coincide.
\end{itemize}

\subsection{Removal of closed saturated subsets}
\label{sec:examples:parts:removed}
Let $(M,\pi)$ be a Poisson manifold of proper type with fixed symplectic integration $(\cG,\Omega)$. The restriction of this symplectic groupoid to any open saturated submanifold $M_0\subset M$ is a symplectic integration of $(M_0,\pi)$. Let $B_0$ denote the leaf pace $\hM_0/\hF$. The relation
between the Weyl groups of  these two PMCTs and of the orbifold fundamental groups of their leaf spaces is described in the following commutative diagram of  split short exact sequences:
\begin{equation}
\label{eq:Weyl:main-diagram-restric}
\vcenter{
\xymatrix@C=15pt@R=10pt{
\hW_0 \ar[rrr] \ar[dd] \ar[dr]& & & \pi_1(\hM_0)\ar[rr]^(.3){\res_*} \ar@{-}[d] \ar[dr]&  & \pi_1(M_0^\reg) \ar@{-}[d] \ar[dr]\\
& \hW\ar[rrr]\ar[dd] & & \ar[d] & 
\pi_1(\hM)\ar[rr]^(.3){\res_*}\ar[dd] & \ar[d] &
\pi_1(M^\reg)\ar[dd]\\
\cW_0 \ar[dr] \ar@{-}[r] & \ar[rr] & & \pi_1^\orb(B_0) \ar[dr] \ar@{-}[r] &\ar[r] &  \pi_1^\orb(B_0^\reg)\ar[dr]\\
  & \cW\ar[rrr] & & & \pi_1^\orb(B)\ar[rr] & & \pi_1^\orb(B^\reg)
}}
\end{equation}
Note that on has also a local diffeomorphism 
\[ \oB_0\to \oB, \]
which is equivariant relative to the group morphism $\pi_1^\orb(B_0)\to \pi_1^\orb(B)$. 

We assume from now on that $M^\reg=M^\reg_0$, so the top and bottom horizontal diagrams become morphisms between split extensions of the groups $\pi_1(M^\reg)$ and $\pi_1^\orb(B^\reg)$, respectively.

\begin{example}
\label{ex:su(2):R}
Consider the Poisson manifold
\[ M:=(\R\times\mathfrak{su}(2))^*\cong\R\times\mathfrak{su}^*(2),\]
with the linear Poisson structure. We can take $T^*(\S^1\times \SU(2))\tto M$ as an integration of this Poisson manifold, so it is of s-proper type. Since the maximal rank of the Poisson structure is $2$, there are only regular and subregular strata. The origin is a subregular point and we let
\[ M_0:=M\setminus \{0\}.\]

It follows from Section \ref{ex:regular-coadjoint-orbits}, that the resolutions of these Poisson manifolds can be identified with
\[ 
\hM=(\S^2\times_{\Z_2}\R^2), \quad
\hM_0=(\S^2\times_{\Z_2}(\R^2\setminus\{0\}),
\]
where the non-trivial element of $\Z_2$ acts on $\S^2$ by the antipodal map and on $\R^2$ by
\[(x,y)\mapsto (x,-y).\]
Here $\R^2=\R\times\tt^*$ where $\tt\subset\su(2)$ is a maximal torus. Henceforth, we identify $\R^2=\C^2$ so the action of $\Z_2$ becomes $z\mapsto \overline{z}$.

The leaf spaces are now
    \[
    \xymatrix{B_0=(\C\setminus\{0\})/\Z_2\
    \ar@{^{(}->}[r] &  \ B=\C/\Z_2,}\]
with orbifold atlas the action groupoids
\[
\C\ltimes (\Z\rtimes\Z_2),\quad \C\rtimes\Z_2,
\]
with projections $p_0:\C\to \C\setminus\{0\})/\Z_2$ and $p:\C\to \C/\Z_2$ given by
\[ p_0(z)=[e^{2\pi z}], \quad p(z)=[z],\]
and where $\Z_2$ acts in both cases by complex conjugation and $n\in \Z\subset\C$ acts by translations $z\mapsto z+in$. Since $M$ and $M_0$ are 1-connected, it follows that the Weyl groups coincide with the orbifold fundamental groups and, using \eqref{eq:pi1-orb-B-gamma}, we obtain:
    \[ \cW_0=\pi_1^\orb(B_0)=\Z\rtimes\Z_2, \quad \cW=\pi_1^\orb(B)=\Z_2.\]
Furthermore the morphism $\pi_1^\orb(B_0)\to \pi_1^\orb(B)$ in the diagram is the projection
    \[ \Z\rtimes\Z_2\to \Z_2. \]

At the level of the universal orbifold covering spaces, both diffeomorphic to $\C$, one now obtains the commutative diagram
\[
\xymatrix{
\oB_0\ar[r]^{\tilde{i}}\ar[d]_{p_0} &  \oB\ar[d]^{p}\\
B_0 \ar@{^{(}->}[r] & B
}
\]
where the top horizontal arrow $\tilde{i}$ is the map
\[ z\mapsto e^{2\pi z}. \]
This map is equivariant relative to the actions of the orbifold fundamental groups.

There is only one reflection in $\cW$, whose  fixed hyperplane is the real axis $\{x=0\}$. Hence we obtain two Weyl chambers $\Delta_\pm$, namely the closed upper and lower half-planes in $\C$. On the other hand, in $\oB_0$ by Proposition \ref{eq: equality regulars} the hyperplanes are the connected components of the preimage $p_0^{-1}(B_0^\sing)=p_0^{-1}(\{y=0\})$, namely
\[ \cH_n:= \big\{x+i\tfrac{n}{2}: x\in\R\big\},\quad (n\in \Z), \]
where the even $\cH_n$ are mapped into the half line $\big\{(x,0):x>0\big\}$ and the odd ones into the negative half line $\big\{(x,0):x<0\big\}$. Hence the chambers in $\oB_0$ the strips 
\[ \Delta_n:=\{x+iy\in\C:\tfrac{n}{2}\leq y\leq\tfrac{n+1}{2}\},\quad (n\in\Z).\]
Notice that the translation $(1, 0)\in \Z\rtimes\Z_2$ sends $\Delta_n$ to $\Delta_{n+2}$, while the reflection $r_0= (0,[1])$ sends $\Delta_n$ to $\Delta_{-n}$; this is consistent with the fact that the Weyl group acts freely and transitively on the set of Weyl chambers.
The set $\Refl\subset \cW_0$ consist of the reflections of the form $r_n(x,y)=\left(x,n-y\right)$, $(n\in\Z)$. The reflections $\Refl_{\Delta_n}=\{r_n,r_{n+1}\}$ give a Coxeter system with solo relations
    \[ r_{n}^2=r_{n+1}^2=1. \]
This describes $\cW$ as the free product $\Z_2*\Z_2\cong \Z\rtimes\Z_2$.

The integral lattice on $\oB$ is  a product lattice $\Lambda=\lambda_1\Z\d x+\lambda_2\Z\d y$, for fixed positive numbers $\lambda_i>0$.  Its pullback by the map $\oB_0\to \oB$ 
is the integral lattice defining the integral affine structure on $\oB_0$. The Weyl groups clearly act by integral affine transformations on these spaces. Moreover, we have the equalities 
\[\oB=B^\aff=B^\lin=\C,\quad B^\aff_0=B^\lin_0=\C\backslash \{0\},\]
and  $\tilde{i}:\oB_0\to \oB$ can be viewed as the developing map $\dev: \oB_0\to \R^2$ (modulo rescaling so that the product lattice $\Lambda$ the standard integer lattice). 
\end{example}

\begin{example}
    Consider the Cartan-Dirac structure on $M=\SU(2)$ and let $M_0=\SU(2)\backslash\{-I\}$. Then it follows from Examples \ref{ex:Weyl-group:G-Dirac} and \ref{ex:Weyl-group:G-Dirac-minus} that the orbit spaces are 
    \[
    \xymatrix{B_0=(\S^1\backslash \{-1\})/\Z_2\
    \ar@{^{(}->}[r] &  \ B=\S^1/\Z_2,}\]
    while at the level of universal covering spaces one has an inclusion
    \[ \oB_0=\left(-\tfrac{1}{2},\tfrac{1}{2}\right)\hookrightarrow \oB=\R,\]
    with orbifold covering projection 
    \[ \oB\to B,\quad t\mapsto [e^{2\pi it}]. \]
    On the other hand, the Weyl groups and the orbifold fundamental groups coincide  
    \[ \cW_0=\pi_1^\orb(B_0)=\Z_2, \quad \cW=\pi_1^\orb(B)=\Z\rtimes\Z_2\]
    and the morphism $\pi_1^\orb(B_0)\to \pi_1^\orb(B)$ in the diagram is the inclusion
    \[ \Z_2\hookrightarrow \Z\rtimes\Z_2,\quad a\mapsto (0,a). \]
    Note that $\Z_2$ acts by $x\mapsto -x$ and $\Z$ acts by translations. Finally, the Weyl chambers in $\oB$ are the intervals $\Delta_n:=[\tfrac{n}{2},\tfrac{n+1}{2}]$, ($n\in\Z$). Intersecting with $\oB_0$ one obtains the two Weyl chambers of $\oB_0$, namely $\left(-\tfrac{1}{2},0\right]$ and $\left[0,\tfrac{1}{2}\right)$. The set $\Refl\subset \cW$ consist of the reflections of the form $r_n(t)=n-t$, $(n\in\Z)$. The reflections $\Refl_{\Delta_n}=\{r_n,r_{n+1}\}$ give a Coxeter system with solo relations
    \[ r_{n}^2=r_{n+1}^2=1. \]
    This describes $\cW$ as the free product $\Z_2*\Z_2\cong \Z\rtimes\Z_2$.
    
    Note that the integral lattice in the universal covering spaces is determined by the choice of $\Ad$-invariant inner product in $\su(2)$, which is unique up to scaling. As we vary the inner product we obtain all lattices $\Lambda=\lambda_0\Z\,\d t$, $(\lambda_0>0)$. The Weyl group clearly acts by integral affine transformations for any such choice.
    \end{example}

In the previous two examples, we saw that by removing a part of the subregular strata may create or destroy topology on the leaf spaces, and may reduce or increase the size of the Weyl group. We will now see that when one removes parts of lower strata one can only increase the size of the Weyl group. 

\begin{proposition}
    \label{prop:remove:part}
    Let $(M,\pi)$ be a Poisson manifold of proper type and assume that $M_0\subset (M,\pi)$ is an open saturated set such that
    \[ M_0^\reg=M^\reg\quad \textrm{and}\quad M_0^\subreg=M^\subreg. \]
    Then the following holds:
    \begin{enumerate}[(i)]
    \item The canonical maps $\cW_0\to \cW$, $\pi_1^\orb(B_0)\to \pi_1^\orb(B)$ are surjective and have a common kernel $\cK$.
    \item $\cK$ is isomorphic to the fundamental group of the pre-image of $B_0$ by the canonical projection $\oB\to B$.
    \item $\cK= 0$ whenever $M_0$ contains all the points $x\in M$ with the property that the semisimple part of $\gg_x$ is 2-dimensional.
    \end{enumerate}
\end{proposition}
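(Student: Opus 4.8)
The plan is to reduce everything to the main diagram and its functoriality under restriction to saturated opens (Proposition \ref{rk:functoriality-main-diagram}), and then to use the fact that removing high-codimension strata does not change low-dimensional homotopy. Throughout I will fix a base point $\hx_0=(x_0,\tt_0)$ with $x_0\in M^\reg=M_0^\reg$, so that all the covering spaces and (orbifold) fundamental groups for $(M_0,\pi)$ and $(M,\pi)$ are based compatibly, and the diagram \eqref{eq:Weyl:main-diagram-restric} is available. Since $M_0^\reg=M^\reg$, the horizontal rows of that diagram are morphisms of split extensions of $\pi_1(M^\reg)$ (top) and of $\pi_1^\orb(B^\reg)$ (bottom), so $\cW_0\to\cW$ and $\pi_1^\orb(B_0)\to\pi_1^\orb(B)$ have the \emph{same} kernel; call it $\cK$. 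This already gives the ``common kernel'' half of (i), independently of the hypothesis on subregular strata.

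\textbf{Step 1: surjectivity.} For surjectivity of $\cW_0\to\cW$ I would invoke Proposition \ref{prop:generation-by-reflections}: $\cW$ is generated by abstract reflections, and each abstract reflection in $\cW$ comes from a subregular point $\hx\in\hM$, i.e.\ from $\res^{-1}(x)$ with $x\in M^\subreg$. The hypothesis $M_0^\subreg=M^\subreg$ ensures every such $x$ already lies in $M_0$, and $\res_{M_0}^{-1}(x)=\res^{-1}(x)$ since the resolution is local over saturated opens (Remark \ref{rem:life:easy}); transporting along a path in $\hM_0$ joining $\hx$ to $\hx_0$ (note $\hM_0^\reg=\hM^\reg$ is connected, hence path components match) produces an abstract reflection in $\cW_0$ mapping to the given one. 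So $\cW_0\to\cW$ is surjective, and then $\pi_1^\orb(B_0)\to\pi_1^\orb(B)$ is surjective by diagram chasing in \eqref{eq:Weyl:main-diagram-restric} (both are semidirect products with the same quotient $\pi_1^\orb(B^\reg)$, up to the bottom-right map being an isomorphism — which requires $\dim M-\dim M_0$-strata to be of codimension $\geq 2$, but this follows from $M_0^\reg=M^\reg$, $M_0^\subreg=M^\subreg$ together with Theorem \ref{thm:inf-strat}, since there are no codimension $1$ or $2$ infinitesimal strata). This finishes (i).

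\textbf{Step 2: identifying $\cK$.} For (ii) I would identify $\cW$ with a quotient of $\pi_1(\hM)$ modulo $\hW$ (diagram \eqref{eq:splitting-weil-pi1M}), and similarly for $\cW_0$. The covering space $\oM\to\hM$ has deck group $\pi_1(\hM)/K$ with $K=K_{\hx_0}$ the $\hF$-homotopy kernel \eqref{eq:F-homotopy}, and $\oB=\oM/\hF^{\orb}$. Write $\oB_0^\ast:=p^{-1}(B_0)\subset\oB$ for the preimage under $\oB\to B$ of the open suborbifold $B_0\subset B$; it is an open, $\pi_1^\orb(B)$-invariant (in fact only invariant under the image of $\pi_1^\orb(B_0)$, which is all of $\pi_1^\orb(B)$ by Step 1) subset. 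The key point is that $\oB_0^\ast$ is precisely the orbifold covering space of $B_0$ classified by the subgroup $\ker(\pi_1^\orb(B_0)\to\pi_1^\orb(B))=\cK$, so $\pi_1^\orb(B_0^\ast)=\cK$ as an orbifold; but I claim $\oB_0^\ast$ is in fact a \emph{manifold}, hence $\pi_1(\oB_0^\ast)=\cK$. This manifold claim, and the compatibility of the covering, is the content I would extract from the functoriality diagram \eqref{eq:Weyl:main-diagram-open}: the map $\oB_0\to\oB$ is a local diffeomorphism onto an open set, the pullback of $B_0^\reg=B^\reg$ behaves well, and $\oB_0^\ast$ is the union of $\pi_1^\orb(B)$-translates of the image of $\oB_0$; being a union of open submanifolds of the manifold $\oB=B^\lin$-cover, it is itself a manifold. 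The deck group of $\oB\to\oB_0^\ast$ is $\cW$, consistent with $\oB_0^\ast/\cW=B_0^\ast/\cW$... — more precisely, $\oB_0^\ast$ is the orbifold universal cover of $B_0$ modulo $\cW_0$, whose fundamental group is exactly $\cK$.

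\textbf{Step 3: vanishing of $\cK$.} For (iii), I would run the argument of Step 2 but now use that, under the extra hypothesis, $M_0$ contains $M^\reg$, $M^\subreg$, \emph{and} all points whose isotropy has $2$-dimensional semisimple part — i.e.\ $M\setminus M_0$ has infinitesimal strata of codimension $\geq 4$ (the next possible semisimple part being $\mathfrak{su}(2)\oplus\mathfrak{su}(2)$ or $\mathfrak{su}(3)$, of dimension $\geq 6$; I should double-check the bookkeeping with Corollary \ref{corollary: infinitesimal-stratum-conormal}, but in any case codimension $\geq 4$). By Corollary \ref{cor:codimension-rank-stratification}, the preimage $\res^{-1}(M\setminus M_0)$ in $\hM$ then has codimension $\geq 2$. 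Lemma \ref{lem:stratif-lemma-4} (applied to the rank stratification of $(\hM,\hF)$, whose strata of codimension $\leq 1$ all lie over $M^\reg\cup M^\subreg\subset M_0$) gives $\pi_j(\hM_0)\xrightarrow{\sim}\pi_j(\hM)$ for $j=0,1$. Hence $\pi_1(\hM_0)\to\pi_1(\hM)$ is an isomorphism; since $\hW_0\to\hW$ fits into the split extensions with common quotient $\pi_1(M^\reg)$, it too is an isomorphism, and therefore (chasing \eqref{eq:Weyl:main-diagram-restric}, using that $\pi_1^\orb(B_0)$ and $\pi_1^\orb(B)$ are the corresponding semidirect products modulo $\hW_0$, resp.\ $\hW$) the maps on Weyl groups and orbifold fundamental groups are isomorphisms, i.e.\ $\cK=0$. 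Equivalently, by Step 2, $\oB_0^\ast=\oB$.

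\textbf{Main obstacle.} I expect the crux to be Step 2 — showing that $\oB_0^\ast=p^{-1}(B_0)\subset\oB$ is a smooth manifold whose fundamental group is canonically $\cK$, \emph{compatibly} identified with both $\ker(\cW_0\to\cW)$ and $\ker(\pi_1^\orb(B_0)\to\pi_1^\orb(B))$. The subtlety is bookkeeping with base points and with the several covering layers ($\hM^\lin$, $\hM^\aff$, $\oM$) in the main diagram, and making sure the local-diffeomorphism-onto-open-set statement for $\oB_0\to\oB$ in Proposition \ref{rk:functoriality-main-diagram} upgrades to the global identification $\bigcup_{w\in\cW}w\cdot\mathrm{im}(\oB_0)=\oB_0^\ast$. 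Once that is in place, (i) and (iii) are essentially diagram chases combined with, respectively, Proposition \ref{prop:generation-by-reflections} and Lemma \ref{lem:stratif-lemma-4}.
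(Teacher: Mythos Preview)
For (i), your approach via Proposition~\ref{prop:generation-by-reflections} has a gap: transporting $\tau_{\hx}$ along a path $\sigma'\subset\hM_0$ yields an abstract reflection in $\cW_0$ whose image in $\cW$ is the one associated to $\sigma'$, not the one associated to an arbitrary path $\sigma\subset\hM$; the two differ by conjugation by $[\sigma(\sigma')^{-1}]\in\pi_1^{\orb}(B)$, and you have not yet shown this element lies in the image of $\pi_1^{\orb}(B_0)$. The fix is that $\hM\setminus\hM_0$ has codimension $\geq 2$ in $\hM$ (it misses the codimension-$0$ and -$1$ strata of $\res^{-1}(\cSi(M,\pi))$), so any $\sigma$ is homotopic rel endpoints into $\hM_0$ by transversality. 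But this codimension argument is exactly what the paper uses directly, via Lemma~\ref{lem:stratif-lemma-4}, to obtain surjectivity of $\pi_1(\hM_0)\to\pi_1(\hM)$ and then of all the diagonal maps in \eqref{eq:Weyl:main-diagram-restric}, bypassing reflections. Your Step~2 is right in spirit but overcomplicated: $\oB$ is already a manifold (the universal cover of $B^{\lin}$), so the claim that $p^{-1}(B_0)$ is a manifold is immediate; the paper simply passes to the $B^{\lin}$ level via Proposition~\ref{rk:functoriality-main-diagram} and invokes the standard covering-space fact for open subsets of manifolds, as in the proof of Proposition~\ref{pro:chamber-covering}.

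Your Step~3 contains a genuine error. Codimension $\geq 2$ in $\hM$ gives only a \emph{surjection} on $\pi_1$, not an isomorphism: removing a codimension-$k$ submanifold preserves $\pi_j$ only for $j<k-1$, so you need codimension $\geq 3$. The source of the confusion is the hypothesis ``$2$-dimensional semisimple part'', which is vacuous as written (there is no $2$-dimensional compact semisimple Lie algebra). The paper's proof reveals the intended reading: the semisimple part has \emph{rank} $2$. By Corollary~\ref{cor:codimension-rank-stratification} one has $\codim(\res^{-1}(\Si))=\rank(\gg_x^{\ss})$, so this is precisely the condition that the codimension-$2$ strata of $\res^{-1}(\cSi(M,\pi))$ lie in $\hM_0$; the complement then has codimension $\geq 3$ and $\pi_1(\hM_0)\to\pi_1(\hM)$ is an isomorphism, giving $\cK=0$.
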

\begin{proof}
The hypothesis imply that $\hM_0$ contains the regular and subregular strata of $\res^{-1}(\cSi(M,\pi))$. By
Lemma \ref{lem:stratif-lemma-4}
we deduce that the inclusion induces a surjection $\pi_1(\hM_0)\to \pi_1(\hM)$. Therefore the top diagonal arrows in Diagram \eqref{eq:Weyl:main-diagram-restric} fit into an epimorphism of short exact sequences.
Its kernel is then another short exact sequence. Since its rightmost group is trivial, the first two kernels are identified. Because the horizontal arrows in  \eqref{eq:Weyl:main-diagram-restric} are epimorphisms, the same result holds for the  short exact sequences at the level of leaf spaces. This shows that 
$\cW_0$ and $\pi_1^\orb(B_0)$ are extensions of $\cW$ and $\pi_1^\orb(B)$ by the same group $\cK$, so (i) holds. 

To prove item (ii), we use that for any open subset of $B$, its linear holonomy cover is an open subset of $B^\lin$ (see Proposition \ref{rk:functoriality-main-diagram}). This reduces statement (ii) from one for (good) orbifolds to one for open subsets in manifolds and ordinary fundamental groups, as we did in proof  of Proposition \ref{pro:chamber-covering}. Therefore item (ii) follows. 

To prove item (iii) we observe that the assumption on isotropy Lie algebras is equivalent to the codimension 2 strata of $\res^{-1}(\cSi(M,\pi))$ being contained in  $\hM_0$. Then the map $\pi_1(\hM_0)\to \pi_1(\hM)$ is an isomorphism, and so all diagonal maps in \eqref{eq:Weyl:main-diagram-restric} are trivial. 
\end{proof}

\begin{remark} If in the previous proposition $(M,\pi)$ is source-proper and has  1-connected principal leaves, then also the canonical maps $\cW_0\to \cW$, $\pi_1^\orb(B_0)\to \pi_1^\orb(B)$ are surjective and have kernel $\cK$. In this case, $\cK$ also coincides with the fundamental group of the pre-image of $\hM_0$ by the universal covering projection of $\hM$. Indeed, if the principal leaves as 1-connected, then by Lemma \ref{lem: orbifold fundamental group isomorhism} the vertical arrows in diagram \eqref{eq:Weyl:main-diagram-restric} are isomorphisms. Therefore the kernel of 
$\pi_1^\orb(B_0)\to \pi_1^\orb(B)$ is identified with the kernel of $\pi_1(\hM_0)\to \pi_1(\hM)$. Since s-properness implies that $\hM^\orb$
is the universal covering space of $\hM$, then $\cK$ coincides with the fundamental group of the preimage  $p^{-1}(M_0)\subset \hM^\orb=\widetilde{\hM}$.
\end{remark}   

\begin{example}
Let $\gg$ be a semi-simple, rank 2, compact Lie algebra. We consider the linear Poisson manifold $M=\gg^*$. It is strong s-proper with source 1-connected integration $(T^*G,\omega_\can)$, where $G$ is the 1-connected Lie group integrating $\gg$. Besides the regular and subregular strata, we have one further stratum that consists of the origin. We let
\[ M_0:=M\setminus \{0\}.\]

It follows from Section \ref{ex:regular-coadjoint-orbits}, that the resolution of this Poisson manifold can be identified with
\[ 
\hM=(G/T\times_W\R^2), \quad
\hM_0=(G/T\times_W(\R^2\setminus\{0\}).
\]
where $\R^2$ appears as the dual $\tt^*$ of a maximal torus $\tt\subset\gg$ and $W$ is the classical Weyl group. It is a known fact from Lie theory that the simple, rank 2, compact Lie algebras and their Weyl groups are:
\medskip
\begin{center}
\begin{tabularx}{.9\textwidth} { 
  | >{\centering\arraybackslash}X 
  | >{\centering\arraybackslash}X
  | >{\centering\arraybackslash}X
  | >{\centering\arraybackslash}X 
  | >{\centering\arraybackslash}X | }
\hline
$\gg$ &$\su(2)\oplus\su(2)$ & $\su(3)$ & $\mathfrak{so}(5)$ & $\gg_2$ 
\\
\hline
$W$  & $D_2$ & $D_3$    & $D_4$ & $D_6$
\\
\hline
\end{tabularx}
\end{center}
Here the Weyl group $D_2$ acts on $\C=\R^2$ by reflection in the coordinate axes and in the other three cases $D_k$ acts on $\C$ as the symmetry group of the regular polygon whose vertices are the $k$-th roots of unity. 
The leaf spaces are now
    \[
    \xymatrix{B_0=(\C\setminus\{0\})/D_k\
    \ar@{^{(}->}[r] &  \ B=\C/D_k,}\]
with orbifold atlas the action groupoids
\[
\C\ltimes (\Z\rtimes \Z_2),\quad \C
\rtimes D_k,
\]
with projections, respectively, 
\[ p_0(z)=[e^{2\pi z}], \quad p(z)=[z].\]
In the first action groupoid, $\Z_2$ acts by complex conjugation and the $\Z$ action is generated by  $z\mapsto z+\tfrac{i}{k}$.

The situation is entirely similar to Example \ref{ex:su(2):R} and, as there, we find that  the Weyl groups coincide with the orbifold fundamental groups and are given by:
    \[ \cW_0=\pi_1^\orb(B_0)=\Z\rtimes \Z_2, \quad \cW=\pi_1^\orb(B)=D_k\]
The morphism $\pi_1^\orb(B_0)\to \pi_1^\orb(B)$ in diagram \eqref{eq:Weyl:main-diagram-restric} is the projection
    \[ \Z\rtimes \Z_2\to D_k=\Z_k\rtimes\Z_2,\quad (n,a)\mapsto (n\,\mathrm{mod}\,k,a). \]
Its kernel $\cK=\Z$ is identified via  the map $\tilde{i}:\oB_0\to\oB$ with the fundamental group of 
\[p^{-1}(B_0)=\C\backslash \{0\}\subset \oB=\C.\]
One can check that, in contrast to Example \ref{ex:su(2):R}, $\cW_0$ is a non-split extension of $\cW$ in all 4 cases.

The Weyl chambers are the strips (see Figure \ref{fig:Weyl})
\[ \Delta_n:=\{x+iy\in\C:\tfrac{n}{2k}\leq y\leq\tfrac{n+1}{2k}\},\quad (n\in\Z).\]
The translation $(1,0)\in D_k$ sends $\Delta_n$ to $\Delta_{n+2k}$, whereas the reflection $(0,[1])\in D_k$
sends $\Delta_n$ to $\Delta_{-n}$.

\begin{figure}[h!]
 \begin{center}
          \setlength\epsfxsize{2cm}
         \leavevmode
\includegraphics[scale=0.9]{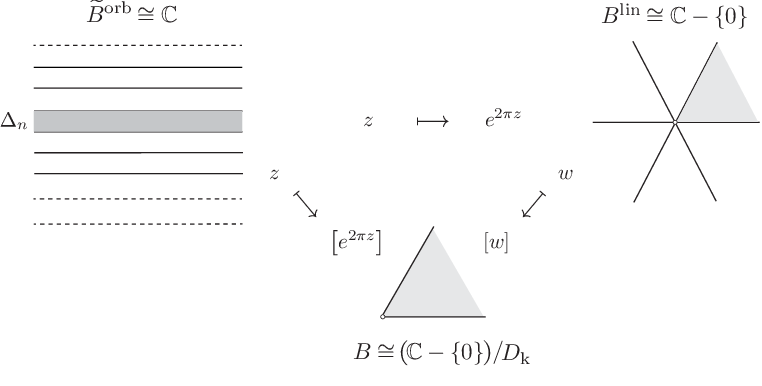}
 \end{center}
 \caption{Orbifold cover and Weyl chambers}
 \label{fig:Weyl}
 \end{figure}

Observe finally that for all four Lie algebras the corresponding $B_0$ are isomorphic orbifolds, since one has isomorphisms of orbifold atlases (Morita equivalences) 
\[(\C\setminus \{0\})\rtimes (\Z_k\rtimes \Z_2)\cong (\C\setminus \{0\})/\Z_k\rtimes \Z_2\cong (\C\setminus \{0\})\rtimes \Z_2.\]
However, they are non-isomorphic (integral) affine orbifolds since, for instance, the linear holonomy is $D_k$. 
\end{example}

\section{The Linear Variation and the Volume Polynomial}
\label{sec:variation:volume}

In this section we will assume that we have a Poisson manifold $(M,\pi)$ of $\s$-proper type. We will apply the theory developed in \cite{CFM-II} to its resolution $(\hM,\hL)$, in order to obtain statements about the variation of the cohomology classes of the symplectic leaves of $(M,\pi)$ and the variation of the symplectic volumes.

\subsection{The linear variation of cohomology}
\label{sec:linear:variation}

As described in \cite[Section 5]{CFM-II} one can interpret the cohomology classes of the of leafwise presymplectic form
$\hF$ as a section of a vector bundle over $\hM$ whose fibers are the second cohomology groups of the s-fibers of $\Hol(\hM,\hF)$. We denote the latter vector bundle by
\[ \hH\to \hM, \quad \hH_{\hx}:=H^2(\Hol(\hx, -)).\]
The section provided by the presymplectic forms will be denoted by
\[ \varpi:\hM\to \hH, \quad \varpi_{\hx}:=[\t^*\widehat{\omega}_{\hS_\hx}], \]
where $\widehat{\omega}_{\hS_\hx}$ denotes the $\hS_\hx$ denotes the presymplectic leaf containing $\hx$ and $\t:\Hol(\hM,\hF)\to\hM$ is the target map.

The vector bundle $\hH$ is equipped with several structures:
\begin{enumerate}[(i)]
    \item An integral affine structure defined by the image of the integral cohomology $H^2(\Hol(\hx,-),\Z)$. This gives a canonical flat connection on $\hH$, the Gauss-Manin connection, and hence a representation of the groupoid $\Pi_1(\hM)$ on $\hM$;
    \item A representation of the groupoid $\Hol(\hM,\hF)$, where the action is via the map induced in cohomology by right translations.
\end{enumerate}
The two groupoids in the previous items fit into a commutative diagram
\[\xymatrix{
\Mon(\hM, \hF) \ar[r]\ar[d] & \Pi_1(\hM)\ar[d] \\
\Hol(\hM, \hF) \ar[r] & \Pi_1(\Hol(\hM, \hF)) 
} \]
where we are making use of the fundamental group from section \ref{sec:The orbifold fundamental group} and the canonical maps (\ref{eq:fund-gpd-canonical-maps}). 

\begin{proposition}
\label{prop:rep:pi:hol}
    There is a representation of $\Pi_1(\Hol(\hM, \hF))$ on $\hH$ compatible with the representations of $\Hol(\hM,\hF)$ and $\Pi_1(\hM)$. Furthermore, the section $\varpi$ is $\Hol(\hM,\hF)$-invariant.
\end{proposition}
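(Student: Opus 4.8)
The plan is to build the representation of $\Pi_1(\Hol(\hM,\hF))$ on $\hH$ directly from its universal property, using the two compatible representations we already have. Recall that $\Pi_1(\Hol(\hM,\hF))$ is generated by the images of the canonical maps $\Hol(\hM,\hF)\to \Pi_1(\Hol(\hM,\hF))$ and $\Pi_1(\hM)\to \Pi_1(\Hol(\hM,\hF))$ (see \eqref{eq:fund-gpd-canonical-maps}); indeed, a Haefliger path is a concatenation of paths in $\hM$ and arrows in $\Hol(\hM,\hF)$, and the latter are themselves represented by paths, so the groupoid is generated by these two subgroupoids. Thus to define the desired representation it suffices to check that the two given representations -- of $\Hol(\hM,\hF)$ (via pullback by right translations) and of $\Pi_1(\hM)$ (via the Gauss--Manin connection) -- agree on the overlap and respect the defining relations of $\Pi_1(\Hol(\hM,\hF))$, namely the three moves (multiplication, concatenation, deformation) recalled in Section \ref{sec:The orbifold fundamental group}.

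\emph{Step 1: compatibility on the overlap.} First I would observe that both representations restrict to the monodromy groupoid $\Mon(\hM,\hF)$ and check they coincide there. The point is that $\Mon(\hM,\hF)$ maps both to $\Hol(\hM,\hF)$ and to $\Pi_1(\hM)$ (the commutative square just before the proposition), and on a leafwise loop $\gamma$ the Gauss--Manin parallel transport along $\gamma$ equals the map induced in $H^2$ by the holonomy diffeomorphism, simply because the Bott connection on $\hH$ restricted to a leaf is the Gauss--Manin connection of the fibration $\Hol(\hx,-)\to \hS_{\hx}$, and holonomy transport is exactly pullback by right translation. This is the content one extracts from \cite[Section 5]{CFM-II}, applied to the \emph{regular} presymplectic groupoid $\Hol(\hM,\hF)$; since $\hF$ is a proper (hence regular, holonomy) foliation on $\hM$, all the hypotheses there are met.

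\emph{Step 2: well-definedness under the three moves.} Given Step 1, a Haefliger path $\eta=(\gamma_0,g_1,\dots,g_k,\gamma_k)$ acts on $\hH$ by alternately applying Gauss--Manin transport along the $\gamma_i$ and the $\Hol$-action of the $g_i$. I would then verify invariance under the relations: the \emph{multiplication} and \emph{concatenation} moves are immediate from functoriality of pullback and of parallel transport (and the fact that a constant path transports identically, an identity arrow acts identically); the \emph{deformation} move is where one uses that $\hH$ carries a \emph{flat} connection and that its restriction to leaves is compatible, so a homotopy of Haefliger paths with fixed endpoints -- which can be subdivided into small boxes each lying in a single foliation chart -- produces the same composite, by the homotopy invariance of parallel transport for a flat connection together with Step 1 handling the ``leafwise'' directions of the homotopy. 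Assembling these gives a well-defined groupoid morphism $\Pi_1(\Hol(\hM,\hF))\to \GL_\Lambda(\hH)$, and by construction it is compatible with the representations of $\Hol(\hM,\hF)$ and $\Pi_1(\hM)$.

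\emph{Step 3: $\Hol(\hM,\hF)$-invariance of $\varpi$.} For the final assertion, fix an arrow $g\colon \hx\to \hy$ in $\Hol(\hM,\hF)$ lying in the presymplectic leaf $\hS$. The action of $g$ on $\hH$ sends $\varpi_{\hx}=[\t^*\widehat{\omega}_{\hS}]\in H^2(\Hol(\hx,-))$ to $[R_g^*\t^*\widehat{\omega}_{\hS}]\in H^2(\Hol(\hy,-))$, where $R_g$ is right translation. But $\t\circ R_g=\t$ on source fibers, so $R_g^*\t^*\widehat{\omega}_{\hS}=\t^*\widehat{\omega}_{\hS}$ on the nose, hence $g\cdot\varpi_{\hx}=\varpi_{\hy}$. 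This is exactly $\Hol(\hM,\hF)$-invariance of the section $\varpi$.

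\emph{Main obstacle.} I expect the deformation-invariance in Step 2 to be the delicate point: one must show that the composite transport depends only on the $\cF$-homotopy class of the Haefliger path, which requires carefully combining the flatness of the Gauss--Manin connection in the transverse directions with the leafwise compatibility of Step 1, using a subdivision argument so that each elementary piece of the homotopy sees only one of the two mechanisms. Everything else is formal once \cite[Section 5]{CFM-II} is invoked for the regular groupoid $\Hol(\hM,\hF)$.
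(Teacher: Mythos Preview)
Your proof is correct but takes a more laborious route than the paper. The key observation that the two representations agree on leafwise paths (your Step~1, the paper's claim (C)) is common to both. From there, however, the paper proceeds differently: since every arrow of $\Hol(\hM,\hF)$ is represented by a leafwise path, the map $\Pi_1(\hM)\to\Pi_1(\Hol(\hM,\hF))$ is \emph{surjective} (cf.\ the discussion around \eqref{eq:F-homotopy}). Hence one only needs to show that the kernel $K$ of $\pi_1(\hM)\to\pi_1(\Hol(\hM,\hF))$ acts trivially on $\hH$ via Gauss--Manin transport. But by \eqref{eq:F-homotopy} any element of $K$ is a product of conjugates $\alpha\beta\alpha^{-1}$ with $\beta$ a leafwise null-holonomic loop; by (C) each such $\beta$ acts trivially, and so does the conjugate. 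The $\Pi_1(\hM)$-representation then \emph{descends} to $\Pi_1(\Hol(\hM,\hF))$, and compatibility with the $\Hol(\hM,\hF)$-representation is again (C).

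Your direct construction via Haefliger paths and verification of the three moves works, but is heavier than necessary; in fact, once you have Step~1, every Haefliger path acts as Gauss--Manin transport along an ordinary path in $\hM$ (replace each $g_i$ by a leafwise representative), so the deformation move reduces to homotopy-invariance of flat transport on $\hM$ and the subdivision argument is not needed. The paper's descent argument packages this more cleanly. Your Step~3 on the invariance of $\varpi$ is identical to the paper's.
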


\begin{proof}
We observe that:
\begin{itemize}
    \item[(C)] given an element $[\gamma]\in\Hol(\hM,\hF)$ its action on $\hH$ coincides with the action of $[\gamma]$ via the Gauss-Manin connection.
\end{itemize}
 Using this, one obtains:
\begin{enumerate}[(a)]
    \item the kernel $K$ of the map $\pi_1(\hM,\hx_0)\to \pi_1(\Hol(\hM,\hF),\hx_0)$ acts trivially on $\hH$. By \eqref{eq:F-homotopy}  an element in $K$ is represented by a loop homotopic to a concatenation of paths of the form $\alpha_i \beta_i \alpha_i^{-1}$, where each $\beta_i$ is a leafwise path with trivial holonomy. Hence, by (C), each $\beta_i$ acts trivially by the Gauss-Manin connection, and so does $\alpha_i \beta_i \alpha_i^{-1}$.
    \item By (a) the action of $\Pi_1(\hM)$ descends to an action of $\Pi_1(\Hol(\hM,\hF))$. By (C) this action is also compatible with the action of $\Hol(\hM,\hF)$.
\end{enumerate}
The $\Hol(\hM,\hF)$-invariance of $\varpi$ follows from its definition and the fact that the target is invariant under right translations.
\end{proof}

The previous proposition shows that we can view $\hH$ has a orbivector bundle over the classical orbifold $B$, which is equipped with an action of $\pi_1^\orb(B)$, and has a canonical section $\varpi$. Recall that $B$ can be presented by various atlases of a discrete group acting on a smooth manifold:
$B^\lin\rtimes\Gamma^\lin\tto B^\lin$, $B^\aff\rtimes \Gamma^\aff\tto B^\aff$ and $\oB\rtimes \pi_1^\orb(B)\tto\oB$. For each of these one obtains an equivariant vector bundle  and an equivariant section, representing $\hH$ and $\varpi$, which will be denoted by the same letter. In particular the fiber at the base point will be denoted by $\hH_0$. Note that it coincides with the 2nd cohomology group of the generic symplectic leaf $S_{x_0}$. The value of $\varpi$ at this leaf will be denoted $\varpi_0$.

We define the \textbf{transport map} (denoted $\Var$ in \cite{CFM-II}) using the representation of $\Pi_1(\hM)$ on $\hH$ by: 
\[ \cT_{\varpi}: \oB\to \hH_{0}, \quad [\gamma]\mapsto \gamma^*\varpi_{\gamma(1)} \]
where we view $\oB$ as the universal cover of $B^{\lin}$, hence $\gamma$ is a path in this space starting at the fixed base point. It follows from the next theorem that this maps descends to a similar map in $B^\aff$.

Recall that we have the (based) developing map 
\[ \dev:B^\aff\to \tt_0, \]
where we use the identification of the tangent space of $B^\aff$ at the base point with $\tt_0$. Also, denoting by $\nabla$ the Gauss-Manin connection on $\hH\to B^\aff$, we have the \textbf{linear variation map} defined by
\[ \Ilin: \tt_0\to \hH_0,\quad v\mapsto \nabla_v\varpi, \]
where $\hH_0$ is the fiber of $\hH$ at the fixed base point. We also consider the \textbf{affine variation map}, defined by
\[ \Iaff:=\varpi_{0}+\Ilin. \]
The linear variation of the cohomology classes of the symplectic leaves of $(M,\pi)$, can finally be stated as follows.

\begin{theorem}
\label{thm:linear:variation}
    The transport map descends to $B^\aff$ yielding the following commutative diagram:
    \[ \xymatrix{ 
B^{\aff} \ar[rr]^-{\mathcal{T}_\varpi} \ar[rd]_-{\dev} &    & \hH_{0}      \\
 & \tt_{0} \ar[ru]_-{\Iaff}                             &            
 }, 
\]
\end{theorem}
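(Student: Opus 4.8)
The statement reduces to a result already available in \cite[Section 5]{CFM-II} once one has identified the correct players on the resolution $(\hM,\hL)$. Indeed, $(\hM,\hL)$ is a \emph{regular} Dirac manifold of $\s$-proper type (Theorem \ref{thm:Weyl-intro}), integrated by the regular, proper, $\s$-proper presymplectic groupoid $(\hG,\hOmega)=(\cG\ltimes\hM,\pr_\cG^*\Omega)$. The Dirac version of \cite[Theorem 3.0.1]{CFM-II} endows the leaf space $B=\hM/\hF$ with its integral affine orbifold structure, and the main diagram \eqref{eq:Weyl:main-diagram} provides the coverings $\oB\to B^\aff\to B^\lin$ together with the based developing map $\dev_0:B^\aff\to\tt_0$. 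The plan is therefore: (1) recall that \cite[Section 5]{CFM-II} establishes, for a regular presymplectic groupoid of $\s$-proper type, exactly the claimed commuting triangle relating the transport map, the developing map, and the affine variation map, stated there for the characteristic foliation of a regular Dirac structure; (2) check that the inputs to that theorem — the orbivector bundle $\hH$, its two representations, and the section $\varpi$ — are precisely the ones furnished by Proposition \ref{prop:rep:pi:hol}; (3) transport the conclusion to $(M,\pi)$ via the identification $B=\hM/\hF=M/\cF_\pi$ and the fibration $\res:\hS\to S$ which identifies $H^2(\hS_{\hx_0})$ with $H^2(S_{x_0})$ (so that $\hH_0=H^2(S_{x_0})$ and $\varpi_0=[\omega_{x_0}]$ as asserted after Proposition \ref{prop:rep:pi:hol}).

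\emph{Step-by-step.} First I would recall the definition of the transport map $\cT_\varpi$ and of the developing cocycle and verify that $\cT_\varpi$ is well defined on $\oB$: this is immediate because $\varpi$ is $\Hol(\hM,\hF)$-invariant (Proposition \ref{prop:rep:pi:hol}) and the representation of $\Pi_1(\hM)$ on $\hH$ factors through $\Pi_1(\Hol(\hM,\hF))$, hence through $\pi_1^\orb(B)$ acting on $\oB$; the Gauss--Manin flatness then shows $\cT_\varpi$ is locally constant along $\hF^\aff$, so it descends to $B^\aff$. Second, I would prove the triangle by a local computation on a Hamiltonian normal form neighborhood $U\simeq Q/G$ (Theorem \ref{thm-lf-Poisson-v2}), where by Section \ref{ex:model main diagram} one has $\oB_U=B^\aff_U=B^\lin_U=\tt^*\cap V$ and everything is linear in the transverse coordinate. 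In this model $\dev_0$ becomes (up to a choice of flat coordinates) the inclusion $\tt^*\cap V\hookrightarrow\tt^*_0$, and the variation of the leafwise cohomology class along $\tt^*$ is computed from the Duistermaat--Heckman-type formula for the reduced spaces $\mu^{-1}(\xi)/G_\xi$: the class $[\omega_\xi]$ depends affinely on $\xi$ with linear part $\Ilin$ given by contracting $\nabla\varpi$, which is exactly the content of the classical linear variation of the reduced symplectic form. Matching constant and linear terms gives $\cT_\varpi=\Iaff\circ\dev_0$ on $U$. Third, I would globalize: both sides of the identity are maps out of $B^\aff$, they agree on each $B^\aff_U$, and the $B^\aff_U$ cover $B^\aff$ (functoriality of the main diagram, Proposition \ref{rk:functoriality-main-diagram}), so they agree globally; the commutativity is a statement of equality of two maps and hence is local.

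\emph{Main obstacle.} The routine but delicate point is Step (2)--(3): making sure that the orbivector bundle $\hH\to\hM$ and the section $\varpi$ built here are literally the objects to which \cite[Section 5]{CFM-II} applies, i.e. that the ``regular PMCT'' hypotheses there are met by $(\hM,\hL,\hG,\hOmega)$ in the presymplectic (Dirac) incarnation, and that the Gauss--Manin connection on the $H^2$-bundle of the $\s$-fibers of $\Hol(\hM,\hF)$ coincides with the one used in that reference. This is bookkeeping rather than a genuinely new difficulty, but it is where the proof must be careful. The geometric heart — the affine dependence of $[\omega_\xi]$ on $\xi$ — is exactly the classical Duistermaat--Heckman linear variation, applied fiberwise on the Hamiltonian local model, so no new analytic input is needed; everything else is transport of structure along $\res$ and along the covering maps of the main diagram.
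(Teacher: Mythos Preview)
Your overall strategy --- apply the linear-variation result of \cite[Section~5]{CFM-II} to the regular Dirac resolution $(\hM,\hL)$ with its $\s$-proper presymplectic integration $(\hG,\hOmega)$, then descend to $B^{\aff}$ --- is exactly the route the paper takes. However, you underestimate the ``bookkeeping'' in your Step~(2). The bundle to which \cite[Theorem~5.2.4]{CFM-II} applies is \emph{not} the bundle $\hH$ defined here: that reference works with the orbifold atlas $\cB=\cB(\hG)$ coming from the short exact sequence \eqref{eq:Weyl-short-exact-sequence}, so its cohomology bundle has fibers $H^2(\cB(\hx,-))$, whereas $\hH$ has fibers $H^2(\Hol(\hM,\hF)(\hx,-))$. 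These differ because $\cB$ is in general a nontrivial extension of $\Hol(\hM,\hF)$ by a bundle of finite groups. The paper's proof therefore does not merely ``check that the inputs are literally the objects'': it constructs an injection $\hH\hookrightarrow\hH_\cB$ induced by the covering maps $\cB(\hx,-)\to\Hol(\hM,\hF)(\hx,-)$, identifies its image as the deck-invariant part, verifies equivariance for the $\Pi_1(\hM)$-actions, and checks that the section $\varpi$ lands in the image. Only then can one transport the diagram from \cite{CFM-II} to the present $\hH$. This is the actual content of the paper's argument, not peripheral bookkeeping.

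Your ``Step-by-step'' outline (local computation on the Hamiltonian model plus functorial globalization) is a genuinely different and also valid route; it is essentially what the paper carries out in Example~\ref{example:D-H} as an illustration, but not as the proof. That approach has the advantage of being self-contained and making the affine dependence visible, at the cost of redoing work already packaged in \cite{CFM-II}. The paper instead invokes \cite{CFM-II} once and pays the price of the bundle comparison $\hH\hookrightarrow\hH_\cB$; this buys a cleaner global argument and avoids having to verify that the local computations patch compatibly under the covering maps of the main diagram.
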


\begin{remark}
The reader may wonder why the previous result expresses the linear variation of the cohomology classes of the symplectic leaves of $(M,\pi)$. For that, observe that the resolution map yields for each symplectic leaf $S_x$, a map in cohomology:
\[
\xymatrix{
H^2(S_x)\ar[r]^{\res^*} & H^2(\hS_{\hx})\ar[r]^---{\t^*} & H^2(\Hol(\hM,\hF)(\hx,-))\simeq \hH_0} \]
where the last isomorphism is obtained by choosing a path connecting $\hx$ to the base point $\hx_0$. So the theorem states that the variation (of the pullback) of the cohomology classes $[\omega_{S_x}]$ becomes linear in any integral affine chart. This will also be clear in the proof below.


A more explicit description can be obtained by fixing a $\Z$-basis $\mathfrak{b}_{\Lambda}$ for $\Lambda_{\hx_0}\subset \tt_0$. This induces an identification of $\cG_{x_0}^{0}$ with the standard torus $\T^r$ via the exponential map. 
On the other hand, denoting $\G= \cG_0$,  one has the principal $G$-bundle 
\[ \t: \cG(x_0, -)\to S_{x_0}\]
and the associated Chern-Weyl  homomorphism is defined on 
\[ S(\gg^*)^G\cong S(\tt_0^*)^{G/G^0}= S(\tt_0^*)\cong \R[X_1, \ldots , X_r]. \]
Here the last identification is the one induced by the basis $\mathfrak{b}_{\Lambda}$, and the middle equality derives from the fact that $x_0$ is in the principal part. Therefore our principal bundle has associated Chern classes, denoted 
\[ c_1, \ldots, c_r\in H^2(S_{x_0}).\]
Finally, the developing map will have coordinates $\dev^{k}$ w.r.t. the basis $\mathfrak{b}_{\Lambda}$. 

All together, the linear variation formula boils down to a commutative diagram 
\[\xymatrix{
\oB  \ar[rr]^-{\Varb} \ar[rd]_-{{\dev_0}}  &    &   H^2(S)        \\
 & \R^r  \ar[ru]!<+10pt,0pt>_-{\quad(v^i)\mapsto[\omega_0]+\sum_i v^i c_i}                                     & &             
}\]
More explicitly, for any path $\gamma$ in $\hM$ starting at $x_0$, one has  
\[ \gamma^*[\omega_{\gamma(1)}]- [\omega_{0}]=  \dev^{1}(\gamma)c_1+ \ldots + \dev^{r}(\gamma)c_r.\]

In the strong s-proper case, \cite[Corollary ]{CFM-II} shows that the classes $c_1,\dots,c_r$ form a primitive family, i.e., we have
\[  \mathrm{Span}_{\Z}(c_1, \ldots , c_r)= \mathrm{Span}_{\R}(c_1, \ldots , c_r)\cap H^2(S_{x_0}, \Z).\]
\end{remark}

\begin{proof}
The result will follow from \cite[Theorem 5.2.4]{CFM-II} applied to the resolution of $(M, \pi)$, viewed as a regular Dirac manifold of $\s$-proper type.

The aforementioned theorem provides the commutative diagram
\begin{equation}\label{eq: diagram variation PMCTII} 
\vcenter{\xymatrix{ 
\Pi_1(\hM) \ar[rr]^-{\mathcal{T}_{\varpi}} \ar[rd]_-{\dev} &    & \hH_\cB    \\
& \nu^{\aff}(\hF) \ar[ru]_-{\Iaff}                                     &            
}}
\end{equation}
The bundle $\hH_\cB$ where the transport map takes values differs slightly from our cohomology bundle: it is constructed from the atlas $\cB=\cB(\hG)$ defining the orbifold structure on $B$, rather than the classical one.  More precisely, it is the bundle over $\hM$ whose fiber is the second cohomology of the source fibers $\cB(\hx,-)$ of the given orbifold atlas.  In this case $\nu^{\aff}(\hF)= \htt$, seen as an affine bundle, and $\Iaff$ is defined as a above, but considering $\varpi$ as section of $\Gamma(\hH_\cB)\to\hM$.

To obtain the commutative diagram of the statement we take the following steps:
\begin{enumerate}[(i)]
\item Replace the $\pi_1(\cB)$-representation $\hH_\cB$ by the $\pi_1(\Hol(\hM, \hF))$-representation $\hH$: first the covering maps $\cB(\hx,-)\to \Hol(\hM,\hF)(\hx, -)$ induce an injection
\[ \hH\to \hH_{\cB} \]
with image the fixed-point sets for the action of the groups of deck transformations. This inclusion preserves the integral affine structure given by integral cohomology and therefore is $\Pi_1(\hM)$-equivariant. The aforementioned covers arise from the short exact sequence of foliation groupoids
\[
\xymatrix{1\ar[r] & \cK \ar[r] & \cB\ar[r] &  \Hol(\hM,\hF)\ \ar[r] & 1}.
\]
We also have a commutative diagram of fundamental groupoids 
\[
\xymatrix{  & \Pi_1(\hM)\ar[dl]\ar[dr]\\
\Pi_1(\cB) \ar[rr] & & \Pi_1(\Hol(\hM,\hF))}
\]
where the diagonal arrows are surjective. As in the proof of Proposition \ref{prop:rep:pi:hol}, the representations of $\Pi_1(\cB)$ and $\Pi_1(\Hol(\hM,\hF))$ on $\hH_\cB$ and $\hH$ are completely determined by the action of $\Pi_1(\hM)$. It follows that the inclusion $\hH\to\hH_\cB$ is equivariant. 

Finally, notice that the injection $\hH\to \hH_{\cB}$ preserves the sections $\varpi$.

\item Drop down to $B^\aff$: 
the developing map descends to $\Pi_1^\aff(\hM)$, and, hence, so does the whole diagram.
Restricting to the s-fiber above $\hx_0$ one obtains
\[ \xymatrix{ 
\hM^{\aff} \ar[rr]^-{\cT_{\varpi}} \ar[rd]_-{\dev} &    & \hH_0      \\
& \tt_0 \ar[ru]_-{\Iaff}                                     &            
}\]
The last diagram also descends to $B^\aff$ giving the commutative diagram in the statement.
\end{enumerate}

\end{proof}

A Poisson manifold of $\s$-proper type around a symplectic leaf is described by the linear local model (see Section \ref{sec:ex:local-model}). The linear variation of cohomology for such a linear local model is described explicitly in the following example. 

\begin{example}\label{example:D-H}
We described in Example \ref{ex:linear local model resolution} the resolution of the linear local model. Fixing the data of a principal $G$-bundle $p:P\to (S,\w_S)$, a connection 1-form $\theta\in \Omega^1(P,\gg)$, and  choice of maximal torus $T\subset G$, 
the resolution $\res: \hM\to M$ of the linear Poisson model is
\[\res:((P\times V)/N(T),\widehat{L}_{\pi^\theta_\lin})\to ((P\times U)/G,\pi^\theta_{\lin}),\quad [(p,\xi)]_{N(T)}\to [(p,\xi)]_G,\]
where the Dirac and Poisson tensors are built out of reduction  of the forms 
\begin{equation}\label{loc-mod-2-form:resolution}
p^*\omega_S-\d\langle \theta^T,\cdot\rangle \in \Omega^2(P\times \tt^*),\quad  p^*\omega_S-\d\langle \theta,\cdot\rangle \in \Omega^2(P\times \gg^*).
\end{equation}
Here $\theta^T:=\mathrm{pr}_\tt\circ \theta$ and
$V$ is the intersection with $\tt^*$ of a small saturated neighborhood $U$ of $0\in \gg^*$ (so that $\pi_\lin^\theta$ is Poisson).

We can move in the big diagram one step up. For simplicity, we assume that the centralizer $Z_G(T)=T$. We obtain that
\[ \hM^\aff=\hM^\lin:=(P\times U)/T=P/T\times U, \]
with Dirac structure still induced by the form \eqref{loc-mod-2-form:resolution}. Notice that now this form is the pullback under the projection $P\times\tt^*\to P/T\times\tt^*$ of the form
\begin{equation}
\label{loc-mod-2-form:affine:cover}
\underline{\omega}:=q^*\omega_S-\d\langle \theta^T,\cdot\rangle \in \Omega^2(P/T\times \tt^*), 
\end{equation}
where $q:P\to P/T$ is the quotient map. In other words, the Dirac structure on $\hM^\lin$ has presymplectic leaves $P/T\times{\xi}$ with presymplectic forms the restriction of the globally defined closed 2-form \eqref{loc-mod-2-form:affine:cover}.
    
Fixing a basis for $\tt$, the principal torus bundle $q:P\to P/T$ has Chern classes $[c_i]\in H^2(P/T)$ represented by the components of the curvature form of $\theta^T$, i.e.,
\[ -\d\theta^T=q^*(c_1,\dots,c_r). \]
The presence of the minus sign is due to our convention that Lie algebras and Lie algebroids are constructed using right-invariant vector fields. 

The closed 2-form \eqref{loc-mod-2-form:affine:cover} becomes
\[ \underline{\omega}:=q^*\omega_S+\sum_{i=1}^r c_i\,\xi^i,
\]
for $\xi=(\xi^1,\dots,\xi^r)\in\tt^*$.
The linear variation of the cohomology class of the symplectic forms can be seen as a cohomological version of the last formula. To see this, recall that the symplectic leaves of 
$(M,\pi^\theta_\lin)$ are parametrised by $\xi\in \tt^*$. More precisely, the leaf $(S_\xi,\omega_\xi)$ corresponding to $\xi$ 
is 
\[ S_\xi=(P\times\{\xi\})/G_\xi\]
with the symplectic form obtained by descending 
\[  \underline{\omega}|_{G/T\times \{\xi\}}
\in \Omega^2(G/T\times \{\xi\})\]
along the bundle map $q_{\xi}: G/T\times \{\xi\}\to G/G_{\xi}\times \{\xi\}$. 
Therefore the previous formula evaluated at a specific $\xi$ becomes
\[ q_{\xi}^*\omega_{S_{\xi}}= q^*\omega_{S}+
\sum_{i= 1}^{r}  c_i\, \xi^{i} \in \Omega^2(G/T).\]
Now, observe that the developing map is just the projection
\[ \dev:\hM^\aff=P/T\times U\to \tt^*, \]
so we find that
\[ \Iaff(\xi)=\sum_{i= 1}^{r}  c_i\, \xi^{i}. \] 
\end{example}


\subsection{The symplectic volume polynomial}
\label{sec:volume:polynomial}

We now study how the symplectic volumes of the leaves vary. For that we need to pass from second degree cohomology to top-degree cohomology. 
The discussion in the previous section on the bundle $\hH$ applies word by word to the similar bundles obtained using instead the cohomology in any fixed degree. In particular, it applies to the top degree of the regular leaves, which is (see the discussion following Theorem \ref{thm:Weyl-intro})
\[ 2 k:= \dim(M)-r, \quad \text{where} \quad r= \corank \pi. \]  
Therefore one obtains a line bundle 
\[ \hLL\to \hM, \quad \hLL_{\hx}= H^{2k}(\Hol(\hx, -)),\]
which is a representation of $\Pi_1(\Hol(\hM, \hF))$. In particular, the fiber at $\hx_0$  becomes 
\[ \hLL_{0}:= H^{2k}(S_{x_0})\in \Rep(\pi^{\orb}_{1}(B)).\]
Furthermore, the role of the section $\varpi$ is replaced by
\[ \sigma_{\vol}:= \frac{\varpi^{k}}{k!}\in \Gamma(\hLL).\]
%
This section is $\Hol(\hM, \hF)$-invariant dues to the fact that the map
$\hH\to \hLL$, $u \mapsto u^{k}/k!$,
is $\Pi_1(\Hol(\hM, \hF))$-equivariant.
This allows us to interpret $\sigma_{\vol}$ as a section of an orbi-vector bundle over $B$. To work over smooth manifolds one can, as before, pullback $\hLL$ together with $\sigma_{\vol}$ to $\hM^{\lin}$, $\hM^{\aff}$, etc, via the canonical projections onto $\hM$, or drop them to $B^{\lin}$, $B^{\aff}$, etc. 
When doing so we will keep the same notations $\hLL$ and $\sigma_{\vol}$.

Next we explain how to encode $\sigma_{\vol}$ in terms of a polynomial. There will be two ways of achieving this and both are relevant to us:
\begin{enumerate}[(i)]
    \item We will show that $\hLL$ trivializes over $B^\aff$. This gives a polynomial function 
    \[ \VVO:B^\aff\to \R. \]
    \item The tensor product $\hLL\otimes\hLL$ trivializes already over $B^{\lin}$. We obtain a function whose pullback to $B^{\aff}$ coincides with $\VV^2_0$. 
\end{enumerate}

In the statement of the next result, by the singular part of 
$B^{\aff}$ we mean the complement of the pre-image of $B^{\reg}$ by the covering projection $B^{\aff}\to B$. 

\begin{theorem}\label{thm:volume-polynomial}
There exists a function 
\[ \VVO: B^{\aff}\to \mathbb{R}\]
with the following properties:
\begin{enumerate}[(i)]
\item it is a polynomial function on the integral affine manifold $B^{\aff}$;
\item its zero-set is precisely the singular part of $B^{\aff}$; 
\item at regular points $\tilde{b}\in B^\aff$,
\[ 
\VVO(\widetilde{b})= \pm \, \iota_0(b) \vol(S_b, \omega_{S_b})
\]
where $\iota_0: B^\reg\to \mathbb{N}$ is the function that for each $b\in B$ gives the order of the holonomy group of $S_b$;
\item for all $w\in \cW$ and $\widetilde{b}\in \oB$ one has 
\[ \VVO(\widetilde{b}\, w)= \delta(w)\, \VVO(\widetilde{b}), \]
where $\delta:\cW\to\Z_2$ is the parity character;
\item the square of $\VVO$ descends to an orbifold smooth function 
\[ \VVO^{2}: B\to \R.  \]
\end{enumerate}
\end{theorem}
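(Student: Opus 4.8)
The plan is to obtain the volume polynomial $\VVO$ as the pairing of the top-degree section $\sigma_{\vol}$ against a trivializing section of the dual line bundle $\hLL^*$ over $B^{\aff}$, and then to establish its stated properties by reducing each to a local computation via the Hamiltonian normal form. The starting point is to show that $\hLL\to B^{\aff}$ is trivial. Since $B^{\aff}$ is the leaf space of the simple foliation $\hF^{\aff}$ and $\hLL$ carries the Gauss--Manin flat connection, triviality amounts to showing that the monodromy representation $\pi_1(B^{\aff})\to \GL(\hLL_0)$ is trivial. By construction, $B^{\aff}$ is built precisely so that the affine holonomy action factors through $\Gamma^{\aff}$; in particular the linear holonomy becomes trivial on $\hM^{\aff}$, and by the top-degree analogue of the discussion in Section~\ref{sec:linear:variation}, together with the fact that the class $\varpi$ (hence $\sigma_{\vol}=\varpi^k/k!$) varies affinely along the developing map (Theorem~\ref{thm:linear:variation}), the line bundle $\hLL$ has trivial Gauss--Manin monodromy over $B^{\aff}$. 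I would fix a trivializing flat section by choosing, over the generic leaf $S_{x_0}$, the orientation class in $H^{2k}(S_{x_0})$ determined by the leafwise symplectic form, and transport it. Pairing $\sigma_{\vol}$ with this section defines $\VVO:B^{\aff}\to\R$, and property (i) (polynomiality in the integral affine chart) is then immediate from Theorem~\ref{thm:linear:variation}: in the affine coordinates given by $\dev_0$, $\varpi$ is an affine-linear function with values in $\hH_0$, so $\varpi^k/k!$ is polynomial of degree $\le k$.

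For property (iii), over the regular locus $\res:\res^{-1}(M^\reg)\to M^\reg$ is a diffeomorphism, and the generic leaf $S_b$ has finite holonomy group of order $\iota_0(b)$; the fiber of the resolution over a regular point is a single point, but the presymplectic leaf $\hS_b=\res^{-1}(S_b)$ covers $S_b$ with the holonomy cover, so integrating $\sigma_{\vol}$ over $\Hol(\hM,\hF)(\hx,-)$ as in the definition of $\hLL$ picks up the factor $\iota_0(b)$ and the sign records the comparison of the chosen orientation class with the symplectic orientation. This is essentially a bookkeeping argument once one unwinds the identification $\hLL_0\cong H^{2k}(S_{x_0})$. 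Property (ii) follows from (iii) together with the observation that at singular points of $B^{\aff}$ the generic leaf degenerates: using the Hamiltonian local model $M=Q/G$ with base point $Q$-related to $0$, the leaf over $0$ is $Q//G$, whose dimension drops, so the pullback of $\varpi^k$ vanishes there; conversely on the regular part $\vol(S_b,\omega_{S_b})>0$ so $\VVO\ne 0$. (One must check the vanishing is exactly on the singular part and not on a larger set — this uses that the leafwise symplectic volume of a regular leaf is strictly positive.) Property (iv), the transformation law under $\cW$, is where the parity character enters: for an abstract reflection $\tau$, the identification of $\cW$ with a geometric reflection group on $\oB$ (Theorem~\ref{thm:Weyl:reflection:group}) shows $\tau$ reverses orientation of $\oB$; since $\VVO$ is built from the orientation class and $\sigma_{\vol}$ is $\Hol(\hM,\hF)$-invariant, $\tau^*\VVO=-\VVO$; extending multiplicatively over products of reflections (Proposition~\ref{prop:generation-by-reflections}) gives the formula with $\delta(w)$. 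Here I would reduce to the subregular case where $\gg_x^\ss\cong\su(2)$ and the action is literally a reflection on a one-dimensional factor of $\tt^*$, exactly as in the proof of Proposition~\ref{pro: abstract is geometric reflection}.

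Finally, for property (v): by (iv), $\VVO^2$ is $\cW$-invariant, and since $\pi_1^\orb(B)/\cW=\pi$ acts by integral affine (hence orientation-type-homogeneous) transformations, one checks $\VVO^2$ is invariant under the full $\pi_1^\orb(B)$-action on $\oB$ — the translational and linear parts coming from $\pi_1(M^\reg)$ act by the Gauss--Manin transport which preserves $\sigma_{\vol}^{\otimes 2}$ up to sign, and squaring kills the sign. Hence $\VVO^2$ descends to a function on $B$, which is orbifold-smooth because it is the pairing of the smooth section $\sigma_{\vol}\otimes\sigma_{\vol}$ with a flat trivializing section of $(\hLL\otimes\hLL)^*$ over $B^{\lin}$ (the tensor square already trivializes over $B^{\lin}$ since the linear holonomy $\Gamma^\lin$ acts on $\hLL$ through $\pm 1$), and this descent is manifestly compatible with the orbifold atlas $B^{\lin}\rtimes\Gamma^{\lin}\tto B^{\lin}$. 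The main obstacle I anticipate is item (ii): pinning down that the zero set of $\VVO$ is \emph{exactly} the singular part requires showing that the polynomial $\VVO$ does not vanish to higher order or on extra components inside $B^{\aff}$, which one handles by the local model, checking in the linear case $\mu:\hgg^{\mathrm{aff}}\to\tt^*$ that $\VVO$ restricted to a Weyl chamber is a product of linear forms (the roots), none of which is identically zero — essentially the Weyl denominator — so its zero set is precisely the union of reflection hyperplanes, which maps onto $B^\sing$.
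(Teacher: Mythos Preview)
Your overall architecture is close to the paper's, but there are two genuine gaps.

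First, your starting point---trivializing $\hLL$ directly over $B^{\aff}$---is not justified by the argument you give. You write that ``the linear holonomy becomes trivial on $\hM^{\aff}$'', but the holonomy that is killed in passing to $\hM^{\lin}$ and $\hM^{\aff}$ is the holonomy of the transverse bundle $\htt^*$, not the Gauss--Manin monodromy of $\hLL$. These are different flat bundles, and the character $\delta_{\hLL}:\pi_1(\hM)\to\Z_2$ governing $\hLL$ has no a priori reason to vanish on $K^{\aff}$. The paper avoids this by trivializing $\hLL$ over the simply connected $\oB$ via Gauss--Manin parallel transport, defining $\VVO$ there, and then observing that $\VVO=P\circ\dev$ for a polynomial $P$; since $\dev$ descends to $B^{\aff}$, so does $\VVO$. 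Triviality of $\hLL$ over $B^{\aff}$ is then a \emph{consequence} (noted in a remark after the proof), not an input. Relatedly, in (iii) you write that ``$\hS_b=\res^{-1}(S_b)$ covers $S_b$ with the holonomy cover''---this is wrong over the regular locus, where $\res$ is a diffeomorphism; the relevant cover is $\Hol(\hx,-)\to \hS_{\hx}$, with deck group the holonomy group of the leaf, and that is where the factor $\iota_0(b)$ comes from.

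Second, and more seriously, your argument for (iv) conflates two different orientations. You argue that an abstract reflection $\tau$ reverses the orientation of $\oB$ and conclude $\tau^*\VVO=-\VVO$. But the transformation of $\VVO$ under $\tau$ is governed by how $\tau$ acts on the fiber $\hLL_0=H^{2k}(\Hol(\hx_0,-))$, i.e., by the character $\delta_{\hLL}$, not by the orientation of the base $\oB$. The identity $\delta_{\hLL}|_{\cW}=\delta$ that you need is the substantive claim, and it requires a separate argument. The paper establishes it by working at a subregular point $\hx$: the deck transformation $\hat{\tau}_{\hx}$ on $\Hol(\hx,-)$ lifts the identity on $S_x$ (since $\hat{\tau}_{\hx}$ is represented by a loop in $\res^{-1}(x)$), while on the fiber $q^{-1}(\hx)\subset\Hol(\hx,-)$ of $q=\res\circ\t$---which is a disjoint union of $2$-spheres covering $\res^{-1}(x)\cong\mathbb{RP}^2$---it acts as the antipodal map on the sphere through the base point. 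Since the total space and $S_x$ are orientable, this forces $\hat{\tau}_{\hx}$ to reverse orientation of $\Hol(\hx,-)$, hence to act by $-1$ on top cohomology. Your reduction ``to the subregular case where the action is literally a reflection on a one-dimensional factor of $\tt^*$'' stays on the base side and never makes this fiberwise orientation computation; without it, (iv) is unproven, and then (ii) (which the paper derives from the other items) also collapses.
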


\begin{proof}
Consider $\hLL$ as a line bundle over $\oB$. It can be trivialised 
\[ \hLL\to \oB\times \hLL_0, \]
using the Gauss-Manin connection: writing $\widetilde{b}=[\gamma]$, where $\gamma$ a path in $B^{\lin}$ from $\hx$ to $\hx_0$, parallel transport with respect to this connection yields the desired trivialization
\[ T_{\gamma}: \hLL_{\hx}\to \hLL_{0}.\]
Next, if we identify $\hLL_0$ with $\R$ using a primitive integral covector 
$\lambda_0\in \hLL_{0}^{*}$, we set
\begin{equation}\label{eq:last-moment}
\VVO([\gamma]):= \lambda_0\left( T_{\gamma}(\sigma_{\vol}(\gamma(1))\right)
\end{equation}
Therefore, under the isomorphism above, $\sigma_{\vol}$ is identified with the smooth function $\VVO:\oB\to \R$.

Since $\sigma_{\vol}=\frac{\varpi^{k}}{k!}$, the linear variation from Theorem \ref{thm:linear:variation} implies that $\VVO$ is a polynomial function. It follows (see also Section \ref{sec:compact:completeness}) that there exist a polynomial $P$ in $\R^r$ such that
\[ \VVO=P\circ\dev. \]
Since $\dev$ descends to $B^\aff$, the function $\VVO$ factors through the projection
\[
\xymatrix{
\oB \ar[d] \ar[r]^{\dev} & \R^r\ar[rr]^{P} & &\R \\
B^\aff \ar[ur]_{\dev}\ar@/_/[rrru]_{\VVO}}
\]
so it can be viewed as a polynomial function $\VVO:B^\aff\to\R$, proving (i).

We now prove item (iii). For $\widetilde{b}=[\gamma]$ with $\gamma: [0, 1]\to M^{\lin}$ a path from $\hx_0$ to some $\hx$ in the regular part, to compute $\VVO([\gamma])$
we have to apply the parallel transport
\[ T_{\gamma}: H^{2k}(\Hol({\hx},-))\to H^{2k}(\Hol({\hx_0},-))\]
to the Liouville element
\[ \frac{\omega_{\hx}^{k}}{k!}\in H^{2k}(\Hol({\hx},-))\]
and then, up to a sign, pair it with the fundamental class for $\Hol({\hx_0},-)$. Up to a sign, that is the same as pairing the Liouville element at $\hx$ with the fundamental class of $\Hol({\hx},-)$, i.e., the symplectic volume of 
\[ (\Hol({\hx},-), \omega_{\hx}).\]
In turn, this manifold is a covering of the symplectic leaf $S_{\hx}$, with group of deck transformations the holonomy group of $S_{\hx}$. Therefore (iii) follows.

We now move to the proofs of items (iv) and (v). Note that the triviality of the integral affine line bundle $\hLL\to\hM$ is controlled by the character
\[ \delta_{\hLL}: \pi_1(\hM)\to \Z_2\]
determined by
\[ T_{\gamma}=\delta_{\hLL}([\gamma])\cdot \id, \]
where $T_{[\gamma]}$ denotes parallel transport  w.r.t. the Gauss-Manin connection. 
Furthermore, this descends to 
\[ \delta_{\hLL}: \pi_1^{\orb}(B)\to \Z_2.\]
On the other hand, it follows from (\ref{eq:last-moment}) that we have the formula
\begin{equation}
    \label{eq:volume:parity:bundle:character}
    \VVO(\widetilde{b}\, u)= \delta_{\hLL}(u)\, \VVO(\widetilde{b}),
\end{equation}
for all $\tilde{b}\in \oB$, $u\in\pi_1^{\orb}(B)$. This implies that (v) holds.

To prove (iv) note that $\delta_{\hLL}$ is trivial over the image of the splitting given in Proposition \ref{prop:split:orbi:fundamental:group} since the section $\sigma_{\vol}\in\Gamma(\hLL)$ is nowhere vanishing over the regular part. Therefore, $\delta_{\hLL}$ is determined by its values on $\cW$ where we claim that it coincides with the parity character of $\cW$
\begin{equation}\label{eq:parity-identified} 
\delta_{\hLL}|_{\cW}=\delta_{\cW}. 
\end{equation}
This identity, together with \eqref{eq:volume:parity:bundle:character}, implies that (iv) holds. In order to prove (\ref{eq:parity-identified}) it is enough to check it for an abstract reflection $\tau_{\hx}=p_*\hat{\tau}_{\hx}$ associated to subregular point $\hx$, since the conjugate of these generate $\cW$. Working at the level of $\hM$, and denoting by $y= 1_{\hx}\in \hM^\lin$ the point represented by the constant path at $\hx$, 
we will prove that $\hat{\tau}_{\hx}$ acts on the fiber 
\[\hLL_{y}=H^{2k}(\Hol(\hx,-)),\]
as minus the identity, i.e., changing the orientation of the fiber.
The element $\hat{\tau}_{\hx}$
belongs to the group of Deck transformations of the cover $\Hol(\hx,-)\to \hS_{\hx}$. Therefore the statement that we have to prove is equivalent to show that $\hat{\tau}_{\hx}$ is orientation reversing. Consider the following commutative diagram
\[
\xymatrix{\Hol(\hx,-)\ar[d]^\t\ar[dr]^q& \\
\hS_{\hx}\ar[r]^{\res} & S_x}
\]
We observe that:
\begin{enumerate}[(a)]
    \item \emph{The fiber  $q^{-1}(\hx)$ is a finite union of spheres.} Indeed, $\res^{-1}(x)\cong \mathbb{RP}^2$
and therefore $q^{-1}(\hx)$ is a (possibly non-connected) covering space of $\mathbb{RP}^2$. Both the total space $\Hol(\hx,-)$ and the base $S_x$ are orientable, and thus so the fiber.
    \item \emph{The action of $\hat{\tau}_{\hx}$ on the sphere $\mathbb{S}^2_y$ in $q^{-1}(\hx)$ through $y$ is the antipodal map.}  Indeed, the restriction
\[q:\mathbb{S}^2_y\to \mathbb{RP}^2\]
is a covering map whose Deck transformation group corresponds to the generator of the fundamental group $\hat{\tau}_{\hx}$.
\end{enumerate}
Now, since $\hat{\tau}_{\hx}$ is represented by a loop in the fiber of $\res$, its action on $\Hol(\hx,-)$ lifts the identity on $S_x$. This, combined with (b), implies that $\hat{\tau}_{\hx}$ reverses the orientation on $\Hol(\hx,-)$.

Finally, item (ii) follows from the other items.
\end{proof}

\begin{remark} 
The proof also shows that $\hLL$ is trivializable over $B^{\aff}$. Indeed, from the proof this is equivalent to $\delta_{\hLL}|_{K^{\aff}}= 1$. This follows by observing that, since $\VVO$ descends to $B^{\aff}$, we must have 
\[\VVO(\widetilde{b}\, u)=  \VVO (\widetilde{b}), \]
whenever $u\in K^{\aff}$. Since $\VVO$ is not identically zero, we obtain from \eqref{eq:volume:parity:bundle:character} that $\delta_{\hLL}= 1$ on $K^{\aff}$.
\end{remark}

\section{The Duistermaat-Heckman and the Weyl Integration Formulas}
\label{sec:DH:formula}

In this section we discuss measure theoretic aspects of Poisson manifolds of s-proper type, extending the Duistermaat-Heckman formula given in \cite{CFM-II} to non-regular Poisson manifolds.  

We will be working with Radon measures and, when working on manifolds,
we will focus mainly on \textit{geometric measures}, by which we mean measures associated to densities. In general, Radon measures will be denoted by $\mu$ and positive densities will be denoted by $\rho$. A measure induced by a density $\rho$ will be denoted by $\mu_{\rho}$. Also, we denote by the symbol $\cD$ density bundles associated with a vector bundle. We refer to \cite{CM19} and \cite[Section 3.13]{DH} for details on measures and densities.

For the rest of the section we fix such a Poisson manifold $(M, \pi)$ together with an s-proper integration $(\G, \Omega)\tto (M, \pi)$ and we consider the leaf space $B$ together with the canonical projection 
\[ p: M\to B.\]
We define two measures on $B$. The first one arises from the fact that $B$ has an integral affine orbifold structure and, therefore, it 
carries a natural measure (see below). We call it the \textbf{Lebesgue measure} and we denote it by 
\begin{equation}\label{eq:measure-aff-B} 
\boldsymbol{\mu^{\aff}_{B}}.
\end{equation}
The second measure is obtained from the Liouville measure $\mu_{\Omega}$ on $\G$, induced by the density associated to the Liouville form
$\frac{\Omega^{\top}}{\top !}\in \Omega^{\top}(\G)$. Noticing that the map $p\circ\s=p\circ\t:\cG\to B$ is proper, one can push $\mu_\Omega$ to a measure on the leaf space, denoted
\[ \boldsymbol{\mu^{DH}_{B}}= p_*s_{*}(\mu_{\Omega}). \]
We call it the \textbf{Duistermaat-Heckman measure} on $B$. 

The last ingredient for stating the main theorem in this section is the \textbf{square volume polynomial} $\VV^2: B\to\R$. It coincides with the function 
\[ \VV^2(b)=\begin{cases}(\iota(b)\vol(S_b,\omega_{S_b}))^2, & b\in B^\reg\\
0, & b\notin B^\reg
\end{cases}
\]
where $\iota:B\to\mathbb{N}$ counts the number of connected components of the isotropy group of $\cG$ at a/any $x\in S_b$ (see also Remark \ref{rem:volume:other:orbifold:structure}).

\begin{theorem} 
\label{thm:measures:DH:Aff}
If $(\G,\Omega)$ is an s-connected, s-proper integration of $(M, \pi)$ then
\begin{equation}\label{eq:DH-measures}
\boldsymbol{\mu^{DH}_{B}}= \VV^2\cdot \boldsymbol{\mu^{\aff}_{B}},
\end{equation}
where $\VV^2: B\to\R$ is the square volume polynomial.
\end{theorem}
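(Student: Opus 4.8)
The plan is to reduce the identity \eqref{eq:DH-measures} to a local statement that can be checked on the Hamiltonian local model, using the resolution and the main diagram to transport everything to the integral affine geometry of $\oB$, and then invoking the volume polynomial $\VVO$ from Theorem \ref{thm:volume-polynomial}. More precisely, since both sides of \eqref{eq:DH-measures} are Radon measures on $B$ and $B$ admits partitions of unity subordinate to the Hamiltonian local model covering (Remark \ref{rmk:smooth:functions}), it suffices to verify the identity on each saturated open set $U\simeq Q/G$ where the Hamiltonian normal form of Theorem \ref{thm-lf-Poisson-v2} holds. On such a $U$ both measures, the integration $(\cG,\Omega)$, and the leaf space $B_U$ are explicitly described by the local model diagram \eqref{eq:Weyl:main-diagram:local:model}, and by Morita invariance (Proposition \ref{prop:Morita:Weyl}, Example \ref{ex:local:model:Morita}) one is reduced to the linear case $M=\gg^*$ with integration $\gg^*\rtimes G$, where the leaf space is $\tt^*/W$. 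Here the computation of the pushforward of the Liouville measure is precisely the classical Duistermaat-Heckman / Harish-Chandra computation: for the action groupoid $\gg^*\rtimes G$ the Liouville density on $\cG=G\times\gg^*$ factors as Haar times Lebesgue, and pushing forward along $p\circ\s$ to $\tt^*/W$ produces (up to the combinatorial factor coming from $|W|$ and from the order of the stabilizers) the Lebesgue density times the square of the symplectic volume of the coadjoint orbit, which by the Kirillov character formula is the Weyl denominator squared — a polynomial in $\tt^*$ that vanishes exactly on the non-regular locus.

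\textbf{Key steps.} First, I would recall from Section \ref{sec:The smooth structure} and the main diagram that $\boldsymbol{\mu^{\aff}_{B}}$ is, by definition, the pushforward to $B$ of the $\cW$-invariant Lebesgue measure $\boldsymbol{\mu^{\aff}_{\oB}}$ associated to the integral affine structure on $\oB$, normalized by $|\cW|$; and that the developing map $\dev_0:B^\aff\to\tt^*_0$ identifies $\boldsymbol{\mu^{\aff}_{B^\aff}}$ with the restriction of Lebesgue measure on $\tt^*_0$ (pulled back via $\dev_0$). Second, I would push the Liouville measure $\mu_\Omega$ on $\cG$ down in two stages: along $\s:\cG\to M$ one obtains, on each symplectic leaf $S_b$, the Liouville measure $\frac{\omega_b^k}{k!}$ times the volume of the source fiber of the isotropy — but more efficiently, one uses that $p\circ\s$ is a proper submersion onto $B^\reg$ over the regular part, and a co-area / fiber-integration argument shows $p_*\s_*\mu_\Omega$ has density on $B^\reg$ equal to (volume of the $\s$-fiber of the isotropy, which contributes the factor $\iota(b)$ together with $\vol(\cG_b^0)$ absorbed in normalization) times $\vol(S_b,\omega_b)$ times the transverse Jacobian, the latter being constant in an integral affine chart. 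Third, I would match this with $\VVO^2$: Theorem \ref{thm:volume-polynomial}(iii) gives $\VVO(\widetilde b)=\pm\,\iota_0(b)\vol(S_b,\omega_{S_b})$ at regular points, and the square descends to $B$ by part (v), so on $B^\reg$ one has $\boldsymbol{\mu^{DH}_{B}}=\VV^2\cdot\boldsymbol{\mu^{\aff}_{B}}$. Fourth, since $B^\reg$ is dense in $B$, both sides are Radon measures, and $\VV^2$ is continuous (indeed a polynomial square, vanishing on the complement of $B^\reg$), the identity extends from $B^\reg$ to all of $B$ by density, provided one checks that $\boldsymbol{\mu^{DH}_{B}}$ puts no mass on $B\setminus B^\reg$ — which follows because $B\setminus B^\reg$ is a lower-dimensional stratified subset (Theorem \ref{thm:inf-strat}, codimension $\geq 3$) and the Liouville measure is absolutely continuous.

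\textbf{The main obstacle.} The delicate point is the precise bookkeeping of the various finite factors — the order $|W|$ coming from the cover $\oB\to B$, the function $\iota(b)$ counting connected components of the isotropy group, and the volumes of the tori $\cG_b^0$ — so that the constant in front of $\VV^2$ in \eqref{eq:DH-measures} is exactly $1$ with the chosen normalization of $\boldsymbol{\mu^{\aff}_{B}}$ (i.e.\ the normalization built into the integral affine structure via the lattice $\Lambda$, so that the covolume is $1$). Getting this right requires carefully tracking how the lattice $\Lambda\subset\htt^*$ enters the definition of $\boldsymbol{\mu^{\aff}_{B}}$ versus how $\vol(\cG_b^0)$ enters the Liouville pushforward; in the linear model this is exactly the statement that $\int_{\mathcal O_\xi}\frac{\omega_\xi^k}{k!}$ equals the Weyl denominator evaluated at $\xi$, with no spurious constant, once the Haar measure on $T$ is normalized by the lattice $\Lambda_G$. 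I expect the cleanest route is to first prove \eqref{eq:DH-measures} in the linear case $M=\gg^*$ by a direct computation (reducing the pushforward along $p\circ\s:G\times\gg^*\to\tt^*/W$ to integration over $G/T$ of the equivariant Liouville form, i.e.\ exactly the Duistermaat-Heckman localization for coadjoint orbits), and then invoke Morita invariance of all the data — the integral affine structure on $B$ (Proposition \ref{prop:Morita:Weyl}), the Liouville measure under symplectic Morita equivalence, and $\VV^2$ (which by Theorem \ref{thm:volume-polynomial}(iii) is a Morita invariant since it is built from $\iota$ and leafwise volumes) — to propagate the identity from the local models to all of $M$.
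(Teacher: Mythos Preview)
Your overall architecture — prove the identity on the regular part and extend by density using that $B\setminus B^\reg$ has measure zero for both sides — is correct, and this is exactly what the paper does in Lemma~\ref{lemma:the-DH-lemma}. However, the paper's route to the regular-part identity is quite different from yours, and your route has two genuine difficulties.

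First, the Morita reduction to the linear case is not justified. You assert ``Morita invariance of the Liouville measure under symplectic Morita equivalence'', but this is not established anywhere: two Morita equivalent symplectic groupoids have the same leaf space $B$, but there is no a priori reason their Duistermaat--Heckman measures $p_*\s_*\mu_\Omega$ should coincide on it — the Liouville forms live on different total spaces and the bibundle does not obviously intertwine them. What Theorem~\ref{thm-lf-Poisson-v2} actually gives is an \emph{isomorphism} $(\cG|_U,\Omega|_U)\simeq ((Q\times_V Q)/G,\underline\Omega)$ with the gauge groupoid, not with $G\ltimes\gg^*$; to pass from the gauge groupoid to $G\ltimes\gg^*$ you would have to prove exactly the Morita-invariance statement you are assuming. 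The paper sidesteps this entirely: on $M^\reg$ one has a regular PMCT, and the identity there is already established in \cite[Lemma~6.3.3]{CFM-II}.

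Second, the bookkeeping worry you flag is not merely a matter of care — it reflects a genuine mismatch in your setup. The function $\VV^2$ in the theorem is \emph{not} $\VVO^2$ from Theorem~\ref{thm:volume-polynomial}: by Remark~\ref{rem:volume:other:orbifold:structure} they differ by a locally constant factor coming from the choice of orbifold atlas ($\cB(\hG)$ versus $\Hol(\hF)$), namely the cardinality of $Z_{\cG_x}(T)/T$. Correspondingly, your description of $\boldsymbol{\mu^{\aff}_B}$ as a $|\cW|$-normalized pushforward from $\oB$ is not the definition the paper uses. The paper's mechanism for getting the constant exactly right is structural: it builds an explicit density decomposition $\rho^{\aff}_\nu = (\rho^{\aff}_{\hF})^\vee\otimes\rho^{\aff}_{\hM}$ on $\hM$ using the short exact sequence $0\to\hnu^*\to\hL\to\hF\to 0$ and the isomorphism $\hL\cong\res^*T^*M$, and then proves (via \cite{CFM-II} on the regular part) the key normalization lemma that the fiber-volume function $\vol^{\rho^{\aff}_{\hF}}$ is identically~$1$. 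This is what forces $\boldsymbol{\mu^{\aff}_B}=p_*\boldsymbol{\mu^{\aff}_{\hM}}$ with no spurious constant, and simultaneously makes $\VV^2$ (with $\iota$ counting components of $\cG_x$, not holonomy) the correct factor.
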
  

\begin{remark}
\label{rem:measures:DH:Aff}
It may be instructive to the reader to notice that $\boldsymbol{\mu^{DH}_{B}}$ and $\boldsymbol{\mu^{\aff}_{B}}$ are associated with the two interpretations of $B$ as a leaf space of $(M, \cF)$ and as a leaf space of $(\hM, \hF)$, respectively. Recall also from Section \ref{sec:volume:polynomial} that, although both $\iota$ and the symplectic volumes of the leaves of $(M, \pi)$ yield functions $\iota$ and $\vol$ defined on the entire $B$, the reason why $\VV^2$ appears in the final formula is its smoothness.
\end{remark}

\begin{remark}
\label{rem:volume:other:orbifold:structure}
There is very little difference between the function $\VV^2$ and the function $\VV_0^2$ introduced in the previous section: one is a rescaling of the other. The scaling factor arises from the fact that we use two orbifold structures on $B$: in this section we use the one whose atlas is the foliation groupoid $\cB= \cB(\hG)$ in (\ref{eq:Weyl-short-exact-sequence}), while in the previous section we used the classical orbifold structure, i.e., whose atlas is the holonomy groupoid $\Hol(\hF)$.

The fact that $\iota(b)/\iota_0(b)$ is (locally) constant can be seen as an instance of a general remark: for any ($s$-connected) proper foliation groupoid $\cB$, the kernel of the canonical map $\cB\tto \Hol(\cF)$ is a bundle of groups of (locally) constant cardinality. This follows, e.g., from the local normal form for proper Lie groupoids, which reduces the statement to one about linear actions of finite groups. 


In our case, the resulting rescaling factor can be read off at any point $x\in M$: it is the cardinality of $Z_{\cG_x}(T)/T$, where $T\subset \cG_x$ is a maximal torus (compare with Section \ref{ex:model main diagram}). At a regular point $x$, this is the subgroup of $\pi_0(\cG_x)$ consisting of elements which act trivially on $\cG_x^0$.
\end{remark}


We now discuss the measure (\ref{eq:measure-aff-B}) in detail.  The key ingredient in its construction is the density arising from the integral affine lattice $\Lambda\subset \htt$ given by
\begin{equation}\label{eq:measure-aff-nu} 
\rho^{\aff}_{\nu}\in\Gamma(\hM, \cD_\nu), \quad \rho^{\aff}_{\nu}|_x:=|\lambda_1\wedge \cdots \wedge\lambda_n|
\end{equation}
where $\lambda_{1}, \ldots , \lambda_{n}$ is any basis of $\Lambda_x\subset \htt_x= \nu_{x}^{*}$. This density has the property that it is invariant under the holonomy of $\hF$ and, therefore, can be thought of as a density on the orbifold $B$. 
Next, we need the generalisation to orbifolds of the construction 
of measures associated to densities. If we use \'etale orbifold atlases, this is
rather straightforward, as explained in the discussion leading to Proposition 6.1.3  in \cite{CFM-II} and produces (\ref{eq:measure-aff-B}) starting from $\rho^{\aff}_{\nu}$. However, it is useful to have a concrete description for all, not necessarily \'etale,  orbifold atlases, as explained also in \cite{CFM-II}. The next proposition provides a summary of what we need here.

\begin{proposition}
\label{prop:measure:affine}
Let $B$ be an orbifold with atlas $\cB\tto M\to B$, where $\cB$ is a proper foliation groupoid with underlying foliation denoted $\cF$ and normal bundle $\nu$. 
Then any positive density $\rho_{\nu}\in\Gamma(\cD_\nu)$ that is invariant under the holonomy of $\cF$ gives rise to a measure $\mu_{ \rho_{\nu}}$ on $B$ such that
for any $f\in \mathcal{C}^{\infty}_c(M)$, 
\[ \int_M f(x) \,\d\mu_{\rho_M}(x) =\int_B \left(\iota(b) \int_{O_b}f(y)\,\d\mu_{O_b}(y) \right) \, d \mu_{ \rho_{\nu}}(b),\]
where $\mu_{\rho_M}$ and $\mu_{O_b}$ are measures associated to densities on $M$ and on the orbits $O_b$ obtained from any decomposition 
\[ \rho_{\nu}=\rho^*_{\cF}\otimes\rho_M\in\mathcal{D}_{\cF^*}\otimes \mathcal{D}_{TM}\cong \mathcal{D}_{\nu}\]
as follows: 
\begin{enumerate}[(a)]
    \item $\rho_M$ is a density on $M$;
    \item $\rho^*_{\cF}$ is the dual of a strictly positive density $\rho_{\cF}\in \mathcal{D}_{\cF}$;
    \item $\mu_{O_b}$ is associated with the density $\rho_{\cF}|_{O_b}\in \cD_{TO_b}$;
    \item $\iota(b)$ is the number of elements of the isotropy groups $\cB_x$ with $x\in O_b$.
\end{enumerate}
Furthermore, this property characterizes uniquely $\mu_{ \rho_{\nu}}$.
\end{proposition}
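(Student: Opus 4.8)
The plan is to establish the measure $\mu_{\rho_\nu}$ on $B$ by working in local charts and then gluing, using the fact that the density $\rho_\nu$ is holonomy-invariant to make the local formulas compatible. I would first reduce to the local situation via the slice theorem / normal form for proper foliation groupoids: around any point of $M$ there is a chart on which $\cB$ is Morita equivalent to an action groupoid $\Gamma \ltimes (V_0 \times V_1)$, where $\Gamma$ is a finite group acting linearly, $V_1$ is a slice to the foliation and $V_0$ a chart in the leaf direction, with $\cF$ the foliation by the $V_0$-factors. On such a chart the leaf space is $V_1/\Gamma$ and the claimed formula becomes the classical statement that a $\Gamma$-invariant density on $V_1$ induces a measure on $V_1/\Gamma$, together with the coarea/Fubini decomposition coming from a splitting $\rho_\nu = \rho^*_\cF \otimes \rho_M$; here the factor $\iota(b) = |\Gamma_x|$ appears because pushing a density from $V_1$ to $V_1/\Gamma$ (via the $|\Gamma|$-to-$1$ quotient on the regular part, extended by continuity) introduces exactly that combinatorial factor. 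The decomposition $\rho_\nu = \rho^*_\cF \otimes \rho_M$ always exists locally since $\cD_\nu \cong \cD_{\cF^*} \otimes \cD_{TM}$ and one may choose $\rho_\cF$ to be any strictly positive leafwise density, e.g. coming from a local product structure.

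Next I would check well-definedness of the local measures under change of the auxiliary data. The measure on $B$ must not depend on the choice of splitting $\rho_\nu = \rho^*_\cF \otimes \rho_M$: if $\rho_\cF$ is replaced by $h \cdot \rho_\cF$ for a positive function $h$, then $\rho_M$ is replaced by $h^{-1}\rho_M$ (as $h$ is a leafwise function, hence cancels in the product defining $\rho_\nu$), and one checks directly that $\int_{O_b} h^{-1} f\, d\mu_{h\rho_\cF|_{O_b}} = \int_{O_b} f \, d\mu_{\rho_\cF|_{O_b}}$ pointwise, so the right-hand side is unchanged. Independence of the chart follows because the transition maps of an orbifold atlas are local diffeomorphisms intertwining the groupoids, and densities transform correctly under such maps; the holonomy-invariance of $\rho_\nu$ is precisely what guarantees that the transverse part glues globally (a non-invariant transverse density would fail to define a function on the leaf space). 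I would invoke here the standard partition-of-unity argument on $B$ (available since $B$ is the orbit space of a proper groupoid, cf.\ Remark \ref{rmk:smooth:functions}) to patch the local measures into a single Radon measure $\mu_{\rho_\nu}$.

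For uniqueness, suppose $\mu$ and $\mu'$ both satisfy the displayed identity for all $f \in \cC^\infty_c(M)$. Fixing a single strictly positive leafwise density $\rho_\cF$ globally (which exists, e.g., by patching), the right-hand side determines, for each fixed $f$, the value $\int_B g\, d\mu$ where $g(b) = \iota(b)\int_{O_b} f\, d\mu_{O_b}$; as $f$ ranges over $\cC^\infty_c(M)$ and $\iota$ is locally constant, the resulting functions $g$ exhaust a dense subspace of $\cC_c(B)$ (every smooth compactly supported function on $B$ arises this way by choosing $f$ to be a suitable leafwise-averaged bump times a transverse bump). Hence $\int_B g\, d\mu = \int_B g\, d\mu'$ for a dense family of test functions $g$, forcing $\mu = \mu'$ by the Riesz representation theorem.

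I expect the main obstacle to be the gluing step: verifying carefully that the local measures defined via the slice charts agree on overlaps, and that the $\iota(b)$-factor is the correct one globally rather than just locally. The subtlety is that $\iota(b)$ jumps along the non-principal strata, and one must confirm that the measure-theoretic identity extends across these strata by continuity / density of the principal part (which has full measure), rather than requiring separate verification on each stratum. The holonomy-invariance hypothesis on $\rho_\nu$ is exactly the hypothesis that makes the transverse factor descend consistently, so the proof is really about organizing the reduction to the linear-action model and then citing the classical density-pushforward computation there; no genuinely new estimate is needed, only bookkeeping of how densities, the foliation product structure, and the finite isotropy interact.
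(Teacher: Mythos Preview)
The paper does not actually give a proof of this proposition: it is stated as a summary of material developed in the references \cite{CFM-II} and \cite{CM19}, with the sentence preceding it pointing explicitly to ``the discussion leading to Proposition~6.1.3 in \cite{CFM-II}'' for the \'etale case and to \cite{CFM-II} again for the non-\'etale description. So there is nothing in the paper to compare your argument against line by line.

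That said, your outline is the standard route and is consistent with what the cited references do: reduce to a linearised slice $\Gamma\ltimes(V_0\times V_1)$ via the normal form for proper foliation groupoids, read off the measure on $V_1/\Gamma$ from the $\Gamma$-invariant transverse density together with Fubini in the product, and glue using a partition of unity on $B$. One small slip: in your well-definedness check you call $h$ ``a leafwise function''; it need not be---any positive function on $M$ can rescale $\rho_\cF$, and the cancellation $\int_{O_b} f\,\d\mu_{h\rho_\cF}=\int_{O_b} hf\,\d\mu_{\rho_\cF}$ against the corresponding change $\rho_M\mapsto h\rho_M$ works regardless. Also, your uniqueness argument is cleaner if you note (as in Remark~\ref{remark:measure:affine}) that in the $\s$-proper case the identity determines $\mu_{\rho_\nu}$ directly as $(\vol^{\rho_\cF})^{-1}\,p_*(\mu_{\rho_M})$; in the general case your density argument via Riesz is fine, but you should say explicitly that the map $f\mapsto\big(b\mapsto\iota(b)\int_{O_b}f\big)$ is surjective onto $\cC^\infty_c(B)$, which follows from the existence of normalised leafwise bump functions.
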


\begin{remark}\label{remark:measure:affine}
The density $\rho^*_{\cF}$, combined with the isomorphisms $t^*\cF|_{S_x}\cong T\cB(x, -)$ induced by right translations, give rise to densities on the $s$-fibers $\cB(x, -)$. The associated measures will be denoted $\mu_{\cB(x, -)}$. It follows that the numbers $\iota(b)$ can be absorbed into the integration as follows:
\[\iota(b) \int_{O_b}f(y)\,\d\mu_{O_b}(y) = \int_{\cB(x, -)} f(t(g)) \d\mu_{\cB(x, -)}(g),\]
where $x$ is any point in the orbit $O_b$. 

When $\cB\tto M$ is $\s$-proper, the fact that the $\mu_{\cB(x, -)}$ are induced by a smooth family of densities implies that the volumes can be arranged into a smooth function 
 \begin{equation}\label{eq:fiber-volumes} \vol^{\rho_{\cF}}: B\to \R, \quad b= p(x)\mapsto \vol\left(\cB(x, -), \mu_{\cB(x, -)}\right)= \iota(b) \vol(O_b, \mu_{O_b}) .
 \end{equation}
Then the proposition implies that, in the $\s$-proper case, $\mu_{ \rho_{\nu}}$ could be defined by 
\begin{equation}\label{eq:induce-measure-s-proper-case} 
\mu_{ \rho_\nu}= \frac{1}{\vol^{\rho_{\cF}}}\,  p_{*}(\mu_{\rho_M}).
\end{equation}
\end{remark}
\medskip

\begin{proof}[Proof of Theorem \ref{thm:measures:DH:Aff}]
We will apply Proposition \ref{prop:measure:affine} to the foliation groupoid
$\cB\tto \hM$ that serves as orbifold atlas for $B$ (see Section \ref{sec:The integral affine structure on the leaf space}).

The Liouville form $\frac{\Omega^{\top}}{\top !}\in \Omega^{\top}(\G)$ can be pushed down via the proper map $\s: \G\to M$ to a density
\[ \boldsymbol{\rho^{DH}_{M}}:=\int_{s} \frac{|\Omega^{\top}|}{\top !} \in \Gamma(\mathcal{D}_{TM}).\]
The associated measure, which is also the pushforward of the Liouville measure, is denoted $\mu^{DH}_{M}$ and we call it the \textbf{Duistermaat-Heckman measure} on $M$. We can use $\rho^{DH}_{M}$ to construct a canonical decomposition of the density \eqref{eq:measure-aff-nu}, as in Proposition \ref{prop:measure:affine}. To that end we use the exact sequence of vector bundles over $\hM$
\[ \xymatrix{0 \ar[r] & \hnu^*\ar[r] & \hL \ar[r] & \hF\ar[r] & 0},\]
together with the isomorphism $\hL\cong \res^*T^*M$ -- see  (\ref{eq:injective:IM:form}) -- to identify
\[ \cD_{\hF}\cong \cD_{\hL}\otimes \cD_{\hnu}\cong \res^*\cD_{T^*M}\otimes \cD_{\hnu}.\]
Making use of $\rho^{DH}_{M}$ and its dual $\rho^{DH, \vee}_{M}\in \Gamma(\cD_{T^*M})$,
we can now define 
\[ \rho^{\aff}_{\hF}:= \left(\rho^{DH, \vee}_{M}\circ \res\right)\otimes \rho^{\aff}_{\nu}\in \Gamma(\cD_{\hF}). \] 
Similarly, using the identification $\cD_{T\hM}\cong  \cD_{\hF}\otimes  \cD_{\hnu}$ arising from the sequence \[\xymatrix{0
\ar[r]& \hF\ar[r]& T\hM\ar[r]& \hnu\ar[r]& 0},\] 
we define
\[  \rho^{\aff}_{\hM}:= \rho^{\aff}_{\hF}\otimes \rho^{\aff}_{\nu}\in \Gamma(\cD_{T\hM}).\]
By construction, these two densities provide a decomposition of $\rho^{\aff}_{\nu}$ as in Proposition \ref{prop:measure:affine}. 
In particular, $\rho^{\aff}_{\hF}$ induces measures on the leaves $\hS_b$ and on the $s$-fibers $\cB(\hx, -)$, denoted now
\begin{equation}\label{eq:induced-Affine-measures-on-leaves}
\mu_{\hS_b}^{\aff}, \quad \mu_{\cB(\hx, -)}^{\aff}. 
\end{equation} 
Notice that these measure are defined for every point in $B$, regular or not. 
For the volume function \eqref{eq:fiber-volumes} associated with $\rho^{\aff}_{\hF}$ one finds:

\begin{lemma} 
The function $\vol^{\rho^{\aff}_{\hF}}: B\to \R$ is identically $1$. 
\end{lemma}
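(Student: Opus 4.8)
The plan is to compute the fiber volume $\vol^{\rho^{\aff}_{\hF}}(b)$ for an arbitrary $b\in B$ by working in a local Hamiltonian model, where everything becomes explicit, and to check that the answer is always $1$. Since $\vol^{\rho^{\aff}_{\hF}}$ is a well-defined function on $B$ (by Proposition~\ref{prop:measure:affine} and Remark~\ref{remark:measure:affine}), it suffices to verify the identity locally; and by the local normal form of Theorem~\ref{thm-lf-Poisson-v2} together with the description of the resolution in Remark~\ref{rem:life:easy}, we may assume $(M,\pi)=(Q/G,\pi_{\red})$ for a connected free Hamiltonian $G$-space $\mu\colon(Q,\omega)\to\gg^*$ with $G$ compact, and that $\hM=\mu^{-1}(\tt^*)/N(T)$ for a fixed maximal torus $T\subset G$. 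The orbifold atlas $\cB\tto\hM$ is then Morita equivalent to the action groupoid associated with the restriction of $\hG$ to the transversal $\mu^{-1}(\tt^*)/T\to\mu^{-1}(\tt^*)/N(T)$, whose isotropy bundle is the tautological torus bundle with fiber $T_x\subset\cG_x$ integrating $\tt_x$.

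First I would unravel the definition of $\rho^{\aff}_{\hF}$ in this model: it is the tensor product of (the pullback along $\res$ of the dual of) the Duistermaat--Heckman density $\rho^{DH}_M$ on $M$ with the integral affine density $\rho^{\aff}_\nu$ on $\hM$, under the identification $\cD_{\hF}\cong\res^*\cD_{T^*M}\otimes\cD_{\hnu}$ coming from the exact sequence $0\to\hnu^*\to\hL\to\hF\to 0$ and the isomorphism $\hL\cong\res^*T^*M$ of~\eqref{eq:injective:IM:form}. The point is that $\rho^{DH}_M$, being the pushforward of the Liouville density along the (proper, source-connected) symplectic groupoid $\cG$, is exactly the density whose reduction along $\mu$ recovers the canonical ``affine'' density on the reduced space: in the linear model $\cG=T^*G|_V\cong G\ltimes V$, the Liouville form on $T^*G$ pushes forward along $\s$ to the density on $\gg^*$ (respectively on $M$) that pairs with the coadjoint orbit Liouville forms via the Lebesgue measure on the slice $\tt^*$ determined by the integral lattice. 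Thus the measure $\mu^{\aff}_{\cB(\hx,-)}$ induced by $\rho^{\aff}_{\hF}$ on the source fiber $\cB(\hx,-)\to\hS_b$ is precisely the one for which the normalization built into $\rho^{\aff}_\nu$ cancels the symplectic volume of $\hS_b$ (more precisely: the presymplectic leaf $\hS_b=\res^{-1}(S_b)$ fibers over $S_b$, and the ``extra'' directions are integrated against a density dual to $\rho^{DH}_M$, producing total mass $1$ after accounting for the torus isotropy).

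The key computation, then, is this. Fix $\hx=(x,\tt_x)\in\hS_b$ and let $G=\cG^0_x$, $T\subset G$ the maximal torus with Lie algebra $\tt_x$. The source fiber $\cB(\hx,-)$ is, up to the finite group $Z_{\cG_x}(T)/T$, the homogeneous space $G/T$, and I would trace through how $\rho^{\aff}_{\hF}$ restricts to it: the $\cD_{\hnu}$-factor $\rho^{\aff}_\nu$ is the Lebesgue density on $\tt^*_x$ normalized by the lattice $\Lambda_x=\ker(\exp\colon\tt_x\to T)^\vee$, while the $\res^*\cD_{T^*M}$-factor, dual to $\rho^{DH}_M$, contributes exactly the inverse of the symplectic volume of the coadjoint-orbit-type fiber $G/T_b$ equipped with its canonical symplectic form. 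Combining the two and using that $\rho^{DH}_M$ on $M$ is by definition $\int_{\s}\frac{|\Omega^{\top}|}{\top!}$, these two contributions are mutually inverse, so the fiber volume is $\iota(b)\cdot\vol(O_b,\mu_{O_b})=1$ identically. Concretely, for $M=\gg^*$ this is the statement that the Liouville volume of a coadjoint orbit, divided by the same quantity, is $1$; the general case follows by the Morita invariance and functoriality already established.

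I expect the main obstacle to be bookkeeping rather than conceptual: one must be careful that the dualization $\rho^{DH}_M\mapsto\rho^{DH,\vee}_M\circ\res$ is set up so that the density on $M$ and the density on the fibers of $\res\colon\hS_b\to S_b$ (which are themselves coadjoint-orbit-type homogeneous spaces $G/T\to G/G_b$) combine correctly with the lattice density $\rho^{\aff}_\nu$, and that the finite isotropy factor $\iota(b)$ (the cardinality of $\cB_{\hx}=N(T)/T$ acting, i.e.\ $|W^0|$ up to the $Z_{\cG_x}(T)/T$ discrepancy discussed in Remark~\ref{rem:volume:other:orbifold:structure}) is precisely absorbed by the difference between the $N(T)$-quotient defining $\hM$ and the $T$-quotient defining the transversal. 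I would organize the verification so that all of these factors are matched against the explicit formula for the pushforward of the canonical symplectic form on $T^*G$ along the source map, where the identity $\vol^{\rho^{\aff}_{\hF}}\equiv 1$ is manifest; everything else is transported to this case by Theorem~\ref{thm-lf-Poisson-v2} and Proposition~\ref{prop:measure:affine}.
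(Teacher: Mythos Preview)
Your approach is genuinely different from the paper's, and considerably more laborious. The paper observes that $\vol^{\rho^{\aff}_{\hF}}$ is \emph{smooth} on $B$ (this is built into the construction via Remark~\ref{remark:measure:affine}), so it suffices to check the identity on the dense open subset $B^{\reg}$. There the resolution $\res$ is the identity, $(M,\pi)$ is regular, and the claim reduces to a density identity $(\iota\cdot\vol)\cdot\rho^{\aff}_{\hF}=\tfrac{|\omega_{\cF_\pi}^{\top}|}{\top!}$ already established in \cite[Lemma~6.3.3]{CFM-II}. No local model, no tracking of $G/T$-fibers at singular points, no Weyl-group combinatorics.

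Your plan, by contrast, attacks the identity pointwise at \emph{every} $b\in B$, including singular ones, via the Hamiltonian local model. This is in principle workable, but the heart of the argument --- the claim that ``these two contributions are mutually inverse'' --- is asserted rather than proven. At a singular point the source fiber $\cB(\hx,-)$ acquires extra $G/T$-type directions over $\hS_b\to S_b$, and verifying the cancellation requires precisely the kind of Weyl-integration-formula computation that the whole theorem is meant to \emph{establish}; your sentence ``for $M=\gg^*$ this is the statement that the Liouville volume of a coadjoint orbit, divided by the same quantity, is $1$'' is circular as written. The paper's reduction to $B^{\reg}$ is the missing simplification: it lets you avoid the singular locus entirely and invoke the already-proven regular case, rather than redoing that analysis fiberwise inside each local model.
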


\begin{proof} 
The function $\vol^{\rho^{\aff}_{\hF}}$ is smooth so it suffices to prove it is $1$ on $B^{\reg}$. Therefore we may assume that $M$ is regular and make
use of the results from \cite{CFM-II}. We will prove something stronger, namely that 
\begin{equation}\label{eq:useful-for-Duistermaat-Heckman}
(\iota \cdot \vol)\cdot\, \rho^{\aff}_{\hF}= \frac{|\omega_{\cF_\pi}^{\top}|}{\top!} ,
\end{equation}
where $\vol(b)$ is now the symplectic volume of $S_b$. In the regular case, the density $\rho^{\aff}_{\hF}$ coincides witj $\rho^{DH, \vee}_{M}\otimes \rho^{\aff}_{\nu}$. Hence, we can apply \cite[Lemma 6.3.3 ]{CFM-II} to rewrite the left hand side of the last equality as 
\[ \left(\frac{1}{\iota \cdot \vol}\cdot\, \rho^{DH}_{M}\right)^{\vee}\otimes \rho^{\aff}_{\nu}= \rho_{M}^{\vee}\otimes \rho^{\aff}_{\nu},\]
with $\rho_M= \frac{|\omega_{\cF_\pi}^{\top}|}{\top!}  \otimes \rho^{\aff}_{\nu} $ (which is equation (6.2) in loc.~cit.). Noticing that now $\res=\id$, we are left with showing that under the isomorphism $\cD_{\cF}\cong \cD_{T^*M}\otimes \cD_{\nu}$ we have
\[ 
\rho_{M}^{\vee}\otimes \rho^{\aff}_{\nu}= \frac{|\omega_{\cF_\pi}^{\top}|}{\top!} 
\] 
The leafwise symplectic form $\omega_{\cF_{\pi}}$, gives an identification $\cF_{\pi}\cong\cF_{\pi}^*$ and from the decomposition induced by
$\xymatrix{0\ar[r]& \cF\ar[r]& TM\ar[r]& \nu\ar[r]& 0}$,
we see that the last identity becomes precisely the definition of $\rho_M$ -- again equation (6.2) in \cite{CFM-II}. 
\end{proof}

Denoting by $\boldsymbol{\mu^{\aff}_{\hM}}$ the measure on $\hM$ induced by $\rho^{\aff}_{\hM}$, the previous lemma combined with Remark \ref{remark:measure:affine} (cf.~equation (\ref{eq:induce-measure-s-proper-case})) implies that 
\[ \boldsymbol{\mu^{\aff}_{B}}= p_{*}(\boldsymbol{\mu^{\aff}_{\hM}}).\]
Therefore, to complete the proof of theorem  we only need to prove the following lemma.

\begin{lemma} \label{lemma:the-DH-lemma}
One has the following equality of measures on $M$:
\[ \boldsymbol{\mu^{DH}_{M}}= \VV^2\cdot  \res_*(\boldsymbol{\mu^{\aff}_{\hM}})\]
\end{lemma}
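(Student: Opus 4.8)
The plan is to compare both sides of the claimed identity fiberwise, that is, over each symplectic leaf of $(M,\pi)$, using the resolution map $\res:\hM\to M$ and the explicit densities constructed above. Since everything is local over the leaf space, by the functoriality of the whole picture under restriction to saturated opens (Proposition \ref{rk:functoriality-main-diagram}) I would first reduce to the Hamiltonian local model $M=Q/G$ of Theorem \ref{thm-lf-Poisson-v2}; this reduces the comparison of measures to an explicit computation on $(\gg^*,\pi_{\gg^*})$ with $\cG=T^*G$, where $\res:G/T\times_W\tt^*\to\gg^*$ and the various densities are the classical ones.

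The key computational step is to understand how the pushforward $\res_*$ interacts with the density $\rho^{\aff}_{\hM}$ along the fibers of $\res$. Recall $\res$ restricts to a fiber bundle $\res:\hS_b\to S_b$ over each (regular) leaf with fiber $\cT(\gg_x)\cong G_x^0/N(T)$, a compact manifold, and that $\res$ is a diffeomorphism over $M^\reg$ only as a map of leaf spaces, not of total spaces — the fiber $\res^{-1}(x)$ has positive dimension $\dim\gg_x-\dim\tt_x$. So pushing forward along $\res$ involves integrating over these fibers. The crucial point is that the relevant ``fiberwise volume'' of $\res^{-1}(x)$ with respect to the density induced from $\rho^{\aff}_{\hM}$ is exactly what produces the discrepancy between $\mu^{DH}_M$ and $\res_*\mu^{\aff}_{\hM}$, and this discrepancy is, up to the singular locus where both sides vanish, the square volume polynomial $\VV^2$. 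Concretely, over $M^\reg$ one has $\res^{-1}(x)\cong G_x^0/N(T)$ and $\hS_b$ is a $G_x^0/N(T)$-bundle over $S_b$; the density $\rho^{\aff}_{\hF}$ was built to have the property (shown in the lemma above) that $\vol^{\rho^{\aff}_{\hF}}\equiv 1$, so integrating $\rho^{\aff}_{\hM}$ over $\hS_b$ and then pushing to $B$ gives the Lebesgue measure with unit ``fiber normalization''; meanwhile $\mu^{DH}_M$ restricted to $S_b$ and pushed down carries the factor $\iota(b)\vol(S_b,\omega_{S_b})$ from the Liouville measure, and a further such factor appears because one is comparing to $\res_*$ of a measure living upstairs. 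Squaring arises precisely because one factor of $(\iota\cdot\vol)$ comes from the Liouville density on the leaf $S_b$ itself (via equation \eqref{eq:useful-for-Duistermaat-Heckman}) and a second identical factor comes from the volume of the fiber $\res^{-1}(x)$ measured in the affine density — both being governed by the same Chern–Weil/symplectic data on the principal bundle $\cG(x_0,-)\to S_{x_0}$.

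The steps I would carry out, in order, are: (1) pull back the identity to $\hM$ and reduce to showing $\res^*\boldsymbol{\mu^{DH}_M}$-type statements fiberwise, using that both measures vanish outside the (dense, open) regular locus $\res^{-1}(M^\reg)$, where $\res$ is proper with compact fibers; (2) on $M^\reg$, use the local model $M=Q/G$ and the already-established regular Duistermaat–Heckman theory from \cite{CFM-II}, in particular \cite[Lemma 6.3.3]{CFM-II} and equation \eqref{eq:useful-for-Duistermaat-Heckman}, to rewrite $\boldsymbol{\mu^{DH}_M}$ in terms of the leafwise Liouville density times the transverse affine density; (3) compute the fiber integral of $\rho^{\aff}_{\hM}$ over $\res^{-1}(x)\cong G_x^0/N(T)$ and identify it with the second factor of $(\iota\cdot\vol)^2$ — here one uses that $\omega_{\hS_b}=\res^*\omega_{S_b}$ is leafwise, so its top power along $\hS_b$ is pulled back and the fiber integral contributes the volume of $\res^{-1}(x)$ w.r.t. the relevant density, which matches $\iota(b)\vol(S_b)$ by the covering relation $\Hol(\hx,-)\to S_x$ used in the proof of Theorem \ref{thm:volume-polynomial}(iii); (4) combine (2) and (3) to get the factor $\VV^2$ and conclude.

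The main obstacle I anticipate is step (3): carefully bookkeeping the densities so that the fiber integral of $\rho^{\aff}_{\hM}$ over $\res^{-1}(x)$ comes out exactly as $\iota(b)\vol(S_b,\omega_{S_b})$, and not some other constant multiple. This requires being precise about how $\rho^{\aff}_{\hM}=\rho^{\aff}_{\hF}\otimes\rho^{\aff}_\nu$ decomposes relative to the fibration $\res:\hS_b\to S_b$ (whose vertical bundle is $T\res^{-1}(x)$, a quotient of $\gg_x$ by $\tt_x$), and about the identification $\hL\cong\res^*T^*M$ used to define $\rho^{\aff}_{\hF}$; in particular one must verify that the normalization inherent in $\rho^{DH,\vee}_M\circ\res$ together with the integral affine density $\rho^{\aff}_\nu$ is precisely what makes the fiber volume equal to the symplectic volume of the leaf (times the holonomy factor $\iota$), rather than off by a factor coming from the discrepancy between the two orbifold structures on $B$ noted in Remark \ref{rem:volume:other:orbifold:structure}. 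Tracking that factor $\iota(b)/\iota_0(b)$ correctly — it should cancel between $\VV^2$ here and $\VV_0^2$ of the previous section — is the delicate part of the argument.
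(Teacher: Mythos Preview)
Your plan rests on a misunderstanding of the resolution map. You write that ``$\res$ is a diffeomorphism over $M^\reg$ only as a map of leaf spaces, not of total spaces --- the fiber $\res^{-1}(x)$ has positive dimension $\dim\gg_x-\dim\tt_x$,'' and then propose to obtain the factor $\VV^2$ by integrating $\rho^{\aff}_{\hM}$ over these fibers. But this is false: by Theorem~\ref{thm:resolution:Dirac}(iii) the map $\res:\res^{-1}(M^\reg)\to M^\reg$ is a diffeomorphism of manifolds, not merely of leaf spaces. Over a regular point $x$ the isotropy $\gg_x$ is abelian, hence equals its unique maximal torus $\tt_x$, so $\dim\gg_x-\dim\tt_x=0$ and $\res^{-1}(x)$ is a single point. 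There is nothing to integrate over, and your proposed mechanism for producing the square $(\iota\cdot\vol)^2$ --- one factor from the leaf, one from the fiber of $\res$ --- collapses.

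The paper's proof proceeds by exactly the observation you are missing. Since both $\boldsymbol{\mu^{DH}_M}$ and $\res_*(\boldsymbol{\mu^{\aff}_{\hM}})$ arise from smooth densities, one may discard the measure-zero singular locus and work entirely over $M^\reg\cong\res^{-1}(M^\reg)$, where $\res$ is a diffeomorphism. On this open dense set the identity becomes $\rho^{DH}_M=(\iota\cdot\vol)^2\,\rho^{\aff}_M$, which is precisely the regular case already established as \cite[Lemma~6.3.3]{CFM-II} (via equation~\eqref{eq:useful-for-Duistermaat-Heckman}). Both factors of $(\iota\cdot\vol)$ come from that lemma: one from expressing $\rho^{DH}_M$ in terms of $\rho_M$, the other from the definition $\rho_M=\tfrac{|\omega_{\cF_\pi}^{\top}|}{\top!}\otimes\rho^{\aff}_\nu$ together with the leafwise symplectic identification $\cF_\pi\cong\cF_\pi^*$. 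No fiber integration over $\cT(\gg_x)$ is involved.
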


\begin{proof} 
We have to prove that $\int_M f \rho^{DH}_{M}= \int_{\hM} \VV^2\cdot \res^*(f) \rho^{\aff}_{M}$ for all $f\in \cC_{c}^{\infty}(M)$, 
Since both sides involve integration of \textit{smooth} densities (see Remark \ref{rem:measures:DH:Aff}) nothing changes if we remove from $M$ and $\hM$ 
subspaces of (Lebesque) measure zero. We deduce that it suffices to prove the identity from the statement 
over $\hM^{\reg}\cong M^{\reg}$. Therefore we may now assume that $M$ is regular, and prove that 
$\rho^{DH}_{M}= (\iota \cdot \vol)^2\cdot\,  \rho^{\aff}_{M}$. 
But this is just a restatement of Lemma 6.3.3 from \cite{CFM-II}, as the equation (\ref{eq:useful-for-Duistermaat-Heckman}) and the definition of
$\rho_M$ ((6.2) in loc.cit. again) show. 
\end{proof}

This completes the proof of Theorem \ref{thm:measures:DH:Aff}.
\end{proof}

From Proposition \ref{prop:measure:affine} we deduce the following \emph{Weyl integration formula}:

\begin{corollary} 
For all $f\in \cC_c^{\infty}(M)$ one has
\[ \int_M f(x) \, \boldsymbol{\d \mu^{DH}_{M}} (x) =  \int_B \left(\int_{\cB(\hx, -)}f(\res(t(g)))\,\d\mu_{\cB(\hx, -)}^{\aff}(g) \right) \VV^2(b) \,  \boldsymbol{\d \mu^{\aff}_{B} } (b).\]
\end{corollary}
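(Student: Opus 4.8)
The plan is to derive the Weyl integration formula as an immediate consequence of the two measure-theoretic results that precede it, namely Theorem \ref{thm:measures:DH:Aff} and the characterizing property of the measure $\boldsymbol{\mu^{\aff}_{B}}$ in Proposition \ref{prop:measure:affine} (together with Remark \ref{remark:measure:affine}). The strategy is a change-of-variables computation, so I expect no genuine obstacle beyond carefully tracking which density induces which fiberwise measure.

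First I would expand the left-hand side using Lemma \ref{lemma:the-DH-lemma}: since $\boldsymbol{\mu^{DH}_{M}}= \VV^2\cdot\res_*(\boldsymbol{\mu^{\aff}_{\hM}})$, for any $f\in\cC_c^\infty(M)$ we have
\[
\int_M f(x)\,\boldsymbol{\d\mu^{DH}_{M}}(x)=\int_{\hM}\VV^2(\res(\hx))\,f(\res(\hx))\,\boldsymbol{\d\mu^{\aff}_{\hM}}(\hx).
\]
Here $\VV^2\circ\res$ is well-defined and descends from the leaf space, since $\VV^2$ is a function on $B$ and $\res$ respects leaves. Next, I would apply Proposition \ref{prop:measure:affine} to the orbifold atlas $\cB\tto\hM\to B$ with the density decomposition $\rho^{\aff}_{\nu}=(\rho^{\aff}_{\hF})^{*}\otimes\rho^{\aff}_{\hM}$ constructed in the proof of Theorem \ref{thm:measures:DH:Aff}. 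Applying the integration formula of that proposition to the compactly supported function $\hx\mapsto \VV^2(\res(\hx))f(\res(\hx))$ on $\hM$, and absorbing the isotropy factor $\iota(b)$ into the $s$-fiber integration via Remark \ref{remark:measure:affine}, gives
\[
\int_{\hM}\VV^2(\res(\hx))\,f(\res(\hx))\,\boldsymbol{\d\mu^{\aff}_{\hM}}(\hx)
=\int_B\left(\int_{\cB(\hx,-)}\VV^2(b)\,f(\res(t(g)))\,\d\mu^{\aff}_{\cB(\hx,-)}(g)\right)\boldsymbol{\d\mu^{\aff}_{B}}(b),
\]
where $\hx$ denotes any point in the fiber over $b$ and I have used that $\VV^2(\res(t(g)))=\VV^2(b)$ is constant along the $s$-fiber $\cB(\hx,-)$ (all arrows in $\cB(\hx,-)$ have target in the single leaf $\hS_b=\res^{-1}(S_b)$). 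Pulling the constant $\VV^2(b)$ out of the inner integral yields exactly the claimed formula.

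The only point requiring care — and the step I expect to be the mild technical crux — is the bookkeeping of densities: one must check that the fiberwise measures $\mu^{\aff}_{\cB(\hx,-)}$ appearing here are precisely those of \eqref{eq:induced-Affine-measures-on-leaves}, i.e. the ones induced by $\rho^{\aff}_{\hF}$ under the right-translation identifications $t^{*}\hF|_{\hS_{\hx}}\cong T\cB(\hx,-)$, and that the decomposition $\rho^{\aff}_{\nu}=(\rho^{\aff}_{\hF})^{*}\otimes\rho^{\aff}_{\hM}$ is a legitimate instance of the decomposition demanded in Proposition \ref{prop:measure:affine}(a)--(c). Both facts were already established inside the proof of Theorem \ref{thm:measures:DH:Aff} (and the positivity and holonomy-invariance of $\rho^{\aff}_{\nu}$ are noted there), so the corollary is purely a matter of specializing Proposition \ref{prop:measure:affine} to the integrand $\VV^2\cdot\res^*f$ rather than to $\res^*f$ alone. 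No new analytic input is needed, and properness of $s$ (from the s-proper hypothesis) guarantees all the measures and integrals involved are well-defined.
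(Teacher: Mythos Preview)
Your argument is correct and matches the paper's intended approach: the paper simply states that the corollary is deduced from Proposition \ref{prop:measure:affine}, and your proof unpacks this by combining Lemma \ref{lemma:the-DH-lemma} with Proposition \ref{prop:measure:affine} and Remark \ref{remark:measure:affine} applied to the decomposition $\rho^{\aff}_{\nu}=(\rho^{\aff}_{\hF})^{*}\otimes\rho^{\aff}_{\hM}$, exactly as set up in the proof of Theorem \ref{thm:measures:DH:Aff}. The only minor remark is that the paper leaves all of this implicit, so your write-up is in fact more detailed than what appears there.
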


\begin{corollary} One has a unitary isomorphism of $L^2$-spaces:
\[ L^2(B, \mu_{B}^{DH})\cong L^2(B, \mu_{B}^{\aff}), \quad \phi\mapsto \phi\cdot |\VV|.\]
\end{corollary}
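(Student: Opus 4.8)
The statement is a direct consequence of Theorem~\ref{thm:measures:DH:Aff} together with a standard fact about $L^2$-spaces of mutually absolutely continuous measures. Recall that if $\nu = f\,\mu$ for a measurable function $f\ge 0$ and $f>0$ $\mu$-almost everywhere, then $g\mapsto \sqrt{f}\cdot g$ is a unitary isomorphism $L^2(\nu)\to L^2(\mu)$: it is a linear isometry because $\int |\sqrt{f}\,g|^2\,\d\mu = \int |g|^2 f\,\d\mu = \int|g|^2\,\d\nu$, and it is onto since $h\mapsto h/\sqrt f$ (set to $0$ on the $\mu$-null set $\{f=0\}$) is a two-sided inverse which maps $L^2(\mu)$ into $L^2(\nu)$ because $\int |h/\sqrt f|^2\,\d\nu = \int_{\{f>0\}} |h|^2\,\d\mu \le \|h\|_{L^2(\mu)}^2$; a surjective linear isometry of Hilbert spaces preserves inner products by polarization, hence is unitary. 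The plan is to apply this with $\mu = \mu_{B}^{\aff}$, $\nu = \mu_{B}^{DH}$ and $f = \VV^2$, which is legitimate precisely because Theorem~\ref{thm:measures:DH:Aff} gives $\mu_{B}^{DH} = \VV^2\cdot \mu_{B}^{\aff}$; the resulting isomorphism is then $\phi\mapsto |\VV|\cdot\phi$.

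Hence the only point requiring argument is that $\VV^2$ is strictly positive $\mu_{B}^{\aff}$-almost everywhere, i.e.\ that the singular part $B^{\sing}:=B\setminus B^{\reg}$ — which by Theorem~\ref{thm:volume-polynomial} (and Remark~\ref{rem:volume:other:orbifold:structure} for the passage between $\VV$ and $\VVO$) is exactly the zero locus of $\VV^2$ — is a $\mu_{B}^{\aff}$-null set. For this I would pass to the covering $B^{\aff}\to B$: by Theorem~\ref{thm:volume-polynomial}(i)--(ii) the function $\VVO$ is a nonzero \emph{polynomial} on the integral affine manifold $B^{\aff}$ whose zero set is the preimage of $B^{\sing}$, and the zero set of a nonzero polynomial has Lebesgue measure zero in every integral affine chart. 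Since $\mu_{B}^{\aff}$ is by construction the measure on $B$ induced by the Lebesgue-type density of the integral affine orbifold structure (Section~\ref{sec:The integral affine structure on the leaf space}, Proposition~\ref{prop:measure:affine}) and the covering map is a local integral affine isomorphism, it follows that $\mu_{B}^{\aff}(B^{\sing})=0$. I would also record here that $|\VVO|$ is invariant under all of $\pi_1^{\orb}(B)$, because the parity character $\delta$ in Theorem~\ref{thm:volume-polynomial}(iv) takes values in $\Z_2$; hence $|\VV|$ descends to a well-defined continuous, in particular measurable, function on $B$, which is what makes the formula $\phi\mapsto|\VV|\cdot\phi$ meaningful in the first place.

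Combining these two observations finishes the proof. The content is elementary once Theorems~\ref{thm:measures:DH:Aff} and~\ref{thm:volume-polynomial} are available; the only mild subtlety — and the single place where the geometry genuinely enters — is the vanishing of $\mu_{B}^{\aff}(B^{\sing})$, for which it is essential that $\VVO$ is a bona fide polynomial on an honest integral affine manifold rather than merely a smooth function, so that its zero set is automatically negligible.
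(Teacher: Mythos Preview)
Your proof is correct and is exactly the intended argument: the paper states this corollary without proof, as an immediate consequence of Theorem~\ref{thm:measures:DH:Aff}, and your deduction via the standard $L^2$-isomorphism for measures related by a nonnegative density is the natural way to fill this in. The one point you take care to justify---that $B^{\sing}$ is $\mu_B^{\aff}$-null because it is the zero set of a nonzero polynomial on $B^{\aff}$---is precisely the small extra observation needed, and your handling of it via Theorem~\ref{thm:volume-polynomial} is clean.
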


\begin{example} 
Let us consider the linear Poisson structure on the dual of a compact Lie algebra $M= \gg^*$. Fixing an integrating groupoid $\cG= T^*G\cong G\ltimes \gg^*$, the resolution $\hgg=G/T\times_W\tt^*$ has the integration $\hG=G\ltimes\hgg$, and the induced foliation groupoid $\cB$ is (see \ref{eq:fol-oid-coadjoint}):
\[  (G/T\times G/T\times \tt^*)/W \tto (G/T\times\tt^*)/W. \]
In particular, we obtain an identification
\begin{equation}
 \label{eq:example:identif-s-fibers-cB}
 \cB(\hx, -) \cong G/T. 
 \end{equation}

The measures and densities above become the following:
\begin{itemize}
    \item $\rho_{\gg^*}^{DH}=\d X^*$, the constant density on $\gg^*$ determined by the Haar density on $G$; i.e., if one $G$-translates  its dual $\d X\in \cD_{\gg}$ then one obtains the Haar density of $G$.
    \item $\rho_{\tt^*}^\aff=\frac{1}{|W|} \d Y^*$,
    where $\d Y^*$ is the constant density on $\tt^*$ associated to the lattice $\Lambda\subset \tt$. Note that the dual of $\d Y^*$ produces the Haar density of $T$. 
    \item  $\rho_{\cB(\hx,-)}^{\aff}=\d(gT)$,  the quotient 
    of the Haar density of $G$ modulo the Haar density of $T$, where one uses identification (\ref{eq:example:identif-s-fibers-cB}).
\end{itemize}
Moreover, in this case one has $\VV^2(Y^*)=|\det(\ad Y^*)_{\gg/\tt}|$, as in \cite{DK}, and therefore our integral formula becomes
\[ \int_{\gg^*} f(X^*) \, \d X^* =  \frac{1}{|W|}\int_{\t^*} \left(\int_{G/T} f(\Ad_{g}(Y^*))\,\d (gT) \right) |\det(\ad Y^*)_{\gg/\tt}| \,  \d Y^*.\]
Comparing with \cite[Corollary 3.14.2]{DK}, notice that we are using more specific densities 
on $G$ and $T$ for which the constant $c$ there equals $1$.

\end{example}

\begin{remark}
The Duistermaat-Heckman formula discussed here can be seen as a formula associated to the Hamiltonian $(\cG, \Omega)$-space $\t: (\cG, \Omega)\to M$. It implies a similar formula for other free Hamiltonian $(\G, \Omega)$-spaces $\mu: (P, \Omega)\to M$ by applying it to the corresponding gauge groupoids. The more general version, for Hamiltonian spaces that are not necessarily free, will be discussed somewhere else (see also \cite{Mol22,Zwaan23}).
\end{remark}

 \begin{remark}
 While the push-forward of measures is well-behaved, the situation is more subtle when pushing forward densities along maps that are not submersions.
 That is the reason that the equality in the statement of Lemma \ref{lemma:the-DH-lemma} is one of measures and cannot be formulated as an equality of densities on $M$.

 On the other hand, the lemma can be understood as the equality of densities on $\hM$
 \[ \res^*(\rho^{DH}_{M})= \VV^2 \cdot \rho^{\aff}_{\hM}\]
 For the left hand side we use that densities can be pulled back along maps that are local diffeomorphisms on dense open sets, like our $\res: \hM\to M$ is. The price to pay is that the pull-back of smooth densities may be only continuous (but that is not a problem for inducing measures). However, the equality says that the left hand side is also smooth. 
 

 \end{remark}

\section{Compactness and Completeness}
\label{sec:compact:completeness}

In this section, we establish structural properties of compact PMCTs and complete, s-proper PMCTs. We achieve this by combining the theory developed in the previous sections with a new ingredient: the factorization properties of polynomials in integral affine manifolds and orbifolds, which we apply to the volume polynomial of Poisson manifolds of s-proper type. For example, we will show that:

\begin{theorem}\label{thm:s-proper-is-regular}
Any Poisson manifold of compact type is regular. 
Equivalently, any compact symplectic groupoid is regular.
\end{theorem}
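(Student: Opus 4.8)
\textbf{Proof strategy for Theorem \ref{thm:s-proper-is-regular}.}
The plan is to argue by contradiction: assume $(M,\pi)$ is of compact type but not regular, so that the infinitesimal stratification $\cSi(M,\pi)$ has a subregular stratum $\Si$. By Corollary \ref{corollary: infinitesimal-stratum-conormal}, $\Si$ has codimension $3$ and $\gg_x^{\ss}\cong\mathfrak{su}(2)$ for $x\in\Si$; by Corollary \ref{cor:codimension-rank-stratification}, its preimage $\res^{-1}(\Si)$ is of codimension one in $\hM$, and by Corollary \ref{cor:subregular:leaf:space} the associated leaf space $B_\Si=\Si/\cF_\pi$ is a codimension one integral affine embedded suborbifold of $B$. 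Passing to the orbifold universal cover $\oB$, this codimension one locus pulls back to (a union of) the reflection hyperplanes $\cH_r$ of the Weyl group $\cW$ acting on $\oB$ (Proposition \ref{pro: equality regulars}). Since $(M,\pi)$ is of compact type it is in particular of $\s$-proper type, so Theorem \ref{thm:volume-polynomial} applies: there is a polynomial function $\VVO:B^\aff\to\R$ on the integral affine manifold $B^\aff$ whose zero set is exactly the singular part, i.e. the pullback of $B^\sing=B\setminus B^\reg$, and whose square descends to a smooth function $\VVO^2:B\to\R$. Pulling everything back to $\oB$, the function $\VVO$ (or rather $\VV_0\circ\dev$) is a polynomial vanishing precisely on $\bigcup_{r\in\Refl}\cH_r$.

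First I would reduce the problem to a statement about polynomials on a vector space. By completeness is \emph{not} assumed, but one does not need it: compactness of the symplectic groupoid $(\cG,\Omega)$ forces $M$, hence $B$, hence each Weyl chamber $\Delta$, to be compact. A compact reflection chamber is a compact polytope in $\oB$, which forces $\oB$ to be (integral affinely) a Euclidean space: indeed a Coxeter group acting properly with compact fundamental chamber is either finite (spherical type) or affine, and in the affine case $\oB\cong\R^n$ via the developing map, so $\dev:B^\aff\to\tt^*_0$ is a global diffeomorphism and we may transport $\VVO$ to an honest polynomial $P$ on $\R^n\cong\tt^*_0$. In the spherical case $\oB$ is compact and $\VVO$ is a polynomial on a compact integral affine manifold, which must be constant; but then its zero set is empty or everything, contradicting the existence of the nonempty, proper hyperplane $\cH_r$. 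So we are in the affine case, $P$ a nonzero polynomial on $\R^n$ whose zero set is the locally finite union $\bigcup_{r\in\Refl}\cH_r$ of affine hyperplanes.

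Next I would exploit the transformation law in Theorem \ref{thm:volume-polynomial}(iv): $\VVO(\tilde b\,w)=\delta(w)\VVO(\tilde b)$ for $w\in\cW$, where $\delta$ is the parity character. A geometric reflection $r\in\Refl$ has $\delta(r)=-1$, so $P$ is anti-invariant under $r$; standard reflection-group theory then says $P$ is divisible by the affine linear form $\ell_r$ cutting out $\cH_r$. Now the key finiteness input: because $M$ is compact, $B$ is compact, so $B$ meets only finitely many strata and the chamber $\Delta$ is bounded, hence meets only finitely many reflection hyperplanes through its closure; but in the \emph{affine} (non-finite) Coxeter case there are infinitely many hyperplanes $\cH_r$ in $\oB$ altogether, all of them in the zero set of $P$. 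A nonzero polynomial on $\R^n$ can vanish on at most finitely many hyperplanes (its zero set is an affine algebraic hypersurface, a finite union of irreducible components). This is the contradiction: $P\ne 0$ but $P$ vanishes on infinitely many hyperplanes.

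The main obstacle, and the step needing genuine care, is the dichotomy in the previous paragraph — ruling out the spherical/finite case and making precise that compactness of $\cG$ indeed forces the Weyl–Coxeter system to be of affine type with $\oB\cong\R^n$, and hence that there are infinitely many reflection hyperplanes. One must handle the degenerate possibility that the Weyl group is trivial (then $B=\oB$ is compact with empty singular locus, so actually $M^\sing=\emptyset$ and $M$ is already regular, which is the desired conclusion, not a contradiction — so strictly one should phrase the argument as: if $M$ is not regular then $\cW$ is nontrivial, contains a reflection, the chamber $\Delta$ is a compact polytope, so $\cW$ is an \emph{affine} Coxeter group, etc.). The polynomial–on–integral–affine–manifold facts (a polynomial on a compact integral affine manifold is constant; a polynomial's vanishing locus is a finite union of hypersurfaces; anti-invariance under a reflection forces divisibility by the defining form) are the ``new ingredient'' the introduction promises and I would isolate them as lemmas in the same section; they are elementary but must be stated and proved for the orbifold/integral-affine setting rather than quoted verbatim from the classical theory. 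Finally one should record the equivalence ``compact symplectic groupoid $\Leftrightarrow$ Poisson manifold of compact type (via such a groupoid) is regular'' — the groupoid $\cG$ is regular iff its base Poisson structure is, since the anchor image of $\mathrm{Lie}(\cG)=T^*M$ is $\cF_\pi$.
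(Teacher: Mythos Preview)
Your overall strategy---use the volume polynomial $\VVO$ from Theorem~\ref{thm:volume-polynomial}, observe that it vanishes on the integral affine hyperplane coming from a subregular stratum, and derive a contradiction from compactness---matches the paper's. The divergence is in how the contradiction is extracted, and your route has a genuine gap.

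The problematic step is ``compactness of $\cG$ forces \ldots\ each Weyl chamber $\Delta$ to be compact.'' This does not follow. One has $\Delta/\pi\cong B$ with $\pi=\pi_1^{\orb}(B)/\cW\cong\pi_1^{\orb}(B^{\reg})$, so $\Delta$ is compact only if $\pi_1^{\orb}(B^{\reg})$ is finite, and nothing in the paper establishes that in the non-regular situation. Even granting $\Delta$ compact, your dichotomy ``spherical or affine, and in the affine case $\oB\cong\R^n$'' is essentially the Markus conjecture for integral affine orbifolds, which the paper explicitly lists as an open problem (Section~\ref{sec:open}); the Davis-style reflection groups on manifolds are far more general than the spherical/Euclidean dichotomy, and $\oB$ is only known to be a simply connected integral affine manifold, not a priori $\R^n$. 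So the ``infinitely many hyperplanes vs.\ finite zero set of a polynomial'' punchline is not available.

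The paper sidesteps completeness entirely by working directly on the compact orbifold $B$ with the \emph{square} $\VVO^2$, which is a genuine nontrivial polynomial on $B$ vanishing on the integral hyperplane $B_\Si$. The contradiction comes from Corollary~\ref{cor:no-polynomial}: any nontrivial polynomial on a compact integral affine orbifold vanishing on an integral hyperplane can, after passing to a finite cover (Proposition~\ref{pro:monic-orbifold}), be factored so as to exhibit a primitive degree-one factor $L$ with nonempty zero locus; but $L$ pulls back via $\dev$ to a degree-one polynomial on $\R^q$, hence has no critical points, contradicting compactness of the cover. This factorization/critical-point argument is the ``new ingredient'' the introduction alludes to, and it is what replaces the completeness hypothesis you are implicitly using.
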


For a compact symplectic manifold all powers of the cohomology class of the symplectic form must be non-trivial. The theorem says that compactness in multiplicative symplectic geometry severely constrains the groupoid multiplication. Equivalently, it severely constrains the underlying Poisson geometry.

\begin{remark} 
Theorem \ref{thm:s-proper-is-regular} is false for twisted Dirac manifolds of compact type. Equivalently, there exist compact twisted presymplectic groupoids which are non-regular. We will discuss this failure at the end of this section.
\end{remark}

If the Markus Conjecture holds for integral affine orbifolds (see Section \ref{sec:open:Markus}), then PMCTs of compact type are complete. For complete, s-proper Poisson manifolds we obtain the following structural theorem:

\begin{theorem}\label{thm:s-proper-complete}
If $(M,\pi)$ is a complete, s-proper Poisson manifold, then it has a unique minimal infinitesimal stratum $\Sigma_0$. Its Weyl group coincides with the classical Weyl group of the isotropy Lie algebra $\gg_x$, for any $x\in\Sigma_0$.
\end{theorem}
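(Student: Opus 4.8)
The plan is to exploit the main diagram and the volume polynomial machinery from Sections \ref{sec:main:diagram} and \ref{sec:variation:volume}. Since $(M,\pi)$ is s-proper and complete, the developing map $\dev_0 : B^{\aff}\to \tt_0^*$ is a global diffeomorphism, so $\oB = B^{\aff}$ is a convex open subset of $\tt_0^* \cong \R^r$ carrying the restriction of the standard flat integral affine structure, and the Weyl group $\cW$ acts on $\oB$ by integral affine transformations as an affine reflection group (Theorems \ref{thm:Weyl:reflection:group}, \ref{thm:Davis}). By Theorem \ref{thm:volume-polynomial} the volume polynomial $\VVO : \oB \to \R$ is a genuine polynomial function whose zero set is exactly the singular part of $\oB$, and which transforms by the parity character of $\cW$. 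Completeness means $\oB$ is the full image of the developing map, i.e.\ an open convex set in $\R^r$; the first step is to argue that $\VVO$ is then a polynomial whose zero locus $Z(\VVO)\cap\oB = (\oB)^{\sing}$ is the union of all reflection hyperplanes $\cH_r$ intersected with $\oB$, and that because $\oB$ is convex and $\VVO$ is $\cW$-anti-invariant, $\VVO$ factors (up to a nowhere-vanishing polynomial) as a product of affine-linear forms, one for each reflection hyperplane meeting $\oB$ — this is the classical Chevalley-type product-of-roots factorization, now carried out over the convex set $\oB$.

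Next I would use the finiteness of the degree of $\VVO$ to conclude that $\oB$ meets only finitely many reflection hyperplanes of $\cW$. A reflection group on a convex subset of $\R^r$ with only finitely many reflection hyperplanes is finite: the complement of finitely many hyperplanes has finitely many chambers, $\cW$ acts freely and transitively on chambers (Theorem \ref{thm:Davis}(ii)), hence $\cW$ is finite. A finite affine reflection group acting on $\R^r$ has a global fixed point (average any orbit), which lies in $\oB$ by convexity; call its preimage in $\hM$ the point $\hx_0' = (x_0',\tt_0')$. The corresponding leaf $S_{x_0'}$ then has isotropy Lie algebra $\gg_{x_0'}$ whose maximal torus $\tt_{0}'$ is fixed, and the stratum $\Sigma_0$ of $\cSi(M,\pi)$ through $x_0'$ is the minimal one: its leaf space $B_{\Sigma_0}$ is the fixed-point set of $\cW$ in $\oB$, which (being a single point image) shows $\Sigma_0$ consists of a single leaf and all other infinitesimal strata have $B_\Sigma$ strictly larger, hence $\Sigma_0\subset\overline{\Sigma}$ for every stratum $\Sigma$. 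By Proposition \ref{prop:Weyl:group:isotropy at point} the isotropy group $\cW_{\hx_0'}(\widetilde{b}_0')$ of the fixed point is the classical Weyl group $W$ of $\gg_{x_0'}$ relative to $\tt_0'$; since that isotropy group is all of $\cW$, we get $\cW \cong W$.

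The main obstacle, I expect, is the factorization/finiteness step: one must rule out a reflection group on a convex region whose walls accumulate or are infinite in number while the defining function is still globally polynomial. The key is that completeness plus s-properness forces $\VVO$ to be an \emph{honest} polynomial on the convex set $\oB = \dev_0(\oB)\subset\R^r$ (this is exactly what Theorem \ref{thm:volume-polynomial}(i) and the discussion preceding \eqref{eq:last-moment} give us), and a nonzero polynomial has a zero set of bounded ``complexity'': in particular its irreducible components through a generic wall point are finitely many affine hyperplanes. One then needs the geometric input that \emph{every} reflection hyperplane of $\cW$ meeting $\oB$ is contained in $Z(\VVO)$ — this is Proposition \ref{pro: equality regulars}, which identifies $(\oB)^{\reg}$ with $(\oB)^{\cW-\reg}$, i.e.\ the complement of all the walls. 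Combining these, the number of walls meeting the convex set $\oB$ is at most $\deg\VVO$, which is finite; the rest is the standard structure theory of finite reflection groups. I would also record, for part of the minimality claim, that completeness guarantees $\oB$ is connected and simply connected so that ``single fixed point'' genuinely translates into ``$\Sigma_0$ is a single leaf'' via the identification $B_{\Sigma_0}=\oB^{\cW}$ and the fact that $\oB^{\cW}$ is the transverse slice $(\zz(\gg_{x_0'})^*)^{W}$ reduced to a point by Proposition \ref{prop:Weyl:group:isotropy at point}(ii).
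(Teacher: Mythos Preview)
Your strategy is essentially the paper's: use the volume polynomial on $\oB$ (the paper works with its square $\VVO^2$ and the factorization of Proposition~\ref{pro:monic}) to bound the number of reflection hyperplanes by its degree, deduce from the free transitive action on chambers that $\cW$ is finite, locate the fixed locus, and then invoke Proposition~\ref{prop:Weyl:group:isotropy at point} at a fixed point to conclude $\cW\cong W$. That last step is correct and is exactly how the paper proves the Weyl-group identification.

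There is, however, a genuine error in your treatment of the minimal stratum: $(\oB)^\cW$ is \emph{not} a single point, and $\Sigma_0$ is \emph{not} a single leaf. The classical Weyl group of a compact Lie algebra $\gg$ acts trivially on $\zz(\gg)^*$, so $(\tt^*)^W=\zz(\gg)^*$; your claim that $(\zz(\gg_{x_0'})^*)^W$ is ``reduced to a point'' is false whenever $\gg_{x_0'}$ has nontrivial center. In the paper's argument, $(\oB)^\cW=\bigcap_{r\in\Refl}\cH_r$ is an affine subspace of $\R^r$ (of dimension $\dim\zz(\gg_x)$ for $x\in\Sigma_0$), obtained via the integral-affine splitting of Lemma~\ref{lem:invariant:decomposition} as $\oB=(\oB)^\cW\oplus\tt_\ss^*$; one then checks it is $\pi_1^\orb(B)$-stable, hence the full preimage of some $B_{\Sigma_0}\subset B$, and that every other infinitesimal stratum has strictly larger dimension. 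Uniqueness of the minimal stratum follows from the connectedness of this affine subspace, not from it being a point. A related minor slip: completeness means $\dev_0:B^\aff\to\tt_0^*$ is a diffeomorphism onto \emph{all} of $\R^r$, not merely onto a convex open subset, so $\oB=B^\aff=\R^r$ and no convexity argument is needed to locate fixed points.
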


This result -- and its more detailed version given later in this section -- suggests that all the information about a complete, s-proper Poisson manifold is encoded in the first jet of $(M,\pi)$ along the minimal stratum $\Sigma_0$.

The previous results provide deep insight into the Poisson geometry of PMCTs and its relation to Lie theory. Hence, one can use them to deduce some classical results in Poisson geometry related to Lie theory. For example, we have the following somewhat surprising application of Theorem \ref{thm:s-proper-is-regular}.

\begin{corollary} 
There exists no Lie bialgebra $(\gg,\gg^*)$ with both $\gg$ and $\gg^*$ semisimple Lie algebras of compact type.
\end{corollary}



\subsection{Polynomial functions on integral affine orbifolds}
\label{sec:polynomial:functions}
An affine manifold $N$ has a well-defined subalgebra of polynomial functions 
\[\Pol(N)\subset C^\infty(N).\]
Namely, we say that $P\in C^\infty(N)$ is a {\bf polynomial} if its pullback to every
affine chart of $N$ is a polynomial. Assuming that $N$ is connected, the degree of a polynomial is the degree on its pullback to any affine chart. It is well-defined since affine transformations on $\R^q$ preserve the degree of polynomials in $\R[x_1,\dots,x_q]$.

The developing map of an affine manifold provides useful information on its algebra of polynomial functions.

\begin{proposition}\label{pro:UFD}
    For an affine manifold $N$, its developing map $\dev:N^\aff\to\R^q$ gives an isomorphism of rings
    \[  (\dev^*)^{-1}\circ p^*: \Pol(N)\to (\R[x^1,\dots,x^q])^{\Gamma^\aff}, \]
    where $p:N^\aff\to N$ is the affine holonomy cover and $\Gamma^\aff$ the affine holonomy group. 
\end{proposition}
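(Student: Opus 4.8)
The statement asserts that the composite $(\dev^*)^{-1}\circ p^*$ is a well-defined ring isomorphism from $\Pol(N)$ onto $(\R[x^1,\dots,x^q])^{\Gamma^\aff}$. The plan is to break this into three verifications: (a) the map is well-defined, i.e.\ $p^*P$ is in the image of $\dev^*$ and the preimage polynomial is $\Gamma^\aff$-invariant; (b) it is a ring homomorphism; (c) it is bijective, with explicit inverse. First I would recall from the excerpt (Section~\ref{sec:main:diagram}, the construction of $B^\aff$ and the developing map, which applies verbatim to a general affine manifold via its affine holonomy cover $p:N^\aff\to N$ with deck group $\Gamma^\aff\subset\Aff(\R^q)$ and $\Gamma^\aff$-equivariant local diffeomorphism $\dev:N^\aff\to\R^q$) that $N^\aff$ is simply connected and $\dev$ pulls back the standard affine structure of $\R^q$ to that of $N^\aff$.

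For (a): given $P\in\Pol(N)$, the pullback $p^*P\in C^\infty(N^\aff)$ is a polynomial function on the affine manifold $N^\aff$ (since $p$ is affine, being a covering of affine manifolds). Because $\dev$ is an affine local diffeomorphism onto an open subset of $\R^q$ and $N^\aff$ is connected, any affine chart of $N^\aff$ is, up to an affine transformation of $\R^q$, a restriction of $\dev$; hence a smooth function $Q$ on an open subset of $\R^q$ with $\dev^*Q=p^*P$ on one chart agrees with a fixed polynomial on every chart, and by analytic continuation (polynomials agreeing on a nonempty open set are equal) these local polynomials glue to a single polynomial $Q\in\R[x^1,\dots,x^q]$ with $\dev^*Q=p^*P$. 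Uniqueness of $Q$ follows since $\dev$ has open image and polynomials are determined by their restriction to any open set. Equivariance of $p$ under $\Gamma^\aff$ (acting by deck transformations on $N^\aff$ and trivially on $N$) together with equivariance of $\dev$ forces $\dev^*(\gamma\cdot Q)=\gamma^*\dev^*Q=\gamma^*p^*P=p^*P=\dev^*Q$ for all $\gamma\in\Gamma^\aff$, hence $\gamma\cdot Q=Q$; so $Q\in(\R[x^1,\dots,x^q])^{\Gamma^\aff}$.

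For (b) and (c): that $(\dev^*)^{-1}\circ p^*$ is a ring homomorphism is immediate, since $p^*$ and $(\dev^*)^{-1}$ are both ring homomorphisms on the relevant subalgebras. For surjectivity, given $Q\in(\R[x^1,\dots,x^q])^{\Gamma^\aff}$, the function $\dev^*Q$ on $N^\aff$ is $\Gamma^\aff$-invariant, hence descends to a smooth function $P$ on $N=N^\aff/\Gamma^\aff$ with $p^*P=\dev^*Q$; locally in an affine chart of $N$, $P$ pulls back (via a section of $p$, which is an affine isomorphism onto an open set) to a restriction of $\dev^*Q$, hence to a polynomial, so $P\in\Pol(N)$ and it maps to $Q$. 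For injectivity: if $(\dev^*)^{-1}\circ p^*(P)=0$ then $p^*P=0$, and since $p$ is surjective, $P=0$. This completes the argument.

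\textbf{Main obstacle.} The one genuinely delicate point is the gluing argument in (a): one must be careful that the locally-defined polynomials $Q$ obtained chart-by-chart from $\dev$ are consistent, which relies on the connectedness of $N^\aff$ and on the rigidity of polynomials under analytic continuation together with the fact that transition maps between affine charts of $N^\aff$ are restrictions of global affine transformations of $\R^q$ (this is precisely the content of ``$\dev$ is a developing map''). Once this is in place, everything else is bookkeeping about equivariance and descent along the Galois-type covering $p:N^\aff\to N$ with group $\Gamma^\aff$. I would also remark that the degree of $P$ equals the degree of $Q$, which will be used later when applying this to the volume polynomial; this follows since $\dev$ is a local affine diffeomorphism and $p$ is affine, so degrees are preserved under both pullbacks.
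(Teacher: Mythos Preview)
Your proof is correct and follows essentially the same approach as the paper: both rely on the rigidity of polynomials (two polynomials agreeing on a nonempty open set are equal), the fact that $\dev$ is an affine local diffeomorphism, and the $\Gamma^\aff$-equivariance to handle descent along $p$. The only cosmetic difference is that the paper factors the composite through the intermediate ring $\Pol(N^\aff)^{\Gamma^\aff}$ and shows separately that $p^*:\Pol(N)\to\Pol(N^\aff)^{\Gamma^\aff}$ and $\dev^*:\R[x^1,\dots,x^q]^{\Gamma^\aff}\to\Pol(N^\aff)^{\Gamma^\aff}$ are isomorphisms, whereas you verify well-definedness and bijectivity of the composite directly; the substance is the same.
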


\begin{proof}
For the proof we will use the abbreviated notations $\widetilde{N}:=N^\aff$ and $\Gamma:=\Gamma^\aff$. We will show that both 
\[p^*: \Pol(N)\to \Pol(\widetilde{N})^{\Gamma},\quad 
\dev^*:\R[x_1,\dots,x_q]^\Gamma\to \Pol(\widetilde{N})^{\Gamma}\]
are isomorphisms.

For the first map we note that pullback by the covering map is an isomorphism
     $p^*: C^\infty(N)\to C^\infty(N)^{\Gamma}$. 
It sends polynomials to polynomials, and any invariant polynomial comes from one in $N$. Hence the result follows.

For the second map  we recall that an affine manifold has a canonical real analytic structure, and that real analytic functions on a (connected) manifold agree if and only if they agree on an open subset.
We apply this first to  $\R[x_1,\dots,x_q]$ to deduce 
that two polynomials on $\R^q$ agree if and only if they agree in the image of the developing map. This implies that 
\[\dev^*:\R[x_1,\dots,x_q]\to \Pol(\widetilde{N}) \]
is injective. To prove surjectivity we restrict any given 
$P\in \Pol(\widetilde{N})$ to a connected open subset $U\subset \widetilde{N}$ on which the developing map is a diffeomorphism onto its image. Then $P|_U$ is the pullback by $\dev$ of the restriction to $\dev(U)$ of a polynomial $P_0\in \R[x_1,\dots,x_q]$. Therefore, $\dev^*P_0=P$.
Finally, since $\dev$ is equivariant with respect to the actions of $\Gamma$ and $\Gamma$, the restriction to the respective subalgebras of invariants  
\[\dev^*:\R[x_1,\dots,x_q]^\Gamma\to \Pol(\widetilde{N})^{\Gamma}\]
is an isomorphism.
\end{proof}

Every hyperplane in $\R^n$ is the zero set of a degree one polynomial. The polynomial is unique up to rescaling by a non-zero real number. We say that the hyperplane is integral if the polynomial can be chosen to have rational degree one coefficients. In this case, up to sign, it is the zero  set of a unique polynomial whose degree one coefficients are coprime integers.


For an integral affine manifold $N$, by an {\bf integral hyperplane} of $N$ we mean a connected, codimension one, integral affine submanifold. A degree one polynomial on $N$ is called {\bf primitive} if its pullback via \emph{some} integral affine chart is a degree one polymonial for which non-constant coefficients are coprime integers. Since an integral change of coordinates has linear part a matrix with determinant $\pm 1$, it follows that this holds for any integral affine chart.

\begin{proposition}\label{pro:monic}
    Let $N$ be a integral affine manifold with trivial affine holonomy. Then any $P\in\Pol(N)$ has polynomial factorization
    \[ P=L_1\cdots L_k\cdot Q,\]
    where the $L_i$'s are primitive degree one polynomials with non-empty vanishing locus and the vanishing locus of $Q$ does not contain an integral affine hyperplane. The factorization is unique up to sign changes and the ordering of the primitive degree one polynomials.
\end{proposition}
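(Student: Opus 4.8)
The plan is to transfer the problem from $N$ to $\R^q$ via the developing map. Since $N$ has trivial affine holonomy, Proposition \ref{pro:UFD} gives $\Pol(N)\cong \dev^*\R[x^1,\dots,x^q]$, and $\dev:N\to\R^q$ is an integral affine immersion (with $N^\aff=N$). So fix $P\in\Pol(N)$, let $\widetilde P\in\R[x^1,\dots,x^q]$ be the unique polynomial with $\dev^*\widetilde P=P$, and factor $\widetilde P$ in the UFD $\R[x^1,\dots,x^q]$ into irreducibles. The degree one irreducible factors correspond to hyperplanes in $\R^q$. The key point to establish is: a hyperplane $H\subset\R^q$ is defined by a \emph{primitive integral} degree one polynomial precisely when $\dev^{-1}(H)$ contains an integral affine hyperplane of $N$ (equivalently, when $\dev^{-1}(H)\neq\emptyset$ and $H$ is ``integral'' relative to the affine structure transported by $\dev$).

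\textbf{Key steps.} First I would collect the degree one irreducible factors of $\widetilde P$ into two groups: those $\widetilde L$ whose zero hyperplane $H_{\widetilde L}$ meets the image $\dev(N)$, and those whose zero hyperplane is disjoint from $\dev(N)$ (or whose preimage is empty); the latter pull back to nowhere-vanishing affine functions on $N$ and can be absorbed into the remaining factor. Second, for a degree one factor $\widetilde L$ with $H_{\widetilde L}\cap\dev(N)\neq\emptyset$, I would show that after rescaling by a nonzero real scalar it becomes primitive integral: pick a point $x\in\dev^{-1}(H_{\widetilde L})$ and an integral affine chart of $N$ around $x$; in this chart $\dev^*\widetilde L$ is an affine function vanishing on a codimension one integral affine submanifold (the restriction of $\dev^{-1}(H_{\widetilde L})$), hence — by the definition of integral affine submanifold and the standard fact that such a submanifold is locally cut out by a primitive integral linear equation — $\dev^*\widetilde L$ agrees up to a real scalar with a primitive integral degree one polynomial $L_i$ on $N$; the corresponding $\widetilde L$ on $\R^q$ is then itself primitive integral (up to scalar) because $\dev$ is an integral affine local diffeomorphism and integral affine coordinate changes have unimodular linear part. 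Third, I would set $Q:=P/(L_1\cdots L_k)\in\Pol(N)$ (the quotient lands in $\Pol(N)$ since the $L_i$ are nonzero divisors in the domain $\R[x^1,\dots,x^q]$ and the quotient is a polynomial pulled back via $\dev$) and argue its zero set contains no integral affine hyperplane: if it did, transporting locally to $\R^q$ via $\dev$ would produce an integral hyperplane inside $V(\widetilde Q)$, forcing $\widetilde Q$ to have a degree one factor defining that hyperplane — contradicting maximality of the extracted factors. Finally, uniqueness follows from unique factorization in $\R[x^1,\dots,x^q]$: two such factorizations of $P$ correspond to factorizations of $\widetilde P$, and the primitive integral normalization pins down each $L_i$ up to sign.

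\textbf{Main obstacle.} The delicate point is the ``integrality transfer'': one must check that primitivity and integrality of a degree one polynomial are intrinsic to the integral affine structure on $N$ and survive passage through $\dev$, rather than being artifacts of a particular chart. This rests on the facts that integral affine coordinate changes are affine maps with linear part in $\GL_q(\Z)$ (so they preserve the set of primitive integral linear forms) and that $\dev$ is by construction an integral affine local diffeomorphism onto its image. A second, more routine technical point is verifying that $Q=P/(L_1\cdots L_k)$ is genuinely a polynomial function on $N$ and not merely a smooth function — this is handled by working upstairs in $\R[x^1,\dots,x^q]$, where the division is literal polynomial division, and then invoking Proposition \ref{pro:UFD} to descend the quotient back to $\Pol(N)$.
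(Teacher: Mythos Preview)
Your overall strategy (transfer to $\R^q$ via $\dev$ and use unique factorization in $\R[x^1,\dots,x^q]$) is exactly the paper's, but your second key step contains a genuine error. You sort the degree one irreducible factors of $\widetilde P$ only according to whether their zero hyperplane meets $\dev(N)$, and then claim that any such factor can be rescaled to be primitive integral. This is false: an arbitrary degree one polynomial $\widetilde L\in\R[x^1,\dots,x^q]$ may have irrational linear coefficients (e.g.\ $\widetilde L=x^1+\sqrt 2\,x^2$), in which case $H_{\widetilde L}\cap\dev(N)$ is a codimension one affine submanifold of $N$ but \emph{not} an integral affine hyperplane, and no rescaling makes $\widetilde L$ primitive integral. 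Your sentence ``in this chart $\dev^*\widetilde L$ is an affine function vanishing on a codimension one integral affine submanifold'' simply assumes what would need to be proved, and it does not hold in general.

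The fix is the one hinted at in your own parenthetical in the strategy paragraph: sort the degree one factors by whether their zero hyperplane is \emph{both} non-empty in $N$ \emph{and} integral (i.e.\ has rational linear part); normalize those to primitive integral $L_i$; and absorb \emph{all} remaining factors---higher-degree irreducibles, linear factors with empty vanishing locus, \emph{and} linear factors with non-integral zero hyperplane---into $Q$. The statement only asks that the vanishing locus of $Q$ contain no \emph{integral} affine hyperplane, so $Q$ is allowed to vanish on non-integral ones. With this correction, the argument for $Q$ goes through: if $V(Q)$ contained an integral affine hyperplane, then (via $\dev$) $\widetilde Q$ would acquire a degree one factor with rational linear part, contradicting the way you split the factorization. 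This is precisely the paper's proof.
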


\begin{proof} 
Since the affine holonomy is trivial, by Proposition \ref{pro:UFD}, every $P\in\Pol(N)$ has a unique factorization into irreducible polynomials (up to units and order). An irreducible polynomial on $N$ of degree greater than one cannot vanish on a hyperplane: if it were to vanish, its restriction to a chart around a point in the hyperplane would have a degree one factor, and so using the developing map, we see that we could factor out a linear term.  Therefore, in the factorization of $P$ only the linear terms can vanish on a hyperplane. We can single out those vanishing on integral hyperplanes (if any), and normalize them to be primitive (we may get the same primitive factor more than once). If $Q$ denotes the product of the remaining polynomials in the factorization of $P$ then, by construction, it does not vanish on any integral affine hyperplane. The uniqueness in the statement is a consequence of the uniqueness of factorizations in $\Pol(N)$.
\end{proof}

For a classical integral affine orbifold $B$ by an {\bf integral hyperplane} we mean a connected, codimension one, integral affine suborbifold. A polynomial on $B$ is a function whose pullback to $\oB$ (an integral affine manifold) is a polynomial. If the pullback polynomial is primitive of degree one we say that the original polynomial on the orbifold is primitive of degree one. 

\begin{proposition}\label{pro:monic-orbifold}
    Let $B$ be a classical integral affine orbifold. Then for  any  $P\in\Pol(B)$ there exists a finite orbifold cover $q:\widetilde{B}\to B$ 
    and a factorization:
    \begin{equation}
    \label{eq:factorization:finite:cover}
    q^*P=L_1\cdots L_k\cdot Q,
    \end{equation}
    where the $L_i$'s are primitive degree one polynomials with non-empty vanishing locus and the vanishing locus of $Q$ does not contain an integral affine hyperplane. 
\end{proposition}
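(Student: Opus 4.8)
The statement is the orbifold analogue of Proposition~\ref{pro:monic}, and the natural strategy is to reduce to that proposition by passing to a finite cover that trivializes the affine holonomy. Recall from Section~\ref{sec:main:diagram} that the classical integral affine orbifold $B$ comes with its affine holonomy cover $B^{\aff}\to B$, whose group of deck transformations is the affine holonomy group $\Gamma^{\aff}$, and that $B^{\aff}$ has a further cover $\oB\to B^{\aff}$ with $\oB$ simply connected (hence, by the $\oB$-description, an integral affine manifold with trivial affine holonomy). The linear holonomy group $\Gamma^{\lin}$ is a quotient of $\Gamma^{\aff}$; its elements act on the tangent space at the base point through integral \emph{linear} transformations, i.e.\ through a finite subgroup of $\GL_{\Lambda_0}(\tt^*_0)$. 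So $\Gamma^{\lin}$ is finite, while $\Gamma^{\aff}$ fits in $1\to\Lambda\to\Gamma^{\aff}\to\Gamma^{\lin}\to 1$ for a lattice $\Lambda$ of translations.

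The key point is that a polynomial $P\in\Pol(B)$ has bounded degree, say $d$, and this bounds how much of the translational part $\Lambda\subset\Gamma^{\aff}$ can genuinely affect $P$: writing $\widetilde P=\dev^*P_0$ via the developing map on $\oB$ as in Proposition~\ref{pro:UFD}, invariance under a translation by $\lambda$ forces $P_0(x+\lambda)=P_0(x)$, and a degree $\le d$ polynomial on $\R^q$ invariant under a full-rank lattice must be constant; more usefully, the polynomial is invariant under the sublattice $(d+1)\Lambda$ automatically is too strong — instead one argues that $P_0$ descends to the quotient torus only if constant, so the relevant structure is the finite quotient. Concretely: the subgroup $\Gamma_d:=\{\gamma\in\Gamma^{\aff}: \gamma\ \text{fixes}\ P\text{-invariants up to degree } d\}$ has finite index. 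More cleanly, I would take $\widetilde B$ to be the cover of $B$ corresponding to the finite-index subgroup $\Lambda'\rtimes\Gamma^{\lin}$ where $\Lambda'\subseteq\Lambda$ is chosen of finite index so that $B^{\aff}/\Lambda'$-type translations act trivially on $P$ — but the cleanest choice is simply: let $\widetilde B\to B$ be the cover with deck group $\Gamma^{\lin}$, i.e.\ $\widetilde B = B^{\aff}$, provided $\Lambda$ acts trivially on $P$, and otherwise pass to a further finite cover killing the nontrivial part of the $\Lambda$-action. Since $\Pol(B)=\Pol(B^{\aff})^{\Gamma^{\aff}}$ and polynomials of degree $\le d$ form a finite-dimensional $\Gamma^{\aff}$-module on which the lattice $\Lambda$ acts through a finite quotient, there is a finite-index $\Lambda'\trianglelefteq\Gamma^{\aff}$ with $\Lambda'$ acting trivially on $P$; the cover $\widetilde B:=B^{\aff}$ will not suffice, so take $\widetilde B$ = the integral affine manifold $B^{\aff}$ \emph{further quotiented is wrong} — rather $\widetilde B$ is the \emph{intermediate} cover $\oB/\Lambda'$ between $\oB$ and $B$. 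This $\widetilde B$ is an integral affine manifold (covered by $\oB$) with affine holonomy the \emph{finite} group $\Lambda'\backslash\Gamma^{\aff}$; but I want trivial affine holonomy to apply Proposition~\ref{pro:monic}, so in fact I should go all the way: take $\widetilde B=\oB$ if $\oB\to B$ were finite, which it is not in general. The correct resolution of this tension is: apply Proposition~\ref{pro:monic} \emph{to} $\oB$ (trivial affine holonomy) to get a factorization $p^*P=L_1\cdots L_k\cdot Q$ on $\oB$, then observe that the set of primitive linear factors vanishing on an integral hyperplane is permuted by $\Gamma^{\aff}$, hence by a finite quotient thereof (each $L_i$'s zero set is a $\Gamma^{\aff}$-orbit of finitely many hyperplanes — finiteness because $P$ has finite degree), so a finite-index subgroup $\Gamma'\le\Gamma^{\aff}$ fixes each $\{L_i=0\}$ individually, hence fixes each $L_i$ up to sign, and likewise fixes $Q$ up to sign. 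Replacing $\Gamma'$ by a further finite-index subgroup to kill the sign characters, the factors and $Q$ all descend to the finite cover $\widetilde B:=\oB/\Gamma'$.

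So the plan, in order, is: (1) pull $P$ back to $\oB$ and apply Proposition~\ref{pro:monic} to obtain the factorization $p^*P=L_1\cdots L_k\cdot Q$ into primitive linear factors and a hyperplane-free remainder $Q$, using that $\oB$ is an integral affine manifold with trivial affine holonomy. (2) Show the decomposition is $\Gamma^{\aff}$-equivariant in the weak sense that $\Gamma^{\aff}$ permutes the multiset $\{\{L_i=0\}\}_i$ and fixes $\{Q=0\}$; this uses uniqueness of the factorization (Proposition~\ref{pro:monic}) and $\Gamma^{\aff}$-invariance of $p^*P$. (3) Since $\deg P<\infty$ forces only finitely many distinct primitive linear factors, the stabilizer $\Gamma'$ in $\Gamma^{\aff}$ of the ordered tuple $(L_1,\dots,L_k,Q)$ up to sign has finite index; passing to a further finite-index subgroup (intersecting kernels of the finitely many sign characters $\Gamma'\to\Z_2$) we get $\Gamma''\le\Gamma^{\aff}$ of finite index fixing each $L_i$ and $Q$ on the nose. (4) Set $\widetilde B:=\oB/\Gamma''$, a finite orbifold cover of $B$; the factors $L_i$, $Q$ descend to $\widetilde B$ and give the required factorization $q^*P=L_1\cdots L_k\cdot Q$. (5) Check the descended $L_i$ are still primitive degree one with nonempty (integral) vanishing locus, and the descended $Q$ still has vanishing locus containing no integral affine hyperplane of $\widetilde B$ — this last point needs that an integral hyperplane downstairs pulls back to one upstairs, which holds because $q$ is an integral affine map and a local diffeomorphism.

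\textbf{Main obstacle.} The delicate step is (3)–(5): controlling that the stabilizing subgroup is of \emph{finite} index and that descending does not destroy primitivity or re-introduce a hyperplane into the zero set of $Q$. Finite index relies on $P$ having finitely many distinct primitive linear factors, which is clear from $\deg p^*P=\deg P$ being finite, but one must be careful that $\Gamma^{\aff}$ acting on $\oB$ by integral affine transformations sends primitive linear polynomials to primitive linear polynomials (true: integral affine maps preserve the lattice, hence primitivity) and integral hyperplanes to integral hyperplanes. The other subtle point is that the vanishing locus of the descended $Q$ contains no integral affine hyperplane of $\widetilde B$: if it did, pulling back along $\oB\to\widetilde B$ (a covering by integral affine maps) would produce an integral affine hyperplane of $\oB$ inside $\{Q=0\}$, contradicting the conclusion of Proposition~\ref{pro:monic} applied upstairs. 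Modulo these bookkeeping checks, the argument is a routine descent, with Propositions~\ref{pro:UFD} and~\ref{pro:monic} doing the real work.
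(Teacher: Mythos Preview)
Your final plan (1)--(5) is correct and is exactly the paper's argument: pull $P$ back to the orbifold universal cover $\oB$, apply Proposition~\ref{pro:monic} there, observe that the deck group acts by integral affine transformations and hence permutes the primitive linear factors up to sign (and scales $Q$ by a sign) by the uniqueness clause of Proposition~\ref{pro:monic}, take the finite-index kernel of the resulting homomorphism into $(\Z_2^k\rtimes S_k)\times\Z_2$, and set $\widetilde B$ to be the corresponding quotient of $\oB$. One correction: the deck group of $\oB\to B$ is $\pi_1^{\orb}(B)$, not $\Gamma^{\aff}$ (the latter is only the deck group of $B^{\aff}\to B$, and the further cover $\oB\to B^{\aff}$ may be nontrivial); replace $\Gamma^{\aff}$ by $\pi_1^{\orb}(B)$ throughout steps (2)--(4) and your argument goes through verbatim, since all you use is that this group acts on $\oB$ by integral affine transformations and fixes $p^*P$.
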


\begin{proof}
    We will call a factorization as in the statement a \emph{primitive factorization}. By Proposition \ref{pro:monic}, the the pullback of $P\in\Pol(B)$ to the universal covering space $p:\oB\to B$ has a primitive factorization:
    \[ \widetilde{P}=p^*P=L'_1\cdots L'_k\cdot Q'. \]
    We may further assume that there are no degree one primitive polynomials with opposite sign and that all repeated primitive polynomials are in contiguous positions. The polynomial $\widetilde{P}$ is $\pi_1^\orb(B)$-invariant, so given any element $a\in\pi_1^\orb(B)$, by the properties of the vanishing sets of the factors and uniqueness of factorization we have:
    \[ a^*\widetilde{P}:=\eps_1(a)L'_{\sigma_1(a)}\cdots \eps_k(a)L'_{\sigma_k(a)}\cdot \eps(a) Q',\]
    where \[
    (\eps_1,\dots,\eps_k,\sigma):\pi_1^\orb(B)\to \Z^k_2\rtimes \mathrm{S}_k,\quad \eps:\pi_1^\orb(B)\to \Z_2\]
    define representations of $\pi_1^\orb(B)$ on the group of signed permutations and on $\Z_2$, respectively. Let us denote by $H\subset \pi_1^\orb(B)$ the kernel of the homomorphism
    \[ \left((\eps_1,\dots,\eps_k,\sigma),\eps\right):\pi_1^\orb(B)\to (\Z_2^{k}\rtimes \mathrm{S}_k)\times \Z_2.\]
    Since $H$ has finite index we obtain the finite orbifold cover $q:\widetilde{B}:=\oB/H\to B$, together with a polynomial defined on it with the properties in the statement.
\end{proof}

It is a non-trivial classical fact that every polynomial on a compact integral affine \emph{manifold} is constant (see \cite{GH86}). Note that this fact is obvious if the manifold is known to be complete and this still holds for compact, complete orbifolds.  Without the assumption of completeness we are able to prove the following result which suffices for our purpose.

\begin{corollary}\label{cor:no-polynomial} 
A non-trivial polynomial on a compact integral affine orbifold $B$ cannot vanish on an integral  hyperplane. 
\end{corollary}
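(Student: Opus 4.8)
The plan is to combine the factorization result for polynomials on integral affine orbifolds (Proposition \ref{pro:monic-orbifold}) with the classical fact that a polynomial on a compact integral affine manifold is constant. Suppose, for contradiction, that $P\in \Pol(B)$ is a non-trivial polynomial vanishing on an integral hyperplane $H\subset B$. Passing to a finite orbifold cover $q:\widetilde{B}\to B$ as provided by Proposition \ref{pro:monic-orbifold}, one obtains the factorization \eqref{eq:factorization:finite:cover}, namely $q^*P=L_1\cdots L_k\cdot Q$. Since $\widetilde{B}$ is still a compact integral affine orbifold (being a finite cover of one), and since $q^*P$ is non-trivial and vanishes on the preimage $q^{-1}(H)$ — which contains at least one connected component that is an integral affine hyperplane of $\widetilde{B}$ — one of the factors must vanish on this hyperplane. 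By the defining property of the factorization, the factor $Q$ does not vanish on any integral affine hyperplane, so in fact $k\ge 1$ and some primitive degree one polynomial $L_i$ is present in the factorization.

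The heart of the argument is then to derive a contradiction from the existence of a primitive degree one polynomial $L$ on a compact integral affine orbifold. First I would pull $L$ back to the universal covering space $\oB$, where by Proposition \ref{pro:UFD} (applied to the integral affine manifold underlying $\oB$, which has trivial affine holonomy) it corresponds, via the developing map, to an honest degree one polynomial $\ell$ on $\R^q$. Now I invoke the classical fact — this is the ``non-trivial classical fact'' cited just before the corollary, due to \cite{GH86} — that any polynomial function on a compact integral affine manifold (or, here, a compact integral affine orbifold, via its finite cover or directly via the structure of its universal cover) must be constant. The issue is that $L$ is genuinely of degree one, hence non-constant, contradicting this fact.

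The main obstacle to watch out for is making precise the passage between the orbifold $B$ and an actual manifold to which the Benzécri--type theorem of \cite{GH86} applies: $B$ itself is an orbifold, not a manifold. There are two routes. One route is to observe that, after passing to $\widetilde{B}$ and then if necessary to a further finite manifold cover (when $\widetilde{B}$ is a good orbifold with only finite isotropy one can realize it, at least locally, or pass to a manifold cover using that it is compact and presented by a finite group action), the non-constant polynomial descends to a non-constant polynomial on a compact integral affine manifold, which is impossible. The cleaner route — and the one I would actually take — is to note that the statement ``every polynomial on a compact integral affine orbifold is constant'' follows formally from the manifold case: a polynomial on the orbifold pulls back to a $\pi_1^\orb$-invariant polynomial on $\oB$, and one applies the manifold result to any compact quotient, or argues directly using that the developing image is a bounded-type region forced by compactness. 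I would state this as a short lemma (or simply cite it as the orbifold analogue of \cite{GH86}, as the paragraph preceding the corollary already anticipates) and then the contradiction is immediate: $L$ would have to be constant, but a primitive degree one polynomial is non-constant by definition. This completes the proof.
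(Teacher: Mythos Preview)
Your first paragraph is exactly right and matches the paper: pass to the finite cover $\widetilde{B}$ from Proposition~\ref{pro:monic-orbifold}, observe $k\ge 1$, and isolate a primitive degree one factor $L$ on the compact integral affine orbifold $\widetilde{B}$.

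The gap is in how you derive the contradiction. You want to invoke (an orbifold version of) the Goldman--Hirsch theorem that polynomials on compact integral affine manifolds are constant. But the paragraph preceding the corollary says precisely the opposite of what you read into it: the paper is \emph{avoiding} \cite{GH86}, not invoking it, and proves this weaker corollary as a substitute that ``suffices for our purpose''. Your proposed route therefore needs an orbifold extension of \cite{GH86} that is neither stated nor proved, and your sketch of how to obtain it (``pass to a further finite manifold cover'') is not justified --- a compact integral affine orbifold need not admit a finite \emph{manifold} cover, and the holonomy cover $B^{\lin}$, while a manifold, is typically non-compact.

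The paper's argument is far more elementary and bypasses all of this. Since $\widetilde{B}$ is compact, the smooth function $L$ must have a critical point. On the other hand, pulling $L$ back to the orbifold universal cover gives (via Proposition~\ref{pro:UFD}) $L'=\dev^*L_0$ with $L_0\in\R[x_1,\dots,x_q]$ of degree one. A degree one polynomial on $\R^q$ has no critical points, and $\dev$ is a local diffeomorphism, so $L'$ has no critical points; hence neither does $L$. Contradiction. This is a two-line calculus argument where you were reaching for a nontrivial structure theorem.
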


\begin{proof} 
Let $P\in \Pol(B)$ be a non-trivial polynomial that vanishes on an integral  hyperplane. We apply Proposition \ref{pro:monic-orbifold} to $P$,  obtaining a finite orbifold cover $q:\widetilde{B}\to B$ where $q^*P$ has a factorization \eqref{eq:factorization:finite:cover} with $k\ge 1$. Since $\widetilde{B}$ is compact, any such primitive factor $L$ has a critical point and we find a contradiction: if we consider the pullback $L'$ of $L$ to the orbifold universal covering space, we have by Proposition \ref{pro:UFD}
\[L'=\dev^*L_0,\]
where $L_0\in \R[x_1,\dots,x_q]$ is a degree one polynomial. But a degree one polynomial in $\R^q$ has no critical points, and, therefore, $L$ cannot have critical points. 


\end{proof}

\subsection{Consequences for PMCTs}
We will now use the volume polynomial to draw important consequences for PMTC's.

\subsubsection{PMCTs of compact type}
We give a quick proof that every Poisson manifold of compact type must be regular. 

\begin{theorem}\label{thm:compact PMCT is regular}
If $(M,\pi)$ is a compact Poisson manifold that admits a proper integration, then $\pi$ is regular.
\end{theorem}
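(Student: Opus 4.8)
The plan is to use the volume polynomial of the s-proper (even compact) Poisson manifold, together with the fact — just proved — that a non-trivial polynomial on a compact integral affine orbifold cannot vanish on an integral hyperplane. First I would reduce to the s-proper case: if $(M,\pi)$ is compact and admits a proper integration $(\cG,\Omega)$, then the source fibers of $\cG$ are closed subsets of the compact manifold $\cG$, hence compact, so $(M,\pi)$ is of compact type, in particular of s-proper type; moreover its leaf space $B$ is a compact classical integral affine orbifold.

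Next I would argue by contradiction. Suppose $(M,\pi)$ is not regular. Then by Theorem~\ref{thm:inf-strat} the singular locus $M^{\sing}=M\setminus M^{\reg}$ is non-empty, and by Corollary~\ref{corollary: infinitesimal-stratum-conormal} together with \eqref{eq:compute:Sigma12}, the codimension of the subregular infinitesimal stratum $\Sigma^{\subreg}$ is exactly $3$. By Corollary~\ref{cor:subregular:leaf:space}, the image $B_{\Sigma^{\subreg}}=\Sigma^{\subreg}/\cF_\pi$ is a connected, codimension one, integral affine embedded suborbifold of $B$ — i.e.\ an integral hyperplane in the sense of Section~\ref{sec:polynomial:functions}. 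Now consider the volume polynomial $\VV_0^2:B\to\R$ from Theorem~\ref{thm:volume-polynomial}(v): it is a genuine polynomial function on the compact integral affine orbifold $B$, and by Theorem~\ref{thm:volume-polynomial}(ii) its zero set is precisely the singular part $(\oB)^{\sing}$, whose image in $B$ contains $B_{\Sigma^{\subreg}}$. Thus $\VV_0^2$ vanishes identically on the integral hyperplane $B_{\Sigma^{\subreg}}$. On the other hand $\VV_0^2$ is not identically zero: by Theorem~\ref{thm:volume-polynomial}(iii) it is strictly positive on the (non-empty, since $M^{\reg}\neq\emptyset$) regular part. This contradicts Corollary~\ref{cor:no-polynomial}. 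Hence $(M,\pi)$ must be regular.

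The only subtlety — and the step I would be most careful about — is making sure that $\VV_0^2$ really is a \emph{polynomial} on $B$ in the precise sense of Section~\ref{sec:polynomial:functions} (namely that its pullback to $\oB$ is a polynomial on the integral affine manifold $\oB$), rather than merely a smooth orbifold function. This is exactly what the construction in the proof of Theorem~\ref{thm:volume-polynomial} provides: $\VV_0=P\circ\dev$ for an honest polynomial $P$ on $\R^r$, so its square pulls back to the polynomial $P^2\circ\dev$ on $\oB$. I would also note that the identification of $B$ as a compact integral affine orbifold requires fixing a compact (hence s-proper, hence proper) integration, which we have; the resulting classical orbifold structure on $B$ is the one used throughout Section~\ref{sec:polynomial:functions}, so Corollary~\ref{cor:no-polynomial} applies verbatim. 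Finally, the equivalence ``every compact symplectic groupoid is regular'' follows immediately, since a compact symplectic groupoid $(\cG,\Omega)$ integrates a Poisson manifold of compact type, and a Poisson manifold is regular precisely when its integrating symplectic groupoid is regular.
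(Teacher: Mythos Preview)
Your proof is correct and follows essentially the same approach as the paper's own argument: assume non-regularity, produce the subregular stratum $B_\Sigma$ as an integral hyperplane in the compact integral affine orbifold $B$ via Corollary~\ref{cor:subregular:leaf:space}, and observe that the volume polynomial $\VV_0^2$ from Theorem~\ref{thm:volume-polynomial} is a non-trivial polynomial vanishing on it, contradicting Corollary~\ref{cor:no-polynomial}. Your write-up is in fact more careful than the paper's on two points the paper leaves implicit: the reduction to the s-proper case (needed to invoke Theorem~\ref{thm:volume-polynomial}) and the verification that $\VV_0^2$ is a polynomial on $B$ in the precise sense of Section~\ref{sec:polynomial:functions}.
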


\begin{proof}
Observe that if $(M,\pi)$ is proper with leaf space $B$, then it follows from Corollary \ref{cor:subregular:leaf:space} that the subregular infinitesimal strata are integral hyperplanes in the classical integral affine orbifold $B$. This subregular locus is non-empty iff $\pi$ is non-regular.

So assume that $(M,\pi)$ is proper, compact and non-regular. First, we apply  Theorem \ref{thm:Weyl-intro} to construct the Weyl resolution of $(M,\pi)$. By Theorem \ref{thm:int:affine:leaf:space}, the leaf space $B$ of $(M,\pi)$ inherits
an integral affine orbifold structure. Second, we apply Theorem \ref{thm:volume-polynomial}
to deduce that the square of the volume defines a polynomial $\mathcal{V}_0^2:B\to \R$
whose zero set is the singular part of $B$. Third, because $(M,\pi)$ is non-regular a subregular stratum $B_\Sigma\in \cS(B)$ exists and is an integral affine suborbifold of $B$ contained in its singular part. Therefore we have produced a polynomial on $B$ with properties that contradict the statement of Corollary \ref{cor:no-polynomial}. 
\end{proof}

\begin{corollary} 
There exists no Lie bialgebra $(\gg,\gg^*)$ with both $\gg$ and $\gg^*$ semisimple Lie algebras of compact type.
\end{corollary}

\begin{proof}
Any Lie bialgebra integrates to Poisson-Lie group structures $(G,\pi_G)$ and $(G^*,\pi_{G^*})$ on the 1-connected integrations of $\gg$ and $\gg^*$. Under the assumption in the statement, both groups are compact. Therefore (see, e.g., \cite[Section 11.4]{LPV13}) the 1-connected integration of the double $\mathfrak{d}:=\gg\Join \gg^*$ is a double Lie group $D:=G\Join G^*$ carrying two distinct symplectic groupoid structures that integrate $(G,\pi_G)$ and $(G^*,\pi_{G^*})$. Since $D$ is compact, by Theorem \ref{thm:compact PMCT is regular}, $(G,\pi_G)$ and $(G^*,\pi_{G^*})$ must be regular. This forces $\gg=\gg^*=\{0\}$.
\end{proof}

\subsubsection{Complete, s-proper PMCTs}
Recall that a Poisson manifold of proper type is complete if its leaf space $B$ is a complete integral affine orbifold, i.e., if the developing map $\dev:\oB\to \R^q$ is a diffeomorphism. Notice also that the connected components of the preimages of the subregular strata in the orbifold universal covering space $\oB$ are contained in the hyperplanes defined the reflections of the Weyl group of the Poisson manifold. The latter are closed integral affine hyperplanes. 

This allows us to show the following structural theorem for this class of Poisson manifolds. In the statement we use the decomposition $\pi_1^\orb(B)=\cW(M,\pi)\rtimes\pi_1^\orb(B^\reg)$ from Proposition \ref{prop:split:orbi:fundamental:group}. 

\begin{theorem}
\label{thm:s:proper:complete}
    Let $(M,\pi)$ be a complete, s-proper, Poisson manifold, and denote its leaf space by $B$. Then:
    \begin{enumerate}[(i)]
    \item There exists a unique minimal infinitesimal stratum $(\Si_0,\pi_0)$, which is a regular, complete, s-proper Poisson manifold with $\pi_1(\Sigma_0)=\pi_1(M)$;
    \item Let $\gg$ denote the isotropy group of any point $x\in\Si_0$. Then
    \[ \cW(M,\pi)\cong W,\] 
    where $W$ is the classical Weyl group of $\gg$ relative to a maximal torus $\tt$;
    \item There is an isomorphism of integral affine manifolds 
    \[ \oB\cong \oB_{\Sigma_0}\times \tt^*_\ss, \]
    where $\tt_\ss$ is a maximal torus of $\gg_\ss:=[\gg,\gg]$.
    Under this isomorphism, the action of $\pi_1^\orb(B)=\cW(M,\pi)\rtimes\pi_1^\orb(B^\reg)$  has the following properties:
    \begin{enumerate}[-]
        \item $\cW(M,\pi)$ acts trivially on the first factor and via the classical action on $\tt^*_\ss$;
        \item $\pi_1^\orb(B^\reg)$ leaves $\oB_{\Sigma_0}\times \{0\}$ invariant and its action on it factors through the action of $\pi_1^\orb(B_{\Sigma_0})$.
    \end{enumerate}
    \end{enumerate}
\end{theorem}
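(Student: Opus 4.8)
The plan is to establish the three claims by combining the polynomial factorization results of Section~\ref{sec:polynomial:functions} applied to the volume polynomial $\VVO$ with the structure of the Weyl group as a reflection group and the main diagram. First I would set up the geometry: since $(M,\pi)$ is complete, the developing map $\dev_0\colon \oB\to\tt_0^*$ is a global diffeomorphism, so we may identify $\oB$ with the vector space $\tt_0^*$ equipped with its integral lattice, and $\pi_1^\orb(B)=\cW(M,\pi)\rtimes\pi_1^\orb(B^\reg)$ acts on it by integral affine transformations. By Theorem~\ref{thm:Weyl:reflection:group} and Theorem~\ref{thm:Davis}, $\cW(M,\pi)$ is an affine reflection group on $\oB$ whose reflection hyperplanes are exactly the connected components of the preimages of the subregular strata (Proposition~\ref{pro: equality regulars} together with Corollary~\ref{cor:codimension-rank-stratification}). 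Because $\oB\cong\tt_0^*$ is a complete affine space, a classical dichotomy for affine reflection groups acting on $\R^q$ applies: the arrangement of reflection hyperplanes decomposes (after passing to the span of the normal directions) into an irreducible-type part that is finite and a ``essential affine'' part, but here we can argue more directly using the volume polynomial.

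The key step is to analyze $\VVO$. By Theorem~\ref{thm:volume-polynomial} it is a polynomial on the integral affine manifold $\oB=B^\aff$ (completeness makes $\oB=B^\aff=B^\lin$ follow once we know the affine holonomy is trivial, which it is since $\oB$ is simply connected and complete), its zero set is exactly the union of the reflection hyperplanes (the singular part), and it transforms by the parity character $\delta$ of $\cW(M,\pi)$. Applying Proposition~\ref{pro:UFD} we write $\VVO=\dev_0^*P$ for a genuine polynomial $P\in\R[x_1,\dots,x_r]$ whose zero set is a finite union of affine hyperplanes. Now I would invoke the factorization Proposition~\ref{pro:monic}: $P$ factors as a product of primitive degree-one polynomials $L_1\cdots L_k$ times a factor $Q$ whose zero set contains no hyperplane; since the zero set of $\VVO$ is \emph{exactly} a hyperplane arrangement, $Q$ must be a nonzero constant and $\VVO$ is, up to scalar, a product of degree-one polynomials, one for each reflection hyperplane. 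The $\cW$-equivariance then forces $\cW$ to permute these linear forms up to sign, i.e.\ $\cW$ acts on the finite arrangement $\{L_i=0\}$, and the subspace $V_\ss:=\bigcap_i\{L_i=0\}^{\perp}\subset\tt_0^*$ (the span of the normals) is $\cW$-invariant, the action on it is by a \emph{finite} reflection group (each reflection fixes a hyperplane through the common center, which we can take to be the origin after translating), while $\cW$ acts trivially on a complementary $\cW$-invariant affine subspace $V_0$. Identifying $V_\ss$ with $\tt_\ss^*$ and observing that a finite reflection group generated by reflections on hyperplanes through a point, acting on $\tt_\ss^*$ with the reflections being exactly the subregular reflections, is by Proposition~\ref{prop:Weyl:group:isotropy at point} (the isotropy at the fixed point of the whole finite part) precisely the classical Weyl group $W$ of $\gg$ relative to $\tt$: this gives claims (ii) and the product decomposition $\oB\cong V_0\times\tt_\ss^*$ of claim (iii).

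To finish I would identify $V_0$ with $\oB_{\Sigma_0}$ and produce the minimal stratum of claim (i). The complementary factor $V_0=\bigcap_{r\in\Refl}\cH_r$ (intersection of all reflection hyperplanes through the chosen center) is, by Theorem~\ref{thm:Davis}(v) and Proposition~\ref{pro: equality regulars}, contained in the preimage of the ``most singular'' infinitesimal stratum. More precisely, a point $\widetilde b\in V_0$ lies on every subregular preimage hyperplane, so its image $b\in B$ lies in every subregular stratum of $B$ meeting a neighborhood, which via the correspondence between infinitesimal strata and codimension-$k$ loci $\Sigma_k^\inf$ (Theorem~\ref{thm:inf-strat}, equations \eqref{eq:stratum:local:model}--\eqref{eq:strata:conormal}) forces all these points to share the same, maximal, isotropy Lie algebra; by connectedness of $M$ and the fact that the isotropy rank is constant, this common isotropy Lie algebra is $\gg$ and the corresponding stratum $\Sigma_0$ is unique, minimal, nonempty. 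Then $\Sigma_0$ is a core Poisson submanifold (Theorem~\ref{thm:inf-strat-type}), hence a regular PMCT of the same s-proper type (Theorem~\ref{thm:inf-strat-type}), and completeness passes to it because $B_{\Sigma_0}$ is a closed integral affine suborbifold of the complete orbifold $B$ with $\oB_{\Sigma_0}=V_0$. The equality $\pi_1(\Sigma_0)=\pi_1(M)$ follows from Theorem~\ref{thm:inf-strat}: both equal $\pi_1(M^\reg)$ since $M^\reg\subset\Sigma_0\subset M$ and the outer inclusion induces an isomorphism on $\pi_1$, so does the middle one. Finally the two displayed properties of the $\pi_1^\orb(B)$-action come from chasing the split exact sequence of Proposition~\ref{prop:split:orbi:fundamental:group} through the product decomposition: $\cW$ is already understood, and $\pi_1^\orb(B^\reg)$ stabilizes the Weyl chamber hence fixes $V_0\times\{0\}$ setwise, its action there being the one induced from $\pi_1^\orb(B_{\Sigma_0})$ by functoriality of the main diagram along $\Sigma_0\hookrightarrow M$ (Proposition~\ref{rk:functoriality-main-diagram} and Proposition~\ref{prop:canonical-orbifold-stratifications:G}). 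The main obstacle I anticipate is the ``reduction to a finite group'' step: one must show carefully that the reflection hyperplanes of $\cW$ in the complete affine space $\oB$ have a common point, equivalently that $\cW$ has a global fixed point on $\oB$ — this is where completeness together with the fact that $\VVO$ is a \emph{product of linear forms} (so its zero locus is a central hyperplane arrangement after translation) does the real work, and it is exactly the place where the analog of the Markus conjecture would be needed in the non-complete case.
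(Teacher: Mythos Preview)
Your overall strategy---use the volume polynomial and its factorization on $\oB\cong\R^q$ to deduce finiteness of the reflection arrangement, hence of $\cW$, and then split $\oB$ as a product---is exactly the paper's. However two points deserve correction.

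First, the fixed-point step. You correctly flag as the ``main obstacle'' that the affine reflection hyperplanes need a common point, but your suggested fix (that $\VVO$ being a product of linear forms forces a central arrangement) does not work: a product of affine linear forms can vanish on parallel hyperplanes. The paper's route is cleaner and you essentially have it in hand: the factorization of $p^*\VVO^2$ bounds the number of hyperplanes, hence the number of chambers, hence $\cW$ is \emph{finite} (it acts freely and transitively on chambers by Theorem~\ref{thm:Davis}). Once $\cW$ is finite and acts by integral affine transformations, Lemma~\ref{lem:invariant:decomposition} gives directly the $\cW$-invariant splitting $\oB=(\oB)^{\cW}\oplus V$ with $(\oB)^{\cW}=\bigcap_{r\in\Refl}\cH_r$. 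Proposition~\ref{prop:Weyl:group:isotropy at point} then identifies $\cW$ with the classical Weyl group at any point of $(\oB)^{\cW}$, and $V\cong\tt_\ss^*$.

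Second, and more seriously, your argument for $\pi_1(\Sigma_0)=\pi_1(M)$ is wrong. You write ``$M^{\reg}\subset\Sigma_0\subset M$'', but $\Sigma_0$ is the \emph{minimal} infinitesimal stratum, i.e.\ the locus of maximal isotropy, while $M^{\reg}$ is the locus of minimal (abelian) isotropy; these are disjoint unless $(M,\pi)$ is already regular. Theorem~\ref{thm:inf-strat} gives $\pi_1(M^{\reg})\cong\pi_1(M)$, not $\pi_1(\Sigma_0)\cong\pi_1(M)$. The paper's argument is genuinely different and uses s-properness in an essential way: since $\oB\cong\R^q$ is contractible and the fibers of $q\colon\oM\to\oB$ are compact connected (by s-properness), $q$ is a trivial fibration, so $p^{-1}(\hSi_0)=q^{-1}((\oB)^{\cW})$ is a deformation retract of $\oM$. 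Comparing the covering $p\colon p^{-1}(\hSi_0)\to\hSi_0$ with $p\colon\oM\to\hM$ (both with deck group $\pi_1^{\orb}(B)$) gives $\pi_1(\hSi_0)\cong\pi_1(\hM)$; then the long exact sequence for the fibration $\res\colon\hSi_0\to\Sigma_0$ (fiber $G/N(T)$ with $\pi_1\cong W\cong\cW$) matched against the split sequence for $\res\colon\hM\to M$ yields $\pi_1(\Sigma_0)\cong\pi_1(M)$.
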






\begin{proof}[Proof of Theorem \ref{thm:s:proper:complete}]
        Consider the pullback of the square of the volume polynomial to $\oB\simeq \R^q$
    \[p^*\mathcal{V}_0^2:\R^q\to \R\]
    This is a polynomial has a function that vanishes precisely on the singular part of $\oB\simeq \R^q$. By Proposition \ref{pro: equality regulars} the singular part agrees with the union of the hyperplanes of the reflections of the Weyl group of the Poisson manifold of proper type. The factorization of $p^*\mathcal{V}_0^2$ provided by Proposition \ref{pro:monic} shows that the number of these hyperplanes is bounded by the number of degree one primitive factors $L_1,\dots,L_k$, and hence are finite in number. It follows that the number of chambers of the corresponding hyperplane arrangement in $\R^q$ is also finite, and hence the Weyl group is finite.

    Now, applying Lemma \ref{lem:invariant:decomposition}, one obtains a $\cW$-invariant decomposition of integral affine subspaces
    \begin{equation}
        \label{eq:decomp:leaf:space}
        \oB=(\oB)^\cW\oplus V,
    \end{equation}
    where one can identify the fixed point set with the intersection of all root hyperplanes
    \[ (\oB)^\cW=\bigcap_{r\in\cW}\cH_r. \] 
    Note that this fixed-point set $(\oB)^\cW$ is stable under the action of $\pi_1^\orb(B)$ since this action preserves the set of hyperplanes.
    It follows that $(\oB)^\cW$ is the preimage of an infinitesimal stratum $B_{\Si_0}$ under the covering projection $p:\oB\to B$. Moreover, all other infinitesimal stratum have dimension greater than $\Si_0$. By Theorem \ref{thm:inf-strat}, we conclude that item (i) holds, with the exception of the statement about $\pi_1(\Sigma_0)$.

    If $x\in \Si_0$, then $\res^{-1}(x)\cong G/N(T)$ where $G$ is a compact Lie integrating the isotropy Lie algebra $\gg_x$ and $T\subset G$ is a maximal torus. Hence, item (ii) follows from the last statement in Proposition \ref{prop:Weyl:group:isotropy at point}, since $\hx$ is a fixed point of $\cW$. It also follows that the $\cW$-invariant decomposition \eqref{eq:decomp:leaf:space} can be written as
    \[ \oB=(\oB)^\cW\oplus \tt^*_\ss,\]
    that the action of $\cW=W$ on $\tt^*_\ss$ is the classical action.

    Since $B$ is assumed complete, we have
    \[ \oB=B^\aff,\quad \oM=\hM^\aff, \quad \Gamma^\aff=\pi_1^\orb(B),\quad\hW\cong\cW\cong W,\]
    where the last isomorphisms follow from Proposition \ref{pro:non-trivial Weyl group elements} and item (ii). The main diagram \eqref{eq:Weyl:main-diagram} then yields:
    \begin{equation}
    \label{eq:main:diag:complete}
\vcenter{
\xymatrix{
*\txt{$p^{-1}(\hSi_0)\quad$ \\$\quad$ }\ar@{^{(}->}[d] \ar@<3pt>[r]&
*\txt{$\quad\hSi_0\quad$\\$\quad$}\ar@{^{(}->}[d]\ar@<3pt>[r] &  
*\txt{$\quad\Si_0\quad$\\$\quad$}\ar@{^{(}->}[d]\\
\oM=\hM^{\aff} \ar[d]_q\ar[r]^---{p}&    \hM \ar[d]\ar[r]^{\res} &  M \ar[d] & \\
\oB=B^{\aff}=\R^q \ar[r] &    B  \ar@{=}[r] & B \\
*\txt{$\quad$\\ $(\oB)^\cW\quad$}
\ar@{^{(}->}[u] \ar[r] 
&  
*\txt{$\quad$\\ $\quad$ $B_{\Si_0}\quad$}
\ar@{^{(}->}[u]
\ar@{=}[r]
& 
*\txt{$\quad$\\\quad $B_{\Si_0}\quad$} \ar@{^{(}->}[u]
}}
\end{equation}
Here the map $p:\oM\to\hM$ is a covering map with covering group $\pi_1^\orb(B)$ and 
\[ p^{-1}(\hSi_0)=q^{-1}((\oB)^\cW)).\]

We now prove item (iii). Recall that, by Theorem \ref{thm:canonical-orbifold-stratifications}, $B_{\Si_0}$ is a suborbifold of $B$, which therefore has the (ineffective) atlas
\[ (\oB)^\cW\rtimes\pi_1^\orb(B)\tto (\oB)^\cW. \]
Effectivization of this action amounts to quotient by the subgroup $N\subset \pi_1^\orb(B)$ consisting of elements that fix $(\oB)^\cW$. This results on the effective atlas for $B_{\Si_0}$
\[ (\oB)^\cW\rtimes\pi_1^\orb(B)/N\tto (\oB)^\cW. \]
Since $(\oB)^\cW$ is simply connected,  this implies -- cf.~\eqref{eq:pi1-orb-B-gamma} -- that
\[  \pi_1^\orb(B_{\Si_0})\cong \pi_1^\orb(B)/N, \]
and that we can identify
\[ (\oB)^\cW\cong \oB_{\Sigma_0}.\]
Obviously, $\cW\subset N$, so the splitting in Proposition \ref{prop:split:orbi:fundamental:group} gives a surjective morphism
\[ \pi_1^\orb(B^\reg)\cong \pi_1^\orb(B)/\cW \to \pi_1^\orb(B)/N\cong \pi_1^\orb(B_{\Si_0}). \]
In particular, the action of $\pi_1^\orb(B^\reg)$ on $\oB_{\Sigma_0}$ factors through the action of $\pi_1^\orb(B_{\Si_0})$.


Finally, to prove the first identity in item (i), we claim that:

\begin{lemma}
    The inclusion $i:\hSi_0\hookrightarrow \hM$ induces an isomorphism
\[ \pi_1(\hSi_0)\cong \pi_1(\hM). \]
\end{lemma}

To prove this lemma, observe that 
 $p^{-1}(\hSi_0)$ is connected and $\pi_1^\orb(B)$-invariant, since it equals $q^{-1}((\oB)^\cW))$ and $q:\oM\to\oB$ is a $\pi_1^\orb(B)$-equivariant. It follows that
\begin{equation}
    \label{eq:covering:hS_0}
    p:p^{-1}(\hSi_0)\to \hSi_0,
\end{equation} 
is a covering with covering group $\pi_1^\orb(B)$.
Also, $q:\oM\to\oB$ is a trivial fibration since the base $\oB\cong\R^q$ is contractible (by completeness) and the fibers are compact and connected (by s-properness). It follows that $p^{-1}(\hSi_0)$ is a deformation retract of $\oM$. 
Therefore, the top left square of \eqref{eq:main:diag:complete} induces a diagram of fundamental groups
\[ 
\xymatrix{
1\ar[r] & \pi_1(p^{-1}(\hSi_0)) \ar[d]_{i_*}\ar[d]\ar[r] & \pi_1(\hSi_0)\ar[d]_{i_*} \ar[r] & \pi_1^\orb(B) \ar@{=}[d] \ar[r] & 1\\
1\ar[r] & \pi_1(\oM) \ar[r] & \pi_1(\hM) \ar[r] & \pi_1^\orb(B) \ar[r] & 1
}
\]
where the left vertical arrow is an isomorphism, so the lemma holds.

Now, to prove (iii), notice that the restriction of the resolution $\res:\hSi_0\to\Si_0$ is a submersion with compact connected fibers diffeomorphic to $G/N(T)$. It follows that we have a commutative diagram
\[ 
\xymatrix{
\ar[r]&\pi_2(\Si_0)\ar[r] & \pi_1(\res^{-1}(x)) \ar[d]\ar[r] & \pi_1(\hSi_0)\ar[d]_{i_*} \ar[r]^{\res_*} & \pi_1(\Sigma_0) \ar[d]_{i_*} \ar[r] & 1\\
&1\ar[r] & \cW \ar[r] & \pi_1(\hM) \ar[r]^{\res_*} & \pi_1(M) \ar[r] & 1
}
\]
and we already know that the left two vertical arrows are isomorphisms. Hence, we obtain the first isomorphism of item (iii):
\[ \pi_1(\Si_0)\cong \pi_1(M). \]

\end{proof}

\subsection{A class of complete, s-proper, PMCTs}
\label{sec:example:s-proper:complete}

We consider a class of Poisson manifolds obtained by fixing the following data:
\begin{enumerate}[(a)]
    \item A Lie algebra $\gg$, semi-simple of compact type;
    \item A finite group $\Gamma\subset \Aut(\gg)$;
    \item A compact symplectic manifold $(S,\omega_S)$ with fundamental group $\Gamma$.
\end{enumerate}
Letting $q:\widetilde{S}\to S$ denote the universal covering space with symplectic form $\omega_{\widetilde{S}}:=q^*\omega_S$, the class of Poisson manifolds we are interested take the form 
\[ M=\widetilde{S}\times_\Gamma \gg^*, \]
where the Poisson structure $\pi_M$ is the quotient of the product $\omega_{\widetilde{S}}^{-1}\times\pi_{\gg^*}$.
\begin{description}
    \item[Claim] $(M,\pi_M)$ is a complete, s-proper, Poisson manifold. 
\end{description}


We will prove this claim by showing that $(M,\pi_M)$ is a (global) local linear model, as in Section \ref{sec:ex:local-model}. The base of its principal bundle $P$ will be the compact symplectic manifold $S$ and the structural group $G$ will be the union of all connected components of $\Aut(\gg)$ intersecting $\Gamma$. Classical Lie theory grants that $G$ is a possibly disconnected compact Lie group integrating $\gg$.

To see how $M$ arises from such linear local model, consider the symplectic manifold $\widetilde{S}\times T^*G$ with symplectic form $\pr_1^*\omega_{\widetilde{S}}+\pr_2^*\omega_\can$. It carries a symplectic action of $\Gamma$  defined by
\[ \gamma\cdot (s,\alpha):=(s\gamma^{-1},\gamma\alpha), \]
where the action on the factor $T^*G$ is via the lift of the left action of $G$ on itself. On the other hand, it carries a symplectic action of $G$ given by
\[ g\cdot(s,\alpha):=(s,\alpha g^{-1}), \]
where now we use the lift of the right action of $G$ on itself. Note that these are free and proper commuting actions, and their orbit spaces make up the following commutative diagram
\[
\xymatrix{
\widetilde{S}\times T^*G \ar[r]^{\Gamma\,\circlearrowright }\ar[d]_{G\,\circlearrowright } &  \widetilde{S}\times_\Gamma T^*G {}\save[]+<+2cm,0cm>*\txt{$\cong\widetilde{S}\times_\Gamma G\times \gg^*$}\restore
\ar[d]^{\circlearrowright\,G} \\
\widetilde{S}\times\gg^*\ar[r]_{\Gamma\,\circlearrowright } & \widetilde{S}\times_\Gamma\gg^*
}
\]
The isomorphism on the upper right corner comes from trivializing the cotangent bundle using left translations. Since the actions are symplectic, there are induced symplectic/Poisson structures on each space of the previous diagram:
\[
\xymatrix{
(\pr_1^*\omega_{\widetilde{S}}+\pr_2^*\omega_\can)^{-1}\ar[r]\ar[d] & (\omega^{\theta}_\lin)^{-1}\ar[d] \\
\omega_{\widetilde{S}}^{-1}\times\pi_{\gg^*}\ar[r] & \pi_M
}
\]
Here, on the right upper corner, we have the symplectic form
\[
\omega^{\theta}_\lin:= p^*\omega_S-\d\langle \theta,\cdot\rangle \in \Omega^2(P\times \gg^*),
\]
where $\theta$ denotes the flat connection on the (connected) principal $G$-bundle
\[ P:=\widetilde{S}\times_\Gamma G\to S,\]
whose pullback to $\widetilde{S}\times G$ is the trivial connection. This is precisely the closed, $G$-invariant, 2-form \eqref{loc-mod-2-form} of the local model, which in this case is globally symplectic since the connection is flat. This shows that $(M,\pi_M)$ is a linear local model and proves the claim.

Since $(M,\pi_M)$ is a linear local model, it follows from Example \ref{ex:local:model:Morita} that it has
\begin{enumerate}[(a)]
    \item complete leaf space $B$ with orbifold fundamental group
    \[ \pi_1^\orb(B)\cong N(T)/Z_G(T),\]
    where $T\subset G$ is a maximal torus, while its Weyl group is
    \[ \cW(M,\pi)\cong W=(N(T)\cap G^0)/T. \]
    \item $\oB=\tt^*$, where $\tt$ is the Lie algebra of $T$, and the $\pi_1^\orb(B)$-action is the adjoint action of $N(T)/Z_G(T)$.
\end{enumerate}

To connect with the results of Theorem \ref{thm:s-proper-complete}, note that the minimal stratum of $(M,\pi)$ consists of a single symplectic leaf,
\[ \Sigma_0=\widetilde{S}\times_\Gamma \{0\}\cong S,  \]
since any other leaf has dimension strictly larger than $S$. Any point in this stratum has isotropy isomorphic to $\gg$, so (a) and (b) agree with the results of Theorem \ref{thm:s-proper-complete}. 

Finally, observe that the split short exact sequence of Proposition \ref{prop:split:orbi:fundamental:group} becomes
\[ \xymatrix{
1\ar[r]& (N(T)\cap G^0)/T\ar[r] & N(T)/Z_G(T) \ar[r] & G/Z_G(T)G^0 \ar[r] & 1
}\]
)The group $\pi_1^\orb(B^\reg)\cong G/Z_G(T)G^0$ is non-trivial if and only if some element in $\Gamma$ acts non-trivially on $\gg^*/G^0$. In this case $\pi_1^\orb(B^\reg)$ is non-trivial while $\pi_1^\orb(B_{\Sigma_0})$ is trivial. For a concrete example, one can take any $\Gamma\subset \Aut(\mathfrak{so}(2m))$, with $m\geq 4$, which is not contained in the connected components of the identity.

\subsection{Remarks on Theorem \ref{thm:s-proper-is-regular} and twists.}
The crucial ingredient in the proof of Theorem \ref{thm:s-proper-is-regular} is the polynomial nature of the symplectic volume function. In particular, this theorem does not hold for general twisted Dirac structures, as shown for instance by the example of the Cartan-Dirac structures on compact Lie groups $G$. For these, as we saw in
Section \ref{ex:conjugacy-classes},
the twisted presymplectic leaves are the conjugacy classes, so as long as $G$ is not abelian, the twisted Dirac structure is never regular. 

Lying between Poisson and general twisted Dirac structures one has the class of $\phi$-twisted Poisson structures. For the latter, the Poisson condition for $\pi$ is replaced by $[\pi, \pi]= \pi^{\sharp}(\phi)$. Theorem \ref{thm:s-proper-is-regular} fails even for $\phi$-twisted Poisson structures. 

\begin{example}
    A $\phi$-twisted Poisson can be seen as $\phi$-twisted Dirac structures $L$ satisfying $L\cap TM= \{0\}$.  It follows that, at the global level, $\phi$-twisted Poisson structures correspond to $\phi$-twisted presymplectic groupoids $(\Sigma,\Omega)$ with \emph{non-degenerate} presymplectic form $\Omega$. Hence, for example, the Cartan-Dirac structure in the case of $G=\SU(2)$ becomes a non-regular twisted Poisson structure that integrates to a compact groupoid.    
\end{example}

The foliation of the Cartan-Dirac structure on $\SU(2)=\S^3$ in the previous example consists of two zeros at the north and south poles (i.e., $I$ and $-I$) and a family of 2-spheres. Hence, the twist $\phi$ (the Cartan 3-form), actually vanishes on the leaves, so $[\pi,\pi]=0$. However, as an untwisted Poisson manifold, $(\S^3,\pi)$ is non-integrable: the symplectic area of the spheres has a critical point.

Generalizing this example, assume that $\pi$ is an honest Poisson structure on $M$ and that one has a
closed 3-form $\phi\in\Omega^3(M)$ that vanishes on its symplectic leaves, that is, satisfying
\[ \phi(\pi^{\sharp}\alpha, \pi^{\sharp}\beta, \pi^{\sharp}\gamma)= 0,\]
for all $\alpha,\beta,\gamma\in\Omega^1(M)$.
We call such a 3-form a \textbf{fake twist of $\pi$}. It allows us to interpret $\pi$ both as a Poisson structure as well as a twisted one. The example above shows that a fake twist can turn a possibly non-regular (even non-integrable!) Poisson structure to one of compact type.

The regular case is easier to handle and sheds light on the difficulty of the non-regular case as well. The key remark is that, starting with a regular Poisson structure $\pi$ and a fake twist $\phi$, when interpreted as a $\pi$-twisted Poisson structure, the corresponding foliation remains the same. Actually, the resulting algebroid of the $\phi$-twisted $\pi$ is still $T^*M$, with the same anchor $\rho=\pi^\sharp$, but with the modified Lie bracket
\begin{equation}\label{eq:twisted-Poisson-bracket} 
[\alpha, \beta]_{\pi,\phi}=  [\alpha, \beta]_{\pi}+ i_{\pi^{\sharp}(\alpha)}i_{\pi^{\sharp}(\beta)} \phi.
\end{equation} 
Therefore, for compactness to be achieved from the twisted perspective, the original symplectic foliation must be of compact type and must admit a transverse integral affine structure. In fact, this is all one needs.

\begin{proposition}
Let $(M,\pi)$ be a regular Poisson manifold whose foliation $\cF_\pi$ is of $\cC$-type and admits a transverse integral affine structure. Then there exists a fake twist $\phi$ such that $\pi$ is of $\cC$-type when viewed as a $\phi$-twisted Poisson structure.  
\end{proposition}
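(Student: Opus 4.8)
The idea is to build the fake twist $\phi$ directly from the transverse integral affine structure, using the observation that the $\phi$-twisted Poisson bracket \eqref{eq:twisted-Poisson-bracket} only modifies the Lie algebroid in the ``transverse curvature'' direction. Since $\cF_\pi$ is of $\cC$-type with a transverse integral affine structure, the results of \cite{CFM-II} give us a proper, regular symplectic groupoid $(\cG_0,\Omega_0)$ integrating the \emph{foliation} $\cF_\pi$ — more precisely, a regular proper symplectic groupoid whose characteristic foliation is $\cF_\pi$ and whose transverse structure realizes the given lattice $\Lambda\subset\nu^*(\cF_\pi)$. The key point is that the Poisson structure $\pi$ and the foliation $\cF_\pi$ share the same leaves, and the obstruction to integrating $\pi$ (the monodromy groups $\cN_x$) is a ``vertical'' datum measuring the variation of leafwise symplectic areas. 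First I would recall that the cotangent algebroid $(T^*M,[\cdot,\cdot]_{\pi,\phi},\pi^\sharp)$ of the $\phi$-twisted Poisson structure differs from that of $\pi$ only through the term $i_{\pi^\sharp\alpha}i_{\pi^\sharp\beta}\phi$, which depends only on the restriction of $\phi$ to $\wedge^2 T\cF_\pi\otimes TM$; a fake twist kills precisely the $\wedge^3 T\cF_\pi$ component, so it modifies the bracket by something that, upon restriction to a leaf, contributes a leafwise-closed 2-form class — this is exactly the handle on the monodromy.

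\textbf{Key steps.} (1) Set up the normal form: using that $\cF_\pi$ is simple on each holonomy stratum and of proper type, reduce to the linear local model and the transverse integral affine data, so that locally $M\cong (P\times_G\gg^*)$ with $\gg$ abelian (the regular case), with leafwise symplectic forms $\omega_\xi$ and Chern classes $c_1,\dots,c_r$ as in Example \ref{example:D-H}. (2) Identify the ``missing'' monodromy: the obstruction to $\pi$ being of $\cC$-type is that the classes $[\omega_\xi]\in H^2(S_\xi)$ need not vary along an integral affine section of the cohomology bundle $\cH\to B$; equivalently $\cN_x$ need not be a lattice of full rank in $\zz(\gg_x)$. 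The transverse integral affine structure, however, provides \emph{some} full-rank lattice $\Lambda$. (3) Construct $\phi$ so that the twisted bracket replaces the monodromy of $\pi$ by $\Lambda$: choose a closed leafwise 2-form $\eta$ on $M$ whose leafwise cohomology class, varied transversally, corrects $[\omega_\xi]$ to an integral affine section relative to $\Lambda$; then set $\phi := \d_{\text{transverse}}\eta$ in an appropriate bigrading — concretely, if $\eta\in\Omega^2(M)$ restricts leafwise to the correcting 2-form, take $\phi$ to be (a representative of) its class under the map $\Omega^{2,0}\to\Omega^{1,1}\to$ (transverse derivative), arranged to be a genuine closed 3-form vanishing on $\wedge^3 T\cF_\pi$. (4) Verify that with this $\phi$, the $\phi$-twisted cotangent algebroid is integrable by a proper (indeed compact, if $M$ is compact and leaves are) symplectic groupoid: the transverse integral affine structure is unchanged, the foliation is unchanged, and the monodromy has become the full-rank lattice $\Lambda$, so the integrating groupoid is a bundle of tori over a proper foliation groupoid, hence proper; invoke Theorem \ref{thm:linearization:PMCT} and the structure theory of \cite{CFM-II} to promote properness to $\cC$-type.

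\textbf{Main obstacle.} The delicate step is (3): producing a \emph{globally} defined closed 3-form $\phi$ on $M$, not merely a collection of local corrections, that simultaneously (a) vanishes on the leaves (fake twist condition), (b) is genuinely closed, and (c) shifts the monodromy of $\pi$ exactly onto the prescribed lattice $\Lambda$. Locally this is the content of Example \ref{example:D-H} read backwards — one wants $\phi$ whose ``transverse periods'' are the differences between the actual Chern data of $\pi$ and the integral affine data of $\Lambda$ — but globally one must patch these over the holonomy stratification and check compatibility with the transverse lattice across strata of different codimension. I expect the cleanest route is to work on the resolution or on an étale atlas for $B$, where the transverse integral affine structure trivializes, build $\phi$ there, and descend; the descent is automatic because $\phi$ is required to vanish on the leaves, so it is genuinely transverse data and pulls back/pushes forward along the foliated covers of the main diagram \eqref{eq:Weyl:main-diagram}. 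The remaining verifications — that the twisted bracket \eqref{eq:twisted-Poisson-bracket} has the claimed integrability, and that properness upgrades to $\cC$-type when $M$ (equivalently, the leaves and the leaf space) is compact — follow from the same reduction-to-local-model arguments used throughout Sections \ref{sec:nform} and \ref{sec:Weyl:resolution}.
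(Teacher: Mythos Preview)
Your proposal misses the key idea and takes a route that is both much harder and not clearly convergent. The paper's proof is a two-line construction: choose any global $2$-form $\widetilde{\omega}\in\Omega^2(M)$ extending the leafwise symplectic form (so that $\widetilde{\omega}(\pi^\sharp\alpha,X)=\alpha(X)$ for all $X,\alpha$, which exists by regularity and a partition of unity), and set $\phi=\d\widetilde{\omega}$. This is automatically a fake twist, and the splitting $X\mapsto i_X\widetilde{\omega}$ of $0\to\nu^*(\cF_\pi)\to T^*M\to T\cF_\pi\to 0$ is then a \emph{Lie algebroid} splitting for the $\phi$-twisted bracket \eqref{eq:twisted-Poisson-bracket} (a direct three-line computation). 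Hence the $\phi$-twisted cotangent algebroid is isomorphic to the semidirect product $T\cF_\pi\ltimes\nu^*(\cF_\pi)$, i.e., to the algebroid of the foliation Dirac structure $T\cF_\pi\oplus\nu^*(\cF_\pi)$. That algebroid is integrated by $\Hol(\cF_\pi)\ltimes\nu^*(\cF_\pi)/\Lambda$, which is of $\cC$-type by hypothesis.

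The point you miss is that one does not need to \emph{tune} $\phi$ so as to shift the monodromy of $\pi$ onto $\Lambda$; the right twist simply \emph{erases} the dependence on the leafwise symplectic data altogether, by making the twisted algebroid isomorphic to an algebroid that only sees the foliation and its transverse integral affine structure. Your step (3) --- building a closed $3$-form whose transverse periods realize the difference between the Chern data of $\pi$ and the lattice $\Lambda$, and then patching over strata --- is exactly the obstacle the paper's construction sidesteps; as you yourself note, it is unclear how to make that global, and your description of $\phi$ via ``$\Omega^{2,0}\to\Omega^{1,1}\to$ (transverse derivative)'' is not a closed $3$-form in general. The invocation of the resolution and the main diagram is also unnecessary: the Poisson manifold is regular, so there is nothing to resolve, and the construction of $\phi=\d\widetilde{\omega}$ is entirely global and elementary.
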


\begin{proof}
Let $\widetilde{\omega}\in\Omega^2(M)$ be an extension of the leafwise symplectic form with the property that 
$\widetilde{\omega}(\pi^{\sharp}(\alpha),X)= \alpha(X)$ for all $X$ and $\alpha$ (it exists, since $\pi$ is regular) and set $\phi=\d\widetilde{\omega}$. Note that the extension $\widetilde{\omega}$ determines a splitting of the short exact sequence:
\[ \xymatrix{0\ar[r] & \nu^*(\cF_\pi)\ar[r] & T^*M \ar[r] & T\cF_\pi \ar[r] \ar@/^0.5pc/[l]^{i_{\cdot}\widetilde{\omega}}& 0} \]
The middle term will be interpreted as the 
the algebroid $A^\phi$ corresponding to  $\pi$ viewed as a $\phi$-twisted Poisson, as discussed above, with Lie bracket (\ref{eq:twisted-Poisson-bracket}). We claim that the splitting of the sequence becomes an algebroid splitting of $A^\phi$. In other words,
\[ i_{[X,Y]}\widetilde{\omega}=[i_X\widetilde{\omega},i_Y\widetilde{\omega}]_{\pi,\phi}, \]
for any $X,Y\in\X(\cF_\pi)$. To see this we can assume that $X=\pi^{\sharp}(\alpha)$, $Y=\pi^{\sharp}(\beta)$, so that $i_X\widetilde{\omega}=\alpha$ and $i_Y\widetilde{\omega}=\beta$. Then:
\begin{align*} 
    [i_X\widetilde{\omega},i_Y\widetilde{\omega}]_{\pi,\phi}
    &=[\alpha,\beta]_{\pi}+i_{\pi^{\sharp}(\alpha)}i_{\pi^{\sharp}(\beta)} \phi\\
    &=\Lie_{\pi^{\sharp}(\alpha)}\beta-i_{\pi^{\sharp}(\beta)}\d\alpha-i_{\pi^{\sharp}(\beta)}i_{\pi^{\sharp}(\alpha)} \d\widetilde{\omega}\\
    &=\Lie_{X}i_{Y}\widetilde{\omega}-i_{Y}\Lie_{X}\widetilde{\omega}
    =i_{[X,Y]}\widetilde{\omega}
\end{align*}

We deduce that the algebroid $A^\phi$ is isomorphic to the semi-direct product algebroid $T\cF_\pi\ltimes\nu^*(\cF_\pi)$, which is just the algebroid associated with the Dirac structure defined by $\cF_\pi$. This algebroid admits an integration of $\cC$-type, namely $\Hol(\cF_\pi)\ltimes \nu^*(\cF_\pi)/\Lambda$, where $\Lambda$ is the transverse integral affine structure (see \cite[Section 4.4]{CFM-I}
\end{proof}

\section{Open Problems} 
\label{sec:open}

In this closing section we list some questions and open problems that arise naturally from our work and that we believe are interesting and important to further understand the geometry of PMCTs and DMCTs. 

\subsection*{Finite Coxeter groups of non-Lie type and the Weyl group}
\label{sec:open:Coxeter}
Theorem \ref{thm:Davis} shows that the Weyl group of a Poisson manifold of proper type is a Coxeter group. The examples discussed in Section \ref{sec:examples:parts:removed} show that such a Poisson manifold can have a Weyl group that is not isomorphic to either a classical Weyl group of a compact Lie algebra or an affine Weyl group of some compact Lie group. We note that all the examples in that section have Weyl groups of infinite order.  

Recall that finite Coxeter groups are classified, and among them, there exists the class of reflection groups of Lie type, i.e., those that arise as Weyl groups of some compact Lie algebra. This raises the following problem.  

\begin{problem}
Is there a Poisson manifold $(M,\pi)$ of proper type with a finite Weyl group $\cW(M,\pi)$ that is not of Lie type?  
\end{problem}

For example, can one find an example of a Poisson manifold of proper type with Weyl group isomorphic to $D_5$?

\begin{remark}
    After the submission of this paper and posting it in the arXiv, Joshua Mundiger send us a solution to this problem, showing that the answer is negative -- see Appendix \ref{sec:appendix}.
\end{remark}

\subsection*{Duistermaat-Heckman theory for Dirac manifolds of compact types}  
Our treatement of Duistermaat-Heckman theory in Sections \ref{sec:variation:volume} and \ref{sec:DH:formula} focussed entirely on the Poisson case. However, one can wonder what happens in the general Dirac setting. On the one hand, Weyl integration formulas have versions both for Lie algebras and for Lie groups, see e.g. \cite{DK}. On the other hand,
the work of Alekseev, Meinrenken and Woodward \cite{AMW2} develops Duistermaat-Heckman theory for Lie-group valued moment maps. The discussion in \cite{ABM} shows that a generalization of the theory to the general Dirac setting is not straightforward -- but also provides some ideas on how to proceed. 

\begin{problem}
Extend the Duistermaat-Heckmann theory to the setting of Dirac manifolds of compact types. 
\end{problem}

\subsection*{Reductive holomorphic Poisson structures} The complex counterparts of compact Lie algebras (respectively, compact Lie groups) are complex reductive Lie algebras (respectively, complex reductive Lie groups). Many results that hold for compact Lie algebras and groups have analogues in the context of complex reductive Lie algebras and groups. Since many of the results in this paper can be interpreted as generalizations of classical results for compact Lie algebras and Lie groups to Poisson geometry, it is natural to ask the following:

\begin{problem}
Define the notion of a reductive holomorphic Poisson/Dirac manifold and establish an analogue of the theory of Poisson manifolds of proper type in the holomorphic context.
\end{problem}

For example, reductive holomorphic Poisson (respectively, Dirac) manifolds should include, as examples,  holomorphic symplectic manifolds and holomorphic linear Poisson structures on the duals of complex reductive Lie algebras (respectively, the Cartan-Dirac structures on complex reductive Lie groups). The work of Evens and Lu on the Grothendieck-Springer resolution for complex semi-simple Poisson-Lie groups \cite{EL07} should also provide some clues to solve this problem.

\subsection*{Markus conjecture for integral affine orbifolds}
\label{sec:open:Markus}

Recall that for an affine manifold the Markus conjecture states that a compact affine manifold is complete iff it has a parallel volume (see, e.g., \cite{GH86})). For integral affine manifolds a parallel volume always exist, and so the conjecture says that every compact integral affine manifold is complete. 

Our results concerning polynomial functions on integral affine orbifolds in Section \ref{sec:polynomial:functions} suggest that the conjecture should hold for such orbifolds.

\begin{problem}
Show that every compact integral affine manifold is complete. 
\end{problem}

It is proved in \cite{GH86} that every polynomial on a compact integral affine \emph{manifold} is constant. This result would follow easily from the Markus conjecture. Similarly, solving the above question would imply that every polynomial on a compact integral affine \emph{orbifold} is constant. Corollary \ref{cor:no-polynomial} provides some evidence for this conjecture.

For us, a positive answer to this problem would have the consequence that every compact PMCT (or DMCT) is complete. It is an intriguing question if, conversely, one can use Poisson geometry to give a proof of the Markus conjecture.

\subsection*{Classification of complete s-proper PMCTs } 

Theorem \ref{thm:s:proper:complete} provides structural results for complete, s-proper, PMCTs. The results in that theorem -- see also Section \ref{sec:example:s-proper:complete} -- suggest that the following characterization of such PMCTs should hold:

\begin{problem}
Given a complete, s-proper, Poisson manifold $(M,\pi)$ show that it is isomorphic to
\[ (\widetilde{\Si}_0\times \gg^*_\ss)/\pi_1(\Si_0), \]
where $\widetilde{\Si}_0$ is the universal covering space of the minimal strata and $\gg_\ss=[\gg,\gg]$ is the semisimple part of the isotropy Lie algebra $\gg$ of any point in $\Si_0$. 
\end{problem}

Notice that one can think of this as a global linearization result around the minimal strata. A special case of this is the classical linearization result of Ginzburg-Weinstein \cite{GW92}. In fact, let $(G^*,\pi_{G^*})$ be the dual of a compact, semi-simple, Poisson-Lie group $(G,\pi_G)$. Since the dressing action of $G$ on $G^*$ is complete, it follows that  $G^*$ is a s-proper, complete, Poisson manifold. The minimal strata $\Si_0$ consists of the identity of $G^*$. So if the result above holds, one obtains a Poisson  isomorphism
\[ G^*\cong\gg^*,\]
Hence, one recovers the global linearization  result of Ginzburg and Weinstein.

\subsection*{Multiplicity free spaces and symplectic gerbes}
\label{sec:open:gerbes}

Recall that, given a Poisson manifold $(M,\pi)$, a \emph{symplectic realization} consists of a symplectic manifold $(X,\omega)$ and a surjective submersion $\mu:(X,\omega)\to (M,\pi)$ that is a Poisson map. It is called \emph{isotropic} when its fibers are isotropic submanifols. 
Regular Poisson manifolds are characterized among Poisson manifolds by the existence of isotropic realizations.

In \cite{DazordDelzant}, Dazord and Delzant showed that an isotropic realization $\mu:(X,\omega)\to(M,\pi)$, with $\mu$ proper, induces a transverse affine structure on  $\cF_\pi$.  Conversely, given a transverse integral affine structure $\Lambda$ on $\cF_\pi$, they showed that the existence of a proper isotropic realization of $(M,\pi)$ inducing $\Lambda$ is obstructed by a certain class.

In \cite{CFM-II}, we extended the Dazord-Delzant theory. In one direction, we showed that a proper isotropic realization $\mu:(X,\omega)\to (M,\pi)$ of a  Poisson manifold comes with a canonical proper symplectic integration of $(M,\pi)$. It is characterized as the smallest symplectic integration that acts on the realization, and its isotropy bundle has connected component of the identity $T^*M/\Lambda$, where $\Lambda$ is the transverse integral affine structure determined by the given realization. In the opposite direction, we showed that any regular, proper symplectic groupoid defines a symplectic gerbe over its leaf space, which is classified by a symplectic version of the Dixmier-Douady class. Moreover, we proved that the latter class pulls back to the class of Dazord and Delzant, and that it is trivial if and only if there exists a complete isotropic realization providing a symplectic Morita equivalence with a symplectic torus bundle.

One would like to extend these results beyond the regular case. The natural generalization of isotropic realizations is \emph{multiplicity-free} symplectic realizations: a symplectic realization $\mu:(X,\omega)\to M$ of $(M,\pi)$ such that each isotropy Lie algebra $\gg_x$ acts transitively on the fibers of $\mu$. However, one must relax the condition that $\mu$ be a submersion; otherwise one remains in the regular case.

\begin{problem}
Develop the theory of (non-singular) multiplicity-free realizations of Poisson manifolds of proper type.  
\end{problem}  

One possible approach is to consider pre-symplectic ``isotropic realizations" of the Weyl resolution of $(M,\pi)$. In fact, one expects that a suitable version of the Weyl resolution can be constructed for the entire realization. The correct notion should allow to extend the above notions and results to the non-regular case, including versions of the Dazord-Delzant class, symplectic gerbes, and their Dixmier-Douady classes for non-regular PMCTs.

\appendix
\section{Finite Weyl Groups of PMCTs are of Lie type}
\label{sec:appendix}
\smallskip
\begin{center}by \textsc{Joshua Mundinger}\end{center}
\medskip

The purpose of this appendix is to show that if $N$ is a simply connected integral affine manifold and $W$ is a finite integral affine reflection group on $N$ (Definition \ref{def:integral affine reflection group}), then $W$ is a Weyl group. This solves Open Problem 1.

\begin{lemma}[\cite{Humphreys90}, p. 39]\label{lemma: condition for Weyl}
  Suppose that $W$ is a finite Coxeter group, generated by reflections $s_\alpha$ with $m_{\alpha\beta} = ord(s_\alpha s_\beta)$. Then $W$ is a Weyl group if and only if $m_{\alpha\beta} \in \{2,3,4,6\}$ for all $\alpha \neq \beta$.\qedhere
\end{lemma}

Recall from §\ref{sec:geometric:reflections} that a \textbf{geometric reflection} on a connected manifold $N$ is a smooth involution $r:N \to N$ such that the fixed-point set $N^r$ is a codimension one submanifold separating $N$. By \cite[Lemma 10.1.3]{Davis08}, $N \setminus N^r$ has exactly two components which are exchanged by $r$.

\begin{lemma}\label{lem: reflecting hyperplanes intersect}
    Let $N$ be a simply connected manifold and let $s$ and $t$ be geometric reflections on $N$. If $st$ has finite order, then $N^s \cap N^t$ is not empty.
\end{lemma}
\begin{proof}
    Suppose $N^s \cap N^t$ is empty.
    Let $U = N \setminus N^s$ and $V = N \setminus N^t$.
    Since $s$ and $t$ are geometric reflections on a connected manifold, 
    $U$ has two components $U_1$ and $U_2$ exchanged by $s$, and $V$ has two components $V_1$ and $V_2$ exchanged by $t$.
    Consider the Mayer-Vietoris sequence associated to open covering $N = U \cup V$. Since $H_1(N) = 0$, $N \setminus (N^s \cup N^t) = U \cap V$ has exactly three components, say $W_1, W_2, W_3$, which are each contained in a unique pairwise intersection $U_i \cap V_j$ for some $i$ and $j$.
    Without loss of generality,
    \begin{align*}
    W_1 &\subseteq U_1 \cap V_1, \\
    W_2 &\subseteq U_2 \cap V_1,\\
    W_3 &\subseteq U_2 \cap V_2.
    \end{align*}
    Thus $U_1 \cap V_2$ is empty, so since $U_1$ and $V_2$ are connected, $U_1 \subseteq V_1$ and $V_2 \subseteq U_2$.
    Thus $W_1 = U_1$ and $W_3 = V_2$.

    \begin{figure}[h]
  \includegraphics[width=2.5in]{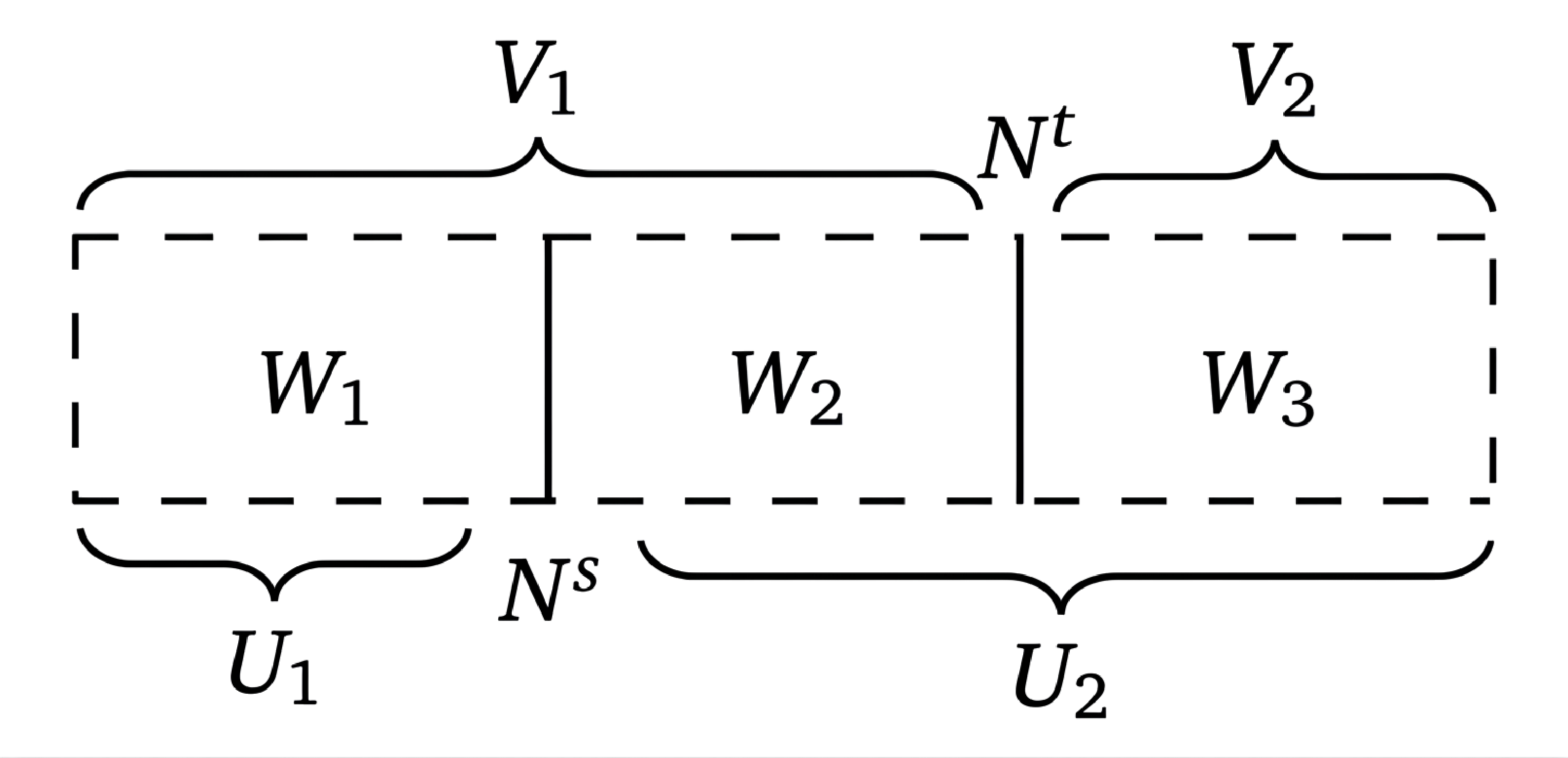}
  \caption{The connected components of $N \setminus (N^s \cup N^t)$.}
  \end{figure}
  
  We proceed by ping-pong. Now $st(W_2) \subseteq s(t(V_1)) \subseteq s(V_2) \subseteq s(U_2) = U_1 = W_1$.
  Similarly,
  $st(W_1) \subseteq s(t(V_1)) = s(V_2) \subseteq s(U_2) = U_1 = W_1$.
  Hence $(st)^n(W_2) \subseteq W_1$ for all $n > 0$, so $st$ has infinite order.
\end{proof}

\begin{theorem}\label{thm: finite-integral-affine-sc-is-Weyl}
  Let $N$ be a simply connected integral affine manifold and $W$ be a finite integral affine reflection group on $N$.
  Then $W$ is a Weyl group. 
\end{theorem}
\begin{proof}
    
  Say $\{s_\alpha\}_{\alpha \in I}$ is a generating set of reflections for $W$ and $H_\alpha = N^{s_\alpha}$.
  Let $\alpha$ and $\beta$ be distinct elements of $I$.
  Since $W$ is finite, Lemma \ref{lem: reflecting hyperplanes intersect} implies that $H_\alpha \cap H_\beta \neq \varnothing$. Let $x \in H_\alpha \cap H_\beta$.
  Define $\sigma_\alpha = \d_x s_\alpha$, $\sigma_\beta = \d_xs_\beta$. Then $\sigma_\alpha$ and $\sigma_\beta$ are reflections on $V = T_x N$. 
  Since $W$ acts by integral affine transformations, both $\sigma_\alpha$ and $\sigma_\beta$ fix some lattice $L \subseteq V$.
  Thus $\tr(\sigma_\alpha\sigma_\beta) \in \mathbb Z$.
  On the other hand, $\sigma_\alpha\sigma_\beta$ is a product of two reflections, so it has a fixed subspace of codimension 2 and is a rotation by some angle $\theta$ on a complementary plane. 
  Then $\tr(\sigma_\alpha\sigma_\beta) = 2 \cos(\theta) + (\dim V - 2)$, so $2 \cos \theta \in \mathbb Z$.
  Thus $\theta = 2\pi / m$ for $m \in \{2,3,4,6\}$ and $(\sigma_\alpha \sigma_\beta)^m = 1$.
  Since $s_\alpha$ and $s_\beta$ are integral affine transformations on $N$, $(s_\alpha s_\beta)^m$ is the identity on a neighborhood of $x$.
  Again since $(s_\alpha s_\beta)^m$ is integral affine, following paths in $N$ from $x$ shows that $(s_\alpha s_\beta)^m$ is the identity at all points of $N$.
  Hence $m_{\alpha\beta} = m \in \{2,3,4,6\}$. Lemma \ref{lemma: condition for Weyl} implies $W$ is a Weyl group.
\end{proof}

The hypothesis of simple connectivity in Theorem \ref{thm: finite-integral-affine-sc-is-Weyl} is necessary:
\begin{example}
    For all $m > 1$, the dihedral group $D_m$ acts on $S^1 \cong \R/m\Z$ by integral affine reflections, but $D_m$ is not a Weyl group unless $m \in \{2,3,4,6\}$. 
    For consider $\tilde N = \R$ with standard integral affine structure. 
    Let $\Gamma = D_\infty = \langle s,t \mid s^2 = t^2 = 1\rangle $ be the infinite dihedral group, generated by reflections $s,t$ about the points $0$ and $1$ of $\tilde N$. If $N = \tilde N / \langle (st)^m\rangle$, then $N$ is an integral affine manifold and the dihedral group $D_m = \Gamma / \langle (st)^m\rangle$ acts on $N$ as a finite integral affine reflection group. 
\end{example}

\section*{List of Notations}

The following notations are used throughout this paper:
{\small
\begin{description}
    \item[$-(M,\pi)$] manifold $M$ with Poisson bivector $\pi\in\X^2(M)$ (section \ref{sec:intro});
    \item[$-\cF_{\pi}$] symplectic foliation of $(M,\pi)$. Its leaves are denoted $(S,\omega_S)$ (section \ref{sec:intro});
    \item[$-(\cG,\Omega)$] symplectic groupoid integrating $(M,\pi)$; its source/target maps are denoted $\s,\t:\cG\to M$ (section \ref{sec:intro});
    \item[$-(\gg^*,\pi_{\gg^*})$] linear Poisson structure on the dual of a Lie algebra $\gg$ (section \ref{sec:ex:simple-local-model});
    \item[$-(Q/G,\pi_\red)$] Hamiltonian local model associated with Hamiltonian $G$-space $\mu: (Q, \omega)\to \gg^*$ (section \ref{sec:ex:simple-local-model});
    \item[$-(\cM,\pi_\lin)$, $(\cM_\lin,L_\lin)$] Poisson and Dirac linear local models  (section \ref{sec:ex:local-model});
    \item[$-(\cG_\lin,\Omega_\lin)$] canonical (pre-)symplectic groupoid integrating the linear local models (sections \ref{sec:ex:local-model} and \ref{sec:Dirac:local-model});
    \item[$-\gg_x(M,\pi)$] isotropy Lie algebra of $(M,\pi)$ at $x\in M$ (section \ref{sec:canonical:stratifications});
    \item[$-\cS(M,\pi)$, $\cSi(M,\pi)$] canonical stratification and canonical infinitesimal stratification of $(M, \pi)$ (sections \ref{sec:The canonical infinitesimal stratification of PMCTs} and \ref{sec:The canonical stratification of PMCTs});
    \item[$-\Sc$, $\Si$] strata of $\cS(M,\pi)$ and $\cSi(M,\pi)$, respectively (sections \ref{sec:The canonical infinitesimal stratification of PMCTs} and \ref{sec:The canonical stratification of PMCTs});
    \item[$-\Sc_k(M,\pi)$, $\Si_k(M,\pi)$] unions of codimension $k$ strata of $\cS(M,\pi)$ and $\cSi(M,\pi)$, respectively (sections \ref{sec:The canonical infinitesimal stratification of PMCTs} and \ref{sec:The canonical stratification of PMCTs});
    \item[$-M^{\reg}$, $M^{\princ}$] regular and principal parts of $(M,\pi)$ (sections \ref{sec:The canonical infinitesimal stratification of PMCTs} and \ref{sec:The canonical stratification of PMCTs});
    \item[$-\rho^{\Hol}: \pi_1(S, x)\to \GL(\zz(\gg_x))$] linear holonomy representation of $(M,\pi)$ at $x$ (section \ref{sec:The canonical infinitesimal stratification of PMCTs});
    \item[$-\partial_x: \pi_2(S, x)\to Z(G(\gg_x))$] monodromy map of $(M,\pi)$ at $x$ (section \ref{sec:The canonical infinitesimal stratification of PMCTs});
    \item[$-(\cT,\Omega_{\cT})$] presymplectic torus bundle, often the connected component of the isotropy of a symplectic groupoid (sections \ref{sec:stratifications:DMCT} and \ref{sec:stratifications:leaf:space}).
    \item[$-\res:(\hM,\hL)\to (M,\pi)$] Weyl resolution of a PMCT (section \ref{sec:Weyl:resolution});
    \item[$-\hF$] presymplectic foliation of $(\hM,\hL)$. Its leaves are denoted $(\hS,\widehat{\omega}_{\hS})$ (section \ref{sec:Weyl:resolution});
    \item[$-(\hG,\hOmega)$] presymplectic groupoid integrating $(\hM,\hL)$ (section \ref{sec:Weyl:resolution});
    \item[$-B=B(M,\pi)$] leaf space of a PMCT $(M, \pi)$ (section \ref{sec:The smooth structure}); 
    \item[$-\cS(B)$, $\cSi(B)$] canonical stratification and canonical infinitesimal stratification of the leaf space $B$ (section \ref{sec:stratifications:leaf:space});
    \item[$-B_\Sc$, $B_\Si$] leaf spaces of the strata $\Sc$ and $\Si$ (section \ref{sec:stratifications:leaf:space});
    \item[$-B^\lin$, $B^\aff$, $\oB$, $\hM^\lin$, $\hM^\aff$, $\hM^\orb$] linear holonomy cover, affine holonomy cover and orbifold universal cover of leaf space $B$, and corresponding covering spaces of the resolution $\hM$ (section \ref{sec:main:diagram});
    \item[$-\htt\to \hM$] tautological line bundle (section \ref{sec:main:diagram});
    \item[$-\Hol(\hF)$, $\Mon(\hF)$] holonomy and monodromy groupoids of $\hF$ 
    (section \ref{sec:main:diagram});
    \item[$-\Lambda$] transverse integral affine structure to $\hF$ (section \ref{sec:main:diagram});
    \item[$-h^{\lin}: \Pi_1(\hM) \to \GL_\Lambda(\,\htt^*\,)$, $h^{\aff}: \Pi_1(\hM) \to \Aff_\Lambda(\,\htt^*\,)$] linear holonomy and affine holonomy actions (section \ref{sec:main:diagram});
    \item[$-K^\lin$, $K^\aff$ $\Gamma^\lin$, $\Gamma^\aff$] kernels and images of $h^\lin$ and $h^\aff$ at the base point $\hx_0\in\hM$
    (section \ref{sec:main:diagram});
    \item[$-\dev$, $\dev_0$] developing map and developing map based at a point (section \ref{sec:main:diagram});
    \item[$-\pi_1^\orb(B,b_0)$] orbifold fundamental group of $B$ based at $b_0$ (section \ref{sec:The orbifold fundamental group});
    \item[$-\pi_1(\cG,x)$] fundamental group of the Lie groupoid $\cG\tto M$ based at $x\in M$ (section \ref{sec:The orbifold fundamental group});
    \item[$-\hW_{\hx}$, $\cW_{\hx}$] kernel of $\res_*: \pi_1(\hM,\hx)\to\pi_1(M,x)$ and  Weyl group of $(M,\pi)$ based at $\hx\in\hM$ (section \ref{sec:Weyl:group:PMCT});
    \item[$-B^\reg$, $(B^\lin)^\reg$,$(B^\aff)^\reg$,$(\oB)^\reg$] regular locus in leaf spaces $B$, $B^\lin$, $B^\aff$ and $\oB$ (section \ref{sec:abstract:reflections})
    \item[$-r$] integral affine geometric reflection (section \ref{sec:geometric:reflections});
    \item[$-\delta:\Gamma\to\Z_2$] parity character of $\Gamma$ (section \ref{sec:geometric:reflections});
    \item[$-\Refl\subset \cW$] subset of geometric reflections (section \ref{sec:Coxeter});
    \item[$-\cH_r$] fixed-point set (hyperplane) defined by $r\in \Refl$ (section \ref{sec:Coxeter});
    \item[$-\Delta$] a chamber in $\oB$ (section \ref{sec:Coxeter});
    \item[$-\Refl_\Delta$] subset of simple reflections associted to chamber $\Delta$ (section \ref{sec:Coxeter});
    \item[$-\hH\to \hM$] bundle of 2nd cohomology of the fibers of $\hF$ (section \ref{sec:linear:variation});
    \item[$-\varpi$] section of $\hH$ induced by presymplectic forms on leaves of $\hF$  (section \ref{sec:linear:variation});
    \item[$-\cT_\varpi$] transport map  (section \ref{sec:linear:variation});
    \item[$-\Ilin$, $\Iaff$] linear and affine variation maps (section \ref{sec:linear:variation});
    \item[$- \hLL\to \hM$] line bundle of top cohomology of the fibers of $\hF$ (section \ref{sec:volume:polynomial});
    \item[$- \sigma_{\vol}$] section of $\hLL$ induced by $\tfrac{\varpi^k}{k!}$ (section \ref{sec:volume:polynomial});    
    \item[$-\VVO:B^\aff\to \R$] symplectic volume polynomial (section \ref{sec:volume:polynomial});
    \item[$-\delta_{\hLL}: \pi_1(\hM)\to \Z_2$] character associated to $\hLL$ (section \ref{sec:volume:polynomial});
    \item[$-{\mu^{\aff}_{B}}$, ${\mu^{DH}_{B}}$] Lebesgue and Duistermaat-Heckman measures on $B$ (section \ref{sec:DH:formula});
    \item[$-\VV^2: B\to\R$] square volume polynomial (section \ref{sec:DH:formula});
    
\end{description}
}
We also use some standard notations. For a linear subspace $W\subset V$ we denote by $W^0\subset V^*$ its annihilator, while $V^\Gamma$ denotes the fixed-point set of a $\Gamma$-action. For a Lie group $G$ we denote by $G^0$ the component containing the identity and by $Z(G)$ its center, while $\zz(\gg)$ denotes the center of a Lie algebra $\gg$. We also denote by $Z_G(H)$ and by $N(H)$  the centralizer and the normalizer of a subgroup $H\subset G$. For a compact Lie group $G$, we denote a maximal torus by $T$ and by $W=N(T)/T$ its Weyl group. At the Lie algebra level, a maximal torus is denoted by $\tt\subset\gg$ and the Grassmannian of maximal torus is denoted by $\cT(\gg)$. We use the symbol $\nu(\cdot)$ to denote normal spaces and normal bundles to submanifolds and foliations.

\bibliographystyle{abbrv}

\end{document}